\documentclass[a4paper,11pt,reqno]{amsart}
\usepackage{amsmath,amssymb,url,stmaryrd}
\usepackage{amsthm}
\usepackage[T1]{fontenc}
\usepackage{lmodern}
\usepackage[top=30truemm,bottom=30truemm,left=20truemm,right=20truemm]{geometry}
\usepackage{tikz}
\usetikzlibrary{positioning}
\usetikzlibrary{arrows.meta}
\usetikzlibrary{calc}
\usepackage{color}
\usepackage{enumitem}
\usepackage{hyperref}

\newenvironment{enumerate2}{\begin{enumerate}[label=\textup{(\alph*)}]}
{\end{enumerate}}

\date{\today}

\newtheorem{Thm}{Theorem}[section]
\newtheorem{Cor}[Thm]{Corollary}
\newtheorem{Lem}[Thm]{Lemma}
\newtheorem{Prop}[Thm]{Proposition}
\newtheorem{Def-Prop}[Thm]{Definition-Proposition}

\theoremstyle{definition}
\newtheorem{Def}[Thm]{Definition}
\newtheorem{Rem}[Thm]{Remark}
\newtheorem{Ex}[Thm]{Example}
\newtheorem{Nota}[Thm]{Notation}

\newcommand{\calA}{\mathcal{A}}
\newcommand{\calC}{\mathcal{C}}
\newcommand{\calD}{\mathcal{D}}
\newcommand{\calF}{\mathcal{F}}
\newcommand{\calH}{\mathcal{H}}
\newcommand{\calT}{\mathcal{T}}

\newcommand{\calW}{\mathcal{W}}
\newcommand{\calX}{\mathcal{X}}

\newcommand{\ovcalF}{\overline{\calF}}
\newcommand{\ovcalT}{\overline{\calT}}

\newcommand{\sfD}{\mathsf{D}}
\newcommand{\sfK}{\mathsf{K}}

\newcommand{\rmb}{\mathrm{b}}
\newcommand{\rmf}{\mathrm{f}}
\newcommand{\rmt}{\mathrm{t}}
\newcommand{\rmw}{\mathrm{w}}

\newcommand{\ovrmf}{\overline{\rmf}}
\newcommand{\ovrmt}{\overline{\rmt}}

\newcommand{\Q}{\mathbb{Q}}
\newcommand{\R}{\mathbb{R}}
\newcommand{\Z}{\mathbb{Z}}

\newcommand{\frakC}{\mathfrak{C}}
\newcommand{\frakF}{\mathfrak{F}}

\DeclareMathOperator{\Hom}{\mathsf{Hom}}
\DeclareMathOperator{\End}{\mathsf{End}}

\DeclareMathOperator{\Ext}{\mathsf{Ext}}

\DeclareMathOperator{\rad}{\mathsf{rad}}
\DeclareMathOperator{\soc}{\mathsf{soc}}

\DeclareMathOperator{\Ker}{\mathsf{Ker}}
\DeclareMathOperator{\Image}{\mathsf{Im}} \renewcommand{\Im}{\Image}

\DeclareMathOperator{\module}{\mathsf{mod}} \renewcommand{\mod}{\module}
\DeclareMathOperator{\proj}{\mathsf{proj}}
\DeclareMathOperator{\inj}{\mathsf{inj}}

\DeclareMathOperator{\add}{\mathsf{add}}

\DeclareMathOperator{\Filt}{\mathsf{Filt}}
\DeclareMathOperator{\Fac}{\mathsf{Fac}}
\DeclareMathOperator{\Sub}{\mathsf{Sub}}
\DeclareMathOperator{\thick}{\mathsf{thick}}
\DeclareMathOperator{\simple}{\mathsf{sim}}

\DeclareMathOperator{\tors}{\mathsf{tors}}
\DeclareMathOperator{\ftors}{\mathsf{f-tors}}
\DeclareMathOperator{\torf}{\mathsf{torf}}
\DeclareMathOperator{\ftorf}{\mathsf{f-torf}}

\DeclareMathOperator{\smc}{\mathsf{smc}}
\DeclareMathOperator{\twosmc}{2\mathsf{-smc}}
\DeclareMathOperator{\silt}{\mathsf{silt}}
\DeclareMathOperator{\twosilt}{2\mathsf{-silt}}

\DeclareMathOperator{\twopsilt}{2\mathsf{-psilt}}

\DeclareMathOperator{\sfT}{\mathsf{T}}
\DeclareMathOperator{\sfF}{\mathsf{F}}

\DeclareMathOperator{\SH}{\mathsf{SH}}
\DeclareMathOperator{\reduc}{\mathsf{red}}

\DeclareMathOperator{\supp}{\mathsf{supp}}
\DeclareMathOperator{\fac}{\mathsf{fac}}
\DeclareMathOperator{\sub}{\mathsf{sub}}

\newcommand{\Cone}{\mathsf{Cone}}

\newcommand{\TF}{\mathsf{TF}}

\DeclareMathOperator{\Facet}{\mathsf{Facet}}
\DeclareMathOperator{\Face}{\mathsf{Face}}

\newcommand{\smallone}{\begin{smallmatrix}1\end{smallmatrix}}
\newcommand{\smalltwo}{\begin{smallmatrix}2\end{smallmatrix}}

\renewcommand{\epsilon}{\varepsilon}
\renewcommand{\Gamma}{\varGamma}
\renewcommand{\Theta}{\varTheta}
\renewcommand{\Phi}{\varPhi}
\renewcommand{\Psi}{\varPsi}
\renewcommand{\Sigma}{\varSigma}
\renewcommand{\ell}{l}

\DeclareMathOperator*{\sfmax}{\mathsf{max}} \renewcommand{\max}{\sfmax}
\DeclareMathOperator*{\sfmin}{\mathsf{min}} \renewcommand{\min}{\sfmin}
\DeclareMathOperator{\sfdim}{\mathsf{dim}} \renewcommand{\dim}{\sfdim}

\numberwithin{equation}{section}

\begin{document}
\title
{The interval neighborhoods in the real Grothendieck groups}

\author{Sota Asai} 
\address{Sota Asai: Graduate School of Mathematical Sciences,
University of Tokyo,  
3-8-1 Komaba, Meguro-ku, Tokyo-to, 153-8914, Japan}
\email{sotaasai@g.ecc.u-tokyo.ac.jp}

\begin{abstract}
For a finite dimensional algebra $A$,
the TF equivalence on the real Grothendieck group $K_0(\proj A)_\R$
can be regarded as a completion of the $g$-fan.
For example, the silting cones $C^\circ(U)$ of 
2-term presilting complexes $U$ give 
the most fundamental family of TF equivalence classes.
The next step is studying the TF equivalence classes
around each silting cone $C^\circ(U)$.
Thus, in this paper, 
we investigate the closed interval neighborhood $D(U)$ of $C^\circ(U)$.
As our main result, we give a $2^{|U|}:1$ correspondence between 
the TF equivalence classes in $D(U)$ and those in $K_0(\proj B)_\R$, 
where $B$ is the algebra appearing 
in the $\tau$-tilting reduction at $U$.
For this purpose, we give an explicit description of 
defining inequalities and the faces of $D(U)$ as a polyhedral cone, 
by using 2-term simple-minded collections and $M$-TF equivalences.
\end{abstract}

\maketitle 
\setcounter{tocdepth}{1}
\tableofcontents

\section{Introduction}

\subsection{Background: the $g$-fan and the TF equivalence}

\emph{Tilting complexes}
in the perfect derived category $\sfK^\rmb(\proj A)$ for a ring $A$
is a fundamental notion characterizing \emph{derived equivalences}
\cite{Rickard}.
In order to get categorical relationships between rings,
it is important to study many tilting complexes.
For this purpose, \emph{silting complexes} \cite{KV},
which generalize tilting complexes, play an important role.
Aihara-Iyama \cite{AI} established \emph{mutations} of silting complexes,
which allow us to construct silting complexes in a systematic way.

Within the range of \emph{2-term silting complexes}
for a finite dimensional algebra $A$ over a field $K$,
we have many more nice properties.
A 2-term silting complex $S=\bigoplus_{i=1}^n S_i$ with $S_i$ indecomposable
has a unique mutation at each $S_j$ \cite{AIR,DF}.
Thus mutations of 2-term silting complexes
are strongly related to \emph{cluster theory} \cite{FZ,KeY}.
Moreover, 2-term silting complexes have bijections
with other important notions 
in the module category $\mod A$ and 
its bounded derived category $\sfD(A):=\sfD^\rmb(\mod A)$;
see \cite{AIR,A-semi,BY,IT,KY,MS}.
There have been various types of studies of (2-term) silting complexes including recent ones \cite{ALS,Garcia,Gnedin,GZ,HI,IK}.

Mutations of 2-term silting complexes can be expressed
as a fan in the \emph{real Grothendieck group}
$K_0(\proj A)_\R$.
For each basic 2-term presilting complex $U=\bigoplus_{i=1}^m U_i$ 
with $U_i$ indecomposable,
the \emph{silting cones} $C^\circ(U),C(U) \subset K_0(\proj A)_\R$ are
defined by
\begin{align*}
C^\circ(U):=\sum_{i=1}^m \R_{>0}[U_i], \quad
C(U):=\sum_{i=1}^m \R_{\ge 0}[U_i].
\end{align*}
As in \cite{AHIKM}, the set of all closed silting cones $C(U)$
is a non-singular fan in $K_0(\proj A)_\R$, 
called the \emph{$g$-fan} of $A$.
In general, the $g$-fan is not necessarily complete.
In fact, the $g$-fan is complete if and only if
there are only finitely many isoclasses of basic 2-term silting complexes
\cite{A}.

In this context,
the \emph{TF equivalence} on $K_0(\proj A)_\R$ \cite{A} is 
a useful notion which can be regarded as a completion of the $g$-fan.
Each $\theta \in K_0(\proj A)_\R$ induces an $\R$-linear form
$\theta \colon K_0(\mod A)_\R \to \R$
via the \emph{Euler bilinear form},
and give two \emph{semistable torsion pairs}
$(\ovcalT_\theta,\calF_\theta)$ and $(\calT_\theta,\ovcalF_\theta)$ 
in $\mod A$ \cite{BKT}.
We say that $\theta,\eta \in K_0(\proj A)_\R$ are \emph{TF equivalent}
if $(\ovcalT_\theta,\calF_\theta)=(\ovcalT_\eta,\calF_\eta)$ and 
$(\calT_\theta,\ovcalF_\theta)=(\calT_\eta,\ovcalF_\eta)$.

Then each silting cone $C^\circ(U)$ is 
a typical TF equivalence class \cite{A,BST,Y}.
Namely, $\theta \in K_0(\proj A)_\R$ belongs to $C^\circ(U)$ if and only if
\begin{align*}
(\ovcalT_\theta,\calF_\theta)
&=({^\perp H^{-1}(\nu U)},\Sub H^{-1}(\nu U))
=:(\ovcalT_U,\calF_U), \\
(\calT_\theta,\ovcalF_\theta)
&=(\Fac H^0(U),{H^0(U)^\perp})
=:(\calT_U,\ovcalF_U).
\end{align*}
The association $U \mapsto C^\circ(U)$ gives an injection
from the set $\twopsilt A$ of basic 2-term presilting complexes
to the set of TF equivalence class in $K_0(\proj A)_\R$.
This map is restricted to a bijection
from the set $\twosilt A$ of basic 2-term silting complexes
to the set of full-dimensional TF equivalence classes in $K_0(\proj A)_\R$
\cite{A}.
Thus we can recover the $g$-fan from the TF equivalence.

These notions are strongly related also to
the \emph{wall-chamber structure} on $K_0(\proj A)_\R$
introduced by \cite{Bridgeland,BST}
by using the \emph{$\theta$-semistable subcategory}
$\calW_\theta:=\ovcalT_\theta \cap \ovcalF_\theta$ of King \cite{K}.
As examples of relevant studies, we refer to 
\cite{BCDMTY,BPPW,HIKT,Mizuno,PYK,STTV}.

\subsection{The main result on the interval neighborhood $D(U)$}
As the next step following silting cones,
in this paper,
we investigate the local behavior of the TF equivalence around $C^\circ(U)$.
For this purpose, we define the following subsets of $K_0(\proj A)_\R$ as 
the main subjects of our study.

\begin{Def}[Definition-Proposition \ref{Def-Prop_D(U)}]
For each $U \in \twopsilt A$,
the \emph{interval neighborhoods} $D^\circ(U),D(U)$
of $C^\circ(U)$ are defined by
\begin{align*}
D(U)&:=\{ \theta \in K_0(\proj A)_\R \mid H^0(U) \in \ovcalT_\theta, \ 
H^{-1}(\nu U) \in \ovcalF_\theta\}, \\
D^\circ(U)&:=\{ \theta \in K_0(\proj A)_\R \mid 
H^0(U) \in \calT_\theta, \ H^{-1}(\nu U) \in \calF_\theta\}\\
&=\{ \theta \in K_0(\proj A)_\R \mid 
\calT_U \subset \calT_\theta \subset \ovcalT_\theta \subset \ovcalT_U\}.
\end{align*}
\end{Def}

We remark that $D^\circ(U)$ appeared in \cite{A}.
To study the interval neighborhoods,
we use the \emph{$\tau$-tilting reduction} at $U$ introduced 
by Jasso \cite{Jasso}.
He constructed a certain algebra $B=B_U$ 
from the \emph{maximal completion} (the \emph{Bongartz completion}) of $U$,
and proved that the algebra $B$ controls
the interval $[\calT_U,\ovcalT_U]$ of torsion classes,
the subcategory $\calW_U:=\ovcalT_U \cap \ovcalF_U$ of $\mod A$,
and the set $\twopsilt_U A$
of basic 2-term presilting complexes containing $U$ as direct summands;
see Propositions \ref{Prop_reduc} and \ref{Prop_reduc_silt}.
This method motivated many works 
including \cite{AET,AP,BDH,CWZ,ES,Tattar}.

The aim of this paper is revealing the relationship
between the sets $\TF_A^{D(U)}$ and $\TF_B$ 
of TF equivalence classes in $D(U)$ and $K_0(\proj B)_\R$, respectively.
To reveal the relationship between $D(U)$ and the algebra $B$,
we use the canonical surjective $\R$-linear map 
$\pi \colon K_0(\proj A)_\R \to K_0(\proj B)_\R$
which is compatible with the $\tau$-tilting reduction at $U$.
Its kernel is the subspace $\R C(U)$ spanned by $C(U)$.
The following is the main result of this paper,
where $U_I:=\bigoplus_{i \in I} U_i$ for any $I \subset \{1,\ldots,m\}$.

\begin{Thm}[Theorems \ref{Thm_TF_2^m_B} and \ref{Thm_rho_explicit}]
\label{Thm_TF_2^m_B_intro}
Let $U=\bigoplus_{i=1}^m U_i \in \twopsilt A$ with $U_i$ indecomposable.
\begin{enumerate}
\item
We have a bijection
\begin{align*}
\TF_A^{D(U)} \to 2^{\{1,\ldots,m\}} \times \TF_B, \quad
E \mapsto (\{ i \in \{1,\ldots,m\} \mid E \subset D^\circ(U_i) \},\pi(E)).
\end{align*}
\item
There exists a piecewise linear injection 
$\rho \colon K_0(\proj B)_\R \to D(U)$
such that $\rho$ is $\R$-linear on each $E \in \TF_B$ and 
that the inverse map of the bijection in (1) is 
\begin{align*}
2^{\{1,\ldots,m\}} \times \TF_B \to \TF_A^{D(U)}, \quad
(I,E) \mapsto C(U_I)+\rho(E).
\end{align*}
\end{enumerate}
\end{Thm}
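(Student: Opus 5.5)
We build $\rho$ first and deduce both (1) and (2) from it; throughout we use Jasso's $\tau$-tilting reduction (Propositions \ref{Prop_reduc} and \ref{Prop_reduc_silt}).

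\textbf{Step 1: description of $D(U)$ and its faces.} Let $T$ be the Bongartz completion of $U$, and let $\calW_U \simeq \mod B$ be the reduction. For $\theta \in D(U)$, the torsion pairs satisfy $\calT_U \subset \calT_\theta \subset \ovcalT_\theta \subset \ovcalT_U$. So $(\calT_\theta,\ovcalF_\theta)$ and $(\ovcalT_\theta,\calF_\theta)$ are determined by their intersections with $\calW_U$. We will show that these intersections are exactly the semistable torsion pairs in $\mod B$ of $\pi(\theta)$, viewing $\theta$ restricted to $K_0(\calW_U)$ as an element of $K_0(\proj B)_\R$ via $\pi$. This uses that $\pi$ is dual to the inclusion $K_0(\calW_U)\to K_0(\mod A)$, and that $\theta$-subobjects inside $\calW_U$ can be tested within $\calW_U$ once $\theta$ lies in the interval.

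Hence on $D(U)$ the TF class of $\theta$ is determined by two pieces of data: the class of $\pi(\theta)$, and which "boundary" conditions hold. The boundary conditions are whether $\theta$ lies in $D^\circ(U_i)$, i.e. whether the strict conditions $H^0(U_i)\in\calT_\theta$ and $H^{-1}(\nu U_i)\in\calF_\theta$ hold. I would encode these via 2-term simple-minded collections: the smc corresponding to $T$ gives finitely many linear inequalities cutting out $D(U)$, with one facet pair attached to each $U_i$. The precise claim is that, for $\theta\in D(U)$, membership in $D^\circ(U_i)$ is equivalent to strict positivity of $\theta$ on the object $X_i$ of the smc dual to $U_i$.

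\textbf{Step 2: construction of $\rho$.} For each $E\in\TF_B$, choose a 2-term presilting or $M$-TF description of $E$ and lift it. I would define $\rho$ on $K_0(\proj B)_\R$ by sending $[P]$, for $P$ in the $g$-vector description, to $[\tilde P]$, where $\tilde P$ is the Bongartz-type lift in $\twopsilt_U A$ minus its $U$-part. This lift is linear on each $g$-cone; for non-rigid classes, extend by the $M$-TF equivalence so that it stays linear on each $E$. Then choose $\rho$ so that $\rho(\eta)$ lies on the boundary face where all the conditions $\theta(X_i)=0$ hold. Piecewise linearity and $\pi\circ\rho=\mathrm{id}$ give injectivity.

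\textbf{Step 3: the bijection.} Given $\theta\in D(U)$, decompose it as $\theta = \rho(\pi\theta)+\sum_i c_i[U_i]$ with $c_i\ge0$. Nonnegativity comes from the inequalities in Step 1, and $c_i>0$ holds iff $\theta\in D^\circ(U_i)$. Adding $[U_i]$ does not change the restriction to $\calW_U$, and by Step 1 the class is determined by $(I,\pi(E))$. Together these show that the map in (1) is injective, and that $C^\circ(U_I)+\rho(E)$ is a single TF class. (The statement in (2) writes $C(U_I)$; the precise open/closed convention should be checked against the definition of $\TF_A^{D(U)}$.) Surjectivity follows because every pair $(I,E)$ is realized.

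\textbf{Main obstacle.} The hardest step is Step 2: making $\rho$ linear on non-polyhedral-looking TF classes of $B$. It also requires proving that $\theta-\rho(\pi\theta)$ lies in $C(U)$, i.e. that the correction is a nonnegative combination of the $[U_i]$. I would first attempt this via the explicit facet inequalities $\theta(X_i)\ge0$ from the smc, and via the duality $\langle U_i, X_j\rangle=\delta_{ij}$.
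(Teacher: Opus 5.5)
\textbf{There is a genuine gap: your Step~1 fails exactly where the theorem has content.}

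The chain $\calT_U\subset\calT_\theta\subset\ovcalT_\theta\subset\ovcalT_U$ characterizes $D^\circ(U)$, not $D(U)$ (Definition-Proposition~\ref{Def-Prop_D(U)}). On $D^\circ(U)$ the reduction to $\calW_U$ is only the known bijection $\TF_A^{D^\circ(U)}\to\TF_B$ of Proposition~\ref{Prop_reduc_K_0}. Everything new happens on $\partial D(U)$. There the semistable torsion classes leave $[\calT_U,\ovcalT_U]$, and they are \emph{not} determined by their traces on $\calW_U$. Take $\theta\in L(U)$. Then $\theta$ and $\theta+[U_1]$ have the same traces $\ovcalT\cap\calW_U$ and $\ovcalF\cap\calW_U$. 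Indeed, $\theta+[U]$ and $\theta+[U_1]+[U]$ are TF equivalent points of $D^\circ(U)$, and $\ovcalT_{\eta+[U]}=\ovcalT_U\cap\ovcalT_\eta$ for $\eta\in D(U)$. Yet only $\theta+[U_1]$ lies in $D^\circ(U_1)$.

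Your substitute for the missing data is also false. $D(U)$ is not cut out by one facet pair per $U_i$, and membership in $D^\circ(U_i)$ is not a single linear inequality. Take Example~\ref{Ex_A4_D(U)}:
\begin{itemize}
\item $\Facet_2^+D(U)$ has three facets.
\item The points $[P(1)]-[P(2)]$ and $[P(2)]$ of $L(U)$, lying over $-[P(1)_B]$ and $[P(1)_B]$, satisfy $\theta(X_2)=1$ and $\theta(Y_2)=1$ respectively, although neither lies in $D^\circ(U_2)$.
\item In fact every $\theta\in D(U)$ over $-[P(1)_B]$ has $\theta(X_2)\ge 1$, and every $\theta\in D(U)$ over $[P(1)_B]$ has $\theta(Y_2)\ge 1$.
\end{itemize}
So, for either simple-minded collection, your normalization in Step~2 (``$\rho(\eta)$ on the face where all $\theta(X_i)=0$'') cannot be achieved.

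What is actually true is that the $[U_i]$-coefficient of the $C(U)$-part of $\theta$ is $\min_{F\in\Facet_i D(U)}|\theta(L_F)|/d_{U_i}$. Here the $L_F$ are brick labels of the facets, with $[U_{i'}](L_F)=\epsilon_F\delta_{i,i'}d_{U_i}$. They come from $\partial_i=D(U)\setminus D^\circ(U_i)$ and the uniqueness of $(i_F,\epsilon_F)$. This is what proves $\theta-\rho(\pi\theta)\in C(U)$, which you leave open. It also shows that $L(U)$ is only a union of faces.

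Even granting $\rho$, injectivity on the boundary needs two ingredients you do not supply.
\begin{enumerate}
\item[(i)] Let $\theta$ lie in the relative interior of a face $F$ and let $\theta'\in C^\circ(U)+\theta$. Then $\ovcalT_\theta=\ovcalT_{\theta'}*\Filt(\sub_F(\rmw_F X^-))$, and dually for $\ovcalF_\theta$ (Proposition~\ref{Prop_Filt_fac_F}). So the class of $\theta$ is fixed by the class of $\theta'$, that is by that of $\pi(\theta)$, together with the face $F$.
\item[(ii)] Points with the same $I$ and TF-equivalent images under $\pi$ lie in the same face. This needs that $\pi$ maps the faces with $I_F=I$ bijectively onto the cones of the $M_I$-TF fan $\Sigma(M_I)$ (Theorem~\ref{Thm_Sigma(M_I)}). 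Here $M_I=\bigoplus_{i\in I}\Phi(W_i)$ is built from the triangles $X_i[-1]\to W_i\to Y_i\to X_i$, and this fan is coarser than TF equivalence.
\end{enumerate}
The same identification is what makes $\rho$ linear on each TF class outside $\Cone_B$. Your ``extend by the $M$-TF equivalence'' names no $M$ and gives no reason the extension lands in $D(U)$. As written, it defines $\rho$ only on $\Cone_B$, where your lift $[\reduc(V)]\mapsto[V/U]$ is correct.

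Your remark on closures is right: the inverse map must send $(I,E)$ to $C^\circ(U_I)+\rho(E)$, not $C(U_I)+\rho(E)$.
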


Theorem \ref{Thm_rho_explicit} also gives
an explicit description of $\rho$ by using the maximal completion of $U$.
This extends the corresponding statement 
for $D^\circ(U)$ in \cite[Theorem 4.5]{A}.

To prove Theorem \ref{Thm_TF_2^m_B_intro}, 
we focus on the structure of
$D(U)$ as a rational polyhedral cone.
The main part of this paper is devoted to describing
the faces of $D(U)$ explicitly.

\subsection{Facets and defining inequalities of the cone $D(U)$}

As the first step, we consider the facets of $D(U)$.
For this purpose, we use \emph{semibricks}.
Write $U=\bigoplus_{i=1}^m U_i$ with each $U_i$ indecomposable.
Then we have two semibricks $Y^+=\bigoplus_{i=1}^m Y_i^+$ 
and $X^-=\bigoplus_{i=1}^m X_i^-$
whose torsion and torsion-free closures
are $\calT_U$ and $\calF_U$, respectively \cite{A-semi}.
Here, the direct summands $Y_i^+,X_i^-$ are bricks or zero. 
See Proposition \ref{Prop_rad_soc_U} for the construction.
Then we have the following equalities.

\begin{Prop}[Proposition \ref{Prop_D(U)_Y_X}]\label{DcU semibrick intro}
Let $U=\bigoplus_{i=1}^m U_i \in \twopsilt A$.
Then we have
\begin{align*}
D(U)&=\{ \theta \in K_0(\proj A)_\R \mid 
Y^+ \in \ovcalT_\theta, \ X^- \in \ovcalF_\theta \}
=\bigcap_{i=1}^m \{ \theta \in K_0(\proj A)_\R \mid 
Y_i^+ \in \ovcalT_\theta, \ X_i^- \in \ovcalF_\theta \},\\
D^\circ(U)&=\{ \theta \in K_0(\proj A)_\R \mid 
Y^+ \in \calT_\theta, \ X^- \in \calF_\theta \}
=\bigcap_{i=1}^m \{ \theta \in K_0(\proj A)_\R \mid 
Y_i^+ \in \calT_\theta, \ X_i^- \in \calF_\theta \}.
\end{align*}
\end{Prop}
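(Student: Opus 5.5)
The plan is to show that the membership conditions in the definition of $D(U)$ and $D^\circ(U)$, stated for the modules $H^0(U)$ and $H^{-1}(\nu U)$, are equivalent to the same conditions for the semibricks $Y^+$ and $X^-$. The key inputs are:
\begin{itemize}
\item the classes $\ovcalT_\theta$ and $\calT_\theta$ are torsion classes, and $\ovcalF_\theta$ and $\calF_\theta$ are torsion-free classes;
\item by Proposition \ref{Prop_rad_soc_U} we have $\calT_U=\Fac H^0(U)=\sfT(Y^+)$ and $\calF_U=\Sub H^{-1}(\nu U)=\sfF(X^-)$, where $\sfT$ and $\sfF$ denote the smallest torsion and torsion-free classes containing a module.
\end{itemize}
Since $Y^+=\bigoplus_i Y_i^+$ and torsion and torsion-free classes are closed under direct sums and summands, the second equality in each line (the intersection over $i$) is immediate once the first is proved. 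So everything reduces to the first equality.

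For the closed version, fix $\theta$.
\begin{itemize}
\item The condition $H^0(U)\in\ovcalT_\theta$ is equivalent to $\Fac H^0(U)\subset\ovcalT_\theta$, because $\ovcalT_\theta$ is closed under factor modules. That is, it says $\calT_U\subset\ovcalT_\theta$.
\item Since $\calT_U=\sfT(Y^+)$ is the smallest torsion class containing $Y^+$, this is in turn equivalent to $Y^+\in\ovcalT_\theta$.
\item Dually, $H^{-1}(\nu U)\in\ovcalF_\theta$ is equivalent to $\calF_U\subset\ovcalF_\theta$, which is equivalent to $X^-\in\ovcalF_\theta$, using that $\calF_U=\sfF(X^-)$ is the smallest torsion-free class containing $X^-$.
\end{itemize}
The open version $D^\circ(U)$ follows by the identical argument with $\calT_\theta$ and $\calF_\theta$ in place of $\ovcalT_\theta$ and $\ovcalF_\theta$; these are also a torsion class and a torsion-free class.

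The only substantive point is the identification of $\calT_U$ and $\calF_U$ with the torsion and torsion-free closures of $Y^+$ and $X^-$. This should be exactly the content of Proposition \ref{Prop_rad_soc_U} and the semibrick bijection of \cite{A-semi}. If the construction there only gives $Y^+$ as a factor of $H^0(U)$, namely $H^0(U)$ modulo the part of its radical coming from other summands, I would prove the closure statement directly:
\begin{itemize}
\item First, $Y^+\in\Fac H^0(U)$, so $\sfT(Y^+)\subset\calT_U$.
\item Conversely, I would argue that $H^0(U)$ is filtered by modules in $\Fac Y^+$, using that each $H^0(U_i)$ has top $Y_i^+$ relative to $\calT_U$. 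This is the usual correspondence between functorially finite torsion classes and their semibricks of split projectives modulo radical. It gives $H^0(U)\in\sfT(Y^+)$.
\end{itemize}
The dual statement for $X^-$ is handled in the same way using $\nu$.

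With both directions of the closures in hand, the equivalences above give the stated equalities for $D(U)$ and $D^\circ(U)$.
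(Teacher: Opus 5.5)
Your argument is correct and is essentially the paper's proof. It rests on $\calT_U=\Fac H^0(U)=\sfT(Y^+)$ and $\calF_U=\Sub H^{-1}(\nu U)=\sfF(X^-)$, uses that $\ovcalT_\theta,\calT_\theta$ are torsion classes and $\ovcalF_\theta,\calF_\theta$ are torsion-free classes, and then splits into the summands $Y_i^+,X_i^-$. One small correction: the paper records the closure identification as Proposition \ref{Prop_T_U_T(Y^+)} (via the completions $S,T$ and \cite{A-semi}), not Proposition \ref{Prop_rad_soc_U}, so your fallback filtration argument is valid but not needed.
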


We mainly use these descriptions to study the set $\Facet D(U)$ 
of facets of $D(U)$. 
For each $i \in \{1,\ldots,m\}$ and $\epsilon \in \{\pm\}$,
we define the subsets $\partial_i^\epsilon$ of the boundary
$\partial D(U)$ by
\begin{align*}
\partial_i^+:=\{ \theta \in D(U) \mid Y_i^+ \notin \calT_\theta\}, \quad
\partial_i^-:=\{ \theta \in D(U) \mid X_i^- \notin \calF_\theta\}, \quad
\partial_i:=\partial_i^+\cup \partial_i^-.
\end{align*}
Moreover we set
\begin{align*}
\Facet_i^\epsilon D(U)
:=\{F \in \Facet D(U) \mid F \subset \partial_i^\epsilon \},\quad
\Facet_i D(U)
:=\Facet_i^+ D(U) \cup \Facet_i^- D(U).
\end{align*}
We show the following decomposition theorem.

\begin{Thm}[Theorem \ref{Thm_partial_facet}]\label{union facet intro}
Let $U=\bigoplus_{i=1}^m U_i \in \twopsilt A$. 
For each facet $F$ of $D(U)$,
there uniquely exists $(i,\epsilon) \in \{1,\ldots,m\} \times \{\pm\}$
such that $F \in \Facet_i^\epsilon D(U)$.
Thus we have a decomposition
\begin{align*}
\Facet D(U)
=\bigsqcup_{i=1}^m(\Facet_i^+ D(U) \sqcup \Facet_i^- D(U)).
\end{align*}
\end{Thm}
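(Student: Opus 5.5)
Existence and uniqueness will be handled separately, both resting on Proposition \ref{DcU semibrick intro} and on the standard description of semistable torsion pairs via the Euler form. For a module $M$, write
\begin{align*}
\ovcalT(M)&:=\{\theta \mid M\in\ovcalT_\theta\}, &
\calT(M)&:=\{\theta \mid M\in\calT_\theta\}.
\end{align*}
Recall that $\ovcalT(M)$ is the closed rational polyhedral cone cut out by the inequalities $\theta(N)\ge 0$ for the finitely many relevant quotients $N$ of $M$. For a brick or zero $Y_i^+$ we expect $\calT(Y_i^+)$ to be the relative interior $\ovcalT(Y_i^+)\setminus\partial_i^+$ inside the boundary description. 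Dually, $\ovcalF(X_i^-)$ is cut out by $\theta(N)\le0$ for submodules $N$.

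\emph{Existence.} By Proposition \ref{DcU semibrick intro}, $D(U)=\bigcap_i\bigl(\ovcalT(Y_i^+)\cap\ovcalF(X_i^-)\bigr)$ is a finite intersection of closed polyhedral cones, and $D^\circ(U)$ is the corresponding intersection of the sets $\calT(Y_i^+)\cap\calF(X_i^-)$. I first show that $D^\circ(U)$ is exactly the interior of $D(U)$. The inclusion $D^\circ(U)$ into the interior follows from openness of the conditions $Y\in\calT_\theta$ and $X\in\calF_\theta$, since these are strict inequalities over finitely many quotients and submodules. The reverse inclusion holds because a point of $\ovcalT(Y_i^+)\setminus\calT(Y_i^+)$ satisfies some defining inequality with equality; perturbing in a direction that makes that inequality negative leaves $D(U)$. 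Consequently the boundary satisfies $\partial D(U)=\bigcup_{i,\epsilon}\partial_i^\epsilon$.

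Now let $F$ be a facet. Each $\partial_i^\epsilon$ is a finite union of faces of $D(U)$, namely the sets where some defining inequality of $\ovcalT(Y_i^+)$ or $\ovcalF(X_i^-)$ becomes an equality. Hence $F$ is the finite union of the closed sets $F\cap\partial_i^\epsilon$. By a Baire or dimension argument, some $F\cap\partial_i^\epsilon$ has nonempty relative interior in $F$, so it contains a relatively open subset of $F$. The linear equality $\theta(N)=0$ responsible for it then holds on an open subset of $F$, and hence on all of its span, since $F$ is a convex cone. This forces $F\subset\{\theta(N)=0\}\cap D(U)$, so $Y_i^+\notin\calT_\theta$ (respectively $X_i^-\notin\calF_\theta$) for every $\theta\in F$, that is, $F\subset\partial_i^\epsilon$.

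\emph{Uniqueness.} This is the main obstacle. We must exclude that a single facet lies in $\partial_i^\epsilon\cap\partial_j^{\epsilon'}$ with $(i,\epsilon)\ne(j,\epsilon')$. My plan is to take $\theta$ in the relative interior of $F$, so that $\theta$ lies on exactly one supporting hyperplane $\eta^\perp$ of $D(U)$. Then I would show that the facet condition forces the wall $\calW_\theta$ to contain a unique brick $S$ up to the relevant filtration, essentially a wall of codimension one. The expected tool is that near a generic boundary point the TF class has codimension one, so $\calW_\theta$ has a single simple object, using the description of $\calW_\theta$ via 2-term simple-minded collections developed later in the paper. Then $Y_i^+\notin\calT_\theta$ means $S$ is a quotient of $Y_i^+$ lying in $\calW_\theta$. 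Since $Y^+$ is a semibrick whose components admit no nonzero morphisms between one another, and $X^-$ is Hom-orthogonal to $\calT_U$, the same brick $S$ cannot also be a quotient of $Y_j^+$ for $j\neq i$, nor a submodule of some $X_j^-$. The latter would give a nonzero map $Y_i^+\to S\to X_j^-$ in $\Hom(\calT_U,\calF_U)$, which is impossible unless $j=i$ with opposite signs. The final case $(i,+)$ versus $(i,-)$ I would rule out using the fact that $Y_i^+$ and $X_i^-$ correspond to the two ends of the mutation at $U_i$. Their dimension vectors give inequalities whose hyperplanes are distinct on $D(U)$, so they cannot share a facet. I expect this last step to need the explicit construction in Proposition \ref{Prop_rad_soc_U}.
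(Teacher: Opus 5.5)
Your existence argument is fine and follows the paper's route (Lemma \ref{Lem_partial_face} (2)). The uniqueness part has a genuine gap.

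First, the claim that for $\theta\in F^\circ$ the category $\calW_\theta$ has a single relevant simple object is not justified. The simple-minded-collection description of $\calW_\theta$ is only available for $\theta\in\Cone$. Moreover, a codimension-one TF class can have infinitely many simple objects in $\calW_\theta$ (for instance the imaginary ray of the Kronecker algebra). What the facet condition actually gives is Proposition \ref{Prop_face_dim}: the $\calW_\theta$-composition factors of $\rmw_F(Y^+\oplus X^-)$ have proportional classes, not that they are isomorphic.

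Second, and decisively for the case $(i,\epsilon)$ versus $(j,\epsilon)$ with $i\neq j$, Hom-orthogonality of the semibrick $Y^+$ does not prevent a common quotient. Take $A=K(2\leftarrow 1\to 3)$ and $U$ the 2-term presilting complex with $H^0(U)=\begin{smallmatrix}1\\2\end{smallmatrix}\oplus\begin{smallmatrix}1\\3\end{smallmatrix}$. Then Proposition \ref{Prop_rad_soc_U} gives $Y_1^+=\begin{smallmatrix}1\\2\end{smallmatrix}$ and $Y_2^+=\begin{smallmatrix}1\\3\end{smallmatrix}$. These are Hom-orthogonal but share the quotient $\begin{smallmatrix}1\end{smallmatrix}$, so $\theta(L(1))\ge 0$ is a valid inequality on $D(U)$ coming from both $Y_1^+$ and $Y_2^+$. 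Here it only cuts out $\{0\}$, but your argument gives no reason why such a shared hyperplane can never cut out a facet. This is also exactly the case where comparing signs of normal vectors gives no help.

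The missing input is Lemma \ref{Lem_Y_W_almost}: $Y_i^+\in\calT_{U_i}\cap\calW_{U/U_i}$ and $X_i^-\in\calF_{U_i}\cap\calW_{U/U_i}$. This comes from $\Hom_{\sfD(A)}(U/U_i,Y_i^+)=0$ (Definition-Proposition \ref{Def-Prop_SH}) and $\calT_{U_i}\subset\ovcalT_{U/U_i}$, not from the semibrick property. It yields $\rmw_FY_i^+\in\calW_{U/U_i}$ and $F+C(U/U_i)\subset\partial_i^\epsilon$ for every $F\in\Face_i^\epsilon D(U)$. Hence every facet in $\partial_i^\epsilon$ contains $C(U/U_i)$ but not $[U_i]$ (Proposition \ref{Prop_C_almost_F}), and a facet in $\partial_i^\epsilon\cap\partial_j^{\epsilon'}$ with $i\neq j$ would both contain and omit $[U_j]$. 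Equivalently, for $\epsilon=\epsilon'={+}$, the inner normal would be a positive multiple of both $[\rmw_FY_i^+]$, on which $[U_j]$ vanishes, and $[\rmw_FY_j^+]$, on which $[U_j]$ is positive.

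For $\epsilon\neq\epsilon'$, your hyperplane idea can be completed without mutations or Proposition \ref{Prop_rad_soc_U}. A facet in $\partial_i^+\cap\partial_j^-$ would have both $[L]$ and $-[L']$ as inner normals, with $L=\rmw_FY_i^+$ and $L'=\rmw_FX_j^-$ nonzero. These would have to be positively proportional, which is impossible since dimension vectors are nonnegative. This is an elementary substitute for the paper's appeal to Proposition \ref{Prop_M-TF_+-_no_facet}. Note also that $\Hom_A(\calT_U,\calF_U)=0$ already rules out a brick that is both a quotient of $Y_i^+$ and a submodule of $X_j^-$ for all $i,j$, including $j=i$, contrary to your remark.
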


Therefore for each $F \in \Facet D(U)$,
we set $(i_F,\epsilon_F)$ as the unique pair $(i,\epsilon)$
such that $F \in \Facet_i^\epsilon D(U)$.
Then we define a nonzero module $L_F$ by 
\begin{align*}
L_F:=\begin{cases}
\rmw_\theta Y_{i_F}^+ & (\epsilon_F={+}) \\
\rmw_\theta X_{i_F}^- & (\epsilon_F={-})
\end{cases},
\end{align*} 
where $\theta$ is an arbitrary element of $F^\circ$;
see Definition \ref{Def_L_F} for detail.
The modules $L_F$ are bricks,
and $L_F$ give defining inequalities of $D(U)$ as follows.

\begin{Thm}[Theorem \ref{Thm_L_F_inn_vec} and Corollary \ref{Cor_D(U)_ineq}]
\label{facet inner intro}
Let $U=\bigoplus_{i=1}^m U_i \in \twopsilt A$.
Then each $F \in \Facet D(U)$
satisfies the following properties.
\begin{enumerate}
\item
The module $L_F$ is a simple object of $\calW_\theta$ 
for any $\theta \in F^\circ$.
\item
The element $\epsilon_F[L_F] \in K_0(\mod A)$ is 
an inner normal vector of $F$ such that
\begin{align*}
F=\{ \theta \in D(U) \mid \theta(L_F)=0\}.
\end{align*}
\end{enumerate}
Therefore we have
\begin{align*}
D(U)=\bigcap_{F \in \Facet D(U)}
\{ \theta \in K_0(\proj A)_\R \mid \epsilon_F \cdot\theta(L_F) \ge 0 \}.
\end{align*}
\end{Thm}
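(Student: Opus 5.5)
Fix $F \in \Facet D(U)$ with $(i,\epsilon)=(i_F,\epsilon_F)$. By symmetry (duality swapping $Y^+$ and $X^-$, torsion and torsion-free classes) I treat only $\epsilon_F={+}$, so $L_F=\rmw_\theta Y_i^+$. Here $\rmw_\theta M$ is the part of $M$ in $\calW_\theta$ obtained from the two canonical sequences of the torsion pairs. For $\theta \in F^\circ$ we have $Y_i^+ \in \ovcalT_\theta \setminus \calT_\theta$. Then the canonical sequence $0 \to \rmt_\theta Y_i^+ \to Y_i^+ \to \ovrmf_\theta Y_i^+ \to 0$ for $(\calT_\theta,\ovcalF_\theta)$ has nonzero right term lying in $\ovcalT_\theta\cap\ovcalF_\theta=\calW_\theta$. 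So $L_F$ should be this nonzero quotient of $Y_i^+$. The overall plan is:
\begin{enumerate2}
\item show that $L_F$ is independent of $\theta\in F^\circ$ (part of Definition \ref{Def_L_F});
\item show that $L_F$ is simple in $\calW_\theta$;
\item use the linear functional $\theta\mapsto\theta(L_F)$ to cut out $F$.
\end{enumerate2}

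For (1), recall that $Y_i^+$ is a brick with $\Fac$-type minimality properties, namely $Y_i^+=H^0(U_i)/(\text{the trace of the other summands and radical part})$ as in Proposition \ref{Prop_rad_soc_U}. Suppose $0 \ne L' \subsetneq L_F$ is a subobject in $\calW_\theta$. Then the preimage in $Y_i^+$ gives a proper quotient $Y_i^+ \to L_F/L'$ in $\calW_\theta$. This should contradict the facet condition by a dimension count. If $L_F$ were not simple, $\theta$ would also satisfy $\theta(L_F/L')=0$, an additional independent vanishing condition. I would then argue that the set $\{\eta\in D(U)\mid Y_i^+\notin\calT_\eta\}$ near $\theta$ lies in a union of subspaces $\{\eta(M)=0\}$, taken over the finitely many relevant quotients $M$ of $Y_i^+$ in $\calW$. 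Concretely, I would perturb $\theta$ within $F^\circ$ to a generic point $\theta'$ at which exactly one such hyperplane passes through. There $\calW_{\theta'}$ is locally determined by a single brick, and the simple top of $\rmw_{\theta'}Y_i^+$ equals all of it. Constancy of $\rmw_\theta Y_i^+$ on $F^\circ$ then transfers simplicity to every $\theta$. This local finiteness step is the main obstacle. To handle it I would use that $\calW_\theta$ is a wide subcategory whose simple objects are bricks, together with the finiteness of the set of dimension vectors of quotients of $Y_i^+$. These give a local finite hyperplane arrangement around $\theta$.

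For (2), first take $\theta\in F^\circ$. Since $L_F\in\calW_\theta$, we get $\theta(L_F)=0$. For $\eta\in D(U)$ we have $Y_i^+\in\ovcalT_\eta$, so its quotient $L_F$ satisfies $\eta(L_F)\ge 0$. Hence $[L_F]$ is a nonnegative functional on $D(U)$ vanishing on $F$. Because $F$ is a facet of full codimension one and $[L_F]\ne0$, we get $F\subseteq\{\eta\in D(U)\mid\eta(L_F)=0\}$. The right-hand side is a face of $D(U)$, since it is the intersection of $D(U)$ with a supporting hyperplane. It is proper because $C^\circ(U)\subset D^\circ(U)$ and for $\eta\in C^\circ(U)$ we have $\eta(L_F)>0$, as $L_F$ is a nonzero quotient of $Y_i^+\in\calT_U=\calT_\eta$. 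A proper face containing a facet equals it, so equality holds and $[L_F]$ is an inner normal vector.

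In the case $\epsilon_F={-}$, $L_F$ is a submodule of $X_i^-\in\ovcalF_\eta$, giving $\eta(L_F)\le0$, which is why the sign $\epsilon_F$ appears. Finally, $D(U)$ is a polyhedral cone whose interior is nonempty, since it contains the full-dimensional silting cones of completions of $U$. A polyhedral cone is the intersection of the half-spaces determined by its facets. This yields the displayed formula for $D(U)$.
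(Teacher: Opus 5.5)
Your argument for (2) and for the final inequality description is sound, and it is more elementary than the paper's. The paper obtains $F=\{\theta\in D(U)\mid\theta(L_F)=0\}$ from Proposition~\ref{Prop_face_dim} together with $\supp_F(L_F)=\{L_F\}$, so it needs (1) first. Your supporting-hyperplane argument does not need (1): you use $\epsilon_F\,\theta(L_F)\ge0$ on $D(U)$, strict inequality on $C^\circ(U)$, and the fact that a proper face containing a facet equals it. The one input you need is that $L_F=\rmw_\theta Y_i^+$ for every $\theta\in F^\circ$ (Lemma~\ref{Lem_face_M-TF}), which gives $\theta(L_F)=0$ on all of $F$.

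The genuine gap is in (1): a dimension count cannot detect non-simplicity.
\begin{itemize}
\item If $0\ne L'\subsetneq L_F$ in $\calW_\theta$, then $\theta(L_F/L')=0$ is \emph{not} an independent condition. Every composition factor of $L_F$ in $\calW_\theta$ vanishes on the hyperplane $\R F$, so $[L']$ and $[L_F/L']$ are positive multiples of $[L_F]$. This proportionality (Proposition~\ref{Prop_L_F_simple}~(1)) is all that counting dimensions can give.
\item Perturbing inside $F^\circ$ buys nothing. $F^\circ$ is a single $M$-TF class for $M=Y^+\oplus X^-$, so $\rmw_{\theta'}Y_i^+$ and its composition factors are the same at every $\theta'\in F^\circ$. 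Moreover, the only hyperplane $\{\eta(Q)=0\}$, with $Q$ a quotient of $Y_i^+$, through any point of $F^\circ$ is already $\R F$.
\item Your claim that at such a point $\calW_{\theta'}$ is ``determined by a single brick'' and that $\rmw_{\theta'}Y_i^+$ equals its simple top is exactly the statement to be proved. A wide subcategory $\calW_{\theta'}$ can contain non-simple objects whose factors have proportional classes, for instance a nonsplit self-extension of a brick $L$ with $\Ext^1_A(L,L)\ne0$, as in a homogeneous tube of the Kronecker algebra. Nothing in your argument rules out $L_F$ being such an object.
\end{itemize}

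What is missing is a functional that separates subobject from quotient along the ray $\R_{>0}[L_F]$. The paper uses $[U_i]$, exploiting the properties of $Y_i^+$ coming from Proposition~\ref{Prop_rad_soc_U}, which you mention but never use.
\begin{itemize}
\item Suppose $0\ne N\subsetneq L_F$ in $\calW_\theta$ with $L_F/N$ simple, and let $N'\subset Y_i^+$ be the preimage of $N$. Then $Y_i^+/N'\simeq L_F/N\ne0$, and $0\to\rmt_F Y_i^+\to N'\to N\to0$ is exact.
\item Since $Y_i^+\in\calT_{U_i}$ (Lemma~\ref{Lem_Y_W_almost}), we get $[U_i](L_F/N)>0$, hence $[U_i](N)>0$ by proportionality.
\item On the other hand, $\rmt_F Y_i^+\in\calW_U\subset\calW_{U_i}$, and the proper submodule $N'$ of $Y_i^+$ lies in $\ovcalF_U\subset\ovcalF_{U_i}$ (Lemma~\ref{Lem_proper_sub_Y}). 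Hence $[U_i](N)=[U_i](N')\le0$, a contradiction.
\end{itemize}
The case $\epsilon_F={-}$ is dual.
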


This labeling of the facets is consistent with the brick labeling
\cite{A-semi} of the \emph{exchange quiver} of 2-term silting complexes; 
see Theorems \ref{Thm_L_F_exch} and \ref{Thm_C(V/Ui)_Z_=}.

\subsection{Faces of $D(U)$ and $M$-TF equivalences on $K_0(\proj B)_\R$}

Next we consider arbitrary faces of $D(U)$.
Our main tools are 
the canonical maps $\lambda_U,\lambda'_U \colon D(U) \to D(U)$
satisfying the properties below.

\begin{Prop}[Propositions \ref{Prop_lambda_coeff} and \ref{Prop_affine}]
Let $U \in \twopsilt A$. 
Then there exist unique piecewise linear maps 
$\lambda_U,\lambda'_U \colon D(U) \to D(U)$ such that any $\theta \in D(U)$
satisfies $\lambda_U(\theta)+\lambda'_U(\theta)=\theta$ and 
\begin{align*}
\{\eta \in \R C(U) \mid \theta-\eta \in D(U)\}
=\lambda_U(\theta)+(-C(U)), \quad
(\theta+\R C(U)) \cap D(U)=\lambda'_U(\theta)+C(U).
\end{align*}
\end{Prop}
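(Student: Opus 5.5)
The plan is to construct $\lambda'_U$ first and then set $\lambda_U(\theta):=\theta-\lambda'_U(\theta)$. With this choice the identity $\lambda_U(\theta)+\lambda'_U(\theta)=\theta$ holds by definition. The first displayed equality follows formally from the second. Indeed, $\eta\in\R C(U)$ satisfies $\theta-\eta\in D(U)$ if and only if $\theta-\eta\in(\theta+\R C(U))\cap D(U)=\lambda'_U(\theta)+C(U)$. This is equivalent to $\eta\in\theta-\lambda'_U(\theta)-C(U)=\lambda_U(\theta)+(-C(U))$. Uniqueness of both maps is clear once the second equality holds, since a translate $x+C(U)$ of the pointed cone $C(U)$ determines its apex $x$. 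So everything reduces to showing that for each $\theta\in D(U)$ the slice $P_\theta:=(\theta+\R C(U))\cap D(U)$ is a translate of $C(U)$, and that its apex depends piecewise linearly on $\theta$.

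First I will record two elementary facts.

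(i) $D(U)$ is a convex cone closed under addition, and $C(U)\subset D(U)$. If $M\in\ovcalT_\theta\cap\ovcalT_\eta$, then every quotient $N$ of $M$ has $\theta(N)\ge0$ and $\eta(N)\ge0$, so $M\in\ovcalT_{\theta+\eta}$; the torsion-free side is dual. Moreover $C(U)\subset D(U)$ because $\ovcalT_U\supset\calT_U\ni H^0(U)$ and dually for $H^{-1}(\nu U)$.

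(ii) Since $U$ is presilting, the classes $[U_1],\dots,[U_m]$ are linearly independent. Hence I can write points of $\theta+\R C(U)$ as $\theta+\sum_i a_i[U_i]$ and identify $P_\theta$ with $S_\theta:=\{a\in\R^m\mid \theta+\sum_i a_i[U_i]\in D(U)\}$.

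By (i), $S_\theta$ is nonempty (it contains $0$), closed, convex and upward closed: $S_\theta+\R^m_{\ge0}\subset S_\theta$. Since $D(U)$ is a rational polyhedral cone (Definition-Proposition \ref{Def-Prop_D(U)}), $S_\theta$ is a polyhedron. It therefore suffices to show that $S_\theta$ has a least element for the componentwise order. Then $S_\theta=a^{\min}+\R^m_{\ge0}$ and $\lambda'_U(\theta):=\theta+\sum_i a^{\min}_i[U_i]$.

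To get a least element I will use Proposition \ref{DcU semibrick intro}. This splits membership in $D(U)$ into the conditions $Y_j^+\in\ovcalT$ and $X_j^-\in\ovcalF$ for each $j$. Every quotient $N$ of $Y_j^+$ lies in $\calT_U=\Fac H^0(U)$, so $\Hom_A(N,H^{-1}(\nu U))=0$. Hence $[U_i](N)=\dim\Hom(U_i,N)\ge0$ for all $i$. Dually, $[U_i](L)\le0$ for submodules $L$ of $X_j^-$.

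Thus each defining inequality $\theta(N)+\sum_i a_iu_i(N)\ge0$ is monotone in $a$. Boundedness below of $S_\theta$ in each coordinate $a_j$ comes from the brick $Y_j^+$ itself, using $[U_i](Y_j^+)=0$ for $i\neq j$ and $[U_j](Y_j^+)>0$, which I expect from the construction in Proposition \ref{Prop_rad_soc_U} (or from $X_j^-$ in the case $Y_j^+=0$).

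The main obstacle is closure of $S_\theta$ under componentwise minimum. Monotonicity alone does not give this when a single quotient $N$ has several nonzero $u_i(N)$. My first attempt will be to refine the inequalities. I will filter quotients of $Y_j^+$ by the semibrick $Y^+$ (using $\calT_U=\Filt\Fac Y^+$), and aim to show that the binding inequalities for $Y_j^+$ involve only the coordinate $a_j$. Dually, the binding inequalities for $X_j^-$ should involve only $a_j$ as well. Then $S_\theta$ is a product of half-lines $\prod_j[a^{\min}_j,\infty)$, and the least element exists.

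If that fails, I will fall back on Jasso's reduction (Proposition \ref{Prop_reduc}) to compare the conditions at $\theta+\sum a_i[U_i]$ with conditions in $\calW_U$, where the $[U_i]$ act trivially.

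Finally, piecewise linearity of $\lambda'_U$ follows from parametric linear programming. Each $a^{\min}_j$ is the minimum of $a_j$ over a polyhedron cut out by finitely many inequalities whose right-hand sides depend linearly on $\theta$. Therefore $a^{\min}_j$ is piecewise linear in $\theta$, and so are $\lambda'_U$ and $\lambda_U$.
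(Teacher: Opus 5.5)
Your reduction is sound. Because $[U_1],\ldots,[U_m]$ are linearly independent, $C(U)\subset D(U)$ and $D(U)+D(U)\subset D(U)$, the statement is equivalent to the upward-closed polyhedron $S_\theta$ having a componentwise least element. Uniqueness, the first displayed equality and piecewise linearity then follow as you describe. Your sign observations ($[U_i](N)\ge 0$ for quotients of $Y_j^+$, $\le 0$ for submodules of $X_j^-$) are correct, and so is the lower bound from $[U_i](Y_j^+)=\delta_{i,j}d_{U_j}$.

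\textbf{The gap.} The existence of the least element is the whole content of the proposition, and you do not prove it. You rightly note that a quotient $N$ of $Y_j^+$ can have several nonzero values $[U_i](N)$. For example, $N=\begin{smallmatrix}1\\2\\3\end{smallmatrix}$, a quotient of $Y_2^+$ in Example \ref{Ex_A4_D(U)}, has $[U_1](N)=[U_2](N)=1$. After that you only state an aim and a fallback. The filtration idea does not reach the problem, since quotients of $Y_j^+$ already lie in $\Fac Y_j^+$. What is needed is a redundancy statement about the whole system: inequalities involving two or more coordinates must be implied by single-coordinate ones. The fallback via Jasso's reduction is also not viable as stated. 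Proposition \ref{Prop_reduc_K_0} (3) is only available on $D^\circ(U)$, whereas the apex $\lambda'_U(\theta)$ lies in $L(U)$, outside every $D^\circ(U_i)$. That is exactly where the comparison with $\calW_U$ breaks down.

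\textbf{The missing idea.} It is the facet analysis of Section \ref{Sec_facet}. Since $D(U)$ is full-dimensional, it is the intersection of its facet half-spaces. A facet normal vanishes on $[U_{i'}]$ exactly when $[U_{i'}]\in F$. So it suffices that every facet $F$ contains $C(U/U_i)$ for some $i$: then that facet inequality involves only $a_i$, and $S_\theta$ is a product of half-lines. This is Proposition \ref{Prop_C_almost_F}, and the argument runs as follows.
\begin{enumerate}
\item Let $\theta\in\partial_i^{+}$. Then $\rmt_\theta Y_i^+$ is a submodule of $Y_i^+\in\calW_{U/U_i}$ (Lemma \ref{Lem_Y_W_almost}), and it also lies in $\calT_\theta\subset\ovcalT_U\subset\ovcalT_{U/U_i}$.
\item Hence the nonzero quotient $\rmw_\theta Y_i^+$ lies in $\calW_{U/U_i}\cap\calW_\theta$.
\item For every $\eta\in C(U/U_i)$ we have $\calW_{U/U_i}\subset\calW_\eta$, so $\rmw_\theta Y_i^+\in\calW_\theta\cap\calW_\eta\subset\calW_{\theta+\eta}$. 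This gives $\partial_i^{+}+C(U/U_i)\subset\partial_i^{+}$, and dually for $\partial_i^-$.
\item Each facet lies in some $\partial_i^\epsilon$, which is a union of faces (Lemma \ref{Lem_partial_face}). Maximality of facets then forces $C(U/U_i)\subset F$.
\end{enumerate}
The paper goes further, via Theorems \ref{Thm_partial_facet} and \ref{Thm_L_F_inn_vec}. It identifies the facet normal as a brick $L_F$ with $[U_{i'}](L_F)=\epsilon_F\delta_{i,i'}d_{U_i}$. This yields the explicit value $a_i=\min_{F\in\Facet_i D(U)}|\theta(L_F)|/d_{U_i}$, and integrality on $K_0(\proj A)$. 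Some argument of this kind must be supplied before your proposal proves the statement.
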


Moreover these maps enjoy the axiom of projections
$\lambda_U \circ \lambda_U=\lambda_U$, 
$\lambda_U \circ \lambda'_U=0$, 
$\lambda'_U \circ \lambda_U=0$ and 
$\lambda'_U \circ \lambda'_U=\lambda'_U$.
The image of $\lambda_U$ is $C(U)$, while
the image of $\lambda'_U$ is the \emph{link} $L(U)$ of $C(U)$ given by
\begin{align*}
L(U):=D(U) \setminus \left( \bigcup_{i=1}^m D^\circ(U_i) \right)
=\bigcap_{i=1}^m \partial_i.
\end{align*}
For example, $L(U)=\partial D(U)$ holds if $U$ is indecomposable,
and $L(U)=C(V_1) \cup C(V_2)$ holds if $m=|A|-1$,
where $U \oplus V_1$ and $U \oplus V_2$ are the two elements 
in $\twosilt_U A$.
We show that $D(U)$ is decomposed to $C(U)$ and $L(U)$ in the following sense.

\begin{Thm}[Theorem \ref{Thm_C(U)_L(U)}]\label{max F bijection intro}
Let $U \in \twopsilt A$.
Then we have a bijection
\begin{align*}
C(U) \times L(U) &\to D(U), \quad 
(\eta,\eta') \mapsto \eta+\eta'
\end{align*}
whose inverse is given by 
$\theta \mapsto (\lambda_U(\theta),\lambda'_U(\theta))$.
\end{Thm}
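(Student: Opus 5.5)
The proof follows directly from the preceding Proposition on $\lambda_U$ and $\lambda'_U$. I would show three things: the map $(\eta,\eta')\mapsto\eta+\eta'$ lands in $D(U)$, the map $\theta\mapsto(\lambda_U(\theta),\lambda'_U(\theta))$ lands in $C(U)\times L(U)$, and the two maps are mutually inverse.

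\textbf{Well-definedness of the sum map.} Let $\eta\in C(U)$ and $\eta'\in L(U)\subset D(U)$. Apply the second equality of the Proposition to $\theta=\eta'$:
\begin{align*}
(\eta'+\R C(U))\cap D(U)=\lambda'_U(\eta')+C(U).
\end{align*}
Since $\eta'\in L(U)$ is in the image of $\lambda'_U$, we can write $\eta'=\lambda'_U(\zeta)$ for some $\zeta$. By idempotence, $\lambda'_U(\eta')=\lambda'_U\lambda'_U(\zeta)=\lambda'_U(\zeta)=\eta'$. The right-hand side is therefore $\eta'+C(U)$, which contains $\eta+\eta'$. Hence $\eta+\eta'\in D(U)$.

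\textbf{Well-definedness of the inverse map.} The images of $\lambda_U$ and $\lambda'_U$ are $C(U)$ and $L(U)$ respectively, as stated after the Proposition. So $\theta\mapsto(\lambda_U(\theta),\lambda'_U(\theta))$ lands in $C(U)\times L(U)$. The identity $\lambda_U(\theta)+\lambda'_U(\theta)=\theta$ then shows that the composite $D(U)\to C(U)\times L(U)\to D(U)$ is the identity.

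\textbf{The other composite.} Given $(\eta,\eta')\in C(U)\times L(U)$ with $\theta=\eta+\eta'$, I must show $\lambda'_U(\theta)=\eta'$ and $\lambda_U(\theta)=\eta$; the second follows from the first by $\lambda_U+\lambda'_U=\mathrm{id}$. Since $\theta-\eta'=\eta\in\R C(U)$, we have $\theta+\R C(U)=\eta'+\R C(U)$. Applying the Proposition to both $\theta$ and $\eta'$ gives
\begin{align*}
\lambda'_U(\theta)+C(U)=(\theta+\R C(U))\cap D(U)=\eta'+C(U).
\end{align*}
So it suffices to know that a set $x+C(U)$ determines its base point $x$. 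This holds because $C(U)$ is a pointed cone: the $[U_i]$ are linearly independent, so $C(U)\cap(-C(U))=0$ and $x$ is the unique element of $x+C(U)$ from which every other element differs by an element of $C(U)$. Hence $\lambda'_U(\theta)=\eta'$.

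\textbf{Main point.} The only non-formal input is the idempotence $\lambda'_U(\eta')=\eta'$ on $L(U)$, together with the fact that the image of $\lambda'_U$ is exactly $L(U)$. If the image statement were not available, I would instead verify directly that for $\eta'\in L(U)=\bigcap_i\partial_i$ one cannot subtract any positive multiple of $[U_i]$ and stay in $D(U)$. Since $\eta'\in\partial_i$, either $Y_i^+\notin\calT_{\eta'}$ or $X_i^-\notin\calF_{\eta'}$. Moving by $-[U_i]$ lowers $\theta(Y_i^+)$ and raises $\theta(X_i^-)$, which pushes outside $D(U)$ by the description in Proposition \ref{DcU semibrick intro}. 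This shows $\lambda_U(\eta')=0$, and hence $\lambda'_U(\eta')=\eta'$.
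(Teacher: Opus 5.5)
Your proof is correct and follows the paper's argument: the inverse map is well defined because the images of $\lambda_U,\lambda'_U$ are $C(U),L(U)$ (Proposition \ref{Prop_lambda_image}), one composite is the identity because $\lambda_U+\lambda'_U=\mathrm{id}$, and the other is the identity because $\lambda'_U$ is invariant under translation along $\R C(U)$ and restricts to the identity on $L(U)$. The paper packages the translation invariance as Lemma \ref{Lem_axiom_proj} (1), proved by the same comparison of affine sections via Proposition \ref{Prop_affine} plus pointedness of $C(U)$; the remaining differences are cosmetic: the paper gets $C(U)+L(U)\subset D(U)$ directly from convexity, and your optional fallback would need $[U_i](M)>0$ for the nonzero factor module $M$ of $Y_i^+$ with $\eta'(M)=0$ (from $Y_i^+\in\calT_{U_i}$, Lemma \ref{Lem_Y_W_almost}), not merely that $\theta(Y_i^+)$ decreases.
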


By using this, we show in Theorem \ref{Thm_pi_L(U)} that 
the surjective linear map $\pi \colon K_0(\proj A)_\R \to K_0(\proj B)_\R$ 
is restricted to a bijection
\begin{align*}
\pi|_{L(U)} \colon L(U) \to K_0(\proj B)_\R.
\end{align*}
Its inverse map $\rho \colon K_0(\proj B)_\R \to L(U)$ 
is nothing but the piecewise linear map $\rho$
in Theorem \ref{Thm_TF_2^m_B_intro}.

Based on these results, 
we construct finite complete generalized fans in $K_0(\proj B)_\R$
from certain subsets of the set $\Face D(U)$ of faces.
For each $I \subset \{1,\ldots,m\}$, we define
\begin{align*}
\Face^\circ_I D(U):=\{ F \in \Face D(U) \mid
\text{$F \subset \partial_i$ if and only if $i \in I$}\}.
\end{align*}
Sending these faces to $K_0(\proj B)_\R$ by $\pi$,
we obtain a finite complete generalized fan.

\begin{Thm}[Theorem \ref{Thm_Sigma_I}]
Let $U=\bigoplus_{i=1}^m U_i \in \twopsilt A$.  
For each subset $I \subset \{1,\ldots,m\}$, 
the set $\Sigma_I$ is a finite complete generalized fan in $K_0(\proj B)_\R$,
where
\begin{align*}
\Sigma_I:=\{\pi(F) \mid F \in \Face^\circ_I D(U)\}.
\end{align*}
\end{Thm}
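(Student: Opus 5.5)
The plan is to transport the face structure of $D(U)$ to $K_0(\proj B)_\R$ along the bijection $\pi|_{L(U)}\colon L(U)\to K_0(\proj B)_\R$ of Theorem \ref{Thm_pi_L(U)}, with inverse $\rho$. There are three things to check for fixed $I$:
\begin{itemize}
\item each $\pi(F)$ with $F\in\Face^\circ_I D(U)$ is a (rational polyhedral) cone;
\item any two such images meet along a common face;
\item the images cover $K_0(\proj B)_\R$.
\end{itemize}
"Generalized" allows cones that are not pointed. We only need to control the lineality space, which contains the images of the directions in $\R C(U)$ killed by $\pi$, and more generally the images of those faces that are not pointed.

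\textbf{Structure of the relevant faces.} I would first show that every $F\in\Face^\circ_I D(U)$ has the form
\begin{align*}
F=C(U_{I^c})+(F\cap L(U)),
\end{align*}
where $I^c=\{1,\ldots,m\}\setminus I$.

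For this I use Theorem \ref{Thm_C(U)_L(U)}. Write $\theta=\lambda_U(\theta)+\lambda'_U(\theta)$. Since $F\not\subset \partial_i$ for $i\notin I$, the relative interior of $F$ meets $D^\circ(U_i)$, which means the coefficient of $[U_i]$ in $\lambda_U(\theta)$ is positive. For $i\in I$, $F\subset\partial_i$ forces that coefficient to vanish on $F$; here I use Proposition \ref{Prop_lambda_coeff}, reading $\lambda_U$ coordinatewise. Faces of a cone are stable under adding elements of faces of $C(U)$ that lie in $F$. So $F=C(U_{I^c})+\lambda'_U(F)$ with $\lambda'_U(F)\subset L(U)$.

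It follows that $\pi(F)=\pi(F\cap L(U))$. This set is the image under the linear map $\pi$ of a polyhedral cone, namely the projection of $F$ itself, hence it is a polyhedral cone.

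\textbf{Covering.} Take $\xi\in K_0(\proj B)_\R$ and set $\theta'=\rho(\xi)\in L(U)$. Then $\theta=\theta'+\sum_{i\notin I}[U_i]$ lies in $D(U)$ by Theorem \ref{Thm_C(U)_L(U)}. Let $F$ be the smallest face of $D(U)$ containing $\theta$.

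Because $\lambda_U(\theta)=\sum_{i\notin I}[U_i]$, the $i$-th coefficient is positive exactly for $i\notin I$. The characterization $\theta\in\partial_i$ if and only if that coefficient is $0$ then gives $F\in\Face^\circ_I D(U)$, since $\theta$ lies in the relative interior of $F$. Therefore $\xi=\pi(\theta)\in\pi(F)$.

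\textbf{Intersection property (main obstacle).} For $F,G\in\Face^\circ_I D(U)$, I need $\pi(F)\cap\pi(G)$ to be a common face of both cones.

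Injectivity of $\pi$ on $L(U)$ gives
\begin{align*}
\pi(F)\cap\pi(G)=\pi(F\cap G\cap L(U)),
\end{align*}
but $F\cap G$ may fail to lie in $\Face^\circ_I$. The remedy is to apply the structural decomposition above to $H=F\cap G$, a face of $D(U)$ lying in $\Face^\circ_J D(U)$ for some $J\supseteq I$. Then set $H':=H+C(U_{I^c})$, which I claim is again a face of $D(U)$ in $\Face^\circ_I D(U)$.

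The claim needs an argument that adding $C(U_{I^c})$ to a face contained in $F$ stays inside a face. Since $C(U_{I^c})\subset F$, I would realize $H'$ as $F\cap\{\text{supporting hyperplanes of }H\text{ that vanish on }U_{I^c}\}$, using the inequalities $\epsilon_{F'}\theta(L_{F'})\ge 0$ from Theorem \ref{Thm_L_F_inn_vec}. Granting that, $\pi(H')=\pi(F)\cap\pi(G)$, and $\pi(H')$ is a face of $\pi(F)$.

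This last step is delicate because $\pi$ need not map faces to faces in general. To get it, I would use that the linear functional defining $H'$ in $F$ vanishes on $\R C(U_{I^c})$, which is exactly the kernel of $\pi$ restricted to the span of $F$, so the functional descends to $K_0(\proj B)_\R$. This reasoning would be checked carefully via the $\lambda$-maps; finiteness of $\Sigma_I$ is immediate, since $D(U)$ has finitely many faces.
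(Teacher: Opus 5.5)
Your route differs from the paper's and can be completed, but as written it skips one fan axiom and leaves its key step unproved.

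\emph{Gaps.} First, Definition \ref{Def_fan} also requires (a): every face of $\pi(F)$ lies in $\Sigma_I$. You never check this, though it is easy. For $\tau'\in\Face\pi(F)$, the set $\tau:=\pi^{-1}(\tau')\cap F$ is a face of $F$: if $v,w\in F$ and $v+w\in\tau$, then $\pi(v),\pi(w)\in\tau'$ because $\tau'$ is a face. It contains $C(U_{I^c})$, since $C(U_{I^c})\subset F$ and $0\in\tau'$. Moreover $\tau\cap C(U)\subset F\cap C(U)=C(U_{I^c})$. Hence $\tau\in\Face^\circ_I D(U)$ by Proposition \ref{Prop_Face_I_iff}, and $\pi(\tau)=\tau'$.

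Second, your worry about $F\cap G$ is unfounded. Both faces contain $C(U_{I^c})$ and lie in $\partial_I$, so $(F\cap G)\cap C(U)=C(U_{I^c})$ and $F\cap G\in\Face^\circ_I D(U)$ already. Thus $H'=H$, and the ``granting that'' step is vacuous.

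Third, the descent of the supporting functional rests entirely on $\Ker\pi\cap\R F=\R C(U_{I^c})$, which you only assert. It follows from tools you already have. Let $\theta_1,\theta_2\in F$ with $\theta_1-\theta_2\in\Ker\pi$. Then $\pi(\lambda'_U(\theta_1))=\pi(\lambda'_U(\theta_2))$, so injectivity of $\pi|_{L(U)}$ (Theorem \ref{Thm_pi_L(U)}) gives $\lambda'_U(\theta_1)=\lambda'_U(\theta_2)$. Hence $\theta_1-\theta_2=\lambda_U(\theta_1)-\lambda_U(\theta_2)\in\R C(U_{I^c})$. Since $\R F=F+(-F)$, this is the identity; the paper proves the same fact via Proposition \ref{Prop_diff_partial} inside Proposition \ref{Prop_Sigma_I_dim}. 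With it, take $\ell\ge 0$ on $F$ with $F\cap\Ker\ell=H$. Then $\ell$ vanishes on $\Ker\pi\cap\R F$, descends to $\bar\ell$ with $\pi(F)\cap\Ker\bar\ell=\pi(H)$, and your intersection argument is correct.

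\emph{Comparison.} The paper never uses supporting hyperplanes. It shows that $\Face_I D(U)$ is a generalized fan with support $\partial_I$. It then characterizes $\Face^\circ_I D(U)$ as the fiber-saturated faces, $\pi^{-1}(\pi(F))\cap\partial_I=F$ (Lemma \ref{Lem_fiber}), using the fiber formula $\pi^{-1}(\pi(\theta))\cap\partial_I=C(U/U_I)+\lambda'_U(\theta)$. Finally it applies the general Lemma \ref{Lem_induce_fan}, which verifies both axioms elementwise from the definition of a face. Completeness comes from $\max\Face_I D(U)\subset\Face^\circ_I D(U)$ (Proposition \ref{Prop_max_Face_circ}) and $\pi(\partial_I)=K_0(\proj B)_\R$.

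Saturation is a global form of your local kernel identity, and it also yields the inverse bijection $\sigma\mapsto\pi^{-1}(\sigma)\cap\partial_I$. Your approach needs only the local statement. Your completeness argument, taking the carrier face of $\rho(\xi)+\sum_{i\notin I}[U_i]$, is more direct than the paper's.
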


To show Theorem \ref{Thm_TF_2^m_B_intro},
it is crucial to show that $\Sigma_I$ is coarser
than the TF equivalence on $K_0(\proj B)_\R$.
For each module $M \in \mod B$, 
we have the \emph{$M$-TF equivalence} \cite{AsI2} on $K_0(\proj B)_\R$
as a coarsening of the TF equivalence,
and this gives a finite complete generalized fan $\Sigma(M)$ 
in $K_0(\proj B)_\R$ 
consisting of the closures of $M$-TF equivalence classes;
see Definition \ref{Def_M-TF} and Proposition \ref{Prop_M-TF_fan}.
Thus it suffices to show the existence of $M_I \in \mod B$
such that $\Sigma_I=\Sigma(M_I)$.

\begin{Thm}[Theorem \ref{Thm_Sigma(M_I)}]\label{Sigma coincide intro}
Let $U=\bigoplus_{i=1}^m U_i \in \twopsilt A$.
Then there exist $M_1,\ldots,M_m \in \mod B$ such that,
for any subset $I \subset \{1,\ldots,m\}$,
we have
\begin{align*}
\Sigma_I=\Sigma(M_I), \ 
\text{where} \ 
M_I:=\bigoplus_{i \in I}M_i.
\end{align*}
Moreover there exist mutually inverse bijections
\begin{align*}
\Face^\circ_I D(U) &\to \Sigma(M_I), &
F &\mapsto \pi(F);\\
\Sigma(M_I) &\to \Face^\circ_I D(U), &
\sigma &\mapsto \pi^{-1}(\sigma) \cap \partial_I=C(U/U_I)+\rho(\sigma),
\end{align*}
which preserve inclusions and intersections.
\end{Thm}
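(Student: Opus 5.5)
The plan is to build $M_i$ from the facet data of $D(U)$ that lies over $\partial_i$, and then to compare the two fans $\Sigma_I$ and $\Sigma(M_I)$ through their defining inequalities.

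\emph{Construction of $M_i$.} Each $Y_i^+$ and $X_i^-$ lies in $\ovcalT_U$ or in $\ovcalF_U$. I would push them into $\calW_U$ with the torsion-pair functors relative to $(\calT_U,\ovcalF_U)$ and $(\ovcalT_U,\calF_U)$. Concretely, take the torsion-free part of $Y_i^+$ with respect to $\calT_{U/U_i}$, and dually for $X_i^-$. Via Jasso's equivalence $\calW_U\simeq\mod B$ (Proposition \ref{Prop_reduc}), this yields modules $M_i^+,M_i^-\in\mod B$, and I set $M_i:=M_i^+\oplus M_i^-$. The justification comes from the explicit maps $\lambda'_U$ and $\rho$. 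For $\theta\in L(U)$ the condition $\theta\in\partial_i$ says that $Y_i^+\notin\calT_\theta$ or $X_i^-\notin\calF_\theta$. Using Theorem \ref{Thm_pi_L(U)} and the compatibility of semistable torsion pairs under reduction (for $\theta\in D(U)$, $\calT_\theta\cap\calW_U$ corresponds to $\calT_{\pi(\theta)}$), I would translate this condition on $\pi(\theta)$ into the statement that $M_i^+\notin\calT^B_{\pi\theta}$ or $M_i^-\notin\calF^B_{\pi\theta}$.

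\emph{Key steps.}
\begin{enumerate}
\item Show that $\theta\in D(U)$ lies on $\partial_i$ exactly when $\rho\pi(\theta)$ does, i.e.\ membership in $\partial_i$ depends only on $\pi(\theta)$. This uses Theorem \ref{Thm_C(U)_L(U)}: moving along $C(U)$ only changes coordinates in directions $[U_j]$. Such a move does not affect the $\partial_i$ conditions once we restrict to the appropriate face $C(U/U_I)+\rho(\sigma)$.
\item Identify the $M_I$-TF classes. Two points of $K_0(\proj B)_\R$ are $M_I$-TF equivalent iff for every $i\in I$ the modules $M_i$ have the same torsion/torsion-free decomposition data. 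Using the wall description of $\Sigma(M)$ in Proposition \ref{Prop_M-TF_fan}, together with the facet description of Theorem \ref{facet inner intro} (the bricks $L_F$ for $F\in\Facet_i D(U)$), show that the walls of $\Sigma(M_I)$ are exactly $\pi$ of the facets of $D(U)$ in $\bigcup_{i\in I}\Facet_i D(U)$, intersected with $L(U)$.
\item Since $\pi|_{L(U)}$ is a bijection and $\rho$ is piecewise linear on TF classes, conclude that for $F\in\Face^\circ_I D(U)$ the cone $\pi(F)$ is the closure of an $M_I$-TF class. Conversely, $\pi^{-1}(\sigma)\cap\partial_I$ equals $\lambda_U$-part $C(U/U_I)$ plus the link part $\rho(\sigma)$. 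Here $C(U/U_I)$ appears because directions $[U_j]$ with $j\in I$ would leave $\partial_j$.
\item Check that these maps are mutually inverse and preserve inclusions and intersections. This is formal once the bijection on the level of relative interiors is known, because both sides are fans and the maps are given by $\pi$ and by a linear-on-cones $\rho$.
\end{enumerate}

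\emph{Main obstacle.} The hardest step is step 2: proving that the $M_I$-TF equivalence is neither finer nor coarser than the equivalence on $L(U)$ given by the collection $\{\partial_i\}_{i\in I}$ together with the faces. I would first prove it for $|I|=1$ and then use that $\Sigma(M\oplus N)$ is the common refinement of $\Sigma(M)$ and $\Sigma(N)$, which should follow from the definition of $M$-TF equivalence. Correspondingly, $\Face^\circ_I D(U)$ should be the common refinement of the $\Face^\circ_{\{i\}}$ restricted to $L(U)$. For $|I|=1$ I would check that $\theta\in F^\circ$ determines $\rmw_\theta Y_i^+$ and $\rmw_\theta X_i^-$, hence the labels $L_F$, and that these correspond under reduction to the $\pi(\theta)$-semistable pieces of $M_i$. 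This is the delicate comparison of Harder–Narasimhan-type filtrations in $\mod A$ and $\mod B$.
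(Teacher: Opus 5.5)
There is a genuine gap at the very first step: your $M_i$ is not well defined, and the natural repairs produce the wrong fan.

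\textbf{The modules you start from are not in $\calW_U$.} By Lemma \ref{Lem_Y_W_almost}, $Y_i^+\in\calT_{U_i}\cap\calW_{U/U_i}$. So its torsion-free part with respect to $\calT_{U/U_i}$ is $Y_i^+$ itself. But $Y_i^+\notin\calW_U$ when $Y_i^+\neq0$, because $\calW_U\subset\ovcalF_U\subset\ovcalF_{U_i}$ and $\calT_{U_i}\cap\ovcalF_{U_i}=0$; dually $X_i^-\notin\calW_U$. Hence $\Phi$ cannot be applied. Indeed $\Hom_A(H^0(S),Y_i^+)$ is not even a $B$-module, since $\Hom_A(H^0(U),Y_i^+)\neq0$.

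\textbf{The other truncations give zero.} Truncating with respect to $(\calT_U,\ovcalF_U)$ or $(\ovcalT_U,\calF_U)$ kills everything, because $Y_i^+\in\calT_U$ and $X_i^-\in\calF_U$ have zero $\calW_U$-part. Then $M_i=0$ and $\Sigma(M_I)=\{K_0(\proj B)_\R\}$, which contradicts $\Sigma_{\{1\}}$ in Example \ref{Ex_A4_Sigma_I}.

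\textbf{Why this cannot work.} By Theorem \ref{Thm_L_F_inn_vec} (3), $[U_i](L_F)=\epsilon_F d_{U_i}\neq0$. So the facet inequalities $\epsilon_F\theta(L_F)\ge0$ do not descend along $\pi$. They must be corrected by a class that pairs to $d_{U_i}$ with $[U_i]$ and to $0$ with every other $[U_j]$; that class is $[X_i]$ (Proposition \ref{Prop_dual_basis}).

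\textbf{The missing idea.} The paper takes $M_i:=\Phi(W_i)$, with $W_i\in\calW_U$ from the triangle $X_i[-1]\to W_i\to Y_i\to X_i$ of Notation \ref{Nota_X_Y}. This $W_i$ involves $X_i^+$ or $Y_i^-$. When $Y_i\in\mod A$ and $X_i\in(\mod A)[1]$, it is a non-split extension of $Y_i^+$ by $X_i^-$, so no direct sum $M_i^+\oplus M_i^-$ can stand in for it. Lemma \ref{Lem_W_i-TF} then shows that for $\eta\in F^\circ$ with $F\in\Facet_i D(U)$:
\begin{enumerate}
\item $\rmw_\eta W_i=0$;
\item $[\rmf_F W_i]=\epsilon_F[L_F]-[X_i]$;
\item the non-splitness in Lemma \ref{Lem_W_i-TF} (2) is exactly what forces $\rmf_F W_i\in\calF_\eta$.
\end{enumerate}

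\textbf{Step 1 is false as stated.} $\rho\pi(\theta)=\lambda'_U(\theta)$ lies in $L(U)=\bigcap_j\partial_j$ for every $\theta\in D(U)$. Yet $[U]\in D^\circ(U)$ lies in no $\partial_i$. Membership in $\partial_I$ is governed by $\lambda_U(\theta)$ (Proposition \ref{Prop_partial_lambda}), not by $\pi(\theta)$.

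\textbf{Step 2 confuses walls with chambers.} For the same reason, the condition ``$\theta\in\partial_i$'' is vacuous on $L(U)$. What separates the maximal cones of $\Sigma_{\{i\}}$ is which facet in $\Facet_i D(U)$ contains $\rho(\xi)$. These cones $\pi(F)$ are full-dimensional, not walls.

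\textbf{The comparison you call the main obstacle is unnecessary.} Once $M_i=\Phi(W_i)$ is available, no two-sided ``neither finer nor coarser'' argument is needed:
\begin{enumerate}
\item $\Sigma_{\{i\}}$ and $\Sigma(M_i)$ are both finite complete generalized fans (Theorem \ref{Thm_Sigma_I}, Proposition \ref{Prop_M-TF_fan}).
\item Each $\pi(F^\circ)$ lies in a single full-dimensional $M_i$-TF class $E_F$ (Lemma \ref{Lem_W_i-TF} (5)).
\item Distinct facets give distinct $[\rmf_F W_i]$, hence distinct $E_F$.
\item Lemma \ref{Lem_fan_inj} then forces $\Sigma_{\{i\}}=\Sigma(M_i)$.
\end{enumerate}

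Your passage from $\#I=1$ to general $I$ by common refinement (Remark \ref{Rem_M-TF_oplus}, Proposition \ref{Prop_Sigma_I_cap}) and the formal bijections match the paper.
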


The modules $M_i$ are constructed
from the 2-term simple-minded collections $X,Y \in \twosmc A$
corresponding the maximal and minimal completions 
$S,T \in \twosilt A$ of $U$.
The indecomposable direct summands $X_i,Y_i$ corresponding to $U_i$
have the triangle $X_i[-1] \to W_i \to Y_i \to X_i$ with $W_i \in \calW_U$ 
as in Notation \ref{Nota_X_Y} and Proposition \ref{Prop_sim_W_U}.
Then we define $M_i:=\Phi(W_i)$ 
by the equivalence $\Phi \colon \calW_U \to \mod B$
in the $\tau$-tilting reduction.
We remark that $Y_i^+,X_i^-$ in Proposition \ref{DcU semibrick intro}
coincide with $H^0(Y_i),H^{-1}(X_i)$ here, respectively.

\subsection{The organization}

In Section \ref{Sec_preliminary},
we recall basic terminology and results 
on torsion pairs, 2-term silting complexes,
2-term simple-minded collections and $\tau$-tilting reduction.
Section \ref{Sec_pre_Grothendieck} is devoted to 
preparation of notions on the real Grothendieck group $K_0(\proj A)_\R$,
including semistable torsion pairs, the TF equivalence,
$M$-TF equivalences, the $g$-fan and interval neighborhoods.
In Section \ref{Sec_facet},
we mainly deal with properties on facets of $D(U)$
such as the purities of $\partial_i^\epsilon$
and the brick labeling $L_F$ of facets.
Then all faces are studied in Section \ref{Sec_face},
where we obtain a finite complete generalized fan
$\Sigma_I=\Sigma(M_I)$ in $K_0(\proj B)_\R$
for each $I \subset \{1,\ldots,m\}$.
In Section \ref{Sec_main_result},
we conclude our proof of the main result Theorem \ref{Thm_TF_2^m_B_intro}.

\subsection*{Acknowledgment}
The author thanks Osamu Iyama for giving useful advices.
This work was supported by JSPS KAKENHI Grant Numbers 
JP19K14500, JP20J00088 and JP23K12957. 

\subsection*{Conventions}\label{Subsec_Nota}

In this paper, $K$ is an arbitrary field,
and $A$ is a finite dimensional algebra over $K$.

We write $\proj A$ for the category of finitely generated projective 
right $A$-modules, and
$\mod A$ for the category of finitely generated right $A$-modules.
Moreover $\sfK^\rmb(\proj A)$ denotes
the homotopy category of bounded complexes in $\proj A$,
and $\sfD(A):=\sfD^\rmb(\mod A)$ denotes
the derived category of bounded complexes in $\mod A$.
All subcategories are assumed to be full subcategories
and closed under isomorphism classes.

Let $\calD$ be $\mod A$, $\sfD(A)$ or $\sfK^\rmb(\proj A)$.
Then $\calD$ is Krull-Schmidt.
Thus each $M \in \calD$ is uniquely written as
$M \simeq \bigoplus_{i=1}^m M_i^{\oplus s_i}$
with each $M_i$ indecomposable, $s_i \in \Z_{\ge 1}$ 
and $M_i \not \simeq M_j$ if $i \ne j$.
In this case, we set $|M|:=m$,
so $M$ is the number of isoclasses of
indecomposable direct summands of $M$.
Moreover $M$ is said to be \emph{basic} if $s_i=1$ for all $i$.

Unless otherwise stated, we set $n:=|A|$,
which is called the \emph{rank} of the algebra $A$.
We write $P(1),\ldots,P(n)$ 
for the pairwise nonisomorphic indecomposable projective $A$-modules.
The symbol $L(i)$ denotes the simple top of $P(i)$.
Thus $L(1),\ldots,L(n)$ are the pairwise nonisomorphic simple $A$-modules.

Let $\calD$ as above, 
and $\calC \subset \calD$ be a subcategory.
For each $X \in \calD$, 
a morphism $f \colon X \to C$ with $C \in \calC$
is called a \emph{left $\calC$-approximation}
if any $g \colon X \to C'$ with $C' \in \calC$ 
admits some $h \colon C \to C'$ such that $g=hf$.
A left $\calC$-approximation $f \colon X \to C$ is called
a \emph{minimal} left $\calC$-approximation if
any endomorphism $h \colon C \to C$ satisfying $f=hf$ is an automorphism.
If any $X \in \calD$ has a left $\calC$-approximation,
then $\calC$ is said to be \emph{covariantly finite} in $\calD$.
Dually \emph{(minimal) right $\calC$-approximations}
and \emph{contravariantly finite} subcategories of $\calD$ are defined,
and $\calC$ is said to be \emph{functorially finite} in $\calD$
if $\calC$ is both covariantly finite and contravariantly finite in $\calD$.

\section{Preliminary on silting theory}
\label{Sec_preliminary}

\subsection{Torsion pairs}

In this subsection, we recall the definition of torsion pairs.
For a subcategory $\calC \subset \mod A$,
we define subcategories
$\calC^\perp:=\{ X \in \mod A \mid \Hom_A(\calC,X)=0\}$ and 
${^\perp \calC}:=\{ X \in \mod A \mid \Hom_A(X,\calC)=0\}$.

Let $\calT,\calF \subset \mod A$ be subcategories.
Then $(\calT,\calF)$ is called a \emph{torsion pair} in $\mod A$
if $\calF=\calT^\perp$ and $\calT={^\perp \calF}$.
In this case, each $M \in \mod A$ has a short exact sequence
\begin{align*}
0 \to M_1 \to M \to M_2 \to 0 \quad (M_1 \in \calT, \ M_2 \in \calF),
\end{align*}
which is unique up to isomorphisms.
This exact sequence is called a \emph{canonical sequence} of $M$
with respect to $(\calT,\calF)$.

A full subcategory $\calT$ is called a \emph{torsion class} in $\mod A$
if there exists some $\calF$ such that
$(\calT,\calF)$ is a torsion pair in $\mod A$.
It is well-known that $\calT$ is a torsion class in $\mod A$ if and only if 
$\calT$ is closed under taking factor modules and extensions in $\mod A$.
Dually, $\calF$ is called a \emph{torsion-free class} in $\mod A$ 
if there exists some $\calT$ such that
$(\calT,\calF)$ is a torsion pair in $\mod A$;
or equivalently, $\calF$ is 
closed under taking submodules and extensions in $\mod A$.
We write $\tors A$ (resp.~$\torf A$) 
for the set of torsion (resp.~torsion-free) classes in $\mod A$.

\subsection{2-term presilting complexes}

Next we recall 2-term (pre)silting complexes \cite[5.1]{KV}.

We say that $U$ is \emph{presilting} if
$\Hom_{\sfK^\rmb(\proj A)}(U,U[\ell])=0$ for all $\ell \in \Z_{\ge 1}$,
and that $U$ is \emph{silting} if
$U$ is presilting and the smallest thick subcategory $\thick U$
is $\sfK^\rmb(\proj A)$.
Here, a \emph{thick} subcategory means 
a triangulated subcategory which is closed under taking direct summands.

Moreover $U$ is said to be \emph{2-term} if
$U$ is isomorphic to some complex $U^{-1} \to U^0$
whose terms except the $-1$st and the $0$th ones are zero.
We write $\twopsilt A$ (resp.~$\twosilt A$)
for the set of isoclasses of
basic 2-term presilting (resp.~silting) complexes in $\sfK^\rmb(\proj A)$.
Unless otherwise stated, 
we assume that each $U_i$ is indecomposable
if we write $U=\bigoplus_{i=1}^m U_i \in \twopsilt A$.

The following property is fundamental.
Here, $\add X$ means the additive closure of each $X$ in $\sfK^\rmb(\proj A)$.

\begin{Prop}\label{Prop_U_S}\cite[Proposition 2.16]{Ai}
For each $U \in \twopsilt A$,
there exists $S \in \twosilt A$ such that $U \in \add S$.
\end{Prop}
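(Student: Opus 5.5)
The statement is Proposition \ref{Prop_U_S}: every $U \in \twopsilt A$ is a direct summand of some $S \in \twosilt A$. I would prove it by the Bongartz completion, done inside $\sfK^\rmb(\proj A)$.

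\emph{Construction.} Choose a minimal right $(\add U)$-approximation $f \colon U' \to A[1]$; this exists because $\add U$ is functorially finite. Complete it to a triangle
\begin{align*}
A \to V \to U' \xrightarrow{f} A[1].
\end{align*}
Set $S := U \oplus V$, replaced by its basic part. Since $A$ and $U'$ are 2-term, the cone description shows that $V$ is 2-term as well.

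\emph{Presilting.} Because $S$ is 2-term, it suffices to show $\Hom(S,S[1])=0$, and this splits into four pieces.
\begin{itemize}
\item $\Hom(U,U[1])=0$ by hypothesis.
\item $\Hom(U,V[1])=0$. Apply $\Hom(U,-)$ to the triangle. The term $\Hom(U,A[1]) \to \Hom(U,V[1])$ is preceded by $f_* \colon \Hom(U,U') \to \Hom(U,A[1])$, which is surjective because $f$ is an approximation. The term $\Hom(U,V[1]) \to \Hom(U,U'[1])$ lands in $\Hom(U,U'[1])=0$. Hence $\Hom(U,V[1])=0$.
\item $\Hom(V,U[1])=0$. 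Apply $\Hom(-,U[1])$. We get $\Hom(U',U[1])=0$ on one side and $\Hom(A,U[1])=H^1(U)=0$ on the other, since $U$ is 2-term.
\item $\Hom(V,V[1])=0$. Apply $\Hom(-,V[1])$. This sandwiches the group between $\Hom(U',V[1])$, which vanishes by the previous item applied to $U' \in \add U$, and $\Hom(A,V[1])=0$, since $V$ is 2-term.
\end{itemize}

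\emph{Silting.} The triangle gives $A \in \thick(U \oplus V)$, so $\thick S = \sfK^\rmb(\proj A)$. Therefore $S \in \twosilt A$ and $U \in \add S$.

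\emph{Main obstacle.} The delicate step is the surjectivity argument for $\Hom(U,V[1])=0$: one has to orient the long exact sequence correctly and use that $f$ is an approximation exactly at the $\Hom(U,A[1])$ term. Alternatively, one can bypass the construction entirely, since the result is standard: cite Aihara's argument, or pass through the $\tau$-tilting correspondence of \cite{AIR} and use the Bongartz completion of $\tau$-rigid pairs there.
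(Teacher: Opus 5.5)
Your proof is correct: the four vanishing checks for $\Hom(S,S[1])$ and the thick-closure step all go through as written, and this is exactly the Bongartz-completion argument of \cite[Proposition 2.16]{Ai}, which the paper only cites and does not reprove. The basic part of your $U \oplus V$ is in fact the maximal completion $S$ of $U$ used from Subsection \ref{Subsec_max_min} onward: applying $\nu$ to your triangle embeds $H^{-1}(\nu V)$ into $H^{-1}(\nu U')$, which gives $\calF_S=\calF_U$.
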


Moreover, $U \in \twosilt A$ if and only if 
$U \in \twopsilt A$ and $|U|=|A|$
\cite[Proposition 3.3]{AIR}.

We next explain that
2-term presilting complexes give functorially finite torsion pairs.
Here, a torsion pair $(\calT,\calF)$ is said to be \emph{functorially finite}
if both $\calT$ and $\calF$ are functorially finite in $\mod A$.
We use the Nakayama functor 
$\nu \colon \sfK^\rmb(\proj A) \to \sfK^\rmb(\inj A)$,
where $\inj A$ is the category of finitely generated injective $A$-modules.

\begin{Def-Prop}\cite[Theorem 5.10]{AS}
Let $U \in \twopsilt A$.
Then we define two functorially finite torsion pairs 
\begin{align*}
(\ovcalT_U,\calF_U):=({^\perp H^{-1}(\nu U)},\Sub H^{-1}(\nu U)), \quad
(\calT_U,\ovcalF_U):=(\Fac H^0(U),{H^0(U)^\perp}).
\end{align*}
\end{Def-Prop}

By \cite[Theorem 2.12]{AIR},
$(\ovcalT_U,\calF_U)=(\calT_U,\ovcalF_U)$ holds if and only if 
$U \in \twosilt A$.
Thus, if $U \in \twopsilt A$ is not silting,
then $(\ovcalT_U,\calF_U)$ and $(\calT_U,\ovcalF_U)$ never coincide.
Their difference is described 
by the subcategory $\calW_U:=\ovcalT_U \cap \ovcalF_U$.
As in Proposition \ref{Prop_reduc} (1),
$\calW_U$ is a wide subcategory of $\mod A$, 
and will play a very important role in this paper.

The result below on functorially finite torsion pairs is fundamental.
We write $\ftors A$ (resp.~$\ftorf A$)
for the set of functorially finite torsion (resp.~torsion-free) classes
in $\mod A$.

\begin{Prop}\label{Prop_silt_tors}
\cite[Theorems 2.7, 3.2]{AIR}
There exist bijections
$\twosilt A \to \ftors A$ and $\twosilt A \to \ftorf A$
given by $S \mapsto \ovcalT_S=\calT_S$ and $S \mapsto \calF_S=\ovcalF_S$.
\end{Prop}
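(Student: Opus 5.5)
Proposition \ref{Prop_silt_tors} is a result quoted from \cite{AIR}, so the plan is to reconstruct the argument there. We only treat the map $S \mapsto \ovcalT_S=\calT_S$; the torsion-free version follows by the same argument applied to $A^{\mathrm{op}}$, or directly from the bijection $\ftors A \to \ftorf A$, $\calT \mapsto \calT^\perp$. This last bijection holds because a torsion class is functorially finite if and only if its torsion-free class is (Smal\o).

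\emph{Step 1: the map is well defined.} For $S\in\twosilt A$ we have $\ovcalT_S=\calT_S$ by the remark after the Definition-Proposition. Moreover $\calT_S=\Fac H^0(S)$ is functorially finite, since it is a $\Fac$ of a single module, which gives contravariant finiteness via right $\add H^0(S)$-approximations. Covariant finiteness comes from the functorial finiteness stated in the Definition-Proposition.

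\emph{Step 2: injectivity.} We pass through support $\tau$-tilting modules. Write $S\simeq(P^{-1}\xrightarrow{d}P^0)$ as a minimal 2-term complex. Then $M:=H^0(S)$ satisfies $\Hom_A(M,\tau M)=0$, and the condition $\Hom(S,S[1])=0$ is equivalent to $\Hom_A(M,\tau M)=0$ together with $\Hom_A(P,M)=0$, where $P$ is the maximal projective summand split off from $P^{-1}$. The silting condition $|S|=|A|$ gives $|M|+|P|=|A|$. Conversely, $(M,P)$ determines $S$ as the minimal projective presentation of $M$ plus $P[1]$. So it suffices to show that a support $\tau$-tilting module $M$ is recovered from $\Fac M$. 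We recover it as the direct sum of the indecomposable Ext-projective objects of $\Fac M$. The inclusion ``$\supseteq$'' uses the Auslander--Smal\o{} formula $\Ext^1(M,\Fac M)\simeq D\overline{\Hom}(\Fac M,\tau M)=0$. The reverse inclusion is a counting argument: any Ext-projective $N$ of $\Fac M$ satisfies that $M\oplus N$ is $\tau$-rigid with the same support, and $|M|$ is maximal among such modules.

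\emph{Step 3: surjectivity.} This is the main obstacle. Given a functorially finite torsion class $\calT$, take an $\add$-generator $M$ of its Ext-projectives, which exists by Auslander--Smal\o{}. Then $\calT=\Fac M$ and $M$ is $\tau$-rigid. Let $P$ be the sum of the indecomposable projectives $e A$ with $\Hom(eA,\calT)=0$. We must show $|M|+|P|=|A|$. The plan is a Bongartz-type argument. Pass to the algebra $A/\langle e\rangle$, over which $\calT$ is sincere, and take the left $\add M$-approximation $A\to M'$. Its cokernel $C$ has the property that $M\oplus C$ is $\tau$-rigid with $\Fac(M\oplus C)=\calT$. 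Ext-projectivity then forces $C\in\add M$, so $M$ is a tilting module over $A/\langle e\rangle$ and $|M|=|A|-|P|$.

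Finally, we build $S$ from $(M,P)$ as in Step 2. Then $H^0(S)=M$, so $\calT_S=\Fac M=\calT$, and $S$ is silting by the presilting criterion together with $|S|=|A|$, using \cite[Proposition 3.3]{AIR}.
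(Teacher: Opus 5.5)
Your route is the one behind the citation; the paper gives no proof of its own, only [AIR, Theorems 3.2, 2.7]. It identifies 2-term silting complexes with support $\tau$-tilting pairs $(H^0(S),P)$, and then identifies these with $\ftors A$ via $\Fac$ and Ext-projectives. Steps 1 and 2 are fine up to two small omissions. First, you recover $M$ from $\Fac M$ but never $P$. You need $\add P=\add eA$, where $eA$ collects the indecomposable projectives $Q$ with $\Hom_A(Q,\Fac M)=0$; this follows from $\Hom_A(P,M)=0$ together with the same count. Second, the bound ``$|X|\le|A/\langle e\rangle|$ for $\tau$-rigid $A/\langle e\rangle$-modules $X$'' that your count relies on is itself nontrivial. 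It comes from $X$ being partial tilting over $A/\mathrm{ann}\,X$.

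The genuine gap is in Step 3. Over $B:=A/\langle e\rangle$ the class $\calT$ is sincere, but your conclusion needs it to be faithful. The left $\add M$-approximation $f\colon B\to M'$ is a monomorphism only if $B\in\Sub M$, and nothing gives $\mathrm{pd}_B M\le 1$. So ``$C\in\add M$, hence $M$ is tilting over $A/\langle e\rangle$'' does not follow. It is in fact false in general: support $\tau$-tilting modules are $\tau$-tilting, not tilting, over $A/\langle e\rangle$.

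For instance, let $A$ be the preprojective algebra of type $A_2$, and let $\calT=\Fac(P_1\oplus S_1)=\add(P_1\oplus S_1)$, where $P_1$ has top $S_1$ and socle $S_2$. Then $\calT$ is sincere, so $e=0$ and $M=P_1\oplus S_1$. The approximation of $P_2$ is $P_2\to P_1$ with kernel $S_1$, and $S_1$ has infinite projective dimension because $A$ is selfinjective. Your observation that $C$ is Ext-projective does survive, if you argue with $\Im f$ in place of $B$. But $C\in\add M$ by itself gives no lower bound on $|M|$.

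The repair, which is what AIR do via Smal\o's theorem, is to replace $A/\langle e\rangle$ by $\overline{B}:=A/\mathrm{ann}(M)=A/\mathrm{ann}(\calT)$.
\begin{itemize}
\item Over $\overline{B}$ the class $\calT$ is faithful, so $\overline{B}\in\Sub M$ and $D\overline{B}\in\Fac M$. The approximation therefore gives an exact sequence $0\to\overline{B}\to M'\to C\to 0$ with $M',C\in\add M$.
\item Since $\Ext^1_{\overline{B}}(M,\Fac M)\subset\Ext^1_A(M,\Fac M)=0$, the module $M$ is $\tau_{\overline{B}}$-rigid. Hence $\Hom_{\overline{B}}(D\overline{B},\tau_{\overline{B}}M)=0$, i.e.\ $\mathrm{pd}_{\overline{B}}M\le 1$.
\item Thus $M$ is a tilting $\overline{B}$-module and $|M|=|\overline{B}|$.
\item Finally $|\overline{B}|=|A/\langle e\rangle|=|A|-|P|$, because both quotients have as simple modules exactly the composition factors of $M$.
\end{itemize}
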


By using these torsion pairs, 
we can characterize when direct sums of 2-term presilting complexes
are again 2-term presilting.

\begin{Lem}\label{Lem_T_U_T_V}
\cite[Lemma 3.13]{A} \cite[Lemma 2.3, Proposition 3.11]{AsI1}
Let $U,V \in \twopsilt A$.
\begin{enumerate}
\item
The condition $U \in \add V$ holds 
if and only if $\calT_U \subset \calT_V$ and $\calF_U \subset \calF_V$.
\item
The direct sum $U \oplus V$ is (not necessarily basic) presilting
if and only if $\calT_U \subset \ovcalT_V$ 
and $\calF_U \subset \ovcalF_V$.
\end{enumerate}
\end{Lem}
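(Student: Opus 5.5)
The lemma is cited from \cite{A} and \cite{AsI1}, so here is how I would derive it from Proposition \ref{Prop_silt_tors} and standard $\tau$-tilting facts. Throughout I use the $\tau$-rigid pair description: write $U=(M,P)$, where $H^0(U)=M$ is $\tau$-rigid and $P$ is projective. Then $H^{-1}(\nu U)$ is determined by $\tau M$ and $\nu P$, and $\ovcalT_U={^\perp(\tau M)}\cap P^\perp$.

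For (2), I would use the standard characterization that $U\oplus V$ is presilting if and only if
\begin{align*}
\Hom(U,V[1])=0 \quad\text{and}\quad \Hom(V,U[1])=0.
\end{align*}
For 2-term complexes, \cite[Lemma 3.4]{AIR} gives
\begin{align*}
\Hom_{\sfK^\rmb(\proj A)}(U,V[1])\cong D\Hom_A(H^0(V),H^{-1}(\nu U)),
\end{align*}
at least up to the vanishing of a suitable subspace; Auslander--Reiten duality is what underlies this. So $\Hom(U,V[1])=0$ if and only if $H^0(V)\in{^\perp H^{-1}(\nu U)}=\ovcalT_U$. Since $\ovcalT_U$ is a torsion class, this is equivalent to $\calT_V=\Fac H^0(V)\subset\ovcalT_U$. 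Symmetrically, $\Hom(V,U[1])=0$ if and only if $H^0(U)\in\ovcalT_V$, i.e.\ $\calT_U\subset\ovcalT_V$. Now rewrite the first condition: $\Hom(H^0(V),H^{-1}(\nu U))=0$ is equivalent to $H^{-1}(\nu U)\in H^0(V)^\perp=\ovcalF_V$, and since $\ovcalF_V$ is a torsion-free class this is $\calF_U=\Sub H^{-1}(\nu U)\subset\ovcalF_V$. This gives (2).

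For (1), the direction ($\Rightarrow$) is immediate: $H^0$ and $H^{-1}\nu$ preserve $\add$, and $\Fac$, $\Sub$ are monotone. For ($\Leftarrow$), the inclusions $\calT_U\subset\calT_V\subset\ovcalT_V$ and $\calF_U\subset\calF_V\subset\ovcalF_V$ give, by (2), that $U\oplus V$ is presilting. The task is then to show that each indecomposable summand $U_i$ of $U$ lies in $\add V$. I would pass to the Bongartz completion $S$ of $V$, which satisfies $\calT_S=\ovcalT_V$. Since $U\oplus V$ is presilting, $U$ is compatible with $V$, and by Jasso reduction (Proposition \ref{Prop_reduc_silt}) $U\oplus V$ lies in $\add S'$ for some silting $S'\in\twosilt_V A$. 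Inside the reduction at $V$, a summand $U_i\notin\add V$ corresponds to a nonzero 2-term presilting complex $\overline{U_i}$ over $B_V$. Such a complex has either $H^0(\overline{U_i})\neq0$ or $H^{-1}(\nu\overline{U_i})\neq 0$. Correspondingly, either $H^0(U_i)$ has a nonzero image in $\calW_V$, which is not contained in $\calT_V$, or $H^{-1}(\nu U_i)$ has a nonzero part not contained in $\calF_V$. Either case contradicts the assumed inclusions.

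The main obstacle is this last step: verifying precisely that the reduction functor sends $H^0(U_i)$ to $H^0$ of the reduced complex, so that $H^0(U_i)\in\calT_V$ forces the reduced $H^0$ to vanish, and likewise for $H^{-1}\nu$. I would first try to check this via the torsion-pair interval bijection $[\calT_V,\ovcalT_V]\cong\tors B_V$, which sends $\calT_{U\oplus V}$ to the torsion class of the reduced complex. With that bijection, the hypothesis $\calT_{U\oplus V}=\calT_V$ and $\calF_{U\oplus V}=\calF_V$ yields that the reduced complex has both torsion pairs trivial. That forces it to be zero, since $U\mapsto(\calT_U,\calF_U)$ is injective on $\twopsilt$ by \cite{A}.
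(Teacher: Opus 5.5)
The paper gives no proof of this lemma; both parts are imported from \cite{A} and \cite{AsI1}, so your argument has to stand on its own.

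\textbf{Part (2).} Your argument is the standard one and is correct. You can drop the hedge: $\Hom_{\sfK^\rmb(\proj A)}(U,V[1])\cong D\Hom_A(H^0(V),H^{-1}(\nu U))$ holds exactly. The truncation triangle $H^{-1}(V)[1]\to V\to H^0(V)\to H^{-1}(V)[2]$, together with $U$ being 2-term, gives $\Hom(U,V[1])\cong\Hom_{\sfD(A)}(U,H^0(V)[1])$. For any module $N$, $\Hom_{\sfD(A)}(U,N[1])=\operatorname{Cok}(\Hom_A(U^0,N)\to\Hom_A(U^{-1},N))$. This is $D\Hom_A(N,H^{-1}(\nu U))$ by $\Hom_A(P,N)\cong D\Hom_A(N,\nu P)$ and left exactness of $\Hom_A(N,?)$.

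\textbf{Part (1), the step to fix.} The reduction argument is sound once it is phrased through Proposition \ref{Prop_reduc_silt}(d).
\begin{itemize}
\item Let $W$ be basic with $\add W=\add(U\oplus V)$. By (2), $W\in\twopsilt_V A$, and $(\calT_W,\calF_W)=(\calT_V,\calF_V)$.
\item Hence $\calT_W\cap\calW_V=\calT_V\cap\ovcalF_V=0$ and $\calF_W\cap\calW_V=\calF_V\cap\ovcalT_V=0$.
\item So $W':=\reduc(W)$ satisfies $\calT_{W'}=0=\calF_{W'}$.
\end{itemize}
The problem is your last step. You conclude $W'=0$ from injectivity of $U\mapsto(\calT_U,\calF_U)$ ``by \cite{A}''. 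That injectivity is just the case $(\calT_U,\calF_U)=(\calT_V,\calF_V)$ of the lemma you are proving; in this paper, Proposition \ref{Prop_cone_TF}(2) is derived from Lemma \ref{Lem_T_U_T_V}(1). So the citation is circular.

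It is also unnecessary, because only a trivial case is needed and your own earlier remark supplies it. $H^0(W')=0$ makes the differential of $W'$ a split epimorphism, so $W'\simeq Q[1]$ with $Q$ projective. Then $H^{-1}(\nu W')=\nu Q=0$ forces $Q=0$. Thus $\reduc(W)=0=\reduc(V)$, and injectivity of $\reduc$ on $\twopsilt_V A$ gives $W=V$. The detour through the Bongartz completion and the sentence about ``a nonzero image in $\calW_V$'' can be deleted.

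\textbf{A lighter route for (1).} Jasso reduction is far heavier than (1) needs. Once you have $W\in\twopsilt A$ with $(\calT_W,\calF_W)=(\calT_V,\calF_V)$, Proposition \ref{Prop_cone_TF}(1) gives $C^\circ(W)=C^\circ(V)$; the paper imports that part without using this lemma. Since $[W_1],\ldots,[W_{|W|}]$ are linearly independent, comparing dimensions gives $|W|=|V|$. Together with $V\in\add W$, this forces $W\simeq V$, i.e.\ $U\in\add V$. This keeps the whole lemma at the level of Hom-duality plus the TF description of silting cones. Your route instead makes (1) depend on the full reduction package: the bijection $\twopsilt_V A\to\twopsilt B$ and its compatibility with torsion pairs.
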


\subsection{2-term simple-minded collections and semibricks}

This subsection is devoted to explaining 
2-term simple-minded collections in $\sfD(A)$ and semibricks in $\mod A$,
both of which are generalizations of semisimple modules.

We first give the definition of 2-term simple-minded collections
introduced by \cite{Al-Nofayee}.
Let $X=\bigoplus_{i=1}^m X_i \in \sfD(A)$ with $X_i$ indecomposable.
Then $X$ is called a \emph{simple-minded collection} 
in $\sfD(A)$ if the following conditions hold.
\begin{enumerate2}
\item
For any $i \in \{1,\ldots,m\}$, 
the endomorphism algebra $\End_{\sfD(A)}(X_i)$ is a division algebra.
\item
For any $i \ne j \in \{1,\ldots,m\}$, 
we have $\Hom_{\sfD(A)}(X_i,X_j)=0$.
\item
For any $i,j \in \{1,\ldots,m\}$ and $\ell \in \Z_{\ge 1}$, 
we have $\Hom_{\sfD(A)}(X_i,X_j[-\ell])=0$.
\item
The smallest thick subcategory of $\sfD(A)$ containing $X$
is $\sfD(A)$.
\end{enumerate2}
A simple-minded collection $X$ in $\sfD(A)$ is
said to be \emph{2-term} 
if $H^i(X)=0$ for all $i \in \Z \setminus \{-1,0\}$.
We write $\twosmc A$ for the set of all 2-term simple-minded collections
in $\sfD(A)$.

By \cite[Corollary 5.5]{KY}, 
any 2-term simple-minded collection $X$ satisfies $|X|=|A|$.
The following bijection with 2-term silting complexes is crucial,
where $\simple \calA$ denotes 
the set of isoclasses of an abelian category $\calA$.

\begin{Prop}\label{Prop_silt_smc}
\cite[Theorem 6.1]{KY} \cite[Corollary 4.3]{BY}
There exists a bijection $\twosilt A \to \twosmc A$ given by
$S \mapsto \simple \calH_S$, 
where $\calH_S \subset \sfD(A)$ is an abelian length category
\begin{align*}
\calH_S:=\{ X \in \sfD(A) \mid 
\text{for all $\ell \ne 0$, $\Hom_{\sfD(A)}(S,X[\ell])=0$} \}.
\end{align*}
\end{Prop}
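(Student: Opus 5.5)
The plan is to pass through bounded t-structures on $\sfD(A)$ and use the known bijection between 2-term silting complexes and functorially finite torsion pairs (Proposition \ref{Prop_silt_tors}). In this way the statement reduces to comparing intermediate t-structures, their hearts and the simple objects of those hearts.

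\emph{The map is well defined.} Let $S \in \twosilt A$ and put
\begin{align*}
\sfD^{\le 0}_S:=\{X \mid \Hom_{\sfD(A)}(S,X[\ell])=0 \ (\ell>0)\}, \quad
\sfD^{\ge 0}_S:=\{X \mid \Hom_{\sfD(A)}(S,X[\ell])=0 \ (\ell<0)\}.
\end{align*}
I will first check that this pair is a bounded t-structure on $\sfD(A)$ with heart $\calH_S$. This is the Koenig--Yang construction, and it uses that $A$ is finite dimensional and $\thick S=\sfK^\rmb(\proj A)$.

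The functor $\Hom_{\sfD(A)}(S,-)$ gives an equivalence $\calH_S \simeq \mod \End(S)$, so $\calH_S$ is a length category with exactly $|S|=|A|$ simple objects. The simples of the heart of a bounded t-structure automatically satisfy the smc axioms. Condition (a) is Schur's lemma and (b) holds because the simples are pairwise non-isomorphic. For (c), $\Hom(X_i,X_j[-\ell])=0$ for $\ell\ge1$ since both objects lie in the heart. For (d), every object of $\sfD(A)$ has a finite filtration by shifts of heart objects, and each heart object is filtered by simples.

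Next I will show the collection is 2-term. Since $S$ is 2-term, $\Hom(S,X[\ell])=0$ for all $\ell>0$ whenever $X\in \sfD^{\le -1}_{\rm std}$, and similarly on the other side. This gives
\begin{align*}
\sfD^{\le -1}_{\rm std}\subset \sfD^{\le 0}_S\subset \sfD^{\le 0}_{\rm std}.
\end{align*}
Hence every object of $\calH_S$ has cohomology only in degrees $-1$ and $0$, and so $\simple \calH_S \in \twosmc A$.

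\emph{Bijectivity.} An intermediate t-structure as above is the Happel--Reiten--Smal\o{} tilt of the standard one at the torsion pair $(\calT,\calF)=(H^0(\calH),H^{-1}(\calH))$. For $\calH_S$ one computes that this torsion pair is $(\calT_S,\calF_S)$. Injectivity then follows from Proposition \ref{Prop_silt_tors}: if $\simple\calH_S=\simple\calH_{S'}$, then the hearts coincide, being the extension closures of their simples. Hence the torsion pairs coincide, and so $S\simeq S'$.

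For surjectivity, take $X\in\twosmc A$. By Al-Nofayee and Koenig--Yang, the extension closure of $X$ is the heart $\calH$ of a bounded t-structure. Because $X$ is 2-term, this t-structure is intermediate, so it is the HRS tilt at the torsion pair
\begin{align*}
(\Filt H^0(X)\text{-torsion closure},\ \Filt H^{-1}(X)\text{-torsion-free closure}).
\end{align*}
It then remains to show that this torsion pair is functorially finite. Once that is known, it equals $(\calT_S,\calF_S)$ for a unique $S$ by Proposition \ref{Prop_silt_tors}, and then $\calH=\calH_S$.

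\emph{Main obstacle.} I expect functorial finiteness in the surjectivity step to be the hardest part. My first approach is to use that $\calH$ is a length category with finitely many simples, and to show it has a projective generator. Concretely, for each simple $X_i$ I would build a projective cover in $\calH$ as a suitable universal extension. Its image under the realization functor should be a 2-term complex of projectives, and the direct sum of these should be a silting complex $S$ with $\calH=\calH_S$. Presilting would follow from projectivity in the heart; being silting would follow from the count of $|X|=|A|$ summands together with $\thick$ generation.

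If this direct construction becomes too technical, I would instead invoke the left-finiteness of the semibrick $H^0(X)$ from \cite{A-semi}, which yields functorial finiteness of $\calT$ directly.
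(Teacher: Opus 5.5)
Your well-definedness and injectivity arguments are fine. The reduction of surjectivity to the functorial finiteness of $\sfT(X^+)$, via the HRS description of intermediate t-structures and Proposition \ref{Prop_silt_tors}, is also sound. But that remaining step is the entire nontrivial content of the smc-to-silting direction, and neither of your routes supplies it. (The paper gives no proof of its own; it quotes \cite[Theorem 6.1]{KY} and \cite[Corollary 4.3]{BY}.)

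Your fallback is circular. In \cite{A-semi} the left finiteness of $H^0(X)$ is proved for $X=\SH(S)$ (cf.\ Proposition \ref{Prop_rad_soc}). It therefore presupposes that every $X\in\twosmc A$ comes from some $S\in\twosilt A$, which is exactly the surjectivity you want.

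Your primary route rests on a false identification: the summands $S_i$ are not projective covers of the $X_i$ inside $\calH$. They are characterized by $\Hom_{\sfD(A)}(S_i,X_j[\ell])=0$ for $(j,\ell)\ne(i,0)$ (Definition-Proposition \ref{Def-Prop_SH}), which is a condition on all shifts. Projectivity in $\calH$ only controls $\ell\le 1$. Also, the realization functor restricts to the inclusion on $\calH$, so it returns a projective object of $\calH$ unchanged, and that object need not be a 2-term complex of projectives.

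Here is a concrete failure. Let $A=K(1\to 2)$ and $S=P(1)[1]\oplus P(2)$, as in the example following Definition \ref{Def_exchange}, so $\SH(S)=L(2)\oplus L(1)[1]$.
\begin{itemize}
\item Since $\End_{\sfD(A)}(S)\simeq K\times K$, the heart $\calH_S$ is semisimple.
\item Hence the projective cover of $L(1)[1]$ in $\calH_S$ is $L(1)[1]\simeq V[1]$ with $V=(P(2)\to P(1))$. This is a complex in degrees $-2,-1$, not the summand $P(1)[1]$.
\item The sum $P(2)\oplus V[1]$ does not recover $\calH_S$: we have $\Hom_{\sfD(A)}(V[1],L(2)[2])\simeq\Ext^1_A(L(1),L(2))\ne 0$, while $L(2)\in\calH_S$.
\end{itemize}

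Your reduction actually only needs a projective generator $Q$ of $\calH$. Right exactness of $H^0$ on $\calH$ would then give $\sfT(X^+)=\Fac H^0(Q)$, which is functorially finite, and Proposition \ref{Prop_silt_tors} would produce $S$. But the existence of $Q$ is precisely the hard point. A length category with finitely many simples and finite-dimensional $\Ext^1$ need not have projective covers (e.g.\ finite-dimensional nilpotent $K[x]$-modules), so iterated universal extensions need not terminate. Showing that they terminate must use the generation axiom (d) and the ambient category $\sfD(A)$. That is the input you would have to import from \cite{KY}; as written, surjectivity is not proved.
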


As in \cite[Lemma 5.3]{KY},
the abelian category $\calH_S$ above is the heart 
of a certain intermediate t-structure \cite{BBD} in $\sfD^\rmb(\mod A)$.

If $S \in \twosilt A$ corresponds to $X \in \twosmc A$,
then we can make the indices of the indecomposable direct summands
of $S$ and $X$ compatible as follows.

\begin{Def-Prop}\label{Def-Prop_SH}
\cite[Lemma 5.3]{KY}
Let $S=\bigoplus_{i=1}^n S_i \in \twosilt A$.
Take $X \in \twosmc A$ corresponding to $S$ under the bijection in 
Proposition \ref{Prop_silt_smc}.
Then we have a unique decomposition $X=\bigoplus_{i=1}^n X_i$ such that
\begin{align*}
\Hom_{\sfD(A)}(S_{i'},X_i[\ell]) \simeq
\begin{cases}
R_i \quad \text{as left $R_j$-modules} & ((i,\ell)=(i',0)) \\
0 & ((i,\ell) \ne (i',0))
\end{cases},
\end{align*}
where $R_i$ is the division algebra $\End_{\sfD(A)}(X_i)$.
We set
\begin{align*}
\SH(S):=X=\bigoplus_{i=1}^n X_i,
\end{align*} 
including the indices of the indecomposable direct summands.
\end{Def-Prop}

For example, if $A$ is basic,
then $A \in \twosilt A$ satisfies $\calH_S=\mod A$.
For the decomposition $A=\bigoplus_{i=1}^n P(i)$,
we get $\SH(A)=\bigoplus_{i=1}^n L(i)$.

As a consequence of \cite[Theorem 4.8]{IY}, 
$\calH_S$ is equivalent to the module category $\mod \End_{\sfD(A)}(S)$.
Thus we have the following.

\begin{Lem}\label{Lem_R_V}
Let $S=\bigoplus_{i=1}^n S_i \in \twosilt A$,
and $X=\bigoplus_{i=1}^n X_i:=\SH(S) \in \twosmc A$.
Then there exists an isomorphism 
\begin{align*}
\End_{\sfD(A)}(S_i)/{\rad\End_{\sfD(A)}(S_i)} \to \End_{\sfD(A)}(X_i).
\end{align*}
\end{Lem}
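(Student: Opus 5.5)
We use the equivalence $\calH_S \simeq \mod \Lambda$, where $\Lambda:=\End_{\sfD(A)}(S)$, coming from \cite[Theorem 4.8]{IY}. We take it in the explicit form $F:=\Hom_{\sfD(A)}(S,-)\colon \calH_S \to \mod \Lambda$, which is an equivalence of abelian categories. By Definition-Proposition \ref{Def-Prop_SH}, $F(X_i)=\Hom_{\sfD(A)}(S,X_i)=\Hom_{\sfD(A)}(S_i,X_i)$, since the terms $\Hom_{\sfD(A)}(S_{i'},X_i)$ with $i'\ne i$ vanish. So $F(X_i)$ is a simple $\Lambda$-module supported only at the idempotent $e_i$ corresponding to $S_i$. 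The point is to identify it with the simple top of the indecomposable projective $\Lambda$-module $e_i\Lambda=\Hom_{\sfD(A)}(S,S_i)$, whose endomorphism ring is $\End(S_i)$.

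\textbf{Step 1.} The $X_i$ are the simple objects of $\calH_S$, so each $F(X_i)$ is a simple $\Lambda$-module. Since $F(X_i)e_j=\Hom_{\sfD(A)}(S_j,X_i)=0$ for $j\neq i$ and $F(X_i)e_i\neq 0$, the module $F(X_i)$ is the simple $\Lambda$-module $\mathrm{top}(e_i\Lambda)$.

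\textbf{Step 2.} For any finite dimensional algebra $\Lambda$ with primitive idempotent $e$, we have
\[
\End_\Lambda(\mathrm{top}(e\Lambda))\simeq e\Lambda e/e(\rad\Lambda)e .
\]
Moreover $e\Lambda e\simeq\End_\Lambda(e\Lambda)$, and $e(\rad\Lambda)e=\rad(e\Lambda e)$ corresponds to $\rad\End_\Lambda(e\Lambda)$. Here $e_i\Lambda e_i=\End_{\sfD(A)}(S_i)$.

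\textbf{Step 3.} Combining Steps 1 and 2,
\[
\End_{\sfD(A)}(X_i)\simeq \End_\Lambda(F(X_i))\simeq \End_{\sfD(A)}(S_i)/\rad\End_{\sfD(A)}(S_i).
\]

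The map in the statement is concrete: an element $f\in\End(S_i)$ acts on $\Hom(S_i,X_i)\simeq R_i$ by precomposition. This induces a surjection $\End(S_i)\to\End_{R_i}(R_i)^{\mathrm{op}}\simeq R_i$ of the appropriate side, whose kernel is the radical. One should verify that this matches the $R_i$-module structure in Definition-Proposition \ref{Def-Prop_SH}, and fix the left/right conventions and $\mathrm{op}$ so that the result is an algebra isomorphism rather than an anti-isomorphism.

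The main obstacle is checking that $F=\Hom_{\sfD(A)}(S,-)$ really realizes the equivalence of \cite{IY}. If one prefers not to rely on its exact form, a direct argument works instead. Precomposition gives a ring map $\End(S_i)\to\End_{R_i}(\Hom(S_i,X_i))$, where the target is the opposite of $R_i$ with the appropriate conventions. This map is nonzero, since the identity maps to the identity. Its target is a division ring, so its kernel is a proper two-sided ideal of the local ring $\End(S_i)$ and hence lies in the radical. Conversely, every radical element is nilpotent, and a nilpotent element maps to a nilpotent element of a division ring, hence to zero. So the kernel equals the radical. Surjectivity follows from $\Hom(S_i,X_i)\simeq R_i$ being cyclic and from the projectivity of $F(S_i)$ in the heart picture, which lets endomorphisms of $X_i$ lift to $S_i$.
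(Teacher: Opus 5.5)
Your proposal is correct and follows essentially the same route as the paper, which simply invokes the equivalence $\calH_S\simeq\mod\End_{\sfD(A)}(S)$ from \cite[Theorem 4.8]{IY} and reads off the endomorphism ring of the simple object $X_i$. Your Steps 1--3 spell out what the paper leaves implicit: with the equivalence realized as $\Hom_{\sfD(A)}(S,-)$, Definition-Proposition \ref{Def-Prop_SH} identifies $X_i$ with the simple top of the projective $\Hom_{\sfD(A)}(S,S_i)$, and the endomorphism ring of that top is $\End_{\sfD(A)}(S_i)/\rad\End_{\sfD(A)}(S_i)$.
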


We will use some results on (semi)bricks defined as follows.
A module $M \in \mod A$ is called a \emph{brick} 
if $\End_A(M)$ is a division algebra,
and $M$ is called a \emph{semibrick}
if $M$ admits a decomposition $M=\bigoplus_{i=1}^m M_i$ 
such that each $M_i$ is a brick and $\Hom_A(M_i,M_j)=0$ for any $i \ne j$. 
In particular, a semibrick is basic.

Semibricks are obtained from 2-term simple-minded collections as follows.

\begin{Def-Prop}\label{Def-Prop_stalk}
\cite[Remark 4.11]{BY}
Let $X=\bigoplus_{i=1}^n X_i \in \twosmc A$.
Then for any $i \in \{1,\ldots,n\}$, 
we have $X_i \in \mod A$ or $X_i \in (\mod A)[1]$.
Thus $X=H^0(X) \oplus H^{-1}(X)[-1]$ holds.
We define the following symbols.
\begin{enumerate}
\item
We set $X^+:=H^0(X)$ and $X^-:=H^{-1}(X)$,
which are semibricks.
\item
For each $i \in \{1,\ldots,n\}$, 
we set $X_i^+:=H^0(X_i)$ and $X_i^-:=H^{-1}(X_i)$. 
Then one of them is a brick, and the other is zero.
\end{enumerate}
\end{Def-Prop}

As in \cite[Theorem 3.3]{A-semi}, $X \in \twosmc A$ 
is determined by either one of the semibricks $X^+$ and $X^-$.
These semibricks satisfy the following properties,
where $\sfT(M)$ and $\sfF(M)$ are 
the torsion and torsion-free closures of $M$, respectively.

\begin{Prop}\label{Prop_rad_soc}
Let $S=\bigoplus_{i=1}^n S_i \in \twosilt A$,
and $X=\bigoplus_{i=1}^n X_i:=\SH(S) \in \twosmc A$.
\begin{enumerate}
\item
\cite[Theorem 3.3]{A-semi}
The torsion pair $(\calT_S,\calF_S)$ coincides with $(\sfT(X^+),\sfF(X^-))$.
\item
\cite[Lemma 3.14]{A-semi}
For each $i \in \{1,\ldots,n\}$, we have
\begin{align*}
X_i^+=H^0(S_i)/\sum_{f \in \rad_A(H^0(S),H^0(S_i))} \Im f, \quad 
X_i^-=\bigcap_{f \in \rad_A(H^{-1}(\nu S_i),H^{-1}(\nu S))} \Ker f,
\end{align*}
and hence
\begin{align*}
X^+=H^0(S)/\rad_{\End_A(H^0(S))}H^0(S)
\quad \text{and} \quad
X^-=\soc_{\End_A(H^{-1}(\nu S))}H^{-1}(\nu S).
\end{align*}
\end{enumerate}
\end{Prop}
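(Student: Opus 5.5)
The statement consists of two cited results, and both parts are taken from \cite{A-semi}. The plan below shows how they follow from the bijection $\twosilt A \to \twosmc A$ of Proposition \ref{Prop_silt_smc}, which goes through the heart $\calH_S$, and from the standard theory of intermediate t-structures.

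For (1), the heart $\calH_S$ is the heart of the intermediate t-structure attached to $S$. It is a length category whose simple objects are the $X_i$ (Proposition \ref{Prop_silt_smc}). By Definition-Proposition \ref{Def-Prop_stalk}, each $X_i$ is a stalk complex lying either in $\mod A$ or in $(\mod A)[1]$. The next step is to check that $\calH_S \cap \mod A = \calT_S$ and $\calH_S \cap (\mod A)[1] = \calF_S[1]$, so that $\calH_S$ is the HRS-tilt of $\mod A$ at $(\calT_S,\calF_S)$. This identification uses the vanishing $\Hom(S,M[\ell])=0$ for $\ell\neq 0$, which should reduce to $M \in \Fac H^0(S)$ for a module $M$ and to $M\in H^0(S)^\perp$ for $M[1]$, given $\calT_S=\ovcalT_S$. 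Every object of $\calT_S$ is then filtered in $\calH_S$ by the simples $X_i$. Its canonical filtration with respect to the torsion pair $(\calF_S[1],\calT_S)$ of $\calH_S$ forces these subquotients to be the module-type simples $X_i^+$. This gives $\calT_S \subset \sfT(X^+)$. The reverse inclusion holds because $X^+\in\calT_S$ and $\calT_S$ is a torsion class. The dual argument gives $\calF_S=\sfF(X^-)$.

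For (2), apply $\Hom(S_i,-)$ to the stalks. For $X_i^+\neq0$, the condition $\Hom(S_{i'},X_i)\simeq\delta_{ii'}R_i$ from Definition-Proposition \ref{Def-Prop_SH} says that $X_i^+$ is the simple top corresponding to $S_i$ under the equivalence $\calH_S\simeq\mod\End(S)$ (Lemma \ref{Lem_R_V}). Next, $H^0(S)$ is the image in $\calT_S$ of the projective generator of $\calH_S$, namely the projective cover of $X_i$ under $\Hom(S,-)$. Hence $X_i^+$ is the quotient of $H^0(S_i)$ by the images of all radical maps from $H^0(S)$. Here one must check that $\rad_A(H^0(S),H^0(S_i))$ matches the radical of $\End(S)$ transported via $H^0$. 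The map $\End_{\sfK}(S)\to\End_A(H^0(S))$ is surjective, and its kernel consists of maps factoring through $[1]$-shifted terms, which lie in the radical. The formula for $X_i^-$ is dual: apply the Nakayama functor, use $\Hom(S,\nu S[-1])$, and take socles of $H^{-1}(\nu S)$. The global formulas for $X^\pm$ follow by summing over $i$.

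The main obstacle will be the radical comparison between $\End_{\sfK^\rmb(\proj A)}(S)$ and $\End_A(H^0(S))$ when $H^0(S_i)=0$. In that case $X_i^+=0$ automatically, and that case needs to be treated separately.
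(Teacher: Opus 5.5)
\paragraph{The gap in (1).} The failing step is your claim that the torsion pair $(\calF_S[1],\calT_S)$ in $\calH_S$ forces every composition factor (in $\calH_S$) of an object of $\calT_S$ to be a module-type simple $X_i^+$. The class $\calT_S$ is the torsion-free part of that pair. It is therefore closed under subobjects in $\calH_S$, but not under quotients, and composition factors of the form $X_j^-[1]$ do occur.

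Here is a concrete case. Take $A=K(1\to 2)$ and $S=P(1)\oplus V$ with $V=(P(2)\to P(1))$, so that $\SH(S)=\begin{smallmatrix}1\\2\end{smallmatrix}\oplus\begin{smallmatrix}2\end{smallmatrix}[1]$, as in the example after Definition~\ref{Def_exchange}. Then $\calT_S=\add\{P(1),L(1)\}$ and $\calF_S=\add L(2)$. All three terms of the triangle $P(1)\to L(1)\to L(2)[1]\to P(1)[1]$ lie in $\calH_S$, so $0\to P(1)\to L(1)\to L(2)[1]\to 0$ is exact in $\calH_S$. Thus $L(1)\in\calT_S$ has the composition factor $X^-[1]$, and $L(1)\notin\Filt(X^+)=\add P(1)$. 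The equality $\calT_S=\sfT(X^+)$ genuinely requires passing to factor modules.

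A correct argument uses only simple \emph{subobjects}:
\begin{enumerate}
\item For $0\neq M\in\calT_S$, a simple subobject of $M$ in $\calH_S$ lies in $\calT_S$, hence is some $X_j^+$. So $\Hom_A(X^+,M)\neq 0$.
\item Apply this to the $\calT_S$-torsion part of any $N\in(X^+)^\perp$. This gives $(X^+)^\perp\subset\calF_S$, hence $\calT_S={}^\perp\calF_S\subset{}^\perp((X^+)^\perp)=\sfT(X^+)$.
\item Dually, for $0\neq N\in\calF_S$, a simple quotient of $N[1]$ in $\calH_S$ lies in the torsion class $\calF_S[1]$, hence is some $X_j^-[1]$. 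This yields $\calF_S\subset\sfF(X^-)$.
\end{enumerate}
Alternatively, prove (2) first and filter $H^0(S)$ by $\rad^k_{E}H^0(S)$ with $E=\End_A(H^0(S))$. Each layer is a factor module of a finite direct sum of copies of $X^+$, so $\calT_S=\Fac H^0(S)\subset\sfT(X^+)$.

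\paragraph{Part (2).} Your route through the projective cover of $X_i$ in $\calH_S$ works. This cover is the truncation of $S_i$ for the t-structure with heart $\calH_S$, and its $H^0$ is $H^0(S_i)$. You do need that $H^0$ is right exact on short exact sequences of $\calH_S$, which follows from the long exact cohomology sequence. That is what turns the radical of the projective cover into the sum of images of radical maps.

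However, your justification of the radical comparison is wrong as stated. The kernel of $\End(S)\to\End_A(H^0(S))$ need not lie in the radical: for $S=P(1)[1]\oplus P(2)$ over the same algebra it contains $\mathrm{id}_{P(1)[1]}$. What you actually need is only that $H^0$ maps $\rad(S,S_i)$ onto $\rad_A(H^0(S),H^0(S_i))$ when $H^0(S_i)\neq 0$. This follows from three facts:
\begin{itemize}
\item $H^0$ is full on 2-term complexes;
\item the surjection of local rings $\End(S_i)\to\End_A(H^0(S_i))$ carries radical onto radical;
\item the nonzero $H^0(S_j)$ are indecomposable and pairwise non-isomorphic.
\end{itemize}
For reference, the paper itself does not prove either part; it quotes both from \cite{A-semi}.
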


\subsection{The exchange quiver of 2-term silting complexes}

Let $S,T \in \twosilt A$ with $U$ their maximum common direct summand.
If $|U|=|A|-1$, then we say that $T$ is a \emph{mutation} of $S$ at 
the indecomposable direct summand $S/U$.
The following is the fundamental property on mutations in $\twosilt A$.

\begin{Def-Prop}\label{Def-Prop_adjacent}
\cite[Definition-Proposition 2.28]{AIR}
Let $S=\bigoplus_{i=1}^n S_i \in \twosilt A$
and $j \in \{1,\ldots,n\}$.
Then the following statements hold.
\begin{enumerate}
\item
There uniquely exists $T \in \twosilt A$ which is a mutation of $S$ at $S_j$.
\item
Let $X=\bigoplus_{i=1}^n X_i:=\SH(S)$.
Then $T$ in (1) satisfies exactly one of the following conditions.
\begin{enumerate2}
\item
We have $\calT_T=\calT_{S/S_j} \subsetneq \calT_S$, 
$\calF_T \supsetneq \calF_{S/S_j}=\calF_S$
and $X_j^+ \ne 0$.
In this case, we call $T$ the \emph{left mutation} of $S$ at $S_j$,
and set $\mu_j^-(S):=T$.
\item
We have $\calT_T \supsetneq \calT_{S/S_j}=\calT_S$, 
$\calF_T=\calF_{S/S_j} \subsetneq \calF_S$
and $X_j^- \ne 0$.
In this case, we call $T$ the \emph{right mutation} of $S$ at $S_j$,
and set $\mu_j^+(S):=T$.
\end{enumerate2}
\end{enumerate}
\end{Def-Prop}

In either case, $\calT_S$ and $\calT_T$ are adjacent
with respect to inclusions of torsion classes in $\mod A$
by \cite[Example 3.5]{DIJ}.
We also remark that left/right mutations in $\twosilt A$
are special cases of mutations of silting complexes in \cite{AI}.

It is natural to express mutations 
of 2-term silting complexes as a quiver with labels.

\begin{Def}\label{Def_exchange}
We define the following.
\begin{enumerate}
\item
The \emph{exchange quiver} $Q(\twosilt A)$ is set to be 
the unique quiver satisfying the conditions below.
\begin{enumerate2}
\item
The vertices set is $\twosilt A$.
\item
For $S,T \in \twosilt A$, there exists an arrow $S \to T$
if and only if $T$ is a left irreducible mutation of $S$.
\end{enumerate2}
\item
\cite[Definition 2.14, Theorem 3.12]{A-semi}
The \emph{brick labeling} of the exchange quiver of $\twosilt A$
is labeling each arrow $S \to T$ with the brick $X_j^+$
in Definition-Proposition \ref{Def-Prop_adjacent} (2)(a).
\end{enumerate}
\end{Def}

Therefore if $S \in \twosilt A$ and $X=\SH(S) \in \twosmc A$,
then the indecomposable direct summands of $X^+$ (resp.~$X^-$)
are the labels of all arrows starting at (resp.~ending at) $S$
in the exchange quiver.
In particular, for each $S \in \twosilt A$,
there are exactly $n=|A|$ arrows which start or end at $S$.

\begin{Ex}
If $A$ is the path algebra $K(1 \to 2)$,
then $Q(\twosilt A)$ is the following,
where $V:=(P(2) \to P(1)) \in \twopsilt A$.
\begin{align*}
\begin{tikzpicture}[->,baseline=0pt,scale=0.8]
\node (1) at ( 0, 2) {$A$};
\node (2) at ( 2, 1) {$P(1) \oplus V$};
\node (3) at ( 2,-1) {$P(2)[1] \oplus V$};
\node (4) at (-2, 0) {$P(1)[1] \oplus P(2)$};
\node (5) at ( 0,-2) {$A[1]$};
\draw (1) to [edge label={$\begin{smallmatrix}2\end{smallmatrix}$}] (2);
\draw (2) to [edge label={$\begin{smallmatrix}1\\2\end{smallmatrix}$}] (3);
\draw (3) to [edge label={$\begin{smallmatrix}1\end{smallmatrix}$}] (5);
\draw (1) to [edge label'={$\begin{smallmatrix}1\end{smallmatrix}$}] (4);
\draw (4) to [edge label'={$\begin{smallmatrix}2\end{smallmatrix}$}] (5);
\end{tikzpicture}
\end{align*}
The brick labeling implies that 
the 2-term simple-minded collection
$\SH(P(1) \oplus V)$ is $\begin{smallmatrix}1\\2\end{smallmatrix} \oplus
\begin{smallmatrix}2\end{smallmatrix}[1]$.
\end{Ex}

As explained in \cite{KT},
the same brick labeling as ours is obtained 
by using results of \cite{BST}
on stability conditions in the real Grothendieck group $K_0(\proj A)_\R$, 
which we will consider in the next section.
We also remark that the brick labeling is extended to 
the Hasse quiver of all torsion classes in $\mod A$ \cite{BCZ,DIRRT}.

\subsection{Maximal and minimal completions} \label{Subsec_max_min}

By Proposition \ref{Prop_silt_tors}, for each $U \in \twopsilt A$,
there uniquely exist $S,T \in \twosilt A$ such that
\begin{align}\label{S T chara}
(\ovcalT_U,\calF_U)=(\ovcalT_S,\calF_S), \quad
(\calT_U,\ovcalF_U)=(\calT_T,\ovcalF_T).
\end{align}
By Lemma \ref{Lem_T_U_T_V} (1),
$S$ and $T$ have $U$ as direct summands, and
$V \in \twosilt A$ contains $U$ as a direct summand 
if and only if $\calT_U \subset \calT_V$ and $\calF_U \subset \calF_V$.
Among such $V$,
the torsion class $\calT_V$ becomes the largest if $V=S$,
and becomes the smallest if $V=T$.
Thus we define as follows.

\begin{Def}
For each $U \in \twopsilt A$, we call $S$ and $T$ in \eqref{S T chara}
the \emph{maximal completion} and 
the \emph{minimal completion} of $U$, respectively.
\end{Def}

We remark that $S$ and $T$ are also called 
the \emph{Bongartz completion} and 
the \emph{Bongartz cocompletion} of $U$.
The following property is useful and important.

\begin{Lem}
\cite[Lemma 3.13]{A}
Let $U \in \twopsilt A$,
and $S$ and $T$ be its maximal and minimal completions respectively.
Then $U$ is the maximum common direct summand of $S$ and $T$.
\end{Lem}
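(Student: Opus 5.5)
We must show that $U$ is the maximum common direct summand of $S$ and $T$. By construction, $U$ is a common direct summand of both $S$ and $T$, so the content is maximality. Write $S=U\oplus S'$ and $T=U\oplus T'$, and let $V$ be an indecomposable complex lying in both $\add S$ and $\add T$. The plan is to show $V\in\add U$. Since any common direct summand of $S$ and $T$ is presilting and basic, it then lies in $\add U$, which gives the claim.

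The key tool is Lemma \ref{Lem_T_U_T_V} (1), which translates $\add$-membership into inclusions of torsion and torsion-free classes. Set $U':=U\oplus V$; this lies in $\add S$, so it is presilting. It suffices to prove $\calT_{U'}\subset\calT_U$ and $\calF_{U'}\subset\calF_U$, because Lemma \ref{Lem_T_U_T_V} (1) then gives $U'\in\add U$, and in particular $V\in\add U$.

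First, $\calT_{U'}=\Fac H^0(U\oplus V)=\Fac(H^0(U)\oplus H^0(V))$. Since $V\in\add T$, we have $H^0(V)\in\add H^0(T)$, so $\Fac H^0(V)\subset\calT_T=\calT_U$, using \eqref{S T chara}. Because $\calT_U$ is a torsion class containing both $H^0(U)$ and $H^0(V)$, this yields $\calT_{U'}\subset\calT_U$.

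Dually, $\calF_{U'}=\Sub H^{-1}(\nu(U\oplus V))$. Since $V\in\add S$, we have $H^{-1}(\nu V)\in\add H^{-1}(\nu S)$, so $\Sub H^{-1}(\nu V)\subset\calF_S=\calF_U$, again by \eqref{S T chara}. Hence $\calF_{U'}\subset\calF_U$.

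With both inclusions in hand, Lemma \ref{Lem_T_U_T_V} (1) applied to the pair $(U',U)$ finishes the argument.

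The only point needing care is applying Lemma \ref{Lem_T_U_T_V} to the basic part of $U\oplus V$, since $U\oplus V$ may fail to be basic. This causes no trouble: $\Fac$, $\Sub$ and $\add$ are all insensitive to multiplicities, so passing to the basic part changes none of the classes involved. There is no real obstacle; the whole argument is a direct consequence of \eqref{S T chara} together with Lemma \ref{Lem_T_U_T_V} (1).
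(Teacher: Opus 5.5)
Your proof is correct. The paper gives no proof of this lemma; it imports it from \cite[Lemma 3.13]{A}. In the paragraph just before, it only uses Lemma \ref{Lem_T_U_T_V} (1) to see that $U$ is a summand of $S$ and of $T$. So your ``by construction'' is really that application: $\calT_U\subset\ovcalT_U=\calT_S$ and $\calF_U=\calF_S$, and dually $\calT_U=\calT_T$ and $\calF_U\subset\ovcalF_U=\calF_T$. Your maximality step pushes the same lemma one step further, and it is economical. You use $V\in\add T$ only to get $\calT_{U\oplus V}\subset\calT_T=\calT_U$, and $V\in\add S$ only to get $\calF_{U\oplus V}\subset\calF_S=\calF_U$. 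Your passage to the basic part of $U\oplus V$ is also handled correctly.

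Be clear about what this buys, though. The paper attributes Lemma \ref{Lem_T_U_T_V} (1) partly to the same \cite[Lemma 3.13]{A}. Moreover, one natural proof of that lemma for non-silting $V$ runs through exactly the statement you are proving. From $\calT_U\subset\calT_V$ and $\calF_U\subset\calF_V$, the silting case puts $U$ into both completions of $V$, and one then needs maximality for $V$. So your argument shows the statement is a formal consequence of Lemma \ref{Lem_T_U_T_V} (1) within the paper's ordering; it is not an independent proof of the cited result.

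If you want an argument avoiding Lemma \ref{Lem_T_U_T_V} (1), Proposition \ref{Prop_reduc_silt} gives one at once. Suppose an indecomposable common summand $V\notin\add U$ existed. Then $U\oplus V\in\twopsilt_U A$, and $\reduc(V)\in\add\reduc(S)\cap\add\reduc(T)=\add B\cap\add B[1]=\{0\}$. Hence $\reduc(U\oplus V)=0=\reduc(U)$, contradicting injectivity of $\reduc$ on $\twopsilt_U A$. That route is shorter but rests on Jasso's reduction; yours needs only torsion-pair bookkeeping once Lemma \ref{Lem_T_U_T_V} (1) is granted.
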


For each $U \in \twopsilt A$, the minimal completion $T$ is 
the left simultaneous mutation of the maximal completion $S$ at $S/U$
in the sense of \cite{AI}.
Thus \cite[Definition 2.30, Theorem 2.31]{AI} give the following notation
which we will frequently use in this paper.

\begin{Nota}\label{Nota_S_T}
Let $U=\bigoplus_{i=1}^m U_i \in \twopsilt A$,
and $S$ and $T$ be its maximal and minimal completions respectively.
Then we decompose $S$ and $T$ into indecomposable direct summands as 
\begin{align*}
S=\bigoplus_{i=1}^nS_i \quad \text{and} \quad T=\bigoplus_{i=1}^nT_i
\end{align*}
so that they satisfy the following conditions.
\begin{enumerate}
\item
For each $i \in \{1,\ldots,m\}$, we have $S_i=U_i=T_i$.
\item
For each $j \in \{m+1,\ldots,n\}$, there exists a triangle
\begin{align*}
S_j \xrightarrow{f} U'_j \to T_j \to S_j[1]
\end{align*}
such that $f$ is a minimal left $(\add U)$-approximation of $S_i$.
\end{enumerate}
\end{Nota}

Generalizing \cite[Theorem 7.12]{KY}, 
simultaneous mutations of a simple-minded collection 
at any direct summand can be defined
so that they are compatible with those of silting complexes.
We will be a detailed proof in \cite{A-smc};
see also \cite{BDH,BCPW}.
Applying this theory to 
the 2-term simple-minded collections $X,Y \in \twosmc A$
corresponding to $S,T \in \twosilt A$,
we get the following notation.
For any subcategory $\calC \subset \sfD(A)$,
the symbol $\Filt \calC$ denotes the filtration closure of $\calC$ 
in $\sfD(A)$.

\begin{Nota}\label{Nota_X_Y}
In Notation \ref{Nota_S_T}, we set
\begin{align*}
X=\bigoplus_{i=1}^n X_i:=\SH(S) \in \twosmc A, \quad
Y=\bigoplus_{i=1}^n Y_i:=\SH(T) \in \twosmc A.
\end{align*}
Setting $\calW:=\Filt \{X_{m+1},\ldots,X_n\} \subset \sfD(A)$,
the relationship between $X$ and $Y$ are given as follows. 
\begin{enumerate}
\item
If $i \in \{1,\ldots,m\}$, then there exists a  triangle 
\begin{align*}
X_i[-1] \xrightarrow{f_i} W_i \to Y_i \to X_i
\end{align*} 
in $\sfD(A)$ with $f_i$ 
a minimal left $\calW$-approximation.
\item
If $j \in \{m+1,\ldots,n\}$, 
then $X_j \in \mod A$ is a brick, and $Y_j=X_j[1] \in (\mod A)[1]$.
\end{enumerate}
\end{Nota}

Thus $\calW$ is contained in $\mod A$.
Actually it coincides with
$\calW_U:=\ovcalT_U \cap \ovcalF_U \subset \mod A$ appearing 
in the next subsection; see Proposition \ref{Prop_sim_W_U}.

By Definition-Proposition \ref{Def-Prop_SH} and Lemma \ref{Lem_R_V},
we have
\begin{align*}
\Hom_{\sfD(A)}(S_{i'},X_i[\ell]) &\simeq
\begin{cases}
R_i \quad \text{as left $R_i$-modules} & ((i,\ell)=(i',0)) \\
0 & ((i,\ell) \ne (i',0))
\end{cases} \simeq
\Hom_{\sfD(A)}(T_{i'},Y_i[\ell]),
\end{align*}
where $R_i:=\End_{\sfD(A)}(U_i)/\rad \End_{\sfD(A)}(U_i)$.

In the rest of this paper, 
once we write $U:=\bigoplus_{i=1}^m U_i \in \twopsilt A$,
we freely use Notations \ref{Nota_S_T} and \ref{Nota_X_Y}.

Recall that we defined 
\begin{align*}
X^+:=H^0(X), \quad X^-:=H^{-1}(X), \quad 
X_i^+:=H^0(X_i), \quad X_i^-:=H^{-1}(X_i)
\end{align*}
for each $X=\bigoplus_{i=1}^n X_i \in \twosmc A$ and $i \in \{1,\ldots,n\}$.
As in Definition-Proposition \ref{Def-Prop_stalk},
exactly one of $X_i^+$ and $X_i^-$ is a brick, and the other is zero.
Moreover $X^+$ and $X^-$ are semibricks.

The equalities \eqref{S T chara} and Proposition \ref{Prop_rad_soc} (1)
immediately give the following.

\begin{Prop}\label{Prop_T_U_T(Y^+)}
Let $U \in \twopsilt A$. 
Then we have 
\begin{align*}
\calT_U=\calT_T=\sfT(Y^+) \quad \text{and} \quad 
\calF_U=\calF_S=\sfF(X^-).
\end{align*}
\end{Prop}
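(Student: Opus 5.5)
The two equalities follow from the characterizations \eqref{S T chara} of the maximal and minimal completions, combined with Proposition \ref{Prop_rad_soc} (1) applied to the 2-term silting complexes $T$ and $S$. The plan is to treat the torsion class and the torsion-free class separately.

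\emph{Torsion class.} By the definition of the minimal completion in \eqref{S T chara}, we have $(\calT_U,\ovcalF_U)=(\calT_T,\ovcalF_T)$, and in particular $\calT_U=\calT_T$. Applying Proposition \ref{Prop_rad_soc} (1) to $T \in \twosilt A$ with $Y=\SH(T)$ (Notation \ref{Nota_X_Y}) gives $(\calT_T,\calF_T)=(\sfT(Y^+),\sfF(Y^-))$. Hence $\calT_T=\sfT(Y^+)$, and so $\calT_U=\calT_T=\sfT(Y^+)$.

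\emph{Torsion-free class.} Dually, the definition of the maximal completion gives $(\ovcalT_U,\calF_U)=(\ovcalT_S,\calF_S)$, so $\calF_U=\calF_S$. Proposition \ref{Prop_rad_soc} (1) applied to $S$ with $X=\SH(S)$ gives $(\calT_S,\calF_S)=(\sfT(X^+),\sfF(X^-))$. Therefore $\calF_U=\calF_S=\sfF(X^-)$.

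There is no real obstacle here. The only point to check is the notational compatibility: Proposition \ref{Prop_rad_soc} (1) is stated for the pair $(\calT_S,\calF_S)$, and for a silting complex this pair coincides with $(\ovcalT_S,\calF_S)$ and with $(\calT_S,\ovcalF_S)$ (by \cite[Theorem 2.12]{AIR}, or Proposition \ref{Prop_silt_tors}). So the identifications $\calT_U=\calT_T$ and $\calF_U=\calF_S$ read off from \eqref{S T chara} can be fed directly into Proposition \ref{Prop_rad_soc}.
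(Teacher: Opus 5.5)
Your proposal is correct and is exactly the paper's argument: the paper derives this proposition immediately from \eqref{S T chara} together with Proposition \ref{Prop_rad_soc} (1) applied to $T$ (with $Y=\SH(T)$) and to $S$ (with $X=\SH(S)$). Your closing remark on $(\ovcalT_S,\calF_S)=(\calT_S,\ovcalF_S)$ for silting $S$ is harmless but not needed, since \eqref{S T chara} already gives $\calT_U=\calT_T$ and $\calF_U=\calF_S$ in the form used by Proposition \ref{Prop_rad_soc} (1).
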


By Notation \ref{Nota_X_Y},
the objects $Y_i$ and $X_i$ with the same indices satisfy
the next properties.

\begin{Prop}\label{Prop_X_i^-_Y_i^+}
Let $U=\bigoplus_{i=1}^m U_i \in \twopsilt A$.
\begin{enumerate}
\item
For each $i \in \{1,\ldots,m\}$, we have $Y_i^+ \ne 0$ or $X_i^- \ne 0$.
\item
For each $j \in \{m+1,\ldots,n\}$, we have $X_j^+=Y_j^- \ne 0$.
\end{enumerate}
\end{Prop}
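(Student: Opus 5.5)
The plan is to read both parts directly off Notation \ref{Nota_X_Y} together with Definition-Proposition \ref{Def-Prop_stalk}, which says that every indecomposable summand of a 2-term simple-minded collection is a stalk complex, concentrated either in degree $0$ or in degree $-1$.

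Part (2) is immediate from Notation \ref{Nota_X_Y} (2). For $j \in \{m+1,\ldots,n\}$ we have that $X_j \in \mod A$ is a brick, so $X_j^+=H^0(X_j)=X_j \ne 0$. Also $Y_j=X_j[1]$, so $Y_j^-=H^{-1}(X_j[1])=H^0(X_j)=X_j$. Hence $X_j^+=Y_j^- \ne 0$.

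For part (1), fix $i \in \{1,\ldots,m\}$ and argue by contradiction: assume $Y_i^+=0$ and $X_i^-=0$.
\begin{enumerate}
\item By Definition-Proposition \ref{Def-Prop_stalk}, $X_i^-=0$ forces $X_i=X_i^+ \in \mod A$. Likewise $Y_i^+=0$ forces $Y_i=Y_i^-[1] \in (\mod A)[1]$, with $Y_i^- \ne 0$ a brick.
\item Take the triangle $X_i[-1] \xrightarrow{f_i} W_i \to Y_i \to X_i$ of Notation \ref{Nota_X_Y} (1), where $W_i \in \calW \subset \mod A$. Apply cohomology to its long exact sequence. Since $X_i[-1]$ is concentrated in degree $1$, we get $H^{-1}(Y_i) \simeq H^{-1}(W_i)=0$, because $W_i$ is a module.
\item This gives $Y_i^-=0$, contradicting $Y_i^- \ne 0$ from the first step.
\end{enumerate}
So $Y_i^+ \ne 0$ or $X_i^- \ne 0$.

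The only point needing care is the cohomology computation in the second step. The relevant segment of the long exact sequence is
\[
H^{-1}(X_i[-1]) \to H^{-1}(W_i) \to H^{-1}(Y_i) \to H^{0}(X_i[-1]).
\]
Both outer terms vanish, since $H^{\ell}(X_i[-1])=H^{\ell-1}(X_i)$ and $X_i$ is a module. This gives $H^{-1}(Y_i)\simeq H^{-1}(W_i)=0$, so the step goes through with no real obstacle.
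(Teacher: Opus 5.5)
Your proof is correct and is essentially the paper's argument. Both derive the contradiction from $W_i \in \mod A$ in the triangle $X_i[-1] \to W_i \to Y_i \to X_i$ of Notation \ref{Nota_X_Y} (1): the paper notes that $Y_i \to X_i$ vanishes because $\Hom_{\sfD(A)}((\mod A)[1],\mod A)=0$, so $W_i \simeq X_i[-1] \oplus Y_i$, while you read off $H^{-1}(Y_i) \simeq H^{-1}(W_i)=0$ from the long exact cohomology sequence; your part (2) is exactly the direct observation the paper calls clear.
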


\begin{proof}
(1)
Otherwise, $Y_i \in (\mod A)[1]$ and $X_i \in \mod A$ hold.
Then the last morphism of the triangle
$X_i[-1] \xrightarrow{f_i} W_i \to Y_i \to X_i$ is zero,
and we get $W_i \simeq X_i[-1] \oplus Y_i$.
This contradicts $W_i \in \Filt \{X_{m+1},\ldots,X_n\} \subset \mod A$.

(2) is clear.
\end{proof}

The modules $Y_i^+,X_i^-$ for $i \in \{1,\ldots,m\}$ 
appearing in (1) can be calculated 
by the formula in Proposition \ref{Prop_rad_soc} (2) from $S$ and $T$.
Combining with Proposition \ref{Prop_T_U_T(Y^+)},
these modules can be obtained directly from $U$.

\begin{Prop}\label{Prop_rad_soc_U}
Let $U=\bigoplus_{i=1}^m U_i \in \twopsilt A$.
For each $i \in \{1,\ldots,m\}$, we have
\begin{align*}
Y_i^+=H^0(U_i)/\sum_{f \in \rad_A(H^0(U),H^0(U_i))} \Im f, \quad 
X_i^-=\bigcap_{f \in \rad_A(H^{-1}(\nu U_i),H^{-1}(\nu U))} \Ker f,
\end{align*}
and hence the semibricks $Y^+$ and $X^-$ are
\begin{align*}
Y^+=H^0(U)/\rad_{\End_A(H^0(U))}H^0(U), \quad
X^-=\soc_{\End_A(H^{-1}(\nu U))}H^{-1}(\nu U).
\end{align*}
\end{Prop}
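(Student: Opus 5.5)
We apply Proposition \ref{Prop_rad_soc} (2) to the silting complexes $T$ and $S$, and then use the relation between $T$, $S$ and $U$ to replace $T$ and $S$ by $U$ in the formulas. We treat $Y_i^+$; the statement for $X_i^-$ is the dual argument with $S$ and $\nu$.

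For $i \in \{1,\ldots,m\}$ we have $T_i=U_i$ by Notation \ref{Nota_S_T}. Proposition \ref{Prop_rad_soc} (2) for $T$ and $Y=\SH(T)$ gives
\begin{align*}
Y_i^+=H^0(U_i)/\sum_{f \in \rad_A(H^0(T),H^0(U_i))} \Im f.
\end{align*}
So we must show that the submodule $\sum_{f \in \rad_A(H^0(T),H^0(U_i))}\Im f$ equals $\sum_{f \in \rad_A(H^0(U),H^0(U_i))}\Im f$.

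We write $H^0(T)=H^0(U)\oplus H^0(T/U)$. The inclusion $\supseteq$ is clear, since radical maps from $H^0(U)$ extend by zero to radical maps from $H^0(T)$.

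For $\subseteq$, the key input is the triangle $S_j \xrightarrow{f} U'_j \to T_j \to S_j[1]$ with $U'_j \in \add U$, for $j>m$. Applying $H^0$ gives an exact sequence $H^0(U'_j) \to H^0(T_j) \to H^0(S_j[1])=0$, since $S_j$ is 2-term. Hence $H^0(T_j)$ is a quotient of $H^0(U'_j)\in\add H^0(U)$. Any morphism $g\colon H^0(T_j)\to H^0(U_i)$ therefore has image equal to the image of its composite with $p\colon H^0(U'_j)\twoheadrightarrow H^0(T_j)$, which is a morphism from $\add H^0(U)$.

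The main obstacle is radicality: we need $g\circ p$ to be a sum of radical maps $H^0(U)\to H^0(U_i)$. Suppose some component $H^0(U_k)\to H^0(U_i)$ of $g\circ p$ is not radical. Then $k=i$ and that component is an isomorphism, so $H^0(U_i)$ is a direct summand of $H^0(T_j)$ via $g$. I would rule this out using minimality of the left $\add U$-approximation together with $T_j\notin\add U$. The alternative, if this fails directly, is to avoid the question: $\calT_U=\calT_T$ already implies $\Fac H^0(T)=\Fac H^0(U)$. Then $\rad_{\End}$ of the projective generator of the torsion class, i.e.\ the Ext-projectives, is compared through the standard fact that the brick $Y_i^+$ is the top quotient of $H^0(U_i)$ by the trace of all non-isomorphic Ext-projective summands. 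The $H^0(T_j)$ for $j>m$ are then either zero or in $\Fac$ of $H^0(U)$ with radical maps only.

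Dually, $X_i^-$ comes from $S$ with $S_i=U_i$. There is a triangle $\nu S_j\to\nu U'_j\to\nu T_j$, and $H^{-1}(\nu S_j)$ embeds into $H^{-1}(\nu U'_j)$ because $H^{-1}(\nu T_j[-1])=0$. The same argument then reduces the intersection of kernels over $H^{-1}(\nu S)$ to one over $H^{-1}(\nu U)$.

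Finally, we take direct sums over $i$. By the definition of $\rad_{\End_A(H^0(U))}$, this yields $Y^+=H^0(U)/\rad_{\End_A(H^0(U))}H^0(U)$, and dually $X^-=\soc_{\End_A(H^{-1}(\nu U))}H^{-1}(\nu U)$.
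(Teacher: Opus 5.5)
Your overall strategy matches the paper's. You apply Proposition \ref{Prop_rad_soc} (2) to $T$ (resp.\ $S$), using $T_i=U_i=S_i$ for $i\le m$. You then use that, for $j>m$, $H^0(T_j)$ is a quotient of an object of $\add H^0(U)$, and $H^{-1}(\nu S_j)$ is a submodule of an object of $\add H^{-1}(\nu U)$. The paper gets these facts from $\calT_T=\calT_U$ and $\calF_S=\calF_U$ (Proposition \ref{Prop_T_U_T(Y^+)}). Your derivation from the triangle $S_j\to U'_j\to T_j\to S_j[1]$ is equally valid. However, you leave unproved the step you call the main obstacle, and the tool you propose for it, minimality of the left $\add U$-approximation, is not what settles it.

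The missing observation is that you never need an arbitrary $g\colon H^0(T_j)\to H^0(U_i)$. The only $g$ that matter are components of some $f\in\rad_A(H^0(T),H^0(U_i))$, so $g$ is itself radical. Since $\rad_A$ is a two-sided ideal of $\mod A$, the composite $g\circ p$ is radical, and so is each of its components $H^0(U_k)\to H^0(U_i)$. Each such component extends by zero to an element of $\rad_A(H^0(U),H^0(U_i))$ with the same image. Hence $\Im g=\Im(g\circ p)\subseteq\sum_{f\in\rad_A(H^0(U),H^0(U_i))}\Im f$, and the non-radical component you try to exclude cannot occur.

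If you did want to handle arbitrary $g$, the relevant input would be different. You would need that $H^0(T_j)$ is zero or indecomposable and not isomorphic to $H^0(U_i)$. This follows from the correspondence between indecomposable 2-term presilting complexes and $\tau$-rigid modules, together with $T$ being basic. Minimality of the approximation plays no role.

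The same remark closes the dual argument. For radical $g\colon H^{-1}(\nu U_i)\to H^{-1}(\nu S_j)$, compose with the monomorphism $H^{-1}(\nu S_j)\hookrightarrow H^{-1}(\nu U'_j)$. This leaves the kernel unchanged and keeps the map radical. Then take the intersection of the kernels of its components into the summands $H^{-1}(\nu U_k)$. With this fix your proof is complete, and you can drop the vague alternative about Ext-projectives.
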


Note that this proposition does not provide information on 
$Y_i^-,X_i^+$ for $i \in \{1,\ldots,m\}$ or
$X_j^+=Y_j^-$ for $j \in \{m+1,\ldots,n\}$.

\subsection{The $\tau$-tilting reduction and the associated algebra $B$}
\label{subsec reduc}

In this subsection, we explain the useful method
\emph{$\tau$-tilting reduction} introduced by Jasso \cite{Jasso}.
We use the maximal/minimal completions $S$ and $T$ of $U$ also here.

One of the things which $\tau$-tilting reduction deals with 
is the interval 
\begin{align*}
[\calT_U,\ovcalT_U]=
\{\calT \in \tors A \mid \calT_U \subset \calT \subset \ovcalT_U\}
\end{align*}
in the poset $\tors A$ of torsion classes with respect to inclusions.
This interval coincides with $[\calT_T,\calT_S]=[\ovcalT_T,\ovcalT_S]$.
To describe this interval, 
Jasso considered the \emph{$\tau$-perpendicular category}
\begin{align*}
\calW_U:=\ovcalT_U \cap \ovcalF_U
\end{align*}
and the algebra
\begin{align}\label{def B}
B=B_U:=\End_A(H^0(S))/\langle e \rangle,
\end{align}
where $e$ is the idempotent endomorphism
$H^0(S) \to H^0(U) \to H^0(S)$.

These satisfy the following properties.
Here, we call a subcategory $\calC \subset \mod A$ is 
a \emph{wide subcategory} if $\calC$ is closed under 
taking kernels, cokernels and extensions in $\mod A$;
or equivalently, if $\calC$ is an abelian subcategory 
closed under extensions in $\mod A$.

\begin{Prop}\label{Prop_reduc}(cf.~\cite[Theorem 4.12]{DIRRT})
Let $U \in \twopsilt A$.
\begin{enumerate}
\item
\cite[Proposition 3.6]{Jasso}
The category $\calW_U$ is a wide subcategory of $\mod A$.
\item
\cite[Theorem 3.8]{Jasso}
We have an equivalence
\begin{align*}
\Phi:=\Hom_A(H^0(S),?) \colon \calW_U \to \mod B
\end{align*}
of abelian categories.
\item
\cite[Theorems 3.12, 3.14]{Jasso}
We have isomorphisms
\begin{align*}
[\calT_U,\ovcalT_U] \to \tors B, \quad
[\calT_U,\ovcalT_U] \cap \ftors A \to \ftors B
\end{align*}
of posets, both given by $\calT \mapsto \Phi(\calT \cap \calW_U)$.
\end{enumerate}
\end{Prop}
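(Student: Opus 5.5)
The plan is to prove the three parts in order. Throughout, everything is reduced to the torsion pairs $(\calT_U,\ovcalF_U)$ and $(\ovcalT_U,\calF_U)=(\calT_S,\calF_S)$ and to the standard Hom-vanishing characterizations for 2-term complexes. For $M\in\mod A$ these read
\begin{align*}
\Hom_A(H^0(U),M)\simeq\Hom_{\sfD(A)}(U,M), \qquad
\Hom_{\sfD(A)}(U,M[1])\simeq D\Hom_A(M,H^{-1}(\nu U)).
\end{align*}
Consequently
\begin{align*}
\calW_U=\{M\in\mod A \mid \Hom_{\sfD(A)}(U,M[\ell])=0 \text{ for } \ell=0,1\},
\end{align*}
and $\Hom_{\sfD(A)}(U,M[\ell])=0$ automatically for $\ell\notin\{0,1\}$, because $U$ is 2-term.

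For (1), let $f\colon M\to N$ be a morphism in $\calW_U$, with kernel $K$, image $I$ and cokernel $C$.
\begin{itemize}
\item Since $\ovcalT_U$ is closed under factor modules, $I$ and $C$ lie in $\ovcalT_U$.
\item Since $\ovcalF_U$ is closed under submodules, $K$ and $I$ lie in $\ovcalF_U$. In particular $I\in\calW_U$.
\item Applying $\Hom_{\sfD(A)}(U,?)$ to $0\to K\to M\to I\to 0$ gives the exact sequence $0=\Hom(U,I)\to\Hom(U,K[1])\to\Hom(U,M[1])=0$. Hence $K\in\ovcalT_U$, so $K\in\calW_U$.
\item Applying the same functor to $0\to I\to N\to C\to 0$ gives $0=\Hom(U,N)\to\Hom(U,C)\to\Hom(U,I[1])=0$. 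Hence $C\in\ovcalF_U$, so $C\in\calW_U$.
\end{itemize}
Closure under extensions is immediate, since $\calW_U$ is the intersection of a torsion class and a torsion-free class.

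For (2), I would use that $H^0(S)$ is an Ext-projective generator of $\calT_S=\ovcalT_U\supset\calW_U$. The functor $\Hom_A(H^0(S),?)$ is exact on $\calT_S$. It is also fully faithful there, by the usual argument: take an $\add H^0(S)$-presentation of an object of $\Fac H^0(S)$ whose first syzygy still lies in $\calT_S$.

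Next, for $W\in\calW_U$ we have $\Hom_A(H^0(U),W)=0$. Hence the idempotent $e$ acts as zero on $\Phi(W)$, so $\Phi$ lands in $\mod B$.

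The step I expect to be the main obstacle is essential surjectivity onto $\mod B$. My plan here is as follows.
\begin{itemize}
\item Each projective $B$-module $\Hom_A(H^0(S),H^0(S_j))/(\text{maps factoring through }\add H^0(U))$ is realized as $\Phi$ of the torsion-free quotient $H^0(S_j)/t_U H^0(S_j)$ with respect to $(\calT_U,\ovcalF_U)$. This quotient lies in $\calW_U$, because $\ovcalT_U$ is closed under factor modules.
\item $\calW_U$ is abelian by (1) and $\Phi$ is exact and fully faithful on it. Hence the image of $\Phi$ is closed under cokernels and contains all projectives, so it is all of $\mod B$.
\end{itemize}

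For (3), the map $\calT\mapsto\Phi(\calT\cap\calW_U)$ lands in $\tors B$, because $\calW_U$ is wide and $\Phi$ is exact. For the inverse, I send $\calG\in\tors B$ to $\Filt(\calT_U\cup\Phi^{-1}(\calG))$. To show the two maps are mutually inverse, I use canonical sequences with respect to $(\calT_U,\ovcalF_U)$. For $M\in\calT\subset\ovcalT_U$, the torsion part of $M$ lies in $\calT_U$. The torsion-free part lies in $\calT\cap\ovcalF_U\subset\calW_U$, since $\calT$ is closed under factor modules. Hence $\calT$ is recovered from $\calT_U$ together with $\calT\cap\calW_U$, and monotonicity in both directions gives the poset isomorphism.

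For the functorially finite version, I would combine Proposition \ref{Prop_silt_tors} with Lemma \ref{Lem_T_U_T_V}. These identify $[\calT_U,\ovcalT_U]\cap\ftors A$ with the 2-term silting complexes having $U$ as a direct summand. These in turn correspond, via the reduction $\Hom_A(H^0(S),?)$ applied to $H^0$ of the completions, to $\twosilt B$. This last compatibility is the part I would follow Jasso's argument most closely for.
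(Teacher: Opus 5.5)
Your architecture is the standard one behind the results the paper only cites (Jasso, DIRRT), and part (1) is complete as written.

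The genuine gap is in (3). The canonical-sequence argument proves $\calT=\calT_U*(\calT\cap\calW_U)$, so $\calT\mapsto\Phi(\calT\cap\calW_U)$ is an order embedding. But surjectivity onto $\tors B$ is never shown, and ``monotonicity in both directions'' does not supply it. With your candidate inverse you would need $\Filt(\calT_U\cup\mathcal{G}')\cap\calW_U=\mathcal{G}'$ for $\mathcal{G}'=\Phi^{-1}(\mathcal{G})$. This does not follow by peeling off filtration factors, since objects of $\calW_U$ can have nonzero quotients in $\calT_U$: in Example \ref{Ex_A4_D(U)}, $\begin{smallmatrix}3\\4\end{smallmatrix}\in\calW_U$ maps onto $\begin{smallmatrix}3\end{smallmatrix}\in\calT_U$. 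One needs the exchange $\mathcal{G}'*\calT_U\subseteq\calT_U*\mathcal{G}'$, which uses wideness.

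A cleaner inverse is available. Let $\mathcal{H}'$ be the torsion-free class of $\calW_U$ paired with $\mathcal{G}'$, and put $\calT:={}^\perp(\calF_U\cup\mathcal{H}')$.
\begin{itemize}
\item It is a torsion class.
\item It contains $\calT_U$, because $\Hom_A(\calT_U,\calF_U)=0=\Hom_A(\calT_U,\mathcal{H}')$.
\item It lies in ${}^\perp\calF_U=\ovcalT_U$.
\item $\calT\cap\calW_U={}^\perp\mathcal{H}'\cap\calW_U=\mathcal{G}'$, since $\calW_U\subseteq{}^\perp\calF_U$.
\end{itemize}

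The functorially finite half of (3) is also not proved. Matching $[\calT_U,\ovcalT_U]\cap\ftors A$ with $\twosilt_U A$ is fine, but what remains is exactly Proposition \ref{Prop_reduc_silt} (b) and (d), which you defer to Jasso. Only one direction is formal: if $\calT=\Fac Z$ (as every functorially finite torsion class is), then $Z/t_UZ\in\calW_U$ and $\Phi(\calT\cap\calW_U)=\Fac\Phi(Z/t_UZ)$. The converse, that the torsion class attached to a functorially finite $\mathcal{G}$ is functorially finite in $\mod A$, is the substantive part and is not addressed.

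In (2), one step is asserted without proof and is not formal. Exactness only gives
$\Phi(H^0(S_j)/t_UH^0(S_j))\cong\Hom_A(H^0(S),H^0(S_j))/\Hom_A(H^0(S),t_UH^0(S_j))$.
To identify this with the projective $B$-module, you must show that every map $H^0(S)\to t_UH^0(S_j)$ factors through $\add H^0(U)$. Here is how:
\begin{itemize}
\item Take a right $\add H^0(U)$-approximation $0\to K\to M_0\to t_UH^0(S_j)\to 0$.
\item Consider the exact sequence $\Hom_{\sfD(A)}(U,M_0)\to\Hom_{\sfD(A)}(U,t_UH^0(S_j))\to\Hom_{\sfD(A)}(U,K[1])\to\Hom_{\sfD(A)}(U,M_0[1])$. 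The first map is onto, and the last term vanishes because $M_0\in\ovcalT_U$.
\item Hence $K\in\ovcalT_U=\calT_S$, so $\Ext^1_A(H^0(S),K)=0$ and the factorization follows.
\end{itemize}
This again uses the maximality of $S$. The same argument with $S$ in place of $U$ proves the syzygy claim underlying your full faithfulness.
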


The property (1) means that $[\calT_U,\ovcalT_U]$
is a \emph{wide interval} in $\tors A$ in the sense of \cite{AP}.

Moreover, $\tau$-tilting reduction also deals with the sets
\begin{align*}
\twosilt_U A:=\{ V \in \twosilt A \mid U \in \add V \}, \quad
\twopsilt_U A:=\{ V \in \twopsilt A \mid U \in \add V\}
\end{align*}
of 2-term (pre)silting complexes in $\sfK^\rmb(\proj A)$
containing $U$ as a direct summand.
These sets are related to the interval $[\calT_U,\ovcalT_U]$,
because $V \in \twopsilt A$ satisfies $U \in \add V$
if and only if $[\calT_V,\ovcalT_V] \subset [\calT_U,\ovcalT_U]$
by Lemma \ref{Lem_T_U_T_V}.
Then we have the following properties.

\begin{Prop}\label{Prop_reduc_silt}
\cite[Theorems 3.16, 4.12]{Jasso}
\cite[Theorem 4.11]{AHIKM}
\cite[Theorem A.7]{BY}
Let $U \in \twopsilt A$.
There exists a triangle functor 
$\reduc \colon \sfK^\rmb(\proj A) \to \sfK^\rmb(\proj B)$
which satisfies the following properties.
\begin{enumerate2}
\item
We have $\reduc(S)=B$, $\reduc(U)=0$ and $\reduc(T)=B[1]$. 
\item
The functor $\reduc$ induces bijections
\begin{align*}
\reduc \colon \twosilt_U A \to \twosilt B, \quad 
\reduc \colon \twopsilt_U A \to \twopsilt B,
\end{align*}
preserving indecomposable direct summands which are not in $\add U$.
\item
The bijection $\reduc \colon \twosilt_U A \to \twosilt B$
induces an isomorphism $Q(\twosilt_U A) \to Q(\twosilt B)$
between the exchange quivers. 
\item
For any $V \in \twopsilt_U A$, we have
\begin{align*}
\Phi(\calT_V \cap \calW_U)=\calT_{\reduc(V)}, \quad
\Phi(\calF_V \cap \calW_U)=\calF_{\reduc(V)}.
\end{align*}
\end{enumerate2}
\end{Prop}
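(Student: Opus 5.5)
This proposition collects results from \cite{Jasso,AHIKM,BY}, so I would not reprove it from scratch. Instead I would assemble it from silting reduction in the sense of Iyama--Yang, which is the approach of \cite[Theorem 4.11]{AHIKM}.

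\emph{Construction of $\reduc$.} Consider the Verdier quotient $\sfK^\rmb(\proj A)\to\sfK^\rmb(\proj A)/\thick U$, and let $\overline{S}$ be the image of the maximal completion $S$. Silting reduction identifies presilting objects of the quotient with presilting objects of $\sfK^\rmb(\proj A)$ containing $U$. It also identifies $\End(\overline{S})$ with $\End_{\sfK^\rmb(\proj A)}(S)/[\add U]$.

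\begin{enumerate}
\item First I would check that, because $S$ and $U$ are 2-term, the functor $H^0$ induces an isomorphism
\begin{align*}
\End_{\sfK^\rmb(\proj A)}(S)/[\add U]\simeq\End_A(H^0(S))/\langle e\rangle=B.
\end{align*}
Surjectivity of $\End(S)\to\End_A(H^0(S))$ holds for 2-term complexes. The point is that its kernel, together with the maps factoring through $\add U$, is exactly the preimage of $\langle e\rangle$. For this I would use that $S$ is the Bongartz completion, so that maps $S\to S[1]$ and the relevant radical maps factor through $U$.
\item Next I would show that the quotient is triangle equivalent to $\sfK^\rmb(\proj B)$ with $\overline{S}\mapsto B$. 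This needs $\overline{S}$ to be a tilting object, i.e.\ $\Hom(\overline{S},\overline{S}[\ell])=0$ for all $\ell\ne0$. It also needs an enhancement so that a Keller/Rickard-type realization applies.
\end{enumerate}

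I expect step 2 to be the main obstacle. Vanishing for $\ell>0$ is inherited from $S$, but vanishing for $\ell<0$ requires using that $\Hom(S,S[-1])$ is killed in the quotient modulo $\thick U$. I would try to prove this via the approximation triangles of Notation \ref{Nota_S_T}. I would then define $\reduc$ as the composite of the quotient functor with this equivalence.

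\emph{Property (a).} We have $\reduc(U)=0$ by construction and $\reduc(S)=B$ by the choice of equivalence. Applying $\reduc$ to the triangles $S_j\to U'_j\to T_j\to S_j[1]$ of Notation \ref{Nota_S_T} and using $\reduc(U'_j)=0$ gives $\reduc(T_j)\simeq\reduc(S_j)[1]$. Since also $T_i=U_i$ for $i\le m$, this yields $\reduc(T)=B[1]$.

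\emph{Property (b).} Silting reduction gives the bijection between presilting (resp.\ silting) objects containing $U$ and those of the quotient, and it preserves indecomposable summands outside $\add U$. To restrict this to 2-term objects, I would characterize $V\in\twopsilt_U A$ as those presilting $V\supset U$ with $V\in\add S*\add S[1]$, equivalently lying between $S$ and $T$ in the silting order. Applying $\reduc$ then shows this condition matches $\reduc(V)\in\add B*\add B[1]$, using (a).

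\emph{Property (c).} This follows from (b). Mutation is defined by sharing $n-1$ indecomposable summands, and $\reduc$ preserves summands outside $\add U$. The direction of an arrow (left versus right) is detected by the silting order, which $\reduc$ preserves since it is induced by $\Hom(?,?[>0])$.

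\emph{Property (d).} I would compute $H^0(\reduc V)$ via $\Phi=\Hom_A(H^0(S),?)$ on $\calW_U$. The key facts are that $\Fac H^0(V)\cap\calW_U$ is the torsion class generated by the $\calW_U$-part of $H^0(V)$, and that $\Phi$ carries this part to $H^0(\reduc V)$. Combining these with the lattice isomorphism of Proposition \ref{Prop_reduc}(3) gives $\Phi(\calT_V\cap\calW_U)=\calT_{\reduc(V)}$. The statement for $\calF$ is dual, via $\nu$.
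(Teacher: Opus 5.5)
Your Step~2 is where the argument breaks. You want the Verdier quotient $\mathcal{U}:=\sfK^\rmb(\proj A)/\thick U$ to be triangle equivalent to $\sfK^\rmb(\proj B)$ via $\overline{S}\mapsto B$. This fails in general: $\overline{S}$ is silting but usually not tilting in $\mathcal{U}$, and the vanishing of $\Hom_{\mathcal{U}}(\overline{S},\overline{S}[\ell])$ for $\ell<0$ that you hope to extract from the approximation triangles is false.

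Already for $U=eA$ projective one has $S=A$ and $B=A/AeA$. Up to direct summands, $\mathcal{U}$ is $\mathrm{per}$ of the derived quotient $A/^{\mathbb{L}}AeA$. Hence $\Hom_{\mathcal{U}}(\overline{A},\overline{A}[-1])\simeq\Ker(Ae\otimes_{eAe}eA\to A)$, which need not vanish.

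For instance, let $A$ be given by arrows $\alpha\colon 1\to 2$, $\beta\colon 2\to 1$ with the relation $\alpha\beta=0$ (the path $1\to 2\to 1$), and let $U=P(2)=e_2A$. Then $S=A$ and $B\simeq K$. Over $e_2Ae_2\simeq K[x]/(x^2)$ we have $Ae_2\simeq e_2Ae_2\oplus K\alpha$ and $e_2A\simeq e_2Ae_2\oplus K\beta$, so $\alpha\otimes\beta$ is a nonzero element of that kernel. Thus $\overline{A}$ is an indecomposable object of $\mathcal{U}$ (its endomorphism ring is $B\simeq K$) with a nonzero negative self-extension. So $\mathcal{U}$ is not triangle equivalent to $\sfK^\rmb(\proj K)$ at all.

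The proposition only asserts a triangle \emph{functor}, and that is the role \cite[Theorem A.7]{BY} plays in the citation. Since $\overline{S}$ is silting in $\mathcal{U}$, the dg enhancement realizes $\mathcal{U}$ (up to summands) as $\mathrm{per}(\mathcal{E})$ for a dg algebra $\mathcal{E}$ with $H^{>0}(\mathcal{E})=0$ and $H^0(\mathcal{E})\simeq\End_{\mathcal{U}}(\overline{S})\simeq B$. This is where your Step~1 is used. After replacing $\mathcal{E}$ by $\tau_{\le 0}\mathcal{E}$ there is a dg morphism $\mathcal{E}\to H^0(\mathcal{E})$. Then $\reduc$ is the quotient functor followed by $-\otimes^{\mathbb{L}}_{\mathcal{E}}H^0(\mathcal{E})$.

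Because this functor is not an equivalence, your later steps must change too.
\begin{itemize}
\item In (b), surjectivity onto $\twopsilt B$ is the content of the reduction theorem for non-positive dg algebras. It does not come from pulling back $\add B*\add B[1]$ along an equivalence.
\item In (c), a non-full triangle functor can create new morphisms $\reduc(X)\to\reduc(Y)[\ell]$, so preservation of the silting order is not automatic. Get the arrow directions instead from (d) together with the lattice isomorphism of Proposition~\ref{Prop_reduc}~(3).
\end{itemize}
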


The isomorphism $Q(\twosilt_U A) \to Q(\twosilt B)$ in (c)
is compatible with the brick labeling of the exchange quivers 
in Definition \ref{Def_exchange}.

\begin{Prop}\label{Prop_reduc_label}
\cite[Theorems 2.21, 3.12]{A-semi}
Let $U \in \twopsilt A$.
If a brick $L$ is the label of an arrow $V_1 \to V_2$ 
in $Q(\twosilt_U A)$,
then we have $L \in \calW_U$, 
and $\Phi(L)$ is the label of the arrow $\reduc(V_1) \to \reduc(V_2)$ 
in $Q(\twosilt B)$.
\end{Prop}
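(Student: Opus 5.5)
This proposition is cited from \cite{A-semi}, so my plan is to rederive it from the results already recalled.

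Let $V_1 \to V_2$ be an arrow in $Q(\twosilt_U A)$. Then $V_2$ is the left mutation of $V_1$ at some indecomposable summand $V_{1,j} \notin \add U$. By Definition-Proposition \ref{Def-Prop_adjacent}, we have $\calT_{V_2} \subsetneq \calT_{V_1}$, and these two torsion classes are adjacent in $\tors A$. The label $L$ is the brick $X_j^+$ for $X=\SH(V_1)$.

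\emph{Step 1: $L \in \calW_U$.} The plan is to characterize $L$ intrinsically: it is the unique brick lying in $\calT_{V_1} \cap \calT_{V_2}^\perp$, i.e. in $\calT_{V_1} \cap \calF_{V_2}$. This uses the brick labeling of adjacent torsion classes \cite{DIJ,DIRRT}. In fact $\calT_{V_1}\cap\calF_{V_2}=\Filt L$.

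Since $V_1,V_2 \in \twosilt_U A$, Lemma \ref{Lem_T_U_T_V} gives $\calT_{V_1}\subset \ovcalT_U$ and $\calF_{V_2} \subset \ovcalF_U$. These come from $\calT_U\subset \calT_{V_2}$ and $\calF_U \subset \calF_{V_1}$ by taking perpendiculars, using $\ovcalT_U={}^\perp\calF_U$. Hence $L \in \ovcalT_U\cap\ovcalF_U=\calW_U$.

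\emph{Step 2: transport to $B$.} Apply Proposition \ref{Prop_reduc}(3). The torsion classes $\calT_{V_2}\subsetneq\calT_{V_1}$ lie in $[\calT_U,\ovcalT_U]$. The poset isomorphism sends them to $\calT_{\reduc(V_2)}\subsetneq \calT_{\reduc(V_1)}$, by Proposition \ref{Prop_reduc_silt}(d). Adjacency is preserved by a poset isomorphism, and Proposition \ref{Prop_reduc_silt}(c) confirms that $\reduc(V_1)\to\reduc(V_2)$ is an arrow.

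The label of this arrow is the unique brick in $\calT_{\reduc(V_1)} \cap \calF_{\reduc(V_2)}$. Note that $\Phi$ is an exact equivalence $\calW_U\to\mod B$, so it preserves bricks. Also $\Phi(L)\in \Phi(\calT_{V_1}\cap\calW_U)\cap\Phi(\calF_{V_2}\cap\calW_U)$, which equals $\calT_{\reduc(V_1)}\cap\calF_{\reduc(V_2)}$ by Proposition \ref{Prop_reduc_silt}(d). By uniqueness, $\Phi(L)$ is the label.

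\emph{Main obstacle.} The delicate point is the intrinsic characterization of the label in Step 1. This is the identification of $X_j^+$ with the unique brick in $\calT_{V_1}\cap\calF_{V_2}$, i.e. the compatibility of \cite[Theorem 3.3]{A-semi} with the \cite{DIJ} labeling. I would prove it via Proposition \ref{Prop_rad_soc}(1). We have $\calT_{V_1}=\sfT(X^+)$, and the left mutation removes exactly $X_j^+$, so $\calT_{V_2}=\sfT(X^+/X_j^+)$. Then $X_j^+\in\calT_{V_1}$, and $X_j^+ \in \calT_{V_2}^\perp$ follows from $\Hom(X_i^+,X_j^+)=0$ for $i\neq j$ together with the brick property.
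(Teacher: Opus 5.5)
Your Steps 1 and 2 are sound. The inclusions $\calT_{V_1}\subset\ovcalT_U$ and $\calF_{V_2}\subset\ovcalF_U$ give $L\in\calW_U$. Jasso's interval isomorphism, together with Proposition \ref{Prop_reduc_silt} (d) and the uniqueness of the brick attached to a cover $\calT'\subsetneq\calT$ in $\tors B$, then transports the label correctly.

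The gap is in your ``main obstacle'' paragraph, which is what identifies $X_j^+$ (defined via $\SH(V_1)$) with the brick of the cover $\calT_{V_2}\subsetneq\calT_{V_1}$. The equality $\calT_{V_2}=\sfT(X^+/X_j^+)$ is false in general; only $\sfT(X^+/X_j^+)\subset\calT_{V_2}$ holds. For a counterexample, take $A=K(1\to 2)$ as in the example after Definition \ref{Def_exchange}, with $V_1=A$ and $j=2$, so that $V_2=P(1)\oplus V$ where $V=(P(2)\to P(1))$.
\begin{itemize}
\item Then $X^+/X_2^+=L(1)$, and every module in $\sfT(L(1))$ has only $L(1)$ as composition factors.
\item But $\calT_{V_2}=\Fac(P(1)\oplus L(1))$ contains $P(1)$, which has $L(2)$ as a composition factor.
\end{itemize}
Consequently, $\Hom_A(X_i^+,X_j^+)=0$ for $i\ne j$ only yields $X_j^+\in\sfT(X^+/X_j^+)^\perp$. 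This is strictly weaker than the needed $X_j^+\in\calT_{V_2}^\perp=\calF_{V_2}$.

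The repair is short and uses only results recalled in the paper.
\begin{itemize}
\item By Definition-Proposition \ref{Def-Prop_adjacent} (2)(a), $\calT_{V_2}=\calT_{V_1/V_{1,j}}=\Fac H^0(V_1/V_{1,j})$, so $\calF_{V_2}=H^0(V_1/V_{1,j})^\perp$.
\item Definition-Proposition \ref{Def-Prop_SH} gives $\Hom_{\sfD(A)}(V_1/V_{1,j},X_j)=0$.
\item Since $X_j=X_j^+$ is a module and $V_1/V_{1,j}$ is a 2-term complex of projectives, this group equals $\Hom_A(H^0(V_1/V_{1,j}),X_j^+)$. Hence $X_j^+\in\calF_{V_2}$; this is exactly the argument in the proof of Lemma \ref{Lem_Y_W_almost}.
\item Proposition \ref{Prop_rad_soc} (2) gives $X_j^+\in\Fac H^0(V_{1,j})\subset\calT_{V_1}$, and $X_j^+$ is a brick.
\end{itemize}
Uniqueness of the brick in $\calT_{V_1}\cap\calF_{V_2}$ then identifies $X_j^+$ with it. The same argument over $B$ identifies the label of $\reduc(V_1)\to\reduc(V_2)$, and the rest of your proof goes through unchanged. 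The paper gives no proof of this statement, only the citation to \cite{A-semi}; with this fix, your route is a valid self-contained derivation from the brick labeling of covers in $\tors A$.
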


Let $U=\bigoplus_{i=1}^m U_i \in \twopsilt A$.
Under Notations \ref{Nota_S_T} and \ref{Nota_X_Y},
$S_{m+1},\ldots,S_n$ are the indecomposable direct summands of $S/U$,
and $\reduc(S_{m+1}),\ldots,\reduc(S_n)$ are
the non-isomorphic indecomposable projective $B$-modules
by Proposition \ref{Prop_reduc_silt} (a)(b).
Corresponding to this, the indecomposable direct summands 
$X_{m+1},\ldots,X_n$ of $X=\SH(S)$ satisfy the properties below 
by Propositions \ref{Prop_reduc_silt} (c) and \ref{Prop_reduc_label}. 
A detailed proof will be given in \cite{A-smc}.

\begin{Prop}\label{Prop_sim_W_U} 
Let $U=\bigoplus_{i=1}^m U_i \in \twopsilt A$, 
and use Notation \ref{Nota_X_Y}.
\begin{enumerate}
\item
Let $j \in \{m+1,\ldots,n\}$. 
Then $X_j \in \calW_U$ holds.
Moreover, $\Phi(X_j) \in \mod B$ is the simple top of 
the indecomposable projective $B$-module $\reduc(S_j)$. 
\item
We have 
\begin{align*}
\simple \calW_U=\{X_{m+1},\ldots,X_n\}, \quad
\calW_U=\Filt \{X_{m+1},\ldots,X_n\}.
\end{align*}
\end{enumerate}
\end{Prop}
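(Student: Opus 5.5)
The plan is to pass everything through the $\tau$-tilting reduction and use that Proposition \ref{Prop_reduc_silt} and Proposition \ref{Prop_reduc_label} already give the combinatorics of $\twosilt_U A$ in terms of $\twosilt B$. For (1), I first show $X_j\in\calW_U$ for $j\in\{m+1,\ldots,n\}$. By Definition-Proposition \ref{Def-Prop_adjacent}, $X_j$ is a brick in $\mod A$ (Notation \ref{Nota_X_Y}(2)), so $X_j^-=0$ and $X_j^+\neq 0$. Hence $X_j=X_j^+$ is the label of the arrow $S\to\mu_j^-(S)$, the left mutation of $S$ at $S_j$. This mutation keeps $U$ as a summand, since $j>m$. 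Therefore the arrow lies in $Q(\twosilt_U A)$, and Proposition \ref{Prop_reduc_label} gives $X_j\in\calW_U$. The same proposition also says that $\Phi(X_j)$ labels the arrow $\reduc(S)=B\to\reduc(\mu_j^-(S))$ in $Q(\twosilt B)$, and by Proposition \ref{Prop_reduc_silt}(b) this is the left mutation of $B$ at $\reduc(S_j)$. For the silting complex $B$ we have $\SH(B)=\bigoplus$ of the simple $B$-modules, with indices compatible with Definition-Proposition \ref{Def-Prop_SH}. So the label of the mutation of $B$ at $\reduc(S_j)$ is the simple top of $\reduc(S_j)$, and (1) follows.

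For (2), note that $\Phi\colon\calW_U\to\mod B$ is an exact equivalence of abelian categories (Proposition \ref{Prop_reduc}(1)(2)). It therefore induces a bijection $\simple\calW_U\to\simple(\mod B)$ and identifies filtration closures. By (1), $\Phi$ sends $X_{m+1},\ldots,X_n$ to the $n-m=|B|$ pairwise non-isomorphic simple $B$-modules, which are all the simple $B$-modules. Hence $\simple\calW_U=\{X_{m+1},\ldots,X_n\}$. Every object of $\mod B$ has a composition series, and $\calW_U$ is a wide subcategory, so filtrations in $\calW_U$ are filtrations in $\mod A$. This gives $\calW_U=\Filt\{X_{m+1},\ldots,X_n\}$. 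Here the filtration closure is taken in $\sfD(A)$, but the two closures agree because $\calW_U$ is extension-closed in $\mod A$, and $\mod A$ is extension-closed in $\sfD(A)$.

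The main obstacle is the index bookkeeping in (1). One must check that the brick labeling of the arrow at the $j$-th summand really carries the index $j$ of $\SH(S)$ as fixed by Definition-Proposition \ref{Def-Prop_SH}. One must also check that the bijection $\reduc$ carries the summand $S_j$ to $\reduc(S_j)$, so that the label at $B$ is the top of $\reduc(S_j)$ rather than some other simple. I would settle the first point directly from Definition-Proposition \ref{Def-Prop_adjacent}(2)(a), which ties $X_j^+$ to mutation at $S_j$. The second point follows from (b), since $\reduc$ preserves indecomposable summands outside $\add U$, and the mutation of $B$ at $\reduc(S_j)$ is labelled by the simple $L$ with $\Hom(\reduc(S_j),L)\neq0$.
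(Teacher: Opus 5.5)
Your proposal is correct and takes the paper's route. The paper only says the statement follows from Propositions \ref{Prop_reduc_silt} (c) and \ref{Prop_reduc_label}, deferring details to \cite{A-smc}; you supply exactly that argument, transporting the labels $X_j$ of the arrows $S \to \mu_j^-(S)$ to the simple labels of $B \to \mu_j^-(B)$, and then using the equivalence $\Phi$ for (2). If you prefer not to lean on Notation \ref{Nota_X_Y} (2) for $X_j^+ \neq 0$ when $j>m$, note that it also follows from the maximality of $\calT_S=\ovcalT_U$ in $[\calT_U,\ovcalT_U]$, which forces the mutation of $S$ at $S_j$ to be a left mutation.
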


Now we summarize some results in this section 
from the point of view of the exchange quiver.
By Proposition \ref{Prop_X_i^-_Y_i^+}, 
$Q(\twosilt A)$ looks like as follows
around $S$ and $T$, where $i_1,i_2,i_3 \in \{1,\ldots,m\}$.
\begin{align*}
\begin{tikzpicture}[->,baseline=0pt,xscale=1, yscale=0.8]
\fill[black!10] (0,3.5)--(-6,1.5)--(-6,-1.5)--
(0,-3.5)--(6,-1.5)--(6,1.5)--cycle;
\node (S0) at ( 0, 3) {$S$};
\node (S1) at ( 7, 5) {$\mu_{i_1}^+(S)$};
\node (S2) at (-8, 2) {$\mu_{i_2}^-(S)$};
\node (S3) at ( 4, 5) {$\mu_{i_3}^+(S)$};
\node (S4) at (-5, 1) {$\mu_{m+1}^-(S)$};
\node (S5) at (-1, 1) {$\mu_{m+2}^-(S)$};
\node (S6) at ( 2, 1) {$\cdots$};
\node (S7) at ( 5, 1) {$\mu_n^-(S)$};
\node (CC) at ( 0, 0) {$\vdots$};
\node (T0) at ( 0,-3) {$T$};
\node (T1) at ( 7,-2) {$\mu_{i_1}^+(T)$};
\node (T2) at (-8,-4) {$\mu_{i_2}^-(T)$};
\node (T3) at ( 4,-4) {$\mu_{i_3}^-(T)$};
\node (T4) at (-5,-1) {$\mu_{m+1}^+(T)$};
\node (T5) at (-1,-1) {$\mu_{m+2}^+(T)$};
\node (T6) at ( 2,-1) {$\cdots$};
\node (T7) at ( 5,-1) {$\mu_n^+(T)$};
\draw (S0) to[edge label ={$\scriptstyle X_{m+1}$},pos=0.7] (S4);
\draw (S0) to[edge label ={$\scriptstyle X_{m+2}$},pos=0.4] (S5);
\draw (S0) to[edge label'={$\scriptstyle X_n$},    pos=0.7] (S7);
\draw (S1) to[edge label ={$\scriptstyle X_{i_1}^-$},pos=0.3] (S0);
\draw (S0) to[edge label'={$\scriptstyle X_{i_2}^+$},pos=0.7] (S2);
\draw (S3) to[edge label'={$\scriptstyle X_{i_3}^-$},pos=0.3] (S0);
\draw (T4) to[edge label ={$\scriptstyle X_{m+1}$},pos=0.3] (T0);
\draw (T5) to[edge label ={$\scriptstyle X_{m+2}$},pos=0.6] (T0);
\draw (T7) to[edge label'={$\scriptstyle X_n$},    pos=0.3] (T0);
\draw (T1) to[edge label ={$\scriptstyle Y_{i_1}^-$},pos=0.3] (T0);
\draw (T0) to[edge label ={$\scriptstyle Y_{i_2}^+$},pos=0.7] (T2);
\draw (T0) to[edge label'={$\scriptstyle Y_{i_3}^+$},pos=0.7] (T3);
\end{tikzpicture}
\end{align*}

The gray region denotes $Q(\twosilt_U A)$.
The arrow $\alpha_j \colon S \to \mu_j^-(S)$ labeled by $X_j$ 
for $j \in \{m+1,\ldots,n\}$
corresponds to the left mutation of $S$ at $S_j \notin \add U$.
Thus $\alpha_j$ is contained in the gray region.
Under the isomorphism $Q(\twosilt_U A) \to Q(\twosilt B)$,
the arrow $\alpha_j$ is sent to the arrow $B \to \mu_j^-(B)$ labeled
by the simple $B$-module $\Phi(X_j)$
by Propositions \ref{Prop_reduc_silt} and \ref{Prop_sim_W_U}.

We also look at the arrows not contained in $Q(\twosilt_U A)$
in the picture.
We first consider such arrows around $S \in \twosilt A$.
Since $i_1,i_2,i_3 \in \{1,\ldots,m\}$, 
we have $S_{i_1},S_{i_2},S_{i_3} \in \add U$,
so none of the mutations $\mu_{i_1}^+(S),\mu_{i_2}^-(S),\mu_{i_3}^+(S)$ of $S$
are in $\twosilt_U A$.
Thus the arrows $\mu_{i_1}^+(S) \to S$, 
$S \to \mu_{i_2}^-(S)$, $\mu_{i_3}^+(S) \to S$ starts or ends 
outside the gray region.
By Definition-Proposition \ref{Def-Prop_adjacent},
the directions of these arrows correspond to 
$X_{i_1}^- \ne 0$, $X_{i_2}^+ \ne 0$, $X_{i_3}^- \ne 0$,
which are their labels.
Similar properties hold for $T \in \twosilt A$.

By Proposition \ref{Prop_X_i^-_Y_i^+} (1), 
for each $i \in \{1,\ldots,m\}$, there are three possibilities
(i) $X_i^+ \ne 0$ and $Y_i^+ \ne 0$,
(ii) $X_i^- \ne 0$ and $Y_i^- \ne 0$,
(iii) $X_i^- \ne 0$ and $Y_i^+ \ne 0$.
The elements $i_1,i_2,i_3$ in the picture express
the situations (i), (ii), (iii), respectively.

\section{Preliminary on the real Grothendieck group}
\label{Sec_pre_Grothendieck}

In this section, we prepare many notions related to
the real Grothendieck group $K_0(\proj A)_\R$,
including the TF equivalence and interval neighborhoods.

\subsection{Polyhedral cones and generalized fans}

We prepare some conventions and properties 
on polyhedral cones and generelized fans 
in Euclidean spaces following  \cite[Subsection 2.1]{AsI2}.

For any subsets $X,Y \subset \R^n$,
we set $X+Y:=\{x+y \mid x \in X, \ y \in Y\}$ and $-X:=\{-x \mid x \in X\}$.
Thus $X+(-Y)$ means $\{x-y \mid x \in X, \ y \in Y\}$.

\begin{Def}
Let $C \subset \R^n$ be a nonempty subset.
Then $C$ is called a \emph{polyhedral cone}
(resp.~a \emph{rational polyhedral cone})
if there exist finitely many elements $v_1,\ldots,v_m \in \R^n$ 
($v_1,\ldots,v_m \in \Q^n$) such that
$C=\sum_{i=1}^m \R_{\ge 0}v_i$.
\end{Def}

If $C$ is a polyhedral cone, then $C \cap (-C)$ 
is always the maximum vector subspace of $\R^n$ contained in $C$.
We say that $C$ is \emph{strongly convex} if $C \cap (-C)=\{0\}$.

Let $C$ be a polyhedral cone in the Euclidean space $\R^n$. 
Then $\R C$ denotes the $\R$-vector subspace of $K_0(\proj A)_\R$ 
spanned by $C$.
The dimension $\dim_\R C$ of the polyhedral cone $C$ means $\dim_\R \R C$.
The \emph{relative interior} $C^\circ$ of $C$ is defined 
as the interior of $C$ as a subset of $\R C$.

Next we define faces of polyhedral cones.

\begin{Def}
Let $C$ be a polyhedral cone in $\R^n$.
A subset $F \subset C$ is called a \emph{face} 
if the following two conditions hold.
\begin{enumerate2}
\item
The subset $F$ is nonempty, convex, and satisfies $\R_{\ge 0}F \subset F$.
\item
If $v,w \in C$ satisfy $v+w \in F$, then $v,w \in F$.
\end{enumerate2}
Moreover, a face $F$ of $C$ is called a \emph{facet} of $C$ 
if $\dim_\R F=\dim_\R C-1$.
We define $\Face C$ (resp.~$\Facet C$) 
as the set of faces (resp.~facets) of $C$.
\end{Def}

It is well-known that a subset $F \subset C$ is a face
if and only if there exists a hyperplane $H \subset \R^n$ 
such that $H \cap C=F$ and that $C$ is contained in one of 
the two half-spaces given by $H$.
Moreover, facets are considered only for full-dimensional polyhedral cones
in this paper.

Next we recall the definition of generalized fans in Euclidean spaces.

\begin{Def}\label{Def_fan}\cite{CLS}
Let $\Sigma$ be a set of polyhedral cones in $\R^n$.
Then $\Sigma$ is called a \emph{generalized fan} in $\R^n$ 
if the following hold.
\begin{enumerate2}
\item
For any $\sigma \in \Sigma$, we have $\Face \sigma \subset \Sigma$.
\item
For any $\sigma_1,\sigma_2 \in \Sigma$, we have
$\sigma_1 \cap \sigma_2 \in \Face \sigma_1 \cap \Face \sigma_2$. 
\end{enumerate2}
Let $\Sigma$ be a generalized fan in $\R^n$.
\begin{enumerate}
\item
The union $\bigcup_{\sigma \in \Sigma} \sigma$ 
is called the \emph{support} of $\Sigma$,
and $\Sigma$ is said to be \emph{complete} if its support is $\R^n$.
\item
We say that $\Sigma$ is \emph{finite} if $\Sigma$ is a finite set.
\item
We say that $\Sigma$ is \emph{rational} 
if all $\sigma \in \Sigma$ are rational polyhedral cones.
\item
The generalized fan $\Sigma$ is just called a \emph{fan}
if all $\sigma \in \Sigma$ are strongly convex.
\end{enumerate}
\end{Def}

If $\frakC$ is a set of polyhedral cones in $\R^n$,
then we write $\max \frakC$ for the set of maximal elements of $\frakC$
with respect to inclusions.
The following property is easy to check.

\begin{Lem}\label{Lem_fan_inj}
Let $\Sigma,\Sigma'$ be finite complete generalized fans in $\R^n$.
Assume that there exists an injection $u \colon \max \Sigma \to \max \Sigma'$
such that any $\sigma \in \max \Sigma$ satisfies $\sigma \subset u(\sigma)$.
Then we have $\Sigma=\Sigma'$, and $u$ is identity.
\end{Lem}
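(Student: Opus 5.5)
The plan is to show first that every maximal cone of $\Sigma$ also lies in $\max\Sigma'$. After that, the two fans are recovered from their maximal cones.

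Fix $\sigma \in \max \Sigma$. Since $\Sigma$ is finite and complete, its support $\R^n$ is a finite union of polyhedral cones. Hence the maximal cones of $\Sigma$ are full-dimensional, and they cover $\R^n$: every cone of $\Sigma$ is a face of a maximal one, and lower-dimensional cones cannot cover an open set. The same holds for $\Sigma'$.

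The main step is to show $\sigma = u(\sigma)$. We have $\sigma \subset u(\sigma)$. Suppose the inclusion were strict. Then $u(\sigma)^\circ \not\subset \sigma$, because $u(\sigma)$ is the closure of its interior and $\sigma$ is closed. Choose a point $x$ in the open set $u(\sigma)^\circ \setminus \sigma$, taking it generic so that it avoids the finitely many lower-dimensional sets $\tau \cap \tau'$ for distinct $\tau,\tau' \in \max\Sigma$.

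Then $x$ lies in the interior of a unique $\tau \in \max\Sigma$, and $\tau \ne \sigma$. It follows that $u(\tau)$ contains $\tau$, so $u(\tau)^\circ \cap u(\sigma)^\circ$ contains a neighborhood of $x$ intersected with $\tau^\circ$, which is nonempty. Two distinct full-dimensional cones of a generalized fan intersect in a common proper face, so their interiors are disjoint. Hence $u(\tau)=u(\sigma)$, and since $\tau \neq \sigma$ this contradicts the injectivity of $u$. Therefore $u(\sigma)=\sigma$ for every $\sigma \in \max \Sigma$, and $u$ is the identity on its domain.

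Next I show that $u$ is surjective. Since $u(\sigma)=\sigma$, the cones of $\max\Sigma \subset \max\Sigma'$ already cover $\R^n$. Let $\sigma' \in \max\Sigma'$ be outside the image of $u$. Its interior is a nonempty open set covered by finitely many closed cones $\sigma$, so some $\sigma$ meets $\sigma'^\circ$ in a set with nonempty interior. This contradicts the disjointness of interiors of distinct maximal cones in $\Sigma'$. Hence $\max\Sigma=\max\Sigma'$.

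Finally, every cone of a finite complete generalized fan is a face of some maximal cone. Indeed, any $\tau \in \Sigma$ meets the interior of some maximal cone $\sigma$ or lies in it, and $\tau\cap\sigma$ is a face of $\tau$ containing relative interior points of $\tau$, so it equals $\tau$. Conversely, all faces of maximal cones belong to the fan by axiom (a). So $\Sigma=\bigcup_{\sigma\in\max\Sigma}\Face\sigma=\Sigma'$.

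The only delicate point is the claim that distinct maximal cones have disjoint interiors, together with the genericity choice of $x$. Both follow directly from axiom (b) and the finiteness of the fans.
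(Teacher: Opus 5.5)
Your proof is correct. There is nothing to compare it against: the paper gives no proof of this lemma and only says it is ``easy to check''. Your argument fills that gap in the natural way. Maximal cones of a finite complete generalized fan are full-dimensional with pairwise disjoint interiors. A generic point of $u(\sigma)^\circ\setminus\sigma$ lies in the interior of some $\tau\in\max\Sigma$ with $\tau\neq\sigma$, which forces $u(\tau)=u(\sigma)$ and contradicts injectivity. Finally, each fan is the set of faces of its maximal cones.

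Two justifications are thinner than they should be, though both are easy to repair.

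First, ``lower-dimensional cones cannot cover an open set'' only shows that the full-dimensional cones cover $\R^n$. It does not by itself show that every maximal cone is full-dimensional, which you need when you treat $u(\sigma)^\circ\setminus\sigma$ as a nonempty open subset of $\R^n$. To close this, take $\sigma\in\max\Sigma$ and a relative interior point of $\sigma$. That point lies in some full-dimensional $\tau\in\Sigma$. By axiom (b), $\sigma\cap\tau$ is a face of $\sigma$ containing a relative interior point of $\sigma$, so $\sigma\cap\tau=\sigma$ and hence $\sigma\subset\tau$. Maximality then gives $\sigma=\tau$, so $\sigma$ is full-dimensional.

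Second, the phrase ``meets the interior of some maximal cone or lies in it'' in your last paragraph is muddled. A ray on a common wall, for example, meets no such interior. The direct argument is simpler: by finiteness, each $\tau\in\Sigma$ is contained in some $\sigma\in\max\Sigma$, and then $\tau=\tau\cap\sigma\in\Face\sigma$ by axiom (b).
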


To construct a new generalized fan via a linear map, 
the following property is useful.

\begin{Lem}\label{Lem_induce_fan}
Let $f \colon \R^n \to \R^{n'}$ be an $\R$-linear map,
and $\Sigma$ be a generalized fan in $\R^n$ whose support is $X$.
Set $\Sigma_\times \subset \Sigma$ by
\begin{align*} 
\Sigma_\times:=\{\sigma \in \Sigma \mid f^{-1}(f(\sigma)) \cap X=\sigma\}.
\end{align*}
Then we have the following statements.
\begin{enumerate}
\item
The set $\Sigma':=\{f(\sigma) \mid \sigma \in \Sigma_\times\}$ 
is a generalized fan in $\R^{n'}$.
\item
We have mutually inverse bijections
\begin{align*}
\Psi_1 \colon \Sigma_\times &\to \Sigma', & 
\Psi_2\colon \Sigma' &\to \Sigma_\times, \\
\sigma &\mapsto f(\sigma),& \sigma' &\mapsto f^{-1}(\sigma') \cap X.
\end{align*}
\item
If $\max \Sigma \subset \Sigma_\times$, 
then the support of $\Sigma'$ is $f(X)$.
\end{enumerate}
\end{Lem}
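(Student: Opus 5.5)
The plan is to prove the three statements in order, checking that $\Sigma'$ is a generalized fan by reducing both fan axioms for $\Sigma'$ to the corresponding axioms for $\Sigma$ through the bijection of (2). Throughout, $f$ is linear, so it sends polyhedral cones to polyhedral cones, and preimages of polyhedral cones intersected with $X$ are unions of cones of $\Sigma$.

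First I would prove (2). For $\sigma \in \Sigma_\times$ we have $\Psi_2\Psi_1(\sigma)=f^{-1}(f(\sigma))\cap X=\sigma$ directly from the definition of $\Sigma_\times$. Conversely, for $\sigma'=f(\sigma)$ with $\sigma\in\Sigma_\times$, we get $\Psi_2(\sigma')=\sigma\in\Sigma_\times$ and $\Psi_1\Psi_2(\sigma')=\sigma'$. So $\Psi_1$ and $\Psi_2$ are well defined and mutually inverse. A useful consequence: for $\sigma,\tau\in\Sigma_\times$,
\begin{align*}
f(\sigma)\cap f(\tau)=f(\sigma\cap\tau),
\end{align*}
since any $y$ in the left side satisfies $f^{-1}(y)\cap X\subset\sigma\cap\tau$ and $f^{-1}(y)\cap X\neq\emptyset$.

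For (1), the key step is to show that $\Sigma_\times$ is closed under faces and intersections. Let $\sigma\in\Sigma_\times$ and let $\tau$ be a face of $\sigma$. Choose a linear form $h$ with $h\ge 0$ on $\sigma$ and $\tau=\sigma\cap h^{-1}(0)$. Then $f(\tau)$ should be the face of $f(\sigma)$ cut out by a suitable supporting functional, and $f^{-1}(f(\tau))\cap X=\tau$.

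I expect this to be the main obstacle, because $h$ need not factor through $f$. I would argue instead with the face criterion (b). Suppose $x\in X$ and $f(x)\in f(\tau)$. Then $x\in\sigma$, and $f(x)=f(t)$ for some $t\in\tau$. Since $f^{-1}(f(t))\cap X\subset\sigma$ and this fiber is convex inside $\sigma$, the segment from $t$ to $x$ lies in $\sigma$. We want to conclude $x\in\tau$. I would show this by first proving that $f(\tau)$ is a face of $f(\sigma)$: if $a,b\in\sigma$ and $f(a)+f(b)\in f(\tau)$, then $f(a+b)=f(t)$. Hence $a+b\in f^{-1}(f(t))\cap X\subset\sigma$, and we need $a+b\in\tau$. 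The injectivity of $f$ on the fiber structure, $f^{-1}(f(\sigma))\cap X=\sigma$, seems to force the whole fiber $f^{-1}(f(t))\cap\sigma$ to lie in $\tau$ whenever it meets $\tau$. To get this, I would use that $\tau$ is the unique face whose relative interior contains $t$ chosen in $\tau^\circ$, together with the fact that the fiber is a convex subset of $\sigma$.

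I would try to reduce to the case $t\in\tau^\circ$ and show that a fiber meeting $\tau^\circ$ and containing a point of $\sigma\setminus\tau$ contradicts the fan property (b) of $\Sigma$ applied to the cones containing that fiber. Intersections are then handled by the displayed identity together with property (b) for $\Sigma$.

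For (3), assume $\max\Sigma\subset\Sigma_\times$. Every point of $X$ lies in some maximal cone $\sigma$, because $\Sigma$ is a generalized fan. Therefore $f(X)=\bigcup_{\sigma\in\max\Sigma}f(\sigma)$, and this is contained in the support of $\Sigma'$. The reverse inclusion is immediate, since each $\sigma\in\Sigma_\times$ satisfies $\sigma\subset X$ and hence $f(\sigma)\subset f(X)$.
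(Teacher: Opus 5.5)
Your arguments for (2), (3) and the identity $f(\sigma)\cap f(\tau)=f(\sigma\cap\tau)$ are fine. The step you call key for (1) is false, however, so the proposal has a genuine gap at the face axiom. It is not true that $\Sigma_\times$ is closed under faces. Nor does $f$ send faces of $\sigma\in\Sigma_\times$ to faces of $f(\sigma)$, and a fiber meeting $\tau^\circ$ need not stay in $\tau$. Take $\Sigma$ to be the set of faces of $Q=\R_{\ge 0}^2$ (so $X=Q$) and $f(x,y)=x-y$. Then $f^{-1}(f(Q))\cap X=Q$, so $Q\in\Sigma_\times$. But for the face $\tau=\R_{\ge0}(1,0)$ we get $f(\tau)=\R_{\ge0}$, which is not a face of $f(Q)=\R$, and $f^{-1}(f(\tau))\cap X=\{(x,y)\in Q\mid x\ge y\}\ne\tau$. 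The fiber over $1$ meets $\tau^\circ$ at $(1,0)$ and contains $(2,1)\notin\tau$, and nothing about $\Sigma$ is contradicted. (Here $\Sigma_\times=\{Q\}$ and $\Sigma'=\{\R\}$, consistent with the lemma.) So no argument along your lines can close this step. Moreover, even a correct push-forward statement would not give what axiom (a) for $\Sigma'$ requires, namely that \emph{every} face of $f(\sigma)$ lies in $\Sigma'$.

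The missing idea is to go in the opposite direction and pull back faces of $f(\sigma)$; the face property transfers backwards trivially. Given $\sigma\in\Sigma_\times$ and $\tau'\in\Face f(\sigma)$, put $\tau:=f^{-1}(\tau')\cap X$.
\begin{itemize}
\item Because $\sigma\in\Sigma_\times$, we have $\tau=f^{-1}(\tau')\cap\sigma$, which is convex.
\item If $v,w\in\sigma$ and $v+w\in\tau$, then $f(v),f(w)\in f(\sigma)$ with $f(v)+f(w)\in\tau'$. Since $\tau'$ is a face, $f(v),f(w)\in\tau'$, so $v,w\in\tau$. Hence $\tau\in\Face\sigma\subset\Sigma$.
\item Each point of $\tau'\subset f(\sigma)$ has a preimage in $\sigma\subset X$, so $f(\tau)=\tau'$. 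Therefore $f^{-1}(f(\tau))\cap X=\tau$, i.e.\ $\tau\in\Sigma_\times$ and $\tau'=f(\tau)\in\Sigma'$.
\end{itemize}
For the intersection axiom your identity does work, provided you push forward only faces lying in $\Sigma_\times$. First, $\sigma_1\cap\sigma_2\in\Sigma_\times$, since $f^{-1}(f(\sigma_1\cap\sigma_2))\cap X\subset\sigma_1\cap\sigma_2$. Second, let $\rho\in\Sigma_\times$ be a face of $\sigma_1$, and let $a,b\in\sigma_1$ satisfy $f(a)+f(b)\in f(\rho)$. Then $a+b\in f^{-1}(f(\rho))\cap X=\rho$, so $a,b\in\rho$, and thus $f(\rho)$ is a face of $f(\sigma_1)$. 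This is exactly the paper's argument.
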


\begin{proof}
(1)
Clearly each element in $\Sigma'$ is a polyhedral cone in $\R^{n'}$.
We check the conditions (a) and (b) from Definition \ref{Def_fan}.

(a)
Let $\sigma' \in \Sigma'$ and $\tau' \in \Face \sigma'$.
It suffices to show $\tau' \in \Sigma'$.

Take $\sigma \in \Sigma_\times$ such that $\sigma'=f(\sigma)$.
Then we have $\sigma=f^{-1}(\sigma') \cap X$. 
Set $\tau:=f^{-1}(\tau') \cap X$.

We claim $\tau \in \Face \sigma$.
We get $\tau \subset \sigma$ by $\tau' \subset \sigma'$,
and $\tau$ is convex since so are $\tau'$ and $\sigma$.
Now let $v,w \in \sigma$ satisfy $v+w \in \tau$.
We have $f(v),f(w) \in \sigma'$ and $f(v)+f(w)=f(v+w) \in \tau'$.
Then since $\tau' \in \Face \sigma'$,
we have $f(v),f(w) \in \tau'$.
Since $v,w \in \sigma \subset X$, 
we have $v,w \in f^{-1}(\tau') \cap X=\tau$.
Thus we get $\tau \in \Face \sigma$.

Since $\Sigma$ is a generalized fan, we have $\tau \in \Sigma$.
Moreover $f(\tau)=\tau'$ follows from $\tau' \subset \sigma' \subset f(X)$
and the definition of $\tau$.
Therefore $\tau \in \Sigma_\times$ holds, so we get $\tau'=f(\tau)$.
Now (a) is proved.

(b)
Let $\sigma'_1,\sigma'_2 \in \Sigma'$.
We show $\sigma'_1 \cap \sigma'_2 \in \Face \sigma'_1 \cap \Face \sigma'_2$.

Clearly, $\sigma'_1 \cap \sigma'_2$ is a convex subset of $\sigma'_1$.
To obtain $\sigma'_1 \cap \sigma'_2 \in \Face \sigma'_1$, 
we assume $v',w' \in \sigma'_1$ and $v'+w' \in \sigma'_1 \cap \sigma'_2$, 
and prove $v',w' \in \sigma'_1 \cap \sigma'_2$.

Take $\sigma_1,\sigma_2 \in \Sigma_\times$ such that $\sigma'_1=f(\sigma_1)$
and $\sigma'_2=f(\sigma_2)$.
We have
\begin{align}\label{preimage cap}
f^{-1}(\sigma'_1 \cap \sigma'_2) \cap X
=f^{-1}(f(\sigma_1) \cap f(\sigma_2)) \cap X
&=(f^{-1}(f(\sigma_1)) \cap X) \cap (f^{-1}(f(\sigma_2)) \cap X) \notag \\
&\stackrel{\sigma_1,\sigma_2 \in \Sigma_\times}{=}\sigma_1 \cap \sigma_2.
\end{align}
Since $\sigma'_1=f(\sigma_1)$, 
we take $v,w \in \sigma_1 \subset X$ such that $v'=f(v)$ and $w'=f(w)$.
Then 
\begin{align*}
v+w \in f^{-1}(v'+w') \cap X \subset 
f^{-1}(\sigma'_1 \cap \sigma'_2) \cap X
\stackrel{\eqref{preimage cap}}{=}\sigma_1 \cap \sigma_2.
\end{align*}
Since $\sigma_1,\sigma_2 \in \sigma$ and $\Sigma$ is a generalized fan,
we have $\sigma_1 \cap \sigma_2 \in \Face \sigma_1$.
Then $v,w \in \sigma_1$ and $v+w \in \sigma_1 \cap \sigma_2$
give $v,w \in \sigma_1 \cap \sigma_2$.
Thus $v',w' \in f(\sigma_1) \cap f(\sigma_2)=\sigma'_1 \cap \sigma'_2$ hold.

Therefore $\sigma'_1 \cap \sigma'_2 \in \Face \sigma'_1$.
By symmetry, we also have $\sigma'_1 \cap \sigma'_2 \in \Face \sigma'_2$.
Thus (b) holds.

Now (a) and (b) imply that $\Sigma'$ is a generalized fan.

(2)
The definition of $\Sigma_\times$ gives that $\Psi_2 \Psi_1$ is identity.
Thus $\Psi_1$ is injective.
Moreover, the definition of $\Sigma'$ implies $\Psi_1$ is surjective.
Therefore $\Psi_1$ is bijective, and $\Psi_2=\Psi_1^{-1}$.

(3)
is obvious.
\end{proof}

\subsection{Stability conditions}

We recall stability conditions and related notions in this subsection.

As usual, $K_0(\proj A)$ denotes the Grothendieck group of 
the exact category $\proj A$.
It is well-known that $K_0(\proj A)$ is naturally identified with
the Grothendieck group $K_0(\sfK^\rmb(\proj A))$ of 
$\sfK^\rmb(\proj A)$ as a triangulated category.
The Grothendieck group $K_0(\mod A)$ of $\mod A$ is also defined,
and we regard $K_0(\sfD^\rmb(\mod A))=K_0(\mod A)$.

We mainly consider the \emph{real Grothendieck group}
$K_0(\proj A)_\R:=K_0(\proj A) \otimes_\Z \R$ in this paper.
In the same way, we set $K_0(\mod A)_\R:=K_0(\mod A) \otimes_\Z \R$.
For these real Grothendieck groups, we have the \emph{Euler bilinear form} 
as the $\R$-bilinear form 
\begin{align*}
\langle !,? \rangle \colon K_0(\proj A)_\R \times K_0(\mod A)_\R \to \R
\end{align*}
such that 
any $U \in \sfK^\rmb(\proj A)$ and $X \in \sfD^\rmb(\mod A)$ satisfy
\begin{align}\label{Euler U X}
\langle [U],[X] \rangle
=\sum_{\ell \in \Z}\dim_K \Hom_{\sfD(A)}(U,X[\ell]).
\end{align}

The Grothendieck group $K_0(\proj A)$ has a free basis $[P(1)],\ldots,[P(n)]$ 
given by the nonisomorphic indecomposable projective modules.
By this basis, we identify
$K_0(\proj A)=\bigoplus_{i=1}^n \Z[P(i)]$ with $\Z^n$
and $K_0(\proj A)_\R=\bigoplus_{i=1}^n \R[P(i)]$ with $\R^n$.
Similarly, the canonical basis of $K_0(\mod A)$ is given by
the nonisomorphic simple modules $[L(1)],\ldots,[L(n)]$,
so we see
$K_0(\mod A)=\bigoplus_{i=1}^n \Z[L(i)] \simeq \Z^n$ and 
$K_0(\mod A)_\R=\bigoplus_{i=1}^n \R[L(i)] \simeq \R^n$.

These elements satisfy
\begin{align*}
\langle [P(i)],[L(j)] \rangle=\delta_{i,j} \dim_K \End_A(L(j)),
\end{align*}
so $[P(1)],\ldots,[P(n)] \in K_0(\proj A)$ and 
$[L(1)],\ldots,[L(n)] \in K_0(\mod A)$
give dual bases of $K_0(\proj A)_\R$ and $K_0(\mod A)_\R$
(up to rescaling) with respect to the Euler bilinear form.

Each $\theta \in K_0(\proj A)_\R$ is
identified with the $\R$-linear form
$\theta:=\langle \theta,? \rangle \colon K_0(\mod A)_\R \to \R$.
For $M \in \mod A$, we shortly write $\theta(M)$
as the meaning of the value $\theta([M])=\langle \theta,[M] \rangle \in \R$.
The linear form given by $\theta \in K_0(\proj A)_\R$ 
can be used to define various notions in $\mod A$.
Among them, we focus on the following class of torsion pairs.

\begin{Def}
\cite[Subsection 3.1]{BKT}
Let $\theta \in K_0(\proj A)_\R$.
Then we define two \emph{semistable torsion pairs} 
$(\ovcalT_\theta,\calF_\theta)$ and $(\calT_\theta,\ovcalF_\theta)$
in $\mod A$ by
\begin{align*}
\ovcalT_\theta&:=\{ M \in \mod A \mid
\text{$\theta(X) \ge 0$ for all factor modules $X$ of $M$} \},\\
\calF_\theta&:=\{ M \in \mod A \mid
\text{$\theta(Y) < 0$ for all nonzero submodules $Y \ne 0$ of $M$} \},\\
\calT_\theta&:=\{ M \in \mod A \mid
\text{$\theta(X) > 0$ for all nonzero factor modules $X \ne 0$ of $M$} \},\\
\ovcalF_\theta&:=\{ M \in \mod A \mid
\text{$\theta(Y) \le 0$ for all submodules $Y$ of $M$} \}.
\end{align*}
\end{Def}

Semistable torsion pairs give rise to 
the following equivalence relation on $K_0(\proj A)_\R$,
which is the main topic throughout the paper.

\begin{Def}\cite[Definition 2.13]{A}
We define an equivalence relation on $K_0(\proj A)_\R$ 
called the \emph{TF equivalence} as follows.
Let $\theta,\eta \in K_0(\proj A)_\R$.
Then we say that $\theta$ and $\eta$ are \emph{TF equivalent} if 
\begin{align*}
(\ovcalT_\theta,\calF_\theta)=(\ovcalT_\eta,\calF_\eta)
\quad \text{and} \quad
(\calT_\theta,\ovcalF_\theta)=(\calT_\eta,\ovcalF_\eta).
\end{align*}
\end{Def}

Note that $\theta$ and $\eta$ are TF equivalent if and only if
$\calT_\theta=\calT_\eta$ and $\calF_\theta=\calF_\eta$.
We write $\TF_A$ for the set of all TF equivalence classes 
in $K_0(\proj A)_\R$.

In addition to semistable torsion pairs, 
we also use the following subcategory.

\begin{Def}\cite[Definition 1.1]{K}
For each $\theta \in K_0(\proj A)$,
we set \emph{the $\theta$-semistable subcategory} by
\begin{align*}
\calW_\theta:=\ovcalT_\theta \cap \ovcalF_\theta
=\{M \in \mod A \mid \text{$\theta(M)=0$ and $\theta(X) \ge 0$
for any factor module $X$ of $M$}\}.
\end{align*}
\end{Def}

It is well-known that $\calW_\theta$ is a wide subcategory of $\mod A$;
see \cite{HR,Rudakov}.

By using $\calW_\theta$, 
a wall-chamber structure on $K_0(\proj A)_\R$ is defined.

\begin{Def}\cite[Definition 6.1]{BST} 
\cite[Definitions 3.2, 3.3]{Bridgeland}
We define the following.
\begin{enumerate}
\item
For each nonzero module $M \in (\mod A) \setminus \{0\}$,
we define the \emph{wall} as the rational polyhedral cone
\begin{align*}
\Theta_M:=\{ \theta \in K_0(\proj A)_\R \mid M \in \calW_\theta \}.
\end{align*}
\item
The \emph{wall-chamber structure} on $K_0(\proj A)_\R$
is defined as the collection of all walls $\Theta_M$ for nonzero modules.
Here, a \emph{chamber} in this wall-chamber structure means
a connected component of the open subset
\begin{align*}
K_0(\proj A)_\R \setminus 
\overline{\bigcup_{M \in \mod A \setminus \{0\}} \Theta_M}.
\end{align*}
\end{enumerate}
\end{Def}

Set the line segment
\begin{align*}
[\theta,\eta]:=\{(1-r)\theta+r\eta \mid r \in [0,1]\} \subset K_0(\proj A)_\R
\end{align*}
connecting $\theta,\eta \in K_0(\proj A)_\R$.
Then the relationship between the wall-chamber structures and 
the TF equivalence is stated as follows.

\begin{Prop}
\cite[Theorem 2.17]{A}
Let $\theta \ne \eta \in K_0(\proj A)_\R$ be distinct elements.
Then the following conditions are equivalent.
\begin{enumerate2}
\item
The elements $\theta$ and $\eta$ are TF equivalent.
\item
For any $\zeta \in [\theta,\eta]$, the subcategory $\calW_\zeta$ is constant.
\item
There exists no nonzero module $M \in \mod A \setminus \{0\}$ such that
$[\theta,\eta] \cap \Theta_M$ is one point.
\end{enumerate2}
\end{Prop}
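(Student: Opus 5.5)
We show the cycle (a)$\Rightarrow$(b)$\Rightarrow$(c)$\Rightarrow$(a), using only the definitions of the semistable torsion pairs and of $\calW_\zeta$. The basic observation is that for a fixed module $M$, the sets $\{\zeta \mid M \in \ovcalT_\zeta\}$ and $\{\zeta \mid M\in\ovcalF_\zeta\}$ are closed convex cones. They are cut out by the finitely many inequalities $\zeta(X)\ge 0$ for $X$ ranging over the classes $[X]$ of factor modules of $M$ (resp.\ $\zeta(Y)\le 0$ for classes of submodules). Likewise $\{\zeta \mid M\in\calT_\zeta\}$ and $\{\zeta\mid M\in\calF_\zeta\}$ are convex and relatively open, and only finitely many dimension vectors occur. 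In particular $\Theta_M$ is a closed convex cone, so $[\theta,\eta]\cap\Theta_M$ is always a closed segment, a point, or empty.

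\textbf{(a)$\Rightarrow$(b).} Suppose $\theta,\eta$ are TF equivalent. By convexity, $M\in\calT_\theta=\calT_\eta$ gives $M\in\calT_\zeta$ for all $\zeta\in[\theta,\eta]$, and similarly for $\ovcalT$, $\calF$ and $\ovcalF$. Hence
$\calT_\theta\subset\calT_\zeta$, $\ovcalT_\theta\subset\ovcalT_\zeta$, $\calF_\theta\subset\calF_\zeta$ and $\ovcalF_\theta\subset\ovcalF_\zeta$.
Because $\ovcalT_\zeta={}^\perp\calF_\zeta$ and $\calT_\zeta={}^\perp\ovcalF_\zeta$, these inclusions force equalities: for example $\ovcalT_\zeta={}^\perp\calF_\zeta\subset{}^\perp\calF_\theta=\ovcalT_\theta$. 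Thus every $\zeta\in[\theta,\eta]$ is TF equivalent to $\theta$, and in particular $\calW_\zeta=\ovcalT_\zeta\cap\ovcalF_\zeta$ is constant along the segment.

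\textbf{(b)$\Rightarrow$(c).} If $[\theta,\eta]\cap\Theta_M=\{\zeta_0\}$ for some $M\ne0$, then $M\in\calW_{\zeta_0}$. Since $\theta\ne\eta$, the segment contains a point $\zeta\ne\zeta_0$, and $M\notin\calW_\zeta$. So $\calW$ is not constant on $[\theta,\eta]$, contradicting (b).

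\textbf{(c)$\Rightarrow$(a).} This is the main obstacle. Assume (c) and suppose, for contradiction, that $\calT_\theta\ne\calT_\eta$; the case $\calF_\theta\ne\calF_\eta$ is dual. Then, up to swapping $\theta$ and $\eta$, there is a nonzero $M\in\calT_\theta\setminus\calT_\eta$.

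Consider the finitely many linear functions $\zeta\mapsto\zeta(X)$ on the segment, for $0\ne X$ running over the isoclasses of dimension vectors of factor modules of $M$. Let $\zeta_1$ be the first point moving from $\theta$ toward $\eta$ at which some $\zeta_1(X)=0$; it exists because $M\notin\calT_\eta$ means some $\eta(X)\le0$. Points between $\theta$ and $\zeta_1$ still lie in $\calT$. Among the nonzero factor modules $X$ of $M$ with $\zeta_1(X)=0$, choose one of minimal length, say $N$.

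We want $N\in\calW_{\zeta_1}$. Every factor module of $N$ is a factor module of $M$, hence has $\zeta_1\ge0$; together with $\zeta_1(N)=0$ this gives exactly $N\in\calW_{\zeta_1}$. So $\zeta_1\in\Theta_N$.

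Next we need $\Theta_N\cap[\theta,\eta]$ to be just $\{\zeta_1\}$. On the $\theta$-side of $\zeta_1$ we have $\zeta(N)>0$, so $N\notin\Theta_N$ there. On the $\eta$-side, $\zeta(N)$ is an affine function vanishing at $\zeta_1$ and positive just before it. If it is not identically zero along the segment, it becomes negative past $\zeta_1$, so again $N\notin\Theta_N$ there. It cannot be identically zero, because it is strictly positive at points before $\zeta_1$. Hence $\Theta_N\cap[\theta,\eta]=\{\zeta_1\}$.

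The degenerate case $\zeta_1=\theta$ is impossible, since $M\in\calT_\theta$ gives $\theta(X)>0$ for all nonzero factors $X$. So $N$ is a nonzero module whose wall meets $[\theta,\eta]$ in exactly one point, contradicting (c). The delicate point is exactly this minimality and sign argument guaranteeing a single intersection point. The dual argument with submodules handles $\calF_\theta\ne\calF_\eta$, and since TF equivalence is equivalent to $\calT_\theta=\calT_\eta$ and $\calF_\theta=\calF_\eta$, this proves (a).
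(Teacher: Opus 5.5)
Your proof is correct and complete. In (a)$\Rightarrow$(b), convexity of the defining inequalities together with $\ovcalT_\zeta={}^\perp\calF_\zeta$ and $\calT_\zeta={}^\perp\ovcalF_\zeta$ does the job, and in fact shows that TF equivalence classes are convex. In (c)$\Rightarrow$(a), the first-zero argument along the segment, applied to the finitely many classes of nonzero factor modules of $M\in\calT_\theta\setminus\calT_\eta$ (dually, submodules), correctly produces a nonzero module whose wall meets $[\theta,\eta]$ in exactly one point.

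The paper does not prove this statement itself but quotes it from \cite[Theorem 2.17]{A}, so there is no in-text argument to compare against. One minor remark: the minimal-length choice of $N$ is unnecessary. Any nonzero factor module $X$ of $M$ with $\zeta_1(X)=0$ already lies in $\calW_{\zeta_1}$, and the same sign argument gives $\Theta_X\cap[\theta,\eta]=\{\zeta_1\}$.
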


\subsection{$M$-TF equivalences}

As a systematic way to coarsen the TF equivalence on $K_0(\proj A)_\R$, 
we introduced the $M$-TF equivalence for each $M \in \mod A$ in \cite{AsI2}.

For $\theta \in K_0(\proj A)_\R$,
we have the torsion pairs $(\ovcalT_\theta,\calF_\theta)$
and $(\calT_\theta,\ovcalF_\theta)$ in $\mod A$
with $\calT_\theta \subset \ovcalT_\theta$
and $\calF_\theta \subset \ovcalF_\theta$.
Thus we have two exact sequences
\begin{align*}
&0 \to \ovrmt_\theta M \to M  \to \rmf_\theta M \to 0
\quad (\ovrmt_\theta M \in \ovcalT_\theta, \ 
\rmf_\theta M \in \calF_\theta), \\
&0 \to \rmt_\theta M \to M \to \ovrmf_\theta M \to 0
\quad (\rmt_\theta M \in \calT_\theta, \ 
\ovrmf_\theta M \in \ovcalF_\theta),
\end{align*}
which are unique up to isomorphisms.
Since $\calT_\theta \subset \ovcalT_\theta$
and $\calW_\theta=\ovcalT_\theta \cap \ovcalF_\theta$,
we have $\rmt_\theta M \subset \ovrmt_\theta M$,
and define
\begin{align*}
\rmw_\theta M:=\ovrmt_\theta M/\rmt_\theta M \in \calW_\theta.
\end{align*}
Therefore $M$ is divided to the three modules
$\rmt_\theta M, \rmw_\theta M, \rmf_\theta M$.

Recall that $\calW_\theta$ is always a wide subcategory of $\mod A$.
Thus $\calW_\theta$ is an abelian length category,
and satisfies the Jordan-H\"{o}lder property.
Then for each $L \in \calW_\theta$, the set
\begin{align*}
\supp_\theta L
:=\{ \text{composition factors of $L$ in $\calW_\theta$}\} \subset 
\simple \calW_\theta
\end{align*}
is well-defined,
where $\simple \calW_\theta$ is the set of isoclasses of simple objects
in $\calW_\theta$.

\begin{Def}\label{Def_M-TF}\cite[Definition 3.1]{AsI2}
Let $M \in \mod A$.
\begin{enumerate}
\item
We define an equivalence relation on $K_0(\proj A)_\R$ called 
the \emph{$M$-TF equivalence} as follows.
For any $\theta,\eta \in K_0(\proj A)_\R$, 
they are said to be \emph{$M$-TF equivalent} if 
\begin{align*}
(\rmt_\theta M,\rmw_\theta M,\rmf_\theta M)=
(\rmt_\eta M,\rmw_\eta M,\rmf_\eta M) \quad \text{and} \quad
\supp_\theta(\rmw_\theta M)=\supp_\eta(\rmw_\eta M).
\end{align*}
\item
The sets $\TF(M)$ and $\Sigma(M)$ are defined by
\begin{align*}
\TF(M):=\{ \text{all $M$-TF equivalence classes in $K_0(\proj A)_\R$} \}, 
\quad
\Sigma(M):=\{ \overline{E} \mid E \in \TF(M) \}.
\end{align*}
\end{enumerate}
\end{Def}

Under these notations, we have the following fundamental property.

\begin{Prop}\label{Prop_M-TF_fan}\cite[Theorems 3.3, 3.5 (a)]{AsI2}
Let $M \in \mod A$.
\begin{enumerate}
\item
There exists a bijection $\TF(M) \to \Sigma(M)$
such that $E \mapsto \overline{E}$.
The inverse map $\Sigma(M) \to \TF(M)$ is given 
by $\sigma \mapsto \sigma^\circ$.
\item
The set $\Sigma(M)$ is a finite complete generalized fan in $K_0(\proj A)_\R$.
In particular, each $\sigma \in \Sigma(M)$ is a rational polyhedral cone.
\end{enumerate}
\end{Prop}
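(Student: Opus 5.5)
The plan is to describe each $M$-TF equivalence class explicitly by a finite system of linear equalities and strict linear inequalities with rational coefficients. Both (1) and (2) will then follow from standard polyhedral geometry. The basic finiteness input is that $M$ has only finitely many dimension vectors of subquotients. Let $\mathcal{V}\subset K_0(\mod A)$ be this finite set. For any $N\in\mod A$, the conditions $N\in\ovcalT_\eta$, $N\in\calT_\eta$, $N\in\calF_\eta$, $N\in\ovcalF_\eta$ involve only the finitely many classes $[X]$ of factor modules or submodules of $N$. So each of them is an intersection of finitely many closed, respectively open, rational half-spaces.

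\textbf{Step 1: characterize the class.} Fix $\theta$ and write $E$ for its $M$-TF class. Put $t:=\rmt_\theta M$, $w:=\rmw_\theta M$, $f:=\rmf_\theta M$, and $\supp_\theta w=\{S_1,\ldots,S_r\}$. Note that each $S_k$ is $\theta$-stable: $\theta(S_k)=0$ and $\theta(Y)<0$ for every proper nonzero submodule $Y\subset S_k$. I claim that $E$ is exactly the set of $\eta$ satisfying the following conditions:
\begin{align*}
t\in\calT_\eta,\quad f\in\calF_\eta,\quad \eta(S_k)=0\ \text{and}\ \eta(Y)<0\ \text{for all proper nonzero}\ Y\subset S_k \ (k=1,\ldots,r).
\end{align*}
First, every $\eta\in E$ satisfies them, since the data at $\eta$ coincide with the data at $\theta$.

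Conversely, suppose $\eta$ satisfies them. Then each $S_k$ is $\eta$-stable, so $S_k\in\calW_\eta$. Since $\calW_\eta$ is closed under extensions, $w\in\calW_\eta$. The filtration $0\subset t\subset\ovrmt_\theta M\subset M$ has successive factors $t\in\calT_\eta$, $w\in\calW_\eta$, $f\in\calF_\eta$. By uniqueness of canonical sequences, this forces $\rmt_\eta M=t$, $\ovrmt_\eta M=\ovrmt_\theta M$ and $\rmf_\eta M=f$. Moreover, stability of the $S_k$ in $\calW_\eta$ shows that they are simple in $\calW_\eta$, so $\supp_\eta w=\supp_\theta w$.

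Thus $E$ is the intersection of a rational linear subspace with finitely many open rational half-spaces, and $E\neq\emptyset$. Hence $E$ is a relatively open convex rational polyhedral cone, $\overline{E}$ is a rational polyhedral cone, and $(\overline{E})^\circ=E$. This gives (1), because distinct classes are disjoint and the relative interior recovers the class.

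\textbf{Step 2: finiteness and completeness.} By the description in Step 1, the whole data $(t,w,f,\supp)$ at $\theta$ is determined by the sign vector $(\mathrm{sgn}\,\theta(v))_{v\in\mathcal{V}}$. Indeed, membership of subquotients of $M$ in $\calT_\theta$, $\calW_\theta$, $\calF_\theta$, and $\theta$-stability, all depend only on these signs. So the $M$-TF equivalence is coarser than the finite hyperplane arrangement $\{\ker v\}_{v\in\mathcal{V}}$. Hence $\TF(M)$ is finite, and the union of the closures is all of $K_0(\proj A)_\R$.

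\textbf{Step 3: the fan axioms.} This is the main obstacle. I must show that every face of $\overline{E}$ is some $\overline{E'}$, and that $\overline{E}\cap\overline{E'}$ is a common face.

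For the first claim, let $F$ be a face of $\overline{E}$. Then $F$ is cut out from $\overline{E}$ by turning a subset of the strict inequalities of Step 1 into equalities. On $F^\circ$, the set of tight inequalities is constant, so I expect the data to be constant there as well. The plan is to compute the data at $\zeta\in F^\circ$ from the data of $E$ together with the set of tight inequalities:
\begin{enumerate2}
\item a factor $X$ of $t$ with $\zeta(X)=0$ passes from the $\calT$-part into the $\calW$-part;
\item similarly, a submodule of $f$ with $\zeta$-value $0$ passes from the $\calF$-part into the $\calW$-part;
\item a destabilizing submodule of some $S_k$ refines the composition series in $\calW_\zeta$.
\end{enumerate2}
Carrying out these modifications (a), (b), (c) should be a semicontinuity argument: in the limit, strict inequalities become weak ones, and torsion classes can only shrink into their semistable parts. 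This shows that $F^\circ$ lies in a single class $E'$. To get $E'\subset F$, I would apply Step 1 to $E'$ and use convexity of $E\cup E'$-type segments to see that no point of $E'$ leaves $F$.

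For the intersection axiom, I would use that the classes partition the space and that each closure is a union of classes. Then $\overline{E}\cap\overline{E'}$ is a union of classes that are simultaneously faces of both closures, and it is convex. Taking its relative-interior class then identifies it as a common face.

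If the direct semicontinuity bookkeeping becomes unwieldy, I would instead deduce the fan axioms from the arrangement: each $\overline{E}$ is a union of closed arrangement cones, and I would check the face conditions cell by cell.
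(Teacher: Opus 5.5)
The gap is in Step 3, which carries all of the ``generalized fan'' content. Steps 1 and 2 are sound: your description of $[\theta]_M$ by $t\in\calT_\eta$, $f\in\calF_\eta$ and $\eta$-stability of the $S_k$ is exactly Lemma \ref{Lem_E_twf}(1). The paper imports that lemma, like the statement itself, from \cite{AsI2} without proof. It does give (1), rationality, finiteness and completeness.

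Both fan axioms rest on one claim: for every face $\tau$ of $\sigma=\overline{E}$, the relative interior $\tau^\circ$ is a single $M$-TF class. This is Proposition \ref{Prop_M-TF_face}(1). Your intersection argument is fine once this is known, but it uses it.

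You only say you \emph{expect} the data to be constant on $\tau^\circ$ and that this \emph{should} follow from semicontinuity. The reverse inclusion $[\zeta]_M\subseteq\tau$ is not actually argued.

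Your fallback via the arrangement cannot work on general grounds. A finite partition of $\R^n$ into relatively open rational cones, each a union of cells of a hyperplane arrangement, need not have closures forming a fan. In $\R^2$, take the classes $\{y>0\}$, $\{y<0\}$, $\{x>0,\,y=0\}$, $\{x<0,\,y=0\}$, $\{0\}$. These are unions of cells of the arrangement $xy=0$, and every closure is a union of classes. Yet the face $\{y=0\}$ of $\{y\ge 0\}$ is not the closure of a class, and $\{y\ge0\}\cap\{x\ge0,\,y=0\}$ is not a face of $\{y\ge 0\}$. So an argument specific to $M$ is needed.

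To close the gap along your lines (a)--(c), work with $\zeta\in\overline{E}$.
\begin{itemize}
\item Since $M/t$ is an extension of $w\in\calW_\zeta$ and $f\in\ovcalF_\zeta$, it lies in $\ovcalF_\zeta$. Hence $\rmt_\zeta M=\rmt_\zeta t$, which is the smallest $L\subseteq t$ with $\zeta(t/L)=0$.
\item Dually, $\ovrmt_\zeta M$ is the preimage of the largest $N\subseteq f$ with $\zeta(N)=0$.
\item The subobjects in $\calW_\zeta$ of $t/\rmt_\zeta t$, of $\ovrmt_\zeta f$, and of each $S_k$ are exactly those given by such $L$ and $N$, and by the $Y\subseteq S_k$ with $\zeta(Y)=0$.
\end{itemize}
So the whole $M$-TF data at $\zeta$ depends only on which defining inequalities of $\overline{E}$ are tight. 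You must then also check that every $\eta\in[\zeta]_M$ lies in $\overline{E}$ with the same tight set, which gives $[\zeta]_M=\tau^\circ$.

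A cleaner route uses Lemma \ref{Lem_M-TF_max_min}. The submodules $L\subseteq M$ maximizing $\theta(L)$ are exactly those with $\rmt_\theta M\subseteq L\subseteq\ovrmt_\theta M$ and $L/\rmt_\theta M\in\calW_\theta$. This family determines the $M$-TF data: its minimum, its maximum, and $\supp_\theta$ read off a maximal chain. Conversely it is determined by that data. It consists of the $L$ whose class $[L]$ lies on the face of $\mathrm{conv}\{[L]\mid L\subseteq M\}$ where $\theta$ is maximal. Hence $\Sigma(M)$ is the normal fan of this lattice polytope, and (1) and (2) follow at once.
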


Direct sums of modules and $M$-TF equivalences have 
a nice relationship.

\begin{Rem}\label{Rem_M-TF_oplus}\cite[Proposition 3.2]{AsI2}
Let $M=M_1 \oplus M_2 \in \mod A$.
Then being $M$-TF equivalent is precisely
being both $M_1$-TF equivalent and $M_2$-TF equivalent.
\end{Rem}

For each $\theta \in K_0(\proj A)_\R$, 
we set $[\theta]_M$ as the $M$-TF equivalence class of $\theta$.
If $E=[\theta]_M$, then we define
\begin{align*}
&\rmt_E M:=\rmt_\theta M, \quad
\rmw_E M:=\rmw_\theta M, \quad
\rmf_E M:=\rmf_\theta M, \quad
\ovrmt_E M:=\ovrmt_\theta M, \quad
\ovrmf_E M:=\ovrmf_\theta M, \\
&\supp_E(\rmw_E M):=\supp_\theta(\rmw_\theta M).
\end{align*}
Each $M$-TF equivalence class $E$ and its closure are described by
$\rmt_E M, \supp_E(\rmw_E M), \rmf_E M$ as follows.
Here, $\R \calX$ means the vector subspace of $K_0(\mod A)_\R$
spanned by $\{[X] \mid X \in \calX\}$ for any set $\calX \subset \mod A$.

\begin{Lem}\label{Lem_E_twf}\cite[Proposition 3.14]{AsI2}
Let $M \in \mod A$, $E \in \TF(M)$, and $\sigma:=\overline{E}$.
\begin{enumerate}
\item
We have
\begin{align*}
E=\{\theta \in K_0(\proj A)_\R \mid
\rmt_E M \in \calT_\theta, \ \supp_E(\rmw_E M) \subset \simple \calW_\theta, \ 
\rmf_E M \in \calF_\theta \}.  
\end{align*}
Thus $E$ is open in the vector subspace
$\Ker \langle ?,\supp_E(\rmw_E M) \rangle$.
\item
We have
\begin{align*}
\sigma=\{\theta \in K_0(\proj A)_\R \mid
\rmt_E M \in \ovcalT_\theta, \ 
\supp_E(\rmw_E M) \subset \calW_\theta, \ 
\rmf_E M \in \ovcalF_\theta \}.  
\end{align*}
Thus $\R \sigma=\Ker \langle ?,\supp_E(\rmw_E M) \rangle$
and $\dim_\R \sigma=n-\dim_\R (\R \supp_E(\rmw_E M))$ hold.
\end{enumerate}
\end{Lem}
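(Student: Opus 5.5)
Part (1) comes first, then (2) follows by a convexity argument, using only the definitions of the semistable torsion pairs, the wide subcategory $\calW_\theta$ and the Jordan--H\"older property in $\calW_\theta$.

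\emph{Part (1).} Write $E'$ for the right-hand side.
\begin{itemize}
\item The inclusion $E \subset E'$ is immediate. For $\theta \in E$ we have $\rmt_\theta M=\rmt_E M \in \calT_\theta$ and $\rmf_\theta M=\rmf_E M\in\calF_\theta$. Also $\supp_E(\rmw_E M)=\supp_\theta(\rmw_\theta M)\subset\simple\calW_\theta$.
\item For the converse, let $\theta\in E'$. Every element of $\supp_E(\rmw_E M)$ is simple in $\calW_\theta$, so it lies in $\calW_\theta$. Since $\calW_\theta$ is closed under extensions, $\rmw_E M \in \calW_\theta$.
\item Consider the filtration $0\subset \rmt_E M\subset \ovrmt_E M\subset M$, with subquotients $\rmt_E M\in\calT_\theta$, $\rmw_E M\in\calW_\theta$ and $\rmf_E M\in\calF_\theta$.
\item Since $\calW_\theta\subset\ovcalF_\theta$, $\calF_\theta\subset\ovcalF_\theta$ and $\ovcalF_\theta$ is extension-closed, we get $M/\rmt_E M\in\ovcalF_\theta$. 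Uniqueness of canonical sequences for $(\calT_\theta,\ovcalF_\theta)$ then gives $\rmt_\theta M=\rmt_E M$.
\item Dually, $\ovrmt_E M\in\ovcalT_\theta$ by extension-closedness, so $\ovrmt_\theta M=\ovrmt_E M$. Hence $\rmw_\theta M=\rmw_E M$ and $\rmf_\theta M=\rmf_E M$.
\item Finally, $\rmw_E M$ has a filtration in $\calW_\theta$ whose factors lie in $\supp_E(\rmw_E M)\subset\simple\calW_\theta$. This is a composition series in $\calW_\theta$, so Jordan--H\"older gives $\supp_\theta(\rmw_\theta M)=\supp_E(\rmw_E M)$, and $\theta\in E$.
\end{itemize}

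\emph{Openness in (1).} Let $N$ be a module. Its factor modules realise only finitely many dimension vectors, so $N\in\calT_\theta$ is a finite conjunction of strict linear inequalities; the same holds for $N\in\calF_\theta$. For a brick $L$, being simple in $\calW_\theta$ means $\theta(L)=0$ together with $\theta(X)>0$ for the finitely many dimension vectors of nonzero proper factors $X$ of $L$. Hence $E$ is cut out inside $\Ker\langle ?,\supp_E(\rmw_E M)\rangle$ by finitely many strict inequalities, so it is open there (and nonempty).

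\emph{Part (2).} Let $\sigma'$ be the right-hand side. It is defined by finitely many non-strict linear inequalities and equalities, by the same finiteness of dimension vectors, so it is a closed polyhedral cone. It contains $E$, since $\calT_\theta\subset\ovcalT_\theta$, $\calF_\theta\subset\ovcalF_\theta$ and $\simple\calW_\theta\subset\calW_\theta$; hence $\sigma\subset\sigma'$.

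For the reverse inclusion, fix $\theta_0\in E$ and $\theta\in\sigma'$, and set $\theta_r:=(1-r)\theta+r\theta_0$ for $r\in(0,1]$. We check $\theta_r \in E'=E$.
\begin{itemize}
\item For each nonzero factor $X$ of $\rmt_E M$, $\theta_r(X)=(1-r)\theta(X)+r\theta_0(X)>0$, because $\theta(X)\ge 0$ and $\theta_0(X)>0$. The submodule condition for $\rmf_E M$ is symmetric.
\item For $L\in\supp_E(\rmw_E M)$ we have $\theta_r(L)=0$. Each nonzero proper factor $X$ of $L$ has $\theta(X)\ge0$ and $\theta_0(X)>0$, so $\theta_r(X)>0$. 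Thus $L$ is $\theta_r$-stable, i.e.\ simple in $\calW_{\theta_r}$.
\end{itemize}
Letting $r\to 0$ shows $\theta\in\overline{E}=\sigma$, so $\sigma=\sigma'$.

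\emph{Dimension.} Since $E$ is nonempty and open in $\Ker\langle ?,\supp_E(\rmw_E M)\rangle$, and $\sigma'$ lies in that kernel, we get $\R\sigma=\Ker\langle ?,\supp_E(\rmw_E M)\rangle$. The dimension formula then follows from the nondegeneracy of the Euler pairing.

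\emph{Expected main obstacle.} The delicate step is the uniqueness argument identifying $\rmt_\theta M$ and $\ovrmt_\theta M$ in (1), together with checking that $\theta$-stability of the support bricks is preserved under the interpolation in (2). Both reduce to extension-closedness and the strict/non-strict inequality bookkeeping above.
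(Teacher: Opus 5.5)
Your proof is correct. The paper gives no argument for this lemma; it quotes it from \cite[Proposition 3.14]{AsI2}, so there is no in-paper proof to compare with. Your argument is a complete, self-contained proof from the definitions. For $E'\subset E$ you use uniqueness of the canonical sequences for $(\calT_\theta,\ovcalF_\theta)$ and $(\ovcalT_\theta,\calF_\theta)$, together with Jordan--H\"{o}lder in $\calW_\theta$. For $\sigma'\subset\overline{E}$ you use the segment $(1-r)\theta+r\theta_0$.

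The one fact you assert without justification is King's identification of $\simple\calW_\theta$ with the $\theta$-stable modules. You use it in both directions: in the openness claim, and in part (2), where you need simple $\Rightarrow$ stable at $\theta_0$ and stable $\Rightarrow$ simple at $\theta_r$. It is worth one line.
\begin{itemize}
\item If $L\in\simple\calW_\theta$ and $X$ is a nonzero proper factor of $L$ with $\theta(X)=0$, then $X\in\calW_\theta$. Indeed, factors of $X$ are factors of $L$, and any submodule $X'\subset X$ satisfies $\theta(X')=-\theta(X/X')\le 0$. So the kernel of $L\to X$ is a nonzero proper subobject of $L$ in the wide subcategory $\calW_\theta$, a contradiction.
\item Conversely, suppose $\theta(L)=0$ and $\theta(X)>0$ for every nonzero proper factor $X$. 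Then $L\in\ovcalT_\theta\cap\ovcalF_\theta$. A nonzero proper subobject $Y$ in $\calW_\theta$ would give $\theta(L/Y)=0$ for the nonzero proper factor $L/Y$, which is impossible.
\end{itemize}
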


The following properties of 
faces of each element in $\Sigma(M)$ are important.

\begin{Prop}\label{Prop_M-TF_face}\cite[Corollary 3.6]{AsI2}
Let $M \in \mod A$, $\sigma \in \Sigma(M)$
and $\sigma' \in \Face \sigma$.
\begin{enumerate}
\item
We have $(\sigma')^\circ \in \TF(M)$ and $\sigma' \in \Sigma(M)$.
Thus the decomposition 
\begin{align*}
\sigma=\bigsqcup_{\tau \in \Face \sigma}\tau^\circ
\end{align*}
is the decomposition into $M$-TF equivalence classes.
\item
Set $E':=(\sigma')^\circ$. Then 
$\tau=\{\eta \in \sigma \mid \eta(\supp_{E'}(\rmw_{E'} M))=0\}$
holds.
\end{enumerate}
\end{Prop}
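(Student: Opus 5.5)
The plan is to get (1) almost entirely from the fan structure in Proposition \ref{Prop_M-TF_fan}, and then to get (2) from the description of the linear span in Lemma \ref{Lem_E_twf} (2) together with a general fact about faces of polyhedral cones. In (2), $\tau$ denotes the face $\sigma'$.

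For (1), first recall that $\Sigma(M)$ is a generalized fan by Proposition \ref{Prop_M-TF_fan} (2). Axiom (a) of Definition \ref{Def_fan} then gives $\sigma' \in \Sigma(M)$ directly. By the bijection $\TF(M) \to \Sigma(M)$ in Proposition \ref{Prop_M-TF_fan} (1), $\sigma'=\overline{E'}$ for a unique $E' \in \TF(M)$, and this inverse bijection also gives $E'=(\sigma')^\circ$. So $(\sigma')^\circ \in \TF(M)$.

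The decomposition in (1) then follows from a standard fact: a polyhedral cone is the disjoint union of the relative interiors of its faces. Since each $\tau^\circ$ with $\tau \in \Face \sigma$ is an $M$-TF equivalence class by the previous paragraph, $\sigma=\bigsqcup_{\tau \in \Face\sigma}\tau^\circ$ is exactly the decomposition of $\sigma$ into $M$-TF equivalence classes. Here the disjointness of the union and the disjointness of equivalence classes are consistent with each other.

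For (2), set $E':=(\sigma')^\circ$ and $\calX:=\supp_{E'}(\rmw_{E'}M)$. Applying Lemma \ref{Lem_E_twf} (2) to $E'$ and $\sigma'=\overline{E'}$ gives
\begin{align*}
\R\sigma'=\Ker\langle ?,\calX\rangle .
\end{align*}
Hence the right-hand side of (2) is
\begin{align*}
\{\eta\in\sigma \mid \eta(\calX)=0\}=\sigma\cap\R\sigma'.
\end{align*}
It therefore remains to show $\sigma\cap\R\sigma'=\sigma'$, which is a purely convex-geometric statement. The inclusion $\sigma' \subset \sigma\cap\R\sigma'$ is clear. For the converse, choose a supporting hyperplane $H$ with $\sigma\cap H=\sigma'$ and $\sigma$ lying in one closed half-space of $H$. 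Such an $H$ exists by the characterization of faces recalled after the definition of faces. Since $H$ is a linear subspace containing $\sigma'$, it contains $\R\sigma'$, so $\sigma\cap\R\sigma' \subset \sigma\cap H=\sigma'$.

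The only delicate point is the bookkeeping in (1): we must make sure that the face $\sigma'$, which a priori is just an element of $\Sigma(M)$, really has an $M$-TF class as its relative interior. This is exactly what the inverse map $\sigma\mapsto\sigma^\circ$ in Proposition \ref{Prop_M-TF_fan} (1) provides. Everything else is formal.
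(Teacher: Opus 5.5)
Your argument is correct. The paper gives no proof of its own here: the statement is quoted from \cite[Corollary 3.6]{AsI2}, so within the paper it has the same status as Proposition \ref{Prop_M-TF_fan} and Lemma \ref{Lem_E_twf}. Your derivation from those two quoted results is a legitimate reduction to elementary convex geometry:
\begin{itemize}
\item for (1), you use axiom (a) of a generalized fan together with the inverse bijection $\sigma\mapsto\sigma^\circ$;
\item for (2), you use $\R\sigma'=\Ker\langle ?,\calX\rangle$ together with $\sigma\cap\R\sigma'=\sigma'$.
\end{itemize}
Be aware that in \cite{AsI2} Lemma \ref{Lem_E_twf} is Proposition 3.14, numbered after Corollary 3.6. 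So this need not be the order in which the source argues. That only matters if you want a proof independent of \cite{AsI2}.

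One small point in the last step needs tidying. When $\sigma'=\sigma$ and $\sigma$ is full-dimensional, no hyperplane $H$ satisfies $\sigma\cap H=\sigma$. So the supporting-hyperplane characterization recalled after the definition of faces does not literally apply. That case is trivial, but you can also avoid hyperplanes altogether:
\begin{itemize}
\item for $\eta\in\sigma\cap\R\sigma'$, choose $\theta\in(\sigma')^\circ$ and $\epsilon>0$ with $\theta-\epsilon\eta\in\sigma'$;
\item then $\epsilon\eta+(\theta-\epsilon\eta)=\theta\in\sigma'$ with both summands in $\sigma$;
\item the face axiom (b) gives $\epsilon\eta\in\sigma'$, hence $\eta\in\sigma'$.
\end{itemize}
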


We set $\TF_n(M)$ and $\Sigma_n(M)$ as the sets of all full-dimensional 
elements in $\TF(M)$ and $\Sigma(M)$ respectively.
Then $\TF_n(M)$ is the set of all $M$-TF equivalence classes
which are open in $K_0(\proj A)_\R$,
and $\Sigma_n(M)$ is the set of maximal elements of $\Sigma(M)$.

\begin{Lem}\label{Lem_M-TF_open}\cite[Lemma 4.1]{AsI2}
Let $M \in \mod A$.
\begin{enumerate}
\item 
We have mutually inverse bijections $\TF_n(M) \leftrightarrow \Sigma_n(M)$
given by $E \mapsto \overline{E}$ and $\sigma \mapsto \sigma^\circ$.
\item
Let $E \in \TF(M)$.
Then $E \in \TF_n(M)$ holds if and only if $\rmw_\eta M=0$.
\end{enumerate}
\end{Lem}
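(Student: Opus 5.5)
The plan is to derive both parts from Proposition \ref{Prop_M-TF_fan} and Lemma \ref{Lem_E_twf}. No new representation-theoretic input should be needed: the lemma is essentially a dimension count.

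For (1), I would start from the bijection $\TF(M) \to \Sigma(M)$, $E \mapsto \overline{E}$, with inverse $\sigma \mapsto \sigma^\circ$ (Proposition \ref{Prop_M-TF_fan} (1)). It then suffices to check that this bijection preserves full-dimensionality in both directions. Since $E=\sigma^\circ$ is the relative interior of the polyhedral cone $\sigma=\overline{E}$, we have $\R E=\R\sigma$, and hence $\dim_\R E=\dim_\R \sigma$. So $E$ is full-dimensional if and only if $\overline{E}$ is, and the bijection restricts to $\TF_n(M) \leftrightarrow \Sigma_n(M)$. One could also note that a full-dimensional $E$ is exactly an open one, because it is the interior of a full-dimensional cone. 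This gives the description of $\TF_n(M)$ as the open classes, but it is not needed for the bijection itself.

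For (2), I would take $\eta \in E$ and $\sigma=\overline{E}$, and apply Lemma \ref{Lem_E_twf} (2):
\begin{align*}
\dim_\R \sigma = n-\dim_\R(\R\,\supp_E(\rmw_E M)).
\end{align*}
Thus $E \in \TF_n(M)$ if and only if $\R\,\supp_\eta(\rmw_\eta M)=0$. If $\rmw_\eta M=0$, the support is empty and the span is $0$. Conversely, if $\rmw_\eta M \neq 0$, it has at least one composition factor $L \in \simple \calW_\eta$. This $L$ is a nonzero $A$-module, so $[L] \neq 0$ in $K_0(\mod A)_\R$, because its dimension vector with respect to the basis $[L(1)],\ldots,[L(n)]$ is nonzero with nonnegative entries. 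Hence the span is nonzero and $E$ is not full-dimensional.

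The only point requiring care is the claim $\dim E=\dim\overline{E}$ in (1). That $E$ equals the relative interior of its closure is exactly Proposition \ref{Prop_M-TF_fan} (1), so this step is routine.
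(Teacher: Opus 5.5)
Your proof is correct. Part (1) is the bijection of Proposition \ref{Prop_M-TF_fan} (1) combined with $\R E=\R\overline{E}$, which holds because $E=(\overline{E})^\circ$ is a nonempty open subset of $\R\overline{E}$. Part (2) is the dimension formula of Lemma \ref{Lem_E_twf} (2) together with the fact that a nonzero module has nonzero class in $K_0(\mod A)_\R$.

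The paper gives no proof of its own here and only cites \cite{AsI2}. Deriving the lemma from these two cited results is the natural route, and reading the unbound $\eta$ in (2) as an arbitrary element of $E$ is the right interpretation.
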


The boundary of a full-dimensional element $\sigma \in \Sigma_n(M)$
satisfies the following properties.

\begin{Prop}\label{Prop_M-TF_+-_no_facet}
\cite[Theorem 4.4]{AsI2}
Let $M \in \mod A$ and $E \in \TF_n(M)$, 
$\sigma:=\overline{E} \in \Sigma_n(M)$.
Set
\begin{align*}
\partial^+\sigma:=\{\theta \in \sigma \mid \rmt_E M \notin \calT_\theta\}, 
\quad
\partial^-\sigma:=\{\theta \in \sigma \mid \rmf_E M \notin \calF_\theta\}.
\end{align*}
Then $\partial^+\sigma \cap \partial^-\sigma$ contains no facets of $\sigma$.
\end{Prop}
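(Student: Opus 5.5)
Throughout, let $E \in \TF_n(M)$ and $\sigma=\overline{E}$. The statement to prove is that no facet $F$ of $\sigma$ is contained in $\partial^+\sigma \cap \partial^-\sigma$. I would argue by contradiction: assume such a facet $F$ exists, take $\theta \in F^\circ$, and use the description of $F^\circ$ as a single $M$-TF equivalence class $E'$ given by Proposition \ref{Prop_M-TF_face}.

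Since $F$ is a facet, $\dim_\R F=n-1$. By Lemma \ref{Lem_E_twf} (2), $\R\supp_{E'}(\rmw_{E'}M)$ is then one-dimensional. Because the classes of distinct simple objects of the semistable category $\calW_\theta$ are positive combinations that cannot be proportional without coinciding in this rank-one situation, $\rmw_\theta M$ has composition factors in $\calW_\theta$ spanning a line. Up to a careful check, I would reduce to the case where all composition factors lie in a fixed ray spanned by a class $[L]$ with $L \in \simple\calW_\theta$.

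Next compare the decompositions of $M$ at $\theta$ and at a generic $\eta \in E$ near $\theta$. The condition $\rmt_E M \notin \calT_\theta$ together with $\rmt_E M \in \ovcalT_\theta$ (by the closure description) shows that $\rmt_E M$ has a nonzero factor $X$ with $\theta(X)=0$. Hence $\rmw_\theta(\rmt_E M)\neq 0$. By uniqueness of canonical sequences, $\rmt_\theta M \subset \rmt_E M$, and the $\calW_\theta$-part of $M$ contains a nonzero subquotient coming from $\rmt_E M$, with class in $\R_{\ge0}[L]$. Dually, $\rmf_E M \notin \calF_\theta$ yields a nonzero $\calW_\theta$-subquotient coming from $\rmf_E M$. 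Thus $\rmw_\theta M$ is built from a piece of $\rmt_E M$ and a piece of $\rmf_E M$.

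Now move off $F$ into $E$, say to $\eta=\theta+\epsilon v$ with $v$ an inner normal direction. For $E$ open we have $\rmw_\eta M=0$ by Lemma \ref{Lem_M-TF_open}. The piece $W_t\subset\rmw_\theta(\rmt_E M)$ must become torsion at $\eta$, which forces $\eta(L)>0$, that is, $v(L)>0$. The piece $W_f$ from $\rmf_E M$ must become torsion-free at $\eta$, which forces $v(L)<0$. Both pieces have composition factors in $\calW_\theta$ proportional to $[L]$, so this is a contradiction.

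The main obstacle is the sign argument. I must make precise that the factor of $\rmt_E M$ detected at $\theta$ really lies in $\calW_\theta$ with composition factors in $\supp_{E'}$, and that membership in $\calT_\eta$ or $\calF_\eta$ forces strict sign conditions on those simples. I would attempt this using $\calW_\theta$ being wide and $\eta$ generic near $\theta$. The key points are that simples of $\calW_\theta$ in $\calT_\eta$ have $\eta>0$ and those in $\calF_\eta$ have $\eta<0$, and that the supports are all proportional because $F$ has codimension one.
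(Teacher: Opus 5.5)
Your argument is correct. The paper gives no proof of its own here; it quotes \cite[Theorem 4.4]{AsI2}, so there is no in-paper route to compare against. The two steps you flagged as obstacles are short, but one of your justifications should be replaced.

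\emph{The ray.} Your stated reason does not hold: non-isomorphic simple objects of $\calW_\theta$ can have equal classes (e.g.\ regular simple modules over the Kronecker algebra). No case reduction is needed. Every simple in $\supp_\theta(\rmw_\theta M)$ has a nonzero class in the pointed cone $\sum_{i=1}^n \R_{\ge 0}[L(i)]\subset K_0(\mod A)_\R$. By Lemma \ref{Lem_E_twf} (2) these classes span a line, so they all lie in one ray $\R_{>0}[L]$.

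\emph{The link with the two pieces.} You have $\rmt_\theta M\subset \rmt_E M$ because $\Hom_A(\calT_\theta,\ovcalF_\theta)=0$ and $\rmf_E M\in\ovcalF_\theta$. You also have $\rmt_E M\subset\ovrmt_\theta M$ because $\Hom_A(\ovcalT_\theta,\calF_\theta)=0$ and $\rmt_E M\in\ovcalT_\theta$. This gives an exact sequence in $\calW_\theta$
\[
0\to \rmt_E M/\rmt_\theta M\to \rmw_\theta M\to \ovrmt_\theta M/\rmt_E M\to 0 .
\]
The left term is $\rmw_\theta(\rmt_E M)$, a factor module of $\rmt_E M$. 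The right term is $\rmw_\theta(\rmf_E M)$, a submodule of $\rmf_E M$. Both are nonzero because $\theta\in\partial^+\sigma\cap\partial^-\sigma$. Their $\calW_\theta$-composition factors lie in $\supp_\theta(\rmw_\theta M)$, so their classes lie in $\R_{>0}[L]$.

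\emph{The sign step.} You need neither genericity nor an inner normal direction; take any $\eta\in E$. Since $\rmt_E M\in\calT_\eta$ and $\rmf_E M\in\calF_\eta$, $\eta$ is positive on the left term and negative on the right term. This gives $\eta(L)>0$ and $\eta(L)<0$, a contradiction. Evaluate $\eta$ on these two modules only; you need say nothing about whether individual simples of $\calW_\theta$ lie in $\calT_\eta$ or $\calF_\eta$.
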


We also have the following property.

\begin{Lem}\label{Lem_M-TF_max_min}\cite[Lemma 3.19]{AsI2}
Let $M \in \mod A$ and $E \in \TF(M)$, $\sigma:=\overline{E}$.
For any $\theta \in \sigma$, 
we have 
\begin{align*}
\theta(\rmt_\theta M)=\theta(\ovrmt_\theta M)=
\theta(\rmt_E M)=\theta(\ovrmt_E M)
&=\max\{ \theta(L) \mid \text{$L$ is a submodule of $M$}\},\\
\theta(\rmf_\theta M)=\theta(\ovrmf_\theta M)=
\theta(\rmf_E M)=\theta(\ovrmf_E M)
&=\min\{ \theta(L) \mid \text{$L$ is a factor module of $M$}\}.
\end{align*}
\end{Lem}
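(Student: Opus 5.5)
The plan is to prove the first line of equalities; the second line is dual (or follows from the first by taking quotients, using $\theta(M)=\theta(L)+\theta(M/L)$). The argument has three steps: first the equality $\theta(\rmt_\theta M)=\theta(\ovrmt_\theta M)$ for arbitrary $\theta$, then the maximality statement for arbitrary $\theta$, and finally the comparison with $\rmt_E M,\ovrmt_E M$, which is where $\theta\in\sigma$ is used.

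\emph{Step 1 (the two $\theta$-torsion parts).} We have $\ovrmt_\theta M/\rmt_\theta M=\rmw_\theta M\in\calW_\theta$. Every object of $\calW_\theta$ satisfies $\theta=0$, so $\theta(\rmt_\theta M)=\theta(\ovrmt_\theta M)$ by additivity.

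\emph{Step 2 (maximality).} Let $L\subset M$ be any submodule and put $L':=L\cap\ovrmt_\theta M$.
\begin{itemize}
\item $L/L'$ embeds into $M/\ovrmt_\theta M=\rmf_\theta M\in\calF_\theta$, so $\theta(L/L')\le 0$.
\item $\ovrmt_\theta M/L'$ is a factor module of $\ovrmt_\theta M\in\ovcalT_\theta$, so $\theta(\ovrmt_\theta M/L')\ge 0$.
\end{itemize}
Hence
\begin{align*}
\theta(L)=\theta(L')+\theta(L/L')\le\theta(L')\le\theta(\ovrmt_\theta M).
\end{align*}
Since $\ovrmt_\theta M$ is itself a submodule of $M$, the maximum over submodules equals $\theta(\ovrmt_\theta M)$. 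This holds for every $\theta$, not only for $\theta\in\sigma$.

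\emph{Step 3 (comparison with $E$).} This is the step that needs $\theta\in\sigma$, and it is the only real obstacle. By Lemma \ref{Lem_E_twf} (2) we have $\rmt_E M\in\ovcalT_\theta$ and $\rmf_E M\in\ovcalF_\theta$. We also have $\supp_E(\rmw_E M)\subset\calW_\theta$. Since $\calW_\theta$ is wide, hence closed under extensions, and $\rmw_E M$ is filtered by its composition factors in $\supp_E(\rmw_E M)$, we get $\rmw_E M\in\calW_\theta$; in particular $\theta(\rmw_E M)=0$, so $\theta(\rmt_E M)=\theta(\ovrmt_E M)$. Moreover $\ovrmt_E M$ is an extension of $\rmw_E M\in\calW_\theta\subset\ovcalT_\theta$ by $\rmt_E M\in\ovcalT_\theta$, so $\ovrmt_E M\in\ovcalT_\theta$.

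Because $\ovrmt_E M\in\ovcalT_\theta$, it lies in the $\ovcalT_\theta$-torsion part, i.e. $\ovrmt_E M\subset\ovrmt_\theta M$. The quotient $Q:=\ovrmt_\theta M/\ovrmt_E M$ is then
\begin{itemize}
\item a factor module of $\ovrmt_\theta M\in\ovcalT_\theta$, so $\theta(Q)\ge 0$;
\item a submodule of $M/\ovrmt_E M=\rmf_E M\in\ovcalF_\theta$, so $\theta(Q)\le 0$.
\end{itemize}
Hence $\theta(Q)=0$, that is $\theta(\ovrmt_E M)=\theta(\ovrmt_\theta M)$. Combining with Steps 1 and 2 gives the whole first line of equalities.

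For the second line, apply the same argument to factor modules, or simply take complements: for each of the four pairs the corresponding sequence is exact, e.g. $\ovrmf_\theta M=M/\rmt_\theta M$ and $\rmf_E M=M/\ovrmt_E M$. Then $\theta(\text{factor})=\theta(M)-\theta(\text{submodule})$ turns the maximum over submodules into the minimum over factor modules.
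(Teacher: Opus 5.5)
Your proof is correct. The paper gives no proof of this lemma; it quotes it from \cite[Lemma 3.19]{AsI2}, so there is no in-paper argument to compare against.

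Your Steps 1--2 are the standard Harder--Narasimhan-type argument and hold for every $\theta$. Step 3 is the only place where $\theta\in\sigma$ is used, and it works as written. By Lemma \ref{Lem_E_twf} (2) you have $\rmt_E M\in\ovcalT_\theta$, $\rmf_E M\in\ovcalF_\theta$ and $\supp_E(\rmw_E M)\subset\calW_\theta$. Extension-closure of $\calW_\theta$ then gives $\rmw_E M\in\calW_\theta$, hence $\ovrmt_E M\in\ovcalT_\theta$. The torsion part is the largest submodule in $\ovcalT_\theta$, so $\ovrmt_E M\subset\ovrmt_\theta M$, and the quotient is $\theta$-null by the two-sided bound you give. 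The passage to the second line via $\theta(M/L)=\theta(M)-\theta(L)$ is also fine.

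There is a shorter route to Step 3 that avoids Lemma \ref{Lem_E_twf} (2) altogether.
\begin{itemize}
\item For $\theta\in E$ you have $\rmt_E M=\rmt_\theta M$ and $\ovrmt_E M=\ovrmt_\theta M$ by definition, so Steps 1--2 already prove the claim on $E$.
\item The maps $\theta\mapsto\theta(\rmt_E M)$ and $\theta\mapsto\theta(\ovrmt_E M)$ are linear.
\item The map $\theta\mapsto\max_{L\subset M}\theta(L)$ is a maximum of finitely many linear forms, because the classes $[L]$ of submodules of $M$ form a finite set. Hence it is continuous.
\item Two continuous functions that agree on $E$ agree on $\sigma=\overline{E}$, which gives the claim on $\sigma$.
\end{itemize}

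Comparing the two: the continuity route is shorter and does not depend on the explicit description of $\overline{E}$. Your route is more structural. For every $\theta\in\sigma$ it also yields the inclusion $\ovrmt_E M\subset\ovrmt_\theta M$ with $\theta$-null quotient. The dual argument, using $\ovrmf_E M\in\ovcalF_\theta$, gives $\rmt_\theta M\subset\rmt_E M$. The continuity argument does not see either inclusion.
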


\subsection{Silting cones and the $g$-fan}

To study 2-term silting complexes,
using the real Grothendieck group $K_0(\proj A)_\R$ is effective.
We recall relevant properties in this subsection.

For each $U=\bigoplus_{i=1}^m U_i \in \twopsilt A$
with each $U_i$ indecomposable,
we define the \emph{silting cones} $C^\circ(U)$ and $C(U)$
in the real Grothendieck group $K_0(\proj A)_\R$ by
\begin{align*}
C^\circ(U):=\sum_{i=1}^m \R_{>0}[U_i] \quad \text{and} \quad
C(U):=\sum_{i=1}^m \R_{\ge 0}[U_i].
\end{align*}
Then $C(U)$ is an $m$-dimensional rational polyhedral cone,
since $[U_1],\ldots,[U_m]$ can be extended to a free basis of $K_0(\proj A)$ 
\cite[Theorem 2.27]{AI}.

Each silting cone $C^\circ(U)$ is a typical example of TF equivalence classes.
For (1), see also \cite[Proposition 3.3]{Y} and \cite[Proposition 3.27]{BST}.

\begin{Prop}\label{Prop_cone_TF}
We have the following assertions.
\begin{enumerate}
\item
\cite[Proposition 3.11]{A} 
Let $U \in \twopsilt A$.
Then $C^\circ(U)$ is a TF equivalence class satisfying
\begin{align*}
C^\circ(U)&=\{\theta\in K_0(\proj A)_\R\mid
(\ovcalT_\theta,\calF_\theta)=(\ovcalT_U,\calF_U), \ 
(\calT_\theta,\ovcalF_\theta)=(\calT_U,\ovcalF_U)\}\\
&=\{\theta\in K_0(\proj A)_\R\mid\calT_\theta=\calT_U,\ 
\calF_\theta=\calF_U\}.
\end{align*}
\item
The correspondence $U \mapsto C^\circ(U)$ gives 
an injection $\twopsilt A \to \TF_A$. 
\end{enumerate}
\end{Prop}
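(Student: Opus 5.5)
The plan is to prove (1) by a direct computation with the Euler form, and then deduce (2) from (1) together with Lemma \ref{Lem_T_U_T_V}.

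\textbf{Key formula.} For a 2-term presilting $U_i$ and $M \in \mod A$, we have $\Hom_{\sfD(A)}(U_i,M)\simeq\Hom_A(H^0(U_i),M)$ and $\Hom_{\sfD(A)}(U_i,M[1])\simeq D\Hom_A(M,H^{-1}(\nu U_i))$; the second is the standard Auslander--Reiten/Nakayama duality, as in \cite{AIR}. Hence for $\theta=\sum_i a_i[U_i]$ formula \eqref{Euler U X} gives
\begin{align*}
\theta(M)=\sum_{i=1}^m a_i\bigl(\dim_K\Hom_A(H^0(U_i),M)-\dim_K\Hom_A(M,H^{-1}(\nu U_i))\bigr).
\end{align*}
We also use the presilting facts $\Fac H^0(U)\subset{^\perp H^{-1}(\nu U)}$ and $\Sub H^{-1}(\nu U)\subset H^0(U)^\perp$, i.e.\ $\calT_U\subset\ovcalT_U$ and $\calF_U\subset\ovcalF_U$.

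\textbf{Step 1: $C^\circ(U)$ lies in the class.} Take $\theta\in C^\circ(U)$, so all $a_i>0$.
\begin{itemize}
\item If $0\ne M\in\calT_U$, every nonzero factor $X$ of $M$ satisfies $\Hom(H^0(U),X)\ne0$ and $\Hom(X,H^{-1}(\nu U))=0$. Thus $\theta(X)>0$, and $M\in\calT_\theta$.
\item Conversely, let $M\in\calT_\theta$ and take its canonical sequence for $(\calT_U,\ovcalF_U)$. The torsion-free quotient $Q$ lies in $H^0(U)^\perp$, so $\theta(Q)\le0$. Since $M\in\calT_\theta$, this forces $Q=0$.
\item Hence $\calT_\theta=\calT_U$. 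Dually $\calF_\theta=\calF_U$.
\item The same argument gives $\ovcalT_\theta=\ovcalT_U$. A module in ${^\perp H^{-1}(\nu U)}$ has all factors with $\theta\ge0$. If $M\in\ovcalT_\theta$ had a nonzero quotient $Q\in\Sub H^{-1}(\nu U)$, then $\Hom(H^0(U),Q)=0$ and $\Hom(Q,H^{-1}(\nu U))\ne0$, so $\theta(Q)<0$, a contradiction. Dually $\ovcalF_\theta=\ovcalF_U$.
\end{itemize}

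\textbf{Step 2: every $\theta$ with $\calT_\theta=\calT_U$, $\calF_\theta=\calF_U$ lies in $C^\circ(U)$.} I expect this to be the main obstacle; it is where the completions $S,T$ and Notation \ref{Nota_X_Y} enter.

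The first task is to show $\theta\in\R C(U)$. Expand $\theta=\sum_{i=1}^n b_i[S_i]$ in the basis given by $S$. Definition-Proposition \ref{Def-Prop_SH} gives $\theta(X_j)=b_j\dim_K R_j$ (with the sign convention for shifts in $K_0$). The hypotheses give $\ovcalT_\theta=\ovcalT_U$ and $\ovcalF_\theta=\ovcalF_U$ (the perpendicular categories), hence $\calW_\theta=\calW_U$. For $j>m$, Proposition \ref{Prop_sim_W_U} puts $X_j\in\calW_U=\calW_\theta$, so $\theta(X_j)=0$ and $b_j=0$. Thus $\theta=\sum_{i\le m}b_i[U_i]$. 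Since $T_i=U_i$ for $i\le m$, the same coefficients $b_i$ are also the coordinates of $\theta$ in the basis given by $T$.

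The second task is positivity, using Proposition \ref{Prop_X_i^-_Y_i^+}(1). Fix $i\le m$.
\begin{itemize}
\item If $X_i^-\ne0$: then $X_i^-\in\calF_S=\calF_U=\calF_\theta$ by Proposition \ref{Prop_T_U_T(Y^+)}, so $\theta(X_i^-)<0$. Therefore $b_i\dim R_i=\theta(X_i)=-\theta(X_i^-)>0$.
\item Otherwise $Y_i^+\ne0$: then $Y_i^+\in\calT_T=\calT_\theta$, so $b_i\dim R_i=\theta(Y_i)=\theta(Y_i^+)>0$.
\end{itemize}
Hence $\theta\in C^\circ(U)$. Combined with Step 1, this gives both descriptions in (1), and shows that $C^\circ(U)$ is a full TF equivalence class.

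\textbf{Step 3: part (2).} If $C^\circ(U)=C^\circ(V)$, then by (1) we have $\calT_U=\calT_V$ and $\calF_U=\calF_V$. Lemma \ref{Lem_T_U_T_V}(1) then gives $U\in\add V$ and $V\in\add U$. Since both are basic, $U\simeq V$, so $U\mapsto C^\circ(U)$ is injective.
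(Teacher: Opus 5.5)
Your argument is correct, but for part (1) it takes a different route from the paper. Part (2) coincides with the paper's proof: well-definedness from (1), and injectivity from Lemma \ref{Lem_T_U_T_V}(1) together with basicness.

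For part (1), the paper gives no proof and simply cites \cite[Proposition 3.11]{A} (see also \cite{Y,BST}). You instead derive it from the machinery of Section \ref{Sec_preliminary}. Your Step 1 is the standard Euler-form computation; you rightly use the alternating sum $\sum_\ell(-1)^\ell\dim_K\Hom_{\sfD(A)}(U,X[\ell])$, whereas the sign is missing in the paper's display \eqref{Euler U X}. Your Step 2 is a dual-basis argument with the simple-minded collections of the maximal and minimal completions.

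The citation is shorter. Your route is self-contained, but as written it leans on Notation \ref{Nota_X_Y} and Propositions \ref{Prop_X_i^-_Y_i^+} and \ref{Prop_sim_W_U}, whose proofs the paper defers to \cite{A-smc}. Most of that dependence can be dropped:
\begin{itemize}
\item For $j>m$, Definition-Proposition \ref{Def-Prop_SH} alone gives $\Hom_{\sfD(A)}(U,X_j[\ell])=0$ for all $\ell$. Hence $H^0(X_j),H^{-1}(X_j)\in\calW_U=\calW_\theta$, and so $b_j=0$.
\item For $i\le m$, either $X_i^+\in\calT_S=\ovcalT_U=\ovcalT_\theta$ or $0\ne X_i^-\in\calF_S=\calF_\theta$. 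Either case already gives $b_i\ge 0$.
\item Strict positivity then follows by applying your Step 1 to $U':=\bigoplus_{b_i>0}U_i$. This gives $\calT_{U'}=\calT_U$ and $\calF_{U'}=\calF_U$, so Lemma \ref{Lem_T_U_T_V}(1) forces $U'=U$.
\end{itemize}
In this form the minimal completion $T$, the collection $Y$ and Proposition \ref{Prop_X_i^-_Y_i^+} are not needed at all.
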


\begin{proof}
(2)
The map is well-defined by (1), 
and the injectivity follows from Lemma \ref{Lem_T_U_T_V} (1).
\end{proof}

As a consequence of \cite{AIR,DK,DIJ,DF,P},
the set $\{C(U) \mid U \in \twopsilt A\}$ is a fan in $K_0(\proj A)_\R$
called the $g$-fan; see \cite[Proposition 4.2]{AHIKM} for the detail. 
This fan is not complete in general.
We set a symbol denoting its support.

\begin{Def}\label{Def_rigid}
We define a subset $\Cone \subset K_0(\proj A)_\R$ by
\begin{align*}
\Cone=\Cone_A:=\bigsqcup_{U \in \twopsilt A} C^\circ(U)
=\bigcup_{U \in \twopsilt A} C(U)
=\bigcup_{T \in \twosilt A} C(T).
\end{align*}
\end{Def}

There is the following characteization of the completeness of the $g$-fan.

\begin{Def-Prop}\label{Def-Prop_brick_fin}
\cite[Theorem 4.2]{DIJ} \cite[Theorem 4.7]{A}
A finite dimensional algebra $A$ is said to be \emph{brick finite}
if the following equivalent conditions hold.
\begin{enumerate2}
\item
There exist finitely many isoclasses of bricks in $\mod A$.
\item
The set $\twosilt A$ is finite.
\item
The $g$-fan is complete; that is, $\Cone=K_0(\proj A)_\R$.
\end{enumerate2}
\end{Def-Prop}

For each indecomposable $V \in \twopsilt A$, we set 
\begin{align*}
R_V:=\End_{\sfD(A)}(V)/{\rad\End_{\sfD(A)}(V)}, \quad
d_V:=\dim_K R_V.
\end{align*}
Then Definition-Proposition \ref{Def-Prop_SH}
and Lemma \ref{Lem_R_V} give the following duality.

\begin{Prop}\label{Prop_dual_basis}
Let $S=\bigoplus_{i=1}^n S_i \in \silt A$,
and set $X=\bigoplus_{i=1}^n X_i:=\SH(X) \in \smc A$.
Then for each $i,j \in \{1,\ldots,n\}$, 
\begin{align*}
[S_i](X_j)&=\delta_{i,j}\dim_k \End_{\sfD(A)}(X_j)=\delta_{i,j}d_{U_i}.
\end{align*}
\end{Prop}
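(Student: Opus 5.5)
This follows directly from the Euler form formula \eqref{Euler U X} together with Definition-Proposition \ref{Def-Prop_SH} and Lemma \ref{Lem_R_V}. (Read the statement with $S\in\twosilt A$ and $X:=\SH(S)\in\twosmc A$, and with $d_{S_j}$ in place of $d_{U_i}$.) The plan has three steps.

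First, I expand the pairing. Since $S_i\in\sfK^\rmb(\proj A)$ and $X_j\in\sfD(A)$, the identification of $\theta$ with $\langle\theta,?\rangle$ and \eqref{Euler U X} give
\begin{align*}
[S_i](X_j)=\langle [S_i],[X_j]\rangle=\sum_{\ell\in\Z}\dim_K\Hom_{\sfD(A)}(S_i,X_j[\ell]).
\end{align*}
This sum is finite, because $S_i$ is 2-term and $X_j$ has cohomology only in degrees $-1,0$.

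Second, I evaluate each summand using Definition-Proposition \ref{Def-Prop_SH}. That result says $\Hom_{\sfD(A)}(S_i,X_j[\ell])=0$ unless $(j,\ell)=(i,0)$. So every term vanishes when $i\ne j$, and when $i=j$ only the $\ell=0$ term survives. In the case $i=j$, the same result gives $\Hom_{\sfD(A)}(S_j,X_j)\simeq R_j=\End_{\sfD(A)}(X_j)$ as a one-sided $R_j$-module. Hence its $K$-dimension is $\dim_K\End_{\sfD(A)}(X_j)$, and we obtain $[S_i](X_j)=\delta_{i,j}\dim_K\End_{\sfD(A)}(X_j)$.

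Third, Lemma \ref{Lem_R_V} supplies an isomorphism $\End_{\sfD(A)}(S_j)/\rad\End_{\sfD(A)}(S_j)\simeq\End_{\sfD(A)}(X_j)$. The left-hand side is $R_{S_j}$, which has dimension $d_{S_j}$. When $j\le m$ in the setting of Notation \ref{Nota_S_T}, we have $S_j=U_j$, so this value is $d_{U_j}$, matching the stated form.

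There is no real obstacle. The only points needing care are the bookkeeping about which index the module structure is over, and checking that no other shifts contribute; both are settled by the vanishing in Definition-Proposition \ref{Def-Prop_SH}.
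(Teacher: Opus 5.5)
Your proposal is correct and is essentially the paper's own argument. The paper justifies the statement in one line by Definition-Proposition~\ref{Def-Prop_SH} and Lemma~\ref{Lem_R_V}, which is exactly your route: expand $\langle [S_i],[X_j]\rangle$ via \eqref{Euler U X}, note that every term vanishes except $(j,\ell)=(i,0)$, and identify $\dim_K\End_{\sfD(A)}(X_j)=d_{S_j}$. Your reading of the typos ($S\in\twosilt A$, $X:=\SH(S)$, and $d_{S_j}$, which equals $d_{U_j}$ when $j\le m$) is the intended one, and since only the $\ell=0$ term survives, the sign convention in the Euler form does not matter.
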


Then we have the following equality.

\begin{Prop}\label{Prop_dual_basis_mut}
Let $U=\bigoplus_{i=1}^m U_i \in \twopsilt A$.
Under Notations \ref{Nota_S_T} and \ref{Nota_X_Y}, 
for any $j \in J$ and $i \in \{1,\ldots,n\} \setminus J$,
we define integers $a_{j,i},b_{j,i} \in \Z_{\ge 0}$ 
so that the following hold.
\begin{enumerate2}
\item
In the triangle $S_j \to U'_j \to T_j \to S_j$ in $\sfK^\rmb(\proj A)$, 
we have
$U'_j \simeq \bigoplus_{i=1}^m U_i^{\oplus a_{j,i}}$ in $\add U$.
\item
In the triangle $X_i[-1] \to W_i \to Y_i \to X_i$ in $\sfD(A)$,
the object $X_j$ appears exactly $b_{j,i}$ times as composition factors
of $W_i$ in the abelian length category $\Filt\{X_{m+1},\ldots,X_n\}$.
\end{enumerate2}
Then we have
\begin{align*}
a_{j,i}d_{U_i}=b_{j,i}d_{U_j}.
\end{align*}
\end{Prop}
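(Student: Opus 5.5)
The plan is to compute the single Euler pairing $\langle [T_j],[X_i]\rangle$ in two different ways, once using the triangle in (a) and once using the triangle in (b), and to compare the results. Throughout, $j \in J=\{m+1,\ldots,n\}$ and $i \in \{1,\ldots,m\}$. By Proposition \ref{Prop_dual_basis}, applied to both pairs $(S,X=\SH(S))$ and $(T,Y=\SH(T))$, we have dual-basis relations
\begin{align*}
[S_k](X_l)=\delta_{k,l}\dim_K\End_{\sfD(A)}(X_l), \quad
[T_k](Y_l)=\delta_{k,l}\dim_K\End_{\sfD(A)}(Y_l).
\end{align*}
The first task is to identify the normalizing constants. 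For $k \le m$ we have $S_k=U_k=T_k$, so $\dim_K \End(X_k)=d_{U_k}=\dim_K\End(Y_k)$ by Lemma \ref{Lem_R_V}. For $j \in J$ we have $Y_j=X_j[1]$ by Notation \ref{Nota_X_Y} (2), so $\End(Y_j)\simeq\End(X_j)$. We write $d_j$ for this common value, which is what $d_{U_j}$ must mean in the statement (it equals $d_{S_j}=d_{T_j}$).

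\emph{First computation.} The triangle in (a) gives $[T_j]=\sum_{k=1}^m a_{j,k}[U_k]-[S_j]$ in $K_0(\proj A)$. Pairing with $[X_i]$, we have $[S_j](X_i)=0$ because $j\ne i$, and $[U_k](X_i)=[S_k](X_i)=\delta_{k,i}d_{U_i}$. Hence
\begin{align*}
\langle [T_j],[X_i]\rangle=a_{j,i}d_{U_i}.
\end{align*}

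\emph{Second computation.} The triangle $X_i[-1]\to W_i\to Y_i\to X_i$ gives $[Y_i]=[W_i]-[X_i[-1]]=[W_i]+[X_i]$ in $K_0(\mod A)$, so $[X_i]=[Y_i]-[W_i]$. For the first term, $[T_j](Y_i)=0$ since $i\ne j$. For the second term, $W_i\in\Filt\{X_{m+1},\ldots,X_n\}$ has composition series in this length category, so $[W_i]=\sum_{k\in J}b_{k,i}[X_k]$. For $k\in J$ we have $[X_k]=-[Y_k]$, and therefore $[T_j](X_k)=-\delta_{j,k}d_j$. This yields $\langle[T_j],[W_i]\rangle=-b_{j,i}d_j$, and hence
\begin{align*}
\langle[T_j],[X_i]\rangle=0-(-b_{j,i}d_j)=b_{j,i}d_j.
\end{align*}

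Comparing the two computations gives $a_{j,i}d_{U_i}=b_{j,i}d_{U_j}$, as claimed. The only delicate points are the sign bookkeeping coming from the shifts $X_i[-1]$ and $Y_k=X_k[1]$, and the justification that the normalizing constants for $S_j$, $T_j$ and $X_j$ all coincide. Both are handled by the identifications made above.
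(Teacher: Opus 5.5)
Your proof is correct and is essentially the paper's argument. The paper expands the single vanishing pairing $\langle [T_j],[Y_i]\rangle=0$ using $[T_j]=-[S_j]+\sum_{i'}a_{j,i'}[U_{i'}]$ and $[Y_i]=\sum_{j'}b_{j',i}[X_{j'}]+[X_i]$, and reads off every term from the $S$--$X$ duality. You split the same identity as $\langle [T_j],[X_i]\rangle=\langle [T_j],[Y_i]\rangle-\langle [T_j],[W_i]\rangle$ and evaluate the $W_i$-term with the $T$--$Y$ duality and $Y_k=X_k[1]$ instead. Your reading of $d_{U_j}$ for $j\in J$ as $d_{S_j}=d_{T_j}=\dim_K\End_{\sfD(A)}(X_j)$ is the correct interpretation of the paper's notation.
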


\begin{proof}
Let $j \in J$ and $i \in \{1,\ldots,n\} \setminus J$.
By definition, we have
\begin{align*}
[T_j]&=-[S_j]+[U'_j]=-[S_j]+\sum_{i'=1}^m a_{j,i'}[U_i] \quad 
\text{in $K_0(\proj A)$}, \\
[Y_i]&=[W_i]+[X_i]=\sum_{j'=m+1}^n b_{j',i}[X_j]+[X_i] \quad 
\text{in $K_0(\mod A)$}.
\end{align*}
Therefore Proposition \ref{Prop_dual_basis} gives
\begin{align*}
0=\langle [T_j],[Y_i] \rangle
&=\left\langle -[S_j]+\sum_{i'=1}^m a_{j,i'}[U_i], 
\sum_{j'=m+1}^n b_{j',i}[X_j]+[X_i] \right\rangle 
=-b_{j,i}d_{U_j}+a_{j,i}d_{U_i}.
\qedhere
\end{align*}
\end{proof}

\subsection{The interval neighborhood $D(U)$}
\label{subsec reduc stab}

This subsection is devoted to giving 
basic properties of the interval neighborhoods $D(U),D^\circ(U)$.
They are defined as follows.

\begin{Def-Prop}\label{Def-Prop_D(U)}
Let $U \in \twopsilt A$. 
We define the \emph{closed interval neighborhood} $D(U)$ 
of $C^\circ(U)$ by
\begin{align*}
D(U)&:=\{ \theta \in K_0(\proj A)_\R \mid \calT_U \subset \ovcalT_\theta, \ 
\calF_U \subset \ovcalF_\theta \}\\
&=\{ \theta \in K_0(\proj A)_\R \mid H^0(U)\in \ovcalT_\theta, \ 
H^{-1}(\nu U)\in \ovcalF_\theta \}.
\end{align*}
This is a rational polyhedral cone 
in $K_0(\proj A)_\R$ by the second equality.

We also define the \emph{open interval neighborhood} 
$D^\circ(U)$ of $C^\circ(U)$ by
\begin{align*}
D^\circ(U)&:=\{ \theta \in K_0(\proj A)_\R \mid 
\calT_U \subset \calT_\theta, \ \calF_U \subset \calF_\theta \}\\
&=\{ \theta \in K_0(\proj A)_\R \mid H^0(U) \in\calT_\theta, \ 
H^{-1}(\nu U) \in \calF_\theta \}.
\end{align*}
This is open by the second equality. 
Moreover, it is an open neighborhood of $C^\circ(U)$ 
by the first equality and Proposition \ref{Prop_cone_TF}. 

Moreover $D^\circ(U)$ is the interior of $D(U)$,
and $D(U)$ is the closure of $D^\circ(U)$.
\end{Def-Prop}

These are often called just the interval neighborhoods of $C^\circ(U)$.
We use the word ``interval'', because
\begin{align*}
D^\circ(U)=\{\theta \in K_0(\proj A)_\R \mid 
[\calT_\theta,\ovcalT_\theta] \subset [\calT_U,\ovcalT_U] \}
\end{align*}
holds.
Note that the interval $[\calT_U,\ovcalT_U] \subset \tors A$
appeared in Proposition \ref{Prop_reduc}.
We also remark that $D^\circ(U)$ is denoted by $N_U$ 
in \cite[Subsection 4.1]{A}.

If $U=U' \oplus U'' \in \twopsilt A$, then we have
\begin{align*}
D(U)=D(U') \cap D(U''), \quad
D^\circ(U)=D^\circ(U') \cap D^\circ(U''),
\end{align*}
from the second descriptions of $D^\circ(U)$ and $D(U)$ 
in Definition-Proposition \ref{Def-Prop_D(U)}.

The interval neighborhoods and direct sums of 2-term presilting complexes
are related as follows.

\begin{Lem}\label{Lem_D(U)_incl}
Let $U,V \in \twopsilt A$. 
\begin{enumerate}
\item 
The following conditions are equivalent.
\begin{enumerate2}
\item
The complex $U$ is a direct summand of $V$.
\item
Both $\calT_U \subset \calT_V$ and $\calF_U \subset \calF_V$ hold. 
\item
We have $C^\circ(V) \subset D^\circ(U)$. 
\item
We have $C^\circ(V) \cap D^\circ(U) \ne \emptyset$
(or equivalently, $C(V) \cap D^\circ(U) \ne \emptyset$). 
\item
We have $D^\circ(V) \subset D^\circ(U)$.
\end{enumerate2}
\item The following conditions are equivalent.
\begin{enumerate2}
\item
The direct sum $U \oplus V$ is 2-term presilting.
\item
Both $\calT_U \subset \ovcalT_V$ and $\calF_U \subset \ovcalF_V$ hold.
\item
We have $C^\circ(V) \subset D(U)$
(or equivalently, $C(V) \subset D(U)$).
\item
We have $C^\circ(V) \cap D(U) \ne \emptyset$.
\item
We have $D^\circ(U) \cap D^\circ(V) \ne \emptyset$ 
(or equivalently, $D(U) \cap D^\circ(V) \ne \emptyset$).
\end{enumerate2}
\end{enumerate}
\end{Lem}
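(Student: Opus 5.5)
We will prove both parts by reducing to the torsion-class characterizations already available: Lemma \ref{Lem_T_U_T_V}, Proposition \ref{Prop_cone_TF}, and the first descriptions of $D(U),D^\circ(U)$ in Definition-Proposition \ref{Def-Prop_D(U)}. The basic observation is the following. By Proposition \ref{Prop_cone_TF}(1), every $\theta\in C^\circ(V)$ satisfies $\calT_\theta=\calT_V$, $\ovcalT_\theta=\ovcalT_V$, $\calF_\theta=\calF_V$ and $\ovcalF_\theta=\ovcalF_V$. Hence membership of a single point of $C^\circ(V)$ in $D(U)$ or $D^\circ(U)$ is equivalent to membership of all of $C^\circ(V)$.

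For (1), the equivalence (a)$\Leftrightarrow$(b) is Lemma \ref{Lem_T_U_T_V}(1). Next, (b)$\Leftrightarrow$(c)$\Leftrightarrow$(d) follows from the observation above together with the first description of $D^\circ(U)$: for $\theta\in C^\circ(V)$, the condition $\theta\in D^\circ(U)$ reads $\calT_U\subset\calT_V$ and $\calF_U\subset\calF_V$. For the parenthetical version of (d), note that $C(V)=\bigsqcup_{V'\in\add V\text{ basic}} C^\circ(V')$, so some face $C^\circ(V')$ meets $D^\circ(U)$. Then $U\in\add V'\subset\add V$ by what we just proved, and the converse direction is trivial. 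For (a)$\Rightarrow$(e), write $V=U\oplus U''$ and use $D^\circ(V)=D^\circ(U)\cap D^\circ(U'')$. For (e)$\Rightarrow$(c), use $C^\circ(V)\subset D^\circ(V)$, which holds by Definition-Proposition \ref{Def-Prop_D(U)}.

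For (2), the equivalence (a)$\Leftrightarrow$(b) is Lemma \ref{Lem_T_U_T_V}(2). By the observation, (b)$\Leftrightarrow$(c)$\Leftrightarrow$(d), since for $\theta\in C^\circ(V)$ the condition $\theta\in D(U)$ means $\calT_U\subset\ovcalT_V$ and $\calF_U\subset\ovcalF_V$. The closed-cone variant of (c) follows by taking closures, since $D(U)$ is closed and $C(V)=\overline{C^\circ(V)}$. For (a)$\Rightarrow$(e), $C^\circ(U\oplus V)$ is nonempty and lies in both $D^\circ(U)$ and $D^\circ(V)$ by part (1)(c). The two versions of (e) are equivalent because $D^\circ(U)$ is the interior of $D(U)$ and $D(U)$ is its closure. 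Indeed, if the open set $D^\circ(V)$ meets $D(U)=\overline{D^\circ(U)}$, then it meets $D^\circ(U)$.

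The main obstacle is (e)$\Rightarrow$(a) in (2): from $\theta\in D^\circ(U)\cap D^\circ(V)$ we must produce the presilting condition. Using the description $D^\circ(U)=\{\theta\mid [\calT_\theta,\ovcalT_\theta]\subset[\calT_U,\ovcalT_U]\}$, we get
\[
\calT_U\subset\calT_\theta\subset\ovcalT_\theta\subset\ovcalT_V
\quad\text{and}\quad
\calF_U\subset\calF_\theta\subset\ovcalF_\theta\subset\ovcalF_V .
\]
The second chain uses that $\calF_\theta=\calT_\theta^\perp$ contains $\calF_U$, and that $\ovcalF_\theta=\ovcalT_\theta^{\perp}\subset\ovcalF_V$ because $\calT_V\subset\calT_\theta$. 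This is exactly condition (b), so Lemma \ref{Lem_T_U_T_V}(2) completes the cycle. The only delicate point is keeping track of which of the two semistable torsion pairs each inclusion refers to.
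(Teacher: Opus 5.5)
Your proof is correct and follows the paper's argument essentially step by step: Lemma \ref{Lem_T_U_T_V}, the constancy of both semistable torsion pairs on $C^\circ(V)$ by Proposition \ref{Prop_cone_TF}, the identity $D^\circ(V)=D^\circ(U)\cap D^\circ(V/U)$, and the chain $\calT_U\subset\calT_\theta\subset\ovcalT_\theta\subset\ovcalT_V$ for the hard implication in (2). One slip needs fixing in your last paragraph: the perpendicular identities are swapped, since in fact $\calF_\theta=\ovcalT_\theta^{\perp}$ and $\ovcalF_\theta=\calT_\theta^{\perp}$, so $\calF_U\subset\calF_\theta$ should be read off directly from $\theta\in D^\circ(U)$, and $\ovcalF_\theta=\calT_\theta^{\perp}\subset\calT_V^{\perp}=\ovcalF_V$ follows from $\calT_V\subset\calT_\theta$.
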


\begin{proof}
(1) 
(a)$\Leftrightarrow$(b) is Lemma \ref{Lem_T_U_T_V} (1).

(b)$\Leftrightarrow$(c)
By Proposition \ref{Prop_cone_TF} (1), 
$\calT_V=\calT_\theta$ and $\calF_V=\calF_\theta$
hold for any $\theta \in C^\circ(U)$.
Thus (b) is equivalent to that 
both $\calT_U \subset \calT_\theta$ and $\calF_U \subset \calF_\theta$ hold
for any $\theta \in C^\circ(U)$,
which is nothing but (c).

(c)$\Leftrightarrow$(d)
By Proposition \ref{Prop_cone_TF} (1), $C^\circ(V)$ is a TF equivalence class.
By definition, $D^\circ(U)$ is a union of TF equivalence classes.
Thus (c) and (d) are equivalent.

(b)$\Rightarrow$(e) follows from $D^\circ(V)=D^\circ(U) \cap D^\circ(V/U)$.

(e)$\Rightarrow$(c) is obvious,
since $C^\circ(V) \subset D^\circ(V)$.

(2)
(a)$\Leftrightarrow$(b) is Lemma \ref{Lem_T_U_T_V} (2).

(b)$\Leftrightarrow$(c)
is shown in a similar way to (1) (b)$\Leftrightarrow$(c).

(c)$\Rightarrow$(d)$\Rightarrow$(e) are obvious.

(e)$\Rightarrow$(b)
Take $\theta \in D^\circ(U) \cap D^\circ(V)$.
Then we get 
$\calT_U \subset \calT_\theta \subset \ovcalT_\theta 
\subset \ovcalT_V$.
Similarly $\calF_U \subset \ovcalF_V$ holds.
\end{proof}

In Definition \ref{Def_rigid}, we defined
\begin{align*}
\Cone:=\bigcup_{T \in \twosilt A} C(T)=
\bigsqcup_{U \in \twopsilt A} C^\circ(U) \subset K_0(\proj A)_\R.
\end{align*}
Then $\Cone=K_0(\proj A)_\R$ is equivalent to that 
$A$ is brick finite by Definition-Proposition \ref{Def-Prop_brick_fin}.

Lemma \ref{Lem_D(U)_incl} and Proposition \ref{Prop_U_S}
give the following equalities.

\begin{Prop}\label{Prop_D(U)_Cone}
Let $U \in \twopsilt A$.
We have
\begin{align*}
D^\circ(U) \cap \Cone=\bigsqcup_{V \in \twopsilt_U A} C^\circ(V),\quad
D(U) \cap \Cone=\bigcup_{V \in \twopsilt_U A} C(V)
=\bigcup_{V \in \twosilt_U A} C(V).
\end{align*}
\end{Prop}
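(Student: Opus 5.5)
We prove the two equalities separately; both reduce to Lemma \ref{Lem_D(U)_incl} combined with the facts that $\Cone$ is a disjoint union of the classes $C^\circ(V)$ and that each $V \in \twopsilt A$ lies in $\add$ of some silting complex (Proposition \ref{Prop_U_S}).

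For the first equality, note that $\Cone=\bigsqcup_{V \in \twopsilt A} C^\circ(V)$. Hence $D^\circ(U) \cap \Cone$ is the disjoint union of the sets $C^\circ(V) \cap D^\circ(U)$ over $V \in \twopsilt A$. By Lemma \ref{Lem_D(U)_incl} (1), (a)$\Leftrightarrow$(c)$\Leftrightarrow$(d), each such intersection is either empty or all of $C^\circ(V)$. It is all of $C^\circ(V)$ exactly when $U$ is a direct summand of $V$, that is, when $V \in \twopsilt_U A$. This gives the first equality.

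For the second equality, the same decomposition writes $D(U) \cap \Cone$ as the union of the sets $C^\circ(V) \cap D(U)$. By Lemma \ref{Lem_D(U)_incl} (2), (c)$\Leftrightarrow$(d), each such set is either empty or all of $C^\circ(V)$, and it is nonempty exactly when $U \oplus V$ is presilting.

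Write $V'$ for the basic complex with $\add V'=\add(U\oplus V)$. Then $V' \in \twopsilt_U A$, and $C^\circ(V) \subset C(V)$ is a face of $C(V')$. Therefore
\begin{align*}
D(U)\cap\Cone \subset \bigcup_{V' \in \twopsilt_U A} C(V').
\end{align*}
Conversely, let $V' \in \twopsilt_U A$. Then $C(V')=\bigsqcup_{V'' \in \add V'} C^\circ(V'')$, where $V''$ runs over basic direct summands of $V'$. Each such $U\oplus V''$ lies in $\add V'$, so it is presilting. By Lemma \ref{Lem_D(U)_incl} (2), (a)$\Rightarrow$(c), each $C^\circ(V'') \subset D(U)$. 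Hence $C(V') \subset D(U) \cap \Cone$, which proves the second equality.

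For the third equality, let $V \in \twopsilt_U A$. Proposition \ref{Prop_U_S} gives $T \in \twosilt A$ with $V \in \add T$. Then $U \in \add T$, so $T \in \twosilt_U A$, and $C(V)$ is a face of $C(T)$, so $C(V) \subset C(T)$. The reverse inclusion is trivial because $\twosilt_U A \subset \twopsilt_U A$.

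There is no serious obstacle here. The only point needing care is the bookkeeping with basic complexes: non-basic sums $U\oplus V$ must be replaced by their basic versions, and one uses that $C(V')$ is the union of the open cones of its summands.
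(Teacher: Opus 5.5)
Your proof is correct and follows essentially the same route as the paper: both arguments apply Lemma \ref{Lem_D(U)_incl} to the decomposition of $\Cone$ into the classes $C^\circ(V)$, replace $U\oplus V$ by its basic version, and use Proposition \ref{Prop_U_S} for the last equality. The only cosmetic difference is in showing $C(V')\subset D(U)$: the paper invokes the closed form ``$C(V)\subset D(U)$'' of Lemma \ref{Lem_D(U)_incl} (2)(c) directly, whereas you decompose $C(V')$ into the open cones of its basic summands, and both work.
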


\begin{proof}
For $V \in \twopsilt_U A$, by Lemma \ref{Lem_D(U)_incl} (1),
$C^\circ(V) \subset D^\circ(U)$ if $U \in \add V$,
and $D^\circ(U) \cap C^\circ(V)=\emptyset$ otherwise.
Thus the equality on $D^\circ(U)$ follows. 

We next show the first equality on $D(U) \cap \Cone$.
If $\theta \in D(U) \cap \Cone$, 
then there exists $V' \in \twopsilt A$ such that 
$\theta \in C^\circ(V')$ and $U \oplus V'$ is 2-term presilting
by Lemma \ref{Lem_D(U)_incl} (2),
so we get $\theta \in C(V)$,
where $V \in \twopsilt_U A$ is the basic 2-term presilting complex 
such that $\add V=\add(U \oplus V')$.
Conversely, if $V \in \twopsilt_U A$,
then $C(V) \subset D(U) \cap \Cone$ holds, 
again by Lemma \ref{Lem_D(U)_incl} (2).
Thus the first equality on $D(U) \cap \Cone$ has been proved.

The second equality on $D(U) \cap \Cone$ follows from 
Proposition \ref{Prop_U_S}.
\end{proof}

We use the following observation later.
Recall that a polyhedral cone $C \subset K_0(\proj A)_\R$
is said to be \emph{strongly convex} if $C \cap (-C)=0$.

\begin{Prop}\label{Prop_strong_convex}
Let $U \in \twopsilt A$.
Set $I \subset \{1,\ldots,n\}$ and $H \subset K_0(\proj A)_\R$ by
\begin{align*}
I:=\{i \in \{1,\ldots,n\} \mid
\text{$L(i)$ is not a composition factor of
$H^0(U) \oplus H^{-1}(\nu U)$ in $\mod A$}\}
\end{align*}
and $H:=\bigoplus_{i \in I}\R[P(i)]$.
Then the following statements hold.
\begin{enumerate}
\item
We have $D(U) \cap (-D(U))=H$.
In particular, $D(U)$ is strongly convex if and only if
$H^0(U) \oplus H^{-1}(\nu U)$ is sincere.
\item
Set $H':=\bigoplus_{i \notin I'} \R[P(i)]$.
Then $D(U) \cap H'$ is strongly convex,
and $D(U)=(D(U) \cap H')+H$.
\end{enumerate}
\end{Prop}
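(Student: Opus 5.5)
The plan is to prove both parts using the description
\begin{align*}
D(U)=\{\theta \mid M_+:=H^0(U) \in \ovcalT_\theta,\ M_-:=H^{-1}(\nu U) \in \ovcalF_\theta\}
\end{align*}
from Definition-Proposition \ref{Def-Prop_D(U)}. The key observation is that $\theta\in D(U)\cap(-D(U))$ means that $\theta$ and $-\theta$ both lie in $D(U)$. Since $\ovcalT_{-\theta}=\ovcalF_\theta$ and $\ovcalF_{-\theta}=\ovcalT_\theta$, this is equivalent to
\begin{align*}
M_+,M_- \in \ovcalT_\theta \cap \ovcalF_\theta=\calW_\theta .
\end{align*}

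\textbf{Inclusion $H\subset D(U)\cap(-D(U))$.} Take $\theta\in H$. Then $\theta(L(i))=0$ for every $i\notin I$, because of the duality $\langle [P(i)],[L(j)]\rangle=\delta_{i,j}\dim_K\End_A(L(j))$.
\begin{itemize}
\item Every submodule and every factor module of $M_+\oplus M_-$ has all its composition factors among the $L(i)$ with $i\notin I$.
\item Hence $\theta$ vanishes on all of these subquotients.
\item So $M_\pm\in\calW_\theta$, and $\pm\theta\in D(U)$.
\end{itemize}

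\textbf{Inclusion $D(U)\cap(-D(U))\subset H$.} This is the main step. Suppose $M_+,M_-\in\calW_\theta$; we must show $\theta(L(i))=0$ for each $i\notin I$. Since $\calW_\theta$ is wide, it is closed under subquotients taken inside $\calW_\theta$, but not under arbitrary subquotients, so a direct argument is not available.

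The first idea is to use the $\tau$-tilting structure. We have $\theta\in D(U)$ with $M_+\in\calW_\theta$. Any $\theta'$ close to $\theta$ in $D^\circ(U)$ must still satisfy $M_+\in\ovcalT_{\theta'}$, which is not enough by itself.

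So I would use a different approach: show that the functionals $\theta$ with $M_+\in\calW_\theta$ form a space of dimension exactly the one predicted by $H$.
\begin{itemize}
\item Since $\theta$ and $-\theta$ both lie in $D(U)$, and $D(U)$ contains the $m$-dimensional cone $C(U)$, consider the subspace $D(U)\cap(-D(U))$.
\item It is invariant under adding $\R C(U)$ only if $C(U)\subset D(U)\cap(-D(U))$. That fails unless $H^0(U)$ and $H^{-1}(\nu U)$ are zero.
\end{itemize}

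I would instead argue via composition series. $M_+\in\calW_\theta$ has a filtration whose factors are $\theta$-stable. It then suffices to show that each simple $L(i)$ occurring in $M_+$ satisfies $\theta(L(i))=0$. This follows if the top and socle layers of $M_+$ lie in $\calW_\theta$, which I expect to be the real obstacle.

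A cleaner route is perturbation. For $\theta$ in the lineality space, $\theta+\eta\in D(U)$ for all $\eta\in D(U)$. Take $\eta\in C^\circ(U)$. Then $\eta+t\theta\in D(U)$ for all $t\in\R$, and by openness $\eta+t\theta\in D^\circ(U)$ for small $|t|$. Since $D^\circ(U)\cap\Cone$ is a union of cones, I would try to derive $\theta(L(i))=0$ from the brick-labelling duality, using the bricks $Y_i^+,X_i^-$ and Proposition \ref{Prop_dual_basis}.

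\textbf{Part (2).} This is formal once (1) is known, because the lineality space of a polyhedral cone splits off. For $\theta\in D(U)$, write $\theta=\theta_{H'}+\theta_H$ according to $K_0(\proj A)_\R=H'\oplus H$. Since $H\subset D(U)\cap(-D(U))$, adding $-\theta_H$ keeps us in $D(U)$, so $\theta_{H'}\in D(U)\cap H'$. Strong convexity of $D(U)\cap H'$ then follows from $(D(U)\cap H')\cap(-(D(U)\cap H'))=H\cap H'=0$. Here $I'$ in the statement is read as $I$.
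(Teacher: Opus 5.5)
\textbf{There is a genuine gap, in your opening reduction.} The identities $\ovcalT_{-\theta}=\ovcalF_\theta$ and $\ovcalF_{-\theta}=\ovcalT_\theta$ are false. $\ovcalT_{-\theta}$ is a torsion class, consisting of the $M$ with $\theta(X)\le 0$ for every \emph{factor} module $X$ of $M$. By contrast, $\ovcalF_\theta$ is a torsion-free class, defined by $\theta(Y)\le 0$ on \emph{submodules}. So $\theta\in D(U)\cap(-D(U))$ is not equivalent to $H^0(U),H^{-1}(\nu U)\in\calW_\theta$, and the statement you isolate as the main step is false.

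Here is a counterexample. Take $A=K(1\to 2)$ and $U=P(1)$. Then $H^0(U)=P(1)$ is uniserial with top $L(1)$ and socle $L(2)$, so it is sincere, giving $I=\emptyset$ and $H=0$; also $H^{-1}(\nu U)=0$. For $\theta=[P(1)]-[P(2)]$ you get $\theta(L(1))=1$, $\theta(L(2))=-1$ and $\theta(P(1))=0$. Hence $P(1)\in\calW_\theta$ and $0\in\calW_\theta$, yet $\theta\neq 0$; indeed $-\theta\notin D(U)$ because $-\theta(L(1))<0$. Consequently your composition-series and top/socle routes aim at a false statement. The perturbation route is left as an intention (``I would try to derive\dots'') with no argument for $\theta(L(i))=0$.

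\textbf{How to fix it.} What you discard by passing to $\calW_\theta$ is exactly what makes the proof short. Unwinding the definitions, $\theta,-\theta\in D(U)$ gives $\theta(X)\ge 0$ and $-\theta(X)\ge 0$, i.e.\ $\theta(X)=0$, for every factor module $X$ of $H^0(U)$. Dually, $\theta(Y)=0$ for every submodule $Y$ of $H^{-1}(\nu U)$. Take a composition series $0=N_0\subset\cdots\subset N_\ell=H^0(U)$. The exact sequences $0\to N_k/N_{k-1}\to H^0(U)/N_{k-1}\to H^0(U)/N_k\to 0$ show $\theta(N_k/N_{k-1})=0$, so $\theta$ vanishes on every composition factor of $H^0(U)$. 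For $H^{-1}(\nu U)$, use $[N_k/N_{k-1}]=[N_k]-[N_{k-1}]$ instead. Hence $\theta(L(i))=0$ for all $i\notin I$, and the duality $\langle[P(i)],[L(j)]\rangle=\delta_{i,j}\dim_K\End_A(L(j))$ gives $\theta\in H$. This is the paper's argument.

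\textbf{What is already fine.} Your inclusion $H\subset D(U)\cap(-D(U))$ is correct in substance: vanishing on all the relevant subquotients gives $\pm\theta\in D(U)$ directly, without going through $\calW_\theta$. Your part (2), splitting off the lineality space with $I'$ read as $I$, is also correct once (1) is known. However, the direction ``sincere $\Rightarrow$ strongly convex'' in (1) and the strong convexity claim in (2) both rest on the inclusion you did not prove.
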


\begin{proof}
(1)
First, let $\theta \in D(U) \cap (-D(U))$.
Then for any factor module $M$ of $H^0(U)$,
we have $\theta(M) \ge 0$ and $-\theta(M) \ge 0$;
hence $\theta(M)=0$.
Thus if $L$ is a composition factor of $H^0(U)$ in $\mod A$,
then $\theta(L)=0$.
Similarly, if $L$ is a composition factor of $H^{-1}(\nu U)$ in $\mod A$,
then $\theta(L)=0$.
Therefore for any $i \notin I$, we have $\theta(L)=0$,
which implies $\theta \in H$.

Conversely, assume $\theta \in H$.
If $M$ is a factor module of $H^0(U)$ or a submodule of $H^{-1}(\nu U)$,
then $M$ is filtered by the simple modules $L(i)$ for $i \notin I$,
so $\theta(M)=-\theta(M)=0$.
Thus $\theta \in D(U) \cap (-D(U))$. 

(2) is clear by (1).
\end{proof}

We will explain the relationship 
between interval neighborhoods and $\tau$-tilting reduction.

Let $U=\bigoplus_{i=1}^m U_i \in \twopsilt A$.
Take its maximal completion $S=\bigoplus_{i=1}^n S_i$
as in Notation \ref{Nota_S_T}.
Then we define the algebra $B$ by \eqref{def B};
namely, $B:=\End_{\sfD(A)}(S)/\langle e \rangle$
with $e$ the idempotent $S \to U \to S$.

By Proposition \ref{Prop_reduc_silt},
there exists a triangle functor 
$\reduc \colon \sfK^\rmb(\proj A) \to \sfK^\rmb(\proj B)$
which gives a bijection
$\reduc \colon \twopsilt_U A \to \twopsilt B$
such that $\reduc(S)=B$ and $\reduc(U)=0$.
As in Proposition \ref{Prop_sim_W_U} (1),
\begin{align}\label{Eq_proj_B}
P(1)_B:=\reduc(S_{m+1}),\quad \ldots,\quad P(n-m)_B:=\reduc(S_n)
\end{align} 
are the nonisomorphic indecomposable projective $B$-modules.
Thus they give a canonical basis of $K_0(\proj B)_\R$.
The functor $\reduc$ induces 
a linear map $\pi \colon K_0(\proj A)_\R \to K_0(\proj B)_\R$ as follows.

\begin{Def-Prop}\label{Def-Prop_pi}
\cite[Theorem 4.9]{AHIKM}
\cite[Subsection 4.1]{A}
Let $U \in \twopsilt A$, and use the setting above.
Then there uniquely exists an $\R$-linear map 
\begin{align*}
\pi=\pi_U \colon K_0(\proj A)_\R \to K_0(\proj B)_\R
\end{align*}
satisfying the following conditions.
\begin{enumerate2}
\item
For any $V \in \sfK^\rmb(\proj A)$,
we have $\pi([V])=[\reduc(V)]$ in $K_0(\proj B)$.
\item
The map $\pi$ is a surjection with $\Ker \pi=\R C(U)$.
\item
If $i \in \{1,\ldots,m\}$, then $\pi([S_i])=\pi([U_i])=0$.
If $j \in \{m+1,\ldots,n\}$, then $\pi([S_j])=[P(j-m)_B]$.
\end{enumerate2}
\end{Def-Prop}

Then we have the following fundamental properties of $\pi$,
where $\TF_A^{D^\circ(U)}$ is 
the set of all TF equivalence classes contained in $D^\circ(U)$,
which is a union of TF equivalence classes in $K_0(\proj A)_\R$
by definition.

\begin{Prop}\label{Prop_reduc_K_0}\cite[Lemma 4.4, Theorem 4.5]{A}
Let $U \in \twopsilt A$.
\begin{enumerate}
\item
The map $\pi|_{D^\circ(U)} \colon D^\circ(U) \to K_0(\proj B)_\R$ 
is surjective.
\item
There exists a bijection
\begin{align*}
\TF_A^{D^\circ(U)} \to \TF_B; \quad E \mapsto \pi(E).
\end{align*}
In particular, if $\theta,\eta \in D^\circ(U)$ 
satisfies $\eta-\theta \in \R C(U)$,
then $\theta$ and $\eta$ are TF equivalent.
\item
For each $\theta \in D^\circ(U)$ and $M \in \calW_U$, 
we have $(\pi(\theta))(\Phi(M))=\theta(M)$ and
\begin{align*}
(\Phi(\ovcalT_\theta \cap \calW_U),\Phi(\calF_\theta \cap \calW_U))
&=(\ovcalT_{\pi(\theta)},\calF_{\pi(\theta)}), \\
(\Phi(\calT_\theta \cap \calW_U),\Phi(\ovcalF_\theta \cap \calW_U))
&=(\calT_{\pi(\theta)},\ovcalF_{\pi(\theta)}), \\
\Phi(\calW_\theta)&=\calW_{\pi(\theta)} \ \text{with} \ 
\calW_\theta \subset \calW_U.
\end{align*}
\item
Any $V \in \twopsilt_U A$ satisfies $\pi(C^\circ(V))=C^\circ(\reduc(V))$.
\end{enumerate}
\end{Prop}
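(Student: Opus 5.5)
The plan is to prove (3) first, since it compares semistability data on $\calW_U$ with that on $\mod B$. Then I use it together with the $\tau$-tilting reduction (Propositions \ref{Prop_reduc} and \ref{Prop_reduc_silt}) to get (2) and (4). Part (1) is a separate, fairly direct argument, and (2) depends on it.

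For the equality $(\pi(\theta))(\Phi(M))=\theta(M)$ in (3), both sides are additive in $M\in\calW_U$. By Proposition \ref{Prop_sim_W_U} we have $\calW_U=\Filt\{X_{m+1},\ldots,X_n\}$, so it is enough to check $M=X_j$ and $\theta\in\{[S_1],\ldots,[S_n]\}$.
\begin{itemize}
\item On the $A$ side, Proposition \ref{Prop_dual_basis} gives $[S_i](X_j)=\delta_{i,j}d_{S_j}$.
\item On the $B$ side, Definition-Proposition \ref{Def-Prop_pi} gives $\pi([S_i])=0$ for $i\le m$ and $\pi([S_i])=[P(i-m)_B]$ for $i>m$.
\item By Proposition \ref{Prop_sim_W_U}(1), $\Phi(X_j)$ is the simple top of $P(j-m)_B$, with endomorphism ring $\End_A(X_j)$. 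So the pairing on the $B$ side is also $\delta_{i,j}d_{S_j}$.
\end{itemize}
This identity holds for all $\theta$, not only those in $D^\circ(U)$.

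Next, for $\theta\in D^\circ(U)$, I show $\Phi(\ovcalT_\theta\cap\calW_U)=\ovcalT_{\pi(\theta)}$, and the other three identities in the same way. Let $M\in\calW_U$.
\begin{itemize}
\item If $M\in\ovcalT_\theta$, then every factor of $M$ lying in $\calW_U$ has $\theta\ge0$. Since $\Phi$ is an exact equivalence, the factors of $\Phi(M)$ in $\mod B$ are exactly the images of those factors, so $\Phi(M)\in\ovcalT_{\pi(\theta)}$.
\item Conversely, let $X$ be any factor module of $M$ in $\mod A$. Then $X\in\ovcalT_U$, and I take its canonical sequence $0\to \rmt_UX\to X\to X'\to0$ for $(\calT_U,\ovcalF_U)$.
\item Here $\rmt_UX\in\calT_U\subset\calT_\theta$, so $\theta(\rmt_UX)\ge0$. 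Also $X'\in\ovcalT_U\cap\ovcalF_U=\calW_U$ is a factor of $M$ in $\calW_U$, so $\theta(X')\ge0$. Hence $\theta(X)\ge0$.
\end{itemize}
Finally, $\calT_U\subset\calT_\theta\subset\ovcalT_\theta\subset\ovcalT_U$ and the dual chain give $\calW_\theta\subset\calW_U$, and then $\Phi(\calW_\theta)=\calW_{\pi(\theta)}$.

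For (1), fix $\varphi\in K_0(\proj B)_\R$ and lift it to $\theta_0:=\sum_{j>m}c_j[S_j]$. I then set $\theta_N:=\theta_0+N\sum_{i\le m}[U_i]$, so that $\pi(\theta_N)=\varphi$.
\begin{itemize}
\item Let $X\ne0$ be a factor module of $H^0(U)$. Then $X\in\calT_U\subset\ovcalT_U={}^\perp H^{-1}(\nu U)$, so $\Hom(U,X[1])\simeq D\Hom(X,H^{-1}(\nu U))=0$. Since $X$ is a nonzero factor of $H^0(U)$, also $\Hom(U,X)\neq0$. Hence $[U](X)>0$.
\item Dually, $[U](Y)<0$ for every nonzero submodule $Y$ of $H^{-1}(\nu U)$.
\end{itemize}
Only finitely many dimension vectors occur among factors of $H^0(U)$ and submodules of $H^{-1}(\nu U)$. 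So for $N\gg0$ all the required strict inequalities hold, which gives $\theta_N\in D^\circ(U)$.

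For (2), TF classes in $D^\circ(U)$ are determined by the pair $(\calT_\theta,\calF_\theta)$, where $\calT_\theta$ and $\ovcalT_\theta$ lie in $[\calT_U,\ovcalT_U]$, and dually for $\calF_\theta$.
\begin{itemize}
\item By (3) and the poset isomorphism of Proposition \ref{Prop_reduc}(3), together with its torsion-free analogue, $\theta,\eta\in D^\circ(U)$ are TF equivalent if and only if $\pi(\theta)$ and $\pi(\eta)$ are.
\item With the surjectivity in (1), this shows that $\pi(E)$ is exactly one class of $\TF_B$ and that $E\mapsto\pi(E)$ is bijective.
\item If $\eta-\theta\in\R C(U)=\Ker\pi$, then $\pi(\theta)=\pi(\eta)$, so $\theta$ and $\eta$ are TF equivalent.
\end{itemize}

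For (4), Lemma \ref{Lem_D(U)_incl}(1) gives $C^\circ(V)\subset D^\circ(U)$, and $C^\circ(V)$ is a TF class by Proposition \ref{Prop_cone_TF}. So $\pi(C^\circ(V))$ is a TF class of $B$ by (2). By Definition-Proposition \ref{Def-Prop_pi}, $\pi(C^\circ(V))$ is the set of positive combinations of the classes $[\reduc(V_k)]$ over the summands $V_k\notin\add U$, since the summands in $\add U$ map to $0$. This set is exactly $C^\circ(\reduc(V))$.

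The main obstacle is the converse direction in (3): a factor module of $M$ in $\mod A$ need not lie in $\calW_U$. The canonical-sequence splitting above is the step I rely on, and it needs $\theta\in D^\circ(U)$, not only $\theta\in D(U)$.
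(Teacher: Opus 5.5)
Your proof is correct. The paper gives no argument of its own here: the proposition is quoted from \cite[Lemma 4.4, Theorem 4.5]{A}, so there is no internal proof to compare against. What you have written is a valid self-contained derivation from results stated earlier in the paper, namely Propositions \ref{Prop_reduc}, \ref{Prop_reduc_silt}, \ref{Prop_sim_W_U}, \ref{Prop_dual_basis}, \ref{Prop_cone_TF}, Definition-Proposition \ref{Def-Prop_pi} and Lemma \ref{Lem_D(U)_incl}. The core step splits a factor $X$ of $M\in\calW_U$ by its canonical sequence $0\to\rmt_U X\to X\to X'\to 0$ for $(\calT_U,\ovcalF_U)$, so that the sign of $\theta(X)$ is controlled by $\calT_U$ and by the factor $X'\in\calW_U$. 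This is the same mechanism as Lemma \ref{Lem_tors_general}, i.e.\ $\calT=\calT_U*(\calT\cap\calW_U)$ for $\calT\in[\calT_U,\ovcalT_U]$. Your surjectivity argument in (1) is the explicit form of the fact that $D^\circ(U)$ is an open cone containing $[U]$.

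There are three small points.

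\emph{Where $D^\circ(U)$ is needed.} Your closing remark overstates this. For $\ovcalT_\theta$ and $\ovcalF_\theta$ the splitting only uses $\calT_U\subset\ovcalT_\theta$ and $\calF_U\subset\ovcalF_\theta$, i.e.\ $\theta\in D(U)$. The strict hypothesis is genuinely used in two places: for $\calT_\theta$ and $\calF_\theta$, in the case $X'=0\neq X$ where one needs $\theta(X)>0$; and for the inclusion $\calW_\theta\subset\calW_U$.

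\emph{Unneeded steps in (2) and (4).} In (2) you do not need a torsion-free analogue of Proposition \ref{Prop_reduc}(3). TF equivalence amounts to $\calT_\theta=\calT_\eta$ and $\ovcalT_\theta=\ovcalT_\eta$, and for $\theta\in D^\circ(U)$ both classes lie in $[\calT_U,\ovcalT_U]$. In (4), invoking (2) is unnecessary, since the equality is just linearity of $\pi$ together with Proposition \ref{Prop_reduc_silt}(a)(b).

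\emph{The pairing identity.} You can get it without the simple objects $X_j$, whose description in Proposition \ref{Prop_sim_W_U} the paper defers to \cite{A-smc}. For $M\in\calW_U\subset\ovcalT_S$, the space $\Hom_{\sfD(A)}(S_j,M[1])$ is $K$-dual to $\Hom_A(M,H^{-1}(\nu S_j))=0$, so $[S_j](M)=\dim_K\Hom_A(H^0(S_j),M)$. This vanishes for $j\le m$ because $M\in\ovcalF_U$. For $j>m$ it equals $\dim_K\Phi(M)e_j=[P(j-m)_B](\Phi(M))$, where $e_j$ is the idempotent of $H^0(S_j)$ and $P(j-m)_B$ is identified with $\bar e_jB$.
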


We remark the following equality.

\begin{Prop}\label{Prop_pi_Cone}
Let $U \in \twopsilt A$.
Then we have $\pi(D^\circ(U) \cap \Cone)=\Cone_B$.
\end{Prop}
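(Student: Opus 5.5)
We prove the two inclusions $\pi(D^\circ(U)\cap\Cone)\subset\Cone_B$ and $\Cone_B\subset\pi(D^\circ(U)\cap\Cone)$ separately. In both directions the tool is Proposition \ref{Prop_D(U)_Cone}, which decomposes $D^\circ(U)\cap\Cone$ into silting cones. This is combined with the compatibility of $\pi$ with silting cones in Proposition \ref{Prop_reduc_K_0} (4) and the bijection $\reduc\colon\twopsilt_U A\to\twopsilt B$ of Proposition \ref{Prop_reduc_silt} (b).

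For the inclusion ``$\subset$'', Proposition \ref{Prop_D(U)_Cone} gives
\begin{align*}
D^\circ(U)\cap\Cone=\bigsqcup_{V\in\twopsilt_U A}C^\circ(V).
\end{align*}
Applying $\pi$ and Proposition \ref{Prop_reduc_K_0} (4), we obtain
\begin{align*}
\pi(D^\circ(U)\cap\Cone)=\bigcup_{V\in\twopsilt_U A}\pi(C^\circ(V))=\bigcup_{V\in\twopsilt_U A}C^\circ(\reduc(V)).
\end{align*}
Each $\reduc(V)$ lies in $\twopsilt B$, so the right-hand side is contained in $\Cone_B=\bigsqcup_{W\in\twopsilt B}C^\circ(W)$.

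For the inclusion ``$\supset$'', let $\eta\in\Cone_B$. Then $\eta\in C^\circ(W)$ for some $W\in\twopsilt B$. Since $\reduc\colon\twopsilt_U A\to\twopsilt B$ is surjective, there is $V\in\twopsilt_U A$ with $\reduc(V)=W$. Proposition \ref{Prop_reduc_K_0} (4) then gives $\eta\in C^\circ(W)=\pi(C^\circ(V))$. Finally, $C^\circ(V)\subset D^\circ(U)\cap\Cone$ by Lemma \ref{Lem_D(U)_incl} (1), so $\eta\in\pi(D^\circ(U)\cap\Cone)$.

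All the substantive input is contained in Proposition \ref{Prop_reduc_K_0} (4) and the surjectivity of $\reduc$ on presilting complexes. The only point needing care is that $\Cone_B$ is the disjoint union of the open cones $C^\circ(W)$, including the cone of $W=0$. This case is harmless: $W=0$ corresponds to $V=U$, and $\pi(C^\circ(U))=\{0\}=C^\circ(0)$ because $\Ker\pi=\R C(U)$.
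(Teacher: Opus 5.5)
Your proof is correct and follows essentially the same route as the paper: decompose $D^\circ(U)\cap\Cone$ into the cones $C^\circ(V)$ for $V\in\twopsilt_U A$ via Proposition \ref{Prop_D(U)_Cone}, apply Proposition \ref{Prop_reduc_K_0} (4) to each cone, and use the bijection $\reduc\colon\twopsilt_U A\to\twopsilt B$ of Proposition \ref{Prop_reduc_silt} (b). The only difference is presentation: the paper writes this as a single chain of equalities rather than two separate inclusions.
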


\begin{proof}
This follows as 
\begin{align*}
\pi(D^\circ(U) \cap \Cone) 
&\stackrel{\text{Prop.~\ref{Prop_D(U)_Cone}}}{=}
\pi\left(\bigcup_{V \in \twopsilt_U A} C^\circ(V)\right)
=\bigcup_{V \in \twopsilt_U A} \pi(C^\circ(V))\\
&\stackrel{\text{Prop.~\ref{Prop_reduc_K_0}}}{=}
\bigcup_{V \in \twopsilt_U A} C^\circ(\reduc(V))
\stackrel{\text{Prop.~\ref{Prop_reduc}}}{=}
\bigcup_{V' \in \twopsilt B} C^\circ(V')=\Cone_B. \qedhere
\end{align*}
\end{proof}

We can characterize the brick finiteness of $B$ as follows.

\begin{Prop}\label{Prop_B_brick_fin}
Let $U \in \twopsilt A$.
Then the following conditions are equivalent.
\begin{enumerate2}
\item
The algebra $B$ is brick finite.
\item
The set $\twosilt_U A$ is a finite set.
\item
The inclusion $D^\circ(U) \subset \Cone$ holds.
\item
The inclusion $D(U) \subset \Cone$ holds.
\end{enumerate2}
In this case, we have 
\begin{align*}
D^\circ(U)=\bigsqcup_{V \in \twopsilt_U A} C^\circ(V),\quad
D(U)=\bigcup_{V \in \twopsilt_U A} C(V)=\bigcup_{V \in \twosilt_U A} C(V).
\end{align*}
\end{Prop}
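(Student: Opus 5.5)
We prove the cycle (a)$\Leftrightarrow$(b), (a)$\Rightarrow$(c)$\Rightarrow$(d)$\Rightarrow$(a), and then read off the decompositions.

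For (a)$\Leftrightarrow$(b), we use the bijection $\reduc \colon \twosilt_U A \to \twosilt B$ of Proposition \ref{Prop_reduc_silt} (b). It shows that $\twosilt_U A$ is finite if and only if $\twosilt B$ is finite. By Definition-Proposition \ref{Def-Prop_brick_fin} applied to $B$, the latter is equivalent to $B$ being brick finite.

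For (a)$\Rightarrow$(c), we use Definition-Proposition \ref{Def-Prop_brick_fin} (c) for $B$ to get $\Cone_B=K_0(\proj B)_\R$. Take $\theta \in D^\circ(U)$. By Proposition \ref{Prop_pi_Cone}, there is $\eta \in D^\circ(U) \cap \Cone$ with $\pi(\eta)=\pi(\theta)$. Then $\eta-\theta \in \Ker\pi=\R C(U)$, and both elements lie in $D^\circ(U)$. By Proposition \ref{Prop_reduc_K_0} (2), $\theta$ and $\eta$ are TF equivalent. By Proposition \ref{Prop_D(U)_Cone}, $\eta \in C^\circ(V)$ for some $V \in \twopsilt_U A$. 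Since $C^\circ(V)$ is a full TF equivalence class (Proposition \ref{Prop_cone_TF}), we get $\theta \in C^\circ(V) \subset \Cone$.

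For (c)$\Rightarrow$(d), Proposition \ref{Prop_D(U)_Cone} shows that $D(U) \cap \Cone$ equals $\bigcup_{V \in \twosilt_U A} C(V)$. The step I expect to need care is proving that this union is closed, since it may a priori be infinite. I would avoid that issue as follows. We have $D(U)=\overline{D^\circ(U)}$ (Definition-Proposition \ref{Def-Prop_D(U)}), and (c) gives $D^\circ(U) \subset \bigcup_{V \in \twosilt_U A} C(V)$. So it suffices to know that $\twosilt_U A$ is finite, i.e.\ to prove (c)$\Rightarrow$(b) directly; then the union is a finite union of closed cones and hence closed.

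For (c)$\Rightarrow$(b), by Proposition \ref{Prop_D(U)_Cone}, $D^\circ(U)$ is the disjoint union of the cones $C^\circ(V)$ for $V \in \twopsilt_U A$. Apply $\pi$ and use Proposition \ref{Prop_reduc_K_0} (1),(4) together with the bijection $\reduc$. This gives
\[
K_0(\proj B)_\R=\pi(D^\circ(U))=\bigcup_{V' \in \twopsilt B} C^\circ(V')=\Cone_B.
\]
Definition-Proposition \ref{Def-Prop_brick_fin} then says $B$ is brick finite, and (b) follows from (a)$\Leftrightarrow$(b). This also gives (c)$\Rightarrow$(a) directly.

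For (d)$\Rightarrow$(a), note that (d) implies (c) because $D^\circ(U) \subset D(U)$, and (c)$\Rightarrow$(a) was just shown.

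Finally, the displayed equalities follow by intersecting with $\Cone$. Under (c), $D^\circ(U)=D^\circ(U) \cap \Cone$; under (d), $D(U)=D(U) \cap \Cone$. Proposition \ref{Prop_D(U)_Cone} then gives the stated decompositions.
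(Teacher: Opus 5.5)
Your proof is correct and follows essentially the same route as the paper. The paper also proves (a)$\Leftrightarrow$(b) via the bijection $\twosilt_U A \to \twosilt B$. It proves (a)$\Leftrightarrow$(c) via Proposition~\ref{Prop_pi_Cone} and the fact that $\pi|_{D^\circ(U)}$ is a surjection preserving TF equivalence classes; you unpack this into the two directions explicitly. It obtains (d) from (b) and (c) by taking the closure of a finite union of closed cones, notes that (d)$\Rightarrow$(c) is trivial, and reads off the decompositions from Proposition~\ref{Prop_D(U)_Cone}, exactly as you do.
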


\begin{proof}
(a)$\Leftrightarrow$(b) 
By Definition-Proposition \ref{Def-Prop_brick_fin},
(a) is equivalent to that $\twosilt B$ is finite.
Moreover $\twosilt B$ is finite if and only if (b) holds, 
by the bijection
$\twosilt_U A \simeq \twosilt B$ in Proposition \ref{Prop_reduc_silt} (b).

(a)$\Leftrightarrow$(c)
Proposition \ref{Prop_reduc_K_0} (1)(2) yield that
$\pi|_{D^\circ(U)} \colon D^\circ(U) \to K_0(\proj B)_\R$
is a surjection preserving TF equivalence classes.
Moreover $D^\circ(U) \cap \Cone$ is a union of some TF equivalence classes
contained in $D^\circ(U)$ by Proposition \ref{Prop_cone_TF}.
By these, $D^\circ(U) \cap \Cone=D^\circ(U)$ holds
if and only if $\pi(D^\circ(U) \cap \Cone)=K_0(\proj B)_\R$.
The former is nothing but (c).
The latter is $\Cone_B=K_0(\proj B)_\R$ by Proposition \ref{Prop_pi_Cone},
and it is equivalent to (a) 
by Definition-Proposition \ref{Def-Prop_brick_fin}.

((b) and (c))$\Rightarrow$(d)
By (c) and Proposition \ref{Prop_D(U)_Cone}, 
$D^\circ(U)$ is the union of all $C^\circ(V)$ for $V \in \twopsilt_U A$.
This union is a finite union by (b).
Thus taking closures, we get that
$D(U)$ is the union of all $C(V)$ for $V \in \twopsilt_U A$.
Thus (d) holds.

(d)$\Rightarrow$(c) is obvious.

The last equalities come from Proposition \ref{Prop_D(U)_Cone}.
\end{proof}

We end this section by giving a concrete example of interval neighborhoods.

\begin{Ex}\label{Ex_A4_D(U)}
Let $A$ be the path algebra of the quiver $1 \to 2 \to 3 \to 4$,
and $U=U_1 \oplus U_2  \in \twopsilt A$
with $U_1=(P(4) \to P(3))$ and $U_2=P(1)$.
In this case, the maximal completion $S$ of $U$ is given by
\begin{align*}
S=U \oplus S_3 \oplus S_4 \quad \text{with} \quad
S_3=P(2), \quad S_4=P(3).
\end{align*}
Minimal left ($\add U$)-approximations of $S_3$ and $S_4$ are
\begin{align*}
S_3 \to U_2, \quad S_4 \to U_1 \oplus U_2,
\end{align*}
and their mapping cones are the 2-term complexes 
$(P(2) \to P(1))$ and $(P(4) \to P(1))$.
Therefore the minimal completion $T$ of $U$ is 
\begin{align*}
T=U \oplus T_3 \oplus T_4 \quad \text{with} \quad
T_3=(P(2) \to P(1)), \quad T_4=(P(4) \to P(1)).
\end{align*}

Then we have
\begin{align*}
H^0(U_1)&=\begin{smallmatrix}3\end{smallmatrix}, &
H^0(U_2)&=\begin{smallmatrix}1\\2\\3\\4\end{smallmatrix}, &
H^{-1}(\nu U_1)&=\begin{smallmatrix}4\end{smallmatrix}, &
H^{-1}(\nu U_2)&=0, \\
H^0(S_3)&=\begin{smallmatrix}2\\3\\4\end{smallmatrix}, &
H^0(S_4)&=\begin{smallmatrix}3\\4\end{smallmatrix}, &
H^{-1}(\nu S_3)&=0, &
H^{-1}(\nu S_4)&=0, \\
H^0(T_3)&=\begin{smallmatrix}1\end{smallmatrix}, &
H^0(T_4)&=\begin{smallmatrix}1\\2\\3\end{smallmatrix}, &
H^{-1}(\nu T_3)&=\begin{smallmatrix}2\end{smallmatrix}, &
H^{-1}(\nu T_4)&=\begin{smallmatrix}2\\3\\4\end{smallmatrix}.
\end{align*}

Setting $e$ as the idempotent $H^0(S) \to H^0(U) \to H^0(S)$,
we can check that 
$B=H^0(S)/\langle e \rangle$ is isomorphic to the path algebra $K(1 \to 2)$. 
Note also that the indices of the vertices are 
consistent with \eqref{Eq_proj_B}.
By Proposition \ref{Prop_rad_soc},
the 2-term simple-minded collections $X=\bigoplus_{i=1}^4 X_i := \SH(S)$ 
and $Y=\bigoplus_{i=1}^4 Y_i := \SH(T)$ are given by
\begin{align*}
X_1&=\begin{smallmatrix}4\end{smallmatrix}[1], &
X_2&=\begin{smallmatrix}1\end{smallmatrix}, &
X_3&=\begin{smallmatrix}2\end{smallmatrix}, &
X_4&=\begin{smallmatrix}3\\4\end{smallmatrix}, \\
Y_1&=\begin{smallmatrix}3\end{smallmatrix}, &
Y_2&=\begin{smallmatrix}1\\2\\3\\4\end{smallmatrix}, &
Y_3&=\begin{smallmatrix}2\end{smallmatrix}[1], &
Y_4&=\begin{smallmatrix}3\\4\end{smallmatrix}[1].
\end{align*}
Therefore Proposition \ref{Prop_sim_W_U} implies 
that the simple objects of $\calW_U$ are 
$X_3=\begin{smallmatrix}2\end{smallmatrix}$ and 
$X_4=\begin{smallmatrix}3\\4\end{smallmatrix}$.
Under Notation \ref{Nota_X_Y}, the modules $W_1,W_2$ are
\begin{align}\label{W_1 W_2}
W_1=\begin{smallmatrix}3\\4\end{smallmatrix}, \quad
W_2=\begin{smallmatrix}2\\3\\4\end{smallmatrix}.
\end{align}

We also set $V:=(P(4) \to P(2)) \in \twopsilt A$,
and then $U \oplus V \in \twopsilt_U A$ holds.
Thus by Proposition \ref{Prop_reduc_silt},
we define $V_B:=\reduc(U)=\reduc(U \oplus V) \in \twopsilt B$,
which is indecomposable.
We have 
$[V]=[P(2)]-[P(4)]=[U_1]+[S_3]-[S_4]$ in $K_0(\proj A)_\R$.
Thus $[\reduc(V)]=\pi([V])=[P(1)_B]-[P(2)_B]$ 
by Definition-Proposition \ref{Def-Prop_pi} and \eqref{Eq_proj_B}.
In particular, $V^B=(P(2)_B \to P(1)_B)$.
We have
\begin{align*}
H^0(V)&=\begin{smallmatrix}2\\3\end{smallmatrix}, &
H^{-1}(\nu V)&=\begin{smallmatrix}3\\4\end{smallmatrix}, \\
H^0(V^B)&=\begin{smallmatrix}1\end{smallmatrix}, &
H^{-1}(\nu V^B)&=\begin{smallmatrix}2\end{smallmatrix}.
\end{align*}

The set $\twosilt_U A$ has exactly five elements,
and so does $\twosilt B$ by Proposition \ref{Prop_reduc_silt}.
Their exchange quivers with labels are given as follows,
\begin{align*}
\begin{tikzpicture}[->,baseline=0pt,scale=0.8]
\node (1) at ( 0, 2) {$S$};
\node (2) at ( 2, 1) {$U \oplus S_3 \oplus V$};
\node (3) at ( 2,-1) {$U \oplus T_4 \oplus V$};
\node (4) at (-2, 0) {$U \oplus T_3 \oplus S_4$};
\node (5) at ( 0,-2) {$T$};
\draw (1) to [edge label={$\begin{smallmatrix}3\\4\end{smallmatrix}$}] (2);
\draw (2) to [edge label={$\begin{smallmatrix}2\\3\\4\end{smallmatrix}$}] (3);
\draw (3) to [edge label={$\begin{smallmatrix}2\end{smallmatrix}$}] (5);
\draw (1) to [edge label'={$\begin{smallmatrix}2\end{smallmatrix}$}] (4);
\draw (4) to [edge label'={$\begin{smallmatrix}3\\4\end{smallmatrix}$}] (5);
\end{tikzpicture} \qquad
\begin{tikzpicture}[->,baseline=0pt,scale=0.8]
\node (1) at ( 0, 2) {$B$};
\node (2) at ( 2, 1) {$P(1)_B \oplus V_B$};
\node (3) at ( 2,-1) {$P(2)_B[1] \oplus V_B$};
\node (4) at (-2, 0) {$P(1)_B[1] \oplus P(2)_B$};
\node (5) at ( 0,-2) {$B[1]$};
\draw (1) to [edge label={$\begin{smallmatrix}2\end{smallmatrix}$}] (2);
\draw (2) to [edge label={$\begin{smallmatrix}1\\2\end{smallmatrix}$}] (3);
\draw (3) to [edge label={$\begin{smallmatrix}1\end{smallmatrix}$}] (5);
\draw (1) to [edge label'={$\begin{smallmatrix}1\end{smallmatrix}$}] (4);
\draw (4) to [edge label'={$\begin{smallmatrix}2\end{smallmatrix}$}] (5);
\end{tikzpicture}
\end{align*}

The equivalence $\Phi \colon \calW_U \to \mod B$ sends 
$X_3$ and $X_4$ to the simple $B$-modules $L(1)_B$ and $L(2)_B$
in $\mod B$ respectively by Proposition \ref{Prop_sim_W_U} (1).
Thus we can see that the brick labels are compatible with $\Phi$
as in Proposition \ref{Prop_reduc_label}.
Moreover \eqref{W_1 W_2} gives 
$\Phi(W_1)=L(2)_B$ and $\Phi(W_2)=P(1)_B$.

Let $\theta=x_1[P_1]+x_2[P_2]+x_3[P_3]+x_4[P_4] \in K_0(\proj A)_\R$.
Then since 
\begin{align*}
Y^+=\begin{smallmatrix}3\end{smallmatrix} \oplus
\begin{smallmatrix}1\\2\\3\\4\end{smallmatrix}, \quad
X^-=\begin{smallmatrix}4\end{smallmatrix},
\end{align*} 
the condition $\theta \in D^\circ(U)$ holds if and only if 
all the following inequalities are satisfied:
\begin{align*}
x_3 \ge 0,\quad
x_1+x_2+x_3+x_4 \ge 0,\quad
x_1+x_2+x_3 \ge 0,\quad
x_1+x_2 \ge 0,\quad
x_1 \ge 0,\quad
x_4 \le 0.
\end{align*}
Among them, $x_1+x_2+x_3 \ge 0$ is redundant,
because it is implied by $x_1+x_2 \ge 0$ and $x_3 \ge 0$.
The other five inequalities give the facets of $D(U)$.
By direct calculation, we have 
$D(U)=\R_{\ge 0}D$, 
where $D$ is the following pyramid in the affine hyperplane 
$\{x_1[P_1]+x_2[P_2]+x_3[P_3]+x_4[P_4] \mid 2x_1+x_2+x_3=0\}$.
\begin{align*}
\begin{tikzpicture}[baseline=0pt,scale=0.8]
\node (F3)[coordinate,label= 90:{$[P(1)]-[P(2)]$}] at ( 0  , 3  ) {};
\node (F1)[coordinate,label=180:{$[P(2)]$}]        at (-3  , 0  ) {};
\node (F2)[coordinate,label=270:{$[P(3)]$}]        at (-1.5,-1  ) {};
\node (F5)[coordinate,label=  0:{$[U_1]=[P(3)]-[P(4)]$}] 
at ( 2  , 0  ) {};
\node (F4)[coordinate]                                      at ( 0.5, 1  ) {};
\node (U2)[coordinate,label=180:{$[U_2]/2 =[P(1)]/2$}]      at (-1.5, 1.5) {};
\node (F4')  at ( 0.5, 1  ) {};
\node (F4'') at ( 3.5, 1  ) {$[P(2)]-[P(4)]$};
\draw (F3) to (F1);
\draw (F3) to (F2);
\draw[dashed] (F3) to (F4);
\draw (F3) to (F5);
\draw (F1) to (F2);
\draw (F2) to (F5);
\draw[dashed] (F5) to (F4);
\draw[dashed] (F4) to (F1);
\draw[dashed,very thick] (U2) to (F5);
\draw[->] (F4'') to (F4');
\draw[fill=black] (F5) circle [radius=0.1];
\draw[fill=black] (U2) circle [radius=0.1];
\end{tikzpicture}.
\end{align*}
Note that $[U_2]$ itself is not on this pyramid.
The bold dashed line is the intersection of $C(U)$ and $D$.

Since $B=K(1 \to 2)$ is brick finite,
Proposition \ref{Prop_B_brick_fin} implies 
$D^\circ(U)=\bigcup_{V \in \twosilt_U A} C^\circ(V)$.
By $\twosilt_U A$ given above,
the wall-chamber structure on $D$ is depicted as follows.
\begin{align*}
\begin{tikzpicture}[baseline=0pt]
\node (F3)[coordinate,label= 90:{$[T_3]=[P(1)]-[P(2)]$}] at ( 0  , 3  ) {};
\node (F1)[coordinate,label=180:{$[S_3]=[P(2)]$}]        at (-3  , 0  ) {};
\node (F2)[coordinate,label=270:{$[S_4]=[P(3)]$}]        at (-1.5,-1  ) {};
\node (F5)[coordinate,label=  0:{$[U_1]=[P(3)]-[P(4)]$}] at ( 2  , 0  ) {};
\node (F4)[coordinate]                                   at ( 0.5, 1  ) {};
\node (U2)[coordinate,label=180:{$[U_2]/2=[P(1)]/2$}]    at (-1.5, 1.5) {};
\node (F4')  at ( 0.5, 1  ) {};
\node (F4'') at ( 3.5, 1  ) {$[V]=[P(2)]-[P(4)]$};
\node (V) [coordinate]                                   at (0.25, 2  ) {};
\node (V')   at (0.25, 2  ) {};
\node (V'')  at ( 3.5, 2  ) {$[T_4]/2=([P(1)]-[P(4)])/2$};
\draw (F3) to (F1);
\draw (F3) to (F2);
\draw[dashed] (F3) to (F4);
\draw (F3) to (F5);
\draw (F1) to (F2);
\draw (F2) to (F5);
\draw[dashed] (F5) to (F4);
\draw[dashed] (F4) to (F1);
\draw[dashed] (F5) to (F1);
\draw[dashed] (F5) to (V);
\draw (U2) to (F2);
\draw[dashed] (U2) to (F4);
\draw[dashed] (U2) to (V);
\draw[dashed,very thick] (U2) to (F5);
\draw[->] (F4'') to (F4');
\draw[->] (V'')  to (V');
\draw[fill=black] (F5) circle [radius=0.1];
\draw[fill=black] (U2) circle [radius=0.1];
\end{tikzpicture}
\end{align*}

Compare this to the wall-chamber structure on $K_0(\proj B)_\R$ given in
the following, where the gray region is $\pi(D)$.
\begin{align*}
\begin{tikzpicture}[baseline=0pt,scale=0.8]
\node (00)[coordinate] at ( 0, 0) {};
\node (+0)[coordinate,label=  0:{$\pi([S_3])=[P(1)_B]$}]        at ( 3, 0) {};
\node (++)[coordinate]                                          at ( 3, 3) {};
\node (0+)[coordinate,label= 90:{$\pi([S_4])=[P(2)_B]$}]        at ( 0, 3) {};
\node (-+)[coordinate]                                          at (-3, 3) {};
\node (-0)[coordinate,label=180:{$\pi([T_3])=-[P(1)_B]$}]       at (-3, 0) {};
\node (--)[coordinate]                                          at (-3,-3) {};
\node (0-)[coordinate,label=270:{$\pi([T_4])=-[P(2)_B]$}]       at ( 0,-3) {};
\node (+-)[coordinate,label=315:{$\pi([V])=[P(1)_B]-[P(2)_B]$}] at ( 3,-3) {};
\draw[fill=black!20] (+0)--(0+)--(-0)--(+-)--cycle;
\draw[dashed,->] (00) to (+0);
\draw[dashed,->] (00) to (0+);
\draw[dashed,->] (00) to (-0);
\draw[dashed,->] (00) to (0-);
\draw[very thick] (00) to (+0);
\draw[very thick] (00) to (0+);
\draw[very thick] (00) to (-0);
\draw[very thick] (00) to (0-);
\draw[very thick] (00) to (+-);
\end{tikzpicture}
\end{align*}
\end{Ex}

\section{Facets of the interval neighborhood $D(U)$}\label{Sec_facet}

\subsection{Our strategy}

Let $U \in \twopsilt A$.
Recall that the closed interval neighborhood $D(U)$ is given by
\begin{align}\notag
D(U)&=
\{\theta \in K_0(\proj A)_\R \mid 
\calT_U \subset \ovcalT_\theta,\ 
\calF_U \subset \ovcalF_\theta\} \\ \label{overline DcU}
&=\{\theta \in K_0(\proj A)_\R \mid 
H^0(U) \in \ovcalT_\theta,\ 
H^{-1}(\nu U) \subset \ovcalF_\theta\}.
\end{align}
The aim of this section is to study the boundary $\partial D(U)$
and the set $\Facet D(U)$ of facets of $D(U)$.

By \eqref{overline DcU}, we have
\begin{align}\label{MM'}
D(U)=\bigcap_{M} \{\theta \in K_0(\proj A)_\R \mid \theta(M) \ge 0\}
\cap \bigcap_{M'} \{\theta \in K_0(\proj A)_\R \mid \theta(M') \le 0\},
\end{align}
where $M$ runs over all factor modules of $H^0(U)$ and $M'$ 
runs over all submodules of $H^{-1}(\nu U)$.
Thus for each facet $F \in \Facet D(U)$,
one of the following conditions holds.
\begin{enumerate2}
\item
An inner normal vector of $F$ in $D(U)$ is $[M] \in K_0(\mod A)$ 
for a factor module $M$ of $H^0(U)$.
\item
An outer normal vector of $F$ in $D(U)$ is $[M] \in K_0(\mod A)$ 
for a submodule $M'$ of $H^{-1}(\nu U)$.
\end{enumerate2}
However, not every factor module $M$ of $H^0(U)$
(resp.~not every submodule $M'$ of $H^{-1}(\nu U)$) gives 
a normal vector of a facet of $D(U)$ in general. 
In other words, 
it is inefficient to consider all factor modules of $H^0(U)$
and submodules of $H^{-1}(\nu U)$ to determine $D(U)$.

To reduce this redundancy,
we use the semibricks $Y^+=H^0(Y)$ and $X^-=H^{-1}(X)$
of the direct summands of
$X=\SH(S),\ Y=\SH(T) \in \twosmc A$,
which correspond to 
the maximal and the minimal completions $S,T \in \twosilt A$ of $U$.

Let $U=\bigoplus_{i=1}^m U_i$ with each $U_i$ indecomposable,
and apply Notations \ref{Nota_S_T} and \ref{Nota_X_Y}.
Then, by Propositions \ref{Prop_rad_soc_U} and \ref{Prop_T_U_T(Y^+)},
they satisfy
\begin{align*}
Y^+&=\bigoplus_{i=1}^m Y_i^+, &  
X^-&=\bigoplus_{i=1}^m X_i^-, \\
Y_i^+&=H^0(U_i)/\sum_{f \in \rad_A(H^0(U),H^0(U_i))} \Im f, & 
X_i^-&=\bigcap_{f \in \rad_A(H^{-1}(\nu U_i),H^{-1}(\nu U))} \Ker f, \\
\calT_U&=\sfT(Y^+), & \calF_U&=\sfF(X^-).
\end{align*}
Then $D(U)$ and $D^\circ(U)$ can be written with these (semi)bricks.

\begin{Prop}\label{Prop_D(U)_Y_X}
For any $U \in \twopsilt A$, we have
\begin{align*}
D(U)&=\{ \theta \in K_0(\proj A)_\R \mid 
Y^+ \in \ovcalT_\theta, \ X^- \in \ovcalF_\theta \}
=\bigcap_{i=1}^m \{ \theta \in K_0(\proj A)_\R \mid 
Y_i^+ \in \ovcalT_\theta, \ X_i^- \in \ovcalF_\theta \}, \\
D^\circ(U)&=\{ \theta \in K_0(\proj A)_\R \mid 
Y^+ \in \calT_\theta, \ X^- \in \calF_\theta \}
=\bigcap_{i=1}^m \{ \theta \in K_0(\proj A)_\R \mid 
Y_i^+ \in \calT_\theta, \ X_i^- \in \calF_\theta \}.
\end{align*}
\end{Prop}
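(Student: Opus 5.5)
The plan is to reduce everything to the torsion-class descriptions. Definition-Proposition \ref{Def-Prop_D(U)} gives
\begin{align*}
D(U)=\{\theta \mid \calT_U \subset \ovcalT_\theta,\ \calF_U \subset \ovcalF_\theta\}, \quad
D^\circ(U)=\{\theta \mid \calT_U \subset \calT_\theta,\ \calF_U \subset \calF_\theta\}.
\end{align*}
Proposition \ref{Prop_T_U_T(Y^+)} gives $\calT_U=\sfT(Y^+)$ and $\calF_U=\sfF(X^-)$. Each of $\ovcalT_\theta$ and $\calT_\theta$ is a torsion class, and each of $\ovcalF_\theta$ and $\calF_\theta$ is a torsion-free class. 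For a torsion class $\calT$ we have $\sfT(Y^+) \subset \calT$ if and only if $Y^+ \in \calT$, because $\sfT(Y^+)$ is the smallest torsion class containing $Y^+$. Dually, $\sfF(X^-) \subset \calF$ if and only if $X^- \in \calF$ for a torsion-free class $\calF$. Substituting these equivalences gives the first equality in each line directly.

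For the second equality in each line, I would use that torsion classes and torsion-free classes are closed under direct sums and direct summands. Thus $Y^+=\bigoplus_{i=1}^m Y_i^+ \in \ovcalT_\theta$ if and only if $Y_i^+ \in \ovcalT_\theta$ for every $i$, and the same holds with $\calT_\theta$, $\ovcalF_\theta$, $\calF_\theta$ in place of $\ovcalT_\theta$ (using $X_i^-$ for the torsion-free classes). Zero summands cause no trouble, since $0$ lies in every such class. Rewriting the condition on $\theta$ summand by summand turns it into the intersection over $i$.

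The only point needing care is that $Y^+$ and $X^-$ really are the direct sums of the $Y_i^+$ and $X_i^-$ for $i \in \{1,\ldots,m\}$, with no contribution from indices $j>m$. This is recorded in Proposition \ref{Prop_rad_soc_U} and the discussion preceding the statement, together with Notation \ref{Nota_X_Y} (2). That notation says that for $j>m$ we have $X_j \in \mod A$, so $X_j^-=0$, and $Y_j=X_j[1]$, so $Y_j^+=0$. I expect no real obstacle: the argument is a formal consequence of these closure properties, and the substantive input is Proposition \ref{Prop_T_U_T(Y^+)}, which we are allowed to assume.
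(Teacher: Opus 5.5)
Your proposal is correct and follows the paper's proof. Both arguments rest on $\calT_U=\sfT(Y^+)$ and $\calF_U=\sfF(X^-)$ from Proposition \ref{Prop_T_U_T(Y^+)}, together with the minimality of torsion and torsion-free closures, and then split the direct sums $Y^+=\bigoplus_{i=1}^m Y_i^+$ and $X^-=\bigoplus_{i=1}^m X_i^-$. The only difference is cosmetic: the paper starts from the conditions $H^0(U)\in\ovcalT_\theta$ and $H^{-1}(\nu U)\in\ovcalF_\theta$ rather than the inclusions $\calT_U\subset\ovcalT_\theta$ and $\calF_U\subset\ovcalF_\theta$. These are equivalent because $\calT_U=\Fac H^0(U)$ and $\calF_U=\Sub H^{-1}(\nu U)$.
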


\begin{proof}
We prove the equalities on $D(U)$. 
By Proposition \ref{Prop_T_U_T(Y^+)}, 
$\calT_U=\Fac H^0(U)=\sfT(Y^+)$ and 
$\calF_U=\Sub H^{-1}(\nu U)=\sfF(X^-)$ hold. 
Thus $\theta \in K_0(\proj A)_\R$ belongs to $D(U)$ if and only if 
$H^0(U) \in \ovcalT_\theta$ and $H^{-1}(\nu U) \in \ovcalF_\theta$ 
if and only if $Y^+ \in \ovcalT_\theta$ and $X^- \in \ovcalF_\theta$. 
Thus the left equality holds.
The right equality is immediate 
from $Y^+=\bigoplus_{i=1}^m Y_i^+$ and $X^-=\bigoplus_{i=1}^m X_i^-$.

A similar argument give the equalities on $D^\circ(U)$.
\end{proof}

Based on these descriptions, 
for each $i \in \{1,\ldots,m\}$ and $\epsilon \in \{\pm\}$, 
we set
\begin{align}\label{define partial}
\partial_i^+:=\{ \theta \in D(U) \mid Y_i^+ \notin \calT_\theta\}, \quad
\partial_i^-:=\{ \theta \in D(U) \mid X_i^- \notin \calF_\theta\}
\end{align}
and moreover
\begin{align}\label{define partial 2}
\partial_i:=\partial_i^+ \cup \partial_i^-, \quad
\partial^\epsilon:=\bigcup_{i=1}^m \partial_i^\epsilon.
\end{align}
Since $D^\circ(U)$ is open, we have 
\begin{align}\label{all partial_i union}
\partial D(U)=D(U)\setminus D^\circ(U) 
\stackrel{\text{Prop.~\ref{Prop_D(U)_Y_X}}}{=}
\bigcup_{i=1}^m \partial_i=
\bigcup_{i=1}^m (\partial_i^+\cup \partial_i^-).
\end{align}

By Proposition \ref{Prop_D(U)_Y_X}, 
every $\theta \in D(U)$ satisfies
$Y_i^+ \in \ovcalT_\theta$ and $X_i^- \in \ovcalF_\theta$ for all $i$.
Thus we have
\begin{align}\label{partial_i}
\begin{array}{l}
\partial_i^+=\{ \theta \in D(U) \mid 
\text{there exists a factor module $M \ne 0$ of $Y_i^+$
such that $\theta(M)=0$} \}, \\
\partial_i^-=\{ \theta \in D(U) \mid 
\text{there exists a submodule $M' \ne 0$ of $X_i^-$
such that $\theta(M')=0$} \}.
\end{array}
\end{align}
As a consequence, we have the following.

\begin{Lem}\label{Lem_partial_face}
Let $U=\bigoplus_{i=1}^m U_i \in \twopsilt A$.
\begin{enumerate}
\item 
Let $i \in \{1,\ldots,m\}$ and $\epsilon \in \{\pm\}$. 
Then $\partial_i^\epsilon$ is a union of faces of $D(U)$.
\item 
For each $F \in \Face D(U)$ with $F \ne D(U)$, 
there exists $(i,\epsilon) \in \{1,\ldots,m\} \times \{\pm\}$ 
such that $F\subset\partial_i^\epsilon$.
\item
Let $F \in \Face D(U)$. 
\begin{enumerate}
\item
If $i \in \{1,\ldots,m\}$ satisfies $F \subset \partial_i$,
then there exists $\epsilon \in \{\pm\}$ such that
$F \subset \partial_i^\epsilon$.
\item
If $\epsilon \in \{\pm\}$ satisfies $F \subset \partial^\epsilon$,
then there exists $i \in \{1,\ldots,m\}$ such that
$F \subset \partial_i^\epsilon$.
\end{enumerate}
\end{enumerate}
\end{Lem}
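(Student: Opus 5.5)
The whole lemma rests on the description \eqref{partial_i} together with the standard fact that, for a polyhedral cone $C$ and a linear form $\ell$ with $\ell \ge 0$ on $C$, the set $\{\theta \in C \mid \ell(\theta)=0\}$ is a face of $C$.

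For (1), let $M \ne 0$ be a factor module of $Y_i^+$. By Proposition \ref{Prop_D(U)_Y_X}, every $\theta \in D(U)$ satisfies $Y_i^+ \in \ovcalT_\theta$, so $\theta(M) \ge 0$ on $D(U)$. Hence
\begin{align*}
F_M:=\{\theta \in D(U) \mid \theta(M)=0\}
\end{align*}
is a face of $D(U)$ (nonempty, since it contains $0$). By \eqref{partial_i}, $\partial_i^+=\bigcup_M F_M$, where $M$ runs over the nonzero factor modules of $Y_i^+$. Dually, for a nonzero submodule $M'$ of $X_i^-$ we have $\theta(M') \le 0$ on $D(U)$, so $\{\theta \in D(U) \mid \theta(M')=0\}$ is a face, and $\partial_i^-$ is the union of these faces.

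The real content of (2) and (3) is showing that containment of a face in a union of faces forces containment in a single one. Here one must be careful: there are infinitely many factor modules $M$, but only finitely many classes $[M]$, because each dimension vector is bounded by that of $Y_i^+$. So each $\partial_i^\epsilon$ is a finite union of faces, and hence $\partial D(U)=\bigcup_{i,\epsilon}\partial_i^\epsilon$ is also a finite union of faces. I would isolate the following general claim.

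Claim. If a face $F$ of a polyhedral cone $C$ is contained in a finite union $\bigcup_k G_k$ of faces of $C$, then $F \subset G_k$ for some $k$.

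To prove it, pick $\theta \in F^\circ$. Then $\theta \in G_k$ for some $k$, so $G_k \cap F$ is a face of $F$ that meets the relative interior $F^\circ$. A face of $F$ that meets $F^\circ$ must equal $F$, so $F \subset G_k$.

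Now (2) follows from the Claim once we know $F \subset \partial D(U)$ for every proper face $F \ne D(U)$. Since $D(U)$ is the closure of the open set $D^\circ(U)$, it is full-dimensional and $D^\circ(U)$ is its interior. Every proper face lies in a supporting hyperplane, so it misses the interior and lies in $\partial D(U)$. By \eqref{all partial_i union}, $\partial D(U)$ is the finite union of faces $\bigcup_{i,\epsilon}\partial_i^\epsilon$, and the Claim gives a single $\partial_i^\epsilon$ containing $F$.

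For (3)(a), $\partial_i=\partial_i^+\cup\partial_i^-$ is a finite union of faces, and the Claim yields some $\epsilon$ with $F \subset \partial_i^\epsilon$. For (3)(b), $\partial^\epsilon=\bigcup_i \partial_i^\epsilon$ is likewise a finite union of faces, and the Claim yields some $i$ with $F \subset \partial_i^\epsilon$.

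The main obstacle is the finiteness needed in the Claim, which I handle through the finiteness of the set of dimension vectors as above.
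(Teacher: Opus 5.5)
Your proof is correct and follows the paper's argument, which cites only \eqref{partial_i}, \eqref{all partial_i union} and \eqref{define partial 2}; you additionally make explicit the standard facts that a proper face misses $D^\circ(U)$ and that a face contained in a union of faces lies in one of them. One small point is that your Claim does not need finiteness: choosing $\theta \in F^\circ$ works for an arbitrary union of faces, so the dimension-vector bound, though true, is not actually an obstacle.
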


\begin{proof}
(1) follows from \eqref{partial_i},
(2) comes from \eqref{all partial_i union} and (1), and 
(3) comes from \eqref{define partial 2} and (1).
\end{proof}

In this context, 
for each $i \in \{1,\ldots,m\}$ and $\epsilon \in \{\pm\}$, we set
\begin{align*}
\Face_i^\epsilon D(U)
&:=\{F \in \Face D(U) \mid F \subset \partial_i^\epsilon \},\\
\Face_i D(U)
&:=\{F \in \Face D(U) \mid F \subset \partial_i \}, &
\Face^\epsilon D(U)
&:=\{F \in \Face D(U) \mid F \subset \partial^\epsilon \},\\
\Facet_i^\epsilon D(U)
&:=\{F \in \Facet D(U) \mid F \subset \partial_i^\epsilon \},\\
\Facet_i D(U)
&:=\{F \in \Facet D(U) \mid F \subset \partial_i \}, &
\Facet^\epsilon D(U)
&:=\{F \in \Facet D(U) \mid F \subset \partial^\epsilon \}.
\end{align*}
Then Lemma \ref{Lem_partial_face} (3) implies
\begin{align*}
\Face_i D(U)&=\Face_i^+ D(U) \cup \Face_i^- D(U), &
\Face^\epsilon D(U)&=\bigcup_{i=1}^m \Face_i^\epsilon D(U), \\
\Facet_i D(U)&=\Facet_i^+ D(U) \cup \Facet_i^- D(U), &
\Facet^\epsilon D(U)&=\bigcup_{i=1}^m \Facet_i^\epsilon D(U).
\end{align*}

By Lemma \ref{Lem_partial_face} (1), $\partial_i^\epsilon$ is 
the union of all faces $F \in \Face_i^\epsilon D(U)$.
More strongly, we will show that $\partial_i^\epsilon$ is 
the union of all facets $F \in \Facet_i^\epsilon D(U)$
in Theorem \ref{Thm_partial_facet} of the next subsection.

We caution the following on these symbols.

\begin{Rem}
By definition, we have
$\partial_i^\epsilon \subset \partial_i \cap \partial^\epsilon$, and hence
$\Face_i^\epsilon D(U) \subset \Face_i D(U) \cap \Face^\epsilon D(U)$.
However, these inclusions can be proper;
see \eqref{Eq_d1_d+_proper_d1+} in Example \ref{Ex_A4_L_F}.
Nevertheless,
$\Facet_i^\epsilon D(U)=\Facet_i D(U) \cap \Facet^\epsilon D(U)$
always holds by Theorem \ref{Thm_partial_facet}.
\end{Rem}

The property below more directly follows from \eqref{MM'}.

\begin{Lem}\label{Lem_setminus_face}
Let $U=\bigoplus_{i=1}^m U_i \in \twopsilt A$.
For any $i \in \{1,\ldots,m\}$,
the set $D(U) \setminus D^\circ(U_i)$ is a union of faces of $D(U)$.
\end{Lem}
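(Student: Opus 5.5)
The plan is to unwind the definition of $D^\circ(U_i)$ inside $D(U)$, in the same way as \eqref{partial_i} was obtained from Proposition \ref{Prop_D(U)_Y_X}, but now using the modules $H^0(U_i)$ and $H^{-1}(\nu U_i)$ instead of the bricks $Y_i^+,X_i^-$.

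First I will check that every $\theta \in D(U)$ satisfies $H^0(U_i) \in \ovcalT_\theta$ and $H^{-1}(\nu U_i) \in \ovcalF_\theta$. This holds because $D(U)=D(U_i)\cap D(U/U_i) \subset D(U_i)$, or directly from \eqref{overline DcU}, since $H^0(U_i)$ is a direct summand of $H^0(U)$ and $H^{-1}(\nu U_i)$ is a direct summand of $H^{-1}(\nu U)$.

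Next I will use the second description of $D^\circ(U_i)$ in Definition-Proposition \ref{Def-Prop_D(U)}. For $\theta \in D(U)$, we have $\theta \notin D^\circ(U_i)$ exactly when $H^0(U_i) \notin \calT_\theta$ or $H^{-1}(\nu U_i) \notin \calF_\theta$. Given the semistable inclusions from the first step, this is equivalent to one of the following:
\begin{itemize}
\item there is a nonzero factor module $M$ of $H^0(U_i)$ with $\theta(M)=0$;
\item there is a nonzero submodule $M'$ of $H^{-1}(\nu U_i)$ with $\theta(M')=0$.
\end{itemize}
Hence
\begin{align*}
D(U)\setminus D^\circ(U_i)=\bigcup_{M}\{\theta \in D(U) \mid \theta(M)=0\} \cup \bigcup_{M'}\{\theta \in D(U)\mid \theta(M')=0\},
\end{align*}
where $M$ runs over the nonzero factor modules of $H^0(U_i)$ and $M'$ runs over the nonzero submodules of $H^{-1}(\nu U_i)$.

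Finally I will show that each set in this union is a face of $D(U)$. A factor module $M$ of $H^0(U_i)$ is also a factor module of $H^0(U)$, and a submodule $M'$ of $H^{-1}(\nu U_i)$ is also a submodule of $H^{-1}(\nu U)$. So by \eqref{MM'}, $D(U)$ lies in the half-space $\theta(M)\ge 0$ (respectively $\theta(M')\le 0$). The sets $\{\theta\in D(U) \mid \theta(M)=0\}$ and $\{\theta\in D(U) \mid \theta(M')=0\}$ are therefore intersections of $D(U)$ with supporting hyperplanes, and they are nonempty because they contain $0$. By the standard characterization of faces recalled after the definition of faces, they are faces of $D(U)$.

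The only step needing care is the equivalence in the second step. The point is that membership in $\calT_\theta$, given membership in $\ovcalT_\theta$, fails precisely when some nonzero factor module has value $0$, and dually for $\calF_\theta$ and submodules. The union is indexed by infinitely many modules, but this does not matter: only finitely many classes $[M]$ occur, and in any case a union of faces is still a union of faces.
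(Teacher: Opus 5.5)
Your proposal is correct and matches the paper's own proof: both restate $\theta\notin D^\circ(U_i)$, for $\theta\in D(U)$, as the vanishing of $\theta$ on some nonzero factor module of $H^0(U_i)$ or some nonzero submodule of $H^{-1}(\nu U_i)$. Both then use \eqref{MM'} to see that each such vanishing locus is the intersection of $D(U)$ with a supporting hyperplane, hence a face.
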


\begin{proof}
Let $\theta \in D(U)$.
Then we have
$H^0(U_i) \in \ovcalT_\theta$ and $H^{-1}(\nu U_i) \in \ovcalF_\theta$.
Thus $\theta \in D(U) \setminus D^\circ(U_i)$ is equivalent to
that at least one of the following holds.
\begin{enumerate2}
\item
There exists a factor module $M \ne 0$ of $H^0(U_i)$ such that 
$\theta(M)=0$.
\item
There exists a submodule $M' \ne 0$ of $H^{-1}(\nu U_i)$ such that 
$\theta(M)=0$.
\end{enumerate2}
Clearly, every factor module of $H^0(U_i)$ is a factor module of $H^0(U)$,
and every submodule of $H^{-1}(\nu U_i)$ is a submodule of $H^{-1}(\nu U)$.
Thus \eqref{MM'} gives that
$D(U) \setminus D^\circ(U_i)$ is a union of faces of $D(U)$.
\end{proof}

It is easy to see $\partial_i \subset D(U) \setminus D^\circ(U_i)$
as in Lemma \ref{Lem_partial_setminus_oneside}.
The converse $D(U) \setminus D^\circ(U_i) \subset \partial_i$
is far from trivial, 
so the next subsection is mainly devoted to proving this part.

\subsection{A decomposition theorem of the set of facets}

This subsection has two main results.
The first one is a description of $\partial_i$ only by interval neighborhoods.

\begin{Thm}\label{Thm_partial_setminus}
Let $U=\bigoplus_{i=1}^m U_i \in \twopsilt A$. 
For each $i \in \{1,\ldots,m\}$, we have the equality
\begin{align*}
\partial_i&=D(U) \setminus D^\circ(U_i).
\end{align*}
\end{Thm}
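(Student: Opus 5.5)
The inclusion $\partial_i \subset D(U)\setminus D^\circ(U_i)$ should be immediate, so I will prove it first. By Proposition \ref{Prop_rad_soc_U}, $Y_i^+$ is a factor module of $H^0(U_i)$ and $X_i^-$ is a submodule of $H^{-1}(\nu U_i)$. Since $\calT_\theta$ is closed under factor modules and $\calF_\theta$ under submodules, $Y_i^+\notin\calT_\theta$ forces $H^0(U_i)\notin\calT_\theta$, and $X_i^-\notin\calF_\theta$ forces $H^{-1}(\nu U_i)\notin\calF_\theta$. Either way $\theta\notin D^\circ(U_i)$ by the second description of $D^\circ(U_i)$ in Definition-Proposition \ref{Def-Prop_D(U)}.

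For the converse, take $\theta\in D(U)\setminus D^\circ(U_i)$. The two cases are dual, so assume $H^0(U_i)\notin\calT_\theta$. Because $H^0(U)\in\ovcalT_\theta$, there is a nonzero factor module of $H^0(U_i)$ with $\theta$-value $0$, hence lying in $\calW_\theta$. I will take $M$ to be such a factor module that is simple in the wide subcategory $\calW_\theta$, with quotient map $p\colon H^0(U_i)\to M$. Put $R_i:=\sum_{f\in\rad_A(H^0(U),H^0(U_i))}\Im f$, so that $Y_i^+=H^0(U_i)/R_i$.
\begin{enumerate2}
\item If $p(R_i)=0$, then $M$ is a nonzero factor module of $Y_i^+$ with $\theta(M)=0$. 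By \eqref{partial_i} this gives $\theta\in\partial_i^+$.
\item Otherwise there is a radical map $f\colon H^0(U_j)\to H^0(U_i)$ with $pf\ne0$. The image of $pf$ is a factor module of $H^0(U_j)\in\ovcalT_\theta$, so its $\theta$-value is $\ge0$. It is also a submodule of $M\in\ovcalF_\theta$, so its $\theta$-value is $\le0$. Hence the image lies in $\calW_\theta$, and simplicity of $M$ in $\calW_\theta$ makes $pf$ surjective.
\end{enumerate2}
In case (b) I replace $(U_i,p)$ by $(U_j,pf)$ and repeat. Each step composes with another radical morphism in $\End_A(H^0(U))$, and this radical is nilpotent. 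So after finitely many steps case (a) must occur at some index $k$, giving $\theta\in\partial_k^+$ with $M$ a factor module of $Y_k^+$.

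The main obstacle is that this descent lands at an index $k$ that may differ from $i$, while the theorem needs $\theta\in\partial_i$ exactly. To fix this, I would not stop at the first $k$ but exploit the data of the maximal and minimal completions. The brick $M$ is simple in $\calW_\theta$, and $\theta\in D(U)$ forces $\calW_\theta$ to meet $\calW_U$ in a controlled way via Proposition \ref{Prop_reduc_K_0}. I would show $M\notin\calW_U$, since $M$ is a nonzero factor of $H^0(U_i)\in\calT_U$ and $\calT_U\cap\ovcalF_U=0$. Then I would use the triangles $X_i[-1]\to W_i\to Y_i\to X_i$ of Notation \ref{Nota_X_Y}, with $W_i\in\calW_U$, to express $H^0(U_i)$ through $Y_i^+$ and objects of $\calW_U$. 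From this, a simple object of $\calW_\theta$ outside $\calW_U$ occurring as a factor of $H^0(U_i)$ should already occur as a factor of $Y_i^+$. If that fails when $Y_i^+=0$, I expect the argument to switch to the dual side: use $X_i^-\ne0$ from Proposition \ref{Prop_X_i^-_Y_i^+} (1) together with $\Hom$-orthogonality to produce a nonzero submodule of $X_i^-$ of $\theta$-value $0$, which gives $\theta\in\partial_i^-$. Making this index control rigorous is the step I expect to require the most work.
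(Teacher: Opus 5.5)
Your proof of $\partial_i\subset D(U)\setminus D^\circ(U_i)$ is correct and matches the paper. The converse has a genuine gap. The descent through radical maps only places $\theta$ in some $\partial_k^+$, with no control over $k$, and the whole content of the theorem is that one can take $k=i$. Moreover, the claim you propose for index control is false, even when $Y_i^+\neq0$.

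Here is a counterexample.
\begin{itemize}
\item Let $A$ be the path algebra of $1\to 3\leftarrow 2$. Put $U_1=(P(3)\to P(1)\oplus P(2))$, so that $H^0(U_1)=I(3)$ (top $L(1)\oplus L(2)$, socle $L(3)$) and $H^{-1}(\nu U_1)=L(3)$. Put $U_2=P(1)$. Then $U=U_1\oplus U_2$ is presilting, since $\Hom_A(I(3)\oplus P(1),L(3))=0$.
\item The inclusion $P(1)\hookrightarrow I(3)$ is radical and $\End_A(I(3))=K$. Hence Proposition \ref{Prop_rad_soc_U} gives $Y_1^+=L(2)\neq 0$ and $X_1^-=L(3)$.
\item Take $\theta=[P(2)]$, so $\theta(N)=[N:L(2)]$. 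Then $\theta\in D(U)$, and $\calW_\theta$ consists of the modules without composition factor $L(2)$.
\item $M=L(1)$ is a simple object of $\calW_\theta$, lies outside $\calW_U$, and is a factor of $H^0(U_1)$. It is not a factor of $Y_1^+$. In fact $Y_1^+\in\calT_\theta$, so $\theta\notin\partial_1^+$ at all.
\item Instead $\theta\in\partial_1^-$, since $\theta(L(3))=0$, while your descent ends at $k=2$.
\end{itemize}
So a case-one argument must sometimes conclude on the $X_i^-$ side. By the theorem, whenever $H^0(U_i)\notin\calT_\theta$ but $Y_i^+\in\calT_\theta$, one is forced to have $X_i^-\notin\calF_\theta$. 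The two cases of your dichotomy are therefore entangled and cannot be settled by separate dual arguments.

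The missing idea is the paper's polyhedral argument.
\begin{itemize}
\item Both $\partial_i$ and $D(U)\setminus D^\circ(U_i)$ are unions of faces (Lemmas \ref{Lem_partial_face} and \ref{Lem_setminus_face}). So it suffices to treat a face $F$ with $F\cap D^\circ(U_i)=\emptyset$.
\item Then $[U_i]\notin F$, so some facet $F'\supset F$ misses $[U_i]$, and $F'\subset\partial_{i'}^\epsilon$ for some $(i',\epsilon)$.
\item The key input is Lemma \ref{Lem_Y_W_almost}: $Y_{i'}^+,X_{i'}^-\in\calW_{U/U_{i'}}$, i.e.\ $\Hom$-orthogonality to the other summands. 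This yields $F'+C(U/U_{i'})\subset\partial_{i'}^\epsilon$ (Lemma \ref{Lem_F+C_almost}). By maximality of the facet, $C(U/U_{i'})\subset F'$ (Proposition \ref{Prop_C_almost_F}).
\item Since $[U_i]\notin F'$, this forces $i'=i$.
\end{itemize}
The sign $\epsilon$ is read off from the facet, not from which of $H^0(U_i)\notin\calT_\theta$ or $H^{-1}(\nu U_i)\notin\calF_\theta$ occurred. That is exactly the flexibility the example shows to be necessary.
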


The other one means the purities of $\partial_i^\epsilon$.

\begin{Thm}\label{Thm_partial_facet}
Let $U=\bigoplus_{i=1}^m U_i \in \twopsilt A$.
Then the following statements hold.
\begin{enumerate}
\item
For each $i \in \{1,\ldots,m\}$ and $\epsilon \in \{\pm\}$,
the subsets $\partial_i^\epsilon,\partial_i,\partial^\epsilon$ 
are union of facets; 
hence we have
\begin{align*}
\partial_i^\epsilon=\bigcup_{F \in \Facet_i^\epsilon D(U)} F, \quad
\partial_i=\bigcup_{F \in \Facet_i D(U)} F, \quad
\partial^\epsilon=\bigcup_{F \in \Facet^\epsilon D(U)} F.
\end{align*}
Moreover $\Facet_i D(U) \ne \emptyset$ holds.
\item
For each facet $F$ of $D(U)$,
there uniquely exists $(i,\epsilon) \in \{1,\ldots,m\} \times \{\pm\}$
such that $F \in \Facet_i^\epsilon D(U)$.
Thus we have a decomposition
\begin{align*}
\Facet D(U)
=\bigsqcup_{i=1}^m(\Facet_i^+ D(U) \sqcup \Facet_i^- D(U)).
\end{align*}
In particular, we have
\begin{align*}
\Facet_i^\epsilon D(U)=\Facet_i D(U) \cap \Facet^i D(U).
\end{align*}
\end{enumerate}
\end{Thm}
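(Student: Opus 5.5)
The plan is to work throughout with the description of $D(U)$ in Proposition \ref{Prop_D(U)_Y_X}, and to combine it with the $M$-TF machinery of \cite{AsI2} for the module $M:=Y^+\oplus X^-$. By Lemma \ref{Lem_E_twf}, the set $E:=\{\theta \mid Y^+\in\calT_\theta,\ X^-\in\calF_\theta\}$ is an $M$-TF equivalence class. For it we have $\rmt_E M=Y^+$, $\rmw_E M=0$ and $\rmf_E M=X^-$. So $E=D^\circ(U)$ is full-dimensional, $\overline{E}=D(U)\in\Sigma_n(M)$, and in the notation of Proposition \ref{Prop_M-TF_+-_no_facet} we get $\partial^+\overline{E}=\partial^+$ and $\partial^-\overline{E}=\partial^-$.

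\textbf{Step 1: different signs.} Proposition \ref{Prop_M-TF_+-_no_facet} says that $\partial^+\cap\partial^-$ contains no facet of $D(U)$. Hence a facet lies in at most one of $\partial^+,\partial^-$. Combined with Lemma \ref{Lem_partial_face}, every facet lies in some $\partial_i^\epsilon$, and $\epsilon$ is unique.

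\textbf{Step 2: same sign, different indices.} Suppose a facet $F$ satisfies $F\subset\partial_i^+\cap\partial_j^+$ with $i\ne j$, and take $\theta\in F^\circ$. Since $\dim F=n-1$, Lemma \ref{Lem_E_twf}(2) forces $\supp_\theta(\rmw_\theta M)$ to span a line. I would then argue that a single simple object $L\in\simple\calW_\theta$ appears as the top of both $\rmw_\theta Y_i^+$ and $\rmw_\theta Y_j^+$. This gives surjections $Y_i^+\twoheadrightarrow L$ and $Y_j^+\twoheadrightarrow L$ with $L$ a brick in $\calT_U\cap\calW_\theta$.

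To get a contradiction, I would use that the $Y_k^+$ are exactly the brick labels of the arrows out of $T$ leaving the interval $\calT_U=\sfT(Y^+)$. More concretely, the minimal sub-torsion classes $\sfT(Y^+\setminus Y_k^+)$ are distinct. Since $\theta\in D(U)$, $L\in\calW_\theta$ and $L$ is a quotient of $Y_i^+$, the torsion class $\calT_\theta\cap\calT_U$ misses the label $Y_i^+$. I would then show that $L$ determines which label is missed, so $Y_i^+\simeq Y_j^+$, a contradiction. The case $\epsilon=-$ is dual, using $X^-$ and $\calF_U=\sfF(X^-)$.

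\textbf{Step 3: purity.} The main obstacle is showing that each $\partial_i^\epsilon$ is a union of facets, i.e.\ that every face $G\subset\partial_i^+$ of codimension at least $2$ lies in a facet contained in $\partial_i^+$. The statements for $\partial_i$ and $\partial^\epsilon$ then follow by taking unions.

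First I would use Theorem \ref{Thm_partial_setminus}, $\partial_i=D(U)\setminus D^\circ(U_i)=D(U)\cap\partial D(U_i)$, to handle $\partial_i$. Fix $\theta\in G^\circ$. Since $D^\circ(U/U_i)$ is open and $D(U)=D(U_i)\cap D(U/U_i)$, I would try to show that $\theta$ is a limit of points of $\partial D(U_i)\cap D^\circ(U/U_i)$. Near such points $D(U)$ coincides locally with $D(U_i)$, whose boundary is a union of its own facets. This produces nearby $(n-1)$-dimensional pieces of $\partial D(U)$ contained in $\partial_i$, whose closures are facets containing $\theta$.

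To produce these points, I would move $\theta$ in the direction $[U/U_i]$. By Proposition \ref{Prop_dual_basis}, $[U_k]$ pairs positively with $Y_k^+$ and $X_k^-$, and pairs to zero with the bricks of index $\ne k$. Adding $t[U/U_i]$ with $t>0$ should therefore push $\theta$ into $D^\circ(U/U_i)$ while keeping it in $D(U)\setminus D^\circ(U_i)$.

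After that, Step 1 upgrades purity of $\partial_i$ to purity of $\partial_i^\pm$. Finally, $\Facet_iD(U)\ne\emptyset$ follows because $\partial_i\ne\emptyset$: it contains $C(U/U_i)$, since $C^\circ(U/U_i)\cap D^\circ(U_i)=\emptyset$ by Lemma \ref{Lem_D(U)_incl}(1).
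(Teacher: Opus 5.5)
Step 1 and your nonemptiness argument are fine, but Step 2 is not a proof. The condition $\dim_\R\R\supp_F(\rmw_F M)=1$ only tells you that all composition factors have proportional classes. It does not give one common simple top $L$ of $\rmw_\theta Y_i^+$ and $\rmw_\theta Y_j^+$, since distinct simples of $\calW_\theta$ can have equal classes. The step ``$L$ determines which label is missed'' has no argument behind it, and it cannot come from semibrick combinatorics alone. Points of $D(U)$ do lie in $\partial_i^+\cap\partial_j^+$ (e.g.\ $F_{34}$ in Example \ref{Ex_A4_L_F}), so the contradiction must use both that $F$ is a facet and where $F$ sits relative to $C(U)$.

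The missing ingredient, on which the paper's proof rests, is $Y_i^+,X_i^-\in\calW_{U/U_i}$ (Lemma \ref{Lem_Y_W_almost}). It gives $\rmw_F Y_i^+,\rmw_F X_i^-\in\calW_{U/U_i}$ and $F+C(U/U_i)\subset\partial_i^\epsilon$ (Lemma \ref{Lem_F+C_almost}). From this the paper deduces that a maximal element of $\Face_i^\epsilon D(U)$ contains $C(U/U_i)$ but not $[U_i]$ (Proposition \ref{Prop_C_almost_F}), which rules out two indices. In your setup the same ingredient closes Step 2 directly. For $j\ne i$, $\rmw_F Y_i^+\in\calW_{U/U_i}\subset\calW_{U_j}$ gives $[U_j](\rmw_F Y_i^+)=0$. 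On the other hand $Y_j^+\in\calT_{U_j}$ gives $[U_j](\rmw_F Y_j^+)>0$. Yet one-dimensionality of $\R\supp_F(\rmw_F M)$ forces these two nonzero classes to be positive multiples of each other, a contradiction.

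In Step 3, moving to $\theta'=\theta+t[U/U_i]$ is a genuinely different route to purity of $\partial_i$. However, $\theta'\notin D^\circ(U_i)$ does not follow from $[U/U_i]$ pairing to zero with $Y_i^+$ and $X_i^-$. You need a nonzero factor of $Y_i^+$ killed by both $\theta$ and $[U/U_i]$, and $\rmw_\theta Y_i^+$ works exactly because it lies in $\calW_{U/U_i}$. Granting this, you can even avoid Theorem \ref{Thm_partial_setminus}, whose hard inclusion the paper derives from Proposition \ref{Prop_facet_between}, i.e.\ from this very purity:
\begin{itemize}
\item each facet through $\theta'$ lies in some $\partial_k^{\epsilon}$ by Lemma \ref{Lem_partial_face};
\item $k\ne i$ is impossible, because $\theta'\in D^\circ(U/U_i)\subset D^\circ(U_k)$ and Lemma \ref{Lem_partial_setminus_oneside} applies;
\item the face property then puts $\theta$ in that facet.
\end{itemize}

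The second real gap is your last sentence. Step 1 cannot upgrade purity of $\partial_i$ to purity of $\partial_i^\pm$. Uniqueness of a facet's sign says nothing about whether a face $G\subset\partial_i^+$ lies in some facet of sign $+$, rather than only in facets of sign $-$. Proposition \ref{Prop_facet_between}(e) leaves the sign open too, so this step needs its own argument. One way:
\begin{itemize}
\item $\theta'\in\partial_i^+$ by Lemma \ref{Lem_F+C_almost}(2).
\item Suppose every facet $F_k$ through $\theta'$ lay in $\partial_i^-$. Then its outer normal would be $[N_k]$, where $N_k:=\rmw_{F_k}X_i^-\ne0$ is a submodule of $X_i^-$.
\item The tangent cone of $D(U)$ at $\theta'$ would then be $\{\zeta\mid\zeta(N_k)\le0\ \text{for all}\ k\}$.
\item This cone lies in $\{\zeta\mid\zeta(Y')\ge0\}$ for a nonzero factor $Y'$ of $Y_i^+$ with $\theta'(Y')=0$.
\item Farkas' lemma then yields $[Y']+\sum_k c_k[N_k]=0$ with $c_k\ge0$, which is impossible for classes of nonzero modules.
\end{itemize}
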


We first aim to prove Theorem \ref{Thm_partial_setminus}.
The inclusion $\partial_i \subset D(U) \setminus D^\circ(U_i)$
is easily obtained.

\begin{Lem}\label{Lem_partial_setminus_oneside}
Let $U=\bigoplus_{i=1}^m U_i \in \twopsilt A$ and $i \in \{1,\ldots,m\}$.
Then we have $\partial_i \subset D(U) \setminus D^\circ(U_i)$.
\end{Lem}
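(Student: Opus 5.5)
We have to show: if $\theta\in\partial_i$, then $\theta\notin D^\circ(U_i)$. We have $\theta\in D(U)$ and either $\theta\in\partial_i^+$ or $\theta\in\partial_i^-$. The two cases are dual, so I would treat $\partial_i^+$ in detail and obtain $\partial_i^-$ by the dual argument, with submodules in place of factor modules and $X_i^-$, $H^{-1}(\nu U_i)$ in place of $Y_i^+$, $H^0(U_i)$.

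Suppose $\theta\in\partial_i^+$, so $Y_i^+\ne0$ and $Y_i^+\notin\calT_\theta$. By \eqref{partial_i} there is a nonzero factor module $M$ of $Y_i^+$ with $\theta(M)=0$. By Proposition \ref{Prop_rad_soc_U}, $Y_i^+$ is a factor module of $H^0(U_i)$. Hence $M$ is a nonzero factor module of $H^0(U_i)$ with $\theta(M)=0$, and so $H^0(U_i)\notin\calT_\theta$. By the second description of $D^\circ(U_i)$ in Definition-Proposition \ref{Def-Prop_D(U)}, membership in $D^\circ(U_i)$ requires $H^0(U_i)\in\calT_\theta$. Therefore $\theta\notin D^\circ(U_i)$. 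Together with $\theta\in D(U)$, this gives $\theta\in D(U)\setminus D^\circ(U_i)$.

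Dually, suppose $\theta\in\partial_i^-$. Then there is a nonzero submodule $M'$ of $X_i^-$ with $\theta(M')=0$. By Proposition \ref{Prop_rad_soc_U}, $X_i^-$ is an intersection of kernels of maps out of $H^{-1}(\nu U_i)$, so it is a submodule of $H^{-1}(\nu U_i)$. Thus $M'$ is a nonzero submodule of $H^{-1}(\nu U_i)$ with $\theta(M')=0$. This gives $H^{-1}(\nu U_i)\notin\calF_\theta$, and hence $\theta\notin D^\circ(U_i)$.

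There is no real obstacle in this inclusion. The only point to check is that $Y_i^+$ is a quotient of $H^0(U_i)$ and $X_i^-$ is a submodule of $H^{-1}(\nu U_i)$, and both are read off directly from the formulas in Proposition \ref{Prop_rad_soc_U}.
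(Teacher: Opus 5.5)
Your proof is correct and is essentially the paper's argument. Both use Proposition \ref{Prop_rad_soc_U}: $Y_i^+$ is a factor module of $H^0(U_i)$ and $X_i^-$ is a submodule of $H^{-1}(\nu U_i)$, which contradicts the second description of $D^\circ(U_i)$. The only difference is cosmetic: you exhibit the witnesses $M$ and $M'$ explicitly via \eqref{partial_i}, whereas the paper uses that $\calT_\theta$ is closed under factor modules and $\calF_\theta$ under submodules.
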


\begin{proof}
Let $\theta \in \partial_i$.
Then \eqref{define partial} gives $Y_i^+ \notin \calT_\theta$ 
or $X_i^- \notin \calF_\theta$.
Since $Y_i^+$ is a factor module of $H^0(U_i)$
and $X_i^-$ is a submodule of $H^{-1}(\nu U_i)$
by Proposition \ref{Prop_rad_soc_U},
we have $\theta \notin D^\circ(U_i)$.
Then we have the assertion, since $\theta \in \partial_i \subset D(U)$.
\end{proof}

To show the converse $D(U) \setminus D^\circ(U_i) \subset \partial_i$,
fortunately, we already know that both sides are union of faces
by Lemmas \ref{Lem_partial_face} and \ref{Lem_setminus_face}.
Thus it is enough to show the following characterization of $\Face_i D(U)$.

\begin{Prop}\label{Prop_facet_between}
Let $U=\bigoplus_{i=1}^m U_i \in \twopsilt A$. 
For each $i \in \{1,\ldots,m\}$ and $F \in \Face D(U)$,
the following conditions are equivalent.
\begin{enumerate2}
\item
We have $F \subset \partial_i$;
that is, $F \in \Face_i D(U)$.
\item
The inclusion $F \subset D(U) \setminus D^\circ(U_i)$ holds.
\item
We have $[U_i] \notin F$.
\item
There exists $F' \in \Facet D(U)$ such that 
$F \subset F'$ and $[U_i] \notin F'$.
\item
There exist $F' \in \Facet D(U)$ and $\epsilon \in \{\pm\}$ such that 
$F \subset F' \subset \partial_i^\epsilon$.
\end{enumerate2}
\end{Prop}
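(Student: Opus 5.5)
We prove the cycle (a)$\Rightarrow$(b)$\Rightarrow$(c)$\Rightarrow$(d)$\Rightarrow$(e)$\Rightarrow$(a). Two of these steps are immediate. (a)$\Rightarrow$(b) is Lemma \ref{Lem_partial_setminus_oneside}. (e)$\Rightarrow$(a) holds because $\partial_i^\epsilon \subset \partial_i$.

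For (b)$\Rightarrow$(c), note that $[U_i] \in C^\circ(U_i) \subset D^\circ(U_i)$ by Lemma \ref{Lem_D(U)_incl} (1) applied to $V=U_i$. Hence $[U_i]\notin D(U)\setminus D^\circ(U_i)\supset F$.

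For (c)$\Rightarrow$(d), we use that $F$ is the intersection of the facets of $D(U)$ containing it, which is standard for polyhedral cones. Since $D(U)$ is full-dimensional (it contains the open set $D^\circ(U)$) and $F \ne D(U)$ (because $[U_i]\in D(U)\setminus F$), at least one facet contains $F$. If every facet $F'\supset F$ contained $[U_i]$, then $[U_i]$ would lie in their intersection, which is $F$, a contradiction. So some facet $F'\supset F$ omits $[U_i]$. If $F=D(U)$, then (c) already fails, so this case does not arise.

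The real content is (d)$\Rightarrow$(e). Take a facet $F'$ with $[U_i]\notin F'$. By Lemma \ref{Lem_partial_face} (2), $F'\subset\partial_j^\epsilon$ for some $(j,\epsilon)$, and we must show that $j=i$ can be arranged. The plan is to show that $[U_i]\in\partial_j^\epsilon$ whenever $j\neq i$, or at least that $[U_i]$ lies in every face contained in $\partial_j^\epsilon$. The facet $F'$ is cut out by a normal $[M]$, where $M$ is either a nonzero factor module of $Y_j^+$ or a nonzero submodule of $X_j^-$, by \eqref{partial_i}. The claim to prove is then $[U_i](M)=0$ for $j\ne i$. Since $F'=\{\theta\in D(U)\mid\theta(M)=0\}$, this would force $[U_i]\in F'$, contradicting the choice of $F'$, so the $j$ found must equal $i$.

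To get $[U_i](M)=0$, write $[U_i]=[T_i]$ and use Proposition \ref{Prop_dual_basis} for $T$ and $Y=\SH(T)$. This gives $\langle [T_i],[Y_j]\rangle=0$ for $j\neq i$, and in fact $\Hom(T_i,Y_j[\ell])=0$ for all $\ell$. For a factor module $M$ of $Y_j^+$, we want $\Hom_{\sfD(A)}(U_i,M)=0=\Hom(U_i,M[1])$. The first vanishing should follow because a map $H^0(U_i)\to M$ composed from $Y_j^+$ comes from the radical-quotient description in Proposition \ref{Prop_rad_soc_U}: the top of $H^0(U)$ splits into summands indexed by the $U_j$. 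The second vanishing uses $M\in\calT_U\subset\ovcalT_{U}$, so $\Hom(U,M[1])=0$ because $M\in\Fac H^0(U)$. The submodule case is dual, using $S$, $X=\SH(S)$, the socle description of $X_j^-$, and $\Hom(U_i,M')=0$ for $M'\in\calF_U$.

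I expect the hard part to be the equality $\Hom(U_i,M)=0$. The difficulty is that $M$ is only a factor of the brick $Y_j^+$, and maps $H^0(U_i)\to M$ need not factor through $Y_j^+$. If this direct vanishing fails, the fallback is to argue on faces rather than points. Pick $\theta\in (F')^\circ$ and perturb it toward $[U_i]$. For small $t>0$, the point $\theta+t[U_i]$ stays in $D(U)$ because $D(U)$ is convex and contains $C(U)$. Since $Y_j^+$ is annihilated by $[U_i]$ for $j\neq i$ (by the duality above), the condition $Y_j^+\notin\calT$ persists along the perturbation. So the whole segment lies in $\partial_j^\epsilon$, and by purity this segment lies in $F'$, which again forces $[U_i]\in F'$.
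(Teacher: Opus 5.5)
The implications (a)$\Rightarrow$(b), (b)$\Rightarrow$(c), (c)$\Rightarrow$(d) and (e)$\Rightarrow$(a) are fine and agree with the paper. The gap is in (d)$\Rightarrow$(e), and it affects both of your routes.

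Your primary route needs $\Hom_{\sfD(A)}(U_i,M)=0$ for a nonzero factor module $M$ of $Y_j^+$ with $j\neq i$. This is false in general. In Example \ref{Ex_A4_D(U)}, $Y_2^+=P(1)$ has the factor module $M=\begin{smallmatrix}1\\2\\3\end{smallmatrix}$. Here $\Hom_A(H^0(U_1),M)=\Hom_A(L(3),M)\neq 0$, so $[U_1](M)=1$; this $M$ gives the redundant inequality $x_1+x_2+x_3\ge 0$. So the vanishing can only hold for an $M$ that actually cuts out a facet, and your argument never uses that property. Moreover, for such an $M$ with $F'=\{\theta\in D(U)\mid \theta(M)=0\}$, the equality $[U_i](M)=0$ is literally equivalent to $[U_i]\in F'$, which is what you are trying to prove.

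Your fallback has the right shape; corrected, it is essentially the paper's argument via Lemma \ref{Lem_F+C_almost} (2) and Proposition \ref{Prop_C_almost_F}. Its key step, however, is a non sequitur: $[U_i](Y_j^+)=0$ does not by itself keep $Y_j^+\notin\calT_{\theta+t[U_i]}$.

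\begin{itemize}
\item The witness for $Y_j^+\notin\calT_\theta$ is the nonzero quotient $\rmw_\theta Y_j^+=Y_j^+/\rmt_\theta Y_j^+$, which satisfies $\theta(\rmw_\theta Y_j^+)=0$.
\item Since $[U_i](\rmw_\theta Y_j^+)=[U_i](Y_j^+)-[U_i](\rmt_\theta Y_j^+)$, you still have to control $\rmt_\theta Y_j^+$. The example above shows that quotients of $Y_j^+$ can have positive $[U_i]$-value.
\item The missing idea is that $\rmt_\theta Y_j^+\in\calW_{U/U_j}$. It lies in $\ovcalF_{U/U_j}$ as a submodule of $Y_j^+\in\calW_{U/U_j}$ (Lemma \ref{Lem_Y_W_almost}). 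It lies in $\ovcalT_{U/U_j}$ because $\rmt_\theta Y_j^+\in\calT_\theta\subset\ovcalT_U$; this is where $\theta\in D(U)$ is genuinely used.
\item Since $\calW_{U/U_j}$ is wide, $\rmw_\theta Y_j^+\in\calW_{U/U_j}\subset\calW_{U_i}=\calW_{[U_i]}$.
\item Hence $\rmw_\theta Y_j^+\in\calW_\theta\cap\calW_{t[U_i]}\subset\calW_{\theta+t[U_i]}$ for all $t\ge 0$, so the whole ray stays in $\partial_j^\epsilon$.
\end{itemize}

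Finally, do not conclude ``by purity'': Theorem \ref{Thm_partial_facet} is proved using this proposition, so that would be circular. Instead, suppose $[U_i]\notin F'$. Since $\theta$ lies in the relative interior of the facet $F'$ and $[U_i]\in D(U)$, every facet inequality becomes strict at $\theta+t[U_i]$ for small $t>0$. So $\theta+t[U_i]\in D^\circ(U)$, contradicting $\theta+t[U_i]\in\partial_j^\epsilon\subset D(U)\setminus D^\circ(U)$. Alternatively, argue as the paper does, using maximality of $F'$ in $\Face_j^\epsilon D(U)$ together with $F'+C(U/U_j)\subset\partial_j^\epsilon$.
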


We have (a)$\Rightarrow$(b) from Lemma \ref{Lem_partial_setminus_oneside}.
Moreover the implications 
(e)$\Rightarrow$(a) and (b)$\Rightarrow$(c)$\Rightarrow$(d)
are almost clear.
Thus we focus on showing the nontrivial part (d)$\Rightarrow$(e).
For this purpose,
we see $D^\circ(U)$ as an $M$-TF equivalence class for $M:=Y^+ \oplus X^-$.

\begin{Lem}\label{Lem_D(U)_M-TF}
Let $U \in \twopsilt A$ and set $M:=Y^+ \oplus X^-$.
Then $D^\circ(U)$ is a full-dimensional $M$-TF equivalence class 
with $\rmt_{D^\circ(U)} M=Y^+$ and $\rmf_{D^\circ(U)} M=X^-$.
\end{Lem}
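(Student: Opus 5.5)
We need to show that $D^\circ(U)$ is exactly one $M$-TF equivalence class for $M:=Y^+\oplus X^-$, that this class is full-dimensional, and that $\rmt_{D^\circ(U)}M=Y^+$ and $\rmf_{D^\circ(U)}M=X^-$. The plan is to compute the triple $(\rmt_\theta M,\rmw_\theta M,\rmf_\theta M)$ for $\theta\in D^\circ(U)$, and then characterize the class of such a $\theta$ using Lemma \ref{Lem_E_twf}.

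\emph{Step 1: the triple is constant on $D^\circ(U)$.} Take $\theta\in D^\circ(U)$. By Proposition \ref{Prop_D(U)_Y_X}, $Y^+\in\calT_\theta$ and $X^-\in\calF_\theta$.
\begin{itemize}
\item Since $\calF_\theta\subset\ovcalF_\theta$, the canonical sequence of $M$ with respect to $(\calT_\theta,\ovcalF_\theta)$ is the split sequence $0\to Y^+\to M\to X^-\to 0$. Hence $\rmt_\theta M=Y^+$ and $\ovrmf_\theta M=X^-$.
\item Since $\calT_\theta\subset\ovcalT_\theta$, the same split sequence is the canonical sequence for $(\ovcalT_\theta,\calF_\theta)$. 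Hence $\ovrmt_\theta M=Y^+$ and $\rmf_\theta M=X^-$.
\item Therefore $\rmw_\theta M=\ovrmt_\theta M/\rmt_\theta M=0$, and $\supp_\theta(\rmw_\theta M)=\emptyset$.
\end{itemize}
So any two elements of $D^\circ(U)$ are $M$-TF equivalent. This means $D^\circ(U)$ is contained in a single class $E\in\TF(M)$, with $\rmt_E M=Y^+$, $\rmw_E M=0$ and $\rmf_E M=X^-$.

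\emph{Step 2: the class is exactly $D^\circ(U)$.} By Lemma \ref{Lem_E_twf}(1),
\begin{align*}
E=\{\theta\mid Y^+\in\calT_\theta,\ X^-\in\calF_\theta\},
\end{align*}
because the support condition on the empty set is vacuous. By Proposition \ref{Prop_D(U)_Y_X}, the right-hand side is precisely $D^\circ(U)$. Thus $E=D^\circ(U)$.

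\emph{Step 3: full-dimensionality.} This follows from Lemma \ref{Lem_M-TF_open}(2), since $\rmw_E M=0$. Alternatively, $D^\circ(U)$ is open and nonempty, as it contains $C^\circ(U)$ by Definition-Proposition \ref{Def-Prop_D(U)}.

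The only point needing care is that $E$ is nonempty, so that it really is an $M$-TF class. This holds because $C^\circ(U)\subset D^\circ(U)$. Apart from that, no step is a genuine obstacle; the content is just that $Y^+$ and $X^-$ already sit in the correct torsion and torsion-free parts for every $\theta\in D^\circ(U)$.
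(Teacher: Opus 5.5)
Your proposal is correct and follows essentially the same route as the paper: compute $\rmt_\theta M=Y^+$, $\rmw_\theta M=0$, $\rmf_\theta M=X^-$ for $\theta\in D^\circ(U)$, then apply Lemma \ref{Lem_E_twf}(1) and Proposition \ref{Prop_D(U)_Y_X} to identify the class with $D^\circ(U)$. Your explicit check of the two canonical sequences and your two justifications of full-dimensionality (via Lemma \ref{Lem_M-TF_open}(2) or openness of $D^\circ(U)$) just spell out what the paper leaves implicit.
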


\begin{proof}
Take $\eta \in D^\circ(U)$.
Then $Y^+ \in \calT_\eta$ and $X^- \in \calF_\eta$ 
by Proposition \ref{Prop_D(U)_Y_X}.
Thus $\rmt_\eta M=Y^+$, $\rmw_\eta M=0$ and $\rmf_\eta M=X^-$,
so Lemma \ref{Lem_E_twf} (1) gives
\begin{align*}
[\eta]_M =\{\theta \in K_0(\proj A)_\R \mid 
Y^+\in \calT_\theta, \ X^- \in \calF_\theta\}.
\end{align*} 
The right-hand side is $D^\circ(U)$ again by Proposition \ref{Prop_D(U)_Y_X},
so we have $[\eta]_M=D^\circ(U) \in \TF_n(M)$. 
Now $\rmt_{D^\circ(U)} M=Y^+$ and $\rmf_{D^\circ(U)} M=X^-$ are clear.
\end{proof}

Thus the following properties come 
from basic properties of $M$-TF equivalences.

\begin{Lem}\label{Lem_face_M-TF}
Let $U=\bigoplus_{i=1}^m U_i \in \twopsilt A$.
Set $M:=Y^+ \oplus X^-$.
\begin{enumerate}
\item
For any $F \in \Face D(U)$, 
we have $F^\circ \in \TF(M)$ and $F \in \Sigma(M)$.
\item
Let $i \in \{1,\ldots,m\}$, $F \in \Face D(U)$ and $\theta \in F^\circ$.
Then none of
\begin{align*}
\rmt_\theta Y_i^+, \ \rmw_\theta Y_i^+, \ \supp_\theta(\rmw_\theta Y_i^+), \ 
\supp_\theta(\rmw_\theta X_i^-), \ \rmw_\theta X_i^-, \ \rmf_\theta X_i^-. 
\end{align*}
depends on the choice of $\theta \in F^\circ$.
Moreover $\rmf_\theta Y_i^+=0$ and $\rmt_\theta X_i^-=0$ hold.
\end{enumerate}
\end{Lem}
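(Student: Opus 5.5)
For (1), I will combine Lemma \ref{Lem_D(U)_M-TF} with Proposition \ref{Prop_M-TF_face}. Lemma \ref{Lem_D(U)_M-TF} says that $D^\circ(U)$ is a full-dimensional $M$-TF equivalence class, so $\sigma:=\overline{D^\circ(U)}\in\Sigma(M)$. By Definition-Proposition \ref{Def-Prop_D(U)}, $D(U)$ is the closure of $D^\circ(U)$, hence $\sigma=D(U)$. Proposition \ref{Prop_M-TF_face} (1) applied to $\sigma$ and to a face $F\in\Face D(U)$ then gives $F^\circ\in\TF(M)$ and $F\in\Sigma(M)$ directly; nothing more is needed.

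For (2), fix $F$ and let $\theta,\theta'\in F^\circ$. By (1), $F^\circ$ is a single $M$-TF equivalence class, so $\theta$ and $\theta'$ are $M$-TF equivalent. Since $M=Y^+\oplus X^-$ and $Y_i^+$, $X_i^-$ are direct summands of $M$, Remark \ref{Rem_M-TF_oplus} makes $\theta,\theta'$ also $Y_i^+$-TF equivalent and $X_i^-$-TF equivalent. By Definition \ref{Def_M-TF}, this means the triples $(\rmt_\theta Y_i^+,\rmw_\theta Y_i^+,\rmf_\theta Y_i^+)$ agree for $\theta$ and $\theta'$, and so do the supports $\supp_\theta(\rmw_\theta Y_i^+)$; the same holds for $X_i^-$. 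This settles the independence of the choice of $\theta\in F^\circ$.

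For the vanishing statements, I use that $F\subset D(U)$. Proposition \ref{Prop_D(U)_Y_X} gives $Y_i^+\in\ovcalT_\theta$ and $X_i^-\in\ovcalF_\theta$. From $Y_i^+\in\ovcalT_\theta$, the canonical sequence for $(\ovcalT_\theta,\calF_\theta)$ gives $\ovrmt_\theta Y_i^+=Y_i^+$ and $\rmf_\theta Y_i^+=0$. Dually, from $X_i^-\in\ovcalF_\theta$, the canonical sequence for $(\calT_\theta,\ovcalF_\theta)$ gives $\rmt_\theta X_i^-=0$.

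There is no real obstacle. The only point to watch is the identification $\overline{D^\circ(U)}=D(U)$, so that faces of $D(U)$ are faces of an element of $\Sigma(M)$; this is already provided by Definition-Proposition \ref{Def-Prop_D(U)}.
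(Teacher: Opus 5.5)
Your proof is correct and follows the paper's own argument. Part (1) comes from Lemma \ref{Lem_D(U)_M-TF} and Proposition \ref{Prop_M-TF_face} (1); you usefully make explicit that $\overline{D^\circ(U)}=D(U)$, which the paper leaves implicit. Part (2) comes from Remark \ref{Rem_M-TF_oplus}, with the vanishing statements following from $Y_i^+\in\ovcalT_\theta$ and $X_i^-\in\ovcalF_\theta$.
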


\begin{proof}
(1) follows from Lemma \ref{Lem_D(U)_M-TF} and 
Proposition \ref{Prop_M-TF_face} (1).

(2)
By (1), we have $F^\circ \in \TF(M)$.
Since $Y_i^+$ is a direct summand of $M$,
we get $F^\circ \subset \TF(Y_i^+)$
by Remark \ref{Rem_M-TF_oplus}.
Thus $\rmt_\theta Y_i^+$, $\rmw_\theta Y_i^+$, 
$\supp_\theta(\rmw_\theta Y_i^+)$ are constant
for any $\theta \in F^\circ$.
Moreover $\rmf_\theta Y_i^+=0$, 
since $Y_i^+ \in \ovcalT_\theta$ for any $\theta \in F^\circ$.
The remaining parts are dually obtained.
\end{proof}

Thus for each $F \in \Face D(U)$ and $i \in \{1,\ldots,m\}$, 
we write
\begin{align*}
\rmt_F Y_i^+, \ \rmw_F Y_i^+, \ \supp_F(\rmw_F Y_i^+), \ 
\supp_F(\rmw_F X_i^-), \ \rmw_F X_i^-, \ \rmf_F X_i^-
\end{align*}
for the six sets appearing in Lemma \ref{Lem_face_M-TF} (2) respectively.
Then we have the short exact sequences
\begin{align}\label{w_theta F}
0 \to \rmt_F Y_i^+ \to Y_i^+ \to \rmw_F Y_i^+ \to 0, \quad
0 \to \rmw_F X_i^- \to X_i^- \to \rmf_F X_i^- \to 0.
\end{align}

The sets $\Face_i^\epsilon D(U)$ are characterized by these exact sequences.

\begin{Lem}\label{Lem_w_F_nonzero}
Let $U=\bigoplus_{i=1}^m U_i \in \twopsilt A$, 
$i \in \{1,\ldots,m\}$
and $F \in \Face D(U)$.
Then
$F \in \Face_i^+ D(U)$ holds if and only if $\rmw_F Y_i^+ \ne 0$,
and $F \in \Face_i^- D(U)$ holds if and only if $\rmw_F X_i^- \ne 0$.
\end{Lem}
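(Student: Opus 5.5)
We prove the statement for $\epsilon={+}$; the case $\epsilon={-}$ is dual, using the second sequence in \eqref{w_theta F} and submodules instead of factor modules.

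Fix $\theta \in F^\circ$. By definition, $F \in \Face_i^+ D(U)$ means $F \subset \partial_i^+$, that is, $Y_i^+ \notin \calT_\eta$ for every $\eta \in F$. The plan has two steps.

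\emph{Reduce to the single point $\theta$.} If $F \subset \partial_i^+$, then in particular $\theta \in \partial_i^+$. Conversely, suppose $\theta \in \partial_i^+$. By Lemma \ref{Lem_partial_face} (1), $\partial_i^+$ is a union of faces of $D(U)$, so $\theta$ lies in some face $G \subset \partial_i^+$. Since $\theta$ is in the relative interior of $F$, the face property forces $F \subset G$. Indeed, for $\eta \in F$ we can write $\theta = s\eta + \eta'$ with $s>0$ small and $\eta' \in F$; face condition (b) for $G$ then gives $\eta \in G$. Hence $F \subset \partial_i^+$. So $F \in \Face_i^+ D(U)$ if and only if $Y_i^+ \notin \calT_\theta$.

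\emph{Translate the pointwise condition.} Since $\theta \in D(U)$, we have $Y_i^+ \in \ovcalT_\theta$, so $\rmf_\theta Y_i^+ = 0$ and $\ovrmt_\theta Y_i^+ = Y_i^+$. Therefore $\rmw_\theta Y_i^+ = Y_i^+/\rmt_\theta Y_i^+$. This quotient is zero if and only if $\rmt_\theta Y_i^+ = Y_i^+$, that is, if and only if $Y_i^+ \in \calT_\theta$. By Lemma \ref{Lem_face_M-TF} (2), $\rmw_\theta Y_i^+ = \rmw_F Y_i^+$. Combining the two steps gives $F \in \Face_i^+ D(U) \iff \rmw_F Y_i^+ \ne 0$.

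The only step that needs care is the relative-interior argument in the first step. A simpler alternative is to use that the $M$-TF data are constant on $F^\circ$ by Lemma \ref{Lem_face_M-TF}. Then $Y_i^+ \notin \calT_\eta$ holds for every $\eta \in F^\circ$ once it holds for one. To pass from $F^\circ$ to the closure $F$, note that the complement of $\calT_\eta$-membership is a closed condition: by \eqref{partial_i}, $\partial_i^+$ is a finite union of closed sets cut out by the equations $\eta(M)=0$. So $F = \overline{F^\circ} \subset \partial_i^+$.
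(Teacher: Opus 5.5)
Your proof is correct and is essentially the paper's argument. The paper likewise uses $\rmf_\theta Y_i^+=0$ to identify $\rmw_\theta Y_i^+\neq 0$ with $Y_i^+\notin\calT_\theta$, takes constancy on $F^\circ$ from Lemma \ref{Lem_face_M-TF} (2), and passes from $F^\circ$ to $F$ by closedness of $\partial_i^+$, exactly as in your alternative. Your main route replaces that closure step with the observation that $\partial_i^+$ is a union of faces and that any face containing a relative-interior point of $F$ contains all of $F$; this is equally valid.
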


\begin{proof}
We prove the first statement only.
The condition $\rmw_F Y_i^+ \ne 0$ is equivalent to that
any $\theta \in F^\circ$ satisfies $Y_i^+ \notin \calT_\theta$
by the definition of the first exact sequence in \eqref{w_theta F}.
The latter is nothing but $F^\circ \subset \partial_i^+$.
Since $F$ is closed, $F^\circ \subset \partial_i^+$ is equivalent to 
$F \subset \partial_i^+$.
\end{proof}

To investigate the exact sequences \eqref{w_theta F} more, 
the following property is crucial.

\begin{Lem}\label{Lem_Y_W_almost}
Let $U=\bigoplus_{i=1}^m U_i \in \twopsilt A$. 
For each $i \in \{1,\ldots,m\}$,
we have $Y_i^+ \in \calT_{U_i} \cap \calW_{U/U_i}$ and
$X_i^- \in \calF_{U_i} \cap \calW_{U/U_i}$.
\end{Lem}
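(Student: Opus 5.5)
The plan is to verify the four memberships directly from the definitions
\[
\calT_{U_i}=\Fac H^0(U_i),\quad \calF_{U_i}=\Sub H^{-1}(\nu U_i),\quad
\calW_{U/U_i}={^\perp H^{-1}(\nu(U/U_i))}\cap H^0(U/U_i)^\perp .
\]
We treat $Y_i^+$ in detail; $X_i^-$ is exactly dual. If $Y_i^+=0$ (resp.\ $X_i^-=0$) there is nothing to prove, so assume it is nonzero. Write $U/U_i=\bigoplus_{j\ne i}U_j$.

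\emph{The first two memberships.} By Proposition \ref{Prop_rad_soc_U}, $Y_i^+$ is a factor module of $H^0(U_i)$, hence $Y_i^+\in\Fac H^0(U_i)=\calT_{U_i}$. Next, $Y_i^+\in\sfT(Y^+)=\calT_U$ by Proposition \ref{Prop_T_U_T(Y^+)}. We also have $\calT_U=\calT_T\subset\ovcalT_S=\ovcalT_U$, or simply $\calT_U\subset\ovcalT_U$ by Lemma \ref{Lem_T_U_T_V} (2) applied to $U\oplus U$. Hence $\Hom_A(Y_i^+,H^{-1}(\nu U))=0$. In particular $\Hom_A(Y_i^+,H^{-1}(\nu U_j))=0$ for all $j$, so $Y_i^+\in\ovcalT_{U/U_i}$.

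\emph{The third membership, $Y_i^+\in\ovcalF_{U/U_i}$.} This is the only step needing more than the torsion-class bookkeeping, and we use the duality in Definition-Proposition \ref{Def-Prop_SH} for $T$ and $Y=\SH(T)$. Since $Y_i^+\ne0$, we have $Y_i=Y_i^+\in\mod A$. For $j\ne i$ in $\{1,\ldots,m\}$ we have $T_j=U_j$, and Definition-Proposition \ref{Def-Prop_SH} gives $\Hom_{\sfD(A)}(U_j,Y_i)=0$. For a 2-term complex $P\in\sfK^\rmb(\proj A)$ and a module $N$, one has $\Hom_{\sfD(A)}(P,N)\simeq\Hom_A(H^0(P),N)$, since any chain map $P\to N$ factors through the cokernel of $P^{-1}\to P^0$. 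Therefore $\Hom_A(H^0(U_j),Y_i^+)=0$ for all $j\ne i$, i.e.\ $Y_i^+\in H^0(U/U_i)^\perp=\ovcalF_{U/U_i}$. Combined with the previous step, $Y_i^+\in\calW_{U/U_i}$.

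\emph{The dual statement for $X_i^-$.} We have $X_i^-\subset H^{-1}(\nu U_i)$, so $X_i^-\in\calF_{U_i}$. Also $X_i^-\in\sfF(X^-)=\calF_U\subset\ovcalF_U=H^0(U)^\perp$. For the remaining orthogonality, $X_i=X_i^-[1]$ and $S_j=U_j$ for $j\le m$, so Definition-Proposition \ref{Def-Prop_SH} gives $\Hom_{\sfD(A)}(U_j,X_i^-[1])=0$ for $j\ne i$. The standard isomorphism $\Hom_{\sfD(A)}(P,N[1])\simeq D\Hom_A(N,H^{-1}(\nu P))$ for 2-term $P$ (as in \cite{AIR}) converts this into $\Hom_A(X_i^-,H^{-1}(\nu U_j))=0$. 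Hence $X_i^-\in{^\perp H^{-1}(\nu(U/U_i))}$ as well.

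The only point I expect to require care is this last identification of $\Hom_{\sfD(A)}(P,N[1])$ via the Nakayama functor; everything else is formal.
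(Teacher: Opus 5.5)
Your proof is correct and follows essentially the same route as the paper: $Y_i^+\in\calT_{U_i}$ because it is a factor module of $H^0(U_i)$, the membership in $\ovcalT_{U/U_i}$ comes from Lemma \ref{Lem_T_U_T_V} (2), and the membership in $\ovcalF_{U/U_i}$ comes from the vanishing $\Hom_{\sfD(A)}(U_j,Y_i)=0$ for $j\ne i$ in Definition-Proposition \ref{Def-Prop_SH}. The only differences are minor: the paper applies Lemma \ref{Lem_T_U_T_V} (2) directly to $U_i\oplus U/U_i$, while you pass through $\calT_U\subset\ovcalT_U\subset\ovcalT_{U/U_i}$, and your Nakayama identification $\Hom_{\sfD(A)}(P,N[1])\simeq D\Hom_A(N,H^{-1}(\nu P))$ for the $X_i^-$ case is correct and is exactly what the paper leaves implicit under ``dual''.
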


\begin{proof}
We only show $Y_i^+ \in \calT_{U_i} \cap \calW_{U/U_i}$.
Since $Y_i^+$ is a factor module of $H^0(U_i) \in \calT_{U_i}$ 
by Proposition \ref{Prop_rad_soc_U}, we have $Y_i^+ \in \calT_{U_i}$.
This and Lemma \ref{Lem_T_U_T_V} (2) imply 
$Y_i^+ \in \calT_{U_i} \subset \ovcalT_{U/U_i}$.
On the other hand, Definition-Proposition \ref{Def-Prop_SH} gives
$\Hom_{\sfD(A)}(U/U_i,Y_i^+)=0$.
Thus $\Hom_A(H^0(U/U_i),Y_i^+)=0$ holds, 
which means $Y_i^+ \in \ovcalF_{U/U_i}$.
Hence we obtain $Y_i^+ \in \calW_{U/U_i}$.
\end{proof}

Then we observe the following relationship 
between $C(U/U_i)$ and $\partial_i^\pm$.

\begin{Prop}\label{Prop_partial_nonempty}
Let $U=\bigoplus_{i=1}^m U_i \in \twopsilt A$
and $i \in \{1,\ldots,m\}$.
\begin{enumerate}
\item
The set $\partial_i^+$ is nonempty if and only if $Y_i^+ \neq 0$.
In this case, $C(U/U_i) \subset \partial_i^+$ holds.
\item
The set $\partial_i^-$ is nonempty if and only if $X_i^- \neq 0$.
In this case, $C(U/U_i) \subset \partial_i^-$ holds.
\item
The set $\partial_i$ is nonempty, and $C(U/U_i) \subset \partial_i$ holds.
\end{enumerate}
\end{Prop}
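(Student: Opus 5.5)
The plan is to prove (1) in detail; (2) is dual, and (3) follows from (1), (2) and Proposition \ref{Prop_X_i^-_Y_i^+} (1).

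\emph{Step 1: the ``only if'' direction of (1).} If $Y_i^+=0$, then $Y_i^+\in\calT_\theta$ for every $\theta$. Hence $\partial_i^+=\emptyset$ by \eqref{define partial}.

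\emph{Step 2: the ``if'' direction and the inclusion $C(U/U_i)\subset\partial_i^+$.} Assume $Y_i^+\neq 0$ and let $\theta\in C(U/U_i)$. First, $C(U/U_i)\subset C(U)\subset D(U)$, since by Lemma \ref{Lem_D(U)_incl} (2) with $V=U$ we have $C(U)\subset D(U)$. It remains to show $Y_i^+\notin\calT_\theta$. By Lemma \ref{Lem_Y_W_almost}, $Y_i^+\in\calW_{U/U_i}=\ovcalT_{U/U_i}\cap\ovcalF_{U/U_i}$.

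\begin{itemize}
\item If $\theta\in C^\circ(U/U_i)$, Proposition \ref{Prop_cone_TF} (1) gives $\ovcalF_\theta=\ovcalF_{U/U_i}$ and $\ovcalT_\theta=\ovcalT_{U/U_i}$. So $Y_i^+\in\calW_\theta$, hence $\theta(Y_i^+)=0$. Since $Y_i^+\ne0$, this shows $Y_i^+\notin\calT_\theta$.
\item For a general $\theta\in C(U/U_i)$, note that $\theta=\sum_{j\neq i}r_j[U_j]$ with $r_j\ge0$. By \eqref{Euler U X} and Definition-Proposition \ref{Def-Prop_SH}, $\langle[U_j],[Y_i]\rangle=0$ for $j\neq i$. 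Since $Y_i=Y_i^+$, this gives $\theta(Y_i^+)=0$ for every $\theta\in C(U/U_i)$, and again $Y_i^+\notin\calT_\theta$.
\end{itemize}

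This linear argument is cleaner, so I would present it first and keep the $\calW$-argument only as a remark. In either form, we get $C(U/U_i)\subset\partial_i^+$, and in particular $\partial_i^+\ne\emptyset$, because $0\in C(U/U_i)$.

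\emph{Step 3: the degenerate case $m=1$.} Here $C(U/U_i)=\{0\}$, and $0\in D(U)$. Also $0\notin\calT$-membership is satisfied for a nonzero module: $0(Y_i^+)=0$ means $Y_i^+\notin\calT_0$. So the argument of Step 2 still works and nonemptiness holds trivially.

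\emph{Step 4: (2) and (3).} For (2), dualize Step 2: use $X_i^-$, $\calF_\theta$ and $\langle[U_j],[X_i]\rangle=0$ for $j\ne i$, noting that $X_i=X_i^-[1]$ up to shift, so $\theta(X_i^-)=0$. For (3), Proposition \ref{Prop_X_i^-_Y_i^+} (1) says $Y_i^+\ne0$ or $X_i^-\ne0$. So one of (1), (2) applies, which yields both $\partial_i\neq\emptyset$ and $C(U/U_i)\subset\partial_i$.

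\emph{Expected main obstacle.} The only subtle point is the vanishing $\theta(Y_i^+)=0$ on all of $C(U/U_i)$. The orthogonality $\Hom(T_j,Y_i[\ell])=0$ for $j\ne i$ from Definition-Proposition \ref{Def-Prop_SH}, applied to $T=\bigoplus T_j$ with $T_j=U_j$ for $j\le m$, gives exactly this.
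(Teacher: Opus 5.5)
Your proof is correct, and its skeleton matches the paper's. The skeleton is: Step~1; then $\theta(Y_i^+)=0$ on $C(U/U_i)$ together with $Y_i^+\neq 0$ gives $\theta\in\partial_i^+$; then duality; then Proposition~\ref{Prop_X_i^-_Y_i^+}~(1) for (3).

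The difference is in how you obtain the vanishing $\theta(Y_i^+)=0$.

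\emph{The paper's route.} It only treats $\theta\in C^\circ(U/U_i)$. It uses Lemma~\ref{Lem_Y_W_almost} to get $Y_i^+\in\calW_{U/U_i}$, and Proposition~\ref{Prop_cone_TF}~(1) to identify $\calW_{U/U_i}=\calW_\theta$. It then passes to all of $C(U/U_i)$ because $\partial_i^+$ is closed, being a union of faces by Lemma~\ref{Lem_partial_face}~(1).

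\emph{Your route.} Your main argument reads the vanishing directly off the orthogonality $\Hom_{\sfD(A)}(T_j,Y_i[\ell])=0$ for $j\neq i$ and all $\ell$, from Definition-Proposition~\ref{Def-Prop_SH}. Equivalently, this is Proposition~\ref{Prop_dual_basis} applied to $T$ and $Y$, using $T_j=U_j$ for $j\le m$ and $Y_i=Y_i^+$. Since this is a linear condition in $\theta$, it holds on all of $\R C(U/U_i)$ at once. So you need neither the closure step nor Lemma~\ref{Lem_Y_W_almost}.

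\emph{What each buys.} The paper's route yields the stronger, non-linear information $Y_i^+\in\calW_\theta$, which it reuses later, for example in Lemma~\ref{Lem_F+C_almost}. Yours is shorter for this particular statement.

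Your Step~3 is redundant, since the case $C(U/U_i)=\{0\}$ is already covered by Step~2.
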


\begin{proof}
(1)
The ``only if'' part is obvious.
We prove the ``if'' part. 
For each $\theta \in C^\circ(U/U_i) \subset D(U)$, 
by Lemma \ref{Lem_Y_W_almost} and Proposition \ref{Prop_cone_TF} (1), 
we have $Y_i^+\in\calW_{U/U_i}=\calW_\theta$ and hence $\theta(Y_i^+)=0$. 
Thus if $Y_i^+\ne 0$, 
then $\theta \in \partial_i^+$ by \eqref{define partial}. 
Thus $C^\circ(U/U_i) \subset \partial_i^+$ holds,
and since $\partial_i^+$ is closed by Lemma \ref{Lem_partial_face} (1),
we get $C(U/U_i) \subset \partial_i^+$.
In particular, $\partial_i^+\neq\emptyset$ holds.

(2) is dual to (1).

(3) follows from (1), (2) and Proposition \ref{Prop_X_i^-_Y_i^+} (1).
\end{proof}

This and the short exact sequences \eqref{w_theta F} 
give the following crucial property of $\partial_i^\pm$.

\begin{Lem}\label{Lem_F+C_almost}
Let $U=\bigoplus_{i=1}^m U_i \in \twopsilt A$ 
and $F \in \Face_i^\epsilon D(U)$ with
$(i,\epsilon) \in \{1,\ldots,m\} \times \{\pm\}$.
\begin{enumerate}
\item 
All terms of the short exact sequences \eqref{w_theta F} 
belong to $\calW_{U/U_i}$.
\item
We have $F+C(U/U_i)\subset\partial_i^\epsilon$.
\end{enumerate}
\end{Lem}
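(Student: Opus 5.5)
We treat the case $\epsilon={+}$; the case $\epsilon={-}$ is dual, with submodules of $X_i^-$ in place of factor modules of $Y_i^+$.

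\emph{Part (1).} By Lemma \ref{Lem_Y_W_almost} we have $Y_i^+ \in \calW_{U/U_i}$. Since $\calW_{U/U_i}$ is a wide subcategory by Proposition \ref{Prop_reduc} (1), it suffices to show that the submodule $\rmt_F Y_i^+$ lies in $\calW_{U/U_i}$; the cokernel $\rmw_F Y_i^+$ then follows by closure under cokernels. Fix $\theta \in F^\circ$ and pick $\eta \in C^\circ(U/U_i)$. By Proposition \ref{Prop_partial_nonempty} and convexity of the face structure, I would move along the segment from $\theta$ toward $\eta$; more directly, I would use $\calW_{U/U_i}=\calW_\eta$ from Proposition \ref{Prop_cone_TF} (1). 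Note $\eta(Y_i^+)=0$. For every factor module of $Y_i^+$, $\eta\ge 0$, so the submodule $\rmt_F Y_i^+$ has $\eta(\rmt_F Y_i^+)=\eta(Y_i^+)-\eta(\rmw_F Y_i^+)\le 0$. Hence I need $\eta(\rmt_F Y_i^+)\ge 0$, together with the same inequality on all its factor modules.

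Factor modules of $\rmt_F Y_i^+$ are not factor modules of $Y_i^+$, so this needs an extra input. Here I would use that $\rmt_F Y_i^+ \in \calT_\theta$ and $\theta \in D(U)$. Alternatively, I would argue that $\rmt_F Y_i^+ \in \ovcalT_{U/U_i}$, since it is a submodule of $Y_i^+$, and that $\ovcalT_{U/U_i}$ is not closed under submodules, so the following approach is cleaner. We have $\Hom_A(H^0(U/U_i),\rmt_F Y_i^+)\subset \Hom_A(H^0(U/U_i),Y_i^+)=0$, so $\rmt_F Y_i^+\in\ovcalF_{U/U_i}$. It then remains to show $\rmt_F Y_i^+\in\ovcalT_{U/U_i}={}^\perp H^{-1}(\nu(U/U_i))$. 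For this I would show that any nonzero map $g\colon \rmt_F Y_i^+\to H^{-1}(\nu U_j)$ ($j\neq i$) leads to a contradiction: its image lies in $\calT_\theta$ (as a factor of $\rmt_F Y_i^+$) and also in $\calF_U\subset\ovcalF_\theta$ (as a submodule of $H^{-1}(\nu U)$, using $\theta\in D(U)$). Hence the image lies in $\calT_\theta\cap\ovcalF_\theta=0$. So $\rmt_F Y_i^+\in\calW_{U/U_i}$ and part (1) follows. The analogous argument for $\rmf_F X_i^-$ uses $\calT_U\subset\ovcalT_\theta$.

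\emph{Part (2).} Let $\theta\in F$ and $\eta\in C(U/U_i)$. For $\eta\in C^\circ(U/U_i)$, every object of $\calW_{U/U_i}=\calW_\eta$ has $\eta$-value $0$, and the same holds on the closure by linearity. By Lemma \ref{Lem_D(U)_incl} (2), $C(U/U_i)\subset D(U)$, so $\theta+\eta\in D(U)$ by convexity. Since $\rmw_F Y_i^+\neq 0$ by Lemma \ref{Lem_w_F_nonzero}, I would take $\theta\in F^\circ$ first. Then $(\theta+\eta)(\rmw_F Y_i^+)=0+0=0$ by part (1) and the definition of $\calW_\theta$. This gives a nonzero factor module of $Y_i^+$ with vanishing value, so $\theta+\eta\in\partial_i^+$ by \eqref{partial_i}. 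Closedness of $\partial_i^+$ (Lemma \ref{Lem_partial_face} (1)) extends this from $F^\circ$ to $F$.

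The main obstacle is the step $\rmt_F Y_i^+\in\ovcalT_{U/U_i}$ in part (1), that is, the $\Hom$-vanishing argument via $\calT_\theta\cap\ovcalF_\theta=0$.
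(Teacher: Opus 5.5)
Your proof is correct and is essentially the paper's argument. The Hom-vanishing step via $\calT_\theta \cap \ovcalF_\theta = 0$ simply unwinds the paper's inclusion $\calT_\theta \subset \ovcalT_U \subset \ovcalT_{U/U_i}$, which follows from $\calF_U \subset \ovcalF_\theta$. Part (2) matches the paper too: you use only the vanishing $(\theta+\eta)(\rmw_F Y_i^+)=0$ instead of $\rmw_F Y_i^+ \in \calW_{\theta+\eta}$, and you make explicit two points the paper leaves implicit, namely $\eta$ on the boundary of $C(U/U_i)$ (by linearity) and $\theta \in F\setminus F^\circ$ (by closedness of $\partial_i^+$). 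You can delete the abandoned false starts at the beginning of your part (1), the $\eta$-value estimate and the remark about submodules, since they play no role.
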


\begin{proof}
We prove the assertions for $Y_i^+$, since those for $X_i^-$ are the duals.

(1) 
By Lemma \ref{Lem_Y_W_almost}, we have $Y_i^+\in\calW_{U/U_i}$. 
Since $\rmt_F Y_i^+$ is a submodule of $Y_i^+ \in \calW_{U/U_i}$, 
we have $\rmt_F Y_i^+ \in \ovcalF_{U/U_i}$. 
On the other hand, taking $\theta \in F^\circ$, we get
$\rmt_F Y_i^+ =\rmt_\theta Y_i^+ \in \calT_\theta \subset \ovcalT_U 
\subset \ovcalT_{U/U_i}$.
Thus $\rmt_F Y_i^+ \in \calW_{U/U_i}$ holds. 
This implies $\rmw_F Y_i^+ \in \calW_{U/U_i}$,
since $\calW_{U/U_i}$ is a wide subcategory.

(2) 
Let $\theta \in F^\circ$ and $\eta \in C(U/U_i)$.
Then $\rmw_F Y_i^+=\rmw_\theta Y_i^+ \in \calW_\theta$ holds.
By (1) and Proposition \ref{Prop_cone_TF} (1), 
we have $\rmw_F Y_i^+ \in \calW_{U/U_i}=\calW_\eta$.
Thus we obtain $\rmw_F Y_i^+ \in \calW_\theta \cap \calW_\eta \subset 
\calW_{\theta+\eta}$.
Since $F \subset \partial_i^+$, we have $\rmw_F Y_i^+ \ne 0$
by Lemma \ref{Lem_w_F_nonzero}.
Thus $\rmw_F Y_i^+$ is a nonzero factor module of $Y_i^+$
which belongs to $\calW_{\theta+\eta}$.
Then $\theta+\eta \in \partial_i^+$ follows from \eqref{partial_i}. 
\end{proof}

Then we have the following property on maximal faces 
contained in $\partial_i^\epsilon$.
For any subset $\frakF \subset \Face D(U)$,
we write $\max \frakF$ for the set of maximal elements in $\frakF$.

\begin{Prop}\label{Prop_C_almost_F}
Let $U=\bigoplus_{i=1}^m U_i \in \twopsilt A$ 
and $(i,\epsilon) \in \{1,\ldots,m\} \times \{\pm\}$.
If $F \in \max \Face_i^\epsilon D(U)$, then we have
\begin{align*}
C(U/U_i) \subset F, \quad [U_i] \notin F.
\end{align*}
\end{Prop}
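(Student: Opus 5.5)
Let $F \in \max \Face_i^\epsilon D(U)$. I show $C(U/U_i) \subset F$ by maximality, and $[U_i] \notin F$ by a direct computation using the defining property of $\partial_i^\epsilon$. By duality I treat only $\epsilon={+}$; the case $\epsilon={-}$ is identical with $X_i^-$, $\rmf$ and submodules in place of $Y_i^+$, $\rmt$ and factor modules.

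\emph{First inclusion.} By Lemma \ref{Lem_F+C_almost} (2) we have $F+C(U/U_i)\subset\partial_i^+$. Since $F$ and $C(U/U_i)$ are convex cones inside $D(U)$, the set $F+C(U/U_i)$ is a convex cone contained in $D(U)$. Take $\theta\in F^\circ$ and $\eta\in C^\circ(U/U_i)$, and let $G$ be the smallest face of $D(U)$ containing $\theta+\eta$. By Lemma \ref{Lem_partial_face} (1), $\partial_i^+$ is a union of faces of $D(U)$, so $\theta+\eta$ lies in some face $G'\subset\partial_i^+$. Then $G\subset G'$, hence $G\in\Face_i^+D(U)$.

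Now apply the face axiom (b) to the decomposition $\theta+\eta$ with $\theta,\eta\in D(U)$: it gives $\theta,\eta\in G$. From $\theta\in F^\circ\cap G$ we get $F\subset G$, because a face containing a relative interior point of the face $F$ contains all of $F$. Maximality of $F$ in $\Face_i^+D(U)$ then forces $G=F$, so $\eta\in F$. As $\eta$ ranges over $C^\circ(U/U_i)$ and $F$ is closed, we obtain $C(U/U_i)\subset F$.

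\emph{Second statement.} Suppose $[U_i]\in F$. Since $F$ is a face containing $C(U/U_i)$ and $[U_i]$, it contains $C(U)$, and in particular the element $\theta:=[U]$ of $C^\circ(U)$. By Definition-Proposition \ref{Def-Prop_D(U)}, $C^\circ(U)\subset D^\circ(U)$, so $Y_i^+\in\calT_\theta$. On the other hand, $F\subset\partial_i^+$ gives $Y_i^+\notin\calT_\theta$. This contradiction shows $[U_i]\notin F$. (If $Y_i^+=0$, then $\partial_i^+$ is empty by Proposition \ref{Prop_partial_nonempty}, so there is nothing to prove.)

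The main obstacle is the maximality argument in the first step. The key point is that the minimal face $G$ containing $\theta+\eta$ both contains $F$ and still lies in $\partial_i^+$. This rests on two facts: $\partial_i^+$ is a union of faces (Lemma \ref{Lem_partial_face} (1)), and $\theta+\eta\in\partial_i^+$ (Lemma \ref{Lem_F+C_almost} (2)).
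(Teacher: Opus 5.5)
Your proof is correct. The inclusion $C(U/U_i)\subset F$ follows the paper's argument: you take the smallest face containing $F+C(U/U_i)$, show it lies in $\partial_i^\epsilon$ via Lemma \ref{Lem_F+C_almost} (2) and Lemma \ref{Lem_partial_face} (1), and conclude by maximality; you also spell out the relative-interior details that the paper leaves implicit.

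For $[U_i]\notin F$ your route differs slightly. The paper uses Lemma \ref{Lem_Y_W_almost} to get $Y_i^+\in\calT_{U_i}=\calT_{[U_i]}$, so $[U_i]\notin\partial_i^+$ directly and without the first inclusion. You instead derive it from the first inclusion together with $[U]\in C^\circ(U)\subset D^\circ(U)$; both arguments are valid.
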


\begin{proof}
Lemma \ref{Lem_Y_W_almost} and Proposition \ref{Prop_cone_TF} (1) imply
$Y_i^+ \in \calT_{U_i}=\calT_{[U_i]}$.
Thus we have $[U_i] \notin \partial_i^+\supset F$ and hence $[U_i] \notin F$.

By Lemma \ref{Lem_F+C_almost} (2), 
we have $F+C(U/U_i) \subset \partial_i^\epsilon$.
The set $F+C(U/U_i)$ is clearly convex,
and $\partial_i^\epsilon$ is a union of faces of $D(U)$
by Lemma \ref{Lem_partial_face} (1).
Thus we obtain $F' \subset \partial_i^\epsilon$, 
where $F'$ is the smallest face containing $F+C(U/U_i)$.
We have proved
$F \subset F+C(U/U_i) \subset F' \subset \partial_i^\epsilon$.
By maximality, $F+C(U/U_i)=F$ holds. Thus $C(U/U_i) \subset F$.
\end{proof}

Now we are able to show Proposition \ref{Prop_facet_between}
and Theorem \ref{Thm_partial_setminus}.

\begin{proof}[Proof of Proposition \ref{Prop_facet_between}]
(a)$\Rightarrow$(b)
follows from Lemma \ref{Lem_partial_setminus_oneside}.

(b)$\Rightarrow$(c) is true, because $[U_i] \in D^\circ(U_i)$.

(c)$\Rightarrow$(d) 
follows from that $F$ is the intersection of all facets containing $F$.

(d)$\Rightarrow$(e)
Take $F' \in \Facet D(U)$ in (d).
By Lemma \ref{Lem_partial_face} (2),
we also take $(i',\epsilon) \in \{1,\ldots,m\} \times \{\pm\}$ 
such that $F' \subset \partial_{i'}^\epsilon$.
Since $F'$ is a facet, we have $F' \in \max \Face_{i'}^\epsilon D(U)$.
Then $C(U/U_{i'}) \subset F'$ follows 
from Proposition \ref{Prop_C_almost_F}.
On the other hand, the choice of $F'$ gives $[U_i] \notin F$.
Thus $i=i'$ must hold, and we obtain $F' \subset \partial_i^\epsilon$.

(e)$\Rightarrow$(a) is obvious.
\end{proof}

\begin{proof}[Proof of Theorem \ref{Thm_partial_setminus}]
The ``$\subset$'' part is Lemma \ref{Lem_partial_setminus_oneside}.

It remains to get the ``$\supset$'' part.
By Proposition \ref{Prop_facet_between} (b)$\Rightarrow$(a),
if $F \in \Face D(U)$ satisfies $F \subset D(U) \setminus D^\circ(U_i)$, 
then $F \subset \partial_i$ holds.
This and Lemma \ref{Lem_setminus_face} imply
$D(U) \setminus D^\circ(U_i) \subset \partial_i$.
\end{proof}

We next move to prove Theorem \ref{Thm_partial_facet}.
We begin with the following property.

\begin{Lem}\label{Lem_partial^+-_no_facet}
Let $U \in \twopsilt A$.
Then
the intersection $\partial^+ \cap \partial^-$ contains no facets of $D(U)$.
\end{Lem}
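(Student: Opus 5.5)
The plan is to reduce the statement to Proposition \ref{Prop_M-TF_+-_no_facet}, applied with $M:=Y^+ \oplus X^-$. By Lemma \ref{Lem_D(U)_M-TF}, $D^\circ(U)$ is a full-dimensional $M$-TF equivalence class $E$ with $\rmt_E M=Y^+$ and $\rmf_E M=X^-$. By Proposition \ref{Prop_M-TF_face} (1), or Lemma \ref{Lem_M-TF_open}, its closure $\sigma:=\overline{E}$ lies in $\Sigma_n(M)$. By Definition-Proposition \ref{Def-Prop_D(U)}, $\sigma=D(U)$.

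Next I identify the sets of Proposition \ref{Prop_M-TF_+-_no_facet}:
\begin{align*}
\partial^+\sigma=\{\theta\in D(U)\mid Y^+\notin\calT_\theta\},\quad
\partial^-\sigma=\{\theta\in D(U)\mid X^-\notin\calF_\theta\}.
\end{align*}
Since $\calT_\theta$ and $\calF_\theta$ are closed under direct sums and direct summands, we have $Y^+\notin\calT_\theta$ if and only if $Y_i^+\notin\calT_\theta$ for some $i$. Similarly, $X^-\notin\calF_\theta$ if and only if $X_i^-\notin\calF_\theta$ for some $i$. Thus $\partial^+\sigma=\bigcup_i\partial_i^+=\partial^+$ and $\partial^-\sigma=\partial^-$, using \eqref{define partial} and \eqref{define partial 2}.

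Proposition \ref{Prop_M-TF_+-_no_facet} then says directly that $\partial^+\cap\partial^-$ contains no facet of $\sigma=D(U)$, which is the claim. There is no real obstacle here. The only care needed is to make sure the closure of the $M$-TF class is exactly $D(U)$ and that the closedness of torsion classes under summands is applied correctly; both are immediate from the results already stated.
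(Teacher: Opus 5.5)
Your proposal is correct and follows the paper's proof: it uses $M:=Y^+\oplus X^-$, takes $D^\circ(U)\in\TF_n(M)$ from Lemma \ref{Lem_D(U)_M-TF}, and applies Proposition \ref{Prop_M-TF_+-_no_facet}. You also spell out two steps the paper leaves implicit, namely $\overline{D^\circ(U)}=D(U)$ and $\partial^\pm\sigma=\partial^\pm$ (via closure of torsion and torsion-free classes under direct sums and summands), and both are handled correctly.
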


\begin{proof}
Set $M:=Y^+ \oplus X^-$.
By Lemma \ref{Lem_D(U)_M-TF}, we have $D^\circ(U) \in \TF_n(M)$ 
with $\rmt_{D^\circ(U)} M=Y^+$ and $\rmf_{D^\circ(U)} M=X^-$.
Thus Proposition \ref{Prop_M-TF_+-_no_facet} implies the assertion.
\end{proof}

Then we have $\Facet_i^\epsilon D(U)$ are disjoint.

\begin{Prop}\label{Prop_Facet_disjoint}
Let $U=\bigoplus_{i=1}^m U_i \in \twopsilt A$.
For any distinct pairs 
$(i,\epsilon) \ne (i',\epsilon') \in \{1,\ldots,m\} \times \{\pm\}$,
we have
$\Facet_i^\epsilon D(U) \cap \Facet_{i'}^{\epsilon'} D(U)
=\emptyset$.
\end{Prop}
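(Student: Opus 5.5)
We split into two cases according to whether $i=i'$ or $i\ne i'$. Suppose there is a facet $F\in\Facet_i^\epsilon D(U)\cap\Facet_{i'}^{\epsilon'}D(U)$ with $(i,\epsilon)\ne(i',\epsilon')$.

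\emph{Case $i\ne i'$.} Since $F$ is a facet contained in $\partial_i^\epsilon$, it is a maximal element of $\Face_i^\epsilon D(U)$: any face strictly containing a facet is $D(U)$ itself, and $D(U)\not\subset\partial_i^\epsilon$ because $D^\circ(U)\ne\emptyset$. Proposition \ref{Prop_C_almost_F} then gives $C(U/U_i)\subset F$. In particular $[U_{i'}]\in C(U/U_i)\subset F$, since $i'\ne i$. Applying Proposition \ref{Prop_C_almost_F} again, now to $F\in\max\Face_{i'}^{\epsilon'}D(U)$, gives $[U_{i'}]\notin F$. This is a contradiction. This is the same argument as in the step (d)$\Rightarrow$(e) of Proposition \ref{Prop_facet_between}.

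\emph{Case $i=i'$.} Then $\epsilon\ne\epsilon'$, so we may assume $F\subset\partial_i^+\cap\partial_i^-$. Since $\partial_i^+\subset\partial^+$ and $\partial_i^-\subset\partial^-$, we get $F\subset\partial^+\cap\partial^-$. This contradicts Lemma \ref{Lem_partial^+-_no_facet}, which rests on Proposition \ref{Prop_M-TF_+-_no_facet} applied to the full-dimensional $M$-TF class $D^\circ(U)$ with $M=Y^+\oplus X^-$.

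Neither case needs new work. The only point to check is that a facet contained in $\partial_i^\epsilon$ is maximal in $\Face_i^\epsilon D(U)$, which holds because $D(U)\not\subset\partial_i^\epsilon$ (as $D^\circ(U)\ne\emptyset$), so Proposition \ref{Prop_C_almost_F} applies.
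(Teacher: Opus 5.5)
Your proof is correct and follows the paper's own argument: for $i\ne i'$ you apply Proposition \ref{Prop_C_almost_F} to $F$, viewed as a maximal element of both $\Face_i^\epsilon D(U)$ and $\Face_{i'}^{\epsilon'} D(U)$, and for $i=i'$ (so $\epsilon\ne\epsilon'$) you use Lemma \ref{Lem_partial^+-_no_facet}. Your explicit check that a facet contained in $\partial_i^\epsilon$ is maximal in $\Face_i^\epsilon D(U)$ (since $D^\circ(U)\ne\emptyset$ is disjoint from $\partial_i^\epsilon$) is a step the paper leaves implicit, and your first case has the indices right, whereas the paper's write-up interchanges $i$ and $i'$ there.
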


\begin{proof}
Let $F \in \Facet_i^\epsilon D(U) \cap \Facet_{i'}^{\epsilon'} D(U)$.
By assumption, $i \ne i'$ or $\epsilon \ne \epsilon'$ holds.

If $i \ne i'$, then 
Proposition \ref{Prop_C_almost_F} implies
$[U_i] \notin F$ and $[U_{i'}] \in C(U/U_i) \subset F$,
a contradiction.

If $\epsilon \ne \epsilon'$, 
then the facet $F$ is contained in $\partial^+ \cap \partial^-$,
which contradicts Lemma \ref{Lem_partial^+-_no_facet}.
\end{proof}

Now we are ready to prove Theorem \ref{Thm_partial_facet}.

\begin{proof}[Proof of Theorem \ref{Thm_partial_facet}]
(1)
By Proposition \ref{Prop_facet_between} (a)$\Rightarrow$(e), 
$\partial_i^\epsilon$ is a union of facets,
and so are $\partial_i$ and $\partial^\epsilon$.
Then the displayed equalities are clear.
Since $\partial_i \ne \emptyset$ 
by Proposition \ref{Prop_partial_nonempty} (3),
we get $\Facet_i D(U) \ne \emptyset$.

(2)
The ``$\supset$'' part is clear,
and the ``$\subset$'' part is Lemma \ref{Lem_partial_face} (2).
By Proposition \ref{Prop_Facet_disjoint},
the right-hand side is a disjoint union.
\end{proof}

\subsection{The brick labeling of facets}

For each $F \in \Facet D(U)$,
we write $(i_F,\epsilon_F)$ for the unique pair $(i,\epsilon)$ such that 
$F \in \Facet_i^\epsilon D(U)$ by Theorem \ref{Thm_partial_facet} (2).
Then we define a nonzero module $L_F$ by Lemma \ref{Lem_w_F_nonzero}
as follows.

\begin{Def}\label{Def_L_F}
Let $U=\bigoplus_{i=1}^m U_i \in \twopsilt A$.
For each facet $F \in \Facet D(U)$, we set 
\begin{align*}
L_F:=\begin{cases}
\rmw_F Y_{i_F}^+ & (\epsilon_F={+}) \\
\rmw_F X_{i_F}^- & (\epsilon_F={-})
\end{cases}.
\end{align*}
\end{Def}

In this subsection, 
we show that $L_F$ satisfies the following nice properties.
Thus we call $L_F$ the \emph{brick label} of each facet $F$.
Recall that we defined the division algebra $R_V$ and 
a positive integer $d_V \in \Z_{\ge 1}$ by
\begin{align*}
R_V:=\End_{\sfD(A)}(V)/{\rad\End_{\sfD(A)}(V)}, \quad
d_V:=\dim_K R_V
\end{align*}
for any indecomposable $V \in \twopsilt A$
and before Proposition \ref{Prop_dual_basis}.

\begin{Thm}\label{Thm_L_F_inn_vec}
Let $U=\bigoplus_{i=1}^m U_i \in \twopsilt A$
and $F \in \Facet_i^\epsilon D(U)$
with $(i,\epsilon) \in \{1,\ldots,m\} \times \{\pm\}$.
\begin{enumerate}
\item
The module $L_F$ is a simple object of $\calW_\theta$ 
for any $\theta \in F^\circ$, and $\End_A(L_F) \simeq R_{U_i}$ holds.
\item
The element $\epsilon[L_F] \in K_0(\mod A)$ is 
an inner normal vector of $F$ such that
\begin{align*}
F=\{ \theta \in D(U) \mid \theta(L_F)=0\}.
\end{align*}
\item
For each $i' \in \{1,\ldots,m\}$, we have
$[U_{i'}](L_F)=\epsilon \cdot \delta_{i,i'}d_{U_i}$.
\end{enumerate}
\end{Thm}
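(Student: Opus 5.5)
We treat only $\epsilon={+}$; the case $\epsilon={-}$ is dual, with $X_i^-$, submodules and $\calF$ in place of $Y_i^+$, factor modules and $\calT$. Then $L_F=\rmw_F Y_i^+$ is a nonzero factor module of $Y_i^+$ (Lemma \ref{Lem_w_F_nonzero}). By Lemma \ref{Lem_F+C_almost} (1) we have $L_F\in\calW_{U/U_i}$, and by construction $L_F\in\calW_\theta$ for every $\theta\in F^\circ$.

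\emph{Part (3).} This part comes first, since (2) uses it.
\begin{itemize}
\item Case $i'\ne i$. For $\eta\in C^\circ(U/U_i)$, Proposition \ref{Prop_cone_TF} (1) gives $L_F\in\calW_{U/U_i}=\calW_\eta$, so $\eta(L_F)=0$. Since $C^\circ(U/U_i)$ spans $\R C(U/U_i)$, we get $[U_{i'}](L_F)=0$.
\item Case $i'=i$. By \eqref{Euler U X}, $[U_i](L_F)=\dim_K\Hom(U_i,L_F)-\dim_K\Hom(U_i,L_F[1])$. The second term vanishes because $L_F\in\Fac H^0(U_i)\subset\ovcalT_{U_i}$.
\item Apply $\Hom(U_i,?)$ to $0\to\rmt_F Y_i^+\to Y_i^+\to L_F\to 0$. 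Since $\rmt_F Y_i^+\in\calT_\theta\subset\ovcalT_U\subset\ovcalT_{U_i}$, we have $\Hom(U_i,\rmt_F Y_i^+[1])=0$, so $\Hom(U_i,Y_i^+)\to\Hom(U_i,L_F)$ is surjective.
\item By Definition-Proposition \ref{Def-Prop_SH} for $T$, where $T_i=U_i$ and $Y_i=Y_i^+$, the group $\Hom(U_i,Y_i^+)$ is $R_{U_i}$ as a one-dimensional left $R_{U_i}$-vector space. Hence $\Hom(U_i,L_F)$ is a nonzero quotient of it, because $L_F\ne0$ is a factor of $H^0(U_i)$. It is therefore one-dimensional over $R_{U_i}$, which gives $[U_i](L_F)=d_{U_i}$.
\end{itemize}

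\emph{Part (2).}
\begin{itemize}
\item Since $L_F$ is a factor of $Y_i^+$, every $\theta\in D(U)$ satisfies $\theta(L_F)\ge0$ by Proposition \ref{Prop_D(U)_Y_X}. So $G:=\{\theta\in D(U)\mid\theta(L_F)=0\}$ is a face of $D(U)$.
\item $G$ contains $F^\circ$, hence $F$.
\item $G\ne D(U)$, because $[U_i]\in D(U)$ and $[U_i](L_F)=d_{U_i}>0$ by (3). In particular $[L_F]\ne0$.
\item As $F$ is a facet and $G\supset F$ is a proper face, $G=F$. So $[L_F]$ is an inner normal vector of $F$.
\end{itemize}

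\emph{Part (1).} This is the step I expect to be the main obstacle. The plan is as follows.
\begin{itemize}
\item By Proposition \ref{Prop_M-TF_face} and Lemma \ref{Lem_E_twf} (2) applied to $M=Y^+\oplus X^-$, the fact that $F$ is a facet forces $\R\supp_F(\rmw_F M)$ to be one-dimensional. Hence every composition factor $L'$ of $L_F$ in $\calW_\theta$ has $[L']=c[L_F]$ for some $c\in\Q_{>0}$.
\item Let $L'$ be a simple top of $L_F$ in $\calW_\theta$. Then $L'$ is again a nonzero factor module of $H^0(U_i)$ lying in $\ovcalT_{U_i}$. The surjectivity argument of (3) then shows $0<[U_i](L')\le d_{U_i}$, and $[U_i](L')$ is a positive multiple of $d_{U_i}$, because $\Hom(U_i,L')$ is a nonzero left $R_{U_i}$-module.
\item Consequently $c=1$, so $[L']=[L_F]$. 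Then $L_F$ has a single composition factor, and $L_F\simeq L'$ is simple in $\calW_\theta$.
\item Since $\calW_\theta$ is wide, Schur's lemma makes $\End_A(L_F)$ a division algebra.
\item The composite $R_{U_i}\simeq\Hom(U_i,Y_i^+)\to\Hom(U_i,L_F)$ is a bijection by (3). Together with the natural action of $\End_A(L_F)$ on $\Hom(U_i,L_F)\simeq\Hom_A(H^0(U_i),L_F)$, I would use this to produce an isomorphism $\End_A(L_F)\simeq R_{U_i}$.
\end{itemize}

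If the inequality $0<[U_i](L')\le d_{U_i}$ for composition factors proves delicate, my fallback is a reduction. I would first use $\tau$-tilting reduction at $U/U_i$, via Proposition \ref{Prop_reduc_K_0} (3) with a point $\theta+\eta\in F^\circ\cap D^\circ(U/U_i)$, to reduce to the case where $U$ is indecomposable. I would then argue there.
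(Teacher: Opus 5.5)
Your argument is correct in substance, but it runs in the opposite order from the paper's and rests on different key inputs.

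\textbf{The paper's route.} It proves (1) first.
\begin{itemize}
\item Simplicity (Proposition \ref{Prop_L_F_simple}) uses $\rmw_F(Y^+\oplus X^-)=L_F$, taken from Proposition \ref{Prop_facet_X_Y_almost}, which rests on facet disjointness. It then runs a sign argument: a nonzero kernel $N$ of a simple top $L_F\to L_F/N$ would have $[U_i](N)>0$ by proportionality. But also $[U_i](N)=[U_i](N')\le 0$, because $\rmt_F Y_i^+\in\calW_U$ and the proper submodule $N'\subsetneq Y_i^+$ lies in $\ovcalF_U$ (Lemma \ref{Lem_proper_sub_Y}).
\item The isomorphism $\End_A(L_F)\simeq R_{U_i}$ comes from surjectivity of $\End_A(H^0(U_i))\to\End_A(L_F)$, via $\Ext^1_A(H^0(U_i),N')=0$ (Proposition \ref{Prop_End_L_F}).
\item Part (2) then follows from Proposition \ref{Prop_face_dim} with $\supp_F(L_F)=\{L_F\}$.
\item Part (3) follows from $[U_i](\rmt_F Y_i^+)=0$ and $[U_i](Y_i^+)=d_{U_i}$.
\end{itemize}

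\textbf{Your route.} You get (3) directly from the surjection $\Hom(U_i,Y_i^+)\to\Hom(U_i,L_F)$ out of a module of length one. You then get (2) by a supporting-hyperplane argument that needs neither simplicity nor Proposition \ref{Prop_face_dim}. Finally you get simplicity by integrality. This avoids Lemma \ref{Lem_proper_sub_Y} entirely. It also needs only that $\R\supp_F(\rmw_F(Y^+\oplus X^-))$ is a line, not that $\rmw_F(Y^+\oplus X^-)=L_F$.

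\textbf{Repairs needed in (1).}
\begin{itemize}
\item The bound $[U_i](L')\le d_{U_i}$ does not follow from ``the surjectivity argument of (3)''. The kernel of $Y_i^+\to L'$ is an extension of $N:=\Ker(L_F\to L')\in\calW_\theta\subset\ovcalT_\theta$ by $\rmt_F Y_i^+$. For $\theta\in D(U)$ you only know $\calT_\theta\subset\ovcalT_U$. The inclusion you would need, $\ovcalT_\theta\subset\ovcalT_U$, is equivalent to $X^-\in\calF_\theta$. For $\theta\in F^\circ$ that is Proposition \ref{Prop_facet_X_Y_almost} (2), and it depends on Proposition \ref{Prop_Facet_disjoint}.
\item You do not need this upper bound at all. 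Write $[L']=c[L_F]$. Then $[N]=(1-c)[L_F]$ is the class of a module, and $[L_F]\neq0$ is a nonnegative class, so $c\le 1$ automatically. The lower bound $[U_i](L')\ge d_{U_i}=[U_i](L_F)$ gives $c\ge 1$.
\item Justify that lower bound differently. No $R_{U_i}$-module structure on $\Hom_{\sfD(A)}(U_i,L')$ is available a priori. Instead, it is a nonzero finite-dimensional right module over the local algebra $\End_{\sfD(A)}(U_i)$, whose only simple module is $R_{U_i}$. Hence its $K$-dimension lies in $d_{U_i}\Z_{>0}$.
\item Your sketch for $\End_A(L_F)\simeq R_{U_i}$ closes by a dimension count.
  \begin{itemize}
  \item Since $\rmt_F Y_i^+$ is fully invariant, there is a unital ring map $\End_A(Y_i^+)\to\End_A(L_F)$. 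It is injective because $\End_A(Y_i^+)\simeq R_{U_i}$ is a division algebra.
  \item Conversely, $\Hom_A(H^0(U_i),L_F)\neq 0$ is a vector space over the division algebra $\End_A(L_F)$, and by (3) its $K$-dimension is $d_{U_i}$. So $\dim_K\End_A(L_F)\le d_{U_i}$, and the injection is an isomorphism.
  \end{itemize}
\end{itemize}
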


It is immediate that $D(U)$ can be described by the bricks $L_F$.

\begin{Cor}\label{Cor_D(U)_ineq}
Let $U=\bigoplus_{i=1}^m U_i \in \twopsilt A$. 
Then we have
\begin{align*}
D(U)=\bigcap_{F \in \Facet D(U)}
\{ \theta \in K_0(\proj A)_\R \mid \epsilon_F \cdot\theta(L_F) \ge 0 \}.
\end{align*}
\end{Cor}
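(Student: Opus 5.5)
The corollary should follow directly from Theorem \ref{Thm_L_F_inn_vec} (2) together with the fact that a full-dimensional polyhedral cone is cut out by the closed half-spaces attached to its facets. The inclusion ``$\subset$'' is immediate. Let $\theta \in D(U)$ and $F \in \Facet D(U)$ with $(i,\epsilon)=(i_F,\epsilon_F)$. If $\epsilon={+}$, then $L_F=\rmw_F Y_i^+$ is a factor module of $Y_i^+$, and $Y_i^+ \in \ovcalT_\theta$ by Proposition \ref{Prop_D(U)_Y_X}. Hence $\theta(L_F) \ge 0$. If $\epsilon={-}$, then $L_F=\rmw_F X_i^-$ is a submodule of $X_i^- \in \ovcalF_\theta$, so $\theta(L_F) \le 0$. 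In either case $\epsilon_F\cdot\theta(L_F)\ge 0$. The same conclusion also follows from the statement in Theorem \ref{Thm_L_F_inn_vec} (2) that $\epsilon_F[L_F]$ is an inner normal vector of $F$.

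For ``$\supset$'', first note that $D(U)$ is full-dimensional, since $D^\circ(U)$ is a nonempty open set containing $C^\circ(U)$ and is the interior of $D(U)$ (Definition-Proposition \ref{Def-Prop_D(U)}). A full-dimensional polyhedral cone $C \subsetneq \R^n$ is the intersection of the closed half-spaces $\{\theta \mid \langle \theta, v_F\rangle \ge 0\}$, where $F$ runs over its facets and $v_F$ is an inner normal vector of $F$. The standard argument runs as follows. Take $\theta \notin C$ and a point $\eta \in C^\circ$ in general position, so that the segment $[\eta,\theta]$ avoids every face of codimension at least $2$. The last point of $C$ on this segment then lies in the relative interior of a single facet $F$, and $\theta$ violates the inequality attached to $F$. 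If $D(U)=K_0(\proj A)_\R$, then there are no facets and the equality is trivial, with the empty intersection equal to the whole space.

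By Theorem \ref{Thm_L_F_inn_vec} (2), for each facet $F$ the vector $\epsilon_F[L_F]$ is an inner normal of $F$, and $F=\{\theta\in D(U)\mid \theta(L_F)=0\}$. So the half-space $\{\theta \mid \epsilon_F\theta(L_F)\ge 0\}$ is exactly the supporting half-space of $D(U)$ along $F$, and the facet description gives the reverse inclusion. There is no real obstacle, because all the substantive work sits in Theorem \ref{Thm_L_F_inn_vec} (2). The one point needing care is that an inner normal is only determined up to a positive scalar modulo the lineality space $D(U)\cap(-D(U))$ from Proposition \ref{Prop_strong_convex}. This causes no problem: the half-space is determined by the facet hyperplane $\R F$ together with the side containing $D(U)$, and both are fixed by $F$ and the sign $\epsilon_F$.
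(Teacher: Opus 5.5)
Your proposal is correct and takes the same approach as the paper, which treats the corollary as immediate from Theorem \ref{Thm_L_F_inn_vec} (2) (each $\epsilon_F[L_F]$ is an inner normal of the facet $F$ with $F=\{\theta\in D(U)\mid\theta(L_F)=0\}$) combined with the standard facet description of the full-dimensional polyhedral cone $D(U)$. One small correction to your closing remark: since $D(U)$ is full-dimensional, $\R F$ is a hyperplane and its normal in $K_0(\mod A)_\R$ is unique up to a positive scalar, so there is no ambiguity ``modulo the lineality space'' to begin with.
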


We first show that $L_F$ is a brick by determining 
the endomorphism algebra of $L_F$.

\begin{Prop}\label{Prop_End_L_F}
Let $U=\bigoplus_{i=1}^m U_i \in \twopsilt A$
and $F \in \Facet_i D(U)$ with $i \in \{1,\ldots,m\}$. 
\begin{enumerate}
\item
The module $L_F$ is a brick with an isomorphism
$R_{U_i} \to \End_A(L_F)$ of division algebras.
\item
For any $\theta \in K_0(\proj A)$, we have $\theta(L_F)=d_{U_i}\Z$.
\end{enumerate}
\end{Prop}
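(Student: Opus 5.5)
The plan is to prove (1) first and deduce (2) from it by a divisibility argument. Throughout, take $\epsilon=+$, so $L_F=\rmw_F Y_i^+$ is a nonzero factor module of the brick $Y_i^+$; the case $\epsilon=-$ is dual, with $X_i^-$ and submodules.

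\textbf{(2) from (1).} Let $\theta=\sum_k x_k[P(k)]$ with $x_k\in\Z$. Then
$$\theta(L_F)=\sum_k x_k\dim_K\Hom_A(P(k),L_F).$$
Each $\Hom_A(P(k),L_F)$ is a right module over the division algebra $\End_A(L_F)\simeq R_{U_i}$, hence free over it. So its $K$-dimension lies in $d_{U_i}\Z$, and therefore $\theta(L_F)\in d_{U_i}\Z$.

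\textbf{Step 1: $L_F$ is a brick.} Fix $\theta\in F^\circ$. By Lemma \ref{Lem_F+C_almost} (1), $L_F\in\calW_\theta\cap\calW_{U/U_i}$, and both categories are wide. By Lemma \ref{Lem_E_twf} (2) and Lemma \ref{Lem_face_M-TF} (1), $F=\overline{F^\circ}$ is a codimension-one element of $\Sigma(Y^+\oplus X^-)$. Hence the span of the classes in $\supp_F(\rmw_F Y_i^+)\cup\supp_F(\rmw_F X_i^-)$ is one-dimensional.

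Let $f\colon L_F\to L_F$ be a nonzero endomorphism. Since $\calW_\theta$ is wide, $\Im f\in\calW_\theta$, and $\Im f$ is a nonzero factor module of $Y_i^+$. Two facts about $\Im f$ follow.
\begin{itemize}
\item By Lemma \ref{Lem_M-TF_max_min}, $\rmt_\theta Y_i^+$ is the largest submodule of $Y_i^+$ on which $\theta$ is positive in the torsion sense. Pulling $\Im f$ back along $Y_i^+\to L_F$ gives a quotient of $Y_i^+$ on which $\theta$ vanishes.
\item The kernel of $Y_i^+\to \Im f$ contains $\rmt_\theta Y_i^+$.
\end{itemize}
I then want to show $\Ker f=0$. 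The intended argument: composing $Y_i^+\twoheadrightarrow L_F\xrightarrow{f}L_F$ gives a nonzero element of $\Hom_A(Y_i^+,L_F)$, and I will show every such map is surjective.

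\textbf{Step 2: $\Hom_A(Y_i^+,L_F)$ and $\End_A(L_F)\simeq R_{U_i}$.} Both routes go through $\Hom_{\sfD(A)}(U_i,L_F)$.
\begin{itemize}
\item \emph{Lifting maps from $H^0(U_i)$.} Since $L_F\in\calW_{U/U_i}$ and $L_F$ is a factor of $Y_i^+$, every map $H^0(U_i)\to L_F$ factors through $Y_i^+$. This uses Proposition \ref{Prop_rad_soc_U}: radical maps from $H^0(U)$ into $L_F$ vanish because $L_F\in\ovcalF_{U/U_i}$, and radical endomorphisms of $H^0(U_i)$ should land in the radical part, which is the point to check.
\item \emph{Computing $\Hom_{\sfD(A)}(U_i,L_F)$.} Since $L_F\in\calW_\theta$ with $\theta\in D(U)$, the sequence
$$0\to\rmt_F Y_i^+\to Y_i^+\to L_F\to 0$$
lies in $\calW_{U/U_i}$. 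Definition-Proposition \ref{Def-Prop_SH} gives $\Hom_{\sfD(A)}(U_{i'},Y_i[\ell])=\delta_{i,i'}\delta_{\ell,0}R_{U_i}$. I will use the long exact sequence together with $[U_i](\rmt_F Y_i^+)\ge0$, which holds because $\rmt_FY_i^+\in\calT_\theta\subset\ovcalT_{U_i}$, to squeeze $\dim_K\Hom_{\sfD(A)}(U_i,L_F)\le d_{U_i}$.
\end{itemize}
Together these give an injection $\End_A(L_F)\hookrightarrow\Hom_A(H^0(U_i),L_F)$ with $K$-dimension at most $d_{U_i}$. On the other hand, the quotient map $Y_i^+\to L_F$ yields the action $R_{U_i}=\End(Y_i^+)\to\End(L_F)$ via the top. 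Comparing dimensions shows that $R_{U_i}\to\End_A(L_F)$ is an isomorphism, which also forces $L_F$ to be a brick.

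\textbf{Main obstacle and fallback.} The hard part is the dimension bound on $\Hom_{\sfD(A)}(U_i,L_F)$, that is, controlling $\Hom(U_i,\rmt_FY_i^+[1])$ so that no extra morphisms appear. If this fails directly, I would first reduce via $\tau$-tilting reduction at $U/U_i$. The equivalence $\calW_{U/U_i}\simeq\mod B_{U/U_i}$ turns $U_i$ into an indecomposable presilting complex, with $Y_i^+$ its top brick, and this reduces the problem to the case $m=1$. In that case the facet structure near a generic $\theta\in F^\circ$ is easier to control.
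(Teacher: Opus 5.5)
Your proposal contains a correct proof, but the proof sits in the second bullet of Step~2, and it follows a different line from the paper's.

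\textbf{The paper's route.} The paper writes $Y_i^+=H^0(U_i)/N$ using Proposition~\ref{Prop_rad_soc_U} and lets $N'$ be the kernel of $H^0(U_i)\to L_F$. It notes $N'\in\ovcalT_{U_i}$, as an extension of $\rmt_F Y_i^+$ by $N$, and deduces $\Ext_A^1(H^0(U_i),N')=0$. So every endomorphism of $L_F$ lifts to an endomorphism of $H^0(U_i)$. Hence $\psi_2\colon\End_A(Y_i^+)\to\End_A(L_F)$ is surjective, and it is injective because its source is a division algebra.

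\textbf{Your route.} You count dimensions instead.
\begin{enumerate}
\item Precomposition with the epimorphism $H^0(U_i)\to L_F$ embeds $\End_A(L_F)$ into $\Hom_A(H^0(U_i),L_F)=\Hom_{\sfD(A)}(U_i,L_F)$.
\item Apply $\Hom_{\sfD(A)}(U_i,?)$ to the triangle $\rmt_F Y_i^+\to Y_i^+\to L_F\to\rmt_F Y_i^+[1]$. Definition-Proposition~\ref{Def-Prop_SH} for $T$ gives $\Hom_{\sfD(A)}(U_i,Y_i)\simeq R_{U_i}$ and $\Hom_{\sfD(A)}(U_i,Y_i[1])=0$. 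Hence $\dim_K\Hom_{\sfD(A)}(U_i,L_F)=d_{U_i}-[U_i](\rmt_F Y_i^+)\le d_{U_i}$.
\item So the injective algebra map $\psi_2$ out of the $d_{U_i}$-dimensional algebra $\End_A(Y_i^+)\simeq R_{U_i}$ is an isomorphism.
\end{enumerate}

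\textbf{Comparison.} Both routes rest on the same input, $\rmt_F Y_i^+\in\calT_\theta\subset\ovcalT_U\subset\ovcalT_{U_i}$. Yours avoids the explicit presentation of $Y_i^+$ and the module $N$, at the cost of using the $T$--$Y$ Hom-duality. As a byproduct it yields $[U_i](\rmt_F Y_i^+)=0$ and $\Hom_A(H^0(U_i),L_F)\simeq\End_A(L_F)$. The paper obtains the first of these separately, via Lemma~\ref{Lem_proper_sub_Y} in the proof of Theorem~\ref{Thm_L_F_inn_vec}~(3).

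\textbf{What to clean up.}
\begin{itemize}
\item Drop Step~1. It reaches no conclusion and contains confused claims; for example, pulling a submodule of $L_F$ back along $Y_i^+\to L_F$ gives a submodule of $Y_i^+$, not a quotient. Brickness already follows from Step~2.
\item Drop the factorization claim in the first bullet of Step~2. It is not needed for the embedding, and it actually follows from the final equality of dimensions.
\item Your ``main obstacle'' is not one. $\Hom_{\sfD(A)}(U_i,\rmt_F Y_i^+[1])$ is dual to $\Hom_A(\rmt_F Y_i^+,H^{-1}(\nu U_i))=0$, so no reduction is needed.
\item Say explicitly why $\psi_2$ exists and is injective. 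It exists because $\rmt_F Y_i^+=\rmt_\theta Y_i^+$ is stable under every endomorphism of $Y_i^+$. It is injective because a unital map from a division algebra to the nonzero algebra $\End_A(L_F)$ is injective.
\end{itemize}
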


\begin{proof}
(1)
We prove in the case $\epsilon_F={+}$.
The other case is dual.
Set $L:=L_F$.

By Lemma \ref{Lem_R_V} and $Y_i^+ \ne 0$,
we have an isomorphism $R_{U_i} \to \End_A(Y_i^+)$.
Thus it suffices to construct an isomorphism $\End_A(Y_i^+) \to \End_A(L)$.

By Proposition \ref{Prop_rad_soc_U}, 
we have a short exact sequence
$0 \to N \to H^0(U_i) \to Y_i^+ \to 0$, where 
\begin{align*}
N:=\sum_{f \in \rad_A(H^0(U),H^0(U_i))} \Im f.
\end{align*}
Then $h(N) \subset N$ holds for any $h \in \End_A(H^0(U_i))$.
Thus we have a canonical homomorphism 
$\psi_1 \colon \End_A(H^0(U_i)) \to \End_A(Y_i^+)$.

We also have a short exact sequence
$0 \to \rmt_F Y_i^+ \to Y_i^+ \to L \to 0$.
Since $h(\rmt_F Y_i^+) \subset \rmt_F Y_i^+$ 
for any $h \in \End_A(Y_i^+)$,
we have a canonical homomorphism $\psi_2 \colon \End_A(Y_i^+) \to \End_A(L)$.

Now we consider the composite
$\psi_2 \psi_1 \colon \End_A(H^0(U_i)) \to \End_A(L)$.
This coincides with the canonical homomorphism
induced by the surjection $H^0(U_i) \to L$.
We show that $\psi_2 \psi_1$ is surjective.

We set $N'$ as the kernel of the canonical surjection $H^0(U_i) \to L$.
Then there exists an exact sequence $0 \to N' \to H^0(U_i) \to L \to 0$.
To get the surjectivity of $\psi_2 \psi_1$,
it suffices to show $\Ext_A^1(H^0(U_i),N')=0$.

By the definition of $N'$, we have another short exact sequence 
$0 \to N \to N' \to \rmt_F Y_i^+ \to 0$.
Since $N \in \calT_U \subset \ovcalT_U \subset \ovcalT_{U_i}$
and $\rmt_F Y_i^+ \in \calT_\theta \subset \ovcalT_U 
\subset \ovcalT_{U_i}$
for any $\theta \in F^\circ \subset D(U)$,
we have $N' \in \ovcalT_{U_i}$.
Thus we get $\Hom_A(N',H^{-1}(\nu U_i))=0$; hence $\Ext_A^1(H^0(U_i),N')=0$.
Thus $\psi_2\psi_1$ is surjective, and so is $\psi_2$.

Since $\rmw_F Y_i^+=L \ne 0$, we have $Y_i^+ \ne 0$.
Thus $Y_i^+$ is a brick and $\End_A(Y_i^+)$ is a division algebra.
By this and $L \ne 0$,
the surjective homomorphism $\psi_2$ of algebras
is actually an isomorphism.
By the second paragraph, we obtain the assertion.

(2)
Let $P \in \proj A$.
Then we have $[P](L_F)=\dim_K \Hom_A(P,L_F)$ by \eqref{Euler U X}.
By (1), $\End_A(L_F)$ is a division algebra,
so $\Hom_A(P,L_F)$ is a free $\End_A(L_F)$-module.
Thus (1) implies $\dim_K \Hom_A(P,L_F) \in d_{U_i}\Z$,
and $[P](L_F) \in d_{U_i}\Z$ follows.
Then the assertion is immediate.
\end{proof}

We also observe the following ``all-but-one'' properties of facets.

\begin{Prop}\label{Prop_facet_X_Y_almost}
Let $U=\bigoplus_{i=1}^m U_i \in \twopsilt A$
and $F \in \Facet_i^\epsilon D(U)$
with $(i,\epsilon) \in \{1,\ldots,m\} \times \{\pm\}$.
\begin{enumerate}
\item
We have $C(U/U_i) \subset F$, $[U_i] \notin F$ and $F \cap C(U)=C(U/U_i)$.
\item
For any $\theta \in F^\circ$, we have
\begin{align*}
\begin{cases}
Y_i^+ \notin \calT_\theta, \ Y^+/Y_i^+ \in \calT_\theta, \ 
X^- \in \calF_\theta &
(\epsilon=+) \\
X_i^+ \notin \calF_\theta, \ X^+/X_i^- \in \calF_\theta, \ 
Y^- \in \calT_\theta &
(\epsilon=-)
\end{cases}.
\end{align*}
\item
If $\epsilon=+$, then $(\rmw_F Y^+,\rmw_F X^-)=(L_F,0)$.
If $\epsilon=-$, then $(\rmw_F Y^+,\rmw_F X^-)=(0,L_F)$.
\end{enumerate}
\end{Prop}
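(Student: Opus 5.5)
The plan is to prove (1), (2), (3) in order, working with $\epsilon=+$; the case $\epsilon=-$ is dual.

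For (1), since $F$ is a facet it lies in $\max \Face_i^+ D(U)$. Proposition \ref{Prop_C_almost_F} then gives $C(U/U_i) \subset F$ and $[U_i] \notin F$. For the equality $F \cap C(U)=C(U/U_i)$, take $\theta=\sum_{j} a_j[U_j] \in F \cap C(U)$ with all $a_j \ge 0$. If $a_i>0$, then $\theta=a_i[U_i]+\eta$ with $\eta \in C(U/U_i) \subset D(U)$ and $a_i[U_i] \in D(U)$. The face property forces $a_i[U_i] \in F$, hence $[U_i] \in F$ by $\R_{\ge 0}F\subset F$, a contradiction. So $a_i=0$ and $\theta \in C(U/U_i)$; the reverse inclusion is already known.

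For (2), fix $\theta \in F^\circ$. We have $Y_i^+ \notin \calT_\theta$ because $F \subset \partial_i^+$ and $\rmw_F Y_i^+\neq 0$ (Lemma \ref{Lem_w_F_nonzero}). Now suppose $Y_{i'}^+ \notin \calT_\theta$ for some $i'\ne i$ with $Y_{i'}^+\neq0$. Then $\theta \in \partial_{i'}^+$, and since $\partial_{i'}^+$ is a union of faces (Lemma \ref{Lem_partial_face}) and $\theta$ lies in the relative interior of $F$, we get $F \subset \partial_{i'}^+$. This puts $F$ in $\Facet_{i'}^+ D(U)$, contradicting the uniqueness in Theorem \ref{Thm_partial_facet} (2). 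Similarly, if $X_{i'}^-\notin\calF_\theta$ for some $i'$ (including $i'=i$), then $F\subset\partial_{i'}^-$ and again uniqueness fails. Since $\calT_\theta$ and $\calF_\theta$ are closed under direct sums, this yields $Y^+/Y_i^+ \in \calT_\theta$ and $X^- \in \calF_\theta$. The displayed condition for $\epsilon=-$ is understood with the obvious intended reading ($X_i^- \notin \calF_\theta$, $X^-/X_i^- \in \calF_\theta$, $Y^+\in\calT_\theta$), and the argument is the same.

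For (3), it follows from (2) that $\rmw_\theta Y_{i'}^+=0$ for $i'\neq i$, since each such $Y_{i'}^+$ lies in $\calT_\theta$, and that $\rmw_\theta X^-=0$, since $X^- \in\calF_\theta$. Because $\rmw_\theta$ is additive along the direct sum decomposition of canonical sequences, $\rmw_F Y^+=\rmw_F Y_i^+=L_F$ and $\rmw_F X^-=0$.

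The only mildly delicate step is passing from a single $\theta \in F^\circ$ lying in $\partial_{i'}^\pm$ to the whole facet $F$ lying in $\partial_{i'}^\pm$. I would justify this using Lemma \ref{Lem_partial_face} (1) together with the fact that a union of faces containing a relative interior point of $F$ must contain a face that contains $F$.
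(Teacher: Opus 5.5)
Your proposal is correct and follows essentially the paper's proof. Part (1) comes from Proposition~\ref{Prop_C_almost_F} together with the fact that $F \cap C(U)$ is a face of $C(U)$, which you unpack explicitly. Parts (2)--(3) come from the disjointness of the sets $\Facet_{i'}^{\epsilon'} D(U)$. Where the paper uses Lemma~\ref{Lem_w_F_nonzero} to pass from $\theta \in F^\circ \cap \partial_{i'}^{\pm}$ to $F \subset \partial_{i'}^{\pm}$, you use Lemma~\ref{Lem_partial_face}~(1) and the relative interior, which amounts to the same step. Your reading of the case $\epsilon=-$ as $X_i^- \notin \calF_\theta$, $X^-/X_i^- \in \calF_\theta$, $Y^+ \in \calT_\theta$ is the intended one.
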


\begin{proof}
(1) 
Both $C(U/U_i) \subset F$ and $[U_i] \notin F$ follow 
from Proposition \ref{Prop_C_almost_F}.
These imply $F \cap C(U)=C(U/U_i)$, since $F \cap C(U)$ is a face of $C(U)$.

(2) 
We prove in the case $\epsilon={+}$.
Since $F \in \Facet_i^+ D(U)$, we have $Y_i^+ \notin \calT_\theta$.

Moreover Proposition \ref{Prop_Facet_disjoint} implies
$F \notin \Facet_{i'}^+ D(U)$ for any $i' \ne i$.
Then Lemma \ref{Lem_w_F_nonzero} gives $\rmw_F Y_{i'}^+=0$, 
and hence $Y_{i'}^+=\rmt_F Y_{i'} \in \calT_\theta$ for any $i' \ne i$.
Thus $Y^+/Y_i^+ \in \calT_\theta$ holds.

We also obtain $\rmw_F X^-=0$ and 
$X^- \in \calF_\theta$ by a similar argument.

(3) 
We only consider the case $\epsilon={+}$.
As in the proof of (2), 
$\rmw_F Y_{i'}^+=0$ holds for any $i' \ne i$,
and moreover $\rmw_F X^-=0$.
Thus we have $(\rmw_F Y^+,\rmw_F X^-)=(\rmw_F Y_i^+,0)=(L_F,0)$.
\end{proof}

To describe faces, the following properties of $M$-TF equivalence
classes are useful.

\begin{Prop}\label{Prop_face_dim}
Let $U=\bigoplus_{i=1}^m U_i \in \twopsilt A$ and $F \in \Face D(U)$.
Set $M:=Y^+ \oplus X^-$.
Then we have
\begin{align*}
F=\{ \theta \in D(U) \mid \theta(\supp_F(\rmw_F M))=0 \}, \quad
\dim_\R F=n-\dim_\R \R(\supp_F(\rmw_F M)).
\end{align*}
\end{Prop}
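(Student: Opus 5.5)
The plan is to reduce everything to the $M$-TF equivalence theory for $M:=Y^+ \oplus X^-$. By Lemma \ref{Lem_face_M-TF} (1), every face $F \in \Face D(U)$ satisfies $F \in \Sigma(M)$ and $F^\circ \in \TF(M)$. So $F=\overline{E}$ with $E:=F^\circ$ an $M$-TF equivalence class, and by construction $\rmw_F M=\rmw_E M$ and $\supp_F(\rmw_F M)=\supp_E(\rmw_E M)$.

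First, Lemma \ref{Lem_E_twf} (2) gives $\R F=\Ker\langle ?,\supp_F(\rmw_F M)\rangle$. It also yields the dimension formula $\dim_\R F=n-\dim_\R(\R\supp_F(\rmw_F M))$ immediately, so the second assertion is free.

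For the first equality, the inclusion $F\subset\{\theta\in D(U)\mid\theta(\supp_F(\rmw_F M))=0\}$ is clear from $F\subset D(U)$ and $F\subset\R F$. For the converse I would not use $\R F\cap D(U)$ directly, since a priori this could be larger than $F$. Instead I would apply Proposition \ref{Prop_M-TF_face} (2) to the full-dimensional class $\sigma:=D(U)=\overline{D^\circ(U)}\in\Sigma(M)$ (Lemma \ref{Lem_D(U)_M-TF}) and its face $\sigma':=F$. That proposition states precisely that
\begin{align*}
F=\{\eta\in D(U)\mid \eta(\supp_{E}(\rmw_{E}M))=0\}, \quad E=F^\circ,
\end{align*}
which is the claim once the notation is translated.

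The only point needing care is this identification. The sets $\supp_F(\rmw_F M)$ defined after Lemma \ref{Lem_face_M-TF} are the ones attached to the summands $Y_i^+$ and $X_i^-$. I will check that they assemble to $\supp_E(\rmw_E M)$ for $M$ itself, using that $\rmw_\theta$ commutes with direct sums and that composition factors in $\calW_\theta$ of a direct sum are the union of those of the summands. I expect no real obstacle beyond this bookkeeping, since the substantive input is Proposition \ref{Prop_M-TF_face} from \cite{AsI2}.
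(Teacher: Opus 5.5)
Your proposal is correct and is exactly the paper's argument: Lemma \ref{Lem_face_M-TF} (1) gives $F^\circ\in\TF(M)$, Proposition \ref{Prop_M-TF_face} (2) applied to $D(U)=\overline{D^\circ(U)}\in\Sigma(M)$ (Lemma \ref{Lem_D(U)_M-TF}) gives the description of $F$, and Lemma \ref{Lem_E_twf} (2) gives the dimension formula. Two of your precautions are unnecessary. First, $\supp_F(\rmw_F M)$ simply means $\supp_E(\rmw_E M)$ for $E=F^\circ$, so no summand bookkeeping is needed. Second, every face $F$ of $D(U)$ satisfies $F=D(U)\cap\R F$: write $x\in D(U)\cap\R F$ as $p-q$ with $p,q\in F$, and use $x+q=p\in F$. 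Hence Lemma \ref{Lem_E_twf} (2) alone would already give both claims.
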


\begin{proof}
By Lemma \ref{Lem_face_M-TF} (1), we have $F^\circ \in \TF(M)$,
so the set $\supp_F(\rmw_F M)$ is well-defined.
Then Proposition \ref{Prop_M-TF_face} (2) and Lemma \ref{Lem_E_twf} (2) imply
the assertions.
\end{proof}

We provide more information on the short exact sequences \eqref{w_theta F}.

\begin{Lem}\label{Lem_proper_sub_Y}
Let $U=\bigoplus_{i=1}^m U_i \in \twopsilt A$. 
For each $i \in \{1,\ldots,m\}$ and $F \in \Face D(U)$,
the following assertions hold.
\begin{enumerate}
\item
Each proper submodule of $Y_i^+$ is in $\ovcalF_U$, 
and each proper factor module of $X_i^-$ is in $\ovcalT_U$.
\item
If $\rmw_F Y_i^+\neq 0$, then $\rmt_F Y_i^+ \in \calW_U$. 
If $\rmw_F X_i^-\neq 0$, then $\rmf_F X_i^- \in \calW_U$.
\end{enumerate}
\end{Lem}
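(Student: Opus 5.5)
For (1) I will treat only $Y_i^+$; the statement for $X_i^-$ is dual. Let $N \subsetneq Y_i^+$ be a proper submodule. I must show $\Hom_A(H^0(U),N)=0$, and it suffices to check $\Hom_A(H^0(U_j),N)=0$ for every $j$. Let $g \colon H^0(U_j) \to N$ and compose with the inclusion $N \hookrightarrow Y_i^+$; the composite is never surjective.

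First I will dispose of the radical maps. If $j \neq i$, then $\Hom_A(H^0(U_j),Y_i^+)=0$ by Lemma \ref{Lem_Y_W_almost}, since $Y_i^+ \in \ovcalF_{U/U_i}$. If $j=i$, then, since $Y_i^+$ is a quotient of the projective-like object $H^0(U_i)$, the map should lift. More precisely, I would use $\Hom_{\sfD(A)}(U_i,Y_i)\simeq R_i$ from Definition-Proposition \ref{Def-Prop_SH}, together with $\Hom_{\sfD(A)}(U_i,Y_i^+)=\Hom_A(H^0(U_i),Y_i^+)$. The latter holds because $Y_i=Y_i^+$ is a stalk in degree $0$ (here $Y_i^+\neq 0$) and $U_i$ is 2-term. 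Hence $\Hom_A(H^0(U_i),Y_i^+)$ is one-dimensional over the division algebra $R_i$. It is generated by the canonical surjection $p\colon H^0(U_i)\to Y_i^+$ of Proposition \ref{Prop_rad_soc_U}, and every nonzero element is $r\circ p$ for an automorphism $r$ of $Y_i^+$ (using Lemma \ref{Lem_R_V}, so $R_i\simeq\End_A(Y_i^+)$). Any nonzero map is therefore surjective. A map with image in the proper submodule $N$ must then be zero, so $N\in\ovcalF_U$.

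For (2), again treat $Y_i^+$. Assume $\rmw_F Y_i^+\neq0$. Then $\rmt_F Y_i^+$ is a proper submodule of $Y_i^+$, so (1) gives $\rmt_F Y_i^+\in\ovcalF_U$. On the other side, take $\theta\in F^\circ$. Then $\rmt_F Y_i^+=\rmt_\theta Y_i^+\in\calT_\theta\subset\ovcalT_\theta$. Since $\theta\in D(U)$, we have $\calF_U\subset\ovcalF_\theta$, hence $\ovcalT_\theta={}^\perp\ovcalF_\theta\subset{}^\perp\calF_U=\ovcalT_U$. Therefore $\rmt_F Y_i^+\in\ovcalT_U\cap\ovcalF_U=\calW_U$.

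The dual statement uses $\rmf_F X_i^-$ as a proper factor module of $X_i^-$, lying in $\calF_\theta\subset\ovcalF_U$. For this inclusion I use $\calT_U\subset\ovcalT_\theta$, hence $\ovcalF_\theta\subset\ovcalF_U$.

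The main obstacle is the $j=i$ case in (1): identifying $\Hom_A(H^0(U_i),Y_i^+)$ with $\Hom_{\sfD(A)}(U_i,Y_i)$ and showing that every nonzero map factors as an automorphism composed with $p$. This is where the simple-minded-collection duality enters. The dual requires $\Hom_A(X_i^-,H^{-1}(\nu U_i))\simeq\Hom_{\sfD(A)}(U_i,X_i)$ via Nakayama duality, i.e. $D\Hom(U_i,X)\simeq\Hom(X,\nu U_i)$.
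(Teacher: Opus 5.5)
Your part (2) follows the paper's argument, but two inclusions you write are false as stated. The torsion pairs are $(\ovcalT_\theta,\calF_\theta)$ and $(\calT_\theta,\ovcalF_\theta)$, so ${}^\perp\ovcalF_\theta=\calT_\theta$, not $\ovcalT_\theta$. Moreover, $\ovcalT_\theta\subset\ovcalT_U$ does not hold on all of $D(U)$. For instance, $\theta=0$ lies in $D(U)$ and has $\ovcalT_0=\mod A$, while $\ovcalT_U\neq\mod A$ as soon as $H^{-1}(\nu U)\neq 0$. Dually, $\ovcalF_\theta\subset\ovcalF_U$ holds only on $D^\circ(U)$, not on the boundary. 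The repair is immediate and is exactly the paper's line: $\rmt_F Y_i^+\in\calT_\theta={}^\perp\ovcalF_\theta\subset{}^\perp\calF_U=\ovcalT_U$, and dually $\rmf_F X_i^-\in\calF_\theta=\ovcalT_\theta^{\perp}\subset\calT_U^{\perp}=\ovcalF_U$.

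Your part (1) is correct and takes a different route from the paper. The paper uses $H^0(U)\in\calT_U=\sfT(Y^+)$ (Proposition \ref{Prop_T_U_T(Y^+)}) together with the semibrick lemma \cite[Lemma 2.7 (1)]{A-semi}: any morphism from an object of $\sfT(Y^+)$ to the brick summand $Y_i^+$ is zero or surjective. Hence a map $H^0(U)\to N\subsetneq Y_i^+$ is zero, with no case split and no derived category. You instead compute the Hom spaces from the silting/simple-minded duality of Definition-Proposition \ref{Def-Prop_SH}, applied to $T$ and $Y=\SH(T)$. You get $\Hom_A(H^0(U_j),Y_i^+)=0$ for $j\neq i$. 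For $j=i$ the space is one-dimensional over the division ring $\End_A(Y_i^+)$, hence equals $\End_A(Y_i^+)\circ p$. This is more explicit, since it describes all maps $H^0(U)\to Y_i^+$, and it avoids the external semibrick lemma. The cost is the stalk identification and, for the dual statement, Serre duality. There, be precise: what you get is $\Hom_A(X_i^-,H^{-1}(\nu U_i))\simeq D\Hom_{\sfD(A)}(U_i,X_i)$, the $K$-dual, not $\Hom_{\sfD(A)}(U_i,X_i)$ itself. Comparing $K$-dimensions then shows this space is one-dimensional over $\End_A(X_i^-)$ acting by precomposition. That is all you need to conclude that every nonzero map $X_i^-\to H^{-1}(\nu U_i)$ is injective.
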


\begin{proof}
We prove the first statements only.

(1)
Let $L$ be a proper submodule of $Y_i^+$, and $f \in\Hom_A(H^0(U),L)$. 
By Proposition \ref{Prop_T_U_T(Y^+)}, $H^0(U) \in \sfT(Y^+)$.
Then the composite 
$H^0(U) \xrightarrow{f} L \xrightarrow{\mathrm{incl.}} Y_i^+$ 
is zero or surjective by \cite[Lemma 2.7 (1)]{A-semi}.
Since $L \subsetneq Y_i^+$, we get $f=0$. 
Thus $L \in H^0(U)^\perp=\ovcalF_U$.

(2)
If $\rmw_F Y_i^+\ne 0$, then $\rmt_F Y_i^+ \subsetneq Y_i^+$.
Hence $\rmt_F Y_i^+ \in \ovcalF_U$ by (1). 
On the other hand, by taking $\theta \in F^\circ \subset D(U)$, 
we have $\rmt_F Y_i^+ \in \calT_\theta \subset \ovcalT_U$.
Thus $\rmt_F Y_i^+ \in \calW_U$ holds.
\end{proof}

Then we have the following key proposition.
Note that $\supp_F(L_F)$ is well-defined,
since $L_F \in \calW_\theta$ for any $\theta \in F^\circ$ by definition.

\begin{Prop}\label{Prop_L_F_simple}
Let $U=\bigoplus_{i=1}^m U_i \in \twopsilt A$ 
and $F \in \Facet D(U)$.
\begin{enumerate}
\item
We have $\dim_\R \R (\supp_F(L_F))=1$.
\item
For any $\theta \in F^\circ$, we have $L_F \in \simple \calW_\theta$.
\end{enumerate}
\end{Prop}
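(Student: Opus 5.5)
The plan is to read off (1) from the dimension formula for faces, and then deduce (2) by a counting argument against the pairing with $[U_i]$. Throughout, set $M:=Y^+\oplus X^-$, $(i,\epsilon):=(i_F,\epsilon_F)$, and fix $\theta\in F^\circ$.

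For (1), Proposition \ref{Prop_face_dim} gives $\dim_\R F=n-\dim_\R\R(\supp_F(\rmw_F M))$. Since $F$ is a facet of the full-dimensional cone $D(U)$, we have $\dim_\R F=n-1$. By Proposition \ref{Prop_facet_X_Y_almost} (3), $\rmw_F M=\rmw_F Y^+\oplus\rmw_F X^-$ equals $L_F\oplus 0$ (or $0\oplus L_F$). Hence $\supp_F(\rmw_F M)=\supp_F(L_F)$, and so $\dim_\R\R(\supp_F(L_F))=1$.

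For (2), let $S_1,\dots,S_k$ be the composition factors of $L_F$ in the length category $\calW_\theta$, counted with multiplicity. By (1), the classes $[S_j]\in K_0(\mod A)_\R$ all lie on one line. Since each $[S_j]$ is a nonzero vector with nonnegative coordinates in the basis $[L(1)],\dots,[L(n)]$, they are positive multiples of a single vector $v$. Thus $[S_j]=c_jv$ with $c_j>0$ and $[L_F]=(\sum_j c_j)v$. We must show $k=1$. The plan is to compare both sides via the linear form $[U_i]$, in three steps.
\begin{enumerate}
\item Show $[U_i](L_F)=\epsilon d_{U_i}$. For $\epsilon=+$, note $L_F\in\calW_{U/U_i}$ by Lemma \ref{Lem_F+C_almost} (1), and $L_F$ is a factor of $Y_i^+\in\calT_{U_i}$. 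So $[U_i](L_F)=\dim_K\Hom_{\sfD(A)}(U_i,L_F)$. This Hom-space is a nonzero $R_{U_i}$-vector space, and it is one-dimensional over $R_{U_i}$. To see this, reuse the lifting argument in the proof of Proposition \ref{Prop_End_L_F}, via the vanishing $\Ext^1_A(H^0(U_i),N')=0$: it shows $\Hom(U_i,L_F)$ is a quotient of $\Hom(U_i,Y_i^+)\simeq R_{U_i}$ (Definition-Proposition \ref{Def-Prop_SH}). The case $\epsilon=-$ is dual. This proves Theorem \ref{Thm_L_F_inn_vec} (3) along the way; the vanishing of $[U_{i'}](L_F)$ for $i'\neq i$ follows from $L_F\in\calW_{U/U_i}$.
\item Show that each $S_j$ satisfies $|[U_i](S_j)|\in d_{U_i}\Z_{\ge1}$. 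First, $[U_{i'}](S_j)=0$ for $i'\ne i$, by proportionality with $[L_F]$. Next, the $S_j$ lie in $\calW_\theta$ with $\theta\in D(U)$, and (this is the point to be justified) inside the wide subcategory $\calW_{U/U_i}$. Then $\Hom(U_i,S_j)$, respectively $\Hom(S_j,H^{-1}(\nu U_i))$, is a module over $R_{U_i}$, so its $K$-dimension is divisible by $d_{U_i}$, exactly as in Proposition \ref{Prop_End_L_F} (2). It is nonzero because $c_j>0$ and $[U_i](L_F)\ne0$.
\item Summing, $d_{U_i}=|[U_i](L_F)|=\sum_j|[U_i](S_j)|\ge k\,d_{U_i}$. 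Hence $k=1$, and $L_F$ is simple in $\calW_\theta$.
\end{enumerate}

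The main obstacle is step 2, namely showing that the $\calW_\theta$-composition factors $S_j$ of $L_F$ lie in $\calW_{U/U_i}$ and carry an $R_{U_i}$-module structure on the relevant Hom-space. For the first part I would try replacing $\theta$ by $\theta+\eta$ with $\eta\in C^\circ(U/U_i)$. Lemma \ref{Lem_F+C_almost} (2) together with Proposition \ref{Prop_facet_X_Y_almost} (1) shows $\theta+\eta$ stays in $F^\circ$, so the filtration of $L_F$ is unchanged. One should then be able to choose it so that $\calW_{\theta+\eta}\subset\calW_\eta=\calW_{U/U_i}$. If the module-structure claim fails, the fallback is a brick argument. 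Take a simple top $L_F\twoheadrightarrow S_1$ and a simple socle $S_k\hookrightarrow L_F$ in $\calW_\theta$, and use the $\calW_{U/U_i}$-reduction $\Phi$ together with $\Hom(U_i,-)$ to show $S_1\simeq S_k$. The composite $L_F\to S_1\simeq S_k\to L_F$ is then a nonzero endomorphism of $L_F$, which is an isomorphism since $L_F$ is a brick (Proposition \ref{Prop_End_L_F}); hence $L_F\to S_1$ is injective and $L_F=S_1$ is simple.
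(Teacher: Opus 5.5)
Part (1) is exactly the paper's argument. For (2), the outline of your counting argument is sound, but its crux, step 2, is not established, and the mechanism you propose for it does not work. Knowing $S_j\in\calW_{U/U_i}$ gives no reason for $\rad\End_{\sfD(A)}(U_i)$ to act trivially on $\Hom_{\sfD(A)}(U_i,S_j)$. For instance, when $U=U_i$ is projective one has $\calW_{U/U_i}=\mod A$, and bricks $S$ on which $\rad\End_A(U_i)$ acts nontrivially on $\Hom_A(U_i,S)$ can exist. So the claim that this Hom-space ``is a module over $R_{U_i}$'' is unjustified. The fallback (simple top $\simeq$ simple socle via $\Phi$ and $\Hom(U_i,-)$) names no actual mechanism. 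Integrality alone only gives $|[U_i](S_j)|\ge 1$, i.e.\ $k\le d_{U_i}$. That settles the case $d_{U_i}=1$ (e.g.\ $K$ algebraically closed), but not the arbitrary field of the paper.

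The missing observation needs no information about $S_j$ at all. Since $U_i$ is indecomposable, $E:=\End_{\sfD(A)}(U_i)$ is local with $E/\rad E=R_{U_i}$. Hence every finite-dimensional right $E$-module has all composition factors isomorphic to $R_{U_i}$, and so its $K$-dimension lies in $d_{U_i}\Z$. Apply this to $\Hom_{\sfD(A)}(U_i,X)$ and $\Hom_{\sfD(A)}(U_i,X[1])$, the only possibly nonzero terms. This gives $[U_i](X)\in d_{U_i}\Z$ for every $X\in\mod A$, and then your steps 2--3 go through verbatim. The detour through $\calW_{U/U_i}$ and $\theta+\eta$ becomes unnecessary. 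Step 1 can simply be quoted from the proof of Theorem \ref{Thm_L_F_inn_vec} (3), which does not depend on this proposition.

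With that repair your route genuinely differs from the paper's. The paper takes a simple quotient $L_F/N$ in $\calW_\theta$ and derives contradictory signs. Proportionality gives $[U_i](N)>0$. On the other hand, lifting $N$ to a proper submodule $N'\supset\rmt_\theta Y_i^+$ of $Y_i^+$ and using Lemma \ref{Lem_proper_sub_Y} gives $[U_i](N)=[U_i](N')\le 0$. The paper thus uses the semibrick property of $Y_i^+$ and only signs. Yours uses the exact value $|[U_i](L_F)|=d_{U_i}$ plus divisibility, and gives the slightly more quantitative bound that the $\calW_\theta$-length of $L_F$ is at most $|[U_i](L_F)|/d_{U_i}$.
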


\begin{proof}
We set $(i,\epsilon):=(i_F,\epsilon_F)$.
We only prove in the case $\epsilon={+}$.

(1)
Propositions \ref{Prop_facet_X_Y_almost} (3) and \ref{Prop_face_dim} imply
$\dim_\R F=n-\dim_\R (\R \supp_F(L_F))$.
Since $\dim_\R F=n-1$, we have $\dim_\R (\R \supp_F(L_F))=1$.

(2)
Set $L:=L_F$.
By $L \ne 0$,
we take a subobject $N \subset L$ in $\calW_\theta$
such that $L/N$ is a simple object in $\calW_\theta$.
It suffices to show $N=0$, so assume $N \ne 0$.

In $\calW_\theta$, the objects $N \ne 0$ and $L/N \ne 0$ 
are filtered by objects in $\supp_F(L)$.
Then (1) implies $[N] \in \Q_{>0}[L/N]$ in $K_0(\mod A)_\R$.
By Lemma \ref{Lem_Y_W_almost} and Proposition \ref{Prop_cone_TF} (1),
we have $Y_i^+ \in \calT_{U_i}=\calT_{[U_i]}$.
Since $L/N$ is a factor module of $Y_i^+$, we have $[U_i](L/N)>0$.
This and $[N] \in \Q_{>0}[L/N]$ give $[U_i](N)>0$.

Now we take the submodule $N' \subset Y_i^+$
such that $Y_i^+/N' \simeq L/N$.
There exists a short exact sequence
$0 \to \rmt_\theta Y_i^+ \to N' \to N \to 0$.
Thus $[U_i](N)=[U_i](N')-[U_i](\rmt_\theta Y_i^+)$ holds.
Since $N \ne 0$, we have $\rmt_\theta Y_i^+ \subsetneq Y_i^+$.
Then Lemma \ref{Lem_proper_sub_Y} (2) implies
$\rmt_\theta Y_i^+=\rmt_F Y_i^+ \in \calW_U \subset \calW_{U_i}$.
Thus Proposition \ref{Prop_cone_TF} (1) gives
$\rmt_\theta Y_i^+ \in \calW_{[U_i]}$, and hence $[U_i](\rmt_\theta Y_i^+)=0$.
Therefore we get $[U_i](N)=[U_i](N')$.

Here $N'$ is a proper submodule of $Y_i^+$,
since $Y_i^+/N' \simeq L/N \ne 0$.
Then Lemma \ref{Lem_proper_sub_Y} (1) gives 
$N' \in \ovcalF_U \subset \ovcalF_{U_i}$.
Again Proposition \ref{Prop_cone_TF} (1) implies $N' \in \ovcalF_{[U_i]}$,
and hence $[U_i](N') \le 0$.
This and the previous paragraph give $[U_i](N) \le 0$.

We have obtained $[U_i](N)>0$ and $[U_i](N) \le 0$, a contradiction.
Thus $N=0$ must hold.
\end{proof}

Now we are able to prove Theorem \ref{Thm_L_F_inn_vec}.

\begin{proof}[Proof of Theorem \ref{Thm_L_F_inn_vec}]
We consider the case $\epsilon={+}$; the other case is dual.
Set $L:=L_F$.

(1) is Propositions \ref{Prop_L_F_simple} (2) and \ref{Prop_End_L_F} (1).

(2)
Set $M:=Y^+ \oplus X^-$.
By Proposition \ref{Prop_facet_X_Y_almost} (3),
we have $\rmw_F M=L$.
Moreover Proposition \ref{Prop_L_F_simple} (2) implies $\supp_F(L)=\{L\}$.
Thus Proposition \ref{Prop_face_dim} gives
$F=\{ \theta \in D(U) \mid \theta(L)=0\}$.
In particular, $[L]$ is a normal vector of $F$.
For any $\theta \in D(U)$,
we have $\theta(L) \ge 0$,
because $L$ is a factor module of 
$Y_i^+ \in \ovcalT_\theta$.
Thus $[L]$ is an inner normal vector of $F$.

(3)
If $i' \ne i$, 
then $L=\rmw_F Y_i^+ \in \calW_{U/U_i} \subset \calW_{U_{i'}}$
hold by Lemma \ref{Lem_F+C_almost} (1),
and we get $[U_{i'}](L)=0$ by Proposition \ref{Prop_cone_TF} (1).

Let $i'=i$.
Then $\rmt_F Y_i^+ \in \calW_U \subset \calW_{U_i}$ holds
by Lemma \ref{Lem_proper_sub_Y} (2), 
so we get $[U_i](\rmt_F Y_i^+)=0$
by Proposition \ref{Prop_cone_TF} (1). 
Then \eqref{w_theta F} implies
\begin{align*}
[U_i](L)=[U_i](\rmw_F Y_i^+)=[U_i](Y_i^+)-[U_i](\rmt_F Y_i^+)=[U_i](Y_i^+).
\end{align*}
By Proposition \ref{Prop_dual_basis}, this value is $d_{U_i}$.
\end{proof}

\subsection{Comparison to the exchange quiver}
\label{Subsec_facet_ex_quiver}

This subsection is devoted to showing
that the brick $L_F$ associated to each facet $F$ 
is an extension of the brick labeling 
in Definition \ref{Def_exchange}
of the exchange quiver $Q(\twosilt A)$.
In this subsection, for each $V \in \twosilt_U A$,
we always assume a decomposition $V=\bigoplus_{i=1}^n V_i \in \twosilt_U A$ 
with $V_i=U_i$ for any $i \in \{1,\ldots,m\}$,
and we set $Z_V=\bigoplus_{i=1}^n Z_{V,i}:=\SH(V)$.

\begin{Thm}\label{Thm_L_F_exch}
Let $U=\bigoplus_{i=1}^m U_i \in \twopsilt A$,
$V=\bigoplus_{i=1}^n V_i \in \twosilt_U A$
and $i \in \{1,\ldots,m\}$.
\begin{enumerate}
\item
There uniquely exists $F \in \Facet D(U)$ such that $C(V/U_i) \subset F$.
\item
The facet $F$ in (1) and the mutation $V'$ of $V$ at $U_i$
satisfy the following.
\begin{enumerate}
\item
If $V'$ is a left mutation of $V$, then $(i_F,\epsilon_F)=(i,+)$, 
and $L_F$ is the label $Z_{V,i}^+$ of the arrow $V \to V'$.
\item
If $V'$ is a right mutation of $V$, then $(i_F,\epsilon_F)=(i,-)$,
and $L_F$ is the label $Z_{V,i}^-$ of the arrow $V' \to V$.
\end{enumerate}
\end{enumerate}
\end{Thm}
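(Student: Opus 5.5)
Throughout, $C(V/U_i)$ is an $(n-1)$-dimensional cone. By Lemma \ref{Lem_D(U)_incl}~(2) it lies in $D(U)$, since $U \oplus (V/U_i)$ is presilting.

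\textbf{Step 1: $C(V/U_i)$ lies in the boundary.} Because $C(V/U_i) \subset C(V) \subset D(U)$ and $U_i \notin \add(V/U_i)$, Lemma \ref{Lem_D(U)_incl}~(1) gives $C^\circ(V/U_i) \cap D^\circ(U_i)=\emptyset$. Theorem \ref{Thm_partial_setminus} then yields $C^\circ(V/U_i) \subset \partial_i$, and taking closures, $C(V/U_i) \subset \partial_i$.

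\textbf{Step 2: existence and uniqueness of $F$ in (1).} By Theorem \ref{Thm_partial_facet}~(1), $\partial_i$ is a finite union of facets in $\Facet_i D(U)$. A convex $(n-1)$-dimensional set contained in a finite union of $(n-1)$-dimensional cones lies in one of them. For this, use the relative interior: $C^\circ(V/U_i)$ is a single TF equivalence class. By Lemma \ref{Lem_face_M-TF}, every $\theta$ in it lies in the relative interior of a unique face of $D(U)$. That face has dimension at least $n-1$ and is contained in $\partial D(U)$, so it is a facet $F$. Constancy of the $M$-TF class along $C^\circ(V/U_i)$ forces all of $C^\circ(V/U_i)$ into the same $F^\circ$, giving $C(V/U_i) \subset F$. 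For uniqueness, two distinct facets meet in dimension at most $n-2$, so they cannot both contain $C(V/U_i)$. Combined with Proposition \ref{Prop_facet_between}~(c)$\Rightarrow$(d)$\Rightarrow$(e), this also shows $i_F=i$.

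\textbf{Step 3: the sign $\epsilon_F$ and the label (2).} Suppose $V'$ is the left mutation of $V$ at $U_i$. Then $Z_{V,i}^+ \neq 0$ and $Z_{V,i}$ is this brick. By Definition-Proposition \ref{Def-Prop_SH}, $\Hom(V/U_i, Z_{V,i}[\ell])=0$ for all $\ell$, so $Z_{V,i}^+ \in \calW_{V/U_i}=\calW_\theta$ for $\theta \in C^\circ(V/U_i) \subset F^\circ$ (Proposition \ref{Prop_cone_TF}); in particular $\theta(Z_{V,i}^+)=0$. Also $[U_i](Z_{V,i}^+)=d_{U_i}>0$ by Proposition \ref{Prop_dual_basis}, and Theorem \ref{Thm_L_F_inn_vec}~(3) gives $[U_i](L_F)=\epsilon_F d_{U_i}$. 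Since $F=\{\theta \in D(U) \mid \theta(L_F)=0\}$ has span $\Ker\langle ?,[L_F]\rangle$, and that span equals $\R C(V/U_i)$, which is also annihilated by $[Z_{V,i}^+]$, the two vectors are proportional. Comparing their values on $[U_i]$, which are $\epsilon_F d_{U_i}$ and $d_{U_i}$, gives $[L_F]=\epsilon_F[Z_{V,i}^+]$. Both $L_F$ and $Z_{V,i}^+$ are modules with classes in the positive cone of $K_0(\mod A)$, so $\epsilon_F=+$.

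To identify the modules themselves, note that $L_F$ and $Z_{V,i}^+$ are both simple in $\calW_\theta$ for $\theta \in C^\circ(V/U_i)$. For $L_F$ this is Theorem \ref{Thm_L_F_inn_vec}~(1). For $Z_{V,i}^+$, note that $\calW_{V/U_i}$ corresponds under $\tau$-tilting reduction at $V/U_i$ to a rank-one algebra, so its unique simple object is $Z_{V,i}^+$. Hence $L_F \simeq Z_{V,i}^+$. The right-mutation case is dual, using $Z_{V,i}^-$, which lies in $\ovcalF_\theta$, with $[U_i](Z_{V,i}^-[-1])$ negative.

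\textbf{Main obstacle.} The hardest step is the claim that $\calW_{V/U_i}$ has a single simple object, namely $Z_{V,i}^{\pm}$. I would deduce this from Proposition \ref{Prop_sim_W_U}~(2) applied to the presilting complex $V/U_i$. Its maximal completion is $V$ or $V'$, depending on the mutation direction, so the simple object is the $X_j$ with $j$ the complementary index. In the left-mutation case this is $Z_{V,i}$ itself. In the right-mutation case it is the corresponding summand of $\SH(V')$, which must be matched with $Z_{V,i}^-$ via the brick labeling of the arrow $V' \to V$ (Definition \ref{Def_exchange}).
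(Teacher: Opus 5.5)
Your proof is correct. Part (1) and the final identification of $L_F$ essentially follow the paper, but you obtain the sign $\epsilon_F$ by a different mechanism.

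For (1), the paper only notes $C(V/U_i)\subset D(U)\setminus D^\circ(U)=\partial D(U)$ and uses convexity and dimension. It then gets $i_F=i$ from Proposition~\ref{Prop_facet_X_Y_almost}~(1) via $C(U/U_i)\subset F$. Your detour through $\partial_i$, $M$-TF classes and Proposition~\ref{Prop_facet_between} is valid but longer. Your general sentence that a convex set inside a finite union of cones lies in one of them is false as stated; it is your face-based justification that actually carries that step.

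For the sign, the paper uses $\calF_V=\calF_{V/U_i}$ to get $X_i^-\in\calF_{U_i}\subset\calF_{V/U_i}=\calF_{[V/U_i]}$. Hence the point $[V/U_i]\in F$ lies outside $\partial_i^-$, forcing $\epsilon_F=+$. You instead compare normal vectors in $K_0(\mod A)$. Both $[L_F]$ and $[Z^+_{V,i}]$ annihilate the hyperplane $\R C(V/U_i)=\R F$. Their values at $[U_i]$ are $\epsilon_F d_{U_i}$ and $d_{U_i}$, by Theorem~\ref{Thm_L_F_inn_vec}~(3) and Proposition~\ref{Prop_dual_basis}. So $[L_F]=\epsilon_F[Z^+_{V,i}]$, and positivity of classes of nonzero modules gives $\epsilon_F=+$. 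This argument treats both mutation directions uniformly and yields the extra identity $[L_F]=[Z^{\pm}_{V,i}]$.

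That identity makes your ``main obstacle'' unnecessary. The category $\calW_{V/U_i}\simeq\mod B_{V/U_i}$ with $|B_{V/U_i}|=1$ has a unique simple object, which must be $L_F$. Any $Z^{\pm}_{V,i}\in\calW_{V/U_i}$ has class equal to its length times $[L_F]$, so the class identity forces length one and $Z^{\pm}_{V,i}\simeq L_F$. You then need neither maximal completions nor matching $\SH(V')$ with $\SH(V)$ through the brick labeling in the right-mutation case. The paper instead uses that $V$ is the maximal completion of $V/U_i$ and Proposition~\ref{Prop_sim_W_U} to name the simple object directly.

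One small slip: in the right-mutation case the relevant value is $[U_i](Z^-_{V,i})=-d_{U_i}$, since $Z_{V,i}=Z^-_{V,i}[1]$. It is not $[U_i](Z^-_{V,i}[-1])$, which would be positive.
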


\begin{proof}
(1)
By Lemma \ref{Lem_D(U)_incl}, 
we have $C(V/U_i) \subset D(U) \setminus D^\circ(U)=\partial D(U)$.
Since $\dim_\R \R C(V/U_i)=n-1$ and $C(V/U_i)$ is convex,
we have the assertion.

(2)
We only show (i).
Since $C(U/U_i) \subset C(V/U_i) \subset F$, 
we have $i_F=i$ by Proposition \ref{Prop_facet_X_Y_almost} (1).

Since $V'$ is a left mutation of $V$,
Definition-Proposition \ref{Def-Prop_adjacent} (2) implies 
$\calF_V=\calF_{V/U_i}$ and $Z_{V,i}^+ \ne 0$.

Then Lemma \ref{Lem_Y_W_almost} and $\calF_V=\calF_{V/U_i}$ above yield
$X_i^- \in \calF_{U_i} \subset \calF_V=\calF_{V/U_i}$.
By setting $\theta:=[V/U_i]$, 
we get $X_i^- \in \calF_\theta$ from Proposition \ref{Prop_cone_TF} (1),
and hence $\theta \notin \partial_i^-$. 
This and $\theta \in C(V/U_i) \subset F$ imply $F \not \subset \partial_i^-$.
Therefore $\epsilon_F=+$.

Moreover $\calF_V=\calF_{V/U_i}$ means that
$V$ is the maximal completion of $V/U_i$.
Then Proposition \ref{Prop_sim_W_U} gives 
$\simple \calW_{V/U_i}=\{Z_{V,i}^+\}$.
On the other hand, by taking $\theta \in C^\circ(V/U_i) \subset F^\circ$,
we get $L_F \in \simple \calW_\theta=\simple \calW_{V/U_i}$ 
by Propositions \ref{Prop_L_F_simple} (2) and \ref{Prop_cone_TF} (1).
Therefore $L_F \simeq Z_{V,i}^+$ as desired.
By the definition of brick labeling,
the label of the arrow $V \to V'$ is $Z_{V,i}^+$.
\end{proof}

Thus we have the following inclusions.

\begin{Cor}\label{Cor_C(V/Ui)_Z}
Let $U=\bigoplus_{i=1}^m U_i \in \twopsilt A$.
For each $(i,\epsilon) \in \{1,\ldots,m\} \times \{\pm\}$,
we have the inclusions
\begin{align}
\bigcup_{V \in \twosilt_U A, \ Z_{V,i}^\epsilon \ne 0} C(V/U_i) 
&\subset \partial_i^\epsilon, \label{Eq_C(V/Ui)}\\
\{Z_{V,i}^\epsilon \mid V \in \twosilt_U A\} \setminus \{0\} &\subset
\{L_F \mid F \in \Facet_i^\epsilon D(U)\} \label{Eq_Z}.
\end{align}
\end{Cor}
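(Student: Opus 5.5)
The corollary follows directly from Theorem \ref{Thm_L_F_exch}, applied one pair $(V,i)$ at a time. Fix $(i,\epsilon)$ and take $V \in \twosilt_U A$ with $Z_{V,i}^\epsilon \ne 0$. Let $V'$ be the mutation of $V$ at $V_i=U_i$, which exists by Definition-Proposition \ref{Def-Prop_adjacent} (1). By Definition-Proposition \ref{Def-Prop_adjacent} (2), exactly one of $Z_{V,i}^+$ and $Z_{V,i}^-$ is nonzero, and which one it is determines the direction of the mutation. If $\epsilon=+$, then $Z_{V,i}^+\neq 0$, so $V'$ is the left mutation. If $\epsilon=-$, then $V'$ is the right mutation.

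Next, Theorem \ref{Thm_L_F_exch} (1) gives a unique facet $F$ of $D(U)$ with $C(V/U_i)\subset F$. Part (2) of the same theorem says that $(i_F,\epsilon_F)=(i,\epsilon)$ in the relevant case, and that $L_F=Z_{V,i}^\epsilon$. The first statement means $F\in\Facet_i^\epsilon D(U)$, that is, $F\subset\partial_i^\epsilon$. It follows that $C(V/U_i)\subset F\subset \partial_i^\epsilon$. Taking the union over all such $V$ gives \eqref{Eq_C(V/Ui)}.

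For \eqref{Eq_Z}, every nonzero element of the left-hand side has the form $Z_{V,i}^\epsilon$ for some $V$ as above. For that $V$, the facet $F$ just constructed lies in $\Facet_i^\epsilon D(U)$ and satisfies $L_F=Z_{V,i}^\epsilon$, so the element lies in the right-hand side. The zero elements have been removed from the left-hand side, so nothing else needs checking.

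There is no real obstacle. The only point needing care is matching the sign $\epsilon$ with the direction of the mutation via Definition-Proposition \ref{Def-Prop_adjacent} (2)(a)(b), so that case (i) or (ii) of Theorem \ref{Thm_L_F_exch} (2) is invoked correctly.
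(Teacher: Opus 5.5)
Your proposal is correct and is the paper's own argument. For $V$ with $Z_{V,i}^\epsilon \ne 0$, Definition-Proposition \ref{Def-Prop_adjacent} (2) fixes the direction of the mutation at $U_i$, and Theorem \ref{Thm_L_F_exch} then gives $F \in \Facet_i^\epsilon D(U)$ with $C(V/U_i) \subset F \subset \partial_i^\epsilon$ and $L_F = Z_{V,i}^\epsilon$, which yields both inclusions at once.
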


\begin{proof}
We show in the case $\epsilon={+}$;
the dual argument works in the case $\epsilon={-}$.

Assume that $V \in \twosilt_U A$ satisfies $Z_{V,i}^+ \ne 0$.
By Definition-Proposition \ref{Def-Prop_adjacent} (2), 
the mutation of $V$ at $U_i$ is a left mutation of $V$.
Thus Theorem \ref{Thm_L_F_exch} implies that
there exists $F \in \Facet_i^+ D(U)$ such that
$C(V/U_i) \subset F \subset \partial_i^+$ and $Z_{V,i}^+=L_F$. 
Thus we have the assertions.
\end{proof}

For each inclusion in Corollary \ref{Cor_C(V/Ui)_Z},
we can characterize when it is an equality.

\begin{Thm}\label{Thm_C(V/Ui)_Z_=}
Let $U=\bigoplus_{i=1}^m U_i \in \twopsilt A$ 
and $(i,\epsilon) \in \{1,\ldots,m\} \times \{\pm\}$.
\begin{enumerate}
\item
Both sides of \eqref{Eq_C(V/Ui)} coincide if and only if
$\partial_i^\epsilon \subset \Cone$.
\item
Both sides of \eqref{Eq_Z} coincide if and only if
every $F \in \Facet_i^\epsilon D(U)$ has $V' \in \twopsilt A$
such that $C^\circ(V) \subset F$ and $|V'|=n-1$.
\item
If \eqref{Eq_C(V/Ui)} is an equality, then so is \eqref{Eq_Z}.
\item
If $B$ is brick finite, 
then \eqref{Eq_C(V/Ui)} and \eqref{Eq_Z} are equalities.
\end{enumerate}
\end{Thm}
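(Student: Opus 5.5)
Throughout, fix $(i,\epsilon)$ and treat $\epsilon={+}$; the case $\epsilon={-}$ is dual. Call the left side of \eqref{Eq_C(V/Ui)} $\Omega$; by Corollary \ref{Cor_C(V/Ui)_Z} we have $\Omega \subset \partial_i^+$. Every piece $C(V/U_i)$ of $\Omega$ lies in $\Cone$. The key tool is Theorem \ref{Thm_partial_facet} (1), which says that $\partial_i^+$ is the union of the facets $F \in \Facet_i^+ D(U)$. Another key tool is Theorem \ref{Thm_L_F_exch}, which matches codimension-one cones $C(V/U_i)$ with facets.

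For (1): the implication ``$\Rightarrow$'' is immediate, since $\Omega \subset \Cone$. For ``$\Leftarrow$'', let $\theta \in \partial_i^+ \subset D(U) \cap \Cone$. Proposition \ref{Prop_D(U)_Cone} gives $V \in \twosilt_U A$ with $\theta \in C(V)$. I then want to move $\theta$ into some $C(V'/U_i)$ with $Z_{V',i}^+ \ne 0$. The cone $C(V)$ meets $\partial_i$ exactly in its faces avoiding $[U_i]$, by Proposition \ref{Prop_facet_between} (c), so $\theta \in C(V/U_i)$. If $Z_{V,i}^+\ne0$ we are done. Otherwise $Z_{V,i}^-\ne0$, and the facet $F$ containing $C(V/U_i)$ is in $\Facet_i^-$. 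Then $\theta$ lies in $\partial_i^+ \cap C(V/U_i)$, which is a lower-dimensional face. This is where I expect the main difficulty. My plan is to choose $V$ among those with $\theta \in C(V)$ so that $C(V/U_i)$ meets $\partial_i^+$ in full dimension near $\theta$. I would use that $\partial_i^+\cap\Cone$ is covered by the finitely many cones $C(V/U_i)$ around $\theta$; this finiteness holds because the cones $C(V)$ containing $\theta$ form a closed star, since the $g$-fan is locally finite at points of $\Cone$. Among these, the ones meeting $(\partial_i^+)^\circ$-pieces must lie in facets of type $(i,+)$, by the uniqueness in Theorem \ref{Thm_partial_facet} (2). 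Since $\partial_i^+$ is pure of dimension $n-1$, $\theta$ is a limit of relative-interior points of facets in $\Facet_i^+$. Each such facet near $\theta$ is covered by the $C(V/U_i)$, and those cones have $Z_{V,i}^+\neq0$ by Theorem \ref{Thm_L_F_exch} (2). Closedness of $\Omega$ near $\theta$ then finishes the argument.

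For (2): write $F \in \Facet_i^+ D(U)$. If some $V'$ with $|V'|=n-1$ has $C^\circ(V')\subset F$, then $V'=V/U_i$ for some $V\in\twosilt_UA$. This holds because $C(V')\subset D(U)$ gives $U\oplus V'$ presilting by Lemma \ref{Lem_D(U)_incl} (2). Moreover $[U_i]\notin F$, while $C(U/U_i)\subset F$ by Proposition \ref{Prop_facet_X_Y_almost}. By Theorem \ref{Thm_L_F_exch}, $L_F=Z_{V,i}^+$. Conversely, suppose $L_F=Z_{V,i}^+$ for some $V$. Then Theorem \ref{Thm_L_F_exch} gives a facet $F'\supset C(V/U_i)$ with $L_{F'}=L_F$. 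By Theorem \ref{Thm_L_F_inn_vec} (2), $F=\{\theta\in D(U)\mid\theta(L_F)=0\}=F'$, so $C^\circ(V/U_i)\subset F$ with $|V/U_i|=n-1$.

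For (3): if \eqref{Eq_C(V/Ui)} is an equality, pick any $F\in\Facet_i^+$. Then $F^\circ$ is covered by finitely many $(n-1)$-dimensional cones $C(V/U_i)$ locally. At least one of them must be full-dimensional in $F$, i.e.\ $C^\circ(V/U_i)\subset F$, and (2) applies. Here the step needing care is excluding coverage by infinitely many cones. I would argue via Baire category: $F$ is covered by countably many closed cones, so one has nonempty relative interior in $F$. For (4): if $B$ is brick finite, then $D(U)\subset\Cone$ by Proposition \ref{Prop_B_brick_fin}. Hence $\partial_i^+\subset\Cone$, and (1) and (3) give both equalities.
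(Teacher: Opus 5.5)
Your arguments for (2), (3) and (4) agree with the paper's. In (3), your Baire-category step plays the role of the paper's choice of a point of $F$ lying in no rational polyhedral cone of dimension $\le n-2$.

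The gap is in the converse of (1), which is the heart of the theorem and on which (4) relies. You need that $\partial_i^+$ is covered near $\theta$ by finitely many cones $C(V/U_i)$, and that $\Omega$ is closed near $\theta$. For both you invoke local finiteness of the $g$-fan at points of $\Cone$, and that is false in general. If $\theta\in C^\circ(W)$, then every $C(V)$ with $W\in\add V$ contains $\theta$. These $V$ are in bijection with $\twosilt B_W$ (Proposition~\ref{Prop_reduc_silt}), so there are infinitely many of them unless $B_W$ is brick finite. For instance, $0\in\partial_i^+$ whenever $Y_i^+\neq 0$, and $0$ lies in every silting cone. So if $A$ is not brick finite, no neighbourhood of $\theta=0$ meets only finitely many cones. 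Without a finiteness statement, $\Omega$ is an infinite union of closed cones, and knowing that $\theta$ is a limit of points of $\Omega$ does not give $\theta\in\Omega$. (Also, Proposition~\ref{Prop_facet_between} is about faces of $D(U)$, not faces of $C(V)$. The equality $C(V)\cap\partial_i=C(V/U_i)$ that you use follows instead from Theorem~\ref{Thm_partial_setminus} and Lemma~\ref{Lem_D(U)_incl}~(1).)

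The finiteness you actually need concerns only cones lying in $\partial_i^\epsilon$, and it must be extracted from the hypothesis $\partial_i^\epsilon\subset\Cone$. This is what the paper does. Arguing as in \cite[Proposition 4.9]{A}, it shows that only finitely many indecomposable $Q\in\twopsilt A$ satisfy $[Q]\in\partial_i^\epsilon$. Hence $\partial_i^\epsilon$, being a union of facets, is the union of the cones $C(V')\subset\partial_i^\epsilon$ with $|V'|=n-1$. Each such $V'$ then gives $V=U_i\oplus V'\in\twosilt_U A$ with $Z_{V,i}^\epsilon\neq 0$, via Lemma~\ref{Lem_D(U)_incl}, Theorem~\ref{Thm_partial_setminus} and Theorem~\ref{Thm_L_F_exch}.

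Alternatively, you can avoid finiteness altogether by combining the Baire argument from your (3) with the star property above. Choose $F\in\Facet_i^+ D(U)$ containing $\theta$ (Theorem~\ref{Thm_partial_facet}~(1)) and $W$ with $\theta\in C^\circ(W)$. The nonempty relatively open subset $F\cap D^\circ(W)$ of $F$ contains a point lying in no cone of dimension $\le n-2$. That point lies in some $C^\circ(V')\subset F$ with $|V'|=n-1$, because $F\subset\Cone$ and $F$ is a union of TF equivalence classes. Lemma~\ref{Lem_D(U)_incl}~(1) then gives $W\in\add V'$, so $\theta\in C(V')$. Your argument for (2) shows $V'=V/U_i$ with $Z_{V,i}^+=L_F\neq 0$.
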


\begin{proof}
(1)
The ``only if'' part is obvious.

Conversely, assume $\partial_i^\epsilon \subset \Cone$.
Since $\partial_i^\epsilon$ is closed and 
is a union of TF equivalence classes,
Proposition \ref{Prop_cone_TF} (1) gives that
$\partial_i^\epsilon$ is the union of $C(V')$
for all $V' \in \twopsilt A$ such that $C(V') \subset \partial_i^\epsilon$.
Then by the same argument as \cite[Proposition 4.9]{A},
there are only finitely many indecomposable $Q \in \twopsilt A$
such that $[Q] \in \partial_i^\epsilon$.
Thus $\partial_i^\epsilon$ is 
the union of $C(V')$ for all $V' \in \twopsilt A$ 
such that $|V'|=n-1$ and $C(V') \subset \partial_i^\epsilon$.

Now let $\theta \in \partial_i^\epsilon$.
Then the previous paragraph allows
us to take $V' \in \twopsilt A$ such that $|V'|=n-1$ and
$\theta \in C(V') \subset \partial_i^\epsilon$.
Since $C(V') \subset \partial_i^\epsilon \subset D(U)$, 
Lemma \ref{Lem_D(U)_incl} implies that
$U \oplus V'$ is 2-term presilting.
By Theorem \ref{Thm_partial_setminus},
we have $C(V') \subset \partial_i^\epsilon \subset 
D(U) \setminus D^\circ(U_i)$,
so Lemma \ref{Lem_D(U)_incl} again implies 
$U_i \oplus V' \in \twopsilt A$,
including that $U_i \oplus V'$ is basic.
This and $|V|=n-1$ give $U_i \oplus V' \in \twosilt A$.
Since $U \oplus V'$ is 2-term presilting and $U_i \oplus V' \in \twosilt A$,
we get $U/U_i \in \add V'$ and $V:=U_i \oplus V' \in \twosilt_U A$.

It remains to show $Z_{V,i}^\epsilon \ne 0$.
Take the unique facet $F \in \Facet D(U)$ such that
$C(V/U_i) \subset F$ by Theorem \ref{Thm_L_F_exch} (1).
Since $C(V/U)=C(V') \subset \partial_i^\epsilon$
and since $\partial_i^\epsilon$ is a union of facets 
by Theorem \ref{Thm_partial_facet} (1),
we have $C(V/U) \subset F \subset \partial_i^\epsilon$.
Thus we get $(i_F,\epsilon_F)=(i,\epsilon)$.
Then Theorem \ref{Thm_L_F_exch} (2) gives
$Z_{V,i}^\epsilon \ne 0$.

(2)
Assume that both sides of \eqref{Eq_Z} coincide.
Let $F \in \Facet_i^\epsilon D(U)$.
Then there exists $V \in \twosilt_U A$ such that $Z_{V,i}^\epsilon=L_F \ne 0$.
By Theorem \ref{Thm_L_F_exch},
there uniquely exists $F' \in \Facet D(U)$ such that $C(V/U_i) \subset F'$,
and this $F'$ satisfies $L_{F'}=Z_{V,i}^{\epsilon'} \ne 0$, 
where $\epsilon':=\epsilon_F$.
Since $Z_{V,i}^\epsilon \ne 0$ and $Z_{V,i}^{\epsilon'} \ne 0$,
we have $\epsilon=\epsilon'$.
Thus we obtain $L_{F'}=Z_{V,i}^\epsilon=L_F$.
Then Theorem \ref{Thm_L_F_inn_vec} (2) implies that $F'=F$.
Thus we get $C(V/U_i) \subset F$.
Since $|V/U_i|=n-1$, the ``only if'' part is proved.

Next we show the ``if'' part.
Let $F \in \Facet_i^\epsilon D(U)$,
and take $V' \in \twopsilt A$ such that $C(V') \subset F$ and $|V'|=n-1$.
Then we have $C(V') \subset F \subset \partial_i=D(U) \setminus D(U_i)$
by Theorem \ref{Thm_partial_setminus}.
Thus Lemma \ref{Lem_D(U)_incl} gives
$V:=U_i \oplus V'$ is a basic 2-term silting complex.
Clearly, $F \in \Facet_i^\epsilon D(U)$ satisfies $C(V'/U) \subset F$,
so Theorem \ref{Thm_L_F_exch} gives $L_F=Z_{V,i}^\epsilon \ne 0$ as desired.

(3)
Let $F \in \Facet_i^\epsilon D(U)$.
Since \eqref{Eq_C(V/Ui)} is an eqaulity, (1) implies that $F \subset \Cone$.
Since $F$ is a facet,
we take $\theta \in F$ which is not contained in any rational polyhedral cone
of dimension $\le n-2$.
Then $\theta \in F \subset \Cone$ give that 
there exists $V \in \twopsilt A$ 
such that $\theta \in C^\circ(V)$ and $|V|=n-1$.
By Lemma \ref{Lem_face_M-TF}, $F$ is a union of TF equivalence classes,
so Proposition \ref{Prop_cone_TF} and $\theta \in C^\circ(V) \cap F$ give
$C^\circ(V) \subset F$.
Thus $C(V) \subset F$ holds.
Then (2) implies that \eqref{Eq_Z} is an equality.

(4)
Since $B$ is finite, Proposition \ref{Prop_B_brick_fin} implies
$D(U) \subset \Cone$.
Thus we get $\partial_i^\epsilon \subset \Cone$.
Now (1) and (3) give the assertion.
\end{proof}

We give an example.

\begin{Ex}\label{Ex_A4_L_F}
As in Example \ref{Ex_A4_D(U)},
let $A$ be the path algebra of the quiver $1 \to 2 \to 3 \to 4$,
and $U=U_1 \oplus U_2  \in \twopsilt A$
with $U_1=(P(4) \to P(3))$ and $U_2=P(1)$.
Then we have seen in Example \ref{Ex_A4_D(U)} that
\begin{align*}
Y_1^+=\begin{smallmatrix}3\end{smallmatrix}, \quad
X_1^-=\begin{smallmatrix}4\end{smallmatrix}, \quad
Y_2^+=\begin{smallmatrix}1\\2\\3\\4\end{smallmatrix}, \quad
X_2^-=0,
\end{align*}
and $D(U)=\R_{\ge 0}D$, where 
\begin{align*}
D&=D(U) \cap
\{x_1[P(1)]+x_2[P(2)]+x_3[P(3)]+x_4[P(4)] \mid 2x_1+x_2+x_3=1,
x_1,x_2,x_3,x_4 \in \R\}
\end{align*}
and $D$ is depicted as follows.
\begin{align*}
\begin{tikzpicture}[baseline=0pt,scale=0.8]
\node (F3)[coordinate,label= 90:{$\theta_3=[P(1)]-[P(2)]$}] at ( 0  , 3  ) {};
\node (F1)[coordinate,label=180:{$\theta_1=[P(2)]$}]        at (-3  , 0  ) {};
\node (F2)[coordinate,label=270:{$\theta_2=[P(3)]$}]        at (-1.5,-1  ) {};
\node (F5)[coordinate,label=  0:{$\theta_5=[U_1]=[P(3)]-[P(4)]$}] 
at ( 2  , 0  ) {};
\node (F4)[coordinate]                                      at ( 0.5, 1  ) {};
\node (U2)[coordinate,label=180:{$[U_2]/2 =[P(1)]/2$}]      at (-1.5, 1.5) {};
\node (F4')  at ( 0.5, 1  ) {};
\node (F4'') at ( 3.5, 1  ) {$\theta_4=[P(2)]-[P(4)]$};
\draw (F3) to (F1);
\draw (F3) to (F2);
\draw[dashed] (F3) to (F4);
\draw (F3) to (F5);
\draw (F1) to (F2);
\draw (F2) to (F5);
\draw[dashed] (F5) to (F4);
\draw[dashed] (F4) to (F1);
\draw[dashed,very thick] (U2) to (F5);
\draw[->] (F4'') to (F4');
\draw[fill=black] (F5) circle [radius=0.1];
\draw[fill=black] (U2) circle [radius=0.1];
\end{tikzpicture}.
\end{align*}

We set $F_i:=\R_{\ge 0}\theta_i$ for each $i \in \{1,2,3,4,5\}$.
It is easy to see that $F_1,F_2,F_3,F_4,F_5$
are all the distinct 1-dimensional faces of $D(U)$.
Moreover we abbreviate $F_1+F_3+F_4$ as $F_{134}$ and so on.

The following table shows the elements of $\Facet D(U)$
and $i_F,\epsilon_F,L_F$ for each $F \in \Facet D(U)$.
Note that $\Facet_2^- D(U)$ is empty.
\begin{center}
\begin{tabular}{c|ccc}
$F \in \Facet D(U)$ & $i_F$ & $\epsilon_F$ & $L_F$ \\
\hline
$F_{134}$ & 1 & $+$ & $\begin{smallmatrix}3\end{smallmatrix}$\\
\hline
$F_{123}$ & 1 & $-$ & $\begin{smallmatrix}4\end{smallmatrix}$\\
\hline
$F_{345}$ & 2 & $+$ &
$\begin{smallmatrix}\\1\\2\\3\\4\\\end{smallmatrix}$\\
\hline
$F_{235}$ & 2 & $+$ &
$\begin{smallmatrix}1\\2\end{smallmatrix}$\\
\hline
$F_{1245}$ & 2 & $+$ & $\begin{smallmatrix}1\end{smallmatrix}$
\end{tabular}
\end{center} 

Therefore by the purities of $\partial_i^\epsilon$ 
in Theorem \ref{Thm_partial_facet} (1),
\begin{align*}
\partial_1^+=F_{134}, \quad
\partial_1^-=F_{123}, \quad
\partial_2^+=F_{345} \cup F_{235} \cup F_{1245}, \quad
\partial_2^-=\emptyset.
\end{align*}
Thus we get
\begin{align*}
\partial_1=F_{134} \cup F_{123}, \quad
\partial_2=F_{345} \cup F_{235} \cup F_{1245}, \quad
\partial^+=F_{134} \cup F_{345} \cup F_{235} \cup F_{1245}, \quad
\partial^-=F_{123}.
\end{align*}

By direct calculations, we have
\begin{align}\label{Eq_d1_d+_proper_d1+}
\partial_1 \cap \partial^+=F_{134} \cup F_{12} \cup F_{13} \cup F_{23}
\supsetneq F_{134}=\partial_1^+,
\end{align}
but $\Facet_1^+ D(U)=\Facet_1 D(U) \cap \Facet^+ D(U)=\{F_{134}\}$
still holds as in Theorem \ref{Thm_partial_facet} (2).

We next see the equality
\begin{align*}
\{Z_{V,i}^\epsilon \mid V \in \twosilt_U A\} \setminus \{0\}=
\{L_F \mid F \in \Facet_i^\epsilon D(U)\}
\end{align*}
in Theorem \ref{Thm_C(V/Ui)_Z_=} (2).
By definition, if $Z_{V,i}^+ \ne 0$, 
then $Z_{V,i}^+$ is the label of the arrow starting at $V$
corresponding to the left mutation of $V$ at $U_i$.
This arrow ends at a vertex outside $\twosilt_U A$.
Dually if $Z_{V,i}^+ \ne 0$,
then $Z_{V,i}^-$ is the label of an arrow ending at $V$
starting outside $\twosilt_U A$.

Therefore it is enough to consider 
each arrow such that exactly one of its two endpoints is in $\twosilt_U A$
in the exchange quiver of $\twosilt A$.
Such arrows are 
the bold solid arrows and the bold dotted arrows in the next picture
corresponding to mutations at $U_1$ and $U_2$ respectively.
\begin{align*}
\begin{tikzpicture}[->,baseline=0pt,scale=0.8]
\node (1) at ( 0, 2) {$S$};
\node (2) at ( 2, 1) {$U \oplus S_3 \oplus V$};
\node (3) at ( 2,-1) {$U \oplus T_4 \oplus V$};
\node (4) at (-2, 0) {$U \oplus T_3 \oplus S_4$};
\node (5) at ( 0,-2) {$T$};
\draw (1) to [edge label'={$\begin{smallmatrix}3\\4\end{smallmatrix}$}](2);
\draw (2) to [edge label'={$\begin{smallmatrix}2\\3\\4\end{smallmatrix}$}](3);
\draw (3) to [edge label'={$\begin{smallmatrix}2\end{smallmatrix}$}](5);
\draw (1) to [edge label={$\begin{smallmatrix}2\end{smallmatrix}$}](4);
\draw (4) to [edge label={$\begin{smallmatrix}3\\4\end{smallmatrix}$}](5);
\draw[very thick] 
(0,3.5) to [edge label={$\begin{smallmatrix}4\end{smallmatrix}$}] (1);
\draw[very thick,dotted] 
(1) to [edge label'={$\begin{smallmatrix}1\end{smallmatrix}$}] (-2,1.5);
\draw[very thick] 
(2) to [edge label'={$\begin{smallmatrix}3\end{smallmatrix}$}] (4,0);
\draw[very thick,dotted] 
(2) to [edge label={$\begin{smallmatrix}1\end{smallmatrix}$}] (6,0.5);
\draw[very thick] 
(3) to [edge label'={$\begin{smallmatrix}3\end{smallmatrix}$}] (4,-2);
\draw[very thick,dotted] 
(3) to [edge label={$\begin{smallmatrix}1\\2\\3\\4\end{smallmatrix}$}]
(6,-1.5);
\draw[very thick] 
(-4,1) to [edge label={$\begin{smallmatrix}4\end{smallmatrix}$}] (4);
\draw[very thick,dotted] 
(4) to [edge label'={$\begin{smallmatrix}1\\2\end{smallmatrix}$}] (-4,-1);
\draw[very thick] 
(5) to [edge label={$\begin{smallmatrix}3\end{smallmatrix}$}] (-2,-3);
\draw[very thick,dotted] 
(5) to [edge label'={$\begin{smallmatrix}1\\2\\3\\4\end{smallmatrix}$}] 
(2,-3);
\end{tikzpicture}
\end{align*}
Here we use $S_3,S_4,T_3,T_4,V$ in Example \ref{Ex_A4_D(U)}; namely,
\begin{align*}
S_3=P(2), \quad S_4=P(3), \quad
T_3=(P(2) \to P(1)), \quad T_4=(P(4) \to P(1)), \quad V=(P(4) \to P(2)).
\end{align*}
For example, $\{Z_{V,2}^+ \mid V \in \twosilt_U A\} \setminus \{0\}$
is the set of labels of bold dotted arrows going outside $\twosilt A$.
By the picture above, it is 
$\left\{\begin{smallmatrix}1\\2\\3\\4\end{smallmatrix},
\begin{smallmatrix}1\\2\end{smallmatrix},
\begin{smallmatrix}1\end{smallmatrix}\right\}$,
and coincides with $\{L_F \mid F \in \Facet_2^+ D(U)\}$.
\end{Ex}

\section{Faces of the interval neighborhood $D(U)$}\label{Sec_face}

\subsection{The link $L(U) \subset D(U)$ of the silting cone $C(U)$}
\label{Subsec piecewise}

In this section, we study 
the general faces of the rational polyhedral cone $D(U)$
for $U \in \twopsilt A$ by fully using the following subset of $D(U)$.

\begin{Def}
Let $U=\bigoplus_{i=1}^m U_i \in \twopsilt A$.
Then we set the \emph{link} $L(U)$ of $C(U)$ by
\begin{align*}
L(U):=D(U) \setminus \left( \bigcup_{i=1}^m D^\circ(U_i) \right)
\stackrel{\text{Thm.~\ref{Thm_partial_setminus}}}{=}
\bigcap_{i=1}^m \partial_i.
\end{align*}
\end{Def}

The reason why we call $L(U)$ the link is the following property.

\begin{Prop}\label{Prop_L(U)_cone}
Let $U \in \twopsilt A$.
Then we have
\begin{align*}
L(U) \cap \Cone=\bigsqcup_{V \in \twopsilt_U A}C^\circ(V/U)
=\bigcup_{V \in \twosilt_U A}C(V/U).
\end{align*}
\end{Prop}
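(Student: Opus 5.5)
We compare $L(U)$ with the known decomposition of $D(U) \cap \Cone$, namely Proposition \ref{Prop_D(U)_Cone}:
\begin{align*}
D(U) \cap \Cone=\bigsqcup_{V \in \twopsilt_U A} C^\circ(V)
\end{align*}
(the disjoint union holds because the $C^\circ(V)$ are distinct TF equivalence classes by Proposition \ref{Prop_cone_TF}). So $L(U) \cap \Cone$ is the union of those $C^\circ(V)$, $V \in \twopsilt_U A$, contained in $L(U)$; note each $C^\circ(V)$ is either inside or disjoint from $L(U)$, since $L(U)=D(U)\setminus\bigcup_i D^\circ(U_i)$ and each $D^\circ(U_i)$ is a union of TF classes. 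The key step is to decide, for $V \in \twopsilt_U A$, when $C^\circ(V) \cap D^\circ(U_i)=\emptyset$ for all $i$.

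By Lemma \ref{Lem_D(U)_incl} (1), (a)$\Leftrightarrow$(d), $C^\circ(V) \cap D^\circ(U_i) \ne \emptyset$ if and only if $U_i$ is a direct summand of $V$. Hence $C^\circ(V) \subset L(U)$ if and only if $V$ has no $U_i$ as a direct summand. Now run over $V' \in \twopsilt_U A$ instead, writing $V' = U \oplus V$ with $V:=V'/U$. As $V$ ranges over basic 2-term presilting complexes with $U \oplus V$ presilting and $\add V \cap \add U=0$, we get $V=V'/U$ for $V' \in \twopsilt_U A$ bijectively. This gives the first equality
\begin{align*}
L(U)\cap\Cone=\bigsqcup_{V' \in \twopsilt_U A} C^\circ(V'/U).
\end{align*}
Disjointness again comes from injectivity of $V \mapsto C^\circ(V)$ (Proposition \ref{Prop_cone_TF} (2)), since $V'\mapsto V'/U$ is injective on $\twopsilt_U A$. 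The case $V'=U$, giving $C^\circ(0)=\{0\}$, is harmless: $0 \in L(U)$ since $0 \notin D^\circ(U_i)$, because $H^0(U_i)\ne 0$ or $H^{-1}(\nu U_i)\neq 0$.

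For the second equality, each $C(V'/U)$ is the union of $C^\circ(W)$ over direct summands $W$ of $V'/U$. Each such $U\oplus W$ lies in $\twopsilt_U A$, so $C(V'/U)$ is contained in the first union. Conversely, $C^\circ(V'/U)\subset C(V'/U)$, and by Proposition \ref{Prop_U_S} every $V' \in \twopsilt_U A$ is a summand of some $V''\in\twosilt_U A$, with $V'/U$ a summand of $V''/U$; so the union may be restricted to silting $V''$. The only step needing care is the bookkeeping for the correspondence $V' \leftrightarrow V'/U$ and the appeal to Lemma \ref{Lem_D(U)_incl} (1) with $U_i$ in the role of $U$; no real obstacle is expected.
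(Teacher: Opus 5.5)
Your strategy is the paper's. You use Lemma \ref{Lem_D(U)_incl} (2) to see that a silting cone meeting $D(U)$ combines with $U$ into a presilting complex, and Lemma \ref{Lem_D(U)_incl} (1), applied to each $U_i$, to see that it avoids $D^\circ(U_i)$ exactly when $U_i$ is not a summand. You then reindex by $V'=U\oplus V$ and use Proposition \ref{Prop_U_S} for the second equality. The paper argues elementwise with $\theta\in C^\circ(V')$, but it is the same argument.

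However, your opening display misquotes Proposition \ref{Prop_D(U)_Cone}: $\bigsqcup_{V\in\twopsilt_U A}C^\circ(V)$ equals $D^\circ(U)\cap\Cone$, not $D(U)\cap\Cone$. Read literally, this breaks the argument. By Lemma \ref{Lem_D(U)_incl} (1), each $C^\circ(V)$ with $V\in\twopsilt_U A$ lies in $D^\circ(U)\subset D^\circ(U_i)$, so none of them meets $L(U)$. Your criterion ``$V$ has no $U_i$ as a summand'' is never satisfied by such $V$ when $m\ge 1$, and you would conclude $L(U)\cap\Cone=\emptyset$.

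The decomposition you need, and which your second paragraph actually uses, is
\begin{align*}
D(U)\cap\Cone=\bigsqcup_{V\in\twopsilt A,\ U\oplus V\ \text{presilting}} C^\circ(V).
\end{align*}
It follows from three facts: $\Cone=\bigsqcup_{V\in\twopsilt A}C^\circ(V)$; $D(U)$ is a union of TF equivalence classes; and Lemma \ref{Lem_D(U)_incl} (2) (a)$\Leftrightarrow$(c)$\Leftrightarrow$(d). Alternatively, take the second equality of Proposition \ref{Prop_D(U)_Cone} and split each $C(V)$ into the relative interiors $C^\circ(W)$ of its faces, $W$ a summand of $V$. With this index set in place of $\twopsilt_U A$, the rest of your proof is correct: the summand criterion, the bijection $V\mapsto U\oplus V$ onto $\twopsilt_U A$, disjointness via Proposition \ref{Prop_cone_TF} (2), the case $V=0$, and the passage to $\twosilt_U A$.
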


\begin{proof}
We first show the left equality.
We decompose $U$ as $\bigoplus_{i=1}^m U_i \in \twopsilt A$.

For the ``$\subset$'' part, let $\theta \in L(U) \cap \Cone$,
and take $V' \in \twopsilt A$ such that $\theta \in C^\circ(V')$.
Then by Lemma \ref{Lem_D(U)_incl}, 
$V:=U \oplus V'$ is a basic 2-term presilting complex, 
and $\theta \in C^\circ(V/U)$.

For the ``$\supset$'' part, let $V \in \twopsilt_U A$.
Then $\add(V/U) \cap \add U=\{0\}$, since $V$ is basic.
Thus by Lemma \ref{Lem_D(U)_incl} again,
$C(V/U) \subset D(U)$ and $C(V/U) \cap D^\circ(U)=\emptyset$.
Thus we get $C(V/U) \subset L(U) \cap \Cone$ as desired. 

The second equality follows from Proposition \ref{Prop_U_S}.
\end{proof}

By Propositions \ref{Prop_L(U)_cone} and \ref{Prop_D(U)_Cone},
it is easy to see that there is a bijection
\begin{align*}
C(U) \times (L(U) \cap \Cone) \to D(U) \cap \Cone, \quad 
(\eta,\eta') \mapsto \eta+\eta'.
\end{align*}
The inverse map send $\theta \in C(V) \subset D(U) \cap \Cone$
with $V \in \twopsilt_U A$
to the pair $(\eta,\eta')$ given by the unique elements
$\eta \in C(U)$ and $\eta' \in C(V/U)$ such that $\theta=\eta+\eta'$.

The aim of this subsection is to show that we can extend it to a bijection
\begin{align*}
C(U) \times L(U) \to D(U), \quad (\eta,\eta') \mapsto \eta+\eta'.
\end{align*}
Our strategy is explicitly constructing two maps $\lambda_U,\lambda'_U$
from results on facets of $D(U)$ obtained in the previous section,
and show that they give the inverse map 
\begin{align*}
D(U) \to C(U) \times L(U), \quad \theta 
\mapsto (\lambda_U(\theta),\lambda'_U(\theta)).
\end{align*}

The following property is crucial to define $\lambda_U,\lambda'_U$.
Note that $a_i$ is well-defined, 
since $\Facet_i D(U) \ne \emptyset$ for any $i$ 
by Theorem \ref{Thm_partial_facet}.

\begin{Prop}\label{Prop_lambda_coeff}
Let $U=\bigoplus_{i=1}^m U_i \in \twopsilt A$ and $\theta \in D(U)$.
We set $H \subset \R C(U)$ and $a_1,\ldots,a_m \in \R_{\ge 0}$ by
\begin{align*}
H:=\{\eta \in \R C(U) \mid \theta-\eta \in D(U)\}, \quad
a_i:=\min_{F \in \Facet_i D(U)} \frac{|\theta(L_F)|}{d_{U_i}}.
\end{align*}
\begin{enumerate}
\item
We have the equality
\begin{align*}
H=\sum_{i=1}^m a_i[U_i]+(-C(U)). 
\end{align*}
\item
Let $U'=\bigoplus_{i \in I} U_i$ with $I \subset \{1,\ldots,m\}$.
We set
\begin{align*}
H':=\{\eta \in \R C(U') \mid \theta-\eta \in D(U')\}.
\end{align*}
Then we have 
\begin{align*}
H'=H \cap \R C(U')=\sum_{i \in I} a_i[U_i]+(-C(U')).
\end{align*}
\item
If $\theta \in K_0(\proj A)$, then $a_1,\ldots,a_m \in \Z_{\ge 0}$.
\end{enumerate}
\end{Prop}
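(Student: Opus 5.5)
The approach is to use the facet description of $D(U)$ from Corollary \ref{Cor_D(U)_ineq} together with the duality $[U_{i'}](L_F)=\epsilon_F\,\delta_{i_F,i'}\,d_{U_{i_F}}$ of Theorem \ref{Thm_L_F_inn_vec} (3). Write $\eta=\sum_{i=1}^m c_i[U_i]\in\R C(U)$; this is possible since the $[U_i]$ are linearly independent. For $F\in\Facet D(U)$ with $(i_F,\epsilon_F)=(i,\epsilon)$ we get
\begin{align*}
\epsilon(\theta-\eta)(L_F)=\epsilon\,\theta(L_F)-c_i d_{U_i}.
\end{align*}
Since $\theta\in D(U)$, we have $\epsilon\,\theta(L_F)=|\theta(L_F)|\ge 0$. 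Hence $\theta-\eta\in D(U)$ if and only if $c_i d_{U_i}\le|\theta(L_F)|$ for every $F\in\Facet_i D(U)$ and every $i$. By the decomposition of Theorem \ref{Thm_partial_facet} (2), each facet belongs to exactly one $\Facet_i D(U)$, so this condition is exactly $c_i\le a_i$ for all $i$. The minimum defining $a_i$ is taken over a nonempty finite set, because $\Facet_i D(U)\ne\emptyset$. The condition $c_i\le a_i$ for all $i$ is the statement $\eta\in\sum_i a_i[U_i]+(-C(U))$, which proves (1).

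For (2) I would apply (1) to $U'$ in place of $U$, which gives $H'=\sum_{i\in I}a'_i[U_i]+(-C(U'))$, with $a'_i$ defined using $\Facet_i D(U')$. The main obstacle is to show that $a'_i=a_i$ for $i\in I$. The facets of $D(U')$ and of $D(U)$ differ, and the bricks $Y_i^+,X_i^-$ for $U'$ differ from those for $U$, because the completions change. So I would avoid comparing facets directly and argue more intrinsically.

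First I would prove $H\cap\R C(U')\subset H'$. If $\eta\in\R C(U')$ and $\theta-\eta\in D(U)$, then $\theta-\eta\in D(U)\subset D(U')$, because $D(U)=D(U')\cap D(U/U')$.

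For the reverse inclusion, let $\eta\in H'$. Then $\theta-\eta\in D(U')$, and we must show $\theta-\eta\in D(U/U')$. For this I would use Theorem \ref{Thm_partial_setminus} and Lemma \ref{Lem_D(U)_incl} in the following way. The set $D(U/U')$ is cut out by the facet inequalities coming from $U/U'$, each of which involves some $L_F$ with $[U_j](L_F)=0$ for $j\in I$. Concretely, for every facet $F$ of $D(U)$ with $i_F\notin I$, we have $[U_j](L_F)=0$ for all $j\in I$, so subtracting $\eta\in\R C(U')$ does not change $\theta(L_F)$, and these inequalities are preserved. For facets $F$ of $D(U)$ with $i_F\in I$, I need that $\epsilon_F\,\theta'(L_F)\ge 0$ holds for every $\theta'\in D(U')$ satisfying the remaining $U/U'$-inequalities. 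I would try to obtain this by realizing $L_F$ also as a brick label, or as a factor of the relevant $Y^+$ or sub of $X^-$, for a facet of $D(U')$, using that $L_F=\rmw_F Y_i^+$ lies in $\calW_{U/U_i}$ by Lemma \ref{Lem_F+C_almost}. Alternatively, I would use that $D(U)\cap(\theta'+\R C(U'))$ is determined by $D(U')$ once the $U/U'$-inequalities hold. This is where I expect the real work to lie. Once $H'=H\cap\R C(U')$ is established, intersecting the formula of (1) with $\R C(U')$ gives $\sum_{i\in I}a_i[U_i]+(-C(U'))$.

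For (3): if $\theta\in K_0(\proj A)$, then $\theta(L_F)\in d_{U_i}\Z$ by Proposition \ref{Prop_End_L_F} (2). Hence each $|\theta(L_F)|/d_{U_i}$ lies in $\Z_{\ge0}$, and so does their minimum $a_i$.
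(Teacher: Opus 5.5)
Your parts (1) and (3), and the inclusion $H\cap\R C(U')\subset H'$ in (2), are correct and are the paper's argument. The reverse inclusion $H'\subset H\cap\R C(U')$, however, is not proved. You reduce it to showing $\epsilon_F\,\theta'(L_F)\ge 0$ for every $\theta'\in D(U')$ and every $F\in\Facet D(U)$ with $i_F\in I$. You then call this ``the real work'' and list only tentative routes: realizing $L_F$ as a label of a facet of $D(U')$, or using the ``remaining $U/U'$-inequalities''. Neither route is carried out, so (2) is left open.

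The missing step is elementary and needs neither a comparison of facets nor extra inequalities. Let $i:=i_F\in I$.
\begin{itemize}
\item If $\epsilon_F=+$: $L_F=\rmw_F Y_i^+$ is a factor module of $Y_i^+$, which is a factor module of $H^0(U_i)$ by Proposition \ref{Prop_rad_soc_U}. Since $U_i\in\add U'$, $L_F$ is a factor module of $H^0(U')$. The defining condition $H^0(U')\in\ovcalT_{\theta'}$ of $D(U')$ (Definition-Proposition \ref{Def-Prop_D(U)}) then gives $\theta'(L_F)\ge 0$.
\item If $\epsilon_F=-$: dually, $L_F$ is a submodule of $X_i^-\subset H^{-1}(\nu U_i)\subset H^{-1}(\nu U')$, so $\theta'(L_F)\le 0$.
\end{itemize}
Take $\theta'=\theta-\eta$ with $\eta=\sum_{i\in I}c_i[U_i]\in H'$. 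Your computation from (1) gives $c_i d_{U_i}\le|\theta(L_F)|$ for all $F\in\Facet_i D(U)$, that is, $c_i\le a_i$ for $i\in I$. Together with $c_j=0\le a_j$ for $j\notin I$, part (1) yields $\eta\in H$. This is exactly how the paper concludes.

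Your detour through $D(U/U')$ also contains a false claim. You say $D(U/U')$ is cut out by inequalities whose bricks $L$ satisfy $[U_j](L)=0$ for $j\in I$. But facets of $D(U/U')$ are not facets of $D(U)$, and Theorem \ref{Thm_L_F_inn_vec} (3) applied to $U/U'$ says nothing about pairings with $[U_j]$ for $j\in I$. For example, take $A=K(1\to 2)$, $U=P(1)\oplus P(2)$ and $U'=P(2)$, and write $\theta=x_1[P(1)]+x_2[P(2)]$. Then $D(U/U')=D(P(1))=\{x_1\ge 0,\ x_1+x_2\ge 0\}$. Its facet $x_1+x_2=0$ has normal vector $[P(1)]$, and $[P(2)](P(1))=\dim_K\Hom_A(P(2),P(1))=1\neq 0$. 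The right target is $\theta-\eta\in D(U)$ directly, checked against all facets of $D(U)$ via Corollary \ref{Cor_D(U)_ineq}. Facets with $i_F\notin I$ are handled by your vanishing argument, and facets with $i_F\in I$ by the observation above.
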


\begin{proof}
(1)
By Corollary \ref{Cor_D(U)_ineq}, we have
\begin{align*}
H&=\{ \eta \in \R C(U) \mid 
\text{$\epsilon_F \cdot (\theta-\eta)(L_F) \ge 0$
for any $F \in \Facet D(U)$} \}\\
&=\{ \eta \in \R C(U) \mid 
\text{$\epsilon_F \cdot (\theta-\eta)(L_F) \ge 0$
for any $i \in \{1,\ldots,m\}$ and $F \in \Facet_i D(U)$} \}.
\end{align*}
Let $\eta=\sum_{i=1}^m x_i[U_i] \in \R C(U)$.
By Theorem \ref{Thm_L_F_inn_vec} (3), 
for each $i \in \{1,\ldots,m\}$ and $F \in \Facet_i D(U)$, 
we have
\begin{align*}
\eta(L_F)=\sum_{i'=1}^m x_{i'}[U_{i'}](L_F)=x_i \epsilon_F d_{U_i},
\end{align*}
so we get
\begin{align*}
\epsilon_F \cdot (\theta-\eta)(L_F)=
\epsilon_F \cdot \theta(L_F)-x_i d_{U_i}=
|\theta(L_F)|-x_i d_{U_i},
\end{align*}
where $\epsilon_F \cdot \theta(L_F)=|\theta(L_F)|$ comes from
$\theta \in D(U)$ and Corollary \ref{Cor_D(U)_ineq}.
Thus $H$ is written as 
\begin{align*}
H&=\left\{ \sum_{i=1}^m x_i[U_i] \in \R C(U) \mid 
\text{$x_i \le \dfrac{|\theta(L_F)|}{d_{U_i}}$
for any $i \in \{1,\ldots,m\}$ and $F \in \Facet_i D(U)$} \right\}.
\end{align*}
Then we have the desired equality by the definition of $a_i$.

(2)
For any $\eta \in H \cap \R C(U')$,
we have $\theta-\eta \in D(U) \subset D(U')$; hence $\eta \in H'$.
Thus $H \cap \R C(U') \subset H'$ holds.

Conversely, assume that $\eta=\sum_{i \in I} x_i[U_i] \in H'$.
Let $i \in I$ and $F \in \Facet_i D(U)$. 
Then $L_F$ is a factor module of $H^0(U_i)$ if $\epsilon_F={+}$, and 
$L_F$ is a submodule of $H^{-1}(\nu U_i)$ if $\epsilon_F={-}$
by Proposition \ref{Prop_rad_soc_U}.
Thus $\theta-\eta \in D(U')$
implies $\epsilon_F \cdot (\theta-\eta)(L_F) \ge 0$.
Thus we get 
$x_i \le |\theta(L_F)|/d_{U_i}$ by the proof of (1).
By the definition of $a_i$, we have $x_i \le a_i$.
This holds for any $i \in I$, so we get $\theta \in H \cap \R C(U')$.
Therefore $H' \subset H \cap \R C(U')$.

Now $H'=H \cap \R C(U')$ is proved, and the second equality is clear.

(3) follows from Proposition \ref{Prop_End_L_F} (2).
\end{proof}

Thus we define maps $\lambda_U,\lambda'_U$ 
based on Proposition \ref{Prop_lambda_coeff}.

\begin{Def}\label{Def_lambda}
For any $U=\bigoplus_{i=1}^m U_i \in \twopsilt A$,
we define piecewise linear maps $\lambda_U,\lambda'_U \colon D(U) \to D(U)$ by
\begin{align*}
\lambda_U(\theta):=\sum_{i=1}^m \left( 
\min_{F \in \Facet_i D(U)}\frac{|\theta(L_F)|}{d_{U_i}}
\right)[U_i], \quad
\lambda'_U(\theta):=\theta-\lambda_U(\theta).
\end{align*}
\end{Def}

The following property gives the relationship 
between $\lambda'_U(\theta)$ and $(\theta+\R C(U)) \cap D(U)$,
which is a section of $D(U)$ by the affine hyperplane $\theta+\R C(U)$.

\begin{Prop}\label{Prop_affine}
Let $U \in \twopsilt A$.
For any $\theta \in D(U)$, we have
\begin{align*}
(\theta+\R C(U)) \cap D(U)=\lambda'_U(\theta)+C(U).
\end{align*}
\end{Prop}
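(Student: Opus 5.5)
We reduce the statement to Proposition \ref{Prop_lambda_coeff} (1) by translating along $\R C(U)$. Fix $\theta \in D(U)$ and put $H:=\{\eta \in \R C(U) \mid \theta-\eta \in D(U)\}$. By Proposition \ref{Prop_lambda_coeff} (1) and Definition \ref{Def_lambda}, we have
\begin{align*}
H=\lambda_U(\theta)+(-C(U)).
\end{align*}
Every element of $\theta+\R C(U)$ can be written as $\theta-\eta$ with $\eta \in \R C(U)$. Such an element lies in $D(U)$ if and only if $\eta \in H$. Hence
\begin{align*}
(\theta+\R C(U)) \cap D(U)=\theta-H=\theta-\lambda_U(\theta)+C(U)=\lambda'_U(\theta)+C(U),
\end{align*}
using $-(-C(U))=C(U)$ and $\lambda'_U(\theta)=\theta-\lambda_U(\theta)$. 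So the plan is essentially a one-line consequence of Proposition \ref{Prop_lambda_coeff} (1). The only things to verify are the sign conventions: Proposition \ref{Prop_lambda_coeff} parametrizes by $\theta-\eta$, while the statement parametrizes by $\theta+\eta$, and the two are matched by replacing $\eta$ with $-\eta$.

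A secondary check is that $\lambda_U,\lambda'_U$ really map into $D(U)$, as Definition \ref{Def_lambda} asserts. For $\lambda'_U$ this follows from the computation above: taking $0 \in C(U)$ shows $\lambda'_U(\theta) \in (\theta+\R C(U)) \cap D(U) \subset D(U)$. Equivalently, $\lambda_U(\theta) \in H$, so $\theta-\lambda_U(\theta) \in D(U)$. For $\lambda_U$, the coefficients $a_i$ are nonnegative, so $\lambda_U(\theta) \in C(U) \subset D(U)$, where the last inclusion holds by Lemma \ref{Lem_D(U)_incl} (2) applied with $V=U$.

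The only real content, and hence the potential obstacle, is Proposition \ref{Prop_lambda_coeff} (1) itself. That result rests on Theorem \ref{Thm_L_F_inn_vec} (3): $[U_{i'}](L_F)=\epsilon_F\delta_{i,i'}d_{U_i}$. This makes the facet inequalities of Corollary \ref{Cor_D(U)_ineq} decouple coordinatewise along $\R C(U)$, giving the box-shaped set $\{x_i \le a_i\}$. Since that is already established, no further difficulty remains here.
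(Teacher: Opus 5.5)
Your proof is correct and follows the paper's argument: both obtain the fibre directly from $H=\lambda_U(\theta)+(-C(U))$ in Proposition \ref{Prop_lambda_coeff} (1) via the substitution $\eta\mapsto\theta-\eta$. The paper checks the two inclusions separately, using $\lambda'_U(\theta)\in D(U)$ and $C(U)\subset D(U)$ for the reverse one, whereas your identity $(\theta+\R C(U))\cap D(U)=\theta-H$ gives both inclusions at once.
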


\begin{proof}
The set $H$ in Proposition \ref{Prop_lambda_coeff} satisfies 
$H=\lambda_U(\theta)+(-C(U))$.

Let $\eta \in (\theta+\R C(U)) \cap D(U)$.
Then $\eta':=\theta-\eta \in \R C(U)$ satisfies
$\eta=\theta-\eta' \in D(U)$,
so $\eta' \in H=\lambda_U(\theta)+(-C(U))$.
Thus we have $\lambda_U(\theta)-\eta'=\lambda_U(\theta)-\theta+\eta
=-\lambda'_U(\theta)+\eta \in C(U)$,
which imply $\eta \in \lambda'_U(\theta)+C(U)$.

Conversely, let $\eta \in \lambda'_U(\theta)+C(U)$.
Since $\lambda'_U(\theta)=\theta-\lambda_U(\theta) \in \theta+(-C(U))$,
we have $\eta \in \theta+\R C(U)$.
Moreover $\eta \in \lambda'_U(\theta)+C(U) \subset D(U)+D(U)=D(U)$ also hold.
\end{proof}

The following observations are useful.
In particular, the maps 
$\lambda_U$ and $\lambda'_U$ satisfy the axiom of projections.

\begin{Lem}\label{Lem_axiom_proj}
Let $U \in \twopsilt A$ and $\theta \in D(U)$.
\begin{enumerate}
\item
If $\theta_1,\theta_2 \in D(U)$ satisfy 
$\eta:=\theta_2-\theta_1 \in \R C(U)$,
then $\lambda_U(\theta_2)=\eta+\lambda_U(\theta_1)$
and $\lambda'_U(\theta'_2)=\lambda'_U(\theta_1)$.
\item
For any $\theta \in C(U)$,
we have $\lambda_U(\theta)=\theta$ and $\lambda'_U(\theta)=0$.
\item
We have $\lambda_U(\lambda_U(\theta))=\lambda_U(\theta)$,
$\lambda'_U(\lambda_U(\theta))=0$,
$\lambda_U(\lambda'_U(\theta))=0$,
$\lambda'_U(\lambda'_U(\theta))=\lambda'_U(\theta)$.
\item
Let $U=V \oplus W$. 
Then $\lambda_U(\theta)=\lambda_V(\theta)+\lambda_W(\theta)$ and
$\lambda_W(\theta)=\lambda_W(\lambda'_V(\theta))$ hold.
\end{enumerate}
\end{Lem}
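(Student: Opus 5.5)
The plan is to derive everything from the explicit formula in Definition \ref{Def_lambda}, combined with Proposition \ref{Prop_lambda_coeff} and Theorem \ref{Thm_L_F_inn_vec} (3). The basic computation is the following. For $\eta=\sum_i x_i[U_i]\in\R C(U)$ and $F\in\Facet_i D(U)$ we have $\eta(L_F)=\epsilon_F x_i d_{U_i}$. Hence, for $\theta\in D(U)$, whenever $\theta+\eta\in D(U)$ we get $\epsilon_F(\theta+\eta)(L_F)=|\theta(L_F)|+x_i d_{U_i}$, and this quantity is nonnegative by Corollary \ref{Cor_D(U)_ineq}. Taking the minimum over $F\in\Facet_i D(U)$ shows that the $i$-th coefficient of $\lambda_U$ shifts by exactly $x_i$.

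For (1), apply this with $\theta=\theta_1$ and $\theta+\eta=\theta_2$. Both lie in $D(U)$, so $|\theta_2(L_F)|=\epsilon_F\theta_2(L_F)=|\theta_1(L_F)|+x_i d_{U_i}$. Dividing by $d_{U_i}$ and minimizing over $F$ gives $\lambda_U(\theta_2)=\lambda_U(\theta_1)+\eta$. Subtracting from $\theta_2=\theta_1+\eta$ then gives $\lambda'_U(\theta_2)=\lambda'_U(\theta_1)$.

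For (2), apply (1) with $\theta_1=0$ and $\theta_2=\theta\in C(U)\subset D(U)$. Since $\lambda_U(0)=0$ is immediate from the definition, this yields $\lambda_U(\theta)=\theta$ and $\lambda'_U(\theta)=0$.

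For (3), we first need that $\lambda_U(\theta)$ and $\lambda'_U(\theta)$ lie in $D(U)$. The first lies in $C(U)$ because each $a_i\ge0$. For the second, Proposition \ref{Prop_lambda_coeff} (1) says $\lambda_U(\theta)\in H$, which means exactly $\theta-\lambda_U(\theta)\in D(U)$. Given this, (2) applied to $\lambda_U(\theta)\in C(U)$ gives $\lambda_U\lambda_U=\lambda_U$ and $\lambda'_U\lambda_U=0$. Next, apply (1) with $\theta_1=\lambda'_U(\theta)$, $\theta_2=\theta$ and $\eta=\lambda_U(\theta)$; this gives $\lambda_U(\theta)=\lambda_U(\theta)+\lambda_U(\lambda'_U(\theta))$, so $\lambda_U\lambda'_U=0$. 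Finally $\lambda'_U\lambda'_U=\lambda'_U$ follows from $\lambda'_U=\mathrm{id}-\lambda_U$.

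For (4), first note that $\theta\in D(U)\subset D(V)\cap D(W)$, so all expressions make sense, and $\R C(U)=\R C(V)\oplus\R C(W)$. By Proposition \ref{Prop_lambda_coeff} (2) applied with $U'=V$, the $D(V)$-analogue $H'$ equals $H\cap\R C(V)=\sum_{i\in I_V}a_i[U_i]+(-C(V))$. Applying Proposition \ref{Prop_lambda_coeff} (1) to $V$ itself describes the same set as $\lambda_V(\theta)+(-C(V))$. Comparing the apexes of these two translated cones, where $C(V)$ is strongly convex and simplicial, shows that the coefficients of $\lambda_V(\theta)$ are the $a_i$ for $i\in I_V$. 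The same holds for $W$, and summing gives $\lambda_U=\lambda_V+\lambda_W$. For the second identity, apply the analogue of (1) for $W$ to the pair $\lambda'_V(\theta)$ and $\theta$; both lie in $D(W)$ because they lie in $D(U)$, this time using $\lambda_U(\theta)\in H$ restricted to the $V$-part. Their difference $\lambda_V(\theta)$ lies in $\R C(V)$, so we need $\lambda_W$ to be invariant under shifts in $\R C(V)$ as long as we stay inside $D(U)$. The facets $F\in\Facet_i D(W)$ with $i\in I_W$ do not see $[V_j]$ directly. Instead I would use the formula $\lambda_W(\theta)=\sum_{i\in I_W}a_i[U_i]$ computed from $D(U)$ as above, together with $\lambda_U(\theta)\in H$, and note that shifting $\theta$ by $\R C(V)$ within $D(U)$ leaves the $a_i$ for $i\in I_W$ unchanged by the computation in (1).

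The main obstacle is this last invariance claim, and in particular checking that $\lambda'_V(\theta)$ indeed lies in $D(U)$, which is what allows the computation to take place in $D(U)$ with the $U$-facets.
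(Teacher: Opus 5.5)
Your proof is correct and is essentially the paper's argument. The only differences are that you obtain (1) directly from the coefficient computation via Theorem \ref{Thm_L_F_inn_vec} (3) and Corollary \ref{Cor_D(U)_ineq} rather than through Proposition \ref{Prop_affine}, and that for the second identity in (4) you compare only the $W$-coefficients of $\lambda_U$ instead of removing the $V$-part with (3) applied to $V$. The point you flag as the main obstacle is immediate: $\lambda_V(\theta)=\lambda_U(\theta)-\lambda_W(\theta)\in\lambda_U(\theta)+(-C(U))$, so $\lambda'_V(\theta)\in D(U)$ by Proposition \ref{Prop_lambda_coeff} (1), and the paper uses this fact tacitly as well.
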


\begin{proof}
(1)
Since $\eta \in \R C(U)$,
we have $(\theta_2+\R C(U)) \cap D(U)=(\theta_1+\R C(U)) \cap D(U)$.
By Proposition \ref{Prop_affine},
we obtain $\lambda'_U(\theta_2)=\lambda'_U(\theta_1)$.
Thus $\lambda_U(\theta_2)=\theta_2-\lambda'_U(\theta_2)
=\theta_2-\lambda'_U(\theta_1)=\theta_2-\theta_1+\lambda_U(\theta_1)
=\eta+\lambda_U(\theta_1)$.

(2)
Since $0,\theta \in D(U)$ and $\theta-0 \in C(U)$,
(1) gives $\lambda_U(\theta)=\theta+\lambda_U(0)$.
By Definition \ref{Def_lambda}, $\lambda_U(0)=0$.
Thus $\lambda_U(\theta)=\theta$; hence $\lambda'_U(\theta)=0$.

(3)
The first two equalities follow from (2).
By (1), 
$\lambda_U(\theta)=\lambda_U(\lambda_U(\theta)+\lambda'_U(\theta))
=\lambda_U(\theta)+\lambda_U(\lambda'_U(\theta))$
hold, so $\lambda_U(\lambda'_U(\theta))=0$.
We also have $\lambda'_U(\lambda'_U(\theta))
=\lambda'_U(\theta)-\lambda_U(\lambda'_U(\theta))=\lambda'_U(\theta)-0
=\lambda'_U(\theta)$.

(4)
Note that $\add V \cap \add W=\{0\}$, since $U$ is basic.
We uniquely write $\lambda_U(\theta)=\eta_1+\eta_2$ 
with $\eta_1 \in C(V)$ and $\eta_2 \in C(W)$.
Then Proposition \ref{Prop_lambda_coeff} (2) implies 
$\eta_1=\lambda_V(\theta)$ and $\eta_2=\lambda_W(\theta)$.
Thus the first equality follows.

We next show the second equality.
By the first one, we have
$\lambda_V(\lambda'_V(\theta))+\lambda_W(\lambda'_V(\theta))
=\lambda_U(\lambda'_V(\theta))$,
and then (3) gives
$\lambda_W(\lambda'_V(\theta))=\lambda_U(\lambda'_V(\theta))$.
By (1), we have
$\lambda_U(\lambda'_V(\theta))=\lambda_U(-\lambda_V(\theta)+\theta)
=-\lambda_V(\theta)+\lambda_U(\theta)$,
which is $\lambda_W(\theta)$ by the first equality.
\end{proof}

For elements $\theta$ in $D(U)$ and $V \in \twopsilt A$,
we can determine whether $\theta \in D^\circ(V)$ holds
from $\lambda_U(\theta)$ and $\lambda'_U(\theta)$.

\begin{Lem}\label{Lem_lambda_DcV}
Let $U \in \twopsilt A$, and $\theta \in D(U)$.
Assume that $V \in \twopsilt A$.
\begin{enumerate}
\item
If $V \in \add U$, then $\theta \in D^\circ(V)$ holds 
if and only if $\lambda_U(\theta) \in C^\circ(V)+C(U/V)$.
\item
If $\add U \cap \add V=\{0\}$, 
then $\theta \in D^\circ(V)$ holds if and only if 
$\lambda'_U(\theta) \in D^\circ(V)$.
\item
Let $V=U' \oplus V' \in \twopsilt A$ with
$U' \in \add U$ and $\add U \cap \add V'=\{0\}$.
Then $\theta \in D^\circ(V)$ holds if and only if 
$\lambda_U(\theta) \in C^\circ(U')+C(U/U')$ and 
$\lambda'_U(\theta) \in D^\circ(V')$.
\end{enumerate}
\end{Lem}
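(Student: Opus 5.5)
We will reduce everything to the single indecomposable case $V=U_i$ or $V$ with $\add U\cap\add V=\{0\}$, using $D^\circ(V)=D^\circ(U')\cap D^\circ(V')$ for $V=U'\oplus V'$; so (3) follows from (1) and (2) once they are established. For (1), note that $V\in\add U$ means $V=U_J:=\bigoplus_{i\in J}U_i$ for some $J\subset\{1,\ldots,m\}$, and $D^\circ(U_J)=\bigcap_{i\in J}D^\circ(U_i)$. Likewise $\lambda_U(\theta)=\sum_i a_i[U_i]$ lies in $C^\circ(U_J)+C(U/U_J)$ exactly when $a_i>0$ for all $i\in J$. So (1) reduces to the claim: for $\theta\in D(U)$, $\theta\in D^\circ(U_i)$ if and only if $a_i>0$.

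To prove this claim, we use Theorem \ref{Thm_partial_setminus}: $\theta\notin D^\circ(U_i)$ iff $\theta\in\partial_i$. By Theorem \ref{Thm_partial_facet}(1), this holds iff $\theta\in F$ for some $F\in\Facet_i D(U)$. By Theorem \ref{Thm_L_F_inn_vec}(2), that is equivalent to $\theta(L_F)=0$ for some $F\in\Facet_i D(U)$, i.e.\ to $a_i=\min_{F}|\theta(L_F)|/d_{U_i}=0$.

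For (2), assume $\add U\cap\add V=\{0\}$ and write $\theta=\lambda_U(\theta)+\lambda'_U(\theta)$ with $\lambda_U(\theta)\in C(U)$. The key step is to show that $D^\circ(V)$ is invariant under moving within $D(U)$ along $\R C(U)$, in the sense that if $\theta_1,\theta_2\in D(U)$ with $\theta_2-\theta_1\in\R C(U)$, then $\theta_1\in D^\circ(V)\iff\theta_2\in D^\circ(V)$. Applying this to $\theta$ and $\lambda'_U(\theta)$ gives (2). By Proposition \ref{Prop_affine}, both points lie in $\lambda'_U(\theta)+C(U)$, so it suffices to compare $\eta':=\lambda'_U(\theta)$ with $\eta'+\eta$ for $\eta\in C(U)$.

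One direction is easy. If $\eta'\in D^\circ(V)$, then since $C(U)\subset D(U)$ and $\eta'+\eta\in D(U)$, I would like $C(U)\subset D(V)$; this holds by Lemma \ref{Lem_D(U)_incl}(2) once $U\oplus V$ is presilting. That in turn follows from Lemma \ref{Lem_D(U)_incl}(2)(e), because $D(U)\cap D^\circ(V)\ni\eta'$. Then the openness of $D^\circ(V)$ together with convexity of the cone $D(V)$ gives $\eta'+\eta\in D^\circ(V)$, since (relative interior) $+$ (closed cone) stays in the interior.

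The converse is the main obstacle: if $\eta'+\eta\in D^\circ(V)$, why should $\eta'\in D^\circ(V)$? Again $U\oplus V$ is presilting, so $V\in\add V''$ where $V'':=U\oplus V\in\twopsilt_U A$. The plan is to use the semibrick description of Proposition \ref{Prop_D(U)_Y_X} for $V''$. Its summands $Y''^+_j,X''^-_j$ attached to $V$-summands lie in $\calW_{V''/V_j}\subset\calW_U$, by the analogue of Lemma \ref{Lem_Y_W_almost}. On $\calW_U$, the value $\theta(M)$ and the conditions $M\in\calT_\theta$ or $M\in\calF_\theta$ should be insensitive to adding elements of $\R C(U)$, by Proposition \ref{Prop_reduc_K_0}(3), for points of $D^\circ(U)$. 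For boundary points I would approximate: perturb $\eta'$ and $\eta'+\eta$ by a common small element of $C^\circ(U)$ to land in $D^\circ(U)$. There Proposition \ref{Prop_reduc_K_0}(2) says they are TF equivalent, hence simultaneously in $D^\circ(V'')$ or not; $D^\circ(V)$ is a union of TF classes, so the statement transfers. The remaining care is to pass back from the perturbed points to $\eta'$ itself. For this I would exploit that $\theta(M)$ for $M\in\calW_U$ is unchanged by adding $\R C(U)$, and that $\calT_\theta\cap\calW_U$ depends only on these values on subobjects in $\calW_U$. This is the step I expect to need the most work.
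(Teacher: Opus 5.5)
Your part (1) is correct, and it takes a more direct route than the paper. For $\theta\in D(U)$ you read off $\theta\notin D^\circ(U_i)\iff\theta\in\partial_i\iff\theta(L_F)=0$ for some $F\in\Facet_i D(U)\iff a_i=0$ from Theorems \ref{Thm_partial_setminus}, \ref{Thm_partial_facet} (1) and \ref{Thm_L_F_inn_vec} (2). The paper instead uses openness of $D^\circ(V)$ to get $\lambda_V(\theta)\in C^\circ(V)$ and then Lemma \ref{Lem_axiom_proj} (4). Your reduction of (3) to (1) and (2) is fine, and so is the implication $\lambda'_U(\theta)\in D^\circ(V)\Rightarrow\theta\in D^\circ(V)$.

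The gap is the implication $\theta\in D^\circ(V)\Rightarrow\lambda'_U(\theta)\in D^\circ(V)$. You leave it as a plan, and as written the plan does not close.
\begin{enumerate}
\item Perturbing by $\zeta\in C^\circ(U)$ and using Proposition \ref{Prop_reduc_K_0} (2) only gives $\lambda'_U(\theta)+\zeta\in D^\circ(V)$ for all such $\zeta$. Letting $\zeta\to 0$ yields $\lambda'_U(\theta)\in D(V)$, not membership in the open set.
\item Your repair needs the claim that, for $M\in\calW_U$, the condition $M\in\calT_\eta$ depends only on the values of $\eta$ on factor objects of $M$ in $\calW_U$. Proposition \ref{Prop_reduc_K_0} (3) gives this only for $\eta\in D^\circ(U)$. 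But $\lambda'_U(\theta)\in L(U)$ lies outside every $D^\circ(U_i)$, so a separate argument is needed there.
\item Proposition \ref{Prop_D(U)_Y_X} for $V''=U\oplus V$ characterizes $D^\circ(V'')$, which does not contain $\lambda'_U(\theta)$ when $U\neq 0$. To isolate the conditions cutting out $D^\circ(V)$ inside $D(V'')$, you would also need Theorem \ref{Thm_partial_setminus} for $V''$.
\end{enumerate}
These steps can be supplied. For instance, $\eta\in D(U)$ gives $\calT_\eta\subset\ovcalT_U$, hence $\rmt_\eta M\in\calW_U$ for $M\in\calW_U$. But none of this is carried out in your proposal.

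The missing idea is that your own part (1) already settles this direction in a few lines, with no module theory. Suppose $\theta\in D(U)\cap D^\circ(V)$.
\begin{itemize}
\item By Lemma \ref{Lem_D(U)_incl} (2), $W:=U\oplus V\in\twopsilt A$; it is basic because $\add U\cap\add V=\{0\}$. Also $\theta\in D(U)\cap D(V)=D(W)$.
\item Lemma \ref{Lem_axiom_proj} (4) gives $\lambda_W(\theta)=\lambda_U(\theta)+\lambda_V(\theta)$, i.e. $\lambda'_U(\theta)=\lambda_V(\theta)+\lambda'_W(\theta)$.
\item Part (1), applied with $V$ in place of $U$, gives $\lambda_V(\theta)\in C^\circ(V)\subset D^\circ(V)$.
\item Also $\lambda'_W(\theta)\in D(W)\subset D(V)$.
\end{itemize}
Hence $\lambda'_U(\theta)\in D^\circ(V)+D(V)\subset D^\circ(V)$, which is exactly how the paper proves this direction.
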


\begin{proof}
(1)
Let $\theta \in D^\circ(V)$. 
Then there exists some $\eta \in C^\circ(V)$ 
such that $\theta-\eta \in D(V)$,
so $\lambda_V(\theta) \in C^\circ(V)$. 
Then Lemma \ref{Lem_axiom_proj} (4) implies
$\lambda_U(\theta)=\lambda_V(\theta)+\lambda_{U/V}(\theta)
\in C^\circ(V)+C(U/V)$.

Conversely, let $\lambda_U(\theta) \in C^\circ(V)+C(U/V)$.
Then by Lemma \ref{Lem_D(U)_incl} (2),
$C(U/V) \subset D(U) \subset D(V)$,
so we have $\lambda_U(\theta) \in D^\circ(V)+D(V) \subset D^\circ(V)$.
Since $\lambda'_U(\theta) \in D(U) \subset D(V)$,
we have $\theta=\lambda_U(\theta)+\lambda'_U(\theta) 
\in D^\circ(V)+D(V)=D^\circ(V)$.

(2)
Let $\theta \in D^\circ(V)$.
By assumption, we get $\theta \in D(U) \cap D^\circ(V)$.
Then Lemma \ref{Lem_D(U)_incl} (2) and $\add U \cap \add V=\{0\}$ imply
$U \oplus V \in \twopsilt A$,
and we have $\theta \in D(U \oplus V)$.
Thus $\lambda_{U \oplus V}(\theta)$ is defined, 
and $\lambda_U(\theta)+\lambda_V(\theta)=\lambda_{U \oplus V}(\theta)$
by Lemma \ref{Lem_axiom_proj} (4),
so we have $\lambda'_U(\theta)
=\lambda_V(\theta)+\lambda'_{U \oplus V}(\theta)$.
Since $\theta \in D^\circ(V)$, 
we get $\lambda_V(\theta) \in C^\circ(V) \subset D^\circ(V)$ by (1).
We also have $\lambda'_{U \oplus V}(\theta) \in D(U \oplus V) \subset D(V)$.
Thus $\lambda'_U(\theta) \in D^\circ(V)+D(V) \subset D^\circ(V)$ as desired.

Conversely, let $\lambda'_U(\theta) \in D^\circ(V)$.
From the definition of $\lambda'_U$, 
we obtain $\lambda'_U(\theta) \in D(U) \cap D^\circ(V)$ follows.
Then Lemma \ref{Lem_D(U)_incl} (2) again implies $C(U) \subset D(V)$.
Thus we have
$\theta=\lambda_U(\theta)+\lambda'_U(\theta) 
\in C(U)+D^\circ(V) \subset D(V)+D^\circ(V)=D^\circ(V)$.

(3) 
is clear by (1) and (2).
\end{proof}

We determine the images of $\lambda_U$ and $\lambda'_U$.

\begin{Prop}\label{Prop_lambda_image}
Let $U \in \twopsilt A$.
\begin{enumerate}
\item
The images of $\lambda_U$ and $\lambda'_U$ are 
$C(U)$ and $L(U)$, respectively.
\item
The map $\lambda_U$ is identity on $C(U)$,
and $\lambda'_U$ is identity on $L(U)$.
\end{enumerate}
\end{Prop}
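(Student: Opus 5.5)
I will deduce both parts from the properties of $\lambda_U,\lambda'_U$ already established, mainly Proposition \ref{Prop_lambda_coeff}, Lemma \ref{Lem_axiom_proj} and Lemma \ref{Lem_lambda_DcV}. Part (2) will come almost for free once (1) is settled, so most of the work is (1).

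\textbf{The image of $\lambda_U$.} By definition $\lambda_U(\theta)=\sum_i a_i[U_i]$ with $a_i\ge 0$, so $\lambda_U(D(U))\subset C(U)$. Conversely, Lemma \ref{Lem_axiom_proj} (2) gives $\lambda_U(\theta)=\theta$ for every $\theta\in C(U)$. This proves equality of the image with $C(U)$ and simultaneously that $\lambda_U$ is the identity on $C(U)$.

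\textbf{$\lambda'_U$ maps into $L(U)$.} Since $\lambda'_U(\theta)\in\theta+(-C(U))$ lies in $D(U)$ (take $\eta=\lambda_U(\theta)\in H$ in Proposition \ref{Prop_lambda_coeff}), it suffices to show $\lambda'_U(\theta)\notin D^\circ(U_i)$ for each $i$. By Lemma \ref{Lem_axiom_proj} (3), $\lambda_U(\lambda'_U(\theta))=0$. Apply Lemma \ref{Lem_lambda_DcV} (1) with $V=U_i\in\add U$: membership $\lambda'_U(\theta)\in D^\circ(U_i)$ would force $\lambda_U(\lambda'_U(\theta))\in C^\circ(U_i)+C(U/U_i)$. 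That is impossible, since $0$ has zero coefficient on $[U_i]$ and $[U_1],\dots,[U_m]$ are linearly independent. Hence $\lambda'_U(D(U))\subset L(U)$.

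\textbf{$\lambda'_U$ is the identity on $L(U)$.} This is the step I expect to need care, because it must use the geometry of $L(U)$. Let $\theta\in L(U)$. Then for every $i$ we have $\theta\notin D^\circ(U_i)$. Applying Lemma \ref{Lem_lambda_DcV} (1) to $V=U_i$, we get $\lambda_U(\theta)\notin C^\circ(U_i)+C(U/U_i)$. Write $\lambda_U(\theta)=\sum a_j[U_j]$ with $a_j\ge0$. The condition then says $a_i=0$, and since it holds for all $i$, $\lambda_U(\theta)=0$ and so $\lambda'_U(\theta)=\theta$.

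Alternatively, one can argue directly: $\theta\in\partial_i$ gives some $F\in\Facet_i D(U)$ containing $\theta$ by Theorem \ref{Thm_partial_facet} (1), so $\theta(L_F)=0$ by Theorem \ref{Thm_L_F_inn_vec} (2) and $a_i=0$ by definition. This shows $L(U)\subset\lambda'_U(D(U))$, so the image equals $L(U)$ and $\lambda'_U$ restricts to the identity there, completing (1) and (2).
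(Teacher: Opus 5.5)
Your proof is correct and is essentially the paper's argument. The image of $\lambda_U$ follows from the definition together with Lemma \ref{Lem_axiom_proj} (2); the inclusion $\lambda'_U(D(U))\subset L(U)$ comes from $\lambda_U\circ\lambda'_U=0$ and Lemma \ref{Lem_lambda_DcV} (1); and $\lambda_U$ vanishes on $L(U)$ again by Lemma \ref{Lem_lambda_DcV} (1). The paper deduces (2) from idempotence in Lemma \ref{Lem_axiom_proj} (3) rather than checking it directly. Your alternative facet-based argument via Theorems \ref{Thm_partial_facet} (1) and \ref{Thm_L_F_inn_vec} (2) is also valid.
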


\begin{proof}
(1)
The images of $\lambda_U$ is a subset of $C(U)$ by definition.
Conversely, $C(U)$ is contained in the image of $\lambda_U$
by Lemma \ref{Lem_axiom_proj} (2).

To determine the image of $\lambda'_U$, let $\theta \in D(U)$.
Then $\lambda_U(\lambda'_U(\theta))=0$ by Lemma \ref{Lem_axiom_proj} (3).
Thus $\lambda'_U(\theta) \notin D^\circ(U_i)$ for any $i \in \{1,\ldots,m\}$
by Lemma \ref{Lem_lambda_DcV} (1).
Therefore $\lambda'_U(\theta) \in L(U)$,
since $\lambda'_U(\theta) \in D(U)$ by definition.
We have proved $\lambda'_U(D(U)) \subset L(U)$.

Conversely, if $\theta \in L(U)$,
then $\lambda_U(\theta)=0$ holds by Lemma \ref{Lem_lambda_DcV} (1) again,
so we get $\theta=\lambda'_U(\theta) \in \lambda'_U(D(U))$.

(2)
follows from (1) and Lemma \ref{Lem_axiom_proj} (3).
\end{proof}

Now we can show that the desired bijection exists.

\begin{Thm}\label{Thm_C(U)_L(U)}
Let $U \in \twopsilt A$.
Then we have the following results.
\begin{enumerate}
\item
We have a bijection
\begin{align*}
C(U) \times L(U) &\to D(U), \quad 
(\eta,\eta') \mapsto \eta+\eta'
\end{align*}
whose inverse is given by 
$\theta\mapsto (\lambda_U(\theta),\lambda'_U(\theta)).$
\item
Let $V=U' \oplus V' \in \twopsilt A$ with
$U' \in \add U$ and $\add U \cap \add V'=\{0\}$.
Then $\Psi_1$ is restricted to a bijection
\begin{align*}
(C^\circ(U')+C(U/U')) \times (L(U) \cap D^\circ(V')) &\to
D(U) \cap D^\circ(V).
\end{align*}
In particular, we have a bijection
\begin{align*}
C^\circ(U) \times L(U) \to D^\circ(U).
\end{align*}
\end{enumerate}
\end{Thm}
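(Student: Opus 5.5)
For (1), the map $\Psi_1 \colon C(U) \times L(U) \to D(U)$, $(\eta,\eta') \mapsto \eta+\eta'$, is well defined. Indeed $C(U) \subset D(U)$ by Lemma \ref{Lem_D(U)_incl} (2), $L(U) \subset D(U)$, and $D(U)$ is a convex cone, so $D(U)+D(U)=D(U)$. The map $\Psi_2 \colon \theta \mapsto (\lambda_U(\theta),\lambda'_U(\theta))$ lands in $C(U) \times L(U)$ by Proposition \ref{Prop_lambda_image} (1). It remains to check that the two maps are mutually inverse.

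The composite $\Psi_1\Psi_2$ is the identity because $\lambda_U(\theta)+\lambda'_U(\theta)=\theta$ by Definition \ref{Def_lambda}. For $\Psi_2\Psi_1$, take $\eta \in C(U)$ and $\eta' \in L(U)$, and set $\theta:=\eta+\eta'$. Both $\theta$ and $\eta'$ lie in $D(U)$, and $\theta-\eta' = \eta \in \R C(U)$. Lemma \ref{Lem_axiom_proj} (1) then gives $\lambda_U(\theta)=\eta+\lambda_U(\eta')$ and $\lambda'_U(\theta)=\lambda'_U(\eta')$. By Proposition \ref{Prop_lambda_image} (2), $\lambda'_U$ is the identity on $L(U)$, so $\lambda'_U(\eta')=\eta'$. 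By Lemma \ref{Lem_axiom_proj} (3), $\lambda_U(\eta')=\lambda_U(\lambda'_U(\eta'))=0$. Hence $\Psi_2(\theta)=(\eta,\eta')$, which proves (1).

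For (2), with $\Psi_1$ the bijection of (1), it suffices to show that $\theta \in D(U)$ lies in $D(U)\cap D^\circ(V)$ if and only if $\Psi_2(\theta)$ lies in $(C^\circ(U')+C(U/U')) \times (L(U)\cap D^\circ(V'))$. This is exactly Lemma \ref{Lem_lambda_DcV} (3), combined with $\lambda'_U(\theta) \in L(U)$ from Proposition \ref{Prop_lambda_image}. Conversely, any pair $(\eta,\eta')$ in the restricted domain is $\Psi_2(\eta+\eta')$ by (1). So $\eta+\eta'$ satisfies the criterion of Lemma \ref{Lem_lambda_DcV} (3), and therefore lies in $D(U)\cap D^\circ(V)$.

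For the special case, take $V=U$, so $U'=U$ and $V'=0$. Then $C^\circ(U')+C(U/U')=C^\circ(U)$ and $D^\circ(0)=K_0(\proj A)_\R$, so the restricted domain is $C^\circ(U) \times L(U)$. Moreover $D(U)\cap D^\circ(U)=D^\circ(U)$.

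I expect the main subtlety to be justifying the degenerate summand $V'=0$ in Lemma \ref{Lem_lambda_DcV} (3), i.e.\ using $D^\circ(0)=K_0(\proj A)_\R$. If this is questionable, I would instead apply Lemma \ref{Lem_lambda_DcV} (1) with $V=U$ directly; it states that $\theta \in D^\circ(U)$ if and only if $\lambda_U(\theta) \in C^\circ(U)$, which gives the special case at once.
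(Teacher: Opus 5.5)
Your proof is correct and follows the paper's argument essentially step for step: the same well-definedness checks, the identity $\lambda_U(\eta+\eta')=\eta+\lambda_U(\eta')$ from Lemma \ref{Lem_axiom_proj} (1), and Lemma \ref{Lem_lambda_DcV} (3) for part (2). The paper also obtains the special case by taking $U'=U$ and $V'=0$, which is harmless since $D^\circ(0)=K_0(\proj A)_\R$. The one cosmetic difference is that the paper gets $\lambda_U(\eta')=0$ for $\eta'\in L(U)$ directly from Lemma \ref{Lem_lambda_DcV} (1), whereas you route it through Proposition \ref{Prop_lambda_image} (2), which is itself proved using that lemma, so there is no circularity.
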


\begin{proof}
(1)
We write $\Psi_1$ for
$C(U) \times L(U) \to D(U)$ by $(\eta,\eta') \mapsto \eta+\eta'$,
and $\Psi_2$ for 
$D(U) \to C(U) \times L(U)$ by 
$\theta \mapsto (\lambda_U(\theta),\lambda'_U(\theta))$.

We first check that $\Psi_1,\Psi_2$ are well-defined.
The well-definedness of $\Psi_1$ follows from 
$C(U)+L(U) \subset D(U)+D(U) \subset D(U)$.
By Proposition \ref{Prop_lambda_image} (1),
$C(U)$ and $L(U)$ are the images of $\lambda_U,\lambda'_U$ respectively,
so $\Psi_2$ is well-defined.

Now $\Psi_1\Psi_2$ is obviously identity.

It remains to show $\Psi_2\Psi_1$ is also identity.
Let $(\eta,\eta') \in C(U) \times L(U)$, and set $\theta:=\eta+\eta'$.
Then by Lemmas \ref{Lem_axiom_proj} (1) and \ref{Lem_lambda_DcV} (1),
we get $\lambda_U(\theta)=\eta+\lambda_U(\eta')$ and $\lambda_U(\eta')=0$.
Therefore $\lambda_U(\theta)=\eta$.
Then $\lambda'_U(\theta)=\theta-\lambda_U(\theta)=\theta-\eta=\eta'$ follow.
Thus $\Psi_2\Psi_1$ is identity.
 
(2)
By Lemma \ref{Lem_lambda_DcV} (3) and 
Proposition \ref{Prop_lambda_image} (1), we get 
\begin{align*}
\Psi_2(D(U) \cap D^\circ(U' \oplus V'))=
(C^\circ(U')+C(U/U')) \times (L(U) \cap D^\circ(V')).
\end{align*}
Thus we obtain the first bijection.
The second one is the case $U'=U$ and $V'=0$.
\end{proof}

\subsection{A canonical bijection $L(U) \to K_0(\proj B)_\R$}

Recall that we have defined a certain subset 
$\partial_i \subset \partial D(U)$ for each $i \in \{1,\ldots,n\}$
in \eqref{define partial 2}.
Extending its definition, for each subset $I \subset \{1,\ldots,m\}$, we set
\begin{align*}
U_I:=\bigoplus_{i \in I} U_i, \quad 
\partial_I:=\bigcap_{i \in I} \partial_i \subset D(U).
\end{align*}
Thus we have a family $(\partial_I)_{I \in 2^{\{1,\ldots,m\}}}$ 
of subsets of $D(U)$ indexed by the power set $2^{\{1,\ldots,m\}}$.
We consider this family in this subsection.
If $I \subset I'$, then $\partial_I \supset \partial_{I'}$ holds.
Moreover, we have $\partial_\emptyset=D(U)$ and 
$\partial_{\{1,\ldots,m\}}=J(U)$.

The subset $\partial_I \subset D(U)$ can be characterized by $\lambda_U$.

\begin{Prop}\label{Prop_partial_lambda}
Let $U=\bigoplus_{i=1}^m U_i \in \twopsilt A$. 
For each $I \subset \{1,\ldots,m\}$, we have the equality
\begin{align*}
\partial_I=D(U) \setminus \left( \bigcup_{i \in I} D^\circ(U_i) \right)
&=\{ \theta \in D(U) \mid \lambda_U(\theta) \in C(U/U_I)\}.
\end{align*}
\end{Prop}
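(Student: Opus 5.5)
The first equality is immediate from Theorem \ref{Thm_partial_setminus}. By definition $\partial_I=\bigcap_{i\in I}\partial_i$, and that theorem gives $\partial_i=D(U)\setminus D^\circ(U_i)$ for each $i$. Intersecting over $i\in I$ yields
\begin{align*}
\partial_I=D(U)\setminus\Bigl(\bigcup_{i\in I}D^\circ(U_i)\Bigr).
\end{align*}
Note that for $I=\emptyset$ both sides equal $D(U)$, consistent with the convention $\partial_\emptyset=D(U)$.

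For the second equality, I reduce to a single index $i$. It suffices to show that, for $\theta\in D(U)$, we have $\theta\notin D^\circ(U_i)$ if and only if the coefficient of $[U_i]$ in $\lambda_U(\theta)$ is zero. Indeed, $\lambda_U(\theta)\in C(U)$ has a unique expression $\sum_{i=1}^m c_i[U_i]$ with $c_i\ge 0$, since $[U_1],\ldots,[U_m]$ are linearly independent. Then $\lambda_U(\theta)\in C(U/U_I)$ holds exactly when $c_i=0$ for all $i\in I$.

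To get the single-index statement, I apply Lemma \ref{Lem_lambda_DcV} (1) with $V=U_i\in\add U$. It says $\theta\in D^\circ(U_i)$ if and only if $\lambda_U(\theta)\in C^\circ(U_i)+C(U/U_i)$. By linear independence of the $[U_j]$, the set $C^\circ(U_i)+C(U/U_i)$ is precisely the set of elements $\sum_j c_j[U_j]$ of $C(U)$ with $c_i>0$. Hence $\theta\notin D^\circ(U_i)$ if and only if $c_i=0$, and intersecting these conditions over $i\in I$ gives the claimed description.

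I do not expect a real obstacle here: all the substantive work is in Theorem \ref{Thm_partial_setminus} and Lemma \ref{Lem_lambda_DcV}. The only point needing a line of care is the linear-independence bookkeeping that identifies $C^\circ(U_i)+C(U/U_i)$ with the locus $c_i>0$ inside $C(U)$, which follows from $C(U)$ being simplicial.
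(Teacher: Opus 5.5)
Your proposal is correct and follows the paper's proof: the first equality comes from Theorem \ref{Thm_partial_setminus}, and the second from Lemma \ref{Lem_lambda_DcV} (1) applied to each $V=U_i$. Your explicit check that $C^\circ(U_i)+C(U/U_i)$ is exactly the locus $c_i>0$ in the simplicial cone $C(U)$ is the step the paper leaves implicit.
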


\begin{proof}
The first equality follows from Theorem \ref{Thm_partial_setminus},
and the other is a result of Lemma \ref{Lem_lambda_DcV} (1).
\end{proof}

Then we have the following fundamental bijection on $\partial_I$.

\begin{Prop}\label{Prop_C_L_partial}
Let $U=\bigoplus_{i=1}^m U_i \in \twopsilt A$ and $I \subset \{1,\ldots,m\}$. 
\begin{enumerate}
\item
On $\partial_I$, the maps $\lambda_U$ and $\lambda_{U/U_I}$ coincide,
and so do $\lambda'_U$ and $\lambda'_{U/U_I}$.
\item
We have a bijection
\begin{align*}
C(U/U_I) \times L(U) \to \partial_I, \quad (\eta,\eta') \mapsto \eta+\eta',
\end{align*}
whose inverse is $\theta \mapsto (\lambda_U(\theta),\lambda'_U(\theta))
=(\lambda_{U/U_I}(\theta),\lambda'_{U/U_I}(\theta))$.
\item
We have $C(U/U_I)+\partial_I=\partial_I$; hence $C(U/U_I) \subset \partial_I$.
\end{enumerate}
\end{Prop}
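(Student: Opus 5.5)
The plan is to derive everything from the characterization of $\partial_I$ in Proposition \ref{Prop_partial_lambda} together with the bijection $C(U)\times L(U)\to D(U)$ of Theorem \ref{Thm_C(U)_L(U)}, and the additivity $\lambda_U=\lambda_{U_I}+\lambda_{U/U_I}$ from Lemma \ref{Lem_axiom_proj} (4).

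For (1), let $\theta\in\partial_I$. By Proposition \ref{Prop_partial_lambda} we have $\lambda_U(\theta)\in C(U/U_I)$. Lemma \ref{Lem_axiom_proj} (4), applied to $U=U_I\oplus U/U_I$, gives $\lambda_U(\theta)=\lambda_{U_I}(\theta)+\lambda_{U/U_I}(\theta)$ with $\lambda_{U_I}(\theta)\in C(U_I)$ and $\lambda_{U/U_I}(\theta)\in C(U/U_I)$. Since $\add U_I\cap\add(U/U_I)=\{0\}$, the classes $[U_1],\ldots,[U_m]$ are linearly independent and the decomposition of an element of $C(U)$ into its $C(U_I)$ and $C(U/U_I)$ parts is unique. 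Therefore $\lambda_{U_I}(\theta)=0$ and $\lambda_U(\theta)=\lambda_{U/U_I}(\theta)$. Subtracting from $\theta$ gives $\lambda'_U(\theta)=\lambda'_{U/U_I}(\theta)$. Note that $\lambda_{U/U_I}$ is defined on $\theta$ because $D(U)\subset D(U/U_I)$.

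For (2), I would restrict the bijection of Theorem \ref{Thm_C(U)_L(U)} (1).
\begin{itemize}
\item If $\theta\in\partial_I$, then $\lambda_U(\theta)\in C(U/U_I)$ by Proposition \ref{Prop_partial_lambda}, and $\lambda'_U(\theta)\in L(U)$ by Proposition \ref{Prop_lambda_image}. So the inverse map sends $\partial_I$ into $C(U/U_I)\times L(U)$.
\item Conversely, let $(\eta,\eta')\in C(U/U_I)\times L(U)$ and $\theta=\eta+\eta'\in D(U)$. Theorem \ref{Thm_C(U)_L(U)} (1) gives $\lambda_U(\theta)=\eta\in C(U/U_I)$. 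Hence $\theta\in\partial_I$ by Proposition \ref{Prop_partial_lambda}.
\end{itemize}
Thus the forward and inverse maps restrict to mutually inverse bijections. The alternative description of the inverse through $\lambda_{U/U_I},\lambda'_{U/U_I}$ then follows from (1).

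For (3), let $\zeta\in C(U/U_I)$ and $\theta\in\partial_I$. By (2) we can write $\theta=\eta+\eta'$ with $\eta\in C(U/U_I)$ and $\eta'\in L(U)$. Then $\zeta+\theta=(\zeta+\eta)+\eta'$ with $\zeta+\eta\in C(U/U_I)$, so $\zeta+\theta\in\partial_I$ by (2) again. The reverse inclusion $\partial_I\subset C(U/U_I)+\partial_I$ uses $0\in C(U/U_I)$. Finally, $L(U)$ is nonempty (it contains $0$), so $C(U/U_I)=C(U/U_I)+0\subset\partial_I$.

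The only delicate point is the uniqueness argument in (1): it needs $\lambda_{U/U_I}$ to be well defined on $\theta$ and the sum $C(U_I)+C(U/U_I)$ to be direct. Both follow from $D(U)\subset D(U/U_I)$ and the linear independence of the $[U_i]$ noted after the definition of silting cones.
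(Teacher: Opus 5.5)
Your proof is correct and follows essentially the same route as the paper. Part (1) uses Proposition~\ref{Prop_partial_lambda} together with the additivity in Lemma~\ref{Lem_axiom_proj}~(4), and part (2) restricts the bijection of Theorem~\ref{Thm_C(U)_L(U)}~(1). The only cosmetic difference is in (3): the paper applies the translation rule $\lambda_U(\eta+\theta)=\eta+\lambda_U(\theta)$ of Lemma~\ref{Lem_axiom_proj}~(1) directly instead of going through the bijection of (2), which amounts to the same thing.
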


\begin{proof}
(1)
Let $\theta \in \partial_I$.
By Proposition \ref{Prop_partial_lambda}, $\lambda_U(\theta) \in C(U/U_I)$.
On the other hand, 
$\lambda_U(\theta)=\lambda_{U/U_I}(\theta)+\lambda_{U_I}(\theta)$
follows from Lemma \ref{Lem_axiom_proj} (4).
Thus we get $\lambda_U(\theta)=\lambda_{U/U_I}(\theta)$.
Now the remaining part follows.

(2) 
By Proposition \ref{Prop_partial_lambda}, 
the bijections in Theorem \ref{Thm_C(U)_L(U)} (1) are restricted as desired.
By (1), $(\lambda_U(\theta),\lambda'_U(\theta))
=(\lambda_{U/U_I}(\theta),\lambda'_{U/U_I}(\theta))$ 
for any $\theta \in \partial_I$.

(3)
For any $\eta \in C(U/U_I)$ and $\theta \in \partial_I$,
$\lambda_U(\eta+\theta)=\eta+\lambda_U(\theta)$ 
by Lemma \ref{Lem_axiom_proj} (1).
Then Proposition \ref{Prop_partial_lambda} implies 
$\lambda_U(\theta) \in C(U/U_I)$,
so $\lambda_U(\eta+\theta) \in C(U/U_I)$.
By Proposition \ref{Prop_partial_lambda} again, 
$\eta+\theta \in \partial_I$.
Thus $C(U/U_I)+\partial_I \subset \partial_I$ holds,
and the converse is obvious.
\end{proof}

Before proceeding, note the following to avoid the confusion.

\begin{Rem}
The set $\partial_I+(-\partial_I)$ appearing in the next proposition 
is not necessarily a vector subspace of $K_0(\proj A)_\R$, 
because $\partial_I$ may not be convex.
We will not consider the vector subspace $\R \partial_I$ of $K_0(\proj A)_\R$ 
generated by $\partial_I$.
\end{Rem}

This set $\partial_I+(-\partial_I)$ satisfies 
the following relationship with $\R C(U)$.

\begin{Prop}\label{Prop_diff_partial}
Let $U=\bigoplus_{i=1}^m U_i \in \twopsilt A$.
For any subset $I \subset \{1,\ldots,m\}$, we have 
\begin{align*}
\R C(U) \cap (\partial_I+(-\partial_I))=\R C(U/U_I).
\end{align*}
In particular, $\R C(U) \cap (L(U)+(-L(U)))=\{0\}$ holds.
\end{Prop}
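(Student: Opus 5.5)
We prove the two inclusions separately, working throughout with the decomposition $\partial_I \cong C(U/U_I) \times L(U)$ of Proposition \ref{Prop_C_L_partial} (2). Recall that $\lambda_U$ is the identity on $C(U)$, vanishes on $L(U)$, and is additive along $\R C(U)$ (Lemma \ref{Lem_axiom_proj}).

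For ``$\supset$'', let $\eta \in \R C(U/U_I)$ and write $\eta=\eta_1-\eta_2$ with $\eta_1,\eta_2 \in C(U/U_I)$. Proposition \ref{Prop_C_L_partial} (3) gives $C(U/U_I) \subset \partial_I$. Hence $\eta_1,\eta_2 \in \partial_I$, and so $\eta \in \partial_I+(-\partial_I)$. Trivially also $\eta \in \R C(U)$.

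For ``$\subset$'', let $\theta_1,\theta_2 \in \partial_I$ with $\theta_1-\theta_2 \in \R C(U)$. Write each $\theta_k$ as $\lambda_U(\theta_k)+\lambda'_U(\theta_k)$.
\begin{enumerate}
\item By Lemma \ref{Lem_axiom_proj} (1), the condition $\theta_1-\theta_2\in\R C(U)$ forces $\lambda'_U(\theta_1)=\lambda'_U(\theta_2)$.
\item Consequently $\theta_1-\theta_2=\lambda_U(\theta_1)-\lambda_U(\theta_2)$.
\item By Proposition \ref{Prop_partial_lambda}, both $\lambda_U(\theta_1)$ and $\lambda_U(\theta_2)$ lie in $C(U/U_I)$.
\end{enumerate}
Therefore $\theta_1-\theta_2 \in C(U/U_I)+(-C(U/U_I))=\R C(U/U_I)$, which is the required inclusion.

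The ``in particular'' assertion is the case $I=\{1,\ldots,m\}$: then $\partial_I=L(U)$ and $U/U_I=0$, so $C(U/U_I)=\{0\}$.

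The main obstacle is the step $\lambda'_U(\theta_1)=\lambda'_U(\theta_2)$. It is exactly Lemma \ref{Lem_axiom_proj} (1), which itself rests on the affine-section description in Proposition \ref{Prop_affine}. With that lemma in hand, the rest of the argument is bookkeeping.
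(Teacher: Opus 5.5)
Your proposal is correct and matches the paper's proof: $\subset$ via Lemma \ref{Lem_axiom_proj} (1) and Proposition \ref{Prop_partial_lambda}, and $\supset$ via $C(U/U_I) \subset \partial_I$ from Proposition \ref{Prop_C_L_partial} (3). Your opening appeal to the product decomposition in Proposition \ref{Prop_C_L_partial} (2) is never used and can be dropped.
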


\begin{proof}
Let $\theta \in \R C(U) \cap (\partial_I+(-\partial_I))$.
Then we take $\theta_1,\theta_2 \in \partial_I$
such that $\theta=\theta_1-\theta_2$.
Since $\theta_1,\theta_2 \in D(U)$ and 
$\theta_2-\theta_1=\theta \in \R C(U)$,
Lemma \ref{Lem_axiom_proj} (1) gives
$\theta=\lambda_U(\theta_2)-\lambda_U(\theta_1)$.
By Proposition \ref{Prop_partial_lambda} and 
$\theta_1,\theta_2 \in \partial_I$, we have
$\lambda_U(\theta_1),\lambda_U(\theta_2) \in C(U/U_I)$.
Thus $\theta=\lambda_U(\theta_2)-\lambda_U(\theta_1) \in \R C(U/U_I)$ holds.
Therefore $\R C(U) \cap (\partial_I+(-\partial_I)) \subset \R C(U/U_I)$.

For the converse,
$\R C(U/U_I) \subset \R C(U)$ is clear,
and $\R C(U/U_I) \subset \partial_I+(-\partial_I)$ 
follows from $C(U/U_I) \subset \partial_I$ 
in Proposition \ref{Prop_C_L_partial} (3).
Thus we get $\R C(U/U_I) \subset \R C(U) \cap (\partial_I+(-\partial_I))$.

We have proved $\R C(U) \cap (\partial_I+(-\partial_I))=\R C(U/U_I)$.
For the last statement, set $I=\{1,\ldots,m\}$.
\end{proof}
Now we recall some properties. 
For the $\tau$-tilting reduction at $U$,
the algebra $B=B_U$ defined by \eqref{def B} 
before Proposition \ref{Prop_reduc} plays a very important role.
In Definition-Proposition \ref{Def-Prop_pi}
and Proposition \ref{Prop_reduc_K_0},
we obtained a surjective $\R$-linear map 
$\pi \colon K_0(\proj A)_\R \to K_0(\proj B)_\R$ 
which is compatible with the $\tau$-tilting reduction.
The map $\pi$ satisfies $\Ker \pi=\R C(U)$ 
and $\pi(D^\circ(U))=K_0(\proj B)_\R$.
The map $\pi$ induces a canonical bijection 
$L(U) \to K_0(\proj B)_\R$ as follows.

\begin{Thm}\label{Thm_pi_L(U)}
Let $U \in \twopsilt A$.
\begin{enumerate}
\item
We have 
$\pi \circ \lambda'_U=\pi|_{D(U)} 
\colon D(U) \to K_0(\proj B)_\R$.
\item
The restriction 
\begin{align*}
\pi|_{L(U)} \colon L(U) \to K_0(\proj B)_\R
\end{align*}
is bijective.
\end{enumerate}
\end{Thm}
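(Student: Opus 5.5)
For (1), I will use that $\Ker \pi=\R C(U)$ (Definition-Proposition \ref{Def-Prop_pi}). For any $\theta \in D(U)$ we have $\theta-\lambda'_U(\theta)=\lambda_U(\theta) \in C(U) \subset \R C(U)$. Applying the linear map $\pi$ gives $\pi(\lambda'_U(\theta))=\pi(\theta)$. This step is immediate.

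For (2), injectivity comes first. Let $\theta_1,\theta_2 \in L(U)$ with $\pi(\theta_1)=\pi(\theta_2)$. Then $\theta_1-\theta_2 \in \Ker\pi \cap (L(U)+(-L(U)))$. This set is $\R C(U) \cap (L(U)+(-L(U)))$, which equals $\{0\}$ by Proposition \ref{Prop_diff_partial} applied with $I=\{1,\ldots,m\}$. Hence $\theta_1=\theta_2$.

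Alternatively, one can argue directly with the projections. Lemma \ref{Lem_axiom_proj} (1) gives $\lambda'_U(\theta_1)=\lambda'_U(\theta_2)$, and Proposition \ref{Prop_lambda_image} (2) says $\lambda'_U$ is the identity on $L(U)$.

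Surjectivity is next. Let $\zeta \in K_0(\proj B)_\R$. By Proposition \ref{Prop_reduc_K_0} (1), there is $\theta \in D^\circ(U) \subset D(U)$ with $\pi(\theta)=\zeta$. By Proposition \ref{Prop_lambda_image} (1), $\lambda'_U(\theta) \in L(U)$. Part (1) then gives $\pi(\lambda'_U(\theta))=\pi(\theta)=\zeta$.

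So the inverse map is $\zeta \mapsto \lambda'_U(\theta)$ for any lift $\theta \in D(U)$ of $\zeta$. By part (1) and Lemma \ref{Lem_axiom_proj} (1), this does not depend on the choice of lift.

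I do not expect any real obstacle. The substantive inputs were already established: the surjectivity of $\pi|_{D^\circ(U)}$ from \cite{A}, and the projection properties of $\lambda_U$ and $\lambda'_U$, which rest on the facet description in Corollary \ref{Cor_D(U)_ineq}. The only point needing care is to use the difference set $L(U)+(-L(U))$ in Proposition \ref{Prop_diff_partial}, rather than a linear span, since $L(U)$ need not be convex.
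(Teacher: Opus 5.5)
Your proposal is correct and follows essentially the same route as the paper: (1) via $\lambda_U(\theta)\in C(U)\subset\Ker\pi$, injectivity via Proposition \ref{Prop_diff_partial} with $I=\{1,\ldots,m\}$, and surjectivity via $\pi(L(U))=\pi(\lambda'_U(D(U)))=\pi(D(U))\supset\pi(D^\circ(U))=K_0(\proj B)_\R$. Your alternative injectivity argument is also valid. It uses Lemma \ref{Lem_axiom_proj} (1) together with the fact that $\lambda'_U$ is the identity on $L(U)$, and it simply inlines what Proposition \ref{Prop_diff_partial} packages.
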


\begin{proof}
(1)
Let $\theta \in D(U)$.
Then $\lambda'_U(\theta)=\theta-\lambda_U(\theta) \in \theta+\R C(U)$ hold,
so we get $\pi(\lambda'_U(\theta))=\pi(\theta)$.

(2)
The surjectivity follows as 
\begin{align*}
\pi(L(U))\stackrel{\text{Prop.~\ref{Prop_lambda_image}}}{=}
\pi(\lambda'_U(D(U)))\stackrel{\text{(1)}}{=}
\pi(D(U))=K_0(\proj B)_\R.
\end{align*}

For the injectivity, 
let $\theta,\eta \in L(U)$ satisfy $\pi(\theta)=\pi(\eta)$.
Then $\theta-\eta \in \R C(U)+(L(U)+(-L(U)))$ by (1).
This and Proposition \ref{Prop_diff_partial} give $\theta-\eta=0$ as desired.
\end{proof}

\subsection{The relationship between $C(U),L(U)$ and faces of $D(U)$}
For each $F \in \Face D(U)$, we set
\begin{align*}
I_F:=\{ i \in \{1,\ldots,m\} \mid F \subset \partial_i \}.
\end{align*}
For any subset $I \subset \{1,\ldots,m\}$, we define
\begin{align*}
\Face^\circ_I D(U):=\{F \in \Face D(U) \mid I_F=I\}, \quad
\Face_I D(U):=\{F \in \Face D(U) \mid I_F \supset I\}.
\end{align*}
For example, $\Face^\circ_\emptyset D(U)=\{D(U)\}$ and
$\Face_\emptyset D(U)=\Face D(U)$ hold.

Clearly we have a decomposition
\begin{align*}
\Face D(U)=\bigsqcup_{I \subset \{1,\ldots,m\}}
\Face^\circ_I D(U).
\end{align*}
This subsection is devoted to studying the relationship between
each $\Face^\circ_I D(U)$ and $C(U),L(U)$ in this subsection
by using the maps $\lambda_U,\lambda'_U$.

We begin with the following fundamental properties of faces.

\begin{Prop}\label{Prop_C(U/U_I_F)}
Let $U=\bigoplus_{i=1}^m U_i \in \twopsilt A$ and $F \in \Face D(U)$.
\begin{enumerate}
\item
We have 
\begin{align*}
\lambda_U(F)=F\cap C(U)=C(U/U_{I_F}), \quad
\lambda'_U(F)=F \cap L(U), \quad
\pi(F)=\pi(F \cap L(U)).
\end{align*}
\item
There exist mutually inverse bijections 
\begin{align*}
(F \cap C(U)) \times (F \cap L(U)) &\to F, 
& F &\to (F \cap C(U))\times(F \cap L(U)), \\
(\eta,\eta') & \mapsto \eta+\eta', 
& \theta &\mapsto (\lambda_U(\theta),\lambda'_U(\theta)).
\end{align*}
\end{enumerate}
\end{Prop}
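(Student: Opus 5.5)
The plan is to first establish the inclusions $\lambda_U(F)\subset F$ and $\lambda'_U(F)\subset F$ for every face $F$; the rest then follows from Theorem \ref{Thm_C(U)_L(U)} and Proposition \ref{Prop_C_L_partial}. The key point is the face axiom. For $\theta\in F$, Theorem \ref{Thm_C(U)_L(U)} (1) gives $\theta=\lambda_U(\theta)+\lambda'_U(\theta)$ with $\lambda_U(\theta)\in C(U)\subset D(U)$ and $\lambda'_U(\theta)\in L(U)\subset D(U)$. Since $F$ is a face of $D(U)$ and the two summands lie in $D(U)$ with sum in $F$, both summands lie in $F$. Hence $\lambda_U(F)\subset F\cap C(U)$ and $\lambda'_U(F)\subset F\cap L(U)$. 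The reverse inclusions hold because $\lambda_U$ is the identity on $C(U)$ and $\lambda'_U$ is the identity on $L(U)$ (Proposition \ref{Prop_lambda_image} (2)). This gives $\lambda_U(F)=F\cap C(U)$ and $\lambda'_U(F)=F\cap L(U)$. The equality $\pi(F)=\pi(F\cap L(U))$ then follows from Theorem \ref{Thm_pi_L(U)} (1): $\pi(F)=\pi(\lambda'_U(F))=\pi(F\cap L(U))$.

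Part (2) follows at once. The addition map $(F\cap C(U))\times(F\cap L(U))\to F$ is well defined because $F$ is convex and closed under $\R_{\ge0}$, so $\eta+\eta'=2\cdot(\eta/2+\eta'/2)\in F$. It is the restriction of the bijection $C(U)\times L(U)\to D(U)$ of Theorem \ref{Thm_C(U)_L(U)} (1). By the previous paragraph, that bijection's inverse sends $F$ into $(F\cap C(U))\times(F\cap L(U))$, so the two restrictions are mutually inverse.

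The remaining and main step is $F\cap C(U)=C(U/U_{I_F})$. Since $F\cap C(U)$ is a face of the simplicial cone $C(U)$, it equals $C(U_J)$ with $J=\{i\mid [U_i]\in F\}$. By Proposition \ref{Prop_facet_between} (a)$\Leftrightarrow$(c), $[U_i]\notin F$ exactly when $F\subset\partial_i$, that is, when $i\in I_F$. So $J$ is the complement of $I_F$, which gives $C(U_J)=C(U/U_{I_F})$. The only delicate points are confirming that $F\cap C(U)$ is nonempty (it contains $0$) and that faces of $C(U)$ are exactly the $C(U_J)$; the latter holds because $[U_1],\ldots,[U_m]$ are linearly independent. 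As a consistency check, Proposition \ref{Prop_partial_lambda} gives $\lambda_U(F)\subset C(U/U_{I_F})$ directly, since $F\subset\partial_{I_F}$.
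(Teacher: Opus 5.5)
Your proposal is correct and matches the paper's proof essentially step by step. In both, the face axiom applied to $\theta=\lambda_U(\theta)+\lambda'_U(\theta)$ gives $\lambda_U(F),\lambda'_U(F)\subset F$, and Proposition \ref{Prop_lambda_image} (2) gives the reverse inclusions. Proposition \ref{Prop_facet_between} together with the fact that $F\cap C(U)$ is a face of the simplicial cone $C(U)$ yields $C(U/U_{I_F})$, and (2) is the restriction of the bijection in Theorem \ref{Thm_C(U)_L(U)} (1).
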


\begin{proof} 
(1)
For any $\theta \in F$, 
clearly $\theta=\lambda_U(\theta)+\lambda'_U(\theta)$ holds.
Since $F$ is a face of $D(U)$ and 
$\lambda_U(\theta),\lambda'_U(\theta) \in D(U)$,
we have $\lambda_U(\theta),\lambda'_U(\theta) \in F$.
Thus $\lambda_U(F),\lambda'_U(F) \subset F$.

Since $\lambda_U(F) \subset F$, 
we have $\lambda_U(F) \subset F \cap C(U)$.
The converse $F \cap C(U) \subset \lambda_U(F)$
follows from that $\lambda_U$ is identity on $C(U)$
in Proposition \ref{Prop_lambda_image} (2).
Thus $\lambda_U(F)=F \cap C(U)$ holds.

By Proposition \ref{Prop_facet_between},
$I_F$ equals $\{ i \in \{1,\ldots,m\} \mid [U_i] \notin F\}$.
Since $C(U) \subset D(U)$ and $F \in \Face D(U)$,
we have $F \cap C(U)$ is a face of $C(U)$.
These imply $F \cap C(U)=C(U/U_{I_F})$.

Since $\lambda'_U$ is identity on $L(U)$ 
by Proposition \ref{Prop_lambda_image} (2),
$\lambda'_U(F)=F \cap L(U)$ can be proved similarly to 
$\lambda_U(F)=F \cap C(U)$.

Then $\pi(F \cap L(U))=\pi(\lambda'_U(F))$, 
and it is $\pi(F)$ by Theorem \ref{Thm_pi_L(U)} (1).

(2) 
In view of Theorem \ref{Thm_C(U)_L(U)} (1),
it suffices to show that, for $\theta \in D(U)$,
the condition $\theta \in F$ holds if and only if 
$\lambda_U(\theta) \in F \cap C(U)$ 
and $\lambda'_U(\theta) \in F \cap L(U)$.
The ``if'' part is obvious, since $F$ is closed under sums.
The ``only if'' part is (1).
\end{proof}

Then $\Face^\circ_I D(U)$ is characterized by intersections with $C(U)$
as follows.

\begin{Prop}\label{Prop_Face_I_iff}
Let $U=\bigoplus_{i=1}^m U_i \in \twopsilt A$,
$F \in \Face D(U)$ and $I \subset \{1,\ldots,m\}$.
\begin{enumerate}
\item
The condition $F \in \Face^\circ_I D(U)$ holds if and only if 
$F \cap C(U)=C(U/U_I)$.
\item
The condition $F \in \Face_I D(U)$ holds if and only if 
$F \cap C(U) \subset C(U/U_I)$.
\end{enumerate}
\end{Prop}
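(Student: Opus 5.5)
The plan is to reduce both statements to the equality $F \cap C(U)=C(U/U_{I_F})$ from Proposition \ref{Prop_C(U/U_I_F)} (1). That equality already records, inside the face $C(U)$, which indices lie in $I_F$. What remains is to show that the assignment $I \mapsto C(U/U_I)$ is injective and order-reversing.

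For (1), the ``only if'' part is immediate. If $F \in \Face^\circ_I D(U)$, then $I_F=I$, and Proposition \ref{Prop_C(U/U_I_F)} (1) gives $F \cap C(U)=C(U/U_I)$. For the ``if'' part, assume $F \cap C(U)=C(U/U_I)$. Then Proposition \ref{Prop_C(U/U_I_F)} (1) yields $C(U/U_{I_F})=C(U/U_I)$. Since $[U_1],\ldots,[U_m]$ are linearly independent (they extend to a basis of $K_0(\proj A)$, as recalled for silting cones), the ray $\R_{\ge 0}[U_i]$ lies in $C(U/U_J)$ if and only if $i \notin J$. Hence the cones $C(U/U_J)$ determine $J$, and $I_F=I$.

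For (2), the same linear independence shows that $C(U/U_{J}) \subset C(U/U_I)$ holds if and only if $\{1,\ldots,m\}\setminus J \subset \{1,\ldots,m\}\setminus I$, that is, if and only if $J \supset I$. Applying this with $J=I_F$ and using $F \cap C(U)=C(U/U_{I_F})$ gives $F \in \Face_I D(U) \Leftrightarrow I_F \supset I \Leftrightarrow F\cap C(U) \subset C(U/U_I)$.

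Alternatively, one can use Proposition \ref{Prop_facet_between} (a)$\Leftrightarrow$(c), which states that $i \in I_F$ if and only if $[U_i]\notin F$. Together with the fact that $F \cap C(U)$ is a face of the simplicial cone $C(U)$, hence spanned by the rays it contains, this recovers the same description. There is no real obstacle; the only point needing care is the linear independence of the $[U_i]$, which guarantees that distinct index sets give distinct faces of $C(U)$.
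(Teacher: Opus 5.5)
Your proof is correct and takes essentially the same route as the paper. Both reduce to the equality $F \cap C(U)=C(U/U_{I_F})$ from Proposition \ref{Prop_C(U/U_I_F)} (1) and then compare index sets. The one addition is that you spell out the linear independence of $[U_1],\ldots,[U_m]$, which the paper uses implicitly to conclude $C(U/U_J)\subset C(U/U_I)$ if and only if $J\supset I$.
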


\begin{proof}
Let $F \in \Face D(U)$.
Then by Proposition \ref{Prop_C(U/U_I_F)} (1),
$F \cap C(U)=C(U/U_I)$ is equivalent to $I_F=I$, and similarly, 
$F \cap C(U) \subset C(U/U_I)$ is equivalent to $I_F \supset I$.
Thus we have the equalities on $\Face^\circ_I D(U)$
and $\Face_I D(U)$.
\end{proof}

The set $\Face^\circ_I D(U)$ has another description
in terms of $\lambda_U(\theta)$ for $\theta \in F^\circ$.

\begin{Prop}\label{Prop_lambda_F^circ}
Let $U=\bigoplus_{i=1}^m U_i \in \twopsilt A$.
For any $F \in \Face D(U)$ and $I \subset \{1,\ldots,m\}$,
the following conditions are equivalent.
\begin{enumerate2}
\item
The face $F$ belongs to $\Face^\circ_I D(U)$.
\item
For any $\theta \in F^\circ$, we have $\lambda_U(\theta) \in C^\circ(U/U_I)$.
\item
There exists $\theta \in F^\circ$ such that 
$\lambda_U(\theta) \in C^\circ(U/U_I)$.
\end{enumerate2}
In this case, we have $\lambda_U(F^\circ)=C^\circ(U/U_I)$.
\end{Prop}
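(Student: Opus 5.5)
We go around the cycle (a)$\Rightarrow$(b)$\Rightarrow$(c)$\Rightarrow$(a) and then read off the final equality $\lambda_U(F^\circ)=C^\circ(U/U_I)$. The main tools are Proposition \ref{Prop_C(U/U_I_F)}, which gives $\lambda_U(F)=F\cap C(U)=C(U/U_{I_F})$, and the linearity-along-$\R C(U)$ statement of Lemma \ref{Lem_axiom_proj} (1).

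For (a)$\Rightarrow$(b), let $F\in\Face^\circ_I D(U)$, so $I_F=I$ and $F\cap C(U)=C(U/U_I)$. Take $\theta\in F^\circ$. We already know $\lambda_U(\theta)\in C(U/U_I)$, so we must rule out $\lambda_U(\theta)$ lying on a proper face $C(U/U_{I'})$ with $I'\supsetneq I$. Fix $\eta\in C^\circ(U/U_I)\subset F$. Since $\theta\in F^\circ$, for small $t>0$ the point $\theta':=\theta+t(\theta-\eta)$ still lies in $F$. Because $\theta'$ and $\theta$ are both in $D(U)$, we can write $\theta-\eta = \lambda_U(\theta)-\eta + (\lambda'_U(\theta))$. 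This is awkward, so we argue differently. Set $\theta_s:=\theta-s\,\eta$ for $s\in\R$. For small $|s|$ the point $\theta_s$ lies in $F$, because $\theta$ is a relative interior point and $\eta\in F$ lies in $\R F$. Since $\theta_s-\theta\in\R C(U)$, Lemma \ref{Lem_axiom_proj} (1) gives $\lambda_U(\theta_s)=\lambda_U(\theta)-s\eta$, and this must lie in $\lambda_U(F)=C(U/U_I)$ for small positive $s$. Writing $\lambda_U(\theta)=\sum_{i\notin I}c_i[U_i]$ and $\eta=\sum_{i\notin I}e_i[U_i]$ with all $e_i>0$, we get $c_i-se_i\ge0$ for small $s>0$. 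Hence $c_i>0$ for every $i\notin I$: if some $c_i=0$, then $c_i-se_i<0$ would violate the containment. This is exactly $\lambda_U(\theta)\in C^\circ(U/U_I)$.

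(b)$\Rightarrow$(c) is trivial, since $F^\circ\neq\emptyset$. For (c)$\Rightarrow$(a), take $\theta\in F^\circ$ with $\lambda_U(\theta)\in C^\circ(U/U_I)$. Since $\lambda_U(\theta)\in F\cap C(U)=C(U/U_{I_F})$, we get $C^\circ(U/U_I)\cap C(U/U_{I_F})\neq\emptyset$, which forces $I_F\subset I$. Applying the already proved implication (a)$\Rightarrow$(b) to $F\in\Face^\circ_{I_F}D(U)$ gives $\lambda_U(\theta)\in C^\circ(U/U_{I_F})$. The relative interiors of distinct faces of the simplicial cone $C(U)$ are disjoint, so $I=I_F$.

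For the final equality, the inclusion $\subset$ is (b). For $\supset$, take $\eta'\in C^\circ(U/U_I)$ and a fixed $\theta\in F^\circ$. Using the decomposition of Proposition \ref{Prop_C(U/U_I_F)} (2), we have $\theta=\lambda_U(\theta)+\lambda'_U(\theta)$. Set $\theta'':=\eta'+\lambda'_U(\theta)$. This point lies in $F$, and by Theorem \ref{Thm_C(U)_L(U)} (1) it satisfies $\lambda_U(\theta'')=\eta'$. The main obstacle is to show $\theta''\in F^\circ$, not merely $\theta''\in F$. The plan is as follows. Because $\eta'$ and $\lambda_U(\theta)$ are both in the relative interior of $C(U/U_I)$, and $\R C(U/U_I)\subset\R F$, the segment from $\theta$ to $\theta''$ moves in the direction $\eta'-\lambda_U(\theta)\in\R F$. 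We then extend this segment slightly beyond $\theta''$ while staying in $F$: replace $\eta'$ by $\eta'+\delta(\eta'-\lambda_U(\theta))$, which is still in $C^\circ(U/U_I)$ for small $\delta$, and note that the corresponding point is still in $F$. So $\theta''$ lies in the open segment between a point of $F^\circ$ and a point of $F$, and is therefore in $F^\circ$.
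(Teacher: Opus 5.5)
Your proof is correct and follows the paper's argument essentially step for step. You prove (a)$\Rightarrow$(b) by subtracting a small element of $C^\circ(U/U_I)\subset F$ and applying Lemma \ref{Lem_axiom_proj} (1), prove (c)$\Rightarrow$(a) by applying (a)$\Rightarrow$(b) to $I_F$, and use the same witness $\eta'+\lambda'_U(\theta)=(\eta'-\lambda_U(\theta))+\theta$ for the final equality; the only difference is that you show this point lies in $F^\circ$ by extending a segment from $\theta\in F^\circ$, whereas the paper chooses $\theta\in F^\circ$ with $\eta'-\lambda_U(\theta)\in C(U/U_I)$ and uses $C(U/U_I)+F^\circ\subset F+F^\circ=F^\circ$ (you should also delete the abandoned first attempt in (a)$\Rightarrow$(b)).
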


\begin{proof}
(a)$\Rightarrow$(b)
Let $F \in \Face^\circ_I D(U)$.
Proposition \ref{Prop_C(U/U_I_F)} (1) implies 
$\lambda_U(F^\circ) \subset \lambda_U(F)=C(U/U_I)$.
Moreover Proposition \ref{Prop_Face_I_iff} 
implies $C^\circ(U/U_I) \subset F$.
Thus each element $\theta \in F^\circ$ has $\eta \in C^\circ(U/U_I)$
such that $\theta-\eta \in F^\circ$.
Then Lemma \ref{Lem_axiom_proj} (1) gives
$\lambda_U(\theta)=\eta+\lambda_U(\theta-\eta)
\in C^\circ(U/U_I)+C(U/U_I)=C^\circ(U/U_I)$.

(b)$\Rightarrow$(c) is obvious.

(c)$\Rightarrow$(a)
Take such $\theta \in F^\circ$.
By definition, we have $F \in \Face^\circ_{I_F} D(U)$.
Then by applying (a)$\Rightarrow$(b) to $I_F$ instead of $I$,
we have $\lambda_U(\theta) \in C^\circ(U/U_{I_F})$.
This and (c) yield $I_F=I$, and we get (a).

For the last statement, 
we have $\lambda_U(F^\circ) \subset C^\circ(U/U_I)$ by (b).
To show the converse, let $\eta \in C^\circ(U/U_I)$.
By (b), we can take $\theta \in F^\circ$ such that 
$\eta-\lambda_U(\theta) \in C(U/U_I)$.
Then we have $\lambda_U((\eta-\lambda_U(\theta))+\theta)=
(\eta-\lambda_U(\theta))+\lambda_U(\theta)=\eta$
by Lemma \ref{Lem_axiom_proj} (1).
Moreover $(\eta-\lambda_U(\theta))+\theta
\in C(U/U_I)+F^\circ \subset F+F^\circ=F^\circ$ hold,
where the inclusion is by Proposition \ref{Prop_C(U/U_I_F)} (1).
\end{proof}

We remark that, even if $F \in \Face^\circ_I D(U)$,
the condition $C^\circ(U/U_I) \subset F^\circ$ does not hold in general.
In Example \ref{Ex_A4_Sigma_I}, $F_{345} \in \Face^\circ_{\{2\}} F(U)$ holds,
but $[U/U_2]=[U_1]=\theta_5$ is not in its relative interior $F_{345}^\circ$.

The maps $\lambda_U,\lambda'_U$ can used also to study the set $\partial_I$.
By definition, we have 
\begin{align*}
\Face_I D(U)=\{F \in \Face D(U) \mid F \subset \partial_I\}.
\end{align*}
Since $\partial_I$ is a union of faces in $D(U)$
by Lemma \ref{Lem_partial_face} (1), it is easy to see
\begin{align}\label{partial_I union face}
\partial_I=
\bigcup_{F \in \Face_I D(U)} F
=\bigcup_{F \in \max \Face_I D(U)} F.
\end{align}
We show that $\partial_I$ is also 
the union of all $F \in \Face^\circ_I D(U)$ as follows.

\begin{Prop}\label{Prop_max_Face_circ}
Let $U=\bigoplus_{i=1}^m U_i \in \twopsilt A$
and $I \subset \{1,\ldots,m\}$.
\begin{enumerate}
\item
For any $F \in \Face_I D(U)$, we have 
$\lambda_U(F+C(U/U_I))=C(U/U_I)$ and $F+C(U/U_I) \subset \partial_I$.
\item
The inclusion $\max \Face_I D(U) \subset \Face^\circ_I D(U)$ holds.
Thus we get
\begin{align*}
\partial_I=\bigcup_{F \in \Face^\circ_I D(U)} F.
\end{align*}
\end{enumerate}
\end{Prop}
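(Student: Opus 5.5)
For (1), fix $F \in \Face_I D(U)$, so $F \subset \partial_I$. By Proposition \ref{Prop_partial_lambda}, every $\theta \in F$ satisfies $\lambda_U(\theta) \in C(U/U_I)$. Take $\eta \in C(U/U_I)$. Since $\theta, \theta+\eta \in D(U)$ and their difference lies in $\R C(U)$, Lemma \ref{Lem_axiom_proj} (1) gives
\begin{align*}
\lambda_U(\theta+\eta)=\eta+\lambda_U(\theta) \in C(U/U_I)+C(U/U_I)=C(U/U_I).
\end{align*}
Hence $\lambda_U(F+C(U/U_I)) \subset C(U/U_I)$. For the reverse inclusion, Proposition \ref{Prop_C(U/U_I_F)} (1) gives $0 \in C(U/U_{I_F}) \subset F$, so $C(U/U_I) \subset F+C(U/U_I)$. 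Since $\lambda_U$ is the identity on $C(U)$ by Proposition \ref{Prop_lambda_image} (2), this yields equality. Finally, $F+C(U/U_I)\subset D(U)$, and Proposition \ref{Prop_partial_lambda} turns $\lambda_U(F+C(U/U_I))\subset C(U/U_I)$ into $F+C(U/U_I)\subset\partial_I$. Alternatively, this containment follows directly from Proposition \ref{Prop_C_L_partial} (3).

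For (2), let $F \in \max \Face_I D(U)$. Let $F'$ be the smallest face of $D(U)$ containing the convex cone $F+C(U/U_I)$. The first point to check is that $F' \subset \partial_I$. Since $\partial_I$ is a union of faces, pick $\theta$ in the relative interior of the convex set $F+C(U/U_I)$. Then $\theta$ lies in some face $G \subset \partial_I$. The smallest face containing $\theta$ is contained in $G$, and it equals $F'$ because $\theta$ is a relative-interior point. This is the same argument as in Proposition \ref{Prop_C_almost_F}. So $F' \in \Face_I D(U)$, and by maximality $F' = F$; that is, $C(U/U_I) \subset F$.

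Now combine $C(U/U_I)\subset F$ with Proposition \ref{Prop_C(U/U_I_F)} (1), which says $F\cap C(U)=C(U/U_{I_F})$. This gives $C(U/U_I)\subset C(U/U_{I_F})$, hence $I_F\subset I$. Conversely, $I_F\supset I$ holds because $F\in\Face_I D(U)$. Therefore $I_F=I$, i.e. $F\in\Face^\circ_I D(U)$.

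For the displayed equality, $\Face^\circ_I D(U)\subset\Face_I D(U)$ gives $\bigcup_{F\in\Face^\circ_I D(U)} F\subset\partial_I$. The reverse inclusion follows from \eqref{partial_I union face} together with the inclusion $\max\Face_I D(U)\subset\Face^\circ_I D(U)$ just proved.

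The main obstacle is the step showing that the face generated by $F+C(U/U_I)$ stays inside $\partial_I$. This relies on $\partial_I$ being a union of faces (Lemma \ref{Lem_partial_face} (1)) and on the relative-interior argument above.
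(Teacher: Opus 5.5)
Your proof is correct and follows the paper's argument essentially step for step: part (1) uses Lemma \ref{Lem_axiom_proj} (1) together with Proposition \ref{Prop_partial_lambda}, and part (2) takes the smallest face containing $F+C(U/U_I)$, shows it lies in $\partial_I$, and applies maximality. The differences are cosmetic. You spell out the relative-interior reason why that smallest face stays inside $\partial_I$, which the paper leaves implicit. You also conclude $I_F=I$ directly from $C(U/U_I)\subset F\cap C(U)=C(U/U_{I_F})$, whereas the paper cites Proposition \ref{Prop_Face_I_iff}.
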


\begin{proof}
(1)
We have $\lambda_U(F)=F \cap C(U) \subset C(U/U_I)$ 
by Proposition \ref{Prop_C(U/U_I_F)} (1).
Then we have
\begin{align*}
\lambda_U(F+C(U/U_I))\stackrel{\text{Lem.~\ref{Lem_axiom_proj} (1)}}{=}
\lambda_U(F)+C(U/U_I)=C(U/U_I).
\end{align*}
By Proposition \ref{Prop_partial_lambda}, 
we get $F+C(U/U_I) \subset \partial_I$.

(2)
Let $F \in \max \Face_I D(U)$.
Take the smallest face $F'$ containing $F+C(U/U_I)$.
Since $\partial_I$ is a union of faces
by Lemma \ref{Lem_partial_face} (1) and $F+C(U/U_I)$ is convex,
(1) implies $F \subset F+C(U/U_I) \subset F' \subset \partial_I$.
By the maximality of $F$ in $\Face_I D(U)$,
we have $F=F+C(U/U_I)$.
By (1) again, $\lambda_U(F)=\lambda_U(U/U_I)=C(U/U_I)$ hold,
so Proposition \ref{Prop_Face_I_iff} gives $F \in \Face^\circ_I D(U)$.
\end{proof}

The converse 
$\Face^\circ_I D(U) \subset \max \Face_I D(U)$ does not hold in general.
In Example \ref{Ex_A4_Sigma_I} later,
$F_5=C(U_1)$ belongs to $\Face^\circ_{\{2\}} D(U)$,
but it is not maximal in $\Face_{\{2\}} D(U)$.

\subsection{Generalized fans in $K_0(\proj B)_\R$ 
induced by some faces of $D(U)$}\label{Subsec fan}

For each $I \subset \{1,\ldots,m\}$, 
we defined the subsets 
$\Face^\circ_I D(U)$ and $\Face_I D(U)$ of $\Face D(U)$.
We set
\begin{align*}
\Sigma_I:=\{ \pi(F) \mid F \in \Face^\circ_I D(U) \}.
\end{align*}
In this subsection,
we will show that $\Sigma_I$ is 
a finite complete generalized fan in $K_0(\proj B)_\R$.

We first observe that 
$\Face_I D(U)$ is a generalized fan 
whose support is $\partial_I$ in $K_0(\proj A)_\R$.

\begin{Prop}\label{Prop_Face_I_fan}
Let $U=\bigoplus_{i=1}^m U_i \in \twopsilt A$.
For any subset $I \subset \{1,\ldots,m\}$, 
the set $\Face_I D(U)$ is a generalized fan in $K_0(\proj A)_\R$
whose support is $\partial_I$.
\end{Prop}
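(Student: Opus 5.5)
We check the two axioms of Definition \ref{Def_fan} for $\Face_I D(U)$ and then identify the support. The key observation is that $\Face_I D(U)$ is a subset of $\Face D(U)$, and $\Face D(U)$ is a generalized fan: the face set of a single polyhedral cone. Indeed, faces of faces of $D(U)$ are faces of $D(U)$, and the intersection of two faces of a polyhedral cone is a face of each. So the only issue is whether $\Face_I D(U)$ is closed under these two operations inside $\Face D(U)$.

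For axiom (a), let $F \in \Face_I D(U)$ and $F' \in \Face F$. Then $F'$ is a face of $D(U)$. Since $F' \subset F \subset \partial_I$, the description $\Face_I D(U)=\{F \in \Face D(U) \mid F \subset \partial_I\}$ gives $F' \in \Face_I D(U)$. Equivalently, $I_{F'} \supset I_F \supset I$, because $F' \subset F \subset \partial_i$ for every $i \in I_F$.

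For axiom (b), let $F_1,F_2 \in \Face_I D(U)$. Then $F_1 \cap F_2$ is nonempty, since it contains $0$; it is a face of $D(U)$; and it is contained in $F_1 \subset \partial_I$. Hence $F_1 \cap F_2 \in \Face_I D(U)$. Because $F_1 \cap F_2$ is a face of $D(U)$ contained in $F_1$, it is also a face of $F_1$: the face condition (b) inherited from $D(U)$ restricts to $F_1$. Symmetrically it is a face of $F_2$. So $F_1 \cap F_2 \in \Face F_1 \cap \Face F_2$.

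For the support, we use \eqref{partial_I union face}, which states $\partial_I=\bigcup_{F \in \Face_I D(U)} F$. This rests on Lemma \ref{Lem_partial_face} (1): each $\partial_i$ is a union of faces of $D(U)$, so every $\theta \in \partial_I$ lies in some face $F$ with $F \subset \partial_i$ for each $i \in I$. To make this step fully rigorous, take for $F$ the smallest face of $D(U)$ containing $\theta$, namely the face with $\theta \in F^\circ$. Any face containing $\theta$ contains this $F$. Since for each $i \in I$ some face containing $\theta$ lies in $\partial_i$, we get $F \subset \partial_i$ for all $i \in I$, hence $F \subset \partial_I$.

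The main, though mild, obstacle is this smallest-face argument, which makes the union over faces legitimate for the intersection $\partial_I$ and not just for each single $\partial_i$. Finiteness of $\Face_I D(U)$ is automatic, since $D(U)$ is a polyhedral cone by Definition-Proposition \ref{Def-Prop_D(U)}.
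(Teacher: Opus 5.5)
Your proof is correct and follows the same route as the paper: closure under faces and under pairwise intersections is inherited from $\Face D(U)$, since $D(U)$ is a polyhedral cone, and the support is identified as $\partial_I$ via \eqref{partial_I union face}. Your smallest-face argument spells out the step the paper calls easy, namely that $\partial_I$ itself, and not only each $\partial_i$, is a union of faces belonging to $\Face_I D(U)$.
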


\begin{proof}
The set $\Face_I D(U)$ is clearly closed under taking faces.
Moreover$F_1 \cap F_2$ of any two $F_1,F_2 \in \Face_I D(U)$ 
is a face of $F_1$ and a face of $F_2$, 
because $D(U)$ is a polyhedral cone.
Thus $\Face_I D(U)$ is a generalized fan.
Its support is $\partial_I$ by \eqref{partial_I union face}.
\end{proof}

The following property of intersections of faces is important.

\begin{Lem}\label{Lem_pi_cap}
Let $U \in \twopsilt A$.
For any subset $\frakF \subset \Face D(U)$, we have
\begin{align*}
\pi\left( \bigcap_{F \in \frakF} F \right)=\bigcap_{F \in \frakF} \pi(F).
\end{align*}
\end{Lem}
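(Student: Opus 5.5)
The inclusion ``$\subset$'' is immediate, since $\pi$ of an intersection is contained in the intersection of the images. The work is in ``$\supset$''. The plan is to transport everything to the link $L(U)$, where $\pi$ is injective by Theorem \ref{Thm_pi_L(U)} (2), and to use that each face of $D(U)$ is stable under $\lambda'_U$.

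First, if $\frakF$ is empty, the statement should be read with the empty intersection equal to $D(U)$ on the left and to $K_0(\proj B)_\R$ on the right. In that case it is just $\pi(D(U))=K_0(\proj B)_\R$, which follows from Proposition \ref{Prop_reduc_K_0} (1). So assume $\frakF\neq\emptyset$.

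Now take $\zeta\in\bigcap_{F\in\frakF}\pi(F)$. By Proposition \ref{Prop_C(U/U_I_F)} (1), $\pi(F)=\pi(F\cap L(U))$ for each $F \in \frakF$. Hence for each $F\in\frakF$ there is some $\theta_F\in F\cap L(U)$ with $\pi(\theta_F)=\zeta$. Since $\pi|_{L(U)}$ is injective by Theorem \ref{Thm_pi_L(U)} (2), all the $\theta_F$ coincide with a single element $\theta\in L(U)$. Therefore $\theta\in\bigcap_{F\in\frakF}F$ and $\pi(\theta)=\zeta$, which gives ``$\supset$''.

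The only substantive input is the equality $\pi(F)=\pi(F\cap L(U))$, which comes from $\lambda'_U(F)=F\cap L(U)$ together with $\pi\circ\lambda'_U=\pi|_{D(U)}$. This holds because a face is closed under decomposing $\theta=\lambda_U(\theta)+\lambda'_U(\theta)$ into summands lying in $D(U)$. With the earlier results taken as given, I do not expect any real obstacle; the one point needing care is the convention for empty $\frakF$.
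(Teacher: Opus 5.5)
Your proof is correct and is essentially the paper's argument: both rest on $\pi(F)=\pi(F\cap L(U))$ from Proposition \ref{Prop_C(U/U_I_F)} (1) and on the injectivity of $\pi|_{L(U)}$ from Theorem \ref{Thm_pi_L(U)} (2). The paper writes it as $\pi\bigl((\bigcap_{F\in\frakF} F)\cap L(U)\bigr)=\bigcap_{F\in\frakF} \pi(F\cap L(U))$, which also applies Proposition \ref{Prop_C(U/U_I_F)} (1) to the face $\bigcap_{F\in\frakF}F$; your elementwise version only needs it for each $F\in\frakF$, and it also treats the empty family explicitly.
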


\begin{proof}
By Proposition \ref{Prop_C(U/U_I_F)} (1), it suffices to show 
\begin{align*}
\pi\left( \left( \bigcap_{F \in \frakF} F \right) \cap L(U) \right)
=\bigcap_{F \in \frakF} \pi(F \cap L(U)).
\end{align*}
This follows as
\begin{align*}
\pi\left( \left( \bigcap_{F \in \frakF} F \right) \cap L(U) \right)
=\pi\left( \bigcap_{F \in \frakF} (F \cap L(U)) \right)
\stackrel{\text{Thm.~\ref{Thm_pi_L(U)} (2)}}{=}
\bigcap_{F \in \frakF} \pi(F \cap L(U)),
\end{align*}
from the injectivity of $\pi|_{L(U)}$ proved 
in Theorem \ref{Thm_pi_L(U)} (2).
\end{proof}

By using this, we have the following property. 
In paticular, $\Sigma_I$ is refined if $I$ gets larger.

\begin{Prop}\label{Prop_Sigma_I_cap}
Let $U=\bigoplus_{i=1}^m U_i \in \twopsilt A$,
and $I_1,I_2 \subset \{1,\ldots,m\}$ be subsets.
Set $I:=I_1 \cup I_2$.
\begin{enumerate}
\item
There exists a surjection
\begin{align*}
\Face^\circ_{I_1} D(U) \times \Face^\circ_{I_2} D(U) \to
\Face^\circ_I D(U); \quad
(F_1,F_2) \mapsto F_1 \cap F_2.
\end{align*}
\item
We have
\begin{align*}
\Sigma_I=\{\sigma_1 \cap \sigma_2 \mid 
\sigma_1 \in \Sigma_{I_1}, \ \sigma_2 \in \Sigma_{I_2}\}.
\end{align*}
In particular, 
$\Sigma_I$ is a refinement of both $\Sigma_{I_1}$ and $\Sigma_{I_2}$.
\end{enumerate}
\end{Prop}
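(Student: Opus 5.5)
We must prove Proposition \ref{Prop_Sigma_I_cap}: (1) the map $(F_1,F_2)\mapsto F_1\cap F_2$ from $\Face^\circ_{I_1}D(U)\times\Face^\circ_{I_2}D(U)$ to $\Face^\circ_I D(U)$ is well defined and surjective, and (2) $\Sigma_I$ consists exactly of the intersections $\sigma_1\cap\sigma_2$.

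\emph{Well-definedness in (1).} Let $F_k\in\Face^\circ_{I_k}D(U)$. Then $F_1\cap F_2$ is a face of $D(U)$. By Proposition \ref{Prop_Face_I_iff}(1) it suffices to compute
\begin{align*}
(F_1\cap F_2)\cap C(U)=(F_1\cap C(U))\cap(F_2\cap C(U))=C(U/U_{I_1})\cap C(U/U_{I_2}).
\end{align*}
Since $[U_1],\ldots,[U_m]$ are linearly independent, this intersection is $C(U/U_{I_1\cup I_2})=C(U/U_I)$. Hence $F_1\cap F_2\in\Face^\circ_I D(U)$.

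\emph{Surjectivity in (1).} Let $F\in\Face^\circ_I D(U)$. For $k=1,2$ I would set $F_k$ to be the smallest face of $D(U)$ containing $F+C(U/U_{I_k})$; this set is convex, so such a face exists.

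First, $F_k\in\Face^\circ_{I_k}D(U)$. Since $I\supset I_k$, we have $C(U/U_{I_k})\supset C(U/U_I)$, so $F+C(U/U_{I_k})\subset F_k$ gives $F_k\cap C(U)\supset C(U/U_{I_k})$. Conversely, $F\in\Face_{I_k}D(U)$, so Proposition \ref{Prop_max_Face_circ}(1) gives $F+C(U/U_{I_k})\subset\partial_{I_k}$. As $\partial_{I_k}$ is a union of faces, I need the minimal face $F_k$ of this convex set to lie in $\partial_{I_k}$. I would get this exactly as in the proof of Proposition \ref{Prop_C_almost_F}: a relative-interior point of $F+C(U/U_{I_k})$ lies in the relative interior of $F_k$ and in some face contained in $\partial_{I_k}$, which must then contain $F_k$. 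Thus $F_k\cap C(U)\subset C(U/U_{I_k})$, and Proposition \ref{Prop_Face_I_iff} gives $F_k\in\Face^\circ_{I_k}D(U)$.

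Second, and this is the main obstacle, $F_1\cap F_2=F$. The inclusion $F\subset F_1\cap F_2$ is clear. For the reverse, take $\theta\in F_1\cap F_2$ and decompose it via $\lambda_U,\lambda'_U$ using Proposition \ref{Prop_C(U/U_I_F)}(2). The $C$-part lies in $C(U/U_I)\subset F$. For the $L$-part, I claim $F_k\cap L(U)=F\cap L(U)$. Write a point of $F+C(U/U_{I_k})$ as $\eta=\phi+c$; by Lemma \ref{Lem_axiom_proj}(1), $\lambda'_U(\eta)=\lambda'_U(\phi)\in F\cap L(U)$. I then need that the minimal face of $F+C(U/U_{I_k})$ has the same $L$-part. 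I would argue that $F_k$ is the set of $\theta\in D(U)$ with $\theta+\zeta\in F+C(U/U_{I_k})$ for some $\zeta\in F_k$ (the standard description of a minimal face), and then apply $\lambda'_U$. The difficulty is that $\lambda'_U$ is only piecewise linear. A fallback is to use the facet descriptions, Proposition \ref{Prop_face_dim}, to compare defining equations.

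\emph{(2).} Apply $\pi$ to (1) and use Lemma \ref{Lem_pi_cap}, which gives $\pi(F_1\cap F_2)=\pi(F_1)\cap\pi(F_2)$. This yields both inclusions of the claimed equality. The refinement statement follows by taking $\sigma_2=\pi(F_2)$ with $F_2$ chosen so that $\sigma_1\cap\sigma_2$ ranges appropriately, together with the fact that $\Sigma_{I_2}$ covers $K_0(\proj B)_\R$.
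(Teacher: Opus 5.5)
Your well-definedness argument, your proof that the smallest face $F_k$ containing $F+C(U/U_{I_k})$ lies in $\Face^\circ_{I_k}D(U)$, and your deduction of (2) from (1) via Lemma~\ref{Lem_pi_cap} are all fine. The gap is the reverse inclusion $F_1\cap F_2\subset F$. The route you propose for it rests on a false claim: in general $F_k\cap L(U)\neq F\cap L(U)$.

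Take Example~\ref{Ex_A4_Sigma_I} with $I_1=\{1\}$, $I_2=\{2\}$ and $F=F_1=\R_{\ge 0}\theta_1\in\Face^\circ_{\{1,2\}}D(U)$. Since $[U_2]=[P(1)]=\theta_1+\theta_3$, the smallest face containing $F_1+C(U_2)$ is $F_{13}$, and $F_{13}\cap L(U)=F_1\cup F_3\neq F_1$. Similarly, since $\theta_1+\theta_5=\theta_2+\theta_4$, the smallest face containing $F_1+C(U_1)$ is $F_{1245}$, with $F_{1245}\cap L(U)=F_{12}\cup F_{14}$. Here $F_{13}\cap F_{1245}=F_1$ does hold, but only because the two $L$-parts cut each other down.

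More generally, $\pi(F_k)$ is the smallest cone of $\Sigma_{I_k}$ containing $\pi(F)$. So $F_k\cap L(U)=\rho(\pi(F_k))$ is strictly larger than $F\cap L(U)$ whenever $\pi(F)\notin\Sigma_{I_k}$. Hence no argument that treats each $F_k$ separately through $\lambda'_U$ can work. Your fallback via Proposition~\ref{Prop_face_dim} is not developed into an actual argument either.

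The missing idea is to work with the facets containing $F$ and their labels $i_{F'}$. Since $D(U)$ is full-dimensional, $F$ is the intersection of all facets $F'\supset F$. For each such $F'$:
\begin{itemize}
\item Theorem~\ref{Thm_partial_facet}~(2) gives the unique index $i_{F'}$.
\item Proposition~\ref{Prop_facet_X_Y_almost}~(1) gives $F'\cap C(U)=C(U/U_{i_{F'}})$.
\item Since $C(U/U_I)=F\cap C(U)\subset F'\cap C(U)$, this forces $i_{F'}\in I=I_1\cup I_2$.
\item If $i_{F'}\in I_k$, then $F'\supset F+C(U/U_{i_{F'}})\supset F+C(U/U_{I_k})$, so $F'\supset F_k$ by minimality of $F_k$.
\end{itemize}
Therefore $F_1\cap F_2\subset\bigcap_{F'\supset F}F'=F$.

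With this step your proof goes through. In fact your $F_k$ coincides with $\bigcap\{F'\in\Facet D(U)\mid F\subset F',\ i_{F'}\in I_k\}$, which is exactly how the paper defines $F_k$. The paper proves $F=F_1\cap F_2$ by this same partition of the facets containing $F$, according to whether $i_{F'}\in I_1$ or $i_{F'}\in I_2$.
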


\begin{proof}
(1) 
This map is well-defined by Proposition \ref{Prop_Face_I_iff}.

To check the surjectivity, let $F \in \Face^\circ_I D(U)$.
We set $\frakF:=\{F' \in \Facet D(U) \mid F \subset F'\}$.
Then $F=\bigcap_{F' \in \frakF} F'$ is clear.
For each $j \in \{1,2\}$, we set
$\frakF_j:=\{F' \in \frakF \mid i_{F'}\in I_j\}$.
We show $\frakF=\frakF_1 \cup \frakF_2$.

The inclusion $\frakF_1 \cup \frakF_2 \subset \frakF$ is obvious.

To show the converse, let $F' \in \frakF$, 
and take the unique $i \in \{1,\ldots,m\}$ such that $F' \in \Facet_i D(U)$
by Theorem \ref{Thm_partial_facet} (2).
It suffices to show $i \in I$, because $I=I_1 \cup I_2$ by definition.
We have $F' \cap C(U)=C(U/U_i)$ 
by Proposition \ref{Prop_facet_X_Y_almost} (1).
Since $F \in \Face^\circ_I D(U)$, 
we get $F \cap C(U)=C(U/U_I)$ by Proposition \ref{Prop_C(U/U_I_F)} (1).
Then we have $C(U/U_I)=F \cap C(U) \subset F' \cap C(U)=C(U/U_i)$,
so we get $i \in I$.

Thus $\frakF=\frakF_1 \cup \frakF_2$ holds.
Setting $F_j:=\bigcap_{F' \in \frakF_j} F'$ for each $j \in \{1,2\}$,
we get $F=F_1 \cap F_2$.

Let $j \in \{1,2\}$.
It remains to show $F_j \in \Face^\circ_{I_j} D(U)$.
By assumption, $F \subset \partial_I \subset \partial_{I_j}$,
so for any $i \in I_j$, there exists $F' \in \Facet_i D(U)$
such that $F \subset F'$ by Theorem \ref{Thm_partial_facet} (1).
Thus $\{i_{F'} \mid F' \in \frakF_j\}=I_j$ holds.
Then we have $F_j \in \Face^\circ_{I_j} D(U)$ 
by Proposition \ref{Prop_Face_I_iff},
because $I_{F'}=\{i_{F'}\}$ for any $F' \in \Facet D(U)$.

(2) follows as
\begin{align*}
\Sigma_I&=\{\pi(F) \mid F \in \Face^\circ_I D(U)\}\\
&\stackrel{(1)}{=}
\{\pi(F_1 \cap F_2) \mid 
F_1 \in \Face^\circ_{I_1} D(U), \ F_2 \in \Face^\circ_{I_2} D(U)\}\\
&\stackrel{\text{Lem.~\ref{Lem_pi_cap}}}{=}
\{\pi(F_1) \cap \pi(F_2) \mid 
F_1 \in \Face^\circ_{I_1} D(U), \ F_2 \in \Face^\circ_{I_2} D(U)\}\\
&=\{\sigma_1 \cap \sigma_2 \mid 
\sigma_1 \in \Sigma_{I_1}, \ \sigma_2 \in \Sigma_{I_2}\}.
\qedhere
\end{align*}
\end{proof}

To show that $\Sigma_I$ is a generalized fan, 
we need the following general property.

\begin{Lem}\label{Lem_fiber}
Let $U=\bigoplus_{i=1}^m U_i \in \twopsilt A$.
For each subset $I \subset \{1,\ldots,m\}$, the following hold.
\begin{enumerate}
\item
The map $\pi|_{\partial_I} \colon \partial_I \to K_0(\proj B)_\R$ 
is surjective.
\item
Let $\theta \in D(U)$.
Then we have 
$\pi^{-1}(\pi(\theta)) \cap \partial_I=C(U/U_I)+\lambda'_U(\theta)$.
\item
Let $F \in \Face_I D(U)$.
Then $F \in \Face^\circ_I D(U)$ holds if and only if
$\pi^{-1}(\pi(F)) \cap \partial_I=F$.
\end{enumerate}
\end{Lem}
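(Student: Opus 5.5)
I will derive all three parts from the product decomposition of $\partial_I$ in Proposition \ref{Prop_C_L_partial} (2), together with the fact that $\pi$ kills $\R C(U)$ and is injective on $L(U)$ (Theorem \ref{Thm_pi_L(U)}).

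For (1), I use the decomposition $\partial_I=C(U/U_I)+L(U)$ from Proposition \ref{Prop_C_L_partial} (2). Since $L(U)\subset\partial_I$ (take $\eta=0$), I get $\pi(\partial_I)\supset\pi(L(U))$. The right-hand side is $K_0(\proj B)_\R$ by Theorem \ref{Thm_pi_L(U)} (2), so $\pi|_{\partial_I}$ is surjective.

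For (2), I prove both inclusions.
\begin{itemize}
\item[$\supset$] Take $\theta\in D(U)$ and $\eta\in C(U/U_I)$. Then $\eta+\lambda'_U(\theta)\in\partial_I$ by Proposition \ref{Prop_C_L_partial} (2), because $\lambda'_U(\theta)\in L(U)$ by Proposition \ref{Prop_lambda_image}. Moreover $\pi(\eta+\lambda'_U(\theta))=\pi(\lambda'_U(\theta))=\pi(\theta)$ by Theorem \ref{Thm_pi_L(U)} (1).
\item[$\subset$] Let $\zeta\in\partial_I$ with $\pi(\zeta)=\pi(\theta)$. By Proposition \ref{Prop_C_L_partial} (2), $\zeta=\lambda_U(\zeta)+\lambda'_U(\zeta)$ with $\lambda_U(\zeta)\in C(U/U_I)$ and $\lambda'_U(\zeta)\in L(U)$. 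By Theorem \ref{Thm_pi_L(U)} (1), $\pi(\lambda'_U(\zeta))=\pi(\zeta)=\pi(\theta)=\pi(\lambda'_U(\theta))$. Injectivity of $\pi|_{L(U)}$ then gives $\lambda'_U(\zeta)=\lambda'_U(\theta)$, so $\zeta\in C(U/U_I)+\lambda'_U(\theta)$.
\end{itemize}

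For (3), part (2) gives
\[
\pi^{-1}(\pi(F))\cap\partial_I=C(U/U_I)+\lambda'_U(F)=C(U/U_I)+(F\cap L(U)),
\]
using $\lambda'_U(F)=F\cap L(U)$ from Proposition \ref{Prop_C(U/U_I_F)} (1). On the other hand, Proposition \ref{Prop_C(U/U_I_F)} (2) gives $F=(F\cap C(U))+(F\cap L(U))$, and this sum is unique by Theorem \ref{Thm_C(U)_L(U)}. The two sets therefore agree if and only if $F\cap C(U)=C(U/U_I)$.
\begin{itemize}
\item[$\Rightarrow$] If $F\in\Face^\circ_I D(U)$, then $F\cap C(U)=C(U/U_I)$ by Proposition \ref{Prop_Face_I_iff}, and equality follows.
\item[$\Leftarrow$] Assume $\pi^{-1}(\pi(F))\cap\partial_I=F$. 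Since $C(U/U_I)+(F\cap L(U))$ contains $C(U/U_I)$, I get $C(U/U_I)\subset F\cap C(U)$. Because $F\in\Face_I D(U)$, Proposition \ref{Prop_Face_I_iff} (2) gives the reverse inclusion $F\cap C(U)\subset C(U/U_I)$. Hence $F\cap C(U)=C(U/U_I)$, and Proposition \ref{Prop_Face_I_iff} (1) gives $F\in\Face^\circ_I D(U)$.
\end{itemize}

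The main subtlety is in (3): identifying the fiber with a product set, and matching components through the uniqueness of the decomposition in Theorem \ref{Thm_C(U)_L(U)}. In the direction $\Leftarrow$, however, only the inclusion $C(U/U_I)\subset F$ is actually needed.
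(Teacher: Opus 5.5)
Your proof is correct and follows essentially the paper's route: (1) from $L(U)\subset\partial_I$ together with Theorem \ref{Thm_pi_L(U)}, (2) from the decomposition $C(U/U_I)\times L(U)\to\partial_I$ and the fact that $\lambda'_U$ is constant on fibres of $\pi$ in $D(U)$, and (3) by reducing to the condition $C(U/U_I)\subset F$ via Proposition \ref{Prop_Face_I_iff}. Your $\subset$ argument in (2) states explicitly the step $\lambda'_U(\zeta)=\lambda'_U(\theta)$, which the paper's write-up glosses over. In ($\Leftarrow$) of (3) you implicitly use $0\in F\cap L(U)$, which does hold.
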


\begin{proof}
(1) 
By definition,
$L(U)=\partial_{\{1,\ldots,m\}}\subset \partial_I$.
Thus Theorem \ref{Thm_pi_L(U)} (2) gives the assertion.

(2)
Note that $\theta \in \partial_I$ implies $\lambda_U(\theta) \in C(U/U_I)$
by Proposition \ref{Prop_partial_lambda}.

First, let $\eta \in \pi^{-1}(\pi(\theta)) \cap \partial_I$.
Then we have $\eta=\lambda_U(\theta)+\lambda'_U(\theta) \in 
C(U/U_I)+\lambda'_U(\theta)$.
Therefore $\pi^{-1}(\pi(\theta)) \cap \partial_I \subset 
C(U/U_I)+\lambda'_U(\theta)$ holds.

Conversely, assume $\eta \in C(U/U_I)+\lambda'_U(\theta)$.
Since $\lambda_U(\theta) \in C(U/U_I)$, we have 
\begin{align*}
\eta \in C(U/U_I)+\lambda'_U(\theta)
=C(U/U_I)-\lambda_U(\theta)+\theta \subset \R C(U/U_I)+\theta.
\end{align*}
Thus Definition-Proposition \ref{Def-Prop_pi} gives 
$\pi(\eta)=\pi(\theta)$, so $\eta \in \pi^{-1}(\pi(\theta))$.
Moreover we get
$\lambda'_U(\theta) \in L(U) \subset \partial_I$
by Proposition \ref{Prop_lambda_image} (1).
Thus Proposition \ref{Prop_C_L_partial} (3) implies
$\eta \in C(U/U_I)+\lambda'_U(\theta) \subset C(U/U_I)+\partial_I=\partial_I$.
Thus $\eta \in \pi^{-1}(\pi(\theta)) \cap \partial_I$ holds,
so we have $C(U/U_I)+\lambda'_U(\theta) \subset 
\pi^{-1}(\pi(\theta)) \cap \partial_I$.

(3)
Under the assumption $F \in \Face_I D(U)$,
Proposition \ref{Prop_Face_I_iff} implies that
$F \in \Face^\circ_I D(U)$ is equivalent to $C(U/U_I) \subset F$.
Since $\lambda'_U(F)=F \cap L(U) \subset F$ 
by Proposition \ref{Prop_C(U/U_I_F)} (1),
$C(U/U_I) \subset F$ holds if and only if $C(U/U_I)+\lambda'_U(F) \subset F$.
The latter is equivalent to 
$\pi^{-1}(\pi(F)) \cap \partial_I \subset F$ by (2),
and also to $\pi^{-1}(\pi(F)) \cap \partial_I=F$,
because $F \subset \pi^{-1}(\pi(F)) \cap \partial_I$ always holds
as long as $F \in \Face_I D(U)$.
\end{proof}

Then we can show that $\Sigma_I$ is a generalized fan in $K_0(\proj B)_\R$.

\begin{Prop}\label{Prop_Sigma_I_fan}
Let $U=\bigoplus_{i=1}^m U_i \in \twopsilt A$.
For each subset $I \subset \{1,\ldots,m\}$, 
the set $\Sigma_I$ is a finite complete generalized fan in $K_0(\proj B)_\R$,
and there exist mutually inverse bijections
\begin{align*}
\Face^\circ_I D(U) & \to \Sigma_I, &
F & \mapsto \pi(F), \\
\Sigma_I & \to \Face^\circ_I D(U), &
\sigma & \mapsto \pi^{-1}(\sigma) \cap \partial_I
\end{align*}
which preserve inclusions and intersections.
\end{Prop}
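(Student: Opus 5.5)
The plan is to apply Lemma \ref{Lem_induce_fan} to the linear map $f:=\pi \colon K_0(\proj A)_\R \to K_0(\proj B)_\R$ and the generalized fan $\Sigma:=\Face_I D(U)$. By Proposition \ref{Prop_Face_I_fan}, $\Sigma$ is a generalized fan with support $X:=\partial_I$, and it is finite because $D(U)$ is a polyhedral cone with finitely many faces. In the notation of that lemma,
\begin{align*}
\Sigma_\times=\{F \in \Face_I D(U) \mid \pi^{-1}(\pi(F)) \cap \partial_I=F\}.
\end{align*}
By Lemma \ref{Lem_fiber} (3), this set equals exactly $\Face^\circ_I D(U)$. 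Consequently the set $\Sigma'$ produced by Lemma \ref{Lem_induce_fan} (1) is precisely our $\Sigma_I$.

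Lemma \ref{Lem_induce_fan} (1) then shows that $\Sigma_I$ is a generalized fan, and it is finite because $\Face^\circ_I D(U)$ is finite. Lemma \ref{Lem_induce_fan} (2) supplies the mutually inverse bijections $F \mapsto \pi(F)$ and $\sigma \mapsto \pi^{-1}(\sigma) \cap \partial_I$ between $\Face^\circ_I D(U)$ and $\Sigma_I$.

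For completeness, I would apply Lemma \ref{Lem_induce_fan} (3). This needs the inclusion $\max \Face_I D(U) \subset \Face^\circ_I D(U)$, which is Proposition \ref{Prop_max_Face_circ} (2). Lemma \ref{Lem_induce_fan} (3) then gives that the support of $\Sigma_I$ is $\pi(\partial_I)$, and Lemma \ref{Lem_fiber} (1) shows $\pi(\partial_I)=K_0(\proj B)_\R$.

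It remains to check that the bijections preserve inclusions and intersections. Both maps $F\mapsto\pi(F)$ and $\sigma\mapsto\pi^{-1}(\sigma)\cap\partial_I$ are obviously monotone, so mutually inverse monotone bijections preserve inclusions. For intersections I would proceed in two steps.
\begin{enumerate}
\item Lemma \ref{Lem_pi_cap} gives $\pi(F_1\cap F_2)=\pi(F_1)\cap\pi(F_2)$ directly.
\item Show that $F_1 \cap F_2 \in \Face^\circ_I D(U)$ whenever $F_1,F_2 \in \Face^\circ_I D(U)$. This follows from Proposition \ref{Prop_Sigma_I_cap} (1) with $I_1=I_2=I$, or directly from Proposition \ref{Prop_Face_I_iff}, since $(F_1\cap F_2)\cap C(U)=C(U/U_I)$.
\end{enumerate}
Dually, $\pi^{-1}(\sigma_1\cap\sigma_2)\cap\partial_I=(\pi^{-1}(\sigma_1)\cap\partial_I)\cap(\pi^{-1}(\sigma_2)\cap\partial_I)$ holds trivially.

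The main subtlety is this closure of $\Face^\circ_I D(U)$ under intersection. It is exactly what makes "preserve intersections" meaningful inside $\Face^\circ_I D(U)$, and Proposition \ref{Prop_Face_I_iff} handles it. Everything else is a direct assembly of earlier results.
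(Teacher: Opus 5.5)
Your proposal is correct and follows the paper's proof essentially step for step. You apply Lemma \ref{Lem_induce_fan} to $\pi$ and the fan $\Face_I D(U)$ from Proposition \ref{Prop_Face_I_fan}, identify $\Sigma_\times=\Face^\circ_I D(U)$ via Lemma \ref{Lem_fiber} (3), obtain completeness from Proposition \ref{Prop_max_Face_circ} (2) and Lemma \ref{Lem_fiber} (1), and get preservation of intersections from Lemma \ref{Lem_pi_cap}. Your extra verification that $\Face^\circ_I D(U)$ is closed under intersections, via Proposition \ref{Prop_Face_I_iff}, is valid; it makes explicit what the paper leaves implicit, and it would also follow from $\Sigma_I$ being a generalized fan together with the bijection.
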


\begin{proof}
By Proposition \ref{Prop_Face_I_fan},
$\Face_I D(U)$ is a generalized fan in $K_0(\proj A)_\R$
whose support is $\partial_I$.
By Lemma \ref{Lem_fiber} (3), we have
\begin{align*}
\Face^\circ_I D(U)=\{F \in \Face_I D(U) \mid
\pi^{-1}(\pi(F)) \cap \partial_I=F\}.
\end{align*}
Thus by Lemma \ref{Lem_induce_fan} (1)(2), 
$\Sigma_I$ is a finite generalized fan in $K_0(\proj B)_\R$,
and we obtain the desired bijections.
These bijections clearly preserve inclusions,
and preserve intersections by Lemma \ref{Lem_pi_cap}.

By Propositon \ref{Prop_max_Face_circ} (2),
the support of $\Sigma_I$ in $K_0(\proj B)_\R$ is $\pi(\partial_I)$ 
by Lemma \ref{Lem_induce_fan} (3).
Then Lemma \ref{Lem_fiber} (1) gives $\pi(\partial_I)=K_0(\proj B)_\R$,
that is, $\Sigma_I$ is complete.
\end{proof}

As in Theorem \ref{Thm_pi_L(U)} (2),
the restriction $\pi|_{L(U)} \colon L(U) \to K_0(\proj B)_\R$ is bijective,
so we define a map $\rho \colon K_0(\proj B)_\R \to L(U)$ as its inverse.
Then $\rho$ is a piecewise $\R$-linear map,
since $L(U)$ is a union of faces of $D(U)$.

We have the following relationship on the faces
in $\Face^\circ_I D(U)$ and $\Sigma_I$.

\begin{Prop}\label{Prop_Sigma_I_dim}
Let $U=\bigoplus_{i=1}^m U_i \in \twopsilt A$,
and $I \subset \{1,\ldots,m\}$ be a subset. 
For any $F \in \Face^\circ_I D(U)$, we have
\begin{align*}
\dim_\R F=(m-\#I)+\dim_\R \pi(F), \quad F=C(U/U_I)+\rho(\pi(F)).
\end{align*}
\end{Prop}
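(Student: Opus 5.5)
The plan is to use the product decomposition of a face in Proposition \ref{Prop_C(U/U_I_F)} (2), together with the bijectivity of $\pi|_{L(U)}$ in Theorem \ref{Thm_pi_L(U)} (2). Let $F \in \Face^\circ_I D(U)$. By Proposition \ref{Prop_C(U/U_I_F)} (1) we have $F \cap C(U)=C(U/U_{I_F})=C(U/U_I)$ and $\pi(F)=\pi(F \cap L(U))$. Since $\rho$ is the inverse of $\pi|_{L(U)}$, this gives $\rho(\pi(F))=F \cap L(U)$. Proposition \ref{Prop_C(U/U_I_F)} (2) then yields
\begin{align*}
F=(F\cap C(U))+(F\cap L(U))=C(U/U_I)+\rho(\pi(F)),
\end{align*}
which is the second equality.

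For the dimension formula, I will compute $\R F$. Since $0 \in C(U/U_I)$ and $0 \in F\cap L(U)$ (because $0=\lambda'_U(0)$ lies in $F$, and $F \cap L(U)=\lambda'_U(F)$), both summands are contained in $F$. Hence $\R F=\R C(U/U_I)+\R(F\cap L(U))$. I claim this sum is direct modulo the identification given by $\pi$. More precisely, I will show that $\pi$ restricted to $\R F$ has kernel exactly $\R C(U/U_I)$.

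The inclusion $\R C(U/U_I) \subset \Ker\pi$ is clear, since $\Ker\pi=\R C(U)$. For the converse, note that $F$ is convex, so every element of $\R F$ can be written as $\theta_1-\theta_2$ with $\theta_1,\theta_2 \in F$. Both $\theta_1$ and $\theta_2$ lie in $\partial_I$, because $F \subset \partial_I$ for $F \in \Face^\circ_I D(U)$. Therefore
\begin{align*}
\R F \cap \Ker \pi \subset \R C(U) \cap (\partial_I+(-\partial_I))=\R C(U/U_I)
\end{align*}
by Proposition \ref{Prop_diff_partial}. It follows that $\dim_\R F=\dim_\R \R C(U/U_I)+\dim_\R \pi(\R F)$. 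Here $\pi(\R F)=\R\pi(F)$, and $\dim_\R C(U/U_I)=m-\#I$ because the classes $[U_i]$ are linearly independent.

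The main obstacle is the kernel computation. One must justify writing elements of $\R F$ as differences of elements of $F$, which uses the convexity of the cone $F$, and then invoke Proposition \ref{Prop_diff_partial} with $F \subset \partial_I$. Everything else follows formally from the earlier propositions.
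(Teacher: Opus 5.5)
Your proposal is correct and follows the paper's proof essentially step by step. You get the dimension formula from rank--nullity, using $\R F \cap \Ker\pi = \R C(U/U_I)$, which comes from $C(U/U_I)\subset F\subset\partial_I$ and Proposition \ref{Prop_diff_partial}; you get $F = C(U/U_I)+\rho(\pi(F))$ from Proposition \ref{Prop_C(U/U_I_F)}. The only cosmetic difference is that the paper phrases the second part as $\lambda'_U=\rho\circ\pi$ on $D(U)$, whereas you identify $\rho(\pi(F))=F\cap L(U)$ directly, which amounts to the same thing.
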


\begin{proof}
On the dimensions, we have
\begin{align*}
\dim_\R F=\dim_\R \R F
&=\dim_\R (\Ker \pi \cap \R F)+\dim_\R \pi(\R F)\\
&\stackrel{\text{Def.-Prop.~\ref{Def-Prop_pi}}}{=}
\dim_\R (\R C(U) \cap \R F)+\dim_\R \pi(\R F)\\
&=\dim_\R (\R C(U) \cap \R F)+\dim_\R \pi(F).
\end{align*}
Moreover, $C(U/U_I)=F \cap C(U)$ follows 
from Proposition \ref{Prop_Face_I_iff},
so we get
\begin{align*}
\R C(U/U_I) \subset \R C(U) \cap \R F
\stackrel{F \in \Face^\circ_I D(U)}{\subset} 
\R C(U) \cap (\partial_I+(-\partial_I)) 
\stackrel{\text{Prop.~\ref{Prop_diff_partial}}}{\subset}
\R C(U/U_I).
\end{align*}
Thus $\R C(U) \cap \R F=\R C(U/U_I)$, so its dimension is $m-\#I$.

We next show $F=C(U/U_I)+\rho(\pi(F))$.
Theorem \ref{Thm_pi_L(U)} (1) 
and $L(U)=\lambda'_U(D(U))$ from Proposition \ref{Prop_lambda_image} (1) imply
$\pi|_{L(U)} \circ \lambda'_U=\pi \colon D(U) \to K_0(\proj B)_\R$,
so we have $\lambda'_U=\rho \circ \pi \colon D(U) \to L(U)$, and
\begin{align*}
F &\stackrel{\text{Prop.~\ref{Prop_C(U/U_I_F)}}}{=} C(U/U_I)+\lambda'_U(F)
\stackrel{\lambda'_U=\rho \circ \pi}{=} C(U/U_I)+\rho(\pi(F)).
\qedhere
\end{align*}
\end{proof}

Now we have the main result in this subsection.

\begin{Thm}\label{Thm_Sigma_I}
Let $U=\bigoplus_{i=1}^m U_i \in \twopsilt A$,
and $I \subset \{1,\ldots,m\}$ be a subset. 
Then $\Sigma_I$ is a finite complete generalized fan in $K_0(\proj B)_\R$,
and there exist mutually inverse bijections
\begin{align*}
\Face^\circ_I D(U) & \to \Sigma_I, &
F & \mapsto \pi(F), \\
\Sigma_I & \to \Face^\circ_I D(U), &
\sigma & \mapsto \pi^{-1}(\sigma) \cap \partial_I=C(U/U_I)+\rho(\sigma)
\end{align*}
which preserve inclusions and intersections.
Moreover for any $F \in \Face^\circ_I D(U)$, we have
\begin{align*}
\dim_\R F=(m-\#I)+\dim_\R \pi(F).
\end{align*}
\end{Thm}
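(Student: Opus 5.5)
This theorem is essentially a repackaging of Proposition \ref{Prop_Sigma_I_fan} and Proposition \ref{Prop_Sigma_I_dim}; the only new ingredient is the identification of the inverse map with $\sigma \mapsto C(U/U_I)+\rho(\sigma)$. I would therefore assemble the proof from those two results.

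First, Proposition \ref{Prop_Sigma_I_fan} already gives three things. It shows that $\Sigma_I$ is a finite complete generalized fan in $K_0(\proj B)_\R$. It shows that $F \mapsto \pi(F)$ and $\sigma \mapsto \pi^{-1}(\sigma) \cap \partial_I$ are mutually inverse bijections between $\Face^\circ_I D(U)$ and $\Sigma_I$. It also shows that these bijections preserve inclusions and intersections. The dimension formula $\dim_\R F=(m-\#I)+\dim_\R \pi(F)$ is exactly the first part of Proposition \ref{Prop_Sigma_I_dim}.

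It remains to show $\pi^{-1}(\sigma) \cap \partial_I=C(U/U_I)+\rho(\sigma)$ for each $\sigma \in \Sigma_I$. Write $\sigma=\pi(F)$ with $F \in \Face^\circ_I D(U)$. By the bijection above, $\pi^{-1}(\sigma) \cap \partial_I=F$. By the second part of Proposition \ref{Prop_Sigma_I_dim}, $F=C(U/U_I)+\rho(\pi(F))=C(U/U_I)+\rho(\sigma)$. Combining the two equalities gives the claim.

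Alternatively, one could argue pointwise, using Lemma \ref{Lem_fiber} (2) together with $\lambda'_U=\rho\circ\pi$ on $D(U)$. Taking the union over $\theta \in \pi^{-1}(\sigma)\cap D(U)$ would give the same set, but it then requires checking $\pi(D(U) \cap \pi^{-1}(\sigma))=\sigma$. Routing through $F$ avoids this check.

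I expect no real obstacle, since all substantive work is in the cited propositions. The only delicate point is to make sure $\sigma$ is written as $\pi(F)$ for a face $F$ in $\Face^\circ_I D(U)$, not merely in $\Face_I D(U)$. That requirement holds by the definition of $\Sigma_I$.
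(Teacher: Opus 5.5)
Your proposal is correct and matches the paper's proof, which likewise just combines Propositions \ref{Prop_Sigma_I_fan} and \ref{Prop_Sigma_I_dim}. Your explicit step $\pi^{-1}(\sigma)\cap\partial_I=F=C(U/U_I)+\rho(\sigma)$, with $\sigma=\pi(F)$ and $F\in\Face^\circ_I D(U)$, is exactly how those two results fit together.
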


\begin{proof}
The assertions follow from 
Propositions \ref{Prop_Sigma_I_fan} and \ref{Prop_Sigma_I_dim}.
\end{proof}

Applying Theorem \ref{Thm_Sigma_I} 
to all subsets $I \subset \{1,\ldots,m\}$,
we have the following results.

\begin{Cor}\label{Cor_Sigma_I_all}
Let $U=\bigoplus_{i=1}^m U_i \in \twopsilt A$.
\begin{enumerate}
\item
There exists a bijection
\begin{align*}
\{ (I,\sigma) \mid I \subset \{1,\ldots,m\}, \ \sigma \in \Sigma_I \}
&\to \Face D(U), \\
(I,\sigma) &\mapsto C(U/U_I)+\rho(\sigma),
\end{align*}
whose inverse map sends $F \in \Face D(U)$ to 
$(I_F,\pi(F))$.
\item
Let $F \in \Face D(U)$ correspond to $(I,\sigma)$ in (1).
Then we have 
\begin{align*}
&I_F=I,\quad F \in \Face^\circ_I D(U), \quad \pi(F)=\sigma,\\
&\dim_\R F=(m-\#I)+\dim_\R \sigma, \quad
\lambda_U(F)=C(U/U_I), \quad \lambda'_U(F)=\rho(\sigma).
\end{align*}
\item
Let $F_1,F_2 \in \Face D(U)$ correspond to 
$(I_1,\sigma_1),(I_2,\sigma_2)$ in (1).
Then $F_1 \subset F_2$ holds if and only if
$I_1 \supset I_2$ and $\sigma_1 \subset \sigma_2$. 
\end{enumerate}
\end{Cor}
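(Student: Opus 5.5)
The plan is to assemble Corollary \ref{Cor_Sigma_I_all} directly from Theorem \ref{Thm_Sigma_I}, the decomposition $\Face D(U)=\bigsqcup_{I}\Face^\circ_I D(U)$, and the characterization of $\Face^\circ_I D(U)$ in Proposition \ref{Prop_Face_I_iff}. The main work is in part (3).

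For (1), I will use that the sets $\Face^\circ_I D(U)$ partition $\Face D(U)$, with $F$ lying in $\Face^\circ_{I_F} D(U)$. For each fixed $I$, Theorem \ref{Thm_Sigma_I} gives mutually inverse bijections $\Sigma_I\to\Face^\circ_I D(U)$, $\sigma\mapsto C(U/U_I)+\rho(\sigma)$, and $F\mapsto\pi(F)$. Taking the disjoint union over $I$ yields the bijection in (1), and its inverse is $F\mapsto(I_F,\pi(F))$.

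For (2), the statements $I_F=I$, $F\in\Face^\circ_I D(U)$ and $\pi(F)=\sigma$ are just unwinding (1). The dimension formula is Theorem \ref{Thm_Sigma_I}. Proposition \ref{Prop_C(U/U_I_F)} (1) gives $\lambda_U(F)=C(U/U_{I_F})=C(U/U_I)$. For $\lambda'_U(F)$ I will use $\lambda'_U=\rho\circ\pi$ on $D(U)$, as established in the proof of Proposition \ref{Prop_Sigma_I_dim}. This gives $\lambda'_U(F)=\rho(\pi(F))=\rho(\sigma)$.

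For (3), suppose first that $F_1\subset F_2$. Then $\sigma_1=\pi(F_1)\subset\pi(F_2)=\sigma_2$ is immediate. Also, if $F_2\subset\partial_i$ then $F_1\subset\partial_i$, so $I_1\supset I_2$.

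Conversely, suppose $I_1\supset I_2$ and $\sigma_1\subset\sigma_2$. Then $C(U/U_{I_1})\subset C(U/U_{I_2})$ and $\rho(\sigma_1)\subset\rho(\sigma_2)$, since $\rho$ is a map of sets. Adding these gives
\begin{align*}
F_1=C(U/U_{I_1})+\rho(\sigma_1)\subset C(U/U_{I_2})+\rho(\sigma_2)=F_2 .
\end{align*}
The only point needing care is that this uses the explicit formula $F=C(U/U_I)+\rho(\sigma)$ from Theorem \ref{Thm_Sigma_I}, and not merely inclusions among the images $\pi(F_j)$. With that formula in hand, no step presents a real obstacle.
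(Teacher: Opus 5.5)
Your proposal is correct and follows the paper's proof essentially step by step. Part (1) glues the bijections of Theorem \ref{Thm_Sigma_I} over the partition $\Face D(U)=\bigsqcup_I \Face^\circ_I D(U)$, part (2) uses $\lambda'_U=\rho\circ\pi$ on $D(U)$, and the ``if'' direction of (3) uses the explicit formula $F=C(U/U_I)+\rho(\sigma)$. The only difference is cosmetic: in the ``only if'' direction of (3) you read off $I_1\supset I_2$ directly from the definition of $I_F$, while the paper deduces it from $\lambda_U(F_1)=C(U/U_{I_1})\subset\lambda_U(F_2)=C(U/U_{I_2})$; both arguments are valid.
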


\begin{proof}
(1) follows from Theorem \ref{Thm_Sigma_I}, since $\Face D(U)$ 
is the disjoint union of all $\Face^\circ_I D(U)$.

(2)
We have $I_F=I$, $F \in \Face^\circ_I D(U)$ and $\pi(F)=\sigma$ by (1).
Then $\dim_\R F=(m-\#I)+\dim_\R \sigma$ follows 
from Theorem \ref{Thm_Sigma_I}.
Moreover $F \in \Face^\circ_I D(U)$ implies
$\lambda_U(F)=C(U/U_I)$ by Proposition \ref{Prop_Face_I_iff}.

It remains to show $\lambda'_U(F)=\rho(\sigma)$. 
By Proposition \ref{Prop_lambda_image} (1),
$\lambda'_U(F) \subset L(U)$ holds,
so we get $\lambda'_U(F)=\rho(\pi(\lambda'_U(F))$.
Its right-hand side is $\rho(\pi(F))$ by Theorem \ref{Thm_pi_L(U)} (1),
and it is $\rho(\sigma)$.

(3)
The ``if'' part follows as
$F_1=C(U/U_{I_1})+\rho(\sigma_1) \subset C(U/U_{I_2})+\rho(\sigma_2)=F_2$.

For the ``only if'' part,
$F_1 \subset F_2$ and (2) imply $C(U/U_{I_1})
=\lambda_U(F_1) \subset \lambda_U(F_2)=C(U/U_{I_2})$, 
so $I_1 \supset I_2$.
Also $F_1 \subset F_2$ gives
$\sigma_1=\pi(F_1) \subset \pi(F_2)=\sigma_2$.
\end{proof}

We give an explicit example of Corollary \ref{Cor_Sigma_I_all}.

\begin{Ex}\label{Ex_A4_Sigma_I}
As in Example \ref{Ex_A4_D(U)}, 
let $A$ be the path algebra of the quiver $1 \to 2 \to 3 \to 4$,
and $U=U_1 \oplus U_2  \in \twopsilt A$
with $U_1=(P(4) \to P(3))$ and $U_2=P(1)$.
\begin{align*}
\begin{tikzpicture}[baseline=0pt,scale=0.8]
\node (F3)[coordinate,label= 90:{$\theta_3=[P(1)]-[P(2)]$}] at ( 0  , 3  ) {};
\node (F1)[coordinate,label=180:{$\theta_1=[P(2)]$}]        at (-3  , 0  ) {};
\node (F2)[coordinate,label=270:{$\theta_2=[P(3)]$}]        at (-1.5,-1  ) {};
\node (F5)[coordinate,label=  0:{$\theta_5=[U_1]=[P(3)]-[P(4)]$}] 
at ( 2  , 0  ) {};
\node (F4)[coordinate]                                      at ( 0.5, 1  ) {};
\node (U2)[coordinate,label=180:{$[U_2]/2 =[P(1)]/2$}]      at (-1.5, 1.5) {};
\node (F4')  at ( 0.5, 1  ) {};
\node (F4'') at ( 3.5, 1  ) {$\theta_4=[P(2)]-[P(4)]$};
\draw (F3) to (F1);
\draw (F3) to (F2);
\draw[dashed] (F3) to (F4);
\draw (F3) to (F5);
\draw (F1) to (F2);
\draw (F2) to (F5);
\draw[dashed] (F5) to (F4);
\draw[dashed] (F4) to (F1);
\draw[dashed,very thick] (U2) to (F5);
\draw[->] (F4'') to (F4');
\draw[fill=black] (F5) circle [radius=0.1];
\draw[fill=black] (U2) circle [radius=0.1];
\end{tikzpicture}.
\end{align*}

As in Example \ref{Ex_A4_L_F},
we set $F_i:=\R_{\ge 0}\theta_i$ for each $i \in \{1,\ldots,5\}$.
It is easy to see that $F_1,\ldots,F_5$
are all the distinct 1-dimensional faces of $D(U)$.
Moreover we abbreviate $F_1+F_3+F_4$ as $F_{134}$.

Under this notation, we determine $\Face^\circ_I D(U)$
for all subsets $I \subset \{1,2\}$.
The smallest face containing $C(U_1),C(U_2)$ are $F_5,F_{13}$,
respectively.
Thus for each $F \in \Face D(U)$, 
the subset $I_F \subset \{1,2\}$ is determined by, 
$1 \in I_F$ if and only if $F_5 \not \subset F$; and
$2 \in I_F$ if and only if $F_{13} \not \subset F$.
Therefore we have the following table showing
the $d$-dimensional faces in $\Face^\circ_I D(U)$.
\begin{center}
\begin{tabular}{c|ccccc}
$I$ & $d=0$ & $d=1$ & $d=2$ & $d=3$ & $d=4$ \\
\hline
$\emptyset$ & & & & & $F_{12345}$ \\
$\{1\}$ & & & $F_{13}$ & $F_{123},F_{134}$ & \\
$\{2\}$ & & $F_5$ & $F_{25},F_{35},F_{45}$ & $F_{1245},F_{235},F_{345}$ & \\
$\{1,2\}$ & $\{0\}$ & $F_1,F_2,F_3,F_4$ & $F_{12},F_{14},F_{23},F_{34}$
\end{tabular}
\end{center} 

In Example \ref{Ex_A4_D(U)}, we have obtained
$\pi(\theta_1)=[P(1)_B]$, $\pi(\theta_2)=[P(2)_B]$,
$\pi(\theta_3)=-[P(1)_B]$, $\pi(\theta_4)=[P(1)_B]-[P(2)_B]$.
Therefore $\Sigma_I=\pi(\Face^\circ_I D(U))$ for 
each $I \subset \{1,2\}$ is given as follows,
where $134$ means $\sigma_{134}:=\pi(F_{134})$ and so on.
\begin{align*}
\begin{tikzpicture}[baseline=0pt,scale=0.8]
\node (00)[coordinate] at ( 0, 0) {};
\node (+0)[coordinate] at ( 2, 0) {};
\node (++)[coordinate] at ( 2, 2) {};
\node (0+)[coordinate] at ( 0, 2) {};
\node (-+)[coordinate] at (-2, 2) {};
\node (-0)[coordinate] at (-2, 0) {};
\node (--)[coordinate] at (-2,-2) {};
\node (0-)[coordinate] at ( 0,-2) {};
\node (+-)[coordinate] at ( 2,-2) {};
\node (I)              at ( 0,-2.5) {$I=\emptyset$};
\fill[black!20] (+-)--(++)--(-+)--(--)--cycle;
\draw[dashed,->] (00) to (+0);
\draw[dashed,->] (00) to (0+);
\draw[dashed,->] (00) to (-0);
\draw[dashed,->] (00) to (0-);
\node[fill=white] at (   0,   0) {$\scriptstyle{12345}$};
\end{tikzpicture}&&
\begin{tikzpicture}[baseline=0pt,scale=0.8]
\node (00)[coordinate] at ( 0, 0) {};
\node (+0)[coordinate] at ( 2, 0) {};
\node (++)[coordinate] at ( 2, 2) {};
\node (0+)[coordinate] at ( 0, 2) {};
\node (-+)[coordinate] at (-2, 2) {};
\node (-0)[coordinate] at (-2, 0) {};
\node (--)[coordinate] at (-2,-2) {};
\node (0-)[coordinate] at ( 0,-2) {};
\node (+-)[coordinate] at ( 2,-2) {};
\node (I)              at ( 0,-2.5) {$I=\{1\}$};
\fill[black!10] (+0)--(++)--(-+)--(-0)--cycle;
\fill[black!30] (+0)--(+-)--(--)--(-0)--cycle;
\draw[dashed,->] (00) to (+0);
\draw[dashed,->] (00) to (0+);
\draw[dashed,->] (00) to (-0);
\draw[dashed,->] (00) to (0-);
\draw[very thick] (-0) to (+0);
\node[fill=white] at (   0,   0) {$\scriptstyle{13}$};
\node[fill=white] at (   0, 1.2) {$\scriptstyle{123}$};
\node[fill=white] at (   0,-1.2) {$\scriptstyle{134}$};
\end{tikzpicture}&&
\begin{tikzpicture}[baseline=0pt,scale=0.8]
\node (00)[coordinate] at ( 0, 0) {};
\node (+0)[coordinate] at ( 2, 0) {};
\node (++)[coordinate] at ( 2, 2) {};
\node (0+)[coordinate] at ( 0, 2) {};
\node (-+)[coordinate] at (-2, 2) {};
\node (-0)[coordinate] at (-2, 0) {};
\node (--)[coordinate] at (-2,-2) {};
\node (0-)[coordinate] at ( 0,-2) {};
\node (+-)[coordinate] at ( 2,-2) {};
\node (I)              at ( 0,-2.5) {$I=\{2\}$};
\fill[black!10] (00)--(+-)--(++)--(0+)--cycle;
\fill[black!30] (00)--(0+)--(-+)--(-0)--cycle;
\fill[black!20] (00)--(-0)--(--)--(+-)--cycle;
\draw[dashed,->] (00) to (+0);
\draw[dashed,->] (00) to (0+);
\draw[dashed,->] (00) to (-0);
\draw[dashed,->] (00) to (0-);
\draw[very thick] (00) to (0+);
\draw[very thick] (00) to (-0);
\draw[very thick] (00) to (+-);
\node[fill=white] at (   0,   0) {$\scriptstyle{5}$};
\node[fill=white] at (   0, 1.2) {$\scriptstyle{25}$};
\node[fill=white] at (-1.2,   0) {$\scriptstyle{35}$};
\node[fill=white] at ( 1.2,-1.2) {$\scriptstyle{45}$};
\node[fill=white] at ( 1.2, 0.6) {$\scriptstyle{1245}$};
\node[fill=white] at (-1.2, 1.2) {$\scriptstyle{235}$};
\node[fill=white] at (-0.6,-1.2) {$\scriptstyle{345}$};
\end{tikzpicture}&&
\begin{tikzpicture}[baseline=0pt,scale=0.8]
\node (00)[coordinate] at ( 0, 0) {};
\node (+0)[coordinate] at ( 2, 0) {};
\node (++)[coordinate] at ( 2, 2) {};
\node (0+)[coordinate] at ( 0, 2) {};
\node (-+)[coordinate] at (-2, 2) {};
\node (-0)[coordinate] at (-2, 0) {};
\node (--)[coordinate] at (-2,-2) {};
\node (0-)[coordinate] at ( 0,-2) {};
\node (+-)[coordinate] at ( 2,-2) {};
\node (I)              at ( 0,-2.5) {$I=\{1,2\}$};
\fill[black!10] (00)--(+0)--(++)--(0+)--cycle;
\fill[black!30] (00)--(0+)--(-+)--(-0)--cycle;
\fill[black!20] (00)--(-0)--(--)--(+-)--cycle;
\fill[black!40] (00)--(+-)--(+0)--cycle;
\draw[dashed,->] (00) to (+0);
\draw[dashed,->] (00) to (0+);
\draw[dashed,->] (00) to (-0);
\draw[dashed,->] (00) to (0-);
\draw[very thick] (00) to (+0);
\draw[very thick] (00) to (0+);
\draw[very thick] (00) to (-0);
\draw[very thick] (00) to (+-);
\node[fill=white] at (   0,   0) {$\scriptstyle{0}$};
\node[fill=white] at ( 1.2,   0) {$\scriptstyle{1}$};
\node[fill=white] at (   0, 1.2) {$\scriptstyle{2}$};
\node[fill=white] at (-1.2,   0) {$\scriptstyle{3}$};
\node[fill=white] at ( 1.2,-1.2) {$\scriptstyle{4}$};
\node[fill=white] at ( 1.2, 1.2) {$\scriptstyle{12}$};
\node[fill=white] at (-1.2, 1.2) {$\scriptstyle{23}$};
\node[fill=white] at (-0.6,-1.2) {$\scriptstyle{34}$};
\node[fill=white] at ( 1.5,-0.6) {$\scriptstyle{14}$};
\end{tikzpicture}.
\end{align*}
The following table gives the $d$-dimensional faces in $\Sigma_I$.
\begin{center}
\begin{tabular}{c|ccc}
$I$ & $d=0$ & $d=1$ & $d=2$ \\
\hline
$\emptyset$ & & & $\sigma_{12345}$ \\
$\{1\}$ & & $\sigma_{13}$ & $\sigma_{123},\sigma_{134}$ \\
$\{2\}$ & $\sigma_5$ & $\sigma_{25},\sigma_{35},\sigma_{45}$ & 
$\sigma_{1245},\sigma_{235},\sigma_{345}$ \\
$\{1,2\}$ & $\{0\}$ & $\sigma_1,\sigma_2,\sigma_3,\sigma_4$ & 
$\sigma_{12},\sigma_{14},\sigma_{23},\sigma_{34}$
\end{tabular}
\end{center} 

By the two tables,
we can see the equality $\dim_\R F=(2-\#I)+\dim_\R \pi(F)$
in Corollary \ref{Cor_Sigma_I_all} (2) surely holds
for each $I \subset \{1,2\}$ and $F \in \Face^\circ_I D(U)$.

We can also check that $\Sigma_{\{1,2\}}$
consists of $\sigma_1 \cap \sigma_2$ 
for all $\sigma_1 \in \Sigma_{\{1\}}$ and $\sigma_2 \in \Sigma_{\{2\}}$
as shown in Proposition \ref{Prop_Sigma_I_cap}.
\end{Ex}

We have the following useful information on maximal elements.

\begin{Cor}\label{Cor_max_Face_dim}
Let $U=\bigoplus_{i=1}^m U_i \in \twopsilt A$,
and $I \subset \{1,\ldots,m\}$ be a subset.
For any $F \in \Face^\circ_I D(U)$,
the following conditions are equivalent.
\begin{enumerate2}
\item
The polyhedral cone $\pi(F)$ is in $\max \Sigma_I$.
\item
The face $F$ is in $\max \Face_I D(U)$.
\item
We have $\dim_\R \pi(F)=n-m$.
\item
We have $\dim_\R F=n-\#I$.
\end{enumerate2}
Therefore if $I=\{i\}$, 
then $\max \Face_{\{i\}} D(U)=\Facet_i D(U)$ holds.
\end{Cor}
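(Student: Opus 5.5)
Throughout, $F\in\Face^\circ_I D(U)$ is fixed, and I will use two facts already established: the dimension formula $\dim_\R F=(m-\#I)+\dim_\R\pi(F)$ from Theorem \ref{Thm_Sigma_I}, and the inclusion- and intersection-preserving bijection $\Face^\circ_I D(U)\leftrightarrow\Sigma_I$ given by $F\mapsto\pi(F)$. Under the formula, (c)$\Leftrightarrow$(d) is immediate, since $\dim_\R\pi(F)=n-m$ exactly when $\dim_\R F=n-\#I$. So it remains to tie (a) and (b) to the dimension conditions.

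For (a)$\Leftrightarrow$(c), I use that $\Sigma_I$ is a finite complete generalized fan in $K_0(\proj B)_\R$, which has dimension $n-m$. In a finite complete generalized fan the maximal cones are exactly the full-dimensional ones. One direction: a finite union of cones of dimension $<n-m$ cannot cover the space, so maximal cones are full-dimensional. For the other direction, let $\sigma$ be full-dimensional and $\sigma\subset\tau$ with $\tau\in\Sigma_I$. Then $\sigma=\sigma\cap\tau$ is a face of $\tau$ of full dimension, hence $\sigma=\tau$.

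For (b)$\Leftrightarrow$(d), I work with the generalized fan $\Face_I D(U)$, whose support is $\partial_I$ (Proposition \ref{Prop_Face_I_fan}). Assume (d), and suppose $F\subset F'$ with $F'\in\Face_I D(U)$. Enlarge $F'$ to some $F''\in\max\Face_I D(U)$. By Proposition \ref{Prop_max_Face_circ} (2) we have $F''\in\Face^\circ_I D(U)$, so the formula gives $\dim F''\le (m-\#I)+(n-m)=n-\#I=\dim F$. Since $F$ is a face of $F''$ of the same dimension, $F=F''$, and so $F$ is maximal.

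Conversely assume (b). Proposition \ref{Prop_max_Face_circ} (2) again gives $F\in\Face^\circ_I D(U)$, so $\pi(F)\in\Sigma_I$. Choose $\sigma\in\max\Sigma_I$ with $\pi(F)\subset\sigma$, and let $G:=\pi^{-1}(\sigma)\cap\partial_I\in\Face^\circ_I D(U)$. Because the bijection preserves inclusions, $F\subset G$, and maximality of $F$ forces $F=G$. Hence $\pi(F)=\sigma$ is maximal, which is (a), and therefore (c) and (d) hold. This step, passing from maximality in $\Face_I D(U)$ to maximality in $\Sigma_I$, is the only delicate point. The reverse implication (a)$\Rightarrow$(b) also follows from the bijection, once we know every element of $\Face_I D(U)$ containing $F$ lies under some element of $\max\Face_I D(U)\subset\Face^\circ_I D(U)$.

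For the final claim take $I=\{i\}$. The condition $\dim F=n-1$ means $F$ is a facet, and $I_F=\{i\}$ means $F\subset\partial_i$; conversely every facet in $\Facet_i D(U)$ has $I_F=\{i_F\}=\{i\}$ by Theorem \ref{Thm_partial_facet} (2). Combining this with (b)$\Leftrightarrow$(d) and $\max\Face_{\{i\}} D(U)\subset\Face^\circ_{\{i\}} D(U)$ gives $\max\Face_{\{i\}} D(U)=\Facet_i D(U)$.
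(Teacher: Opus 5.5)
Your proof is correct and rests on the same three ingredients as the paper's: the dimension formula, the inclusion-preserving bijection $\Face^\circ_I D(U)\leftrightarrow\Sigma_I$, and the completeness of $\Sigma_I$ in $K_0(\proj B)_\R$. The one difference is that you link (b) to the others through (d), using Proposition \ref{Prop_max_Face_circ} (2), rather than by the paper's one-line ``(a)$\Leftrightarrow$(b) since the bijection preserves inclusions''; this spells out the fact the paper leaves implicit, namely that for $F\in\Face^\circ_I D(U)$, being maximal in $\Face_I D(U)$ is the same as being maximal in $\Face^\circ_I D(U)$.
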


\begin{proof}
Since the bijections in Theorem \ref{Thm_Sigma_I} preserve inclusions,
(a) and (b) are equivalent.
The dimension formula in Theorem \ref{Thm_Sigma_I} impies that
(c) and (d) are equivalent.
Moreover (a) and (c) are equivalent,
because $\Sigma_I$ is a finite complete generalized fan 
in $K_0(\proj B)_\R \simeq \R^{n-m}$ by Theorem \ref{Thm_Sigma_I}.

The last statement comes from (b)$\Leftrightarrow$(d).
\end{proof}

\subsection{Recovering faces of $D(U)$ from 
$M$-TF equivalences on $K_0(\proj B)_\R$}
\label{subsec recover}

In this subsection, we will explain that the faces of $D(U)$
can be recovered from the $B$-modules $M_I$ for all subsets
$I \subset \{1,\ldots,m\}$ defined below.

For this purpose, we consider the 2-term simple-minded collections
$X=\bigoplus_{i=1}^n X_i := \SH(S)$ and 
$Y=\bigoplus_{i=1}^n Y_i := \SH(T)$,
corresponding to the maximal and the minimal completions $S$ and $T$ of $U$
in Notation \ref{Nota_X_Y}.
To construct $M_I$, we use the triangle
$W_i \to Y_i \to X_i \to W_i[1]$ 
with $W_i \in \Filt \{X_{m+1},\ldots,X_n\}$ for each $i \in \{1,\ldots,m\}$.

By Proposition \ref{Prop_sim_W_U},
$\Filt \{X_{m+1},\ldots,X_n\}$ coincides with $\calW_U$ 
for the $\tau$-tilting reduction.
Using the equivalence $\Phi=\Hom_A(H^0(S),?) \colon \calW_U \to \mod B$
in Proposition \ref{Prop_reduc} (2),
for any $i \in \{1,\ldots,m\}$ and $I \subset \{1,\ldots,m\}$, we set
\begin{align*}
M_i:=\Phi(W_i), \quad
M_I:=\bigoplus_{i \in I} M_i.
\end{align*}

Then we have a finite complete generalized fan 
$\Sigma(M_I):=\{\overline{E} \mid E \in \TF(M_I) \}$
in $K_0(\proj B)_\R$ by the $M_I$-TF equivalence classes
in Proposition \ref{Prop_M-TF_fan} (2).
We will show that $\Sigma(M_I)$ coincides with $\Sigma_I$.

\begin{Thm}\label{Thm_Sigma(M_I)}
Let $U=\bigoplus_{i=1}^m U_i \in \twopsilt A$,
and $I \subset \{1,\ldots,m\}$ be a subset.
Then we have
\begin{align*}
\Sigma_I=\Sigma(M_I) 
\quad \text{and} \quad
\{ \sigma^\circ \mid \sigma \in \Sigma_I \} = \TF(M_I).
\end{align*}
Moreover there exist mutually inverse bijections
\begin{align*}
\Face^\circ_I D(U) &\to \Sigma(M_I), &
F &\mapsto \pi(F);\\
\Sigma(M_I) &\to \Face^\circ_I D(U), &
\sigma &\mapsto \pi^{-1}(\sigma) \cap \partial_I=C(U/U_I)+\rho(\sigma),
\end{align*}
which preserve inclusions and intersections.
\end{Thm}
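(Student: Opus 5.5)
Once the fan equality $\Sigma_I=\Sigma(M_I)$ is established, everything else in the statement follows. The claim on $\TF(M_I)$ then comes from Proposition \ref{Prop_M-TF_fan} (1), and the bijections are those of Theorem \ref{Thm_Sigma_I}. Both $\Sigma_I$ and $\Sigma(M_I)$ are finite complete generalized fans in $K_0(\proj B)_\R$, by Theorem \ref{Thm_Sigma_I} and Proposition \ref{Prop_M-TF_fan} (2). So by Lemma \ref{Lem_fan_inj} it suffices to construct an injection $u\colon \max\Sigma(M_I)\to\max\Sigma_I$ with $\sigma\subset u(\sigma)$.

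The maximal cones of $\Sigma(M_I)$ are the closures of the open classes $E\in\TF_n(M_I)$. By Lemma \ref{Lem_M-TF_open} these are exactly the classes with $\rmw M_I=0$. So for $\eta\in E$ we have, for every $i\in I$, a canonical sequence $0\to\rmt_\eta M_i\to M_i\to\rmf_\eta M_i\to0$ with $\rmt\in\calT_\eta$ and $\rmf\in\calF_\eta$.

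The key step is to translate this condition back to $A$. Take $\theta:=\rho(\eta)\in L(U)$ and shift it to $\theta':=\theta+\sum_{i\notin I}c_i[U_i]$ with $c_i>0$. By Theorem \ref{Thm_C(U)_L(U)} and Proposition \ref{Prop_partial_lambda}, $\theta'\in\partial_I$ and $\theta'\in D^\circ(U_j)$ for $j\notin I$. I will show that $\theta'$ lies in the relative interior of a face $F$ of maximal dimension $n-\#I$ in $\Face^\circ_I D(U)$, i.e.\ that $\supp_F(\rmw_F(Y^+\oplus X^-))$ spans exactly $\#I$ dimensions (Proposition \ref{Prop_face_dim}), with one vector for each $i\in I$.

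To do this, fix $i\in I$ and use the triangle $X_i[-1]\to W_i\to Y_i\to X_i$. In the three cases (i)--(iii) of Proposition \ref{Prop_X_i^-_Y_i^+}, this triangle expresses $Y_i^+$ or $X_i^-$ as an extension involving $W_i$ and $X_i^\pm$. Consider the case $Y_i^+\neq0$. The submodule of $Y_i^+$ coming from $\Phi^{-1}(\rmt_\eta M_i)\subset W_i$ lies in $\calT_{\theta'}$, using Proposition \ref{Prop_reduc_K_0} (3) applied to a point of $D^\circ(U)$ congruent to $\theta'$ modulo $\R C(U)$. The quotient by it is then a brick in $\calW_{\theta'}$. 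This quotient should be the label $L_F$ for the unique facet in $\Facet_i D(U)$ containing $\theta'$. The case $X_i^-\neq0$ is dual.

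Thus each $\eta$ in an open $M_I$-class determines, via the pair $(\rmt_\eta M_i,\rmf_\eta M_i)_{i\in I}$, a collection of facets, one in $\Facet_i D(U)$ for each $i\in I$. Their intersection $F$ is in $\max\Face_I D(U)$ and has dimension $n-\#I$ by Corollary \ref{Cor_max_Face_dim}. Since the data depend only on $E$, we may set $u(\overline E):=\pi(F)$. Then $E\subset\pi(F)$, and injectivity follows because distinct open classes have disjoint interiors while each $\pi(F)^\circ$ is open.

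I expect the main obstacle to be verifying that the filtration of $M_i$ really computes $\rmt_{\theta'}Y_i^+$ (or $\rmf_{\theta'}X_i^-$), uniformly over the three cases of the triangle. One direction is easy by passing from $\theta'$ to the TF-equivalent point of $D^\circ(U)$. The other direction, that $\rmw_{\theta'}$ is a single brick rather than something larger, needs the argument of Proposition \ref{Prop_L_F_simple}, combined with Lemma \ref{Lem_proper_sub_Y} and the pairing values $[U_i](\cdot)$ from Theorem \ref{Thm_L_F_inn_vec} (3).
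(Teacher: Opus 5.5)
Your reduction is legitimate. Lemma \ref{Lem_fan_inj} can be applied in either direction, and the rest of the statement does follow from $\Sigma_I=\Sigma(M_I)$ via Proposition \ref{Prop_M-TF_fan} (1) and Theorem \ref{Thm_Sigma_I}. The gap is that the injection itself is not established.

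\textbf{Injectivity.} Your injectivity argument does not work. Two distinct open $M_I$-classes are disjoint, but nothing you say prevents both from lying in the same open set $\pi(F)^\circ$. To exclude this you must show that $\pi(F^\circ)$ lies in a single $M_I$-TF class, i.e.\ that the face $F$ determines $\rmt_\xi M_i$ and forces $\rmw_\xi M_i=0$ for all $\xi\in\pi(F^\circ)$. That is the actual content of the theorem. In the paper it is Lemma \ref{Lem_W_i-TF} (3)--(5) together with Proposition \ref{Prop_M_i-TF} (1)--(3):
\begin{itemize}
\item For $F\in\Facet_i^+ D(U)$, the lift $\rmt_F Y_i^+\hookrightarrow W_i$ is shown to be the torsion part of $W_i$.
\item This holds because its cokernel $N_F$ lies in $\calF_\zeta$ for every $\zeta\in F^\circ$. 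The proof uses that $L_F$ is simple in $\calW_\zeta$ and that the sequences in Lemma \ref{Lem_W_i-TF} (2) do not split.
\item Distinct facets are then separated by the classes $[\rmf_F W_i]=\epsilon_F[L_F]-[X_i]$.
\end{itemize}

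\textbf{The key step.} This step is also not proved, and the tools you cite are either circular or point the wrong way.
\begin{itemize}
\item To get $\rmw_{\theta'}Y_i^+$ simple in $\calW_{\theta'}$, you invoke the argument of Proposition \ref{Prop_L_F_simple}. That argument rests on Proposition \ref{Prop_L_F_simple} (1), $\dim_\R\R(\supp_F L_F)=1$, which is deduced from $\dim_\R F=n-1$. That is exactly the codimension statement you are trying to reach.
\item Even placing $Y_i^+/\Phi^{-1}(\rmt_\eta M_i)$ in $\calW_{\theta'}$ requires $\theta'(X_i)=\rho_i(\eta)d_{U_i}=-\eta(\rmf_\eta M_i)$. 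This is Theorem \ref{Thm_rho_explicit} (1), which the paper obtains only as a consequence of the statement you are proving.
\item Proposition \ref{Prop_reduc_K_0} (3) at $\theta''=\theta'+[U]\in D^\circ(U)$ only gives $\Phi^{-1}(\rmt_\eta M_i)\in\calT_{\theta''}$. But Proposition \ref{Prop_W_U_cap_W_theta} (1) yields $\calT_{\theta'}\subset\calT_{\theta''}$, so membership in $\calT_{\theta'}$ does not follow.
\end{itemize}

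\textbf{How to repair it.} Follow the paper's route. First reduce to $\#I=1$ using Remark \ref{Rem_M-TF_oplus} and Proposition \ref{Prop_Sigma_I_cap}. Then run the injection in the opposite direction,
$\Facet_i D(U)\cong\max\Sigma_{\{i\}}\to\TF_{n-m}(M_i)$, $F\mapsto E_F$.
In that direction, simplicity of $L_F$ and $\rmt_F Y_i^+\in\calW_U$ are already available, precisely because $F$ is a facet.
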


To prove Theorem \ref{Thm_Sigma(M_I)}, 
it suffices to show in the case $\#I=1$ 
by Remark \ref{Rem_M-TF_oplus} and Proposition \ref{Prop_Sigma_I_cap}.
Moreover it is enough 
to compare the sets $\max \Sigma_I$ and $\max \Sigma(M_I)$
of maximal elements, 
since we have already proved that $\Sigma_I$ and $\Sigma(M_I)$
are finite complete generalized fans in $K_0(\proj B)_\R$
in Propositions \ref{Prop_Sigma_I_fan} and \ref{Prop_M-TF_fan} (2).

Since we consider the relative interiors of faces in this subsection, 
we first remark the following property.

\begin{Lem}\label{Lem_pi_interior}
Let $U \in \twopsilt A$.
For any face $F \in \Face D(U)$, 
we have $\pi(F)^\circ=\pi(F^\circ)$.
\end{Lem}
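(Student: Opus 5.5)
The claim is the standard fact from convex analysis that a linear map carries the relative interior of a convex set onto the relative interior of its image. I would prove it directly for $F$ and the linear map $\pi$, using only that $F$ is a nonempty convex polyhedral cone, so that $F^\circ\neq\emptyset$. Both inclusions are elementary.

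\emph{The inclusion $\pi(F^\circ)\subset\pi(F)^\circ$.}
\begin{itemize}
\item Since $\pi$ is linear, $\pi(\R F)=\R\pi(F)$. Hence the restriction $\pi|_{\R F}\colon \R F\to\R\pi(F)$ is a surjective linear map between finite dimensional spaces, and is therefore an open map.
\item $F^\circ$ is open in $\R F$, so $\pi(F^\circ)$ is open in $\R\pi(F)$.
\item Since $\pi(F^\circ)\subset\pi(F)$ and $\pi(F)^\circ$ is the largest subset of $\pi(F)$ open in $\R\pi(F)$, we get $\pi(F^\circ)\subset\pi(F)^\circ$.
\end{itemize}

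\emph{The inclusion $\pi(F)^\circ\subset\pi(F^\circ)$.}
\begin{itemize}
\item Let $y\in\pi(F)^\circ$. Fix $x_0\in F^\circ$ and put $y_0:=\pi(x_0)$. If $y=y_0$ we are done, so assume $y\neq y_0$.
\item Both $y$ and $y_0$ lie in $\R\pi(F)$, and $y$ is a relative interior point of $\pi(F)$. So for small $t>0$ the point $z:=y+t(y-y_0)$ still lies in $\pi(F)$. Write $z=\pi(x_1)$ with $x_1\in F$.
\item Then
\begin{align*}
y=\tfrac{1}{1+t}z+\tfrac{t}{1+t}y_0=\pi(x), \quad x:=\tfrac{1}{1+t}x_1+\tfrac{t}{1+t}x_0 .
\end{align*}
\item By the line segment principle for convex sets, a convex combination of a point of $F$ and a point of $F^\circ$ with positive weight on the latter lies in $F^\circ$. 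Hence $x\in F^\circ$, and so $y\in\pi(F^\circ)$.
\end{itemize}

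The only point needing care is the line segment principle. It holds because a small ball around $x_0$ in $\R F$ lies in $F$, and contracting that ball towards $x_1$ by the factor $\tfrac{t}{1+t}$ gives a ball around $x$ that still lies in $F$ by convexity. Nothing specific to $D(U)$ is used beyond $F$ being a convex cone with nonempty relative interior.
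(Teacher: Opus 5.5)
Your proof is correct and follows the paper's route. The paper's one-line proof rests on the openness of the surjective linear map $\pi|_{\R F}\colon \R F\to\R\pi(F)$, which is your first inclusion; your line-segment argument for $\pi(F)^\circ\subset\pi(F^\circ)$ supplies the convexity input that the paper's ``this is clear'' leaves implicit, since openness alone only gives $\pi(F^\circ)\subset\pi(F)^\circ$.
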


\begin{proof}
This is clear, because $\pi|_{\R F} \colon \R F \to \R \pi(F)$ is an open map.
\end{proof}

Let $(i,\epsilon) \in \{1,\ldots,m\} \times \{\pm\}$.
For each $F \in \Facet_i^\epsilon D(U)$,
we constructed a brick $L_F \in \mod A$ by
\begin{align*}
L_F:=\begin{cases}
\rmw_F Y_i^+ & (\epsilon={+}) \\
\rmw_F X_i^- & (\epsilon={-})
\end{cases}
\end{align*}
in Definition \ref{Def_L_F}.
Here $\rmw_F Y_i^+$ and $\rmw_F X_i^-$ are the modules 
in the short exact sequences
\begin{align*}
0 \to \rmt_F Y_i^+ \to Y_i^+ \to \rmw_F Y_i^+ \to 0, \quad
0 \to \rmw_F X_i^- \to X_i^- \to \rmf_F X_i^- \to 0.
\end{align*}
These modules, $L_F$ and $W_i$ satisfy the following properties.

\begin{Lem}\label{Lem_W_i-TF}
Let $U=\bigoplus_{i=1}^m U_i \in \twopsilt A$ and
$F \in \Facet_i^\epsilon D(U)$ 
with $(i,\epsilon) \in \{1,\ldots,m\} \times \{\pm\}$.
\begin{enumerate}
\item
There exists a short exact sequence
\begin{align*}
\begin{cases}
0 \to \rmt_F Y_i^+ \to W_i \to N_F \to 0 & (\epsilon={+}) \\
0 \to N_F \to W_i \to \rmf_F X_i^- \to 0 & (\epsilon={-})
\end{cases}
\end{align*}
whose terms are in $\calW_U$.
\item
The module $N_F$ in (1) is embedded to a non-split short exact sequence 
\begin{align*}
\begin{cases}
0 \to N_F \to L_F \to X_i^+ \to 0 
& (\epsilon=+, \ X_i \in \mod A)\\
0 \to X_i^- \to N_F \to L_F \to 0
& (\epsilon=+, \ X_i \in (\mod A)[1])\\
0 \to L_F \to N_F \to Y_i^+ \to 0 
& (\epsilon=-, \ Y_i \in \mod A)\\
0 \to Y_i^- \to L_F \to N_F \to 0
& (\epsilon=-, \ Y_i \in (\mod A)[1])
\end{cases}.
\end{align*}
\item
The short exact sequence in (1) coincides with
$0 \to \rmt_F W_i \to W_i \to \rmf_F W_i \to 0$.
\item
Let $\theta \in C^\circ(U_i)+F^\circ$.
Then the short exact sequence in (3) coincides with
$0 \to \rmt_\theta W_i \to W_i \to \rmf_\theta W_i \to 0$.
\item
In $\mod B$, there exists a short exact sequence
$0 \to \Phi(\rmt_F W_i) \to \Phi(W_i) \to \Phi(\rmf_F W_i) \to 0$.
For any $\xi \in \pi(F^\circ)$, this short exact sequence coincides with 
$0 \to \rmt_\xi M_i \to M_i \to \rmf_\xi M_i \to 0$.
\end{enumerate}
\end{Lem}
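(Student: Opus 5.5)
We treat the case $\epsilon={+}$ throughout; the case $\epsilon={-}$ is dual, obtained by swapping submodules with factor modules and $\rmt$ with $\rmf$. The starting point is the triangle $X_i[-1]\to W_i\to Y_i\to X_i$ from Notation \ref{Nota_X_Y}. Since $F\subset\partial_i^+$, Lemma \ref{Lem_w_F_nonzero} gives $Y_i^+\ne 0$, so $Y_i=Y_i^+\in\mod A$.

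\textbf{Steps (1) and (2).} Let $g$ be the composite $\rmt_F Y_i^+\hookrightarrow Y_i^+=Y_i\to X_i$.
\begin{enumerate}
\item By Lemma \ref{Lem_proper_sub_Y} (2) we have $\rmt_F Y_i^+\in\calW_U=\Filt\{X_{m+1},\ldots,X_n\}$.
\item Since $X$ is a simple-minded collection, $\Hom_{\sfD(A)}(X_j,X_i)=0$ for every $j\ne i$. Hence $\Hom_{\sfD(A)}(\calW_U,X_i)=0$, and in particular $g=0$.
\item Therefore the inclusion $\rmt_F Y_i^+\to Y_i$ lifts to a morphism $\rmt_F Y_i^+\to W_i$, which is a monomorphism in $\mod A$.
\item Put $N_F:=W_i/\rmt_F Y_i^+$. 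Since $\calW_U$ is wide (Proposition \ref{Prop_reduc} (1)), $N_F\in\calW_U$, which proves (1).
\item Apply the octahedral axiom to $\rmt_F Y_i^+\to W_i\to Y_i$. This produces a triangle $X_i[-1]\to N_F\to L_F\to X_i$.
\item Taking cohomology of this triangle in the two cases $X_i=X_i^+\in\mod A$ and $X_i=X_i^-[1]$ yields exactly the two short exact sequences in (2).
\item Non-splitness: if the triangle split, then $N_F\simeq X_i[-1]\oplus L_F$. The summand $X_i[-1]$ is not a module in the first case, and in the second case $X_i^-\notin\calW_U$ while $N_F\in\calW_U$ and $\calW_U$ is closed under direct summands. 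Either way we get a contradiction.
\end{enumerate}

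\textbf{Step (3).} We must show $\rmt_F Y_i^+\in\calT_\theta$ and $N_F\in\calF_\theta$ for $\theta\in F^\circ$. The first holds by definition. For $N_F$ we plan the following.
\begin{enumerate}
\item Theorem \ref{Thm_L_F_inn_vec} (3) and Proposition \ref{Prop_dual_basis} give $[U_i](L_F)=d_{U_i}$ and $[U_i](X_i)=d_{U_i}$, so $[U_i](N_F)=0$. This is consistent with $N_F\in\calW_U$.
\item Case $X_i\in\mod A$: here $N_F\subset L_F$. A nonzero submodule $Z\subset N_F$ with $\theta(Z)=0$ is then a nonzero subobject of $L_F$ in $\calW_\theta$. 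Since $L_F$ is simple in $\calW_\theta$ (Proposition \ref{Prop_L_F_simple}), this forces $Z=L_F$, contradicting $N_F\subsetneq L_F$.
\item Case $X_i\in(\mod A)[1]$: here $N_F$ is an extension of $L_F$ by $X_i^-$, and $X_i^-\in\calF_\theta$ by Proposition \ref{Prop_facet_X_Y_almost} (2). A hypothetical submodule $Z$ with $\theta(Z)=0$ must map nontrivially onto a subobject of $L_F$ in $\calW_\theta$, hence onto $L_F$ itself. This would make the sequence split on pullback, contradicting non-splitness.
\end{enumerate}

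I expect this second case to be the \textbf{main obstacle}. If the direct argument fails, I will fall back on Lemma \ref{Lem_M-TF_max_min}: compare $\theta(N_F)$ with the maximum of $\theta$ over submodules of $W_i$ and exploit $\dim_\R F=n-1$.

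\textbf{Steps (4) and (5).} For (4), write $\theta=\eta+\zeta$ with $\eta\in C^\circ(U_i)$ and $\zeta\in F^\circ$.
\begin{enumerate}
\item Both $\rmt_F W_i$ and $N_F$ lie in $\calW_U\subset\calW_{U_i}=\calW_\eta$, so $\eta$ vanishes on them.
\item Factor modules of $\rmt_F W_i$ lie in $\ovcalT_{U_i}$ and submodules of $N_F$ lie in $\ovcalF_{U_i}$, so $\eta$ contributes only inequalities with the right signs.
\item The strict inequalities then come from $\zeta$ by (3). Hence the canonical sequence for $\theta$ coincides with that for $F$.
\end{enumerate}
For (5), apply the exact functor $\Phi$.
\begin{enumerate}
\item By Lemma \ref{Lem_lambda_DcV}, together with $C(U/U_i)\subset F$ from Proposition \ref{Prop_facet_X_Y_almost} (1), we expect $C^\circ(U_i)+F^\circ\subset D^\circ(U)$.
\item Proposition \ref{Prop_reduc_K_0} (3) then transports $\calT_\theta\cap\calW_U$ and $\calF_\theta\cap\calW_U$ to $\calT_{\pi(\theta)}$ and $\calF_{\pi(\theta)}$.
\item Since $\pi(\theta)=\pi(\zeta)$ ranges over all of $\pi(F^\circ)$ as $\zeta$ ranges over $F^\circ$, this proves (5).
\end{enumerate}
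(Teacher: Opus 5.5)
Your outline follows the paper's proof closely. You lift $\rmt_F Y_i^+\hookrightarrow Y_i$ through $W_i$ because $\Hom_{\sfD(A)}(\calW_U,X_i)=0$. You compare cokernels; your octahedral axiom is the triangulated form of the paper's diagram of short exact sequences. You use simplicity of $L_F$ in $\calW_\theta$ plus non-splitness for (3), and you transport by $\Phi$ with Proposition~\ref{Prop_reduc_K_0}~(3) for (5).

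Your proof of (4) takes a genuinely different and slightly cleaner route. You check $\rmt_F W_i\in\calT_\theta$ and $N_F\in\calF_\theta$ directly from $\rmt_F W_i\in\ovcalT_{U_i}=\ovcalT_\eta$, $N_F\in\ovcalF_{U_i}=\ovcalF_\eta$, and (3) at $\zeta$. The paper instead takes $r$ small by openness and then propagates along $C^\circ(U_i)+\eta$ via Proposition~\ref{Prop_reduc_K_0}~(2). That step tacitly needs $C^\circ(U_i)+F^\circ\subset D^\circ(U)$, which is exactly the inclusion you correctly isolate for (5).

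The step that fails is non-splitness when $X_i\in\mod A$. In the triangle $X_i[-1]\to N_F\to L_F\to X_i$ you show that $L_F\to X_i$ is nonzero. But that map is the surjection $L_F\to X_i^+$. The extension class of $0\to N_F\to L_F\to X_i^+\to 0$ is the other map, $X_i[-1]\to N_F$. Your criterion is the right one only when $X_i\in(\mod A)[1]$, where $L_F\to X_i=X_i^-[1]$ is the connecting morphism.

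In fact no argument can work in this case. Since $L_F$ is a brick and $X_i^+\neq 0$, the sequence splits exactly when $N_F=0$. This happens, for example, for $F=F_{1245}\in\Facet_2^+D(U)$ in Example~\ref{Ex_A4_L_F}, where $L_F=\begin{smallmatrix}1\end{smallmatrix}=X_2^+$ and $\rmt_F Y_2^+=\begin{smallmatrix}2\\3\\4\end{smallmatrix}=W_2$. The paper's own justification ($N_F\in\calW_U$, $L_F\in\calT_U$) likewise only yields $N_F\in\calT_U\cap\ovcalF_U=0$ there. Dually, the case $\epsilon={-}$, $Y_i\in(\mod A)[1]$ behaves the same way.

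This is harmless for the rest of the lemma. In that case (3) only needs $N_F\subsetneq L_F$. Non-splitness is used only when $X_i\in(\mod A)[1]$, and there your argument is valid: $0\ne X_i^-\in\calF_U$, so $X_i^-\notin\calW_U$. This is an alternative to the paper's use of $L_F\in\calT_U$.

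In that case of (3), two details should be recorded. First, $N_F\in\ovcalF_\theta$ by extension-closure. Second, $\theta(Z)=0$ forces $\theta(Z\cap X_i^-)=0$, hence $Z\cap X_i^-=0$ since $X_i^-\in\calF_\theta$. Only then does $Z\cong L_F$ actually split the sequence.
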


\begin{proof}
We only prove the case $\epsilon={+}$.
We may assume $Y_i^+ \ne 0$.

(1)
Since $\rmw_F Y_i^+=L_F \ne 0$, by Lemma \ref{Lem_proper_sub_Y} (2), 
we have $\rmt_F Y_i^+ \in \calW_U$.
We use the triangle 
$W_i \to Y_i^+ \to X_i \to W_i[1]$ in Notation \ref{Nota_X_Y}.
Since $\rmt_F Y_i^+ \in \calW_U=\Filt\{X_{m+1},\ldots,X_n\}$ 
by Proposition \ref{Prop_sim_W_U}, 
the composite $\rmt_F Y_i^+ \to X_i$
of the inclusion $\rmt_F Y_i^+ \to Y_i^+$ and
the morphism $Y_i^+ \to X_i$ in the triangle is zero.
Thus the inclusion $\rmt_F Y_i^+ \to Y_i^+$ factors through $W_i$,
so there exists a monomorohism $\rmt_F Y_i^+ \to W_i$ in $\calW_U$.
Thus we have a short exact sequence
$0 \to \rmt_F Y_i^+ \to W_i \to N_F \to 0$.
Since $\rmt_F Y_i^+, W_i$ are in $\calW_U$, so is $N_F$.

(2)
By the proof of (1), $N_F$ and $L_F$ are the cokernels 
of the monomorphisms $\rmt_F Y_i^+ \to W_i$ and $\rmt_F Y_i^+ \to Y_i^+$, 
respectively.
Now we consider the triangle $W_i \to Y_i^+ \to X_i \to W_i[1]$ again.

(i)
Assume $X_i \in \mod A$.
Then the triangle gives a short exact sequence 
$0 \to W_i \to Y_i^+ \to X_i^+ \to 0$ in $\mod A$,
and we have the commutative diagram of short exact sequences
\begin{align*}
\begin{tikzpicture}[baseline=0pt,->,scale=1.2]
\node (11) at ( 0, 0.5) {$0$};
\node (12) at ( 2, 0.5) {$\rmt_F Y_i^+\vphantom{Y_i^+}$};
\node (13) at ( 4, 0.5) {$\rmt_F Y_i^+\vphantom{Y_i^+}$};
\node (14) at ( 6, 0.5) {$0$};
\node (15) at ( 8, 0.5) {$0$};
\node (21) at ( 0,-0.5) {$0$};
\node (22) at ( 2,-0.5) {$W_i\vphantom{Y_i^+}$};
\node (23) at ( 4,-0.5) {$Y_i^+\vphantom{Y_i^+}$};
\node (24) at ( 6,-0.5) {$X_i^+\vphantom{Y_i^+}$};
\node (25) at ( 8,-0.5) {$0$};
\draw (11) to (12);
\draw (12) to (13);
\draw (13) to (14);
\draw (14) to (15);
\draw (21) to (22);
\draw (22) to (23);
\draw (23) to (24);
\draw (24) to (25);
\draw (12) to (22);          
\draw (13) to (23);          
\draw (14) to (24);          
\end{tikzpicture}.
\end{align*}
Taking cokernels, 
we have another short exact sequence $0 \to N_F \to L_F \to X_i^+ \to 0$.

(ii)
Otherwise, $X_i \in (\mod A)[1]$ holds.
Then the triangle gives a short exact sequence 
$0 \to X_i^- \to W_i \to Y_i^+ \to 0$ in $\mod A$,
and we have the commutative diagram of short exact sequences
\begin{align*}
\begin{tikzpicture}[baseline=0pt,->,scale=1.2]
\node (11) at ( 0, 0.5) {$0$};
\node (12) at ( 2, 0.5) {$0$};
\node (13) at ( 4, 0.5) {$\rmt_F Y_i^+\vphantom{Y_i^+}$};
\node (14) at ( 6, 0.5) {$\rmt_F Y_i^+\vphantom{Y_i^+}$};
\node (15) at ( 8, 0.5) {$0$};
\node (21) at ( 0,-0.5) {$0$};
\node (22) at ( 2,-0.5) {$X_i^-\vphantom{Y_i^+}$};
\node (23) at ( 4,-0.5) {$W_i\vphantom{Y_i^+}$};
\node (24) at ( 6,-0.5) {$Y_i^+\vphantom{Y_i^+}$};
\node (25) at ( 8,-0.5) {$0$};
\draw (11) to (12);
\draw (12) to (13);
\draw (13) to (14);
\draw (14) to (15);
\draw (21) to (22);
\draw (22) to (23);
\draw (23) to (24);
\draw (24) to (25);
\draw (12) to (22);          
\draw (13) to (23);          
\draw (14) to (24);          
\end{tikzpicture}.
\end{align*}
Taking cokernels, 
we have another short exact sequence $0 \to X_i^- \to N_F \to L_F \to 0$.

In either case, we have $N_F \in \calW_U$ by (1), 
and $L_F \in \calT_U$ because $L_F$ is a factor module of $Y_i^+ \in \calT_U$.
Thus the short exact sequence does not split.

(3)
Take $\eta \in F^\circ$.
By the uniqueness of canonical sequences of $W_i$ with respect
to the torsion pairs $(\ovcalT_\eta,\calF_\eta)$
and $(\calT_\eta,\ovcalF_\eta)$,
it suffices to prove $\rmt_F Y_i^+ \in \calT_\eta$ and $N_F \in \calF_\eta$.
The former is clear.
The proof of the latter again
depends on whether $X_i \in \mod A$ or $X_i \in (\mod A)[1]$.

(i)
Assume $X_i \in \mod A$.
Since $L_F \in \simple \calW_\eta$ by Proposition \ref{Prop_L_F_simple} (2),
we have $N_F \in \calF_\eta$ by (2).

(ii)
Assume $X_i \in (\mod A)[1]$.
Let $N'$ be a submodule of $N_F$.
We have the commutative diagram of short exact sequences
\begin{align*}
\begin{tikzpicture}[baseline=0pt,->,scale=1.2]
\node (11) at ( 0, 0.5) {$0$};
\node (12) at ( 2, 0.5) {$N' \cap X_i^-\vphantom{Y_i^+}$};
\node (13) at ( 4, 0.5) {$N'\vphantom{Y_i^+}$};
\node (14) at ( 6, 0.5) {$N'/(N' \cap X_i^-)\vphantom{Y_i^+}$};
\node (15) at ( 8, 0.5) {$0$};
\node (21) at ( 0,-0.5) {$0$};
\node (22) at ( 2,-0.5) {$X_i^-\vphantom{Y_i^+}$};
\node (23) at ( 4,-0.5) {$N_F\vphantom{Y_i^+}$};
\node (24) at ( 6,-0.5) {$L_F\vphantom{Y_i^+}$};
\node (25) at ( 8,-0.5) {$0$};
\draw (11) to (12);
\draw (12) to (13);
\draw (13) to (14);
\draw (14) to (15);
\draw (21) to (22);
\draw (22) to (23);
\draw (23) to (24);
\draw (24) to (25);
\draw (12) to (22);          
\draw (13) to (23);          
\draw (14) to (24);          
\end{tikzpicture}
\end{align*}
with all vertical maps injective.
By Proposition \ref{Prop_facet_X_Y_almost} (2), 
we have $X_i^- \in \calF_\eta$, so $\eta(N' \cap X_i^-) \le 0$.
By Proposition \ref{Prop_L_F_simple} (2), $L_F \in \simple \calW_\eta$ holds, 
which gives $\eta(N'/(N' \cap X_i^-)) \le 0$.
Thus $\eta(N') \le 0$.

If $\eta(N')=0$, then 
$\eta(N' \cap X_i^-)=0$ and $\eta(N'/(N' \cap X_i^-))=0$.
In this case, $(N' \cap X_i^-,N'/(N' \cap X_i^-))$ is 
isomorphic to $(0,0)$ or $(0,L_F)$ 
by $X_i^- \in \calF_\eta$ and $L_F \in \simple \calW_\eta$,
so $N'$ is isomorphic to $0$ or $L_F$.
If the latter case holds,
the commutative diagram implies that 
$0 \to X_i^- \to N_F \to L_F \to 0$ splits,
which contradicts (2). 
Thus $(N' \cap X_i^-,N'/(N' \cap X_i^-))$ is $(0,0)$, and $N'=0$.
Therefore we have $N_F \in \calF_\eta$.

(4)
Fix $\eta \in F^\circ$, and let $\theta=r[U_i]+\eta$ with $r \in \R_{>0}$.
If $r$ is sufficiently small,
then $\rmt_F W_i \in \calT_\theta$ and $\rmf_F W_i \in \calF_\theta$.
By Proposition \ref{Prop_reduc_K_0} (2), 
all elements in $C^\circ(U_i)+\eta$ are TF equivalent,
so for all $r \in \R_{>0}$,
we have $\rmt_F W_i \in \calT_\theta$ and $\rmf_F W_i \in \calF_\theta$.
Thus by the uniqueness of canonical sequences,
we have that
$0 \to \rmt_F W_i \to W_i \to \rmf_F W_i \to 0$ coincides with
$0 \to \rmt_\theta W_i \to W_i \to \rmf_\theta W_i \to 0$.

(5)
By (1)(3), the short exact sequence 
$0 \to \rmt_F W_i \to W_i \to \rmf_F W_i \to 0$ is in $\calW_U$.
This is sent to a short exact sequence
$0 \to \Phi(\rmt_F W_i) \to \Phi(W_i) \to \Phi(\rmf_F W_i) \to 0$ in $\mod B$
by Proposition \ref{Prop_reduc} (2).

Let $\xi \in \pi(F^\circ)$.
By the uniqueness of canonical sequences,
it suffices to show
$\Phi(\rmt_F W_i) \in \calT_\xi$ and $\Phi(\rmf_F W_i) \in \calF_\xi$.
Take $\eta \in F^\circ$ such that $\xi=\pi(\eta)$,
and take $\theta \in C^\circ(U_i)+\eta$.
Then $\xi=\pi(\theta)$ holds.
By (4), $\rmt_F W_i \in \calT_\theta$ and $\rmf_F W_i \in \calF_\theta$.
This and (3)(1) give
$\rmt_F W_i \in \calT_\theta \cap \calW_U$ and 
$\rmf_F W_i \in \calF_\theta \cap \calW_U$.
Then by Proposition \ref{Prop_reduc_K_0} (3) and $\pi(\theta)=\xi$,
we have $\Phi(\rmt_F W_i) \in \calT_\xi$ and $\Phi(\rmf_F W_i) \in \calF_\xi$.
\end{proof}

By using the short exact sequences in Lemma \ref{Lem_W_i-TF}, 
we prove Theorem \ref{Thm_Sigma(M_I)} in the case $\#I=1$.
Note that $\TF_{n-m}(M_i)$ and $\Sigma_{n-m}(M_i)$ 
consist of the full-dimensional elements in $\TF(M_i)$ and $\Sigma(M_i)$,
respectively, because $|B|=n-m$.
In particular, $\Sigma_{n-m}(M_i)=\max \Sigma(M_i)$ holds.

\begin{Prop}\label{Prop_M_i-TF}
Let $U=\bigoplus_{i=1}^m U_i \in \twopsilt A$.
For each $i \in \{1,\ldots,m\}$ and $F \in \Facet_i D(U)$,
the following statements hold.
\begin{enumerate}
\item
There uniquely exists $E_F \in \TF(M_i)_{n-m}$ such that 
$\pi(F^\circ)=\pi(F)^\circ \subset E_F$.
\item
In $K_0(\mod A)$, we have $[\rmf_F W_i]=\epsilon_F[L_F]-[X_i]$.
\item
Let $F \ne F' \in \Facet_i D(U)$.
Then $E_F \cap E_{F'}=\emptyset$.
\item
We have $\Sigma_{\{i\}}=\Sigma(M_i)$, and
$\pi(F)^\circ=E_F$ and $\pi(F)=\overline{E_F}$ hold. 
\item
The map $\pi$ induces bijections
\begin{align*}
\Face^\circ_{\{i\}} D(U) & \to \Sigma(M_i), 
& \Facet_i D(U) & \to \Sigma_{n-m}(M_i).
\end{align*}
Their inverses are given by $\sigma \mapsto \pi^{-1}(\sigma) \cap \partial_i$.
\end{enumerate}
\end{Prop}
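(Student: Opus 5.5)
Fix $i$ and $F \in \Facet_i D(U)$; write $\epsilon:=\epsilon_F$. The plan is to extract $E_F$ from Lemma \ref{Lem_W_i-TF} (5), separate the classes $E_F$ by their $K_0$-data, and finish with Lemma \ref{Lem_fan_inj}.

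For (1), take $\xi \in \pi(F^\circ)$. By Lemma \ref{Lem_W_i-TF} (5), $M_i$ has canonical sequence $0 \to \Phi(\rmt_F W_i) \to M_i \to \Phi(\rmf_F W_i) \to 0$ for both torsion pairs at $\xi$, and this sequence does not depend on $\xi$. Hence $\rmw_\xi M_i=0$ and $(\rmt_\xi M_i,\rmf_\xi M_i)$ is constant on $\pi(F^\circ)$. Lemma \ref{Lem_E_twf} (1) then shows that all of $\pi(F^\circ)$ lies in a single $M_i$-TF class $E_F$. This class is full-dimensional by Lemma \ref{Lem_M-TF_open} (2). Uniqueness is immediate because distinct classes are disjoint. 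Lemma \ref{Lem_pi_interior} gives $\pi(F^\circ)=\pi(F)^\circ$.

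For (2), I will compute in the two cases $\epsilon=\pm$ using the sequences of Lemma \ref{Lem_W_i-TF} (1)(2). Take $\epsilon=+$. Then $\rmf_F W_i=N_F$. If $X_i\in\mod A$, then $[N_F]=[L_F]-[X_i^+]=[L_F]-[X_i]$. If $X_i\in(\mod A)[1]$, then $[N_F]=[X_i^-]+[L_F]=[L_F]-[X_i]$, since $[X_i]=-[X_i^-]$. The case $\epsilon=-$ is dual: from $0\to N_F \to W_i\to \rmf_F X_i^-\to 0$ and $[W_i]=[Y_i]-[X_i]$ one gets $[\rmf_F W_i]=[W_i]-[N_F]$. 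Splitting on whether $Y_i$ is a module or a shifted module gives $[N_F]=[L_F]+[Y_i]$, and so $[\rmf_F W_i]=-[L_F]-[X_i]$. I will check the signs of these computations carefully.

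For (3), suppose $F\neq F'$ and $E_F=E_{F'}$. Then $\rmf_F W_i\cong\rmf_{F'}W_i$, because $\Phi$ is an equivalence and the $\rmf$-parts agree. By (2) this gives $\epsilon_F[L_F]=\epsilon_{F'}[L_{F'}]$. Theorem \ref{Thm_L_F_inn_vec} (2) describes each facet as $\{\theta\in D(U)\mid\theta(L_F)=0\}$ with inner normal $\epsilon_F[L_F]$, so equal inner normals force $F=F'$. This is a contradiction. I expect this step to be the main obstacle. One must make sure that (2) really identifies the normal vector, and that two distinct facets of the full-dimensional cone $D(U)$ cannot share an inner normal; the latter is standard.

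For (4) and (5), I will use Corollary \ref{Cor_max_Face_dim}, which gives $\max\Sigma_{\{i\}}=\{\pi(F)\mid F\in\Facet_i D(U)\}$. The map $u\colon\pi(F)\mapsto\overline{E_F}$ goes into $\max\Sigma(M_i)$, is injective by (3), and satisfies $\pi(F)\subset\overline{E_F}$ by (1). Both $\Sigma_{\{i\}}$ and $\Sigma(M_i)$ are finite complete generalized fans, by Theorem \ref{Thm_Sigma_I} and Proposition \ref{Prop_M-TF_fan}. Lemma \ref{Lem_fan_inj} therefore yields $\Sigma_{\{i\}}=\Sigma(M_i)$ and $\pi(F)=\overline{E_F}$. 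Taking relative interiors and applying Proposition \ref{Prop_M-TF_fan} (1) gives $\pi(F)^\circ=E_F$. The bijections in (5) then follow from Theorem \ref{Thm_Sigma_I} with $I=\{i\}$, combined with Corollary \ref{Cor_max_Face_dim}.
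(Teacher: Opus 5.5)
Your proposal is correct and follows essentially the same route as the paper: extract $E_F$ from Lemma~\ref{Lem_W_i-TF}~(5), compute $[\rmf_F W_i]$ case by case for (2), separate the classes $E_F$ by their $K_0$-data for (3), and conclude (4) and (5) with Corollary~\ref{Cor_max_Face_dim}, Lemma~\ref{Lem_fan_inj} and Theorem~\ref{Thm_Sigma_I}. The step you flag as the main obstacle in (3) is immediate, because Theorem~\ref{Thm_L_F_inn_vec}~(2) gives $F=\{\theta\in D(U)\mid\theta(L_F)=0\}$, which depends only on $[L_F]$ up to sign, so $\epsilon_F[L_F]=\epsilon_{F'}[L_{F'}]$ forces $F=F'$.
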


\begin{proof}
(1)
First, $\pi(F)^\circ=\pi(F^\circ)$ is Lemma \ref{Lem_pi_interior}.
For each $\xi \in \pi(F^\circ)$,
we have $\rmt_\xi M_i=\Phi(\rmt_F W_i)$, $\rmw_\xi M_i=0$ 
and $\rmf_\xi M_i=\Phi(\rmf_F W_i)$
by Lemma \ref{Lem_W_i-TF} (5).
Thus $\pi(F^\circ)$ is contained in some single $M_i$-TF equivalence class,
say $E_F$.
Since $\rmw_{\xi} M_i=0$, Lemma \ref{Lem_M-TF_open} (2) implies 
$E_F \in \TF_{n-m}(M_i)$.
Thus $\pi(F^\circ) \subset E_F \in \TF_{n-m}(M_i)$ holds.

(2)
If $\epsilon_F=+$, then 
\begin{align*}
\epsilon_F[L_F]-[X_i]=[L_F]-[X_i]
\stackrel{\text{Lem.~\ref{Lem_W_i-TF} (2)}}{=}[N_F]
\stackrel{\text{Lem.~\ref{Lem_W_i-TF} (3)}}{=}[\rmf_F W_i].
\end{align*}

If $\epsilon_F=-$, then the triangle ($*$) $W_i \to Y_i \to X_i \to W_i[1]$
and Lemma \ref{Lem_W_i-TF} (1)(2)(3) give
\begin{align*}
\epsilon_F[L_F]-[X_i]\stackrel{(*)}{=}-[L_F]-[Y_i]+[W_i]
&\stackrel{\text{Lem.~\ref{Lem_W_i-TF} (2)}}{=}-[N_F]+[W_i]\\
&\stackrel{\text{Lem.~\ref{Lem_W_i-TF} (1)}}{=}[\rmf_F X_i^-]
\stackrel{\text{Lem.~\ref{Lem_W_i-TF} (3)}}{=}[\rmf_F W_i]
\end{align*}

(3)
Theorem \ref{Thm_L_F_inn_vec} (1) gives 
$\epsilon_F[L_F] \ne \epsilon_{F'}[L_{F'}]$ in $K_0(\mod A)$.
Thus (2) gives $[\rmf_F W_i] \ne [\rmf_{F'} W_i]$,
and hence $\rmf_F W_i \not \simeq \rmf_{F'} W_i$.
By Proposition \ref{Prop_reduc} (2),
$\Phi(\rmf_F W_i) \not \simeq \Phi(\rmf_{F'} W_i)$.
By (1), we get $\rmf_{E_F} M_i \not \simeq \rmf_{E_{F'}} M_i$;
hence $E_F \ne E_{F'}$. 
Since $E_F,E_{F'} \in \TF(M_i)$, we have $E_F \cap E_{F'}=\emptyset$.

(4)
By Corollary \ref{Cor_max_Face_dim},
there exists a bijection 
$\Psi_1 \colon \Facet_i D(U) \to \max \Sigma_{\{i\}}$
given by $F \mapsto \pi(F)$.
On the other hand, (3) gives an injection
$\Facet_i D(U) \to \TF_{n-m}(M_i)$ such that $F \mapsto E_F$.
Composing the bijection 
$\TF_{n-m}(M_i) \to \Sigma_{n-m}(M_i)=\max \Sigma(M_i)$ 
given by $E \mapsto \overline{E}$
in Lemma \ref{Lem_M-TF_open} (1),
we have an injection
$\Psi_2 \colon \Facet_i D(U) \to \max \Sigma(M_i)$
such that $F \mapsto \overline{E_F}$.

Then we have an injection 
$\Psi_2\Psi_1^{-1} \colon \max \Sigma_{\{i\}} \to \max \Sigma(M_i)$.
For any $\sigma=\pi(F) \in \max \Sigma_{\{i\}}$
with $F \in \Facet_i D(U)$, 
we have 
\begin{align*}
\sigma=\pi(F)=\overline{\pi(F)^\circ} \stackrel{\text{(1)}}{\subset}
\overline{E_F}=\Psi_2\Psi_1^{-1}(\sigma).
\end{align*}
Both generalized fans $\Sigma_{\{i\}}$ and $\Sigma(M_i)$ 
are finite and complete in $K_0(\proj B)_\R$ 
by Propositions \ref{Prop_Sigma_I_fan} and \ref{Prop_M-TF_fan} (2),
so Lemma \ref{Lem_fan_inj} gives $\Sigma_{\{i\}}=\Sigma(M_i)$ and
$\Psi_2=\Psi_1$.

Since $\Psi_2=\Psi_1$,
for each $F \in \Facet_i D(U)$, we have $\pi(F)=\overline{E_F}$.
Then $\pi(F)^\circ=(\overline{E_F})^\circ=E_F$ hold,
where we use Lemma \ref{Lem_M-TF_open} (1) for the latter equality. 

(5)
The first bijection follows from (4) and Theorem \ref{Thm_Sigma_I}.
This and Corollary \ref{Cor_max_Face_dim} give the second one.
\end{proof}

Now we can prove Theorem \ref{Thm_Sigma(M_I)}.

\begin{proof}[Proof of Theorem \ref{Thm_Sigma(M_I)}]
By Proposition \ref{Prop_M_i-TF} (5), the assertions hold if $\#I=\{1\}$.
Then Remark \ref{Rem_M-TF_oplus} and Proposition \ref{Prop_Sigma_I_cap}
implies the assertions for any nonempty subset $I \subset \{1,\ldots,m\}$.
If $I=\emptyset$, then $\Face^\circ_I D(U)=\{D(U)\}$ and $M_I=0$ hold, 
so both generalized fans $\Sigma_I$ and $\Sigma(M_I)$ 
are $\{K_0(\proj B)_\R\}$,
and we have the assertions also in this case. 
\end{proof}

By combining Corollary \ref{Cor_Sigma_I_all} and 
Theorem \ref{Thm_Sigma(M_I)},
we have the following description of faces of $D(U)$.
Recall that $\pi|_{L(U)} \colon L(U) \to K_0(\proj B)_\R$ is a bijection
from Theorem \ref{Thm_pi_L(U)} (2),
and that $\rho$ is its inverse map as 
$\rho \colon K_0(\proj B)_\R \to L(U)$.

\begin{Cor}\label{Cor_Sigma(M_I)_all}
Let $U=\bigoplus_{i=1}^m U_i \in \twopsilt A$.
\begin{enumerate}
\item
There exists a bijection
\begin{align*}
\{ (I,\sigma) \mid I \subset \{1,\ldots,m\}, \ \sigma \in \Sigma(M_I) \}
&\to \Face D(U), \\
(I,\sigma) &\mapsto C(U/U_I)+\rho(\sigma),
\end{align*}
whose inverse map sends $F \in \Face D(U)$ to 
$(I_F,\pi(F))$.
\item
Let $F \in \Face^\circ_I D(U)$ correspond to $(I,\sigma)$ in (1).
Then we have 
\begin{align*}
\dim_\R F=(m-\#I)+\dim_\R \sigma, \quad
\lambda_U(F)=C(U/U_I), \quad \lambda'_U(F)=\rho(\sigma).
\end{align*}
\item
Let $F_1,F_2 \in \Face D(U)$ correspond to 
$(I_1,\sigma_1),(I_2,\sigma_2)$ in (1).
Then $F_1 \subset F_2$ holds if and only if
$I_1 \supset I_2$ and $\sigma_1 \subset \sigma_2$. 
\end{enumerate} 
\end{Cor}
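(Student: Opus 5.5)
This corollary is a direct combination of Corollary \ref{Cor_Sigma_I_all} and Theorem \ref{Thm_Sigma(M_I)}. The plan is to replace $\Sigma_I$ by $\Sigma(M_I)$ everywhere in Corollary \ref{Cor_Sigma_I_all}.

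\emph{Step 1.} Theorem \ref{Thm_Sigma(M_I)} gives $\Sigma_I=\Sigma(M_I)$ as sets of polyhedral cones for every $I \subset \{1,\ldots,m\}$, including $I=\emptyset$. Hence the index sets
\begin{align*}
\{(I,\sigma) \mid \sigma \in \Sigma_I\} \quad \text{and} \quad \{(I,\sigma) \mid \sigma \in \Sigma(M_I)\}
\end{align*}
coincide literally.

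\emph{Step 2.} With this identification, assertion (1) is exactly Corollary \ref{Cor_Sigma_I_all} (1). The map is $(I,\sigma) \mapsto C(U/U_I)+\rho(\sigma)$ and its inverse is $F \mapsto (I_F,\pi(F))$; neither formula refers to how $\Sigma_I$ was defined. Alternatively, one can assemble (1) from the bijections $\Face^\circ_I D(U) \to \Sigma(M_I)$ of Theorem \ref{Thm_Sigma(M_I)}, using the disjoint decomposition $\Face D(U)=\bigsqcup_I \Face^\circ_I D(U)$.

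\emph{Step 3.} Assertion (2) follows from Corollary \ref{Cor_Sigma_I_all} (2). That result gives the dimension formula $\dim_\R F=(m-\#I)+\dim_\R \sigma$, together with $\lambda_U(F)=C(U/U_I)$ and $\lambda'_U(F)=\rho(\sigma)$, where $\sigma=\pi(F)$. The only point to check is that the $\sigma$ attached to $F$ is again $\pi(F)$, which holds by Step 1. Assertion (3) is identical to Corollary \ref{Cor_Sigma_I_all} (3) under the same identification.

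There is no genuine obstacle here. The only step needing care is confirming that the correspondence $F \leftrightarrow (I_F,\pi(F))$ is unchanged when $\Sigma_I$ is replaced by $\Sigma(M_I)$, and this is immediate because the two fans are equal as sets.
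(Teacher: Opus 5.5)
Your proposal is correct and matches the paper's approach: the paper obtains this corollary directly by combining Corollary \ref{Cor_Sigma_I_all} with the equality $\Sigma_I=\Sigma(M_I)$ from Theorem \ref{Thm_Sigma(M_I)}, exactly as you do. Nothing further is needed.
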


The following is useful when we study the faces of 
$D(U)$ explicitly.

\begin{Rem}\label{Rem_low_dim}
Assume that $D(U)$ is strongly convex
(cf.~Proposition \ref{Prop_strong_convex}).
Then $D(U)$ is generated by its 1-dimensional faces.
By Corollary \ref{Cor_Sigma(M_I)_all} (2), 
all 1-dimensional faces are 
\begin{enumerate}
\item
$\rho(\sigma)$ with $\sigma \in \Sigma(M)$ being 1-dimensional; and
\item
$C(U_i)$ with $i \in \{1,\ldots,m\}$ satisfying $\{0\} 
\in \Sigma(M/M_i)$.
\end{enumerate}
Therefore $C(U_i)$ is a 1-dimensional face of $D(U)$
if and only if $M/M_i$ is sincere in $\mod B$.

Moreover to obtain abstract structures of faces of $D(U)$,
it is enough to determine the 1-dimensional subfaces 
of each 2-dimensional face, 
and we can use Corollary \ref{Cor_Sigma(M_I)_all} also for this purpose.
Namely, all 2-dimensional faces are 
\begin{enumerate}
\item
$\rho(\sigma)$ with $\sigma \in \Sigma(M)$ being 2-dimensional; and
\item
$C(U_i)+\rho(\sigma)$ with $i \in \{1,\ldots,m\}$ and 
$\sigma \in \Sigma(M/M_i)$ being 1-dimensional; and
\item
$C(U_I)$ with $I \subset \{1,\ldots,m\}$ satisfying 
$\#I=2$ and $\{0\} \in \Sigma(M/M_I)$.
\end{enumerate}
\end{Rem}

To calculate $\rho$ explicitly, the following property 
on the maximal completion $S=\bigoplus_{i=1}^n S_i$ is useful.
Note that the value $\rho_i(\xi)$ or $\xi(\rmf_\xi M_i)$ below 
is not determined only by $\Sigma(M_i)$ in general.
For the definition of $d_{U_i}$, see before Proposition \ref{Prop_dual_basis}.

\begin{Thm}\label{Thm_rho_explicit}
Let $U=\bigoplus_{i=1}^m U_i \in \twopsilt A$.
Define maps $\rho_1,\ldots,\rho_m \colon K_0(\proj B)_\R \to \R$
so that any $\xi=\sum_{k=1}^{n-m} x_k[P(k)_B]$ with $x_k \in \R$
satisfies
\begin{align*}
\rho(\xi)=\sum_{i=1}^m \rho_i(\xi)[U_i]+\sum_{j=m+1}^n x_{j-m}[S_j].
\end{align*}
\begin{enumerate}
\item
Let $i \in \{1,\ldots,m\}$ and $\sigma=\overline{E} \in \max \Sigma(M_i)$
with $E \in \TF_{n-m}(M_i)$.
Then any $\xi \in \sigma$ satisfies
\begin{align*}
\rho_i(\xi)d_{U_i}=|\xi(\rmf_E M_i)|=|\xi(\rmf_\xi M_i)|
=|{\min\{\xi(N) \mid 
\text{$N$ is a factor module of $M_i$}\}}|.
\end{align*}
In particular, $\rho_i$ is $\R$-linear on $\sigma$.
\item
The map $\rho$ is $\R$-linear 
on each $\sigma \in \max \Sigma(M_{\{1,\ldots,m\}})$.
Thus $\rho$ is piecewise linear.
\end{enumerate}
\end{Thm}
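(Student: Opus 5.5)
The plan is to compute $\rho(\xi)$ through the facet description of $D(U)$. By construction $\rho(\xi)\in L(U)$ and $\pi(\rho(\xi))=\xi$. Since $\pi([S_j])=[P(j-m)_B]$ and $\pi([U_i])=0$, the element $\theta_0:=\sum_{j=m+1}^n x_{j-m}[S_j]$ also satisfies $\pi(\theta_0)=\xi$. Hence $\rho(\xi)=\theta_0+\sum_i c_i[U_i]$ for unique $c_i$, and these are the $\rho_i(\xi)$.

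The condition $\rho(\xi)\in L(U)$ means $\lambda_U(\rho(\xi))=0$. By Definition \ref{Def_lambda} this says that for each $i$,
\begin{align*}
\min_{F\in\Facet_i D(U)}|\rho(\xi)(L_F)|=0 ,
\end{align*}
together with $\epsilon_F\,\rho(\xi)(L_F)\ge 0$ for all $F$ (Corollary \ref{Cor_D(U)_ineq}). By Theorem \ref{Thm_L_F_inn_vec} (3), $\rho(\xi)(L_F)=\theta_0(L_F)+c_i\epsilon_F d_{U_i}$ for $F\in\Facet_i D(U)$. Therefore
\begin{align*}
c_i d_{U_i}=\max_{F\in\Facet_i D(U)}\bigl(-\epsilon_F\theta_0(L_F)\bigr).
\end{align*}

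Next I evaluate $\epsilon_F\theta_0(L_F)$. Proposition \ref{Prop_M_i-TF} (2) gives $\epsilon_F[L_F]=[\rmf_F W_i]+[X_i]$. Proposition \ref{Prop_dual_basis} gives $[S_j](X_i)=0$ for $j>m\ge i$, so $\theta_0(X_i)=0$ and $\epsilon_F\theta_0(L_F)=\theta_0(\rmf_F W_i)$. Since $\rmf_F W_i\in\calW_U$, Proposition \ref{Prop_reduc_K_0} (3) suggests $\theta_0(\rmf_F W_i)=\xi(\Phi(\rmf_F W_i))$. That proposition is stated for $\theta\in D^\circ(U)$, but the identity $\langle\theta,[N]\rangle=\langle\pi\theta,[\Phi N]\rangle$ for $N\in\calW_U$ is linear in $\theta$. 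It therefore extends from the open set $D^\circ(U)$ to all $\theta$; alternatively, one can check it directly on $[S_j]$ and $[U_i]$. By Proposition \ref{Prop_M_i-TF} (4) and Lemma \ref{Lem_W_i-TF} (5), $\Phi(\rmf_F W_i)=\rmf_{E_F}M_i$. So
\begin{align*}
c_i d_{U_i}=\max_{F}\bigl(-\xi(\rmf_{E_F}M_i)\bigr),
\end{align*}
where $E_F$ runs over all of $\TF_{n-m}(M_i)$ by the bijection $\Facet_i D(U)\to\Sigma_{n-m}(M_i)$.

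Now let $\xi\in\sigma=\overline{E}$. By Lemma \ref{Lem_M-TF_max_min}, $\xi(\rmf_E M_i)$ is the minimum of $\xi(N)$ over factor modules $N$ of $M_i$, so it is $\le\xi(\rmf_{E'}M_i)$ for every $E'$. Hence the maximum above is attained at $E$, and $c_i d_{U_i}=-\xi(\rmf_E M_i)$. Because $0$ is a factor module, this minimum is $\le0$, so $c_i d_{U_i}=|\xi(\rmf_E M_i)|$. The remaining equalities $=|\xi(\rmf_\xi M_i)|=|\min\{\xi(N)\}|$ also come from Lemma \ref{Lem_M-TF_max_min}. The expression $-\xi(\rmf_E M_i)$ is linear in $\xi$ on $\sigma$.

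For (2), every $\sigma\in\max\Sigma(M_{\{1,\ldots,m\}})$ is contained in some maximal cone of each $\Sigma(M_i)$, because the fan for $M_{\{1,\dots,m\}}$ refines each $\Sigma(M_i)$ (Remark \ref{Rem_M-TF_oplus}). So every $\rho_i$ is linear on $\sigma$, and hence so is $\rho$. The fans are finite and complete, so $\rho$ is piecewise linear.

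The main obstacle is the step $\theta_0(\rmf_FW_i)=\xi(\Phi(\rmf_FW_i))$, namely extending the pairing compatibility of $\pi$ with $\Phi$ from $D^\circ(U)$ to all of $K_0(\proj A)_\R$. I plan to handle it by the linearity and density argument sketched above.
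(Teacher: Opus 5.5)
Your argument is correct, but it finds the relevant facet in a different way from the paper.

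\textbf{The paper's route.} The paper fixes $F\in\Facet_i D(U)$ with $\pi(F)=\sigma$. It then uses the fiber identity $\pi^{-1}(\pi(F))\cap\partial_i=F$ from Proposition~\ref{Prop_M_i-TF}~(5) to place $\theta:=\rho(\xi)$ in $F$, so that $\theta(L_F)=0$ by Theorem~\ref{Thm_L_F_inn_vec}. From this it reads off $\rho_i(\xi)d_{U_i}=\theta(X_i)=-\theta(\rmf_F W_i)$, with the sign coming from $\theta\in\overline{F^\circ}$.

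\textbf{Your route.} You unpack $\lambda_U(\rho(\xi))=0$ through the explicit minimum in Definition~\ref{Def_lambda}. This gives the global formula $\rho_i(\xi)d_{U_i}=\max_{F\in\Facet_i D(U)}\bigl(-\xi(\rmf_{E_F}M_i)\bigr)$, valid for every $\xi$. You then use Lemma~\ref{Lem_M-TF_max_min} to see that on $\overline{E}$ the maximum is attained at $E$.

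\textbf{What each buys.} For the attainment step you do not need the full bijection $F\mapsto E_F$ that you cite. It suffices that $\xi\in\pi(F)\subset\overline{E_F}$ for some $F\in\Facet_i D(U)$, which follows from completeness. Your route also writes each $\rho_i$ as a maximum of finitely many linear forms, so it shows for free that $\rho_i$ is convex, which the paper does not note. The paper's argument is shorter and purely local to one facet.

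You are also right to flag Proposition~\ref{Prop_reduc_K_0}~(3). The paper applies it to $\theta=\rho(\xi)\in L(U)$, which is disjoint from $D^\circ(U)$, without comment. Your linearity argument is what justifies this; alternatively, use $[U_i](N)=0$ for $N\in\calW_U$ together with the values on $[S_j]$. Part (2) of your proposal is the same as the paper's.
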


\begin{proof}
(1)
By Proposition \ref{Prop_M_i-TF} (5),
$\sigma \in \max \Sigma(M_i)=\max \Sigma_{\{i\}}$ holds,
and we take $F \in \Facet_i D(U)$ such that $\sigma=\pi(F)$.
Set $\theta:=\rho(\xi)$.

By Proposition \ref{Prop_dual_basis} for $X=\SH(S)$,
we have $\rho_i(\xi)d_{U_i}=\rho(\xi)(X_i)=\theta(X_i)$.
Since $\xi \in \sigma$, we have $\theta \in \pi^{-1}(\sigma) \cap L(U)
\subset \pi^{-1}(\pi(F)) \cap \partial_i=F$,
where the last equality is by Proposition \ref{Prop_M_i-TF} (5). 
Thus we get $\theta(L_F)=0$ by Theorem \ref{Thm_L_F_inn_vec} (1), 
which implies $\theta(X_i)=-\theta(\epsilon_F[L_F]-[X_i])$.
Proposition \ref{Prop_M_i-TF} (2) implies 
$-\theta(\epsilon_F[L_F]-[X_i])=-\theta(\rmf_F W_i)$.
Therefore $\rho_i(\xi)d_{U_i}=-\theta(\rmf_F W_i)$ holds.

For any $\eta \in F^\circ$, we have $\eta(\rmf_F W_i) \le 0$,
so $\theta \in F=\overline{F^\circ}$ implies $\theta(\rmf_F W_i) \le 0$.
Thus we obtain $\rho_i(\xi)d_{U_i}=|\theta(\rmf_F W_i)|$.

Since $\pi(F^\circ)=\pi(F)^\circ=\sigma^\circ=E$,
we have $\Phi(\rmf_F W_i)=\rmf_E M_i$ by Lemma \ref{Lem_W_i-TF} (5),
and then Proposition \ref{Prop_reduc_K_0} (3) and $\pi(\theta)=\xi$ imply
$\theta(\rmf_F W_i)=\xi(\rmf_E M_i)$.
Thus we get $\rho_i(\xi)d_{U_i}=|\xi(\rmf_E M_i)|$,
which gives the first equality.

By Lemma \ref{Lem_M-TF_max_min}, 
the second and the third equalities hold.

(2) 
By Remark \ref{Rem_M-TF_oplus}, for each $i \in \{1,\ldots,m\}$, 
there exists $\sigma_i \in \Sigma(M_i)$ such that $\sigma \subset \sigma_i$.
Then all $\rho_i$ are $\R$-linear on $\sigma$ by (1), and so is $\rho$.
\end{proof}

This proposition is related to the minimal completion $T$ of $U$ as follows.

\begin{Ex}
Let $U=\bigoplus_{i=1}^m U_i \in \twopsilt A$.

We consider the TF equivalence class
$C^\circ(B[1])$ in $K_0(\proj B)_\R$.
It is contained in a unique $M$-TF equivalence class $E$.
For each $i \in \{1,\ldots,m\}$, clearly $\rmf_E M_i=M_i$ holds.

Proposition \ref{Prop_dual_basis_mut} states that
$a_{j,i}d_{U_i}=b_{j,i}d_{U_j}$ holds
for each $i \in \{1,\ldots,m\}$ and $j \in \{m+1,\ldots,n\}$,
where $a_{j,i},b_{j,i} \in \Z_{\ge 0}$ as follows.
\begin{enumerate}
\item
In the triangle $S_j \to U'_j \to T_j \to S_j$ in $\sfK^\rmb(\proj A)$, 
we have $U'_j \simeq \bigoplus_{i=1}^m U_i^{\oplus a_{j,i}}$ in $\add U$.
Thus $[U'_j]=\sum_{i=1}^m a_{j,i}[U_i]$ holds in $K_0(\proj A)_\R$.
\item
In the triangle $X_i[-1] \to W_i \to Y_i \to X_i$ in $\sfD(A)$,
the object $X_j$ appears exactly $b_{j,i}$ times as composition factors
of $W_i$ in $\Filt \{X_{m+1},\ldots,X_n\}=\calW_U$  
(see Proposition \ref{Prop_sim_W_U}).
Thus $[W_i]=\sum_{j=m+1}^n b_{j,i}[X_j]$ holds in $K_0(\mod A)_\R$.
\end{enumerate}

We write $L(k)_B$ for the simple top of $P(k)_B$.
By Proposition \ref{Prop_sim_W_U}, we have
\begin{align*}
[M_i]=\sum_{j=m+1}^n b_{j,i}[L(j-m)_B] \
\text{in $K_0(\mod B)$}, \quad
d_j=\dim_K \End_B(L(j-m)_B),
\end{align*}

If $\xi=-[P(j-m)_B]$ with $j \in \{m+1,\ldots,n\}$,
then $\xi$ belongs to $\overline{E}$, 
so 
\begin{align*}
&\xi(\rmf_E M_i)=\xi(M_i)
\stackrel{\text{Prop.~\ref{Prop_dual_basis}}}{=}-b_{j,i}d_{U_j}, \quad
\rho_i(\xi)\stackrel{\text{Thm.~\ref{Thm_rho_explicit} (1)}}{=}
\frac{|\xi(\rmf_E M_i)|}{d_{U_i}}
=b_{j,i}\frac{d_{U_j}}{d_{U_i}} \quad (i \in \{1,\ldots,m\}), \\
&\rho(\xi)=\sum_{i=1}^m b_{j,i}\frac{d_{U_j}}{d_{U_i}}[S_i]-[S_j]
\stackrel{\text{Prop.~\ref{Prop_dual_basis_mut}}}{=}
\sum_{i=1}^m a_{j,i}[S_i]-[S_j]=[U'_j]-[S_j]=[T_j].
\end{align*}
\end{Ex}

The following example gives how the faces of $D(U)$ are recovered from $M_I$.

\begin{Ex}
We use the setting of Example \ref{Ex_A4_D(U)} again.
Thus $A$ is the path algebra of the quiver $1 \to 2 \to 3 \to 4$,
and $U=U_1 \oplus U_2  \in \twopsilt A$
with $U_1=(P(4) \to P(3))$ and $U_2=P(1)$.
Recall that $B \simeq K(1 \to 2)$ and that
\begin{align*}
M_1=\Phi(W_1)=\begin{smallmatrix}2\end{smallmatrix}, \quad
M_2=\Phi(W_2)=\begin{smallmatrix}1\\2\end{smallmatrix}.
\end{align*}
Therefore for each $I \subset \{1,2\}$,
the generalized fan $\Sigma(M_I)$ is the following.
\begin{align*}
\begin{tikzpicture}[baseline=0pt,scale=0.6]
\node (00)[coordinate] at ( 0, 0) {};
\node (+0)[coordinate] at ( 2, 0) {};
\node (++)[coordinate] at ( 2, 2) {};
\node (0+)[coordinate] at ( 0, 2) {};
\node (-+)[coordinate] at (-2, 2) {};
\node (-0)[coordinate] at (-2, 0) {};
\node (--)[coordinate] at (-2,-2) {};
\node (0-)[coordinate] at ( 0,-2) {};
\node (+-)[coordinate] at ( 2,-2) {};
\node (I)              at ( 0,-2.5) {$I=\emptyset$};
\fill[black!20] (+-)--(++)--(-+)--(--)--cycle;
\draw[dashed,->] (00) to (+0);
\draw[dashed,->] (00) to (0+);
\draw[dashed,->] (00) to (-0);
\draw[dashed,->] (00) to (0-);
\end{tikzpicture}&&
\begin{tikzpicture}[baseline=0pt,scale=0.6]
\node (00)[coordinate] at ( 0, 0) {};
\node (+0)[coordinate] at ( 2, 0) {};
\node (++)[coordinate] at ( 2, 2) {};
\node (0+)[coordinate] at ( 0, 2) {};
\node (-+)[coordinate] at (-2, 2) {};
\node (-0)[coordinate] at (-2, 0) {};
\node (--)[coordinate] at (-2,-2) {};
\node (0-)[coordinate] at ( 0,-2) {};
\node (+-)[coordinate] at ( 2,-2) {};
\node (I)              at ( 0,-2.5) {$I=\{1\}$};
\fill[black!10] (+0)--(++)--(-+)--(-0)--cycle;
\fill[black!30] (+0)--(+-)--(--)--(-0)--cycle;
\draw[dashed,->] (00) to (+0);
\draw[dashed,->] (00) to (0+);
\draw[dashed,->] (00) to (-0);
\draw[dashed,->] (00) to (0-);
\draw[very thick] (-0) to (+0);
\end{tikzpicture}&&
\begin{tikzpicture}[baseline=0pt,scale=0.6]
\node (00)[coordinate] at ( 0, 0) {};
\node (+0)[coordinate] at ( 2, 0) {};
\node (++)[coordinate] at ( 2, 2) {};
\node (0+)[coordinate] at ( 0, 2) {};
\node (-+)[coordinate] at (-2, 2) {};
\node (-0)[coordinate] at (-2, 0) {};
\node (--)[coordinate] at (-2,-2) {};
\node (0-)[coordinate] at ( 0,-2) {};
\node (+-)[coordinate] at ( 2,-2) {};
\node (I)              at ( 0,-2.5) {$I=\{2\}$};
\fill[black!10] (00)--(+-)--(++)--(0+)--cycle;
\fill[black!30] (00)--(0+)--(-+)--(-0)--cycle;
\fill[black!20] (00)--(-0)--(--)--(+-)--cycle;
\draw[dashed,->] (00) to (+0);
\draw[dashed,->] (00) to (0+);
\draw[dashed,->] (00) to (-0);
\draw[dashed,->] (00) to (0-);
\draw[very thick] (00) to (0+);
\draw[very thick] (00) to (-0);
\draw[very thick] (00) to (+-);
\end{tikzpicture}&&
\begin{tikzpicture}[baseline=0pt,scale=0.6]
\node (00)[coordinate] at ( 0, 0) {};
\node (+0)[coordinate] at ( 2, 0) {};
\node (++)[coordinate] at ( 2, 2) {};
\node (0+)[coordinate] at ( 0, 2) {};
\node (-+)[coordinate] at (-2, 2) {};
\node (-0)[coordinate] at (-2, 0) {};
\node (--)[coordinate] at (-2,-2) {};
\node (0-)[coordinate] at ( 0,-2) {};
\node (+-)[coordinate] at ( 2,-2) {};
\node (I)              at ( 0,-2.5) {$I=\{1,2\}$};
\fill[black!10] (00)--(+0)--(++)--(0+)--cycle;
\fill[black!30] (00)--(0+)--(-+)--(-0)--cycle;
\fill[black!20] (00)--(-0)--(--)--(+-)--cycle;
\fill[black!40] (00)--(+-)--(+0)--cycle;
\draw[dashed,->] (00) to (+0);
\draw[dashed,->] (00) to (0+);
\draw[dashed,->] (00) to (-0);
\draw[dashed,->] (00) to (0-);
\draw[very thick] (00) to (+0);
\draw[very thick] (00) to (0+);
\draw[very thick] (00) to (-0);
\draw[very thick] (00) to (+-);
\end{tikzpicture}
\end{align*}

We recover $D(U)$ 
from $\Sigma(M_I)$ and the maximal completion $S$ of $U$
by using Remark \ref{Rem_low_dim} and Theorem \ref{Thm_rho_explicit}.

By Proposition \ref{Prop_strong_convex}, $D(U)$ is strongly convex,
because $H^0(U) \oplus H^{-1}(\nu U)$ is sincere.
Thus we can use Remark \ref{Rem_low_dim}.
Since the 1-dimensional elements of $\Sigma(M)$ are
\begin{align*}
\sigma_1:=\R_{\ge 0}[P(1)_B], \quad \sigma_2:=\R_{\ge 0}[P(2)_B], \quad
\sigma_3:=\R_{\ge 0}(-[P(1)_B]), \quad \sigma_4:=\R_{\ge 0}([P(1)_B]-[P(2)_B])
\end{align*}
and since
\begin{align*} 
\{0\} \in \Sigma(M_2)=\Sigma(M/M_1), \quad 
\{0\} \notin \Sigma(M_1)=\Sigma(M/M_2),
\end{align*}
the 1-dimensional faces of $D(U)$ are
\begin{align*}
F_1:=\rho(\sigma_1), \quad F_2:=\rho(\sigma_2), \quad 
F_3:=\rho(\sigma_3), \quad F_4:=\rho(\sigma_4), \quad
F_5:=C(U_1).
\end{align*}
We set $F_{134}:=F_1+F_3+F_4$ and so on also here.
Then similarly the 2-dimensional faces of $D(U)$ are
\begin{align*}
&
\rho(\sigma_1+\sigma_2)=F_{12}, \quad 
\rho(\sigma_1+\sigma_4)=F_{14}, \quad
\rho(\sigma_2+\sigma_3)=F_{23}, \quad
\rho(\sigma_3+\sigma_4)=F_{34}, \\
&
C(U_1)+\rho(\sigma_2)=F_{25}, \quad
C(U_1)+\rho(\sigma_3)=F_{35}, \quad
C(U_1)+\rho(\sigma_4)=F_{45}, \\
&
C(U_2)+\rho(\R[P(1)_B])=F_{13}.
\end{align*}

Therefore the intersection of $D(U)$
and an approriate affine hyperplane is the following pyramid,
where the bold dashed line indicates the intersection of
$C(U)$ and the affine hyperplane.
\begin{align*}
\begin{tikzpicture}[baseline=0pt,scale=0.8]
\node (F3)[coordinate,label= 90:{$F_3$}]        at ( 0  , 3  ) {};
\node (F1)[coordinate,label=180:{$F_1$}]        at (-3  , 0  ) {};
\node (F2)[coordinate,label=270:{$F_2$}]        at (-1.5,-1  ) {};
\node (F5)[coordinate,label=  0:{$F_5=C(U_1)$}] at ( 2  , 0  ) {};
\node (F4)[coordinate,label=  0:{$\longleftarrow F_4$}] at ( 0.5, 1  ) {};
\node (U2)[coordinate,label=180:{$C(U_2)$}]     at (-1.5, 1.5) {};
\draw (F3) to (F1);
\draw (F3) to (F2);
\draw[dashed] (F3) to (F4);
\draw (F3) to (F5);
\draw (F1) to (F2);
\draw (F2) to (F5);
\draw[dashed] (F5) to (F4);
\draw[dashed] (F4) to (F1);
\draw[dashed,very thick] (U2) to (F5);
\draw[fill=black] (F5) circle [radius=0.1];
\draw[fill=black] (U2) circle [radius=0.1];
\end{tikzpicture}.
\end{align*}

In Example \ref{Ex_A4_D(U)},
we have seen $[S_1]=[U_1]=[P(3)]-[P(4)]$, $[S_2]=[U_2]=[P(1)]$,
$[S_3]=[P(2)]$, $[S_4]=[P(3)]$.
Theorem \ref{Thm_rho_explicit} gives the value of $\rho(\xi)$
for $\xi$ in $F_1,F_2,F_3,F_4 \in \Face^\circ_{\{1,2\}} D(U)$ 
as follows.
\begin{center}
\begin{tabular}{c|cc|cc|c}
$\xi$ & $\rmf_\xi(M_1)$ & $\rmf_\xi(M_2)$ & $\rho_1(\xi)$ & $\rho_2(\xi)$ &
$\rho(\xi)$ \\
\hline
$[P(1)_B]$ & 0 & 0 & 0 & 0 & $[S_3]=[P(2)]$ \\
$[P(2)_B]$ & 0 & 0 & 0 & 0 & $[S_4]=[P(3)]$ \\
$-[P(1)_B]$ & 0 & $\smallone$ & 0 & 1 & $[S_2]-[S_3]=[P(1)]-[P(2)]$ \\
$[P(1)_B]-[P(2)_B]$ & $\smalltwo$ & 0 & 1 & 0 & 
$[S_1]+[S_3]-[S_4]=[P(2)]-[P(4)]$
\end{tabular}
\end{center}
These four elements $\rho(\xi)$ and $[U_1]=[P(3)]-[P(4)]$ are
in the affine hyperplane
\begin{align*}
\{ x_1[P(1)]+x_2[P(2)]+x_3[P(3)]+x_4[P(4)] \mid 2x_1+x_2+x_3=1 \}
\subset K_0(\proj A)_\R.
\end{align*}
This affine hyperplane and $D(U)$ intersects as follows.
\begin{align*}
\begin{tikzpicture}[baseline=0pt,scale=0.8]
\node (F3)[coordinate,label= 90:{$[P(1)]-[P(2)]$}] at ( 0  , 3  ) {};
\node (F1)[coordinate,label=180:{$[P(2)]$}]        at (-3  , 0  ) {};
\node (F2)[coordinate,label=270:{$[P(3)]$}]        at (-1.5,-1  ) {};
\node (F5)[coordinate,label=  0:{$[P(3)]-[P(4)]$}] at ( 2  , 0  ) {};
\node (F4)[coordinate]                             at ( 0.5, 1  ) {};
\node (U2)[coordinate,label=180:{$[P(1)]/2$}]      at (-1.5, 1.5) {};
\node (F4')  at ( 0.5, 1  ) {};
\node (F4'') at ( 3  , 1  ) {$[P(2)]-[P(4)]$};
\draw (F3) to (F1);
\draw (F3) to (F2);
\draw[dashed] (F3) to (F4);
\draw (F3) to (F5);
\draw (F1) to (F2);
\draw (F2) to (F5);
\draw[dashed] (F5) to (F4);
\draw[dashed] (F4) to (F1);
\draw[dashed,very thick] (U2) to (F5);
\draw[->] (F4'') to (F4');
\draw[fill=black] (F5) circle [radius=0.1];
\draw[fill=black] (U2) circle [radius=0.1];
\end{tikzpicture}
\end{align*}
\end{Ex}

\section{The main results}
\label{Sec_main_result}

In this section, we describe the TF equivalence classes in $D(U)$
by the following result.
The symbol $\TF_\Lambda^\Gamma$ denotes
the set of TF equivalence classes in $\Gamma$,
for an algebra $\Lambda$ and a subset $\Gamma \subset K_0(\proj \Lambda)_\R$
which is a union of TF equivalence classes.

\begin{Thm}\label{Thm_TF_2^m_B}
Let $U \in \twopsilt A$.
\begin{enumerate}
\item
The bijection $C(U) \times L(U) \to D(U)$ in Theorem \ref{Thm_C(U)_L(U)} (1)
induces a bijection
\begin{align*}
\Psi_1 \colon \TF_A^{C(U)} \times \TF_A^{L(U)} \to \TF_A^{D(U)}, \quad
(E,E') \mapsto E+E',
\end{align*}
whose inverse is given by $E \mapsto (\lambda_U(E),\lambda'_U(E))$.
\item
The bijection $\pi|_{L(U)} \colon L(U) \to K_0(\proj B)_\R$
in Theorem \ref{Thm_pi_L(U)} (2) induces bijections
\begin{align*}
\Psi_2 \colon \TF_A^{C(U)} \times \TF_A^{L(U)} 
&\to 2^{\{1,\ldots,m\}} \times \TF_B,& 
(C^\circ(U_I),E) &\mapsto (I,\pi(E)),\\
\TF_A^{L(U)} &\to \TF_B,& E &\mapsto \pi(E),
\end{align*}
whose inverses are given by $(I,E) \mapsto (C(U_I),\rho(E))$
and $E \mapsto \rho(E)$, respectively.
\item
The bijection $\Psi_2 \Psi_1^{-1}$ coincides with 
\begin{align*}
\Psi_3 \colon \TF_A^{D(U)} \to 2^{\{1,\ldots,m\}} \times \TF_B, \quad
E \mapsto (\{i \in \{1,\ldots,m\} \mid E \subset D(U_i)\},\pi(E)),
\end{align*}
and its inverse is given by $(I,E) \mapsto C(U_I)+\rho(E)$.
\end{enumerate}
\end{Thm}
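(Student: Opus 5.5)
The key step is to show that the decomposition $\theta=\lambda_U(\theta)+\lambda'_U(\theta)$ of Theorem \ref{Thm_C(U)_L(U)} respects TF equivalence in both directions. Once that is in place, all three parts are bookkeeping.

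For (1), I first record two facts. The TF classes inside $C(U)$ are exactly the cones $C^\circ(U_I)$ for $I\subset\{1,\ldots,m\}$, by Proposition \ref{Prop_cone_TF}. Since $D(U)$ and each $D^\circ(U_i)$ are unions of TF classes, so is $L(U)$. I then claim: if $\theta,\theta'\in D(U)$ are TF equivalent, then $\lambda_U(\theta),\lambda_U(\theta')$ lie in the same $C^\circ(U_I)$, and $\lambda'_U(\theta),\lambda'_U(\theta')$ are TF equivalent.

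The first half follows from Lemma \ref{Lem_lambda_DcV} (1). Indeed, $\lambda_U(\theta)\in C^\circ(U_I)+C(U/U_I)$ iff $\theta\in D^\circ(U_I)$, which is a TF-invariant condition; so $I=\{i\mid\theta\in D^\circ(U_i)\}$ is determined by the class.

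For the second half I use Proposition \ref{Prop_C(U/U_I_F)} together with Lemma \ref{Lem_face_M-TF}. The class $E$ of $\theta$ lies in the relative interior of a unique face $F$ of $D(U)$, and $\lambda'_U$ maps $F$ into $F\cap L(U)$. To see that $\lambda'_U$ preserves TF classes, I plan to use reduction rather than work directly in $A$. Since $\pi\circ\lambda'_U=\pi$ (Theorem \ref{Thm_pi_L(U)} (1)), it suffices to show two things. First, TF equivalent $\theta,\theta'$ give TF equivalent $\pi(\theta),\pi(\theta')$. Second, for $\eta,\eta'\in L(U)$, TF equivalence of $\eta,\eta'$ is the same as TF equivalence of $\pi(\eta),\pi(\eta')$.

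For both, I shift into $D^\circ(U)$ by adding $[U]$: for $\eta\in L(U)$, the element $\eta+[U]$ lies in $D^\circ(U)$ by Theorem \ref{Thm_C(U)_L(U)} (2). Proposition \ref{Prop_reduc_K_0} (2) then handles TF equivalence inside $D^\circ(U)$ modulo $\R C(U)$. The genuinely new claim is that $\eta\sim\eta'$ in $L(U)$ if and only if $\eta+[U]\sim\eta'+[U]$.

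The forward direction I expect to be the main obstacle. My plan is to argue by segments, via the criterion \cite[Theorem 2.17]{A}: walls $\Theta_M$ meeting $[\eta+[U],\eta'+[U]]$ in a single point correspond to walls meeting $[\eta,\eta']$. I intend to control this using the product structure of faces from Corollary \ref{Cor_Sigma(M_I)_all}, where $F=C(U/U_I)+\rho(\sigma)$, together with the fact that $\rho$ is linear on cones of $\Sigma(M_{\{1,\ldots,m\}})$, which refines TF classes of $B$ (Theorem \ref{Thm_rho_explicit}). Concretely, TF classes in $L(U)$ should be of the form $\rho(E_B)$ with $E_B\in\TF_B$, and these sit inside faces on which $\lambda'_U$ is linear.

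With the claim established, $\Psi_1$ is well defined and injective by Theorem \ref{Thm_C(U)_L(U)} (1). Surjectivity, and the fact that $E+E'$ is a single full class, come from the converse direction: if two sums have components in the same classes, the sums are equivalent, because the criterion above depends only on the $I$ and the $\pi$-class.

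For (2), the first map is the product of $C^\circ(U_I)\mapsto I$ with the bijection $\TF_A^{L(U)}\to\TF_B$, $E\mapsto\pi(E)$. That second bijection is exactly the equivalence just established, combined with Theorem \ref{Thm_pi_L(U)} (2); its inverse is $\rho$.

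For (3), I compose the two maps. By Lemma \ref{Lem_lambda_DcV} (1), the index set $I$ recorded by $\lambda_U(E)\subset C^\circ(U_I)$ is precisely $\{i\mid E\subset D^\circ(U_i)\}$. Also $\pi(\lambda'_U(E))=\pi(E)$. The inverse is $(I,E)\mapsto C^\circ(U_I)+\rho(E)$.
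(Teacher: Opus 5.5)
Your treatment of the $\lambda_U$-component via Lemma \ref{Lem_lambda_DcV} (1) is correct. The gap is at the step that carries the real content: you never prove that if $\theta,\eta\in D(U)$ have the same index set $I$ and $\pi(\theta)\sim\pi(\eta)$ (equivalently $\theta+[U]\sim\eta+[U]$), then $\theta\sim\eta$. Three of your claims rest on this: injectivity of $E\mapsto\pi(E)$ on $\TF_A^{L(U)}$, that $\lambda'_U$ maps a class into a single class, and that $E+E'$ is one class. Yet it is only asserted (``TF classes in $L(U)$ should be of the form $\rho(E_B)$'', ``the criterion above depends only on the $I$ and the $\pi$-class'').

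You have also placed the difficulty in the wrong direction. The implication $\theta\sim\eta\Rightarrow\theta+[U]\sim\eta+[U]$ is short. For $\theta\in D(U)$ and $\theta'\in C^\circ(U)+\theta$ one has $\ovcalT_{\theta'}=\ovcalT_U\cap\ovcalT_\theta$ and $\ovcalF_{\theta'}=\ovcalF_U\cap\ovcalF_\theta$. Here ``$\supset$'' follows from $\ovcalT_{\theta'-\theta}=\ovcalT_U$ and additivity of the defining inequalities. For ``$\subset$'', use $\theta'\in D^\circ(U)$, together with the fact that $\theta$ lies in the closure of $C^\circ(U)+\theta$, which is contained in one TF class by Proposition \ref{Prop_reduc_K_0} (2).

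The reverse direction cannot be obtained by matching walls along $[\theta,\eta]$ and $[\theta+[U],\eta+[U]]$. Along the second segment you only see $\calW_{\zeta+[U]}=\calW_U\cap\calW_\zeta$. But for $\zeta\in\partial D(U)$, $\calW_\zeta$ contains objects outside $\calW_U$, e.g.\ the facet labels $L_F$, for which $[U_i](L_F)=\epsilon_F d_{U_i}\ne 0$. Two ingredients are needed.

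First, $\theta$ and $\eta$ must lie in the relative interior of the same face of $D(U)$. Given $I_F=I_G=I$, this follows because $\pi(F^\circ)$ and $\pi(G^\circ)$ are $M_I$-TF classes (Theorem \ref{Thm_Sigma(M_I)} and Lemma \ref{Lem_pi_interior}), and TF equivalence on $K_0(\proj B)_\R$ refines $M_I$-TF equivalence. You state this refinement the wrong way round. Linearity of $\rho$ on the cones of $\Sigma(M_{\{1,\ldots,m\}})$ gives at most this first ingredient; it says nothing about semistable torsion pairs.

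Second, you need a formula showing that the face $F$ and the class of $\theta'\in C^\circ(U)+\theta$ together determine the class of $\theta$. The paper proves
\[
\ovcalT_\theta=\ovcalT_{\theta'}*\Filt(\sub_F(\rmw_F X^-)), \qquad
\ovcalF_\theta=\Filt(\fac_F(\rmw_F Y^+))*\ovcalF_{\theta'}
\]
(Proposition \ref{Prop_Filt_fac_F}). The argument filters each object of $\calF_{\theta'}\cap\calW_\theta$ by submodules of $X^-$, using that $\calF_{\theta'}$ is the smallest torsion-free class containing $\calF_U=\sfF(X^-)$ and $\calF_\theta$. Without these two steps, the bijections in (1) and (2) are not established.
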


Since $D^\circ(U)=\bigcap_{i=1}^m D^\circ(U_i)$
and $L(U)=D(U) \setminus (\bigcup_{i=1}^m D^\circ(U_i))$,
Theorem \ref{Thm_TF_2^m_B} (3) gives bijections
\begin{align*}
\TF_A^{D^\circ(U)} \to \{\{1,\ldots,m\}\} \times \TF_B, \quad
\TF_A^{L(U)} \to \{\emptyset\} \times \TF_B.
\end{align*}
They recover the bijections
$\TF_A^{D^\circ(U)} \to \TF_B$ in Proposition \ref{Prop_reduc_K_0} (2)
and $\TF_A^{L(U)} \to \TF_B$ in (2), respectively.

Moreover, for faces of $D(U)$, we have the following result.
Recall that we have set 
$I_F:=\{i \in \{1,\ldots,m\} \mid F \subset \partial_i\}$.

\begin{Cor}
Let $U \in \twopsilt A$ and $F \in \Face D(U)$.
Set $J_F:=\{1,\ldots,m\} \setminus I_F$.
Then the bijection $\Psi_2\Psi_1^{-1}=\Psi_3$ 
in Theorem \ref{Thm_TF_2^m_B} (3) is restricted to a bijection
\begin{align*}
\TF_A^F \to 2^{J_F} \times \TF_B^{\pi(F)}.
\end{align*}
\end{Cor}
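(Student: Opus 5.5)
The plan is to deduce the corollary directly from the bijection $\Psi_3$ of Theorem \ref{Thm_TF_2^m_B} (3) together with the product description of faces in Proposition \ref{Prop_C(U/U_I_F)}. There are three things to check. First, $F$ is a union of TF equivalence classes, so $\TF_A^F$ makes sense. Second, $\pi(F)$ is a union of TF equivalence classes in $K_0(\proj B)_\R$, so $\TF_B^{\pi(F)}$ makes sense. Third, $\Psi_3$ maps $\TF_A^F$ into $2^{J_F}\times\TF_B^{\pi(F)}$, and its inverse maps $2^{J_F}\times\TF_B^{\pi(F)}$ into $\TF_A^F$. Injectivity of the restricted map is automatic, since it is a restriction of a bijection.

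For the first point, Lemma \ref{Lem_face_M-TF} (1) shows that each face $F'$ of $D(U)$ has $(F')^\circ\in\TF(M)$ with $M=Y^+\oplus X^-$. Since $F=\bigsqcup_{F'\in\Face F}(F')^\circ$, the face $F$ is a union of $M$-TF equivalence classes. The TF equivalence refines every $M$-TF equivalence (equal $\calT_\theta,\calF_\theta$ force equal $\rmt,\rmw,\rmf$ and equal supports). Hence $F$ is a union of TF equivalence classes.

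For the forward inclusion, take $E\in\TF_A^F$ and set $I=\{i\mid E\subset D^\circ(U_i)\}$. If $i\in I_F$, then $E\subset F\subset\partial_i=D(U)\setminus D^\circ(U_i)$ by Theorem \ref{Thm_partial_setminus}, so $i\notin I$; thus $I\subset J_F$. Also $\pi(E)\subset\pi(F)$. For the reverse inclusion, take $I\subset J_F$ and $E'\in\TF_B$ with $E'\subset\pi(F)$. Then
\begin{align*}
C(U_I)\subset C(U/U_{I_F})=F\cap C(U), \qquad \rho(E')\subset\rho(\pi(F))=\lambda'_U(F)\subset F,
\end{align*}
using Proposition \ref{Prop_C(U/U_I_F)} (1) and $\lambda'_U=\rho\circ\pi$ from the proof of Proposition \ref{Prop_Sigma_I_dim}. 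Since $F$ is a convex cone, $C(U_I)+\rho(E')\subset F$, so $\Psi_3^{-1}(I,E')\in\TF_A^F$.

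The step I expect to need the most care is the second point, namely that $\pi(F)$ is saturated for the TF equivalence of $B$; without it the target $\TF_B^{\pi(F)}$ is not well defined, and the forward map could land in classes not contained in $\pi(F)$. I would argue as follows. Let $E''\in\TF_B$ meet $\pi(F)$ at a point $\xi$. Then $\rho(E'')=\Psi_3^{-1}(\emptyset,E'')$ is a TF class of $A$ containing $\rho(\xi)\in\rho(\pi(F))\subset F$. Because $F$ is a union of TF classes, this gives $\rho(E'')\subset F$, and so $E''=\pi(\rho(E''))\subset\pi(F)$. With this in hand, the two inclusions above show that $\Psi_3$ and $\Psi_3^{-1}$ restrict to mutually inverse maps between $\TF_A^F$ and $2^{J_F}\times\TF_B^{\pi(F)}$.
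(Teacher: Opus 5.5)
Your proof is correct, but it is organized differently from the paper's.

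The paper restricts the product bijection $C(U)\times L(U)\to D(U)$ of Theorem \ref{Thm_C(U)_L(U)} (1) to a bijection $C(U_{J_F})\times(F\cap L(U))\to F$, using Proposition \ref{Prop_C(U/U_I_F)} (1)(2). It then obtains $\TF_A^{C(U_{J_F})}\times\TF_A^{F\cap L(U)}\to\TF_A^F$ from $\Psi_1$. Finally it identifies the image under $\Psi_2$ with $2^{J_F}\times\TF_B^{\pi(F)}$ via $\pi(F)=\pi(F\cap L(U))$.

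You instead check directly that $\Psi_3$ and $\Psi_3^{-1}$ preserve the two subsets. Your backward containment uses the same facts as the paper: $F\cap C(U)=C(U/U_{I_F})$ and $\lambda'_U(F)=F\cap L(U)$. Your forward containment $I\subset J_F$ is where you differ: you get it from $\partial_i=D(U)\setminus D^\circ(U_i)$ (Theorem \ref{Thm_partial_setminus}), not from $\lambda_U(F)=C(U_{J_F})$.

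The paper's route is shorter. It also exhibits the bijection as a product of $\TF_A^{C(U_{J_F})}\cong 2^{J_F}$ and $\TF_A^{F\cap L(U)}\cong\TF_B^{\pi(F)}$. Your route has the advantage of making explicit the saturation of $F$ and $\pi(F)$, which the paper leaves implicit.

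One correction to your motivation. If $\TF_B^{\pi(F)}$ is read as the set of TF classes contained in $\pi(F)$, your two containments already give the bijection. The forward map cannot land outside $\pi(F)$, because $E\subset F$ forces $\pi(E)\subset\pi(F)$. Saturation of $\pi(F)$ is only needed to fit the paper's convention that $\TF_\Lambda^\Gamma$ is formed for unions $\Gamma$ of TF classes.

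Also, strictly speaking the TF class $\Psi_3^{-1}(I,E')$ is $C^\circ(U_I)+\rho(E')$, not $C(U_I)+\rho(E')$ as written in Theorem \ref{Thm_TF_2^m_B}. Your containment argument covers it verbatim.
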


\begin{proof}
By Proposition \ref{Prop_C(U/U_I_F)} (1)(2), 
the bijection $C(U) \times L(U) \to D(U)$ is restricted to a bijection
$C(U_{J_F}) \times (F \cap L(U)) \to F$.
Thus $\Psi_1$ gives a bijection
$\TF_A^{C(U_{J_F})} \times \TF_A^{F \cap L(U)} \to \TF_A^F$.
The image of $\TF_A^{C(U_{J_F})} \times \TF_A^{F \cap L(U)}$ under $\Psi_2$ is
$2^{J_F} \times \TF_B^{\pi(F)}$ by Proposition \ref{Prop_C(U/U_I_F)} (1).
\end{proof}

To show Theorem \ref{Thm_TF_2^m_B}, the following key proposition is enough.

\begin{Prop}\label{Prop_TF_lambda_lambda'}
Let $\theta,\eta \in D(U)$,
and take the unique faces $F,G \in \Face D(U)$ such that
$\theta \in F^\circ$ and $\eta \in G^\circ$.
Then the following conditions are equivalent.
\begin{enumerate2}
\item
The elements $\theta$ and $\eta$ are TF equivalent.
\item
The faces $F=G$ coincide, and $[U]+\theta,[U]+\eta$ are TF equivalent.
\item
The faces $F=G$ coincide, and $\pi(\theta),\pi(\eta)$ are TF equivalent.
\item
The elements $\lambda_U(\theta),\lambda_U(\eta)$ are TF equivalent,
and $\pi(\theta),\pi(\eta)$ are TF equivalent.
\item
The elements $\lambda_U(\theta),\lambda_U(\eta)$ are TF equivalent,
and $\lambda'_U(\theta),\lambda'_U(\eta)$ are TF equivalent.
\end{enumerate2}
\end{Prop}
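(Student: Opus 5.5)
Since $D(U)$ is a union of TF equivalence classes and each face $F$ of $D(U)$ has $F^\circ$ an $M$-TF class for $M=Y^+\oplus X^-$ (Lemma \ref{Lem_face_M-TF}), with $M$-TF equivalence coarser than TF equivalence, TF equivalent elements of $D(U)$ lie in the relative interior of the same face. This is the basic reduction: in every condition we may assume $F=G$. In conditions (d) and (e), $F=G$ is recovered as follows. By Proposition \ref{Prop_lambda_F^circ}, $\lambda_U(\theta)\in C^\circ(U/U_{I_F})$. By Corollary \ref{Cor_Sigma_I_all}, $F$ is determined by $I_F$ and $\pi(F)$. The TF class of $\lambda_U(\theta)$, which is a silting cone $C^\circ(U/U_I)$, determines $I_F$. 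Finally, $\pi(F)^\circ$ is an $M_{I_F}$-TF class by Theorem \ref{Thm_Sigma(M_I)}, which is coarser than TF on $K_0(\proj B)_\R$, so the TF class of $\pi(\theta)$ together with $I_F$ determines $\pi(F)$.

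I would prove the cycle (a)$\Rightarrow$(b)$\Rightarrow$(c)$\Rightarrow$(d)$\Leftrightarrow$(e)$\Rightarrow$(a).

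For (a)$\Rightarrow$(b), we have $F=G$ by the reduction above. To show that $[U]+\theta$ and $[U]+\eta$ are TF equivalent, I use that $C^\circ(U)$ is a TF class contained in $D^\circ(U)$, and that sums of TF equivalent pairs stay TF equivalent whenever $C^\circ(U)+\theta$ remains in one class. Concretely, I would argue with the wall criterion: the segment $[\theta,\eta]$ lies in one TF class, hence in $F^\circ$ by convexity. Then $[[U]+\theta,[U]+\eta]$ lies in $D^\circ(U)$. A wall $\Theta_N$ crossing it at a single point would have to have $N\in\calW_U$ (Proposition \ref{Prop_reduc_K_0}(3)), so $[U](N)=0$, and then $\Theta_N$ would cross $[\theta,\eta]$ at a single point as well, a contradiction.

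For (b)$\Rightarrow$(c), apply Proposition \ref{Prop_reduc_K_0}(2): the bijection $\TF_A^{D^\circ(U)}\to\TF_B$ is induced by $\pi$, and $\pi([U]+\theta)=\pi(\theta)$.

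For (c)$\Rightarrow$(d), note that $\lambda_U(\theta),\lambda_U(\eta)\in C^\circ(U/U_{I_F})$ by Proposition \ref{Prop_lambda_F^circ}. Since $C^\circ(U/U_{I_F})$ is a single TF class, $\lambda_U(\theta)$ and $\lambda_U(\eta)$ are TF equivalent.

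For (d)$\Leftrightarrow$(e), use $\lambda'_U=\rho\circ\pi$ and $\pi\circ\lambda'_U=\pi$. It then suffices that $\pi|_{L(U)}$ and $\rho$ preserve and reflect TF equivalence. For $\xi\in L(U)$, the element $[U]+\xi$ lies in $D^\circ(U)$, and $\pi([U]+\xi)=\pi(\xi)$; so this reduces to the same face-plus-$[U]$ argument as in (a)$\Rightarrow$(b) and (e)$\Rightarrow$(a), applied inside $L(U)$.

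The main obstacle is (e) or (d)$\Rightarrow$(a): TF equivalence must be recovered from the data $F=G$ and the TF class of $\pi(\theta)$. I would first attack it through the $M$-TF description. Write $\theta=\lambda_U(\theta)+\lambda'_U(\theta)$ and $\eta=\lambda_U(\eta)+\lambda'_U(\eta)$, with both $\lambda_U$-parts in the same open cone and both $\lambda'_U$-parts in the same face of $L(U)$. Then, for any module $N$, I would compare the torsion parts $\rmt_\theta N$ and $\rmt_\eta N$ by means of the semistable subcategories. By Proposition \ref{Prop_reduc_K_0}(3), for $\theta':=[U]+\theta\in D^\circ(U)$ we have $\calT_{\theta'}$ determined by $\calT_{\pi(\theta)}$ via $\Phi$. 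The decisive step is then to compare $\calT_\theta$ with $\calT_{\theta'}$. I expect that $\calT_\theta$ is obtained from $\calT_{\theta'}$ by intersecting with $\ovcalT$-type conditions on the bricks $Y_i^+,X_i^-$ for $i\in I_F$, and that these conditions depend only on $F$ by Lemma \ref{Lem_face_M-TF}. If this direct approach fails, I would instead use the wall criterion on $[\theta,\eta]\subset F^\circ$. A module $N$ with $\Theta_N$ meeting the segment at one point would yield, after adding $r[U]$ for large $r$ and using the constancy of $\rmw$-data on $F^\circ$, a wall in $K_0(\proj B)_\R$ separating $\pi(\theta)$ from $\pi(\eta)$, which is a contradiction.
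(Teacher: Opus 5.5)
Your reduction to $F=G$, the implications (b)$\Rightarrow$(c)$\Rightarrow$(d), the recovery of $F$ from $(I_F,\pi(F))$ for (d)$\Rightarrow$(c), and the treatment of (d)$\Leftrightarrow$(e) through $\lambda'_U(\theta),\lambda'_U(\eta)\in L(U)$ all match the paper.

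\textbf{The gap.} The step that carries the proposition is missing. You need to show that $F=G$, together with TF equivalence of $[U]+\theta$ and $[U]+\eta$ (equivalently, of $\pi(\theta)$ and $\pi(\eta)$), forces $\theta$ and $\eta$ to be TF equivalent. You only state an expectation here, and your (d)$\Leftrightarrow$(e) also depends on it. The missing idea is Proposition \ref{Prop_Filt_fac_F}: for $\theta\in F^\circ$ and $\theta'\in C^\circ(U)+\theta$,
\[
\ovcalT_\theta=\ovcalT_{\theta'}*\Filt(\sub_F(\rmw_F X^-)),\qquad
\ovcalF_\theta=\Filt(\fac_F(\rmw_F Y^+))*\ovcalF_{\theta'}.
\]
Here $\sub_F(\rmw_F X^-)$ and $\fac_F(\rmw_F Y^+)$ depend only on $F$ by Lemma \ref{Lem_face_M-TF}, so this gives (b)$\Rightarrow$(a) at once. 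In your terms it says $\calT_\theta=\calT_{\theta'}\cap{}^\perp\fac_F(\rmw_F Y^+)$. That is a Hom-orthogonality condition against $\theta$-semistable factor modules of $Y^+$, not an $\ovcalT$-type condition on the bricks.

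Proving this takes genuine work:
\begin{itemize}
\item From $\ovcalT_{\theta'}=\ovcalT_U\cap\ovcalT_\theta$ (Proposition \ref{Prop_W_U_cap_W_theta}), $\calF_{\theta'}$ is the torsion-free class generated by $\calF_U=\sfF(X^-)$ and $\calF_\theta$.
\item Lemma \ref{Lem_tors_general} reduces everything to $\calF_{\theta'}\cap\ovcalT_\theta=\Filt(\sub_F(\rmw_F X^-))$.
\item For the nontrivial inclusion, filter $L\in\calF_{\theta'}\cap\calW_\theta$ with subquotients in $\Sub X^-$ or in $\calF_\theta$. Then repeatedly peel off the top factor, using $L\in\ovcalT_\theta$ and the wideness of $\calW_\theta$, to see that every factor is a $\theta$-semistable submodule of $X^-$.
\end{itemize}

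\textbf{The fallback wall argument.} This does not get around the gap. For $\zeta\in[\theta,\eta]$ and $r>0$ one has $\calW_{\zeta+r[U]}=\calW_U\cap\calW_\zeta$. So a wall $\Theta_N$ meeting $[\theta,\eta]$ only at $\zeta$ with $N\notin\calW_U$ disappears after adding $r[U]$ and yields nothing in $K_0(\proj B)_\R$. You would first have to replace $N$ by a module in $\calW_U$ whose wall still meets the segment in a single point. Showing this is possible is precisely the content of the missing lemma. Moreover, when $\eta-\theta\in\R C(U)$ one has $\pi(\theta)=\pi(\eta)$, so nothing downstairs can separate them.

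\textbf{A smaller issue in (a)$\Rightarrow$(b).} From $N\in\calW_{[U]+\zeta}\subset\calW_U$ you get $[U](N)=0$ and hence $\zeta(N)=0$, but not $N\in\calW_\zeta$. For that you need $\calW_{[U]+\zeta}=\calW_U\cap\calW_\zeta$. Proposition \ref{Prop_W_U_cap_W_theta}(1) then gives (a)$\Rightarrow$(b) directly without walls, because $\ovcalT_{[U]+\theta}=\ovcalT_U\cap\ovcalT_\theta$ and $\ovcalF_{[U]+\theta}=\ovcalF_U\cap\ovcalF_\theta$.
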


The most nontrivial part is showing (c)$\Leftrightarrow$(d),
but this has been virtually done in
Proposition \ref{Prop_lambda_F^circ} and Theorem \ref{Thm_Sigma(M_I)}.
The equivalence (b)$\Leftrightarrow$(c) comes from
just Proposition \ref{Prop_reduc_K_0} (2),
and (d)$\Leftrightarrow$(e) follows applying (a)$\Leftrightarrow$(d)
to $\lambda'_U(\theta),\lambda'_U(\eta)$ instead of $\theta,\eta$.
Thus this section is mostly devoted to showing (a)$\Leftrightarrow$(b).

For any subcategory $\calC_1,\calC_2 \subset \mod A$, we set 
$\calC_1*\calC_2$ as the full subcategory of $M \in \mod A$ which admit
short exact sequences
$0 \to M_1 \to M \to M_2 \to 0$ with $M_1 \in \calC_1$ and $M_2 \in \calC_2$.
Then we can easily check the following property.

\begin{Lem}\label{Lem_tors_general}
Let $(\calT,\calF)$ be a torsion pair in $\mod A$. 
If $\calT \subset \calT'$, then $\calT'=\calT*(\calF \cap \calT')$.
If $\calF \subset \calF'$, then $\calF'=(\calT \cap \calF')*\calF$.
\end{Lem}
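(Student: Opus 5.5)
We prove both statements directly from the definition of torsion pairs; by duality it suffices to treat the first one, so assume $\calT \subset \calT'$ with $\calT'$ a torsion class.

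For the inclusion $\calT*(\calF\cap\calT') \subset \calT'$, take a short exact sequence $0 \to M_1 \to M \to M_2 \to 0$ with $M_1 \in \calT \subset \calT'$ and $M_2 \in \calF \cap \calT' \subset \calT'$. Since $\calT'$ is a torsion class, it is closed under extensions, so $M \in \calT'$.

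For the reverse inclusion, let $M \in \calT'$ and take its canonical sequence $0 \to \rmt M \to M \to M/\rmt M \to 0$ with respect to $(\calT,\calF)$, where $\rmt M \in \calT$ and $M/\rmt M \in \calF$. The module $M/\rmt M$ is a factor module of $M \in \calT'$. Since $\calT'$ is closed under factor modules, $M/\rmt M \in \calF\cap\calT'$. Hence $M \in \calT*(\calF\cap\calT')$, and so $\calT'=\calT*(\calF\cap\calT')$.

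The second statement is dual. Assume $\calF \subset \calF'$ with $\calF'$ a torsion-free class. The inclusion $(\calT\cap\calF')*\calF \subset \calF'$ follows because $\calF'$ is extension-closed and $\calF \subset \calF'$. Conversely, for $M \in \calF'$, the canonical sequence $0 \to \rmt M \to M \to M/\rmt M \to 0$ has $\rmt M$ a submodule of $M$, so $\rmt M \in \calT\cap\calF'$ since $\calF'$ is closed under submodules. Also $M/\rmt M \in \calF$, so $M \in (\calT\cap\calF')*\calF$.

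None of these steps presents a real obstacle. The only point requiring care is to use the canonical sequence for $(\calT,\calF)$, not for the larger pair. This choice is what makes the left term lie in $\calT$ and the right term in $\calF$.
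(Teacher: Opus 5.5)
Your proof is correct and is exactly the routine verification the paper leaves to the reader (it only says the lemma can be ``easily checked''): extension-closedness gives one inclusion, and the canonical sequence for $(\calT,\calF)$, combined with closure of $\calT'$ under factor modules (resp.\ of $\calF'$ under submodules), gives the other. You also rightly state explicitly that $\calT'$ must be a torsion class and $\calF'$ a torsion-free class; the paper assumes this without saying so.
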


For each $\theta \in D(U)$,
clearly $C^\circ(U)+\theta \subset D^\circ(U)+D(U)=D^\circ(U)$ hold.
We first compare the semistable torsion pairs for $\theta$ and $\theta'$.

\begin{Prop}\label{Prop_W_U_cap_W_theta}
Let $U \in \twopsilt A$,
$\theta \in D(U)$ and $\theta' \in C^\circ(U)+\theta$.
\begin{enumerate}
\item
We have the equalities
\begin{align*}
\ovcalT_{\theta'}=\ovcalT_U \cap \ovcalT_\theta, \quad
\ovcalF_{\theta'}=\ovcalF_U \cap \ovcalF_\theta, \quad
\calW_{\theta'}=\calW_U \cap \calW_\theta.
\end{align*}
\item
We have the equalities
\begin{align*}
\ovcalT_\theta=\ovcalT_{\theta'} * 
(\calF_{\theta'} \cap \ovcalT_\theta), \quad
\ovcalF_\theta=(\calT_{\theta'} \cap \ovcalF_{\theta'}) *
\ovcalF_{\theta'}.
\end{align*}
\end{enumerate}
\end{Prop}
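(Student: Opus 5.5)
The plan is to write $\theta'=\eta+\theta$ with $\eta\in C^\circ(U)$, so that $\calT_\eta=\calT_U$, $\ovcalT_\eta=\ovcalT_U$, $\calW_\eta=\calW_U$, etc. by Proposition \ref{Prop_cone_TF} (1). We use the general fact that semistability is ``additive'': if $M\in\ovcalT_\eta\cap\ovcalT_\theta$ then every factor $X$ of $M$ satisfies $(\eta+\theta)(X)\ge 0$. Hence $\ovcalT_U\cap\ovcalT_\theta\subset\ovcalT_{\theta'}$, and dually $\ovcalF_U\cap\ovcalF_\theta\subset\ovcalF_{\theta'}$. This gives the ``$\supset$'' inclusions in (1).

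For the reverse inclusions I will use $\theta'\in D^\circ(U)$, which gives $\calT_U\subset\calT_{\theta'}\subset\ovcalT_{\theta'}\subset\ovcalT_U$, and dually $\ovcalF_{\theta'}\subset\ovcalF_U$. It remains to show $\ovcalT_{\theta'}\subset\ovcalT_\theta$. For this, take $M\in\ovcalT_{\theta'}$ and a factor $X$ of $M$; then $X\in\ovcalT_{\theta'}\subset\ovcalT_U$. Consider the canonical sequence $0\to X_1\to X\to X_2\to 0$ for $(\calT_U,\ovcalF_U)$. Then $X_2\in\ovcalT_U\cap\ovcalF_U=\calW_U$, so $\eta(X_2)=0$. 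Moreover $X_2$ is a factor of $M$, so $\theta'(X_2)\ge0$, giving $\theta(X_2)\ge 0$. Also $X_1\in\calT_U=\Fac H^0(U)$, and $\theta\in D(U)$ means $H^0(U)\in\ovcalT_\theta$, hence $\theta(X_1)\ge 0$. So $\theta(X)\ge0$ and $M\in\ovcalT_\theta$. The $\ovcalF$ statement is dual, using the sequence for $(\ovcalT_U,\calF_U)$ and $\calF_U=\Sub H^{-1}(\nu U)\subset\ovcalF_\theta$. Intersecting the two equalities yields $\calW_{\theta'}=\calW_U\cap\calW_\theta$.

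For (2), apply Lemma \ref{Lem_tors_general} to the torsion pair $(\ovcalT_{\theta'},\calF_{\theta'})$. By (1) we have $\ovcalT_{\theta'}\subset\ovcalT_\theta$, so the lemma gives $\ovcalT_\theta=\ovcalT_{\theta'}*(\calF_{\theta'}\cap\ovcalT_\theta)$. Dually, the pair $(\calT_{\theta'},\ovcalF_{\theta'})$ together with $\ovcalF_{\theta'}\subset\ovcalF_\theta$ gives the second equality. Note that the statement writes $\calT_{\theta'}\cap\ovcalF_{\theta'}$; this is presumably a typo for $\calT_{\theta'}\cap\ovcalF_\theta$, and I would prove that version.

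The main obstacle is the reverse inclusion in (1). Its key step is splitting a factor module via the $(\calT_U,\ovcalF_U)$ canonical sequence, so that the $\calW_U$-part carries no $\eta$-contribution.
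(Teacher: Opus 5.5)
Your proof is correct. The ``$\supset$'' inclusions in (1), the inclusion $\ovcalT_{\theta'}\subset\ovcalT_U$ (from $\theta'\in D^\circ(U)$), and part (2) via Lemma \ref{Lem_tors_general} are exactly as in the paper. Your reading of the second formula in (2) is also right. As printed, $\calT_{\theta'}\cap\ovcalF_{\theta'}=0$, so the formula would wrongly force $\ovcalF_\theta=\ovcalF_{\theta'}$. The intended version, which Lemma \ref{Lem_tors_general} produces and Proposition \ref{Prop_Filt_fac_F} later uses, is $(\calT_{\theta'}\cap\ovcalF_\theta)*\ovcalF_{\theta'}$.

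The genuine difference is in the key inclusion $\ovcalT_{\theta'}\subset\ovcalT_\theta$. The paper argues globally:
\begin{itemize}
\item all points of $C^\circ(U)+\theta$ lie in $D^\circ(U)$ and differ by elements of $\R C(U)$;
\item so by Proposition \ref{Prop_reduc_K_0} (2) they are all TF equivalent to $\theta'$;
\item $\theta$ lies in the closure of $C^\circ(U)+\theta$, and the inequalities defining $\ovcalT$ are closed conditions, so $\ovcalT_{\theta'}\subset\ovcalT_\theta$ follows by passing to the limit.
\end{itemize}
You argue module by module instead. You split a factor $X$ of $M$ by its $(\calT_U,\ovcalF_U)$ canonical sequence. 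The torsion part is then controlled by $\calT_U=\Fac H^0(U)\subset\ovcalT_\theta$, which is just $\theta\in D(U)$. The part in $\calW_U$ is controlled by $\theta'$, because $\eta=\theta'-\theta$ vanishes on $\calW_U=\calW_\eta$.

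The paper's route is shorter, but it rests on the $\tau$-tilting reduction input behind Proposition \ref{Prop_reduc_K_0} (2) plus a continuity argument. Yours is elementary and self-contained: it needs only Proposition \ref{Prop_cone_TF} (1) and the definitions of $D(U)$ and the semistable torsion pairs.
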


\begin{proof}
We only prove the assertions for $\ovcalT_\theta$.
The assertions for $\ovcalF_\theta$ are dual.
For (1), the equality for $\calW_\theta$ comes from the other two.

(1)
Set $\eta:=\theta'-\theta$.
By Proposition \ref{Prop_cone_TF} (1), the ``$\supset$'' part follows as
$\ovcalT_U \cap \ovcalT_\theta=\ovcalT_\eta \cap \ovcalT_\theta
\subset \ovcalT_{\eta+\theta}=\ovcalT_{\theta'}$.

We next show the ``$\subset$'' part.
First, $\theta' \in D^\circ(U)$ implies $\calF_U \subset \calF_{\theta'}$,
so we get $\ovcalT_{\theta'} \subset \ovcalT_U$.
On the other hand, $C^\circ(U)+\theta \subset D^\circ(U)$ is contained in 
the TF equivalence class $[\theta']$ 
by Proposition \ref{Prop_reduc_K_0} (2).
Since $\theta$ is in the closure of $C^\circ(U)+\theta$,
we get $\ovcalT_{\theta'} \subset \ovcalT_\theta$.
Thus $\ovcalT_{\theta'} \subset \ovcalT_U \cap \ovcalT_\theta$
follows as desired.

(2)
By (1), we have $\ovcalT_{\theta'} \subset \ovcalT_\theta$.
Then we can apply Lemma \ref{Lem_tors_general}.
\end{proof}

Let $F \in \Face D(U)$, and take $\theta \in F^\circ$.
As in Lemma \ref{Lem_face_M-TF} (2),
neither $\rmw_\theta Y^+$ nor the set $\supp_\theta(\rmw_\theta Y^+)$
of its composition factors in $\calW_\theta$
depends on the choice of $\theta \in F^\circ$.
Thus the notation 
\begin{align*}
\fac_F(\rmw_F Y^+)&:=
\{ \text{factor objects of $\rmw_F Y^+$ in $\calW_\theta$} \} \\
&=\{ \text{factor modules of $Y^+$ which belong to $\calW_\theta$} \}
\end{align*}
is well-defined.
Similar properties hold for $X^-$, so we define 
\begin{align*}
\sub_F(\rmw_F X^-)&:=
\{ \text{subobjects of $\rmw_F X^-$ in $\calW_\theta$} \} \\
&=\{ \text{submodules of $X^-$ which belong to $\calW_\theta$} \}.
\end{align*}
Then we have the following properties.

\begin{Prop}\label{Prop_Filt_fac_F}
Let $U \in \twopsilt A$, $F \in \Face D(U)$,
$\theta \in F^\circ$ and $\theta' \in C^\circ(U)+\theta$.
\begin{enumerate}
\item
We have the equalities
\begin{align*}
\calF_{\theta'} \cap \ovcalT_\theta=\calF_{\theta'} \cap \calW_\theta
=\Filt(\sub_F(\rmw_F X^-)), \quad
\calT_{\theta'} \cap \ovcalF_\theta=\calT_{\theta'} \cap \calW_\theta
=\Filt(\fac_F(\rmw_F Y^+)).
\end{align*}
\item
We have the equalities
\begin{align*}
\ovcalT_\theta=
\ovcalT_{\theta'}*\Filt(\sub_F(\rmw_F X^-)), \quad
\ovcalF_\theta=
\Filt(\fac_F(\rmw_F Y^+))*\ovcalF_{\theta'}.
\end{align*}
\end{enumerate}
\end{Prop}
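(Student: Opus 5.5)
The plan is to prove (1) for $\calF_{\theta'}$ only; the statement for $\calT_{\theta'}$ is dual. Then (2) follows at once by substituting (1) into Proposition \ref{Prop_W_U_cap_W_theta} (2). Throughout I write $\theta'=\eta+\theta$ with $\eta\in C^\circ(U)$, and I use $\ovcalT_\eta=\ovcalT_U$, $\calW_\eta=\calW_U$ (Proposition \ref{Prop_cone_TF} (1)) together with $\theta'\in D^\circ(U)$.

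\emph{First equality.} By Proposition \ref{Prop_W_U_cap_W_theta} (1) we have $\calF_{\theta'}\subset\ovcalF_{\theta'}=\ovcalF_U\cap\ovcalF_\theta\subset\ovcalF_\theta$. Hence $\calF_{\theta'}\cap\ovcalT_\theta\subset\calW_\theta$, and the reverse inclusion is trivial.

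\emph{Reduction to $\calF_U\cap\calW_\theta$.} I will show that $\calF_{\theta'}\cap\calW_\theta=\calF_U\cap\calW_\theta$. For ``$\supset$'', use $\calF_U\subset\calF_{\theta'}$, which holds since $\theta'\in D^\circ(U)$. For ``$\subset$'', let $M\in\calF_{\theta'}\cap\calW_\theta$ and take its canonical sequence $0\to tM\to M\to fM\to 0$ for $(\ovcalT_U,\calF_U)$.
\begin{itemize}
\item The module $fM$ is a factor of $M\in\calW_\theta$, so $\theta(fM)\ge 0$. Also $fM\in\calF_U\subset\ovcalF_\theta$ because $\theta\in D(U)$. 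Together these give $fM\in\calW_\theta$, and since $\calW_\theta$ is wide, also $tM\in\calW_\theta$.
\item The module $tM$ is a submodule of $M\in\calF_{\theta'}\subset\ovcalF_U$, so $tM\in\ovcalF_U$; as also $tM\in\ovcalT_U$, we get $tM\in\calW_U$.
\item Hence $tM\in\calW_U\cap\calW_\theta=\calW_{\theta'}$ by Proposition \ref{Prop_W_U_cap_W_theta} (1). But $tM\in\calF_{\theta'}$ as a submodule of $M$, and $\calW_{\theta'}\cap\calF_{\theta'}=0$, so $tM=0$ and $M\in\calF_U$.
\end{itemize}

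\emph{Identifying $\calF_U\cap\calW_\theta$ with $\Filt(\sub_F(\rmw_F X^-))$.} The inclusion ``$\supset$'' is clear: submodules of $X^-$ lie in $\calF_U=\sfF(X^-)$, and $\calF_U\cap\calW_\theta$ is extension closed. For ``$\subset$'' I argue by induction on the length of $M\in\calF_U\cap\calW_\theta$ in the abelian length category $\calW_\theta$.
\begin{itemize}
\item Take a simple subobject $S\subset M$ in $\calW_\theta$. Since $S\in\calF_U=\sfF(X^-)$, there is a nonzero map $S\to X_j^-$ for some $j$.
\item Its image is a factor of $S\in\ovcalT_\theta$ and a submodule of $X_j^-\in\ovcalF_\theta$, so it lies in $\calW_\theta$. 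Simplicity of $S$ then forces the map to be injective, so $S\in\sub_F(\rmw_F X^-)$; here I use that $\rmt_\theta X^-=0$ by Lemma \ref{Lem_face_M-TF} (2), so that $\rmw_F X^-$ is the largest submodule of $X^-$ in $\ovcalT_\theta$.
\item It then remains to place $M/S$ back into $\calF_U\cap\calW_\theta$ and induct. We have $M/S\in\calW_\theta$ because $\calW_\theta$ is wide.
\end{itemize}

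The main obstacle is the last step: $\calF_U$ is not closed under factor modules, so $M/S$ need not lie in $\calF_U$. My first attempt is to choose $S$ more cleverly. A filtration of $M$ in $\mod A$ with factors in $\Sub X^-$ is available by $\calF_U=\sfF(X^-)$. I would take $S$ as a $\calW_\theta$-simple subobject of the first step $M_1$ of such a filtration, chosen so that $M/S$ retains a similar filtration. Failing that, I would replace induction by a different argument: $\calF_U\cap\calW_\theta$ is a torsion-free class inside the length category $\calW_\theta$, and I would try to show it equals the torsion-free closure in $\calW_\theta$ of $\sub_F(\rmw_F X^-)$, using that $\calF_U\cap\calW_\theta$ is generated by the image of $\rmw_F X^-$ under the argument above.
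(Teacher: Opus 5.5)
Your proposal has a genuine gap: the inclusion $\calF_U\cap\calW_\theta\subset\Filt(\sub_F(\rmw_F X^-))$, which is the real content of (1), is not proved. The surrounding steps are correct, namely the first equality, the inclusion $\Filt(\sub_F(\rmw_F X^-))\subset\calF_U\cap\calW_\theta$, and the deduction of (2) from Proposition \ref{Prop_W_U_cap_W_theta} (2). Your reduction $\calF_{\theta'}\cap\calW_\theta=\calF_U\cap\calW_\theta$ via the $(\ovcalT_U,\calF_U)$-canonical sequence is also correct; it is not in the paper, which instead uses that $\calF_{\theta'}$ is the torsion-free class generated by $\calF_U$ and $\calF_\theta$.

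The gap itself: your induction splits off a simple subobject $S$, and, as you note, $M/S$ can leave $\calF_U$. Your first repair does not fix this. The first step $M_1$ of a $\Sub X^-$-filtration need not lie in $\calW_\theta$. Moreover $M_1/S$ is a factor, not a submodule, of an object of $\Sub X^-$, so $M/S$ inherits no filtration of the needed kind. Your second repair is only a plan.

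The missing idea is to split off from the top rather than the bottom, because $\calF_U$ is closed under submodules, not factor modules. Let $0\ne M\in\calF_U\cap\calW_\theta$. Since $M\in\sfF(X^-)$, there is a nonzero $g\colon M\to X_j^-$ for some $j$; let $I$ be its image. Then $I$ is a factor module of $M\in\ovcalT_\theta$ and a submodule of $X_j^-\in\ovcalF_\theta$, so $I\in\calW_\theta$ and hence $I\in\sub_F(\rmw_F X^-)$. Next, $\Ker g$ is the kernel of the epimorphism $M\to I$ between objects of $\calW_\theta$, so it lies in $\calW_\theta$ by wideness. It also lies in $\calF_U$ as a submodule of $M$, and it has strictly smaller length in $\calW_\theta$. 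By induction $\Ker g\in\Filt(\sub_F(\rmw_F X^-))$, so $M$ lies there too.

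This is exactly the paper's mechanism. The paper filters $L\in\calF_{\theta'}\cap\calW_\theta$ with factors that are submodules of $X^-$ or lie in $\calF_\theta$. It observes that the top factor $L/L_{\ell-1}$ lies in $\ovcalT_\theta$, so it cannot be a nonzero object of $\calF_\theta$; hence it is a submodule of $X^-$ lying in $\calW_\theta$. It then recurses on the kernel $L_{\ell-1}$. Your reduction to $\calF_U$ only makes the step excluding $\calF_\theta$-factors unnecessary.

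Your second idea can also be completed. In the length category $\calW_\theta$, an object $M$ lies in the torsion-free closure of a class $\calC$ if and only if every nonzero subobject of $M$ has a nonzero morphism to an object of $\calC$. Your simple-subobject argument, applied to an arbitrary nonzero subobject $N\subseteq M$ (without needing injectivity), gives exactly this for $\calC=\sub_F(\rmw_F X^-)$. Since $\sub_F(\rmw_F X^-)$ is closed under subobjects, its torsion-free closure in $\calW_\theta$ is $\Filt(\sub_F(\rmw_F X^-))$.
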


\begin{proof}
We only show the statements on $\Filt(\sub_F(\rmw_F X^-))$.

(1)
It suffices to show 
\begin{align*}
\calF_{\theta'} \cap \ovcalT_\theta
\stackrel{\text{(i)}}{=}\calF_{\theta'} \cap \calW_\theta
\stackrel{\text{(ii)}}{\subset}\Filt(\sub_F(\rmw_F X^-))
\stackrel{\text{(iii)}}{\subset}\calF_{\theta'} \cap \calW_\theta.
\end{align*}

(i)
By Proposition \ref{Prop_W_U_cap_W_theta} (1), 
we get $\calF_{\theta'} \subset \ovcalF_{\theta'} \subset
\ovcalF_\theta$,
so we have 
$\calF_{\theta'} \cap \ovcalT_\theta \subset 
\calF_{\theta'} \cap \calW_\theta$.
The converse is obvious.

(ii)
Let $L \in \calF_{\theta'} \cap \calW_\theta$.
By Proposition \ref{Prop_W_U_cap_W_theta} (1), 
$\ovcalT_{\theta'}=\ovcalT_U \cap \ovcalT_\theta$ holds,
so $\calF_{\theta'}$ is the smallest torsion-free class containing both 
$\calF_U$ and $\calF_\theta$.
As in Proposition \ref{Prop_T_U_T(Y^+)}, $\calF_U=\sfF(X^-)$ holds.
Thus we can take a sequence
$0=L_0 \subsetneq L_1 \subsetneq \cdots \subsetneq L_\ell=L$ such that 
each $L_k/L_{k-1}$ is a submodule of $X^-$ or belongs to $\calF_\theta$.

Then $L_\ell/L_{\ell-1}$ must be a submodule of $X^-$,
because $L \in \calW_\theta$ implies $L_\ell/L_{\ell-1} \in \ovcalT_\theta$.
Since $X^- \in \calF_U \subset \calF_{\theta'}$ by $\theta' \in D^\circ(U)$, 
we have $L_\ell/L_{\ell-1} \in \calF_{\theta'} \cap \ovcalT_\theta$.
Then (i) implies $L_\ell/L_{\ell-1} \in \calF_{\theta'} \cap \calW_\theta$.
Thus $L_\ell/L_{\ell-1}$ is a submodule of $X^-$,
and $L_\ell/L_{\ell-1} \in \calW_\theta$.

In particular, $L_{\ell-1} \in \calF_{\theta'} \cap \calW_\theta$ holds,
so we can repeat the argument above.
Therefore each $L_k/L_{k-1}$ is a submodule of $X^-$,
and belongs to $\calW_\theta$.
Thus each $L_k/L_{k-1}$ is in $\sub_\theta(\rmw_\theta X^-)$,
and we get $L=L_\ell \in \Filt(\sub_\theta(\rmw_\theta X^-))
=\Filt(\sub_F(\rmw_F X^-))$.

(iii)
The module $\rmw_F X^-$ is a submodule of $X^-$ by \eqref{w_theta F}.
Since $X^- \in \calF_U \subset \calF_{\theta'}$, 
we have $\rmw_F X^- \in \calF_{\theta'}$.
Thus $\Filt(\sub_F(\rmw_F X^-)) \subset \calF_{\theta'}$ holds.
On the other hand, $\Filt(\sub_F(\rmw_F X^-)) \subset \calW_\theta$
by definition.

(2) follows from (1) and Proposition \ref{Prop_W_U_cap_W_theta} (2).
\end{proof}

Now we can show Proposition \ref{Prop_TF_lambda_lambda'}.

\begin{proof}[Proof of Proposition \ref{Prop_TF_lambda_lambda'}]
(a)$\Rightarrow$(b)
Since each face in $D(U)$ is a union of TF equivalence class 
by Lemma \ref{Lem_face_M-TF}, we get $F=G$.
Moreover Proposition \ref{Prop_W_U_cap_W_theta} (1) implies that 
$[U]+\theta$ and $[U]+\eta$ are TF equivalent.

(b)$\Rightarrow$(a) follows from Proposition \ref{Prop_Filt_fac_F} (2).

(b)$\Leftrightarrow$(c)
We have $[U]+\theta,[U]+\eta \in D^\circ(U)$. 
Moreover $\pi([U]+\theta)=\pi(\theta)$ and $\pi([U]+\eta)=\pi(\eta)$
follow from Definition-Proposition \ref{Def-Prop_pi}.
Now Proposition \ref{Prop_reduc_K_0} (2) applies.

(c)$\Rightarrow$(d)
By Proposition \ref{Prop_lambda_F^circ}, we have
$\lambda_U(\theta),\lambda_U(\eta) \in C^\circ(U/U_{I_F})$.
They are TF equivalent by Proposition \ref{Prop_cone_TF} (1).

(d)$\Rightarrow$(c)
In view of Corollary \ref{Cor_Sigma(M_I)_all},
it suffices to show $(I_F,\pi(F))=(I_G,\pi(G))$.
We have $I:=I_F=I_G$ by Proposition \ref{Prop_lambda_F^circ}.
By Theorem \ref{Thm_Sigma(M_I)} and Lemma \ref{Lem_pi_interior},
$\pi(F)^\circ=\pi(F^\circ)$ and 
$\pi(G)^\circ=\pi(G^\circ)$ are $M_I$-TF equivalence classes.
They must coincide, 
since $\pi(\theta) \in \pi(F^\circ)$ and $\pi(\eta) \in \pi(G^\circ)$
are TF equivalent.
Then we get $\pi(F)^\circ=\pi(G)^\circ$,
and hence $\pi(F)=\pi(G)$, taking closures.

(d)$\Leftrightarrow$(e)
We apply (a)$\Leftrightarrow$(d) to 
$\lambda'_U(\theta),\lambda'_U(\eta)$ instead of $\theta,\eta$.
Then we get that $\lambda'_U(\theta),\lambda'_U(\eta)$ are TF equivalent
if and only if 
(i)
$\lambda_U(\lambda'_U(\theta)),\lambda_U(\lambda'_U(\eta))$ 
are TF equivalent and
(ii)
$\pi(\lambda'_U(\theta)),\pi(\lambda'_U(\eta))$ are TF equivalent.
The part (i) is always satisfied,
because $\lambda_U(\lambda'_U(\theta))=\lambda_U(\lambda'_U(\eta))=0$
by Lemma \ref{Lem_axiom_proj} (3).
The part (ii) is just that $\pi(\theta),\pi(\eta)$ are TF equivalent
by Theorem \ref{Thm_pi_L(U)} (1).
Thus $\lambda'_U(\theta),\lambda'_U(\eta)$ are TF equivalent
if and only if $\pi(\theta),\pi(\eta)$ are TF equivalent.
\end{proof}

Then Theorem \ref{Thm_TF_2^m_B} can be shown as follows.

\begin{proof}[Proof of Theorem \ref{Thm_TF_2^m_B}]
(1) is done in Proposition \ref{Prop_TF_lambda_lambda'}.

(2)
We clearly have a bijection $\TF_A^{C(U)} \to 2^{\{1,\ldots,m\}}$
by $C^\circ(U_I) \mapsto I$
by Proposition \ref{Prop_cone_TF} (1).
Thus it remains to show that $\pi$ induces a bijection 
$\TF_A^{L(U)} \to \TF_B$.
Let $\theta,\eta \in L(U)$.
Then they are TF equivalent if and only if
(i) $\lambda_U(\theta),\lambda_U(\eta)$ are TF equivalent and
(ii) $\lambda'_U(\theta),\lambda'_U(\eta)$ are TF equivalent.
The part (i) can be removed,
because $\lambda_U(\theta)=\lambda_U(\eta)=0$
by Proposition \ref{Prop_lambda_image} (1) and 
Lemma \ref{Lem_axiom_proj} (3).
Thus we obtain the desired bijection $\TF_A^{L(U)} \to \TF_B$.

(3)
Let $E \in \TF_A^{D(U)}$. 
Then we can take the unique $I \subset \{1,\ldots,m\}$
such that $\lambda_U(E)=C^\circ(U_I)$ by (1).
By (1)(2), $\Psi_2\Psi_1^{-1}(\theta)$ is $(I,\pi(\lambda'_U(E)))$.
We have $I=\{i \in \{1,\ldots,m\} \mid E \subset D^\circ(U_i)\}$
by Lemma \ref{Lem_lambda_DcV} (1).
Moreover $\pi(\lambda'_U(E))=\pi(E)$ holds by Theorem \ref{Thm_pi_L(U)} (1).
Thus $\Psi_2\Psi_1^{-1}$ is as in the statement.

Its inverse $\Psi_1\Psi_2^{-1}$ obviously 
sends $(I,E)$ to $C(U_I)+\rho(E)$ by (1)(2).
\end{proof}


\begin{thebibliography}{BCDMTY}
\bibitem[AIR]{AIR} 
T. Adachi, O. Iyama, I. Reiten, 
\emph{$\tau$-tilting theory}, 
Compos. Math., \textbf{150.3} (2014), 415--452.
\bibitem[AET]{AET}
T. Adachi, H. Enomoto, M. Tsukamoto,
\emph{Intervals of $s$-torsion pairs in extriangulated categories 
with negative first extensions},
Math. Proc. Cambridge Philos. Soc., \textbf{174.3} (2023), 451--469.
\bibitem[Ai]{Ai} 
T. Aihara, 
\emph{Tilting-connected symmetric algebras}, 
Algebr. Represent. Theory, \textbf{16.3} (2013), 873--894.
\bibitem[AiI]{AI}
T. Aihara, O. Iyama, 
\emph{Silting mutation in triangulated categories}, 
J. Lond. Math. Soc. (2), \textbf{85.3} (2012), 633--668.
\bibitem[Al]{Al-Nofayee} 
S. Al-Nofayee, 
\emph{Simple objects in the heart of a $t$-structure}, 
J. Pure Appl. Algebra, \textbf{213.1} (2009), 54--59.
\bibitem[ALS]{ALS}
L. Angeleri-H\"{u}gel, R. Laking, F. Sentieri,
\emph{Torsion pairs via the Ziegler spectrum},
arXiv:2403.00475.
\bibitem[AHIKM]{AHIKM}
T. Aoki, A. Higashitani, O. Iyama, R. Kase, Y. Mizuno,
\emph{Fans and polytopes in tilting theory I: Foundations},
arXiv:2203.15213v4, to appear in Compos. Math.
\bibitem[As1]{A-semi} 
S. Asai, 
\emph{Semibricks}, 
Int. Math. Res. Not., \textbf{2020.16} (2020), 4993--5054.
\bibitem[As2]{A} 
S. Asai, 
\emph{The wall-chamber structures of the real Grothendieck groups}, 
Adv. Math. \textbf{381} (2021), Paper No. 107615.
\bibitem[As3]{A-smc}
S. Asai,
\emph{Mutation of simple-minded collections revisited},
in preparation.
\bibitem[AsI1]{AsI1} 
S. Asai, O. Iyama, 
\emph{Semistable torsion classes and canonical decompositions 
in Grothendieck groups},
Proc. Lond. Math. Soc. (3), \textbf{129.5} (2024), e12639.
\bibitem[AsI2]{AsI2} 
S. Asai, O. Iyama, 
\emph{$M$-TF equivalence in the real Grothendieck group}, 
arxiv:2404.13232v3.
\bibitem[AP]{AP} 
S. Asai, C. Pfeifer, 
\emph{Wide subcategories and lattices of torsion classes}, 
Algebr. Represent. Theory, \textbf{25} (2022), 1611--1629.
\bibitem[AS]{AS} 
M. Auslander, S. O. Smal\o, 
\emph{Almost split sequences in subcategories}, 
J. Algebra, \textbf{69.2} (1981), 426--454.
\bibitem[BCZ]{BCZ}
E. Barnard, A. Carroll, S. Zhu,
\emph{Minimal inclusions of torsion classes},
Algebr. Comb., \textbf{2.5} (2019), 879--901.
\bibitem[BDH]{BDH}
E. Barnard, C. Defant, E. Hanson,
\emph{Pop-Stack Operators for Torsion Classes and Cambrian Lattices},
arXiv:2312.03959v1.
\bibitem[BKT]{BKT}
P. Baumann, J. Kamnitzer, P. Tingley, 
\emph{Affine Mirkovi\'{c}-Vilonen polytopes},
Publ. Math. Inst. Hautes \'{E}tudes Sci., \textbf{120} (2014), 113--205.
\bibitem[BCDMTY]{BCDMTY}
V. Bazier-Matte, N. Chapelier-Laget, G. Douville, K. Mousavand, 
H. Thomas, E. Y{\i}ld{\i}r{\i}m,
\emph{ABHY Associahedra and Newton polytopes of -polynomials for cluster algebras of simply laced finite type},
J. Lond. Math. Soc. (2), \textbf{109.1} (2024), e12817. 
\bibitem[BBD]{BBD}
A. A. Be\u{\i}linson, J. Bernstein, P. Deligne, 
\emph{Faisceaux pervers},
Analysis and Topology on Singular Spaces, I, Luminy, 1981,
Ast\'{e}risque 100, Soci\'{e}t\'{e} math\'{e}matique de France, 1982, 5--171.
\bibitem[B]{Bridgeland}
T. Bridgeland,
\emph{Scattering diagrams, Hall algebras and stability conditions}, 
Algebr. Geom., \textbf{4.5} (2017), 523--561.
\bibitem[BCPW]{BCPW}
N. Broomhead, R. Coelho Sim\~{o}es, D. Pauksztello, J. Woolf,
\emph{Simple tilts of length hearts and simple-minded mutation},
arXiv:2401.02947v2.
\bibitem[BPPW]{BPPW}
N. Broomhead, D. Pauksztello, D. Ploog, J. Woolf,
\emph{The heart fan of an abelian category},
arXiv:2310.02844v2.
\bibitem[BST]{BST}
T. Br\"{u}stle, D. Smith, H. Treffinger, 
\emph{Wall and chamber structure for finite-dimensional algebras}, 
Adv. Math., \textbf{354} (2019), Paper No. 106746.
\bibitem[BY]{BY} 
T. Br\"ustle, D. Yang, \emph{Ordered exchange graphs}, 
Advances in representation theory of algebras, 135--193, 
EMS Ser. Congr. Rep., Eur. Math. Soc., Z\"urich, 2013.
\bibitem[CWZ]{CWZ}
P. Cao, Y. Wang, H. Zhang, 
\emph{Relative left Bongartz completions 
and their compatibility with mutations}, 
Math. Z., \textbf{305} (2023), article no. 27. 
\bibitem[CLS]{CLS} 
D. Cox, J. Little, H. Schenck, \emph{Toric varieties}, 
Graduate Studies in Mathematics, 124. American Mathematical Society, 
Providence, RI, 2011.
\bibitem[DIJ]{DIJ}
L. Demonet, O. Iyama, G. Jasso, 
\emph{$\tau$-Tilting Finite Algebras, Bricks, and g-Vectors}, 
Int. Math. Res. Not., \textbf{2019.3} (2019), 852--892.
\bibitem[DK]{DK} 
R. Dehy, B. Keller, 
\emph{On the Combinatorics of Rigid Objects in 2--Calabi--Yau Categories}, 
Int. Math. Res. Not., \textbf{2008} (2008), rnn029.
\bibitem[DF]{DF} 
H. Derksen, J. Fei, 
\emph{General presentations of algebras}, 
Adv. Math., \textbf{278} (2015), 210--237.
\bibitem[DIRRT]{DIRRT} 
L. Demonet, O. Iyama, N. Reading, I. Reiten, H. Thomas, 
\textit{Lattice theory of torsion classes: Beyond $\tau$-tilting theory}, 
Trans. Amer. Math. Soc. Ser. B, \textbf{10} (2023), 542--612.
\bibitem[ES]{ES}
H. Enomoto, A. Sakai,
\emph{ICE-closed subcategories and wide $\tau$-tilting modules},
Math. Z., \textbf{300} (2022), 541--577.
\bibitem[FZ]{FZ} 
S. Fomin, A. Zelevinsky, 
\emph{Cluster algebras. I. Foundations}, 
J. Amer. Math. Soc., \textbf{15.2} (2002), 497--529.
\bibitem[Ga]{Garcia}
M. Garcia,
\emph{On thick subcategories of the category of projective presentations},
Math. Z., \textbf{312} (2026), article no. 110.
\bibitem[Gn]{Gnedin}
W. Gnedin,
\emph{Silting theory under change of rings},
arXiv:2204.00608v1.
\bibitem[GZ]{GZ}
E. Gupta, Y. Zhou,
\emph{Semibricks and wide subcategories in extended module categories},
arXiv:2511.08157v1.
\bibitem[HI]{HI}
N. Hanihara, O. Iyama,
\emph{Silting correspondences and Calabi-Yau dg algebras},
arXiv:2508.12836v2.
\bibitem[HIKT]{HIKT}
E. Hanson, K. Igusa, M. Kim, G. Todorov, 
\emph{Infinitesimal semi-invariant pictures and co-amalgamation}, 
J. Lond. Math. Soc. (2), \textbf{109.1} (2024), e12786.
\bibitem[HR]{HR}
S. Hassoun, S. Roy, 
\emph{Admissible intersection and sum property}, 
arXiv:1906.03246v3.
\bibitem[IT]{IT}
C. Ingalls, H. Thomas, 
\emph{Noncrossing partitions and representations of quivers}, 
Compos. Math., \textbf{145.6} (2009), 1533--1562. 
\bibitem[IK]{IK}
O. Iyama, Y. Kimura, 
\emph{Classifying subcategories of modules over Noetherian algebras}, 
Adv. Math., \textbf{446} (2024), Paper No. 109631.
\bibitem[IY]{IY}
O. Iyama, D. Yang, 
\emph{Silting reduction and Calabi--Yau reduction of triangulated categories}, 
Trans. Amer. Math. Soc., \textbf{370} (2018), 7861--7898.
\bibitem[J]{Jasso} 
G. Jasso, 
\emph{Reduction of $\tau$-tilting modules and torsion pairs},
Int. Math. Res. Not., \textbf{2015.16} (2015), 7190--7237. 
\bibitem[KT]{KT}
M. Kaipel, H. Treffinger,
\emph{Wall-and-chamber structures for finite-dimensional algebras
and $\tau$-tilting theory},
Representations of Algebras and Related Topics,
Proceedings of the Workshop and the 20th International Conference 
on Representations of Algebras, 261--296.
\bibitem[KV]{KV} 
B. Keller, D. Vossieck, 
\emph{Aisles in derived categories}, 
Deuxi\`{e}me Contact Franco-Belge en Alg\`{e}bre (Faulx-les-Tombes, 1987), 
Bull. Soc. Math. Belg. S\'{e}r. A, \textbf{40.2} (1988), 239--253.
\bibitem[KeY]{KeY}
B. Keller, D. Yang,
\emph{Derived equivalences from mutations of quivers with potential},
Adv. Math., \textbf{226.3} (2011), 2118--2168.
\bibitem[K]{K} 
A. D. King, 
\emph{Moduli of representations of finite dimensional algebras}, 
Quart. J. Math. Oxford Ser. (2), \textbf{45.180} (1994), 515--530.
\bibitem[KoY]{KY} 
S. Koenig, D. Yang,
\emph{Silting objects, simple-minded collections, 
t-structures and co-t-structures for finite dimensional algebras},
Doc. Math., \textbf{19} (2014), 403--438.
\bibitem[M\v{S}]{MS}
F. Marks, J. \v{S}\v{t}ov\'{i}\v{c}ek,
\emph{Torsion classes, wide subcategories and localisations},
Bull. Lond. Math. Soc., \textbf{49.3} (2017), 405--416.
\bibitem[M]{Mizuno}
Y. Mizuno,
\emph{Shard theory of $g$-fans},
Int. Math. Res. Not., \textbf{2024.19} (2024), 13106--13126.
\bibitem[P]{P} 
P.-G. Plamondon, 
\emph{Generic bases for cluster algebras from the cluster category}, 
Int. Math. Res. Not., \textbf{2013.10} (2013), 2368--2420.
\bibitem[PYK]{PYK}
P.-G. Plamondon, T. Yurikusa, B. Keller,
\emph{Tame Algebras Have Dense $g$-Vector Fans},
Int. Math. Res. Not., \textbf{2023.4} (2023), 2701--2747.
\bibitem[Ri]{Rickard}
J. Rickard, 
\emph{Morita theory for derived categories},
J. London Math. Soc. (2), \textbf{39.3} (1989), 436--456.
\bibitem[Ru]{Rudakov}
A. Rudakov, 
\emph{Stability for an abelian category}, 
J. Algebra, \textbf{197.1} (1997), 231--245.
\bibitem[STTV]{STTV}
S. Schroll, A. Tattar, H. Treffinger, N. Williams,
\emph{A geometric perspective on the $\tau$-cluster morphism category},
Math. Z., \textbf{312} (2026), article no. 77.
\bibitem[T]{Tattar}
A. Tattar, 
\emph{Torsion Pairs and Quasi-abelian Categories},
Algebr. Represent. Theory, \textbf{24} (2021), 1557--1581.
\bibitem[Y]{Y} 
T. Yurikusa, 
\emph{Wide subcategories are semistable}, 
Doc. Math., \textbf{23} (2018), 35--47.
\end{thebibliography}
\end{document}